\documentclass{amsart}

\usepackage{amsmath}
\usepackage{amsthm}
\usepackage{amssymb}
\usepackage[all]{xy}
\usepackage[hypertexnames=false]{hyperref}
\usepackage{url}
\usepackage{amsrefs}
\usepackage{verbatim}

\usepackage{tikz}
\usetikzlibrary{patterns,snakes}
\usetikzlibrary{math}

\usepackage{graphicx}
\usepackage{rotating}
\usepackage{bm}
\usepackage[Symbol]{upgreek}

\numberwithin{equation}{section}

\newtheorem{thm}{Theorem}[section]
\newtheorem{theorem}[thm]{Theorem}
\newtheorem{lemma}[thm]{Lemma}
\newtheorem{cor}[thm]{Corollary}
\newtheorem{prop}[thm]{Proposition}
\newtheorem{claim}[thm]{Claim}

\newtheorem{fact}[thm]{Fact}

\newtheorem{conjecture}[thm]{Conjecture}

\newtheorem*{thmIntro}{Main Theorem}
\newtheorem*{corIntro}{Corollary}

\theoremstyle{remark}
\newtheorem{remark}[thm]{Remark}
\newtheorem{rem}[thm]{Remark}
\newtheorem{rems}[thm]{Remarks}

\newtheorem*{exampleunnumbered}{Example}

\newtheorem*{remarksunnumbered}{Remarks}
\newtheorem*{remarkunnumbered}{Remark}
\newtheorem*{notation}{Notation}

\theoremstyle{definition}
\newtheorem{definition}[thm]{Definition}
\newtheorem{problem}[thm]{Problem}

\newcommand{\case}[2][\!\!]{\medskip\noindent {\bf Case #1:} {\it #2}\/}

\newcommand\N{\mathbb{N}}
\newcommand\Z{\mathbb{Z}}
\newcommand\Q{\mathbb{Q}}

\newcommand\M{\mathbb{M}}
\renewcommand\L{\mathcal{L}}
\renewcommand\k{\boldsymbol{k}}

\DeclareMathOperator\Th{Th}
\DeclareMathOperator\discr{discr}
\DeclareMathOperator\tp{tp}
\DeclareMathOperator\eq{eq}
\DeclareMathOperator\ind{ind}
\DeclareMathOperator\acl{acl}
\DeclareMathOperator\dcl{dcl}
\DeclareMathOperator\ch{char}
\DeclareMathOperator\RV{RV}
\DeclareMathOperator\rv{rv}

\DeclareFontFamily{U}{fsy}{}
\DeclareFontShape{U}{fsy}{m}{n}{<->s*[.9]psyr}{}
\DeclareSymbolFont{der@m}{U}{fsy}{m}{n}
\DeclareMathSymbol{\der}{\mathord}{der@m}{182}

\makeatother

\DeclareSymbolFont{imag@m}{OT1}{cmr}{m}{ui}
\DeclareMathSymbol{\imag}{\mathord}{imag@m}{105}

\DeclareMathOperator\id{id}
\DeclareMathOperator\im{im}
\newcommand\T{\mathbb{T}}

\DeclareMathOperator\Sh{Sh}

\DeclareMathOperator\res{res}

\DeclareMathOperator\DCF{DCF}

\DeclareMathOperator\ACF{ACF}

\DeclareMathOperator\RCF{RCF}
\DeclareMathOperator\CODF{CODF}

\renewcommand\k{\boldsymbol{k}}

\renewcommand\L{\mathcal{L}}

\def \T{\mathbb{T}}
\def \mr#1{{\mathrm{#1}}}

\renewcommand\epsilon{\varepsilon}

\def \<{\langle}
\def \>{\rangle}

\def \tilde {\widetilde}

\def \eq{\operatorname{eq}}

\def \coker{\operatorname{coker}}

\def \((  {(\!(}
\def \)) {)\!)}

\def \res{\operatorname{res}}

\def \k {{{\boldsymbol{k}}}}

\DeclareMathSymbol{\precequ}{\mathrel}{symbols}{"16}
\DeclareMathSymbol{\succequ}{\mathrel}{symbols}{"17}

\newcommand{\abs}[1]{\lvert#1\rvert}

\DeclareFontFamily{U}{fsy}{}
\DeclareFontShape{U}{fsy}{m}{n}{<->s*[.9]psyr}{}
\DeclareSymbolFont{der@m}{U}{fsy}{m}{n}
\DeclareMathSymbol{\der}{\mathord}{der@m}{182}

\def \Upl{\Uplambda}

\def \upo{\upomega}
\def \Upo{\Upomega}

\newcommand{\lacds}{\L_\mr{acd}^\ast}
\newcommand{\labcds}{\L_\mr{abcd}^\ast}
\newcommand{\lacdqs}{\L_\mr{acdq}^\ast}

\newcommand{\lac}{\L_{\mathrm{ac}}}

\newcommand{\lb}{\L_{\mathrm{b}}}

\newcommand{\lc}{\L_{\mathrm{c}}}
\newcommand{\la}{\L_{\mathrm{a}}}
\newcommand{\labc}{\L_{\mathrm{abc}}}
\newcommand{\lacd}{\L_{\mathrm{acd}}}

\newcommand{\labcd}{\L_{\mathrm{abcd}}}

\newcommand{\lacs}{\L^\ast_{\mathrm{ac}}}

\newcommand{\las}{\L^{\ast}_{\mathrm{a}}}
\newcommand{\laqs}{\L^{\ast}_{\mathrm{aq}}}
\newcommand{\lcs}{\L^{\ast}_{\mathrm{c}}}
\newcommand{\labcs}{\L^\ast_{\mathrm{abc}}}
\newcommand{\lacq}{\L_{\mathrm{acq}}}

\newcommand{\labcq}{\L_{\mathrm{abcq}}}
\newcommand{\tabc}{T_{\mathrm{{abc}}}}
\newcommand{\tabcq}{T_{\mathrm{{abcq}}}}
\newcommand{\lacqs}{\L^\ast_{\mathrm{acq}}}
\newcommand{\labcqs}{\L^\ast_{\mathrm{abcq}}}
\newcommand{\tabcd}{T_{\mathrm{abcd}}}
\newcommand{\lrng}{\mathcal L_{\mathrm{rng}}}
\newcommand{\lrngq}{\mathcal L_{\mathrm{rngq}}}
\newcommand{\vr}{\mathcal{O}}
\newcommand{\maxi}{\mathfrak{m}}
\newcommand{\gi}{\Gamma_\infty}
\newcommand{\lrkng}{\mathcal L_{\mathrm{rkng}}}
\newcommand{\lk}{\mathcal L_{\mathrm{k}}}

\author[Aschenbrenner]{Matthias Aschenbrenner}
\address{Department of Mathematics\\ University of California\\ Los Angeles\\ Los Angeles, CA 90095\\ USA}
\curraddr{Kurt G\"odel Research Center for Mathematical Logic\\
Universit\"at Wien\\
1090 Wien\\ Austria}
\email{matthias.aschenbrenner@univie.ac.at}

\author[Chernikov]{Artem Chernikov}
\address{Department of Mathematics\\ University of California\\ Los Angeles\\ Los Angeles, CA 90095\\ USA}
\email{chernikov@math.ucla.edu}

\author[Gehret]{Allen Gehret}
\address{Department of Mathematics\\ University of California\\ Los Angeles\\ Los Angeles, CA 90095\\ USA}
\curraddr{Kurt G\"odel Research Center for Mathematical Logic\\
Universit\"at Wien\\
1090 Wien\\ Austria}
\email{allen.gehret@univie.ac.at}

\author[Ziegler]{Martin Ziegler}
\address{Albert-Ludwigs-Universit\"at Freiburg\\ Mathematisches Institut\\ Abteilung f\"ur Ma\-the\-ma\-ti\-sche Logik\\ 79104 Freiburg\\ Germany}
\email{ziegler@uni-freiburg.de}

\title{Distality in Valued Fields and Related Structures}
\keywords{}
\subjclass[2010]{Primary 03C45, 	03C60; Secondary 12L12, 12J25}

\date{February 2022}

\begin{document}

\begin{abstract}
We investigate distality and existence of distal expansions in valued fields and related structures. In particular, we characterize distality in a large class of ordered abelian groups, provide an AKE-style characterization for henselian valued fields, and demonstrate that certain expansions of fields, e.g.,~the differential field  of logarithmic-exponential transseries, are distal. As a new   tool for analyzing valued fields we employ a relative quantifier elimination for pure short exact sequences of abelian groups.
\end{abstract}

\maketitle

\setcounter{tocdepth}{1}
\tableofcontents

\section*{Introduction}

\noindent
Distal theories were introduced in~\cite{SimonDistal} as a way to distinguish those NIP theories in which no stable behavior of any kind occurs.
Examples include all (weakly) o-minimal theories (e.g., the theory of the exponential ordered field of reals) and all
$P$-minimal theories (such as the theory of the field  of $p$-adic numbers and its analytic expansion from \cite{DenefvdDries});
see the introduction of~\cite{chernikov2015regularity} for a detailed discussion.
Distality has been investigated both from the point of view of pure model theory~\cites{boxall2018definable, boxall2018theories, ExtDefII, kaplan2017exact} and in connection to the extremal combinatorics of restricted families of graphs.
Indeed,  as   demonstrated in~\cites{chernikov2015regularity}, distality of a theory is equivalent to a definable version of the \emph{strong Erd\H{o}s-Hajnal Property}. Further results in~\cites{chernikov2016cutting, chernikov2018model} show  that many of the combinatorial consequences of distality, including the strong Erd\H{o}s-Hajnal Property, improved regularity lemmas and various generalized incidence bounds, continue to hold for structures which are merely {\it interpretable}\/ in distal structures. Curiously, finding a distal expansion  also appears to be the easiest way of establishing these combinatorial results in a given structure. This motivates the question: {\it which NIP structures admit distal expansions?}\/ Currently, the only known reason for {\it not}\/ having a distal expansion comes from interpreting an infinite field of positive characteristic; see Section~\ref{sec: distal fields and rings} below, where
we also point out that more generally, every infinite distal unital ring without zero-divisors has characteristic~zero.

\medskip\noindent
The aim of this paper is to investigate both  issues---distality and existence of distal ex\-pan\-sions---in the setting of valued fields and various related structures: ordered abelian groups, short exact sequences of abelian group, valued fields with   operators. This provides new examples in which the aforementioned combinatorial results hold, and along the way yields some general tools to address these problems in similar settings. The question of classifying NIP (valued) fields is currently an active area of research  motivated by various versions of Shelah's Conjecture. (See~\cites{dupont2017definable, halevi2017strongly, jahnke2016does, kaplan2011artin} and references therein for some recent results.) In particular, good understanding has been achieved in the $\operatorname{dp}$-minimal case~\cites{jahnke2017dp, johnson2015dp}; see  Section~\ref{sec: dp-minimal fields} for more details. (We recall the definition of $\operatorname{dp}$-minimality in Section~\ref{sec:def distality}.)  Our results demonstrate that some of the issues in this program simplify in the distal case, where   infinite fields of positive characteristic are ruled out, while   new  complications arise due to the fact that distality is not preserved under taking reducts.

\medskip\noindent
As a practical matter, we will not in general set out to prove from scratch that the structures we are interested in are distal (or not distal).
Instead, whenever possible we will view structures as mild expansions of certain distal reducts, and then study how distality passes from the reduct up to the original structure.
For instance, in Section~\ref{SectionFieldExpansions} we   show that certain expansions of valued fields by unary operators are distal by reducing the problem   to the reduct of said valued field without the additional   operators.
For this reason, we will often rely on abstract criteria which (under certain circumstances) show how the distality of a structure can be deduced from the distality of a suitably chosen reduct.

\medskip\noindent
In Section~\ref{SectionDistalPreliminaries} we recall basic results and notions around distality, as well as prove some auxiliary lemmas for verifying that certain expansions in an abstract model-theoretic setting are distal.
In Section~\ref{sec: distal fields and rings} we briefly discuss distal fields and rings. Using Hahn products we give an example
of an infinite unital ring of prime characteristic which has a distal expansion.

\medskip\noindent
In Section~\ref{OAGsection} we then study distality in the class of ordered abelian groups. While every ordered abelian group~$G$ is NIP by~\cite{GurevichSchmitt}, distality may fail due to the   presence of infinite stable quotients of the form $G/nG$.   Theorem~\ref{Spfinitedistalthm}   makes this precise by characterizing distality in a large class of ordered abelian groups.
To properly state this result requires the   many-sorted language $\mathcal{L}_{\operatorname{qe}}$ of Cluckers and Halupczok~\cite{Cluckers}, so we only mention here a consequence and save the discussion of~$\mathcal{L}_{\operatorname{qe}}$ and the full statement of Theorem~\ref{Spfinitedistalthm} for Section~\ref{OAGsection}.

\begin{corIntro}
Let $G$ be a strongly dependent ordered abelian group; then
\begin{align*}
\text{$G$ is distal} &\quad \Longleftrightarrow\quad \text{$G$ is $\operatorname{dp}$-minimal} \\ &\quad \Longleftrightarrow\quad
\text{$G$ is non-singular \textup{(}i.e., $G/pG$ finite for every prime $p$\textup{)}.}
\end{align*}
\end{corIntro}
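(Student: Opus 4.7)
I would establish the corollary through the cycle
\[
\text{distal} \;\Longrightarrow\; \text{non-singular} \;\Longleftrightarrow\; \operatorname{dp}\text{-minimal} \;\Longrightarrow\; \text{distal},
\]
working throughout under the standing assumption that $G$ is a strongly dependent ordered abelian group.

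The middle equivalence $\operatorname{dp}\text{-minimal} \Longleftrightarrow \text{non-singular}$ is the characterization of $\operatorname{dp}$-minimal ordered abelian groups due to Jahnke--Simon--Walsberg, which is valid for all OAGs (\emph{a fortiori} in the strongly dependent setting) and recalled in Section~\ref{sec: dp-minimal fields}. The implication $\operatorname{dp}\text{-minimal} \Longrightarrow \text{distal}$ invokes Simon's theorem that every $\operatorname{dp}$-minimal theory defining an infinite linear order is distal---applicable here because $G$ carries such an order by assumption.

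The substantive remaining direction is $\text{distal} \Longrightarrow \text{non-singular}$. For this I would appeal directly to the full Theorem~\ref{Spfinitedistalthm}, which characterizes distality in the relevant class of OAGs via a finiteness condition on the auxiliary sorts $S_p$ of the Cluckers--Halupczok many-sorted language $\mathcal{L}_{\operatorname{qe}}$ attached to each prime $p$. Under strong dependence, these auxiliary sorts collapse enough that the $S_p$-finiteness condition reduces exactly to $[G:pG]<\infty$ for every prime $p$, which is non-singularity of $G$.

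The main obstacle lies precisely in that last translation: one must carefully unpack the $\mathcal{L}_{\operatorname{qe}}$-formulation of Theorem~\ref{Spfinitedistalthm} and verify that, under strong dependence, its abstract finiteness condition on the sorts $S_p$ is equivalent to the concrete statement that $G/pG$ is finite for every prime $p$. As a soft sanity check for the same direction, one can also argue by contradiction: if $[G:pG]$ were infinite, then $G^{\operatorname{eq}}$ would interpret an infinite pure $\mathbb{F}_p$-vector space---since the ordering of $G$ does not descend to any additional structure on $G/pG$ beyond its $\mathbb{F}_p$-module structure---yielding an infinite interpretable stable structure, which contradicts distality (as distality passes to interpretable sorts, cf.~Section~\ref{SectionDistalPreliminaries}).
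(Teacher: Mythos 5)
Your overall skeleton is the paper's: the equivalence $\operatorname{dp}$-minimal $\Leftrightarrow$ non-singular is Fact~\ref{oagdpminnonsing}, the implication $\operatorname{dp}$-minimal $\Rightarrow$ distal is Fact~\ref{fac: no tot indisc in distal}, and distal $\Rightarrow$ non-singular comes from Theorem~\ref{Spfinitedistalthm}. But your account of how Theorem~\ref{Spfinitedistalthm} enters contains a genuine error. The finiteness of the sorts $\mathcal{S}_p$ is the \emph{hypothesis} of that theorem, not its characterization of distality; its conclusion is that, under this hypothesis, $G$ is distal if and only if $G$ is non-singular. Consequently the step you single out as the ``main obstacle''---verifying that under strong dependence the $\mathcal{S}_p$-finiteness condition is equivalent to $[G:pG]<\infty$ for all $p$---is both unnecessary and false. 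If that equivalence held, every strongly dependent ordered abelian group would be non-singular, hence distal, and the corollary would be vacuous; but, for instance, an archimedean ordered group isomorphic to $\bigoplus_{i<\omega}\mathbb{Z}_{(p)}$ has trivial (hence finite) $\mathcal{S}_q$ for every prime $q$ and is strongly dependent by the characterizations in the references cited after Theorem~\ref{Spfinitedistalthm}, yet it is singular and, by Theorem~\ref{dpminimaldistalPresQE}, not distal. The correct use of the theorem is simply: strong dependence implies $\mathcal{S}_p$ is finite for every $p$ (quoted from those references), so the theorem applies and directly yields distal $\Leftrightarrow$ non-singular; there is nothing to ``translate''.

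Your fallback interpretability argument also assumes what has to be proved. From distality of $G$ one does get distality of $G^{\operatorname{eq}}$ and of the full induced structure on the sort $G/pG$ (Corollary~\ref{cor:Meq distal} and Lemma~\ref{lem: reduct of distal on a stab emb set is distal}), and an infinite sort carrying a stable induced structure would then contradict Fact~\ref{distalnottotallyindisc}. But the assertion that the ordering of $G$ induces no structure on $G/pG$ beyond the $\mathbb{F}_p$-vector space structure is precisely the non-trivial point: the induced structure consists of all traces of $G$-definable sets, and in general it is strictly richer than the bare vector space (e.g.\ the images of the subgroups $G_\alpha+pG$ coming from the relations $\equiv_{p,\alpha}$ of the Cluckers--Halupczok language are induced-definable subspaces that need not be definable in the pure $\mathbb{F}_p$-vector space). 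Ruling out induced order-type (unstable) structure requires the quantifier-elimination analysis---for instance the fact that the relevant cosets are cofinal---and this is essentially the content of the paper's proof of Theorem~\ref{Spfinitedistalthm}: the explicit construction, via Proposition~\ref{nondistalLPres} and its $\mathcal{L}_{\operatorname{qe}}$-generalization, of a rapidly increasing indiscernible sequence with prescribed congruence conditions witnessing non-distality. So the ``sanity check'' cannot replace the theorem; as written it has a gap exactly where the work lies.
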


\noindent
In Section~\ref{sec: Distality and SES} we consider distality in short exact sequences of abelian groups with extra structure. That is, we consider short exact sequences of abelian groups $${0 \to A \to B \to C \to 0}$$ viewed in a natural way as three-sorted structures with the corresponding morphisms named as primitives, and with arbitrary additional structure   allowed on the sorts $A$ and $C$. In Section~\ref{sec: QE for SES} we give a general quantifier elimination result for \emph{pure} short exact sequences, i.e., where the image of $A$ is assumed to be a pure subgroup of $B$.
(This applies when~$C$ is torsion-free.) In this case  only   sorts for the quotients~$A/nA$  and certain induced maps~$B\to A/nA$  have to be added in order to eliminate quantification over $B$; see Corollary~\ref{cor_expansion} for the precise statement. This generalizes a     result in~\cite{chernikov2016henselian}, where all of the quotients $A/nA$ ($n\geq 1$) were assumed to be finite. Using this quantifier elimination, we show in Section~\ref{sec: distality preserv for SES} that such a pure short exact sequence is distal (has a distal expansion) if and only if both~$A$ and~$C$ are distal (have distal expansions, respectively). Note that  the theory of   a pure short exact sequence is interpretable in the theory of the  direct product   $A\times C$, as explained at the beginning of Section~\ref{sec: QE for SES}; however in general, distality is not preserved under passing to reducts, thus a precise description of the definable sets is necessary for our purpose. In Sections~\ref{sec:variant},  \ref{sec:weakly pure exact}, and \ref{sec:abelian structures} we consider variants and extensions of our quantifier elimination theorem. We expect these elimination theorems for short exact sequences to have many uses. As an illustration, we employ some of these variants in Section~\ref{sec:relative QE} to prove some quantifier elimination theorems for henselian valued fields of characteristic zero.

\medskip\noindent
In Section~\ref{SectionHensVal} we consider distality in henselian valued fields. Relying on the results of the previous sections,
in Sections~\ref{sec:reduction 1} and \ref{sec:reduction 2}
we prove the following Ax-Kochen-Er\v{s}ov (AKE) type characterization.
Recall that a valued field $K$ with valuation~$v\colon K^\times=K\setminus\{0\}\to\Gamma=v(K^\times)$ is said to
be {\it finitely ramified}\/ if for each $n\geq 1$ there are only finitely many $\gamma\in\Gamma$ such that $0\leq \gamma\leq v(n)$.
If $\Gamma\neq\{0\}$, then this clearly implies that the field~$K$ has characteristic zero;
if $K$ has equicharacteristic zero, then $K$   is always finitely ramified.

\begin{thmIntro} 
Let $K$ be a henselian valued field, viewed as a structure in the language of rings augmented by a predicate for the valuation ring, with value group~$\Gamma$ and residue field~$\k$.  Then~$K$ is distal \textup{(}has a distal expansion\textup{)} if and only if
\begin{enumerate}
\item $K$ is finitely ramified, and
\item both $\Gamma$ and $\k$   are distal \textup{(}respectively, have distal expansions\textup{)}.
\end{enumerate}
In this case $\k$ is either finite or of characteristic~zero.
\end{thmIntro}

\noindent
For example,  this theorem implies that a finitely ramified henselian valued field $K$ with regular non-singular value group  is distal if and only if the residue field of $K$ is distal;
this generalizes the well-known facts that each $p$-adically closed field is distal, and that a real closed valued field is distal iff
its residue field is real closed.

\medskip
\noindent
In Section~\ref{sec: naming val distal} we consider Jahnke's results~\cite{jahnke2016does} on naming a henselian valuation in the distal case. In Section~\ref{sec:distal fields} we formulate a conjectural classification of fields admitting a distal expansion:
a (pure) NIP field  does not have a distal expansion if and only if it interprets an infinite field of positive characteristic.
We show  that this statement holds modulo Shelah's conjecture on NIP  fields and a conjecture on distal expansions of ordered abelian groups
from Section~\ref{OAGsection}. For this, we rely on definability theorems of Koenigsmann-Jahnke~\cite{JK15}, in a similar way as Johnson~\cite[Chapter~9]{johnson2016fun}.
In Section~\ref{sec: dp-minimal fields} we concentrate on the $\operatorname{dp}$-minimal case;  based on Johnson's
results~\cite{johnson2015dp}, we observe that our conjecture   does hold unconditionally for $\operatorname{dp}$-minimal
fields.

\medskip\noindent
Finally, in Section~\ref{SectionFieldExpansions} we show that a certain ``forgetful functor'' argument preserves distality. Utilizing this, we exhibit    expansions of (valued) fields with additional operators (e.g., derivations) which are distal. Examples include the differential field of  transseries~\cite{adamtt} and certain topological fields with a generic derivation in the sense of~\cites{guzy2010topological, TresslUC}.
This also implies that the theory of differentially closed fields of characteristic zero admits a distal expansion (Corollary~\ref{cor: DCF0 distal exp}).   These techniques   also yield that analytic expansions of distal valued fields of characteristic zero
are distal (Corollary~\ref{cor:analytic exp}).

\subsection*{Conventions and notations} Throughout, $m$ and $n$ (possibly with decorations) range over the set $\N = \{0,1,2,\ldots\}$.
In general we adopt the model theoretic conventions of Appendix~B of~\cite{adamtt}. In particular, $\L$ can be a many-sorted  language. Given a complete $\L$-theory $T$, we will sometimes consider a model $\M\models T$ and a cardinal~$\kappa(\M)>|\L|$ such that $\M$ is $\kappa(\M)$-saturated and every reduct of $\M$ is strongly $\kappa(\M)$-homogeneous.  Such a model is called a {\it monster model}\/ of $T$. Then every model of $T$ of size~$\leq\kappa(\M)$ can be elementarily embedded into $\M$.
``Small'' will mean
``of size $< \kappa(\M)$''. We use $x$, $y$, $z$ (sometimes with decorations)  to denote multivariables.
Unless otherwise specified, all multivariables are assumed to have {\it finite}\/ size, and the size of such a  multivariable~$x$ is denoted by~$|x|$.
We shall write ``$\models \theta$'' to indicate that~$\theta$ is an $\mathcal L_{\M}$-formula and $\M \models \theta$.
Likewise, ``$\Phi(x) \models \Theta(x)$'' will mean that $\Phi(x)$ and~$\Theta(x)$ are small sets of $\L_{\M}$-formulas   such that
every $a \in \M_x$ realizing $\Phi(x)$ also realizes~$\Theta(x)$. We write~``$\varphi(x) \models \Theta(x)$'' to abbreviate $\{\varphi(x)\}\models\Theta(x)$, etc.

Given linearly ordered sets $I$ and~$J$ we denote by $I{}^\frown J$ the concatenation of~$I$ and $J$, that is,
the set $K:=I\cup J$ (disjoint union)   equipped with the
linear ordering extending both the orderings of $I$ and $J$ such that $I<J$. If, say, $I=\{i\}$ is a singleton, we also write~$I{}^\frown J=i{}^\frown J$. Similarly,
given sequences $a=(a_i)_{i\in I}$ and~$b=(b_j)_{j\in J}$ in $\M_x$, where~$I$,~$J$ are linearly ordered sets, we let  $a{}^\frown b$ denote the   sequence $(c_k)_{k\in K}$ where $K=I{}^\frown J$ and $c_i=a_i$ for $i\in I$, $c_j=b_j$ for $j\in J$.
We extend this notation to the concatenation of several (finitely many) linearly ordered sets and sequences, respectively, in the natural way.
If $a=(a_i)_{i\in I}$ is a sequence and $J\subseteq I$, we let $a_J:=(a_j)_{j\in J}$.
By convention ``indiscernible sequence''  means ``$\emptyset$-indiscernible sequence''.

\section{Preliminaries on Distality}
\label{SectionDistalPreliminaries}

\noindent
\emph{Throughout this section $\L$ is a language and $T$ is a complete $\L$-theory. We also fix a monster model~$\M$ of~$T$}. The definitions below do not depend on the choice of this monster model.

\subsection{Two ways of defining distality}\label{sec:def distality}
Distality has many facets, and can be introduced in a number of equivalent ways.
In this subsection we present two of them: by means of {\it indiscernible sequences,}\/ and via {\it honest definitions.}\/

\begin{definition}
\label{distaldef}
We say that $T$ is \textbf{distal} if for every small parameter set $B\subseteq \M$,   every indiscernible sequence~$a=(a_i)_{i\in I}$ in $\M_x$,
and every $i\in I$,
the following holds:~if
\begin{enumerate}
\item both $I^<=I^{<i}:=\{j\in I:j<i\}$ and $I^>=I^{>i}:=\{j\in I:i<j\}$ are infinite, and
\item $a_{I\setminus\{i\}}$ is $B$-indiscernible,
\end{enumerate}
then $a$ is $B$-indiscernible. We say that an $\L$-structure is \textbf{distal} if its theory is distal.
\end{definition}

\noindent
While the definition of distality given above involves checking a certain condition for all infinite linearly ordered sets~$I^<$ and $I^>$, standard arguments show that this definition is equivalent to the variant  where~$I^<$ and $I^>$ are fixed infinite linearly ordered sets.
More precisely, fix a linearly ordered set $I=I^< {}^\frown i {}^\frown I^>$ where~$I^<$,~$I^>$ are infinite; then the theory~$T$ is distal if
for every small parameter set~${B\subseteq \M}$,
an indiscernible sequence $(a_i)_{i\in I}$ in $\M_x$ is
$B$-indiscernible provided
  $(a_i)_{i\in I\setminus\{i\}}$ is $B$-indiscernible. For this reason, in practice we can (and often will) assume that~$I^<$ and $I^>$ are   ``nice''  infinite linearly ordered sets such as~$\Q$ or~$[0,1]$.

\medskip
\noindent
Definition~\ref{distaldef} can be localized to a particular indiscernible sequence:

 \begin{definition}[{\cite[Definition 2.1]{SimonDistal}}]
 	\label{def : dist ind seq}
	Let $a=(a_i)_{i \in I}$ be
 	an indiscernible sequence in $\M_x$. Then~$a$ is  {\bf distal} if for every indiscernible sequence  $a'=(a'_i)_{i \in I'}$ in $\M_x$ with the same EM-type as $a$ and~$I' = I_1 {}^\frown I_2 {}^\frown I_3$ where~$I_1$, $I_2$, $I_3$ are dense without endpoints, and all   $c,d\in\M_x$, the following holds: if the sequences
	$$a'_{I_1} {}^{\frown} c {}^\frown a'_{I_2} {}^\frown a'_{I_3}\quad\text{ and }\quad a'_{I_1}{}^\frown a'_{I_2} {}^\frown d {}^{\frown} a'_{I_3}$$ are indiscernible, then so is $a'_{I_1}{}^{\frown} c {}^\frown a'_{I_2} {}^\frown d {}^{\frown} a'_{I_3}$.
 \end{definition}

\noindent
 Definitions \ref{distaldef} and \ref{def : dist ind seq} are connected by the following fact.

 \begin{fact}[{\cite[Lemma 2.7]{SimonDistal}}]\label{fac: distSeqEquiv} Suppose $T$  is NIP, and let $a = (a_i)_{i \in I}$ be an indiscernible sequence in~$\M_x$; then the following are equivalent:
 \begin{enumerate}
 	\item $a$ is distal;
 	\item for every small parameter set $B\subseteq\M$,   $b\in\M_x$, and $B$-indiscernible sequence~$a' = (a'_i)_{i \in I'}$ in~$\M_x$ with $I' = I_1{}^{\frown}I_2$, $I_1$ and $I_2$ without endpoints, having the same EM-type as $a$, if the sequence~$a'_{I_1} {}^{\frown} b {}^{\frown} a'_{I_2}$ is indiscernible, then it is also $B$-indiscernible.
 \end{enumerate}
 	In particular, $T$ is distal if and only if every infinite indiscernible sequence is distal.
 \end{fact}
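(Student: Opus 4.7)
The plan is to prove the equivalence (1)~$\Leftrightarrow$~(2); the ``in particular'' statement follows by unfolding Definition~\ref{distaldef}.

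For the direction (2)~$\Rightarrow$~(1), given the data $a',\,I'=I_1{}^\frown I_2{}^\frown I_3,\,c,\,d$ of Definition~\ref{def : dist ind seq}, set $B := a'_{I_1}\cup\{c\}$. The first indiscernibility hypothesis implies that the subsequence $a'_{I_2}{}^\frown a'_{I_3}$ is $B$-indiscernible (and has the same EM-type as $a$), while the second gives that $a'_{I_2}{}^\frown d{}^\frown a'_{I_3}$ is indiscernible. Applying (2) to this subsequence with $b := d$, the indiscernibility of $a'_{I_2}{}^\frown d{}^\frown a'_{I_3}$ upgrades to $B$-indiscernibility. It remains to verify that every increasing finite $n$-tuple from $a'_{I_1}{}^\frown c{}^\frown a'_{I_2}{}^\frown d{}^\frown a'_{I_3}$ has the standard EM-type $\tau_n$ of $a$: tuples not involving $d$ lie in $a'_{I_1}{}^\frown c{}^\frown a'_{I_2}{}^\frown a'_{I_3}$ and so have type $\tau_n$ by the first hypothesis; tuples not involving $c$ have type $\tau_n$ by the second; and for a tuple $\bar z = \bar z^{(1)}{}^\frown c{}^\frown \bar z^{(2)}{}^\frown d{}^\frown \bar z^{(3)}$ involving both, use the just-established $B$-indiscernibility of $a'_{I_2}{}^\frown d{}^\frown a'_{I_3}$ with parameter tuple $\bar z^{(1)}{}^\frown c$ to replace $\bar z^{(2)}{}^\frown d{}^\frown \bar z^{(3)}$ by an increasing tuple $\bar z^{(2)}{}^\frown \bar z^{(3)+}$ from $a'_{I_2}{}^\frown a'_{I_3}$ of the same length, reducing to the first case.

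For the direction (1)~$\Rightarrow$~(2), the argument is more subtle and uses NIP essentially. Given $B,\,b$, and $a' = a'_{I_1}{}^\frown a'_{I_2}$ as in (2), use saturation of $\M$ together with a finite-satisfiability argument to produce a sequence $a''_{I_3}$ (indexed by a dense linear order without endpoints, of the same EM-type as $a$) and an element $d \in \M_x$ such that $a'_{I_1}{}^\frown a'_{I_2}{}^\frown a''_{I_3}$ is $B$-indiscernible, $a'_{I_1}{}^\frown b{}^\frown a'_{I_2}{}^\frown a''_{I_3}$ is indiscernible, and $a'_{I_1}{}^\frown a'_{I_2}{}^\frown d{}^\frown a''_{I_3}$ is indiscernible. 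Applying (1) to $a'_{I_1}{}^\frown a'_{I_2}{}^\frown a''_{I_3}$ (which, having the same EM-type as $a$, inherits distality) with insertions $c := b$ and $d$ yields indiscernibility of $a'_{I_1}{}^\frown b{}^\frown a'_{I_2}{}^\frown d{}^\frown a''_{I_3}$. Suppose for contradiction that $a'_{I_1}{}^\frown b{}^\frown a'_{I_2}$ fails $B$-indiscernibility, witnessed by a formula $\varphi(\bar x, c)$ with $c \in B^{|y|}$; NIP (Shelah's bound on the number of alternations of $\varphi(\cdot,c)$ along an indiscernible sequence), combined with the $B$-indiscernibility of $a'_{I_1}{}^\frown a'_{I_2}{}^\frown a''_{I_3}$ and the above indiscernibility conclusion, forces $\varphi(\cdot,c)$ to be constant along the inserted sequence, contradicting the witness.

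Finally, the ``in particular'' statement follows immediately: Definition~\ref{distaldef} of distality of $T$ is exactly condition~(2) asserted uniformly for every indiscernible sequence in $\M_x$ and every finite $x$, which by the equivalence just established is the same as asserting condition~(1) uniformly, i.e., every infinite indiscernible sequence in $\M$ is distal in the sense of Definition~\ref{def : dist ind seq}. The main obstacle is the (1)~$\Rightarrow$~(2) direction, specifically the final conversion of bare indiscernibility of the doubly-inserted sequence into $B$-indiscernibility of its singly-inserted initial segment, where NIP enters essentially; this is the technical heart of Simon's original argument.
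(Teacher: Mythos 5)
The paper cites this fact from Simon's \textit{Distal and non-distal NIP theories} without giving its own proof, so I am evaluating your reconstruction on its own terms.

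Your argument for (2)~$\Rightarrow$~(1) is correct and clean. Setting $B := a'_{I_1} \cup \{c\}$, noting that $a'_{I_2}{}^\frown a'_{I_3}$ is $B$-indiscernible, applying (2) with $b := d$, and then finishing by the case analysis on tuples (replacing the $d$-position by a sequence element using the $B$-indiscernibility just obtained) works without issue; indeed NIP is not needed for this direction.

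Your argument for (1)~$\Rightarrow$~(2) has a genuine gap at the final step. The element $d$ you produce is only required to fill the cut $I_2 \mid I_3$ generically over the sequence --- you impose no constraint at all on $\operatorname{tp}(d/B)$. Consequently, the payoff from (1), namely that $a'_{I_1}{}^\frown b{}^\frown a'_{I_2}{}^\frown d{}^\frown a''_{I_3}$ is $\emptyset$-indiscernible, carries no information whatsoever about types over $B$: a formula $\varphi(\bar x, e)$ with parameter $e \in B$ witnessing the failure of $B$-indiscernibility of $a'_{I_1}{}^\frown b{}^\frown a'_{I_2}$ is not related to $\emptyset$-indiscernibility of any sequence you have constructed. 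The phrase ``NIP forces $\varphi(\cdot,c)$ to be constant along the inserted sequence'' does not, as written, describe an argument: $\varphi(\cdot, e)$ is evaluated at positions where the remaining entries $a'_{\bar i_1}, a'_{\bar i_2}$ are fixed, so there is no indiscernible sequence of the right shape in sight to which Shelah's alternation bound applies, and the order type of $(\bar i_1, j, \bar i_2)$ with $j$ ranging into $I_3$ is not even the right one. To close this you would need to build $d$ so that its $B$-type reflects $b$'s anomalous $B$-type at the transported cut --- which requires a delicate compactness argument showing that ``fill the cut $I_2 \mid I_3$ generically over the whole sequence'' is consistent with ``have a prescribed $B$-type over a fixed tuple straddling the cut'' --- or find another mechanism linking the external parameter $e$ to the sequence. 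You correctly identify this implication as the technical heart and the place where NIP enters, but the heart is missing: consult the actual proof of Lemma~2.7 in Simon's paper for how it is done.

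Your reduction of the ``in particular'' clause to the equivalence (1)~$\Leftrightarrow$~(2) is fine, modulo the gap above.
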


\noindent
It is well-known that if $T$ is distal, then $T$ is NIP; for instance, see~\cite[Proposition~2.8]{GehretKaplan}.
\emph{Distality} can be thought of as a notion of \emph{pure instability} among NIP theories. The following fact  (which follows from~\cite[Corollary~2.15]{SimonDistal}) is   evidence for this point of view.

\begin{fact}
\label{distalnottotallyindisc}
If $T$ is distal then no  infinite non-constant indiscernible sequence is  totally indiscernible.
\end{fact}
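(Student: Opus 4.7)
The plan is to argue by contradiction via Definition~\ref{distaldef}. Assume $(a_i)_{i\in I}$ is an infinite, totally indiscernible, non-constant sequence in $\M_x$; I will derive a contradiction by producing a formula over a single parameter which distinguishes two terms, in violation of distality.

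First I would set up the correct index configuration. Total indiscernibility combined with non-constancy forces all terms $a_i$ to be pairwise distinct, since any coincidence $a_i=a_j$ with $i\neq j$ propagates by permutation to every pair of distinct indices. Next, by passing to a sequence realizing the same EM-type on a longer order---using saturation of $\M$ together with the equivalent form of distality noted in Section~\ref{sec:def distality}---I may assume $I = I^<{}^\frown i_0 {}^\frown I^>$ with both $I^<$ and $I^>$ infinite; the new sequence inherits total indiscernibility and pairwise distinctness of its terms.

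The main step is the observation that total indiscernibility of $a_I$ automatically upgrades to $a_{i_0}$-indiscernibility of the subsequence $a_{I\setminus\{i_0\}}$: any two same-length increasing tuples $j_1<\cdots<j_n$ and $k_1<\cdots<k_n$ from $I\setminus\{i_0\}$, augmented by $i_0$, yield $(n+1)$-tuples of distinct terms of the totally indiscernible sequence $a_I$, hence share the same type, which is exactly $\tp(a_{j_1},\dots,a_{j_n}/a_{i_0}) = \tp(a_{k_1},\dots,a_{k_n}/a_{i_0})$. Applying Definition~\ref{distaldef} with $B=\{a_{i_0}\}$ upgrades this to $a_{i_0}$-indiscernibility of the full sequence $a_I$. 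To conclude, the formula $\varphi(x):=(x=a_{i_0})$ is an $\L_{\{a_{i_0}\}}$-formula satisfied by $a_{i_0}$; $a_{i_0}$-indiscernibility applied to the $1$-tuples $(a_j)$ then gives $\models\varphi(a_j)$, i.e., $a_j=a_{i_0}$, for every $j\in I$---contradicting pairwise distinctness. The only delicate point throughout is the bookkeeping on index sets needed to match the exact shape required by Definition~\ref{distaldef}, namely that both sides of the distinguished point are infinite; this is handled by the preliminary stretching step and is the only non-routine aspect of the argument.
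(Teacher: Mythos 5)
Your argument is correct, and since the paper cites~\cite[Corollary~2.15]{SimonDistal} for Fact~\ref{distalnottotallyindisc} without giving its own proof, you have in effect supplied a self-contained derivation from Definition~\ref{distaldef}. The chain of reasoning holds up: non-constancy plus total indiscernibility does force all terms to be pairwise distinct (a coincidence $a_i = a_j$ for $i\neq j$ would propagate by permutation-invariance); stretching by compactness to an index order with both sides of a distinguished point infinite preserves total indiscernibility and pairwise distinctness because the EM-type of a totally indiscernible sequence is a permutation-invariant type; and, crucially, total indiscernibility of $a_I$ does make $a_{I\setminus\{i_0\}}$ automatically $a_{i_0}$-indiscernible, since adjoining the parameter index $i_0$ to two increasing tuples from $I\setminus\{i_0\}$ still gives two tuples of pairwise distinct indices. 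Definition~\ref{distaldef} then upgrades this to $a_{i_0}$-indiscernibility of the full sequence, and the formula $x = a_{i_0}$ forces every term to equal $a_{i_0}$, contradicting pairwise distinctness. This is the clean "one internal parameter" route; Simon's treatment in~\cite{SimonDistal} reaches the same conclusion via the machinery of distal indiscernible sequences (Definition~\ref{def : dist ind seq} and base change) developed to prove the harder equivalence in Fact~\ref{fac: distSeqEquiv}, so your approach is somewhat more elementary in that it uses only the indiscernible-sequence definition of distality directly, at the cost of not yielding the finer structural information about distal sequences that Simon's framework provides.
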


\noindent
In the $\operatorname{dp}$-minimal case we also have a converse. We
first recall the definition of dp-minimality. 
Recall that a {\it cut}\/ in a linearly ordered set $I$ is a downward closed subset of $I$; such a cut $\mathfrak c$ is {\it trivial}\/ if~$\mathfrak c=\emptyset$ or~$\mathfrak c=I$. We let $\overline{I}$ be the set of nontrivial cuts in $I$, totally ordered by inclusion;
if $I$ does not have a largest element, then
the map which sends $i\in I$ to the cut $\{j\in I: j\leq i\}$ is an embedding  $I\to \overline{I}$ of ordered sets, and we then identify $I$
with its image under this embedding.
Now the theory~$T$ is called
{\bf $\operatorname{dp}$-minimal} if for each 
indiscernible sequence~$a=(a_i)_{i\in I}$ in $\M_x$ indexed by a dense
linearly ordered set $I$ and each $c\in\M_y$ there is a  cut $\mathfrak i\in\overline{I}$ such that the sequences
$(a_i)_{i<\mathfrak i}$ and $(a_i)_{i>\mathfrak i}$ are
$c$-indiscernible. (This is not the original definition from
\cite{OU}, but equivalent to it thanks to
\cite[Lemma~1.4]{SimonDPordered}.)

\begin{fact}[{\cite[Lemma 2.10]{SimonDistal}}]\label{fac: no tot indisc in distal}
If $T$ is $\operatorname{dp}$-minimal and every non-constant indiscernible sequence in~$\M_x$ where $|x|=1$ is not totally indiscernible, then $T$ is distal.
In particular, if $T$ is $\operatorname{dp}$-minimal and every sort of $\mathbb{M}$ expands a   linearly ordered set, then $T$ is distal.
\end{fact}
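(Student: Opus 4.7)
The ``in particular'' clause follows from the main claim: in a linearly ordered sort, no non-constant indiscernible sequence $(a_i)_{i\in I}$ is totally indiscernible, since for $i<j$ with $a_i<a_j$ the transposition of $i$ and $j$ would reverse the order and hence change the type. Moreover, the single-variable hypothesis extends automatically to tuples of any finite length: if $(a_i)_{i\in I}$ were a non-constant totally indiscernible sequence in $\M_x$, some coordinate projection $(a_i^{(k)})_{i\in I}$ would be non-constant and, as a projection of a totally indiscernible sequence, also totally indiscernible---contradicting the hypothesis. So I may assume throughout that $\M$ contains no non-constant totally indiscernible sequence in any finite tuple of variables.

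By Fact~\ref{fac: distSeqEquiv}, to prove $T$ distal it suffices to verify condition~(2) for every infinite indiscernible sequence $a=(a_i)_{i\in I}$ in $\M_x$. So let $B\subseteq\M$ be small, $b\in\M_x$, and $a'=(a'_i)_{i\in I'}$ a $B$-indiscernible sequence in $\M_x$ of the same EM-type as~$a$, with $I'=I_1{}^{\frown}I_2$ ($I_1,I_2$ dense without endpoints), such that $\tilde a:=a'_{I_1}{}^{\frown}b{}^{\frown}a'_{I_2}$ is $\emptyset$-indiscernible; the goal is to show $\tilde a$ is $B$-indiscernible. A preliminary observation will streamline the argument: whenever a subsequence $a'_J$ ($J\subseteq I'$) is $(B\cup\{b\})$-indiscernible, the corresponding subsequence of $\tilde a$ (reinserting $b$ at its natural position $*$) is $B$-indiscernible, because the $B$-type of a tuple containing $b$ at a fixed position equals the $(B\cup\{b\})$-type of the tuple obtained by omitting $b$. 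It therefore suffices to establish $(B\cup\{b\})$-indiscernibility of all of~$a'$.

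To that end, I apply dp-minimality to $a'$ (indiscernible over $\emptyset$, indexed by the dense linear order $I'$) and to the parameter tuple $B\cup\{b\}$ (taking a finite sub-parameter and using compactness), yielding a cut $\mathfrak{j}\in\overline{I'}$ such that both $(a'_i)_{i<\mathfrak{j}}$ and $(a'_i)_{i>\mathfrak{j}}$ are $(B\cup\{b\})$-indiscernible. The main obstacle is to argue that $\mathfrak{j}$ may be taken trivial, i.e., that no genuine cut of $a'$ by $b$ exists. The decisive tool is a uniqueness-of-cut principle enforced by the bootstrapped hypothesis: if two distinct cuts $\mathfrak{j}_1<\mathfrak{j}_2$ of $a'$ over $B\cup\{b\}$ existed, then the intermediate subsequence $(a'_i)_{\mathfrak{j}_1<i<\mathfrak{j}_2}$ would be $(B\cup\{b\})$-indiscernible from both sides and hence totally $(B\cup\{b\})$-indiscernible, in particular totally $\emptyset$-indiscernible---and, passing to a coordinate projection, this contradicts the hypothesis. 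Using $B$-indiscernibility of $a'$ to translate $\mathfrak{j}$ within $I'$ by $B$-automorphisms of $\M$, and the $\emptyset$-indiscernibility of $\tilde a$ to ensure these translated cuts remain cuts over $B\cup\{b\}$, one produces such a pair $\mathfrak{j}_1<\mathfrak{j}_2$ unless $\mathfrak{j}$ itself is trivial. Hence $a'$ is $(B\cup\{b\})$-indiscernible, and by the preliminary observation $\tilde a$ is $B$-indiscernible, verifying Fact~\ref{fac: distSeqEquiv}(2) and completing the proof.
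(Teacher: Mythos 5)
The paper treats this statement as a citation to Simon's work and gives no proof, so there is nothing to compare against directly; but the proposal, evaluated on its own, has a genuine gap concentrated in the ``preliminary observation'' and in the plan that flows from it. You claim that $(B\cup\{b\})$-indiscernibility of $a'$ together with $\emptyset$-indiscernibility of $\tilde a$ yields $B$-indiscernibility of $\tilde a$. But the argument you give (``the $B$-type of a tuple containing $b$ at a fixed position equals the $Bb$-type of the tuple with $b$ omitted'') only shows that any two increasing $n$-tuples from $\tilde a$ \emph{both containing} the distinguished index $*$ have the same $B$-type, and likewise for those \emph{not} containing $*$. It does not compare an $n$-tuple with $b$ inserted against an $n$-tuple taken entirely from $a'$; for that one would need to know that $b$ realizes the $B$-limit type of $a'$ at the cut $*$, which is essentially the thing being proven. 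So the reduction to ``show $a'$ is $Bb$-indiscernible'' is not sound.

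The deeper problem is that the strategy ``show the dp-minimality cut $\mathfrak{j}$ is trivial'' cannot work: in a typical distal theory such as $\mathrm{RCF}$, take an increasing sequence $a'_{I_1}{}^\frown a'_{I_2}$ with $b$ filling the gap, so $a'_{I_1}<b<a'_{I_2}$. Then $a'$ is \emph{not} $Bb$-indiscernible (pairs from $I_1$, split pairs, and pairs from $I_2$ have different types over $b$), the dp-cut necessarily sits at the non-trivial position $*$, and it is the \emph{unique} such cut. Hence no contradiction can be extracted from $\mathfrak{j}$ being non-trivial, yet $\mathrm{RCF}$ is distal---so the approach would refute distality incorrectly. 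Relatedly, the ``middle is totally $(B\cup\{b\})$-indiscernible'' step is also unjustified: being a subsequence of two overlapping $Bb$-indiscernible halves only gives that the middle is $Bb$-indiscernible as an ordinary indiscernible sequence (e.g., a monotone segment in $\mathrm{DLO}$ is indiscernible over parameters to either side but is far from totally indiscernible). Total indiscernibility would require symmetry under reversing the order, and nothing in the hypotheses supplies that. The correct route has to show that $b$ sits at the \emph{same} cut of $a'$ over $B$ as over $\emptyset$ (so that ``removing $b$ at $*$'' and ``inserting it back'' commute with passing to $B$-types), rather than trying to argue the cut away.
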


\noindent
Linear orders in distal theories also occur on indiscernible sequences:

\begin{cor}
\label{cor: distal indisc seq is definably ordered}
Suppose $T$ is distal, and let $a=(a_i)_{i\in I}$ be a non-constant indiscernible sequence in~$\mathbb M_x$. Then there are an $\mathcal L$-formula
$\theta(u,x,y,w)$ and some $n$ such that for all $I_0,I_1\subseteq I$ of size $n$ and all~$i,j\in I$ such that $I_0<i,j<I_1$ we have
$$i<j\quad\Longleftrightarrow\quad {}\models\theta(a_{I_0},a_i,a_j,a_{I_1}).$$
\end{cor}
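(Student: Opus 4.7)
The plan is to extract a ``definable order'' on the sequence directly from the failure of total indiscernibility guaranteed by Fact~\ref{distalnottotallyindisc}.

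After enlarging $a$ to an infinite indiscernible sequence of the same EM-type via compactness and saturation of $\M$ if $I$ is finite, we may assume $I$ is infinite. By Fact~\ref{distalnottotallyindisc}, $a$ is not totally indiscernible. A key observation (using only indiscernibility and strong homogeneity of $\M$) is that for each $m\geq 1$ the set
\[
\Sigma_m := \bigl\{\sigma \in S_m : \tp(a_{k_{\sigma(1)}}, \ldots, a_{k_{\sigma(m)}}) = \tp(a_{k_1}, \ldots, a_{k_m}) \text{ for all } k_1 < \cdots < k_m \text{ in } I\bigr\}
\]
is a subgroup of $S_m$. For closure under composition, let $\sigma,\tau \in \Sigma_m$, set $t := (a_{k_1},\ldots,a_{k_m})$, and by strong homogeneity choose an automorphism $\phi$ of $\M$ with $\phi(\tau \cdot t) = t$ (where $S_m$ acts on $m$-tuples by position-permutation); then $\phi(\sigma\tau \cdot t) = \sigma \cdot t$, whose type equals that of $t$ since $\sigma \in \Sigma_m$. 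As $S_m$ is generated by adjacent transpositions, non-total-indiscernibility of $a$ forces some $\Sigma_m$ to miss an adjacent transposition. This produces $m\geq 2$, indices $k_1<\cdots<k_m$ in $I$, $p\in\{1,\ldots,m-1\}$, and an $\L$-formula $\psi(x_1,\ldots,x_m)$ with
\[
\models\psi(a_{k_1},\ldots,a_{k_p},a_{k_{p+1}},\ldots,a_{k_m})\quad\text{and}\quad \not\models\psi(a_{k_1},\ldots,a_{k_{p+1}},a_{k_p},\ldots,a_{k_m}).
\]

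Set $L:=p-1$, $R:=m-p-1$, and let $\theta'(u_1,\ldots,u_L,x,y,w_1,\ldots,w_R):=\psi(u_1,\ldots,u_L,x,y,w_1,\ldots,w_R)$. For any $I_0, I_1 \subseteq I$ with $|I_0|=L$, $|I_1|=R$, and $I_0<i,j<I_1$, the tuple $(a_{I_0},a_i,a_j,a_{I_1})$ has all its indices in increasing order when $i<j$ (so matches the order type of $(a_{k_1},\ldots,a_{k_p},a_{k_{p+1}},\ldots,a_{k_m})$), and in the order obtained by transposing the two middle indices when $i>j$ (matching the swapped tuple). By indiscernibility this yields $\models\theta'(a_{I_0},a_i,a_j,a_{I_1})$ iff $i<j$. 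To achieve $|I_0|=|I_1|=n$ as required, set $n:=\max(L,R)$ and define $\theta$ from $\theta'$ by discarding all but the last $L$ entries of $u$ and the first $R$ entries of $w$.

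The main point, and the only non-routine step, is recognizing $\Sigma_m$ as a subgroup of $S_m$; this is what enables us to extract an \emph{adjacent-index-swap} witness, rather than merely an arbitrary-permutation witness. Adjacency is essential here: only then do the remaining parameters of $\psi$ have indices lying entirely below $k_p$ or entirely above $k_{p+1}$, so that they partition cleanly into the ``$u$-parameters'' (left of $a_i,a_j$) and ``$w$-parameters'' (right of $a_i,a_j$) demanded by the form of $\theta(u,x,y,w)$.
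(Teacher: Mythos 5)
Your argument is correct and is, in substance, exactly the argument the paper delegates to a citation: the paper's own proof just applies Fact~\ref{distalnottotallyindisc} and then refers to the explanation after Fact~3.1 of \cite{chernikov2013externally} for the standard fact that a non-totally-indiscernible indiscernible sequence carries a formula ordering a middle segment. You have reconstructed that standard argument, and your observation that the type-preserving permutations $\Sigma_m$ form a subgroup of $S_m$ is a clean way to justify the only non-routine step, the reduction to an \emph{adjacent} transposition.

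Two small repairs are needed, neither affecting the main idea. First, the corollary as stated quantifies over all $i,j$ with $I_0<i,j<I_1$, including $i=j$, and your $\theta$ need not be false on the diagonal (e.g.\ if $\psi$ happens to hold of $(a_{I_0},a_i,a_i,a_{I_1})$); since a non-constant indiscernible sequence has pairwise distinct terms, conjoining $x\neq y$ to your $\theta$ fixes this. Second, your opening reduction is not justified: a finite non-constant indiscernible sequence need not extend, even in EM-type, to an infinite indiscernible sequence, and this can already fail in a distal theory (in a dense circular order with a named two-element set $P=\{p,q\}$, the pair $(p,q)$ is indiscernible with $\tp(p)=\tp(q)$, but no third term can continue it), so ``compactness and saturation'' do not provide the enlargement. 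This is harmless here, since for finite $I$ the conclusion is vacuously true once $n\geq |I|$ (and the corollary, like Fact~\ref{distalnottotallyindisc}, is really about infinite sequences, which is how the paper uses it), but the finite case should be dispatched that way rather than by the claimed extension.
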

\begin{proof}
By~\ref{distalnottotallyindisc}, $a$ is not totally indiscernible, and for every indiscernible sequence which is not totally indiscernible  there are such $\theta$ and $n$; see, e.g., the explanation after~\cite[Fact~3.1]{chernikov2013externally}.
\end{proof}

\noindent
In the following we sometimes employ $\mathcal L$-formulas whose free variables have been separated into  multivariables~$x$,~$y$  thought  of as \emph{object} and \emph{parameter} variables, respectively. We use the notation~$\varphi(x;y)$ to indicate that the free variables of the $\mathcal L$-formula $\varphi$ are contained among the components of the multivariables~$x$,~$y$ (which we also assume to be disjoint).  We refer to $\varphi(x;y)$ as a \emph{partitioned $\mathcal L$-formula.}\/ Given $a\in\mathbb M_x$ and~$B\subseteq\mathbb M_y$ we let
$$\operatorname{tp}_{\varphi}(a|B):=\big\{ \varphi(x;b):b\in B,\ \models \varphi(a;b)\big\} \cup \big\{ \neg\varphi(x;b):b\in B,\ \models \neg\varphi(a;b)\big\}  $$ be the $\varphi$-type of $a$ over $B$.

\begin{definition}
Let $\varphi(x;y)$ be a partitioned $\L$-formula, and let $y_1,y_2,\dots$ be disjoint multivariables of the same sort as $y$. A partitioned $\L$-formula $\psi(x;y_1,\ldots,y_{n})$ is a (uniform) \textbf{strong honest definition} for~$\varphi(x;y)$ (in $T$) if for
every $a\in\M_x$ and   finite   $B\subseteq\M_y$ with~$|B|\geq 2$, there are $b_1,\ldots,b_{n}\in B$ such that
$$\models \psi(a;b_1,\ldots,b_{n})\quad\text{ and }\quad\psi(x;b_1,\ldots,b_{n}) \models \operatorname{tp}_{\varphi}(a|B).$$
\end{definition}

\begin{remarkunnumbered} 
A strong honest definition for $\varphi(x;y)$ remains a strong honest definition for~$\neg\varphi(x;y)$.
Moreover,  if
$\psi(x;y_1,\ldots,y_{m})$, $\psi'(x;y_1',\ldots,y_n')$ are strong honest definitions for the partitioned $\L$-formulas~$\varphi(x;y)$, $\varphi'(x;y)$, respectively, with all multivariables~$y_i$,~$y'_j$ disjoint, then $\psi\wedge\psi'$
is a strong honest definition  for~$\varphi\wedge\varphi'$.
\end{remarkunnumbered}

\noindent
By \cite[Theorem~21]{ExtDefII} we have:

\begin{fact}
\label{SHDequiv}
The following are equivalent:
\begin{enumerate}
\item $T$ is distal;
\item\label{SHDequiv2} every partitioned $\L$-formula $\varphi(x;y)$ has a strong honest definition in $T$.
\end{enumerate}
\end{fact}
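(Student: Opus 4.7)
The plan is to prove the equivalence in two directions, and I expect the direction $(1)\Rightarrow(2)$ to be significantly harder.

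For $(2)\Rightarrow(1)$, I will invoke Fact~\ref{fac: distSeqEquiv} to reduce to checking that every infinite indiscernible sequence is distal. Given a $B$-indiscernible sequence $a' = (a'_i)_{i \in I_1 {}^\frown I_2}$ and $b \in \M_x$ with $a'_{I_1}{}^\frown b {}^\frown a'_{I_2}$ indiscernible, my goal is to show that the extended sequence is also $B$-indiscernible. It suffices to verify, for each $\L$-formula $\theta(x, \bar y, \bar z)$ (with $x$ matching the sort of $b$, $\bar y$ matching tuples from $a'$, and $\bar z$ matching tuples from $B$) and each $\bar c \in B$, that the truth value of $\theta(b, a'_{\bar i}, \bar c)$ depends only on the order type of $\bar i$ relative to the cut at which $b$ lies. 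I will apply a strong honest definition $\psi(x; w_1, \dots, w_n)$ for the partitioned formula $\varphi(x; y_1, y_2) := \theta(x, y_1, y_2)$ to $b$ together with a finite parameter set containing $(a'_{\bar i}, \bar c)$ and $(a'_{\bar i'}, \bar c)$ for two such order-equivalent $\bar i, \bar i'$. The honest parameters $\bar w_k$ are themselves tuples built from $a' \cup B$; using $B$-indiscernibility of $a'$ together with indiscernibility of the augmented sequence, these parameters can be transported between configurations, forcing the two truth values to coincide.

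For $(1)\Rightarrow(2)$, I will argue by contradiction, following the strategy of~\cite{ExtDefII}. Suppose $T$ is distal but some partitioned formula $\varphi(x; y)$ admits no strong honest definition. Then for each candidate $\psi(x; y_1, \dots, y_n)$ there exist $a_\psi \in \M_x$ and a finite $B_\psi \subseteq \M_y$ witnessing failure: for every $\bar b \in B_\psi^n$ with $\models \psi(a_\psi; \bar b)$, the formula $\psi(x; \bar b)$ fails to entail $\operatorname{tp}_\varphi(a_\psi | B_\psi)$. By a compactness and Ramsey extraction applied uniformly across these local failures, I will produce a single element $a \in \M_x$ and an infinite indiscernible sequence $(b_i)_{i \in \Q}$ in $\M_y$ with the following property: for every $\L$-formula $\psi$ and every increasing tuple $\bar i$ from $\Q$ with $\models \psi(a; b_{\bar i})$, some $j \in \Q$ witnesses disagreement between $\varphi(a; b_j)$ and the value implied by $\psi(x; b_{\bar i})$. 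Using density of $\Q$, I will then find two candidates $c, d \in \M_x$ which share an EM-type over appropriate segments of the sequence but which force distinct $\varphi$-values when placed at the same cut; inserting $a$ at that cut will violate Definition~\ref{def : dist ind seq} applied to $(b_i)$, contradicting the distality of the sequence granted by Fact~\ref{fac: distSeqEquiv}.

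The hard part will be the extraction step in the second direction: the collection of local failures across different candidate formulas $\psi$ must be simultaneously upgraded to a single, uniformly indiscernible witness. The Ramsey argument must track the interaction between $\varphi$ and its negation (since a strong honest definition controls $\operatorname{tp}_\varphi(a | B)$, not just the $\varphi$-side), and the resulting indiscernible sequence must carry enough structure to feed back into Definition~\ref{def : dist ind seq}. Organizing this cleanly---or equivalently, exhibiting the bad configuration directly at the level of externally definable sets, as in the Chernikov--Simon framework---is the technical heart of the proof.
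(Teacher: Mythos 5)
The paper does not actually prove Fact~\ref{SHDequiv}: it is quoted verbatim from Chernikov--Simon \cite[Theorem~21]{ExtDefII}, so what you are proposing is a from-scratch reproof of a cited black box. That is legitimate in principle, but as written your argument has gaps at precisely the load-bearing points. In the direction (2)$\Rightarrow$(1), you invoke Fact~\ref{fac: distSeqEquiv}, which is stated only for NIP $T$; you would first have to observe that existence of strong honest definitions implies NIP (it does, being a strengthening of uniform definability of $\varphi$-types over finite sets, but this needs to be said). More seriously, the ``transport'' step does not work as described: applying the strong honest definition to $b$ over the two-element set $B_0=\{(a'_{\bar i},\bar c),(a'_{\bar i'},\bar c)\}$ only yields parameters $\bar b$ from $B_0$ with $\models\psi(b;\bar b)$ and $\psi(x;\bar b)\models\operatorname{tp}_\varphi(b|B_0)$; this says that any other realization of $\psi(x;\bar b)$ has the \emph{same} $\varphi$-values as $b$ at those two points, and gives no pressure whatsoever towards those two values being \emph{equal to each other}. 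Nor is there an automorphism fixing $b$ and $\bar c$ and swapping $a'_{\bar i}$ with $a'_{\bar i'}$ -- the existence of such symmetry is exactly what is being proved. The known argument needs a finite set containing many order-equivalent tuples, a pigeonhole step so that the isolating parameters avoid some of them, and a careful interplay between $\emptyset$-indiscernibility of the augmented sequence and $B$-indiscernibility of the original one; none of this is present.

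In the direction (1)$\Rightarrow$(2) the situation is worse: the ``compactness and Ramsey extraction'' you defer to is the entire content of the theorem, and it is not a routine extraction. The failure of (2) for $\varphi$ is a statement quantifying over all candidate formulas $\psi$ and all arities $n$, with a different witness pair $(a_\psi,B_\psi)$ for each; the property you want your extracted sequence $(b_i)_{i\in\Q}$ to have quantifies over \emph{all} formulas $\psi$ and is not an EM-type condition, so it is not preserved by a standard Ramsey/Erd\H{o}s--Rado extraction, and you give no argument that it can be arranged. In the literature this direction is obtained by combining the existence of honest definitions over NIP pairs (Chernikov--Simon~I), distality to upgrade an honest definition to one that \emph{isolates} $\operatorname{tp}_\varphi(a|B)$, and a separate compactness argument to obtain uniformity (a single $\psi$ and a bounded number of parameters); your sketch neither engages with that machinery nor supplies a substitute, and you yourself flag this step as ``the technical heart.'' As it stands, then, the proposal is an outline of the statement rather than a proof of it; if you want to avoid redoing the work of \cite{ExtDefII}, the honest course is to cite it, as the paper does.
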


\noindent
When proving distality of a particular structure, Definition~\ref{distaldef} is typically easier to verify. On the other hand, occasionally~\ref{SHDequiv}(\ref{SHDequiv2}) is more useful since it ultimately   gives more information about definable sets, and obtaining bounds on the complexity of strong honest definitions  is important for combinatorial applications.

\subsection{Reduction to one  variable}
In order to verify that a theory is distal, it is enough to check
distality   ``in dimension $1$''. There are two ways to interpret this claim. First, we observe that existence of strong honest definitions for all formulas reduces to formulas in a single free variable.

\begin{prop}\label{prop: str honest defs in 1 var}
Suppose  every partitioned $\L$-formula $\varphi(x;y)$
with $|x|=1$ has a strong honest definition in $T$. Then every partitioned $\L$-formula $\varphi(x;y)$ with $|x|$ arbitrary has a strong honest definition in $T$, so $T$ is distal.
\end{prop}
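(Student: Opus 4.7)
The strategy is to proceed by induction on $n=|x|$. The base case $n=1$ is exactly the hypothesis, so assume $n\geq 2$ and that the statement holds for all shorter object-variables. Write $x=(x_1,x')$ with $|x_1|=1$ and $|x'|=n-1$. The plan is to construct a strong honest definition $\Psi$ for $\varphi(x_1,x';y)$ by composing two strong honest definitions---one obtained from the base case (in the variable $x_1$), one from the inductive hypothesis (in the block $x'$)---connected by identifying auxiliary copies of variables. This style of composition is permitted because substitution of a variable by a term gives again a partitioned $\mathcal L$-formula.

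Concretely, first regard $\varphi$ as a partitioned formula $\varphi(x_1;(x',y))$ with one-dimensional object variable $x_1$; by the base case it has a strong honest definition $\psi_1\bigl(x_1;(x'_1,y_1),\ldots,(x'_m,y_m)\bigr)$. Identifying all the $x'_j$ with a single fresh copy $x'$ produces the formula $\chi(x',x_1;y_1,\ldots,y_m):=\psi_1\bigl(x_1;(x',y_1),\ldots,(x',y_m)\bigr)$, which, regarded as $\chi(x';(x_1,y_1,\ldots,y_m))$, has object-variable length $n-1$. By the inductive hypothesis it admits a strong honest definition $\psi_2\bigl(x';(x_{1,1},y_{1,1},\ldots,y_{1,m}),\ldots,(x_{1,r},y_{r,1},\ldots,y_{r,m})\bigr)$. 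Identifying each $x_{1,k}$ with a single copy $x_1$ then yields the candidate
\[
\Psi\bigl(x_1,x';(y_{k,i})_{k\leq r,\,i\leq m}\bigr)\;:=\;\psi_2\bigl(x';(x_1,y_{1,1},\ldots,y_{1,m}),\ldots,(x_1,y_{r,1},\ldots,y_{r,m})\bigr).
\]

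For the verification, fix $a=(a_1,a')\in\M_x$ and finite $B\subseteq\M_y$ with $|B|\geq 2$. Parameters are chosen in two stages: first apply the strong honest definition property of $\psi_1$ to the pair $(a_1,\{a'\}\times B)$ to obtain $b^*_1,\ldots,b^*_m\in B$ with $\models\psi_1(a_1;(a',b^*_1),\ldots,(a',b^*_m))$ and $\psi_1(x_1;(a',b^*_1),\ldots,(a',b^*_m))\models\varphi(x_1,a';b)\leftrightarrow\varphi(a_1,a';b)$ for every $b\in B$; then apply the strong honest definition property of $\psi_2$ to the pair $\bigl(a',\{a_1\}\times B^m\bigr)$ (which has size $\geq 2$) to obtain elements $b^{**}_{k,i}\in B$ so that the resulting instance of $\psi_2$ holds at $a'$ and entails the $\chi$-type of $a'$ over $\{a_1\}\times B^m$. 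The elements $b^{**}_{k,i}$ serve as the parameter tuple for $\Psi$ at $(a,B)$.

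The main obstacle is the verification step: one must show that whenever $(c_1,c')$ satisfies $\Psi(c_1,c';\bar b^{**})$, then $\varphi(c_1,c';b)\leftrightarrow\varphi(a_1,a';b)$ holds for every $b\in B$. This requires chaining the two strong honest definition properties through the variable identifications: the $\psi_2$-step, applied to the tuple $(a_1,b^*_1,\ldots,b^*_m)\in\{a_1\}\times B^m$, reduces the $\Psi$-condition to an instance of $\chi$, which in turn is a substitution instance of $\psi_1$; the $\psi_1$-step then transfers that instance into the desired $\varphi$-equivalence, with the key point being that the identification of $x_{1,k}$ with $x_1$ and of $x'_j$ with $x'$ is compatible with the SHD-chosen parameters at both levels, so that moving from $(a_1,a')$ to $(c_1,c')$ preserves the cell. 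Once every partitioned $\mathcal L$-formula is shown to admit a strong honest definition, the distality of $T$ is immediate from Fact~\ref{SHDequiv}.
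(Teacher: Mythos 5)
Your decomposition $x=(x_1,x')$ and the two-stage use of the hypothesis (base case for the $x_1$-block, inductive hypothesis for the $x'$-block) match the paper's setup, but the construction of $\Psi$ by direct substitution has a genuine gap that the paper's more elaborate formula $\gamma = \chi \land \rho^+ \land \rho^-$ is designed precisely to avoid.

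The problem is in the last identification, where you set
\[
\Psi(x_1,x';\bar y) := \psi_2\bigl(x';(x_1,y_{1,1},\ldots,y_{1,m}),\ldots,(x_1,y_{r,1},\ldots,y_{r,m})\bigr).
\]
When you apply the SHD property of $\psi_2$ to the pair $(a',\{a_1\}\times B^m)$, the tuples it produces are of the form $\bar c_k=(a_1,b^{**}_{k,1},\ldots,b^{**}_{k,m})$ — with the \emph{specific element} $a_1$ in the first slot — and what is guaranteed is that $\psi_2(x';\bar c_1,\ldots,\bar c_r)\models\tp_\chi(a'\mid\{a_1\}\times B^m)$. But when you evaluate $\Psi(c_1,c';\bar b^{**})$ at some $(c_1,c')$ with $c_1\neq a_1$, you get $\psi_2\bigl(c';(c_1,b^{**}_{1,\bullet}),\ldots,(c_1,b^{**}_{r,\bullet})\bigr)$: the parameter tuples $(c_1,b^{**}_{k,\bullet})$ do not lie in $\{a_1\}\times B^m$, so the SHD guarantee for $\psi_2$ says nothing about this formula. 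Your ``reduces the $\Psi$-condition to an instance of $\chi$'' step therefore does not go through, and there is no reason for $\psi_2(\cdot;(c_1,\ldots))$ to isolate any relevant $\chi$-type. There is a second, similar break further down the chain: even granting $\chi(c',a_1;b^*)\leftrightarrow\chi(a',a_1;b^*)$, the $\psi_1$-SHD was set up with $a'$ fixed in the parameter positions (the parameter set was $\{a'\}\times B$), so it tells you nothing about $\varphi(c_1,c';b)$ with $c'$ replacing $a'$.

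The paper sidesteps exactly this issue by not threading $x_1$ through the $\psi_2$-parameter slots at all. Instead it forms the universally quantified formulas $\chi^{\pm}(x_1;y,\vec y)$ (``the $\psi$-cell cut out by $(x_1,\vec y)$ lies entirely inside / entirely outside $\varphi(\cdot;x_1,y)$''), applies the inductive hypothesis to $\chi^{\pm}$ to obtain $\rho^{\pm}(x_1;\vec y^{\pm})$, and takes $\gamma := \chi(x_0;x_1,\vec y)\land\rho^+(x_1;\vec y^+)\land\rho^-(x_1;\vec y^-)$. The conjuncts $\rho^{\pm}$ pin down the $\chi^{\pm}$-type of $x_1$ over $B\times\{\vec b\}$, which is exactly what's needed to transfer the conclusion from $(a_0,a_1)$ to any $(a_0',a_1')$ satisfying $\gamma$. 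Your proposal is missing the $\chi^{\pm}$ device (or an equivalent conjunct controlling $x_1$), and without it the composition is fallacious. Incidentally, your version would yield a parameter count of roughly $f(n)=f(1)\cdot f(n-1)$ (exponential), whereas the paper's gives $f(n)\leq 2f(1)+f(n-1)$ (linear); if a proof as simple as yours worked, one would expect this discrepancy to be explained.
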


\begin{proof}
We argue by induction on the size $|x|$ of $x$, with the base case $|x|=1$ given by the assumption. Assume that $x = (x_0, x_1)$, and let a partitioned $\L$-for\-mu\-la~$\varphi (x_{0},x_{1};y)$ be
given. By the inductive assumption, take a strong honest de\-fi\-ni\-tion~$\psi(x_0; z_1,\dots,z_{n})$
for the partitioned $\L$-formula $\varphi(x_{0};x_{1},y)$, where $z_i=(x_{1i},y_i)$ for $i=1,\dots,n$. Set
$$\chi(x_0;x_1,\vec{y}\,):=\psi\big(x_0; (x_1,y_1), \dots, (x_1,y_{n})\big)\qquad\text{where $\vec{y}:=(y_1,\dots,y_{n})$,}$$
let
\begin{align*}
\chi^{+} (x_{1};y,\vec{y}\, )&\ :=\ \forall x_{0}\big(\chi (x_{0};x_{1},\vec{y}\, )\rightarrow\varphi (x_{0};x_{1},y )\big),\\
\chi^{-} (x_{1};y,\vec{y}\, )&\ :=\ \forall x_{0}\big(\chi (x_{0};x_{1},\vec{y}\, )\rightarrow\neg\varphi (x_{0};x_{1},y )\big),
\end{align*}
and by   inductive assumption, let $\rho^{+} (x_{1};\vec{y}\,^{+} )$ and $\rho^{-} (x_{1};\vec{y}\,^{-} )$
be strong honest definitions for~$\chi^{+}$ and~$\chi^{-}$, respectively; here $\vec{y}\,^{+} = (\vec{y}\,^{+}_1,\dots, \vec{y}\,^{+}_{n^+})$ for some $n^+$, and similarly with $-$ in place of $+$. We claim that
$$\gamma (x_{0},x_{1};\vec{y},\vec{y}\,^{+},\vec{y}\,^{-} )\ :=\ \chi (x_{0};x_{1},\vec{y}\, )\land\rho^{+} (x_{1};\vec{y}\,^{+} )\land\rho^{-} (x_{1};\vec{y}\,^{-} )$$
is a strong honest definition for $\varphi(x;y)$.
To see this let $a_{i}\in\mathbb{M}_{x_i}$ ($i=0,1$) and a finite~$B\subseteq\M_{y}$  with~$|B|\geq 2$
be given.
Applying $\psi$   to $a_0$ and the set of pa\-ra\-me\-ters~$\{ a_1 \} \times B$, we  obtain some $\vec{b}\in B^{n}$
such that
\[\models\chi(a_{0};a_{1},\vec{b})\quad\text{ and }\quad \chi(x_{0};a_{1},\vec{b}) \models\tp_{\varphi}\!\big(a_{0}  \big| \{ a_{1} \} \times B \big).\]
Next choose $\vec{b}\,^{+}\in \big(B\times \{ \vec{b} \}\big)^{n^+}$ such
that
\[\models\rho^{+} (a_{1};\vec{b}\,^{+})\quad\text{ and }\quad\rho^{+} (x_{1};\vec{b}\,^{+})\models\tp_{\chi^{+}}\!\big(a_{1} \big| B\times \{ \vec{b}\, \}\big).\]
Then for any $a_{1}'\models\rho^{+} (x_{1},\vec{b}\,^{+} )$
and $b\in B$ we have
\begin{align*}
\models\chi (x_{0},a_{1}',\vec{b}\, )\rightarrow\varphi (x_{0},a_{1}',b ) & \quad\Longleftrightarrow\quad \models\chi (x_{0},a_{1},\vec{b}\, )\rightarrow\varphi (x_{0},a_{1},b ) \\ & \quad\Longleftrightarrow\quad  \models\varphi (a_{0},a_{1},b ).
\end{align*}
Similarly,
we find $\vec{b}\,^{-}\in\big( B\times \{ \vec{b}\, \} \big)^{n^-}$ such
that for any $a_{1}'\models\rho^{-} (x_{1},\vec{b}\,^{-} )$
and $b\in B$ we have
$$\models\chi (x_{0},a_{1}',\vec{b}\, )\rightarrow\neg\varphi(x_{0},a_{1}',b )  \quad\Longleftrightarrow\quad \models \neg\varphi\left(a_{0},a_{1},b\right).$$
Combining,
we see that for all   $a_{1}'\models\rho^+(x_{1},\vec{b}\,^{+})\land \rho^-(x_{1},\vec{b}\,^{-})$
and $a_{0}'\models\chi (x_{0},a_{1}',\vec{b} )$ and each~$b\in B$ we have $\models\varphi (a_{0}',a_{1}',b )\leftrightarrow\varphi (a_{0},a_{1},b )$.
Thus
$$\gamma (x_{0},x_{1};\vec{b},\vec{b}\,^{+},\vec{b}\,^{-} )\models\tp_{\varphi} (a_{0}a_{1} | B )\quad\text{and}\quad \models \gamma (a_{0},a_{1};\vec{b},\vec{b}\,^{+},\vec{b}\,^{-} )$$ hold, as wanted.
\end{proof}

\begin{remarkunnumbered}
Let $f(m)$ be  the smallest possible number of parameters~$n$ in a strong honest definition~$\psi(x;y_1,\dots,y_{n})$ for partitioned $\L$-formulas $\varphi(x;y)$ with~$|x|\leq m$.
It follows from the proof that if $f(1)$ is finite, then $f(m) \leq 2f(1) + f(m-1)$ for~$m\geq 1$; so $f(m)\leq (2m-1)f(1)$ for all $m\geq 1$. This gives a naive upper bound on the growth of the size of distal cell decompositions, an important parameter in combinatorial applications of distality isolated in~\cite[Section~2]{chernikov2016cutting}. It is an interesting (and challenging) problem to determine optimal bounds in various theories of interest, e.g., in o-minimal or $P$-minimal theories.
\end{remarkunnumbered}

\noindent Secondly, in terms of indiscernible sequences we have the following equivalence.

\begin{prop}
\label{prop: indisc char of distality for singletons} The following are equivalent:

\begin{enumerate}
\item $T$ is distal;
\item for every indiscernible sequence $a=(a_i)_{i\in I}$ in $\M_x$, $i\in I$ such that $I^{<i}$ and $I^{>i}$ are infinite, and $b\in \M_y$ with $|y|=1$, if $a_{I\setminus\{i\}}$ is $b$-indiscernible, then so is $a$;
%
\item for every indiscernible sequence $a=(a_i)_{i\in I}$ in $\M_x$ where $|x|=1$, $i\in I$ such that $I^{<i}$ and $I^{>i}$ are infinite,   and $b\in \M_y$, if $a_{I\setminus\{i\}}$ is $b$-indiscernible, then so is $a$.
%
\end{enumerate}
\end{prop}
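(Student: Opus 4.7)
The implications $(1)\Rightarrow(2)$ and $(1)\Rightarrow(3)$ are immediate from Definition~\ref{distaldef}: in each case apply it with the singleton parameter set $B:=\{b\}$.

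For $(2)\Rightarrow(1)$, my plan is to reduce to a finite parameter set $B=\{b_1,\dots,b_n\}$ via the finite character of $B$-indiscernibility, and then induct on $n$. The base case $n=1$ is exactly $(2)$. In the step from $n-1$ to $n$, I first apply $(2)$ with $b:=b_n$---legitimate since $a_{I\setminus\{i\}}$ is $B$-indiscernible, hence in particular $b_n$-indiscernible---to conclude that $a$ itself is $b_n$-indiscernible. This allows me to form the augmented sequence $\tilde a_j:=a_j{}^\frown b_n$, which is then indiscernible (because $b_n$ is constant across the sequence, and $a$ is $b_n$-indiscernible). Unpacking the appended constant, $\tilde a_{I\setminus\{i\}}$ is $\{b_1,\dots,b_{n-1}\}$-indiscernible precisely because $a_{I\setminus\{i\}}$ is $B$-indiscernible. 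The inductive hypothesis applied to $\tilde a$ then yields that $\tilde a$ is $\{b_1,\dots,b_{n-1}\}$-indiscernible, which upon unpacking gives that $a$ is $B$-indiscernible.

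For $(3)\Rightarrow(1)$: by the previous implication it suffices to derive $(2)$ from $(3)$. My plan is to invoke Fact~\ref{SHDequiv} together with Proposition~\ref{prop: str honest defs in 1 var}, which together reduce distality of $T$ to establishing that every partitioned $\L$-formula $\varphi(x;y)$ with $|x|=1$ admits a strong honest definition. To extract such SHDs from $(3)$, I would argue by contrapositive: a failure of SHD for some $\varphi(x;y)$ with $|x|=1$ can be turned, via a standard Ramsey-compactness extraction, into an indiscernible pattern providing a singleton-valued indiscernible sequence $(a_j)_{j\in I}$ in $\M_x$ and a tuple parameter $b\in\M_y$ such that $a_{I\setminus\{i\}}$ is $b$-indiscernible while $a$ is not---directly contradicting $(3)$. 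Here the flexibility that $|y|$ is unrestricted in $(3)$ is essential for accommodating the extracted ``bad parameter set,'' which a priori has arbitrary size.

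The main obstacle will be this last extraction step: a failure of SHD naturally produces an indiscernible sequence in a product sort simultaneously recording the ``bad object'' (a singleton, since $|x|=1$) and the ``bad parameter set.'' One must isolate a singleton-valued sub-sequence from this pattern while folding the remaining components into a single tuple parameter $b$ of the form allowed in $(3)$. A plausible direct alternative is to argue $(3)\Rightarrow(2)$ by induction on $|x|$, applying $(3)$ to a single component of $a_j$ with the other components absorbed into the parameter tuple; the delicate point there is controlling the interleaving between these auxiliary parameter indices and the sequence indices, which is why the SHD route appears cleaner.
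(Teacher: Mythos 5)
Your handling of $(1)\Rightarrow(2)$, $(1)\Rightarrow(3)$, and $(2)\Rightarrow(1)$ is correct and matches the paper. In particular, the induction on $|B|$ — absorbing the last parameter $b_n$ into the tuple and invoking the inductive hypothesis for the augmented sequence — is exactly the ``iteration'' the paper alludes to (and the paper also points to Proposition~\ref{prop: str honest defs in 1 var} for an explicit form of this argument).

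For $(3)\Rightarrow(1)$, however, there is a genuine gap, and it is precisely the one you flag as the ``main obstacle.'' Your plan requires establishing $(3)\Rightarrow\text{SHD for }\varphi(x;y)$ with $|x|=1$; contrapositively, that a failure of a strong honest definition for such a $\varphi$ produces a singleton-valued indiscernible sequence and a tuple parameter violating $(3)$. You cannot obtain this from Fact~\ref{SHDequiv}: the implication ``distal $\Rightarrow$ SHD'' in that fact would give you ``no SHD $\Rightarrow$ not distal,'' but ``not distal'' only produces a bad indiscernible sequence in \emph{some} sort, not one of singletons — i.e., a failure of $(1)$ rather than of $(3)$, which is useless here since $(3)\Rightarrow(1)$ is the implication at stake. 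And invoking Fact~\ref{SHDequiv} under the hypothesis $(3)$ alone (to deduce SHD) would be circular, since that direction assumes full distality. A direct proof of ``$(3)\Rightarrow$ SHD for $|x|=1$'' would have to reprove and localize the hard direction of the SHD theorem, and it is not clear it localizes: the standard argument calls on distality for sequences of arbitrary tuple length. Your ``direct alternative'' — $(3)\Rightarrow(2)$ by induction on $|x|$ — is blocked by exactly the interleaving problem you identify: the remaining coordinates $a_j^{\geq 2}$ vary with $j$ and cannot be frozen into a fixed parameter tuple, so $(3)$ cannot simply be applied to the first coordinate. The paper resolves this by citing Simon, \cite[Theorem~2.28]{SimonDistal}, whose proof (reproduced and corrected in the paper as Proposition~\ref{prop: lifting distality over a predicate}) uses the base change lemma and the machinery of weakly linked sequences precisely to overcome the interleaving difficulty; this is substantive and does not reduce to a ``standard Ramsey--compactness extraction.''
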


\begin{proof}
It is not hard to see that the condition in (2) can be iterated to obtain the same conclusion with~$y$ an arbitrary multivariable, which is sufficient to satisfy Definition~\ref{distaldef}. (Alternatively, Proposition~\ref{prop: str honest defs in 1 var} provides a more explicit version of this argument.)
The equivalence of (1) and (3) is established in~\cite[Theorem~2.28]{SimonDistal}. (See also Proposition~\ref{prop: lifting distality over a predicate} below for a discussion.)
\end{proof}

\begin{cor}
\label{cor: explicit witness of non-distality}
The following are equivalent:
\begin{enumerate}
\item $T$ is not distal;
\item there is an indiscernible sequence $a=(a_{i})_{i\in\mathbb{Q}}$ in $\M_x$ and some $b\in \M_y$ such
that~$a_{\mathbb{Q}\setminus \{ 0 \}}$ is
$b$-indiscernible,  and some partitioned $\L$-formula $\varphi(x;y)$  such that
$$\models\varphi (a_{i}; b)\quad\iff\quad i\neq 0;$$
\item the same statement as in \textup{(2)} \emph{with $|x|=1$}.
\end{enumerate}
\end{cor}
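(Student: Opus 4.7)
First, $(3) \Rightarrow (2)$ is immediate. For $(2) \Rightarrow (1)$ I would observe that the formula $\varphi$ forces $\tp(a_0/b) \neq \tp(a_1/b)$, so $a$ fails to be $b$-indiscernible; since $a_{\mathbb Q \setminus \{0\}}$ is $b$-indiscernible and both $\mathbb Q^{<0}$ and $\mathbb Q^{>0}$ are infinite, this directly violates Definition~\ref{distaldef}. So the substance is in $(1) \Rightarrow (3)$.

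The plan for $(1) \Rightarrow (3)$ is to start from Proposition~\ref{prop: indisc char of distality for singletons} (combined with the fixed-index-set variant of distality noted right after Definition~\ref{distaldef}), which produces an indiscernible sequence $a = (a_i)_{i \in \mathbb Q}$ in $\M_x$ with $|x| = 1$ and some $b \in \M_y$ such that $a_{\mathbb Q \setminus \{0\}}$ is $b$-indiscernible but $a$ is not. I would then pick $k \geq 1$ minimal such that two increasing $k$-tuples in $\mathbb Q$ have distinct $b$-types. Since by hypothesis all increasing $k$-tuples contained in $\mathbb Q \setminus \{0\}$ share a single $b$-type, one of the differing tuples must pass through $0$; hence there exist an increasing $k$-tuple $\bar j = (j_1, \ldots, j_k)$ with $j_\ell = 0$ for some $\ell$, an increasing $k$-tuple $\bar j' \subseteq \mathbb Q \setminus \{0\}$, and an $\L$-formula $\theta(x_1, \ldots, x_k; y)$ with $\models \theta(a_{\bar j}; b)$ and $\not\models \theta(a_{\bar j'}; b)$.

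The crux is promoting this multi-variable witness to a formula in a single object variable. Let $J := (j_{\ell-1}, j_{\ell+1}) \cap \mathbb Q$, with the conventions $j_0 := -\infty$ and $j_{k+1} := +\infty$; then $J$ is a dense linear order without endpoints with $0$ in its interior, hence order-isomorphic to $\mathbb Q$ via a bijection sending $0$ to $0$. I would set $b' := b\,{}^\frown a_{\bar j \setminus \{0\}}$ and define
$$\varphi(x; y, z_1, \ldots, z_{k-1}) \ :=\ \neg\,\theta(z_1, \ldots, z_{\ell-1}, x, z_\ell, \ldots, z_{k-1}; y).$$
Then $\varphi(a_0; b')$ becomes $\neg\,\theta(a_{\bar j}; b)$, which fails by choice of $\theta$, while for any $i \in J \setminus \{0\}$ the tuple obtained by replacing the $\ell$-th entry of $a_{\bar j}$ by $a_i$ is an increasing $k$-tuple in $\mathbb Q \setminus \{0\}$, so it has the same $b$-type as $a_{\bar j'}$ by $b$-indiscernibility of $a_{\mathbb Q \setminus \{0\}}$, giving $\models \varphi(a_i; b')$. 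The same calculation shows that $(a_j)_{j \in J \setminus \{0\}}$ remains $b'$-indiscernible: concatenating an increasing tuple from $J \setminus \{0\}$ with $\bar j \setminus \{0\}$ yields an increasing tuple in $\mathbb Q \setminus \{0\}$ whose $b$-type depends only on its length. Transporting the subsequence $(a_j)_{j \in J}$ along the order-isomorphism $J \to \mathbb Q$ then supplies the configuration required by (3).

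The main obstacle is precisely this upgrade step: the candidate formula $\neg\,\theta(\ldots,x,\ldots;b)$ with parameters $a_{\bar j \setminus \{0\}}$ a priori isolates $a_0$ only among indices lying in the open interval $(j_{\ell-1}, j_{\ell+1})$, since outside it the inserted tuple fails to be increasing and its $b$-type is no longer controlled by the given data. Restricting the index set to $J$ before relabeling circumvents this without disturbing $b'$-indiscernibility of the remaining subsequence.
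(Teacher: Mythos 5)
Your proof is correct and takes essentially the same route as the paper's: both invoke Proposition~\ref{prop: indisc char of distality for singletons}(3) to get a single-variable indiscernible sequence and a parameter $b$, extract a formula witnessing failure of $b$-indiscernibility at $0$, absorb the spectator coordinates $a_{\bar j \setminus\{0\}}$ into an enlarged parameter $b'$, and restrict to an open subinterval of $\Q$ around $0$ before relabeling. The only cosmetic differences are that you phrase the extraction in terms of types rather than directly naming a witnessing formula $\psi$, and you use the full interval $J=(j_{\ell-1},j_{\ell+1})$ where the paper shrinks to a symmetric interval $(-\varepsilon,\varepsilon)$.
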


\begin{proof}
To show (1)~$\Rightarrow$~(3), assume that
the condition in Proposition~\ref{prop: indisc char of distality for singletons}(3) fails.
Then we can take some indiscernible sequence $a=(a_i)_{i\in \Q}$ in $\M_x$ where $|x|=1$  and some $b\in \M_y$ such that $a_{\Q\setminus\{0\}}$ is $b$-indiscernible, but $a$ is not.
Thus we can take an $\L$-formula $\psi (x_{1},\ldots,x_{n};y)$, where $x_1,\dots,x_n$ are single variables of the same sort as $x$, as well as  finite subsets $I_1$, $I_2$ of~$\Q$ with $|I_1|+|I_2|=n-1$ and~$I_{1}<0<I_{2}$,
such that
\begin{enumerate}
\item $\models\neg\psi (a_{I_{1}},a_{0},a_{I_{2}};b)$;
\item $\models\psi (a_{J_{1}},a_{j},a_{J_{2}};b)$
 for all   $J_1,J_2\subseteq\Q\setminus\{0\}$ and $j\in \Q\setminus\{0\}$ with $|J_1|+|J_2|=n-1$ and $J_{1}<j<J_{2}$.
 \end{enumerate}
Let $y':=(y,y_1,y_2)$ where $y_{1}=(x_1,\dots,x_{m})$, $y_{2}=(x_{m+2},\dots,x_n)$,  $m=|I_1|$. Set
$$\varphi (x;y'):=\psi (y_1,x,y_2,y),\quad b':=(b,a_{I_1},a_{I_2})\in\M_{y'}.$$
Choose~$\varepsilon\in\Q$  with $I_{1}<-\varepsilon<0<\varepsilon<I_{2}$ and set $I':=\{i\in\Q:-\varepsilon<i<\varepsilon\}$. Then
  the sequence~$a_{I'}$ is indiscernible and $a_{I'\setminus\{0\}}$
is $b'$-indiscernible; moreover, for~$i\in I'$ we have
$$\models\varphi (a_{i};b')\quad\iff
\quad i\neq 0.$$
It follows that (3) holds. Finally, (3)~$\Rightarrow$~(2) and (2)~$\Rightarrow$~(1) are obvious.
\end{proof}

\begin{remark}\label{rem: distal formulas closed under pos bool comb}
Let $a=(a_{i})_{i\in\mathbb{Q}}$ be an indiscernible sequence in $\M_x$ and  $b\in \M_y$ such
that~$a_{\mathbb{Q}\setminus \{ 0 \}}$ is
$b$-indiscernible.
It is easy to see that the set of $\L$-formulas $\varphi(x;y)$ violating the conclusion of (2) in Corollary~\ref{cor: explicit witness of non-distality} (that is,
such that $\models\varphi (a_{0};b )$ or~$\models\neg\varphi(a_i;b)$ for some, or equivalently, all~$i\neq 0$) is closed under \emph{positive} boolean combinations. 
\end{remark}

\begin{remark}\label{rem:Qinfty}
Let   $\Q_\infty=\Q\cup\{\infty\}$ where $\infty\notin\Q$ is a new symbol and the usual ordering of~$\Q$ is extended to a total ordering of $\Q_\infty$ with $\Q<\infty$. Then  Corollary~\ref{cor: explicit witness of non-distality}  and Remark~\ref{rem: distal formulas closed under pos bool comb} remain true
with the linearly ordered set $\Q$  replaced by~$\Q_\infty$. (This is used in the proof of Theorem~\ref{thm: dist in SES} below.)
\end{remark}

\subsection{Induced structure and mild expansions} 
From \cite{SimonDistal} we record the following. (For part~(2) use \cite[Co\-rol\-lary~2.9]{SimonDistal} along with Fact~\ref{fac: distSeqEquiv}.)

\begin{fact}\label{fac: biinterp distal}
\mbox{}
\begin{enumerate}
\item If $T$ is distal, then so is every  complete theory  bi-interpretable with $T$.
\item Naming a small set of constants does not affect distality: if $\M$ is distal, then for each
small~${A\subseteq\M}$, the $\L_A$-structure  $\M_A$ is  distal, and if
$\M_A$ is distal for some small $A\subseteq\M$, then $\M$ is distal.
\end{enumerate}
\end{fact}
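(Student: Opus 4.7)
For part~(1), the plan is to use the indiscernible-sequence definition of distality (Definition~\ref{distaldef}). A bi-interpretation of $T$ and $T'$ consists of interpretations in both directions whose compositions are definably isomorphic to the identity. Taking monster models $\M \models T$ and $\M' \models T'$, this gives a back-and-forth correspondence of definable sets, under which indiscernible sequences, their EM-types, and relative indiscernibility over parameter sets all transfer. Hence a putative failure of distality in $T'$---an indiscernible sequence $(a_i)_{i \in I}$ in $\M'_x$, a small parameter set $B$, and an index $i_0$ such that $a_{I\setminus\{i_0\}}$ is $B$-indiscernible but $a$ is not---transfers via the interpretation into $\M$, contradicting distality of $T$.

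For the forward direction of part~(2), an indiscernible sequence in $\M_A$ is precisely an $A$-indiscernible sequence in $\M$, and $B$-indiscernibility in $\M_A$ (for $B\subseteq\M$) corresponds to $(A\cup B)$-indiscernibility in $\M$; the defining condition of distality for $\M_A$ thus reduces to an instance of the condition for $\M$ with parameter set $A\cup B$.

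For the converse of part~(2), the plan---following the hint---is to combine \cite[Corollary~2.9]{SimonDistal} with Fact~\ref{fac: distSeqEquiv}, applicable since distality implies NIP. The key observations are: distality of an individual indiscernible sequence (in the sense of Definition~\ref{def : dist ind seq}) depends only on its $\emptyset$-EM-type, since the definition already quantifies over all indiscernible sequences of the same EM-type; and by Simon's corollary, an indiscernible sequence that happens to be $A$-indiscernible is distal over $\emptyset$ if and only if it is distal over $A$. Thus, if $\M_A$ is distal, then by Fact~\ref{fac: distSeqEquiv} every $A$-indiscernible sequence in $\M$ is distal over~$A$, hence also distal over~$\emptyset$ in $\M$. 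Given an arbitrary $\emptyset$-indiscernible sequence $(a_i)_{i\in I}$ in $\M$, a Ramsey extraction (using smallness of $A$) produces an $A$-indiscernible sequence with the same $\emptyset$-EM-type, which is therefore distal, and hence so is $(a_i)$. Thus every $\emptyset$-indiscernible sequence in $\M$ is distal, so applying Fact~\ref{fac: distSeqEquiv} in the other direction yields that $\M$ is distal.

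The main obstacle is the converse of~(2): the crucial insight is that distality of an indiscernible sequence depends only on its $\emptyset$-EM-type (which allows the Ramsey extraction to transfer distality from $A$-indiscernible sequences back to arbitrary $\emptyset$-indiscernible ones), combined with Simon's equivalence of $\emptyset$-distality and $A$-distality for sequences that are $A$-indiscernible in the first place.
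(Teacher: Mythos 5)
Your treatment of part~(2) is correct and is exactly what the paper's parenthetical hint points at: the forward direction is definition-chasing ($B$-indiscernibility in $\M_A$ is $(A\cup B)$-indiscernibility in $\M$, and $A\cup B$ is small), and the converse combines Fact~\ref{fac: distSeqEquiv} with Simon's Corollary~2.9 via the observation that distality of an indiscernible sequence in the sense of Definition~\ref{def : dist ind seq} depends only on its $\emptyset$-EM-type, so a Ramsey/Erd\H{o}s--Rado extraction of an $A$-indiscernible sequence with the same $\emptyset$-EM-type transfers distality back to an arbitrary $\emptyset$-indiscernible sequence. (You do implicitly use that $\Th(\M)$ and $\Th(\M_A)$ are NIP, which holds because distality implies NIP and NIP passes to reducts; worth stating since Fact~\ref{fac: distSeqEquiv} assumes NIP.)

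For part~(1), which the paper simply cites from Simon, your sketch has the right shape but is vague exactly at the point where the work is. An interpretation of $T'$ in $T$ places $\M'$ inside $\M^{\eq}$, not inside $\M$; so ``transfers via the interpretation into $\M$'' tacitly requires that $T$ distal implies $T^{\eq}$ distal, which in this paper is Corollary~\ref{cor:Meq distal}, itself a nontrivial consequence of Proposition~\ref{prop: lifting distality over a predicate}. Moreover, under a bare interpretation the image of $\M'$ in $\M^{\eq}$ carries the full structure induced from $\M^{\eq}$, which may be strictly richer than the $T'$-structure, and distality does \emph{not} pass to reducts; one must use the reverse interpretation (and the definable isomorphism between the composite and the identity) to conclude that the $T'$-structure on the image is interdefinable with the induced one, and only then apply Lemma~\ref{lem: reduct of distal on a stab emb set is distal}. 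Your phrase ``back-and-forth correspondence of definable sets'' gestures at this, but the reduct issue is precisely where the hypothesis of \emph{bi}-interpretability (rather than mere interpretability) earns its keep, and it should be made explicit.
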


\noindent
In what follows, we will often be in a situation when $T$ is NIP and we have a definable set~${D\subseteq\mathbb M_x}$ (often, a sort)  such that the induced structure on $D$ is distal. More precisely,   denote the full induced structure on $D$ by $D_{\ind}$;
that is, we  introduce  the one-sorted language~$\L_{\ind}$ which contains,  for  each $\L$-formula $\varphi(y_1,\dots,y_n)$ where each $y_i$ is a  multivariable of the same sort as~$x$, an $n$-ary relation symbol~$R_\varphi$; then~$D_{\ind}$ is the $\mathcal L_{\ind}$-structure with underlying set $D$ where each relation symbol $R_\varphi$ is interpreted by~${\varphi^{\M}\cap D^n}$.
The following is then straightforward by Definition~\ref{distaldef}.

\begin{lemma}
\label{lem: reduct of distal on a stab emb set is distal} If $T$
is distal, then $D_{\ind}$ is also distal.
\end{lemma}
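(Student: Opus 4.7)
The plan is to verify Definition~\ref{distaldef} directly for $D_{\ind}$, by transferring the distality condition from $\L$-indiscernibility in~$\M$ to $\L_{\ind}$-indiscernibility in~$D_{\ind}$; this transfer is built right into the definition of $D_{\ind}$, since each atomic $\L_{\ind}$-formula comes from an $\L$-formula whose free variables are multivariables of the same sort as~$x$.

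First I would fix data as required by Definition~\ref{distaldef} for $D_{\ind}$: a small $B\subseteq D$, an $\L_{\ind}$-indiscernible sequence $a=(a_j)_{j\in I}$ in $D^{|z|}$ for some $\L_{\ind}$-multivariable~$z$, and an index $i\in I$ with $I^{<i}$, $I^{>i}$ infinite and with $a_{I\setminus\{i\}}$ being $B$-indiscernible in $D_{\ind}$. Each entry of $a_j\in D^{|z|}$ lies in $D\subseteq\M_x$, so by concatenating the $|x|$-tuples forming the $|z|$ components of~$a_j$, I may identify $a_j$ with a tuple $\tilde a_j\in\M_{\tilde x}$, where $\tilde x$ is the concatenation of $|z|$ copies of~$x$; similarly, I let $\tilde B\subseteq\M_x$ denote the set obtained by viewing each element of~$B$ as a tuple in~$\M$.

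The key observation is that, directly from the definition of $D_{\ind}$, for every $\L$-formula $\varphi(y_1,\dots,y_n)$ with each $y_k$ of the same sort as $x$ and all $j_1,\dots,j_n\in I$,
$$D_{\ind}\models R_\varphi(a_{j_1},\dots,a_{j_n})\quad\Longleftrightarrow\quad\M\models\varphi(\tilde a_{j_1},\dots,\tilde a_{j_n}),$$
and analogously when parameters from $B$ are substituted in (treating each $b\in B$ as the corresponding $\tilde b\in\tilde B$). Since every $\L_{\ind}$-formula is a boolean combination of atomic ones, $\L_{\ind}$-indiscernibility of $a$ over $B$ in~$D_{\ind}$ is equivalent to $\L$-indiscernibility of $(\tilde a_j)_{j\in I}$ over~$\tilde B$ in~$\M$.

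Applying Definition~\ref{distaldef} for the distal theory $T$ to $(\tilde a_j)_{j\in I}$, $\tilde B$, and the distinguished index~$i$, we conclude that $(\tilde a_j)_{j\in I}$ is $\tilde B$-indiscernible in~$\M$, whence $a$ is $B$-indiscernible in~$D_{\ind}$, as required. The result is essentially a bookkeeping exercise; the only point requiring attention is to ensure that the identification $a_j\leftrightarrow\tilde a_j$ correctly matches $\L_{\ind}$-types with parameters in $B$ to $\L$-types with the corresponding parameters in $\tilde B$, and this is immediate from how the atomic formulas of~$\L_{\ind}$ are set up.
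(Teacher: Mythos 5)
Your proof is correct and proceeds exactly along the lines the paper intends: the paper itself gives no argument, stating only that the lemma ``is then straightforward by Definition~\ref{distaldef},'' and your write-up supplies the expected transfer of indiscernibility between $D_{\ind}$ and $\M$ via the relation symbols $R_\varphi$. One small inaccuracy: it is not literally true that ``every $\L_{\ind}$-formula is a boolean combination of atomic ones,'' since $\L_{\ind}$-formulas may contain quantifiers; what is true (and what you need) is that every $\L_{\ind}$-formula is, uniformly, \emph{equivalent} to an atomic one, because $\L_{\ind}$-quantification ranges over $D$, which is $\emptyset$-definable in $\M$, so $\exists z_0\,R_\varphi(z_0,z_1,\dots,z_n)$ is equivalent to $R_{\exists y_0\,(y_0\in D\,\wedge\,\varphi)}(z_1,\dots,z_n)$, etc. With that phrase replaced, the identification of $\L_{\ind}$-indiscernibility of $(a_j)$ over $B$ with $\L$-indiscernibility of $(\tilde a_j)$ over $\tilde B$ goes through, and the rest of your argument is a direct application of Definition~\ref{distaldef} for $T$.
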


\noindent
We have the following lemmas in the converse direction.
{\it In the rest of this subsection we assume that~$T$ is~NIP, and we let $D$ be an $\emptyset$-definable set such that $D_{\ind}$
is distal.} Our goal is to conclude that under suitable circumstances, $T$ itself is distal.

\begin{lemma}
\label{lem: easy distal over a predicate}
Let $B\subseteq\M$ be  small and $b\in\M_y$,  and let $(a_{i})_{i\in\Q}$
be a $B$-indiscernible sequence of elements from $D$.
If $(a_i)_{i\in \mathbb{Q}\setminus \{ 0 \}}$
is $Bb$-indiscernible, then so is $(a_{i})_{i\in\Q}$.
\end{lemma}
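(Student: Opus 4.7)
The plan is to show $(a_i)_{i \in \Q}$ is $Bb$-indiscernible by reducing to a one-variable problem and then invoking the distality of $D_{\ind}$. Combining the hypothesis of $B$-indiscernibility of the full sequence with the $Bb$-indiscernibility of the punctured subsequence, it suffices to check that for two tuples $\bar i,\bar j\in\Q^n$ of the same order type, both containing the index $0$ at a common position~$k$, one has $a_{\bar i}\equiv_{Bb}a_{\bar j}$. Applying a $Bb$-automorphism $\sigma$ of $\M$ with $\sigma(a_{\bar i\setminus k})=a_{\bar j\setminus k}$ (which exists by $Bb$-indiscernibility of $(a_i)_{i\in\Q\setminus\{0\}}$) and setting $c:=\sigma(a_0)\in D$ (using that $D$ is $\emptyset$-definable and hence preserved by $\sigma$), the goal reduces to showing $c\equiv_{Bb\,a_{\bar j\setminus k}}a_0$.

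A short automorphism chase using $B$-indiscernibility of $(a_i)_{i\in\Q}$, which provides a $B$-auto\-mor\-phism fixing $a_0$ and sending $a_{\bar i\setminus k}$ to $a_{\bar j\setminus k}$, composed with $\sigma^{-1}$, readily gives the weaker equivalence $c\equiv_{B\,a_{\bar j\setminus k}}a_0$. The remaining task is to upgrade the parameter set from $B$ to $Bb$, and this is where distality of $D_{\ind}$ enters, via Proposition~\ref{prop: indisc char of distality for singletons}(3) applied to $\mathrm{Th}(D_{\ind})$. Since the sequence $(a_i)_{i\in\Q}$ is $\emptyset$-indiscernible in the distal reduct $D_{\ind}$ and both $c,a_0$ and the tuple $a_{\bar j\setminus k}$ lie in $D$, while the $Bb$-indiscernibility of $(a_i)_{i\in\Q\setminus\{0\}}$ ensures that $b$'s action on the sequence is governed only by its EM-type over $Bb$, distality of $D_{\ind}$ forces $c$ and $a_0$ to be indistinguishable over $Bb\,a_{\bar j\setminus k}$ as well.

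The principal obstacle is the final step: making precise how the external parameter $b\in\M_y$, which need not lie in $D$, can be ``internalized'' to $D_{\ind}$ so that Proposition~\ref{prop: indisc char of distality for singletons}(3) applies. The key point is that the given hypotheses restrict $b$'s effect on the sequence enough that it can be captured by finitely many elements of the sequence itself (hence of $D$), and the one-variable characterization of distality in $D_{\ind}$ then yields $c\equiv_{Bb\,a_{\bar j\setminus k}}a_0$, which completes the proof.
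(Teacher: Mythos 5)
Your approach is genuinely different from the paper's, but it has two real problems; one is a flawed reduction, the other is the unresolved ``principal obstacle'' you yourself flag.

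First, the reduction in your opening paragraph does not work. To pass from $Bb$-indiscernibility of $(a_i)_{i\neq 0}$ to $Bb$-indiscernibility of the whole sequence, what you must show is of the form
$a_{I_1}{}^\frown a_0{}^\frown a_{I_2}\equiv_{Bb}a_{I_1}{}^\frown a_j{}^\frown a_{I_2}$
for $I_1<0<I_2$ finite and $j\in(\max I_1,\min I_2)$ with $j\neq 0$ --- i.e.\ you compare a tuple \emph{containing} $a_0$ with a tuple from which $a_0$ is \emph{replaced by some other term of the sequence}. Your claim instead compares two tuples \emph{both} containing $a_0$ at the same position. That claim --- equivalently, that $\tp(a_0/Bba_{\bar j\setminus k})$ is invariant under $Bb$-conjugation of $a_{\bar j\setminus k}$ --- does not yield what is needed: one can have all of (i) $B$-indiscernibility of $(a_i)$, (ii) $Bb$-indiscernibility of $(a_i)_{i\neq 0}$, and (iii) your claim, while $a_0$ still has a different $Bb$-type over $a_{\bar j\setminus k}$ than every $a_j$, $j\neq 0$. (Your own intermediate result $c\equiv_{B a_{\bar j\setminus k}}a_0$ is the easy $B$-level version of the correct statement; the correct $Bb$-level version you would need is $a_j\equiv_{Bb a_{\bar j\setminus k}}a_0$.)

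Second, and more seriously, the last step is not a proof. You correctly identify that $b$ need not lie in $D$, so Proposition~\ref{prop: indisc char of distality for singletons}(3) for $D_{\ind}$ cannot be applied with $b$ as a parameter. But ``$b$'s effect on the sequence can be captured by finitely many elements of the sequence itself'' is not a justified step --- it is essentially a restatement of the lemma being proved. Making this precise is exactly the difficulty, and it is where the whole argument has to live.

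The paper's proof goes a different, shorter route that resolves both issues at once: because $(a_i)$ lies in the $\emptyset$-definable set $D$ and $D_{\ind}$ is distal, $(a_i)$ is a \emph{distal indiscernible sequence} in the sense of Definition~\ref{def : dist ind seq} (that definition only involves elements realizing the EM-type of $(a_i)$, hence elements of $D$). One then runs, not cites, the proof of \cite[Lemma~2.7]{SimonDistal} (= Fact~\ref{fac: distSeqEquiv}), which is an array/alternation argument contradicting NIP; that argument works for a parameter $b$ of arbitrary sort precisely because it uses only the distality of the sequence $(a_i)$, not distality of the ambient theory. If you want to salvage your outline, this is the missing idea: upgrade ``$(a_i)$ lies in a distal set'' to ``$(a_i)$ is a distal sequence'', and then use the local (sequence-level) consequences of distality together with NIP of $T$ --- rather than trying to push a general parameter $b$ into the structure $D_{\ind}$.
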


\begin{proof}
If $a$ fails the conclusion of the lemma, then using distality of  $a$ (in the sense of Definition~\ref{def : dist ind seq}), following the proof of~\cite[Lemma~2.7]{SimonDistal} gives a contradiction to $T$ being NIP.
\end{proof}

\noindent
We also have a dual fact, where the sequence may be anywhere in $\M$,
but the new parameters are coming from our distal set $D$. (A similar observation is stated in~\cite[Re\-mark~4.26]{estevan2019non}.)

\begin{prop}
\label{prop: lifting distality over a predicate}
Let $a=(a_{i})_{i\in\Q}$ be an indiscernible
sequence in $\M_x$ and~$b\in D^N$, where~$N\in\N$. If~$(a_i)_{i\in \mathbb{Q}\setminus \{ 0 \}}$ is
$b$-in\-dis\-cer\-nible, then so is $(a_{i})_{i\in\Q}$.
\end{prop}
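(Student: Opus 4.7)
The plan is a proof by contradiction, reducing to an application of distality in the induced structure on $D^{\operatorname{eq}}$. Suppose $(a_i)_{i \in \Q}$ is not $b$-indiscernible. Following the extraction technique behind Corollary~\ref{cor: explicit witness of non-distality}, I first produce a single-variable witness of failure: an $\L$-formula $\varphi(x;y,\bar z)$ (with $x$ of the sort of $a_0$, $y$ of the $D^N$-sort of $b$, and $\bar z$ of some $\M$-sort), a tuple $\bar c$ of elements from $\{a_j:j\neq 0\}$, and an open $\Q$-interval $I' \ni 0$ avoiding the positions of $\bar c$, such that $(a_t)_{t\in I'}$ is $\bar c$-indiscernible, $(a_t)_{t\in I'\setminus\{0\}}$ is $b\bar c$-indiscernible, and $\models\varphi(a_0;b,\bar c)$ while $\models\neg\varphi(a_t;b,\bar c)$ for all $t$ in some infinite $S\subseteq I'\setminus\{0\}$ accumulating at $0$ from both sides. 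This step uses only the $\L$-indiscernibility of $(a_i)_{i\in\Q}$ together with the hypothesis that $(a_i)_{i\neq 0}$ is $b$-indiscernible, the latter being what allows us to replace all but one coordinate of the witness formula by elements of $\bar c$.

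Next, I transfer this witness to the induced structure on $D^{\operatorname{eq}}$. Since $T$ is NIP, the sort $D$ is uniformly stably embedded in $\M$, and canonical parameters of definable subsets of $D^N$ lie in $D^{\operatorname{eq}}$; so there exist an $\L^{\operatorname{eq}}$-formula $\xi(y;\zeta)$ with $y,\zeta$ of $D^{\operatorname{eq}}$-sort and an $\L^{\operatorname{eq}}$-$\emptyset$-definable canonical-parameter map $\gamma$ such that
$$\varphi(a;y,\bar c')\cap D^N \;=\; \xi(y;\gamma(a,\bar c'))\cap D^N \qquad\text{for all $a\in\M_x$, $\bar c'\in\M_{\bar z}$.}$$
Set $\zeta_t:=\gamma(a_t,\bar c)\in D^{\operatorname{eq}}$. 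Because $\zeta_t\in\operatorname{dcl}^{\operatorname{eq}}(a_t\bar c)$, the $\bar c$-indiscernibility of $(a_t)_{t\in I'}$ yields $\emptyset$-indiscernibility of $(\zeta_t)_{t\in I'}$ in $(D^{\operatorname{eq}})_{\ind}$, and the $b\bar c$-indiscernibility of $(a_t)_{t\in I'\setminus\{0\}}$ yields $b$-indiscernibility of $(\zeta_t)_{t\in I'\setminus\{0\}}$ in $(D^{\operatorname{eq}})_{\ind}$ (using $b\in D\subseteq D^{\operatorname{eq}}$). Since $D_{\ind}$ is distal by hypothesis, $(D^{\operatorname{eq}})_{\ind}$ is also distal by Fact~\ref{fac: biinterp distal}(1) (bi-interpretation with the imaginary sorts). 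Applying Definition~\ref{distaldef} to the sequence $(\zeta_t)_{t\in I'}$ in $(D^{\operatorname{eq}})_{\ind}$ at the index $0$ and parameter $b$ gives that $(\zeta_t)_{t\in I'}$ is $b$-indiscernible in $(D^{\operatorname{eq}})_{\ind}$. In particular $\models\xi(b;\zeta_0)\leftrightarrow \xi(b;\zeta_t)$ for every $t\in S$, which by the defining property of $\gamma$ translates to $\models\varphi(a_0;b,\bar c)\leftrightarrow\varphi(a_t;b,\bar c)$, contradicting the choice of $S$.

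The main obstacle is the uniform stable embeddedness step: producing a \emph{single} formula $\xi$ and an $\emptyset$-definable canonical-parameter map $\gamma$ that, for every $(a,\bar c')$, encodes the trace on $D^N$ of a set defined over parameters outside $D$ by a single $D^{\operatorname{eq}}$-parameterized family. This is where NIP enters the argument essentially. Modulo this step, the proposition is formally dual to Lemma~\ref{lem: easy distal over a predicate}: there the sequence lies in $D$ while the parameters lie outside, so stable embeddedness lets one argue inside $D$ directly; here, with the roles reversed, we must first replace external parameters by canonical parameters in $D^{\operatorname{eq}}$ before invoking distality.
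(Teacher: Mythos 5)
Your high-level plan---funnel a failure of distality on $(a_i)$ into a failure of distality inside $D$---is the right instinct and is also what the paper does, but the transfer step via canonical parameters in $D^{\eq}$ rests on a false premise. You assert that because $T$ is NIP, "the sort $D$ is uniformly stably embedded in $\M$" and that the trace of $\varphi(a;\cdot,\bar c')$ on $D^N$ has a code $\gamma(a,\bar c')\in D^{\eq}$. Stable embeddedness of a definable set $D$ does \emph{not} follow from NIP, nor from NIP together with $D_{\ind}$ being distal. For a concrete counterexample, take $T$ to be the theory of $(\R,<,D)$ where $D$ names a dense codense subset such as $\Q$. Then $T$ is NIP, and $D_{\ind}=(\Q,<)$ is o-minimal hence distal; yet for irrational $a$, the trace of $\{y: y<a\}$ on $\Q$ is a cut that has no canonical parameter in $\Q^{\eq}$, because the $\emptyset$-induced structure on $\Q$ is just $(\Q,<)$. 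So no pair $(\xi,\gamma)$ with the defining property you require exists, and the argument collapses exactly at the point you yourself flagged as ``the main obstacle.'' NIP gives honest definitions (an inner and outer approximation of the trace by a $D^{\eq}$-definable set in an elementary extension), but not the exact coding your $\xi$ and $\gamma$ need, and the mismatch is fatal: indiscernibility arguments require the equivalence to be exact.

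This gap is not incidental: the statement is Simon's Theorem~2.28 from \cite{SimonDistal}, and the difficulty of lifting distality from a predicate \emph{without} stable embeddedness is precisely why the paper's proof is so much longer than your sketch. The paper's proof constructs a genuine witness of non-distality inside $D$ rather than trying to code external sets into $D^{\eq}$: starting from a cut in $(a_i)$ that $b$ detects (Claim~\ref{cla: Step0}), one produces elements filling a transfinite family of cuts (Claim~\ref{cla: Step1}), builds an array $(a_{r,s})$ together with parameters $(b_s)$ from $D^N$ satisfying the alternation and limit-type constraints of Claim~\ref{cla: Step2}, passes to an indiscernible array via Erd\H{o}s--Rado and compactness (Claim~\ref{cla: Step 3}), and finally shows in Claim~\ref{cla: FinalStep} that the diagonal $(a_{n,n})$ and the sequence $(b_n)$ are weakly linked but not mutually indiscernible. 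Lemma~\ref{fac: wlinked} then forces $(b_n)$, which lives in $D^N$, to be a non-distal indiscernible sequence, contradicting distality of $D_{\ind}$. The array-and-weak-linking machinery is exactly the replacement for the coding step your approach would need; your observation of the duality with Lemma~\ref{lem: easy distal over a predicate} is correct, but that duality is not a symmetry---the direction where the parameters (not the sequence) lie in $D$ is the substantively harder one, and it is hard for exactly the reason your proof founders.
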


\noindent
This proposition can be shown along the same lines as the proof of~\cite[Theorem~2.28]{SimonDistal}; we provide the details for the sake of completeness and correcting some inaccuracies there.
First we recall some terminology and facts from~\cite{SimonDistal}.

\medskip
\noindent
A nontrivial cut $\mathfrak c$
in a linearly ordered set $I$ is  {\it dedekind}\/ if    $\mathfrak c$ does not have a largest  and $I\setminus\mathfrak c$ does not have a smallest element.
Let  $a=(a_i)_{i \in I}$ be an ($\emptyset$-) indiscernible sequence in $\M_x$ where~$I$ is endless, and~$B\subseteq\M$ is an arbitrary parameter set.
Recall that since $T$ is   NIP, the  $\mathcal{L}_B$-formulas $\varphi(x)$ with the property that the set of $i\in I$
with ${}\models \varphi(a_i)$ is cofinal in $I$ form a  complete
 $x$-type $\lim(a|B)$ over $B$.
(See, e.g.,~\cite[Proposition~2.8]{SimonGuide}.) Given a dedekind cut $\mathfrak{c}$ in $I$,
letting  $\mathfrak c^+$ denote the complement~$I\setminus\mathfrak c$ of $\mathfrak c$ ordered by the reverse ordering,
we set
$$\textstyle\lim_{-}(\mathfrak{c}|B):=\lim( a_{\mathfrak c}|B),\qquad \lim_{+}(\mathfrak{c}|B):=\lim( a_{\mathfrak c^+}|B).$$
(Here $a$ is understood from the context.) We say that  {\it $b\in\M_x$ fills   $\mathfrak{c}$}\/ in $a$ if the sequence~$a_{\mathfrak c}{}^{\frown}  b {}^{\frown} a_{I\setminus\mathfrak c}$ is indiscernible.

\begin{fact}[Strong base change {\cite[Lemma~2.8]{SimonDistal}}]\label{fac: base change}
Let $a = (a_i)_{i \in I}$ be an indiscernible sequence in~$\M_x$ and~${A \subseteq \M_x}$ be a small parameter set  containing all~$a_i$. Let also~$(\mathfrak{c}_\lambda)_{\lambda\in\Lambda}$  be a family of pairwise distinct dedekind cuts in $I$, and for each~${\lambda \in \Lambda}$, let $a_\lambda$ fill the cut $\mathfrak{c}_\lambda$ in $a$. Then there exists a family~$(a'_\lambda)_{\lambda \in \Lambda}$ in $\M_x$ with~$(a'_\lambda)  \equiv_{a} (a_\lambda)$ and $\tp(a'_\lambda|A) = \lim_{+}(\mathfrak{c}_\lambda|A)$ for all $\lambda\in\Lambda$.
\end{fact}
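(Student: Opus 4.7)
The plan is to recast the conclusion as a single consistency statement and verify it via compactness plus the NIP control of indiscernible sequences. By saturation of~$\M$ and finite character of types, the existence of $(a'_\lambda)_{\lambda \in \Lambda}$ is equivalent to showing, for every finite $\Lambda_0 = \{\lambda_1, \ldots, \lambda_k\} \subseteq \Lambda$ (reindexed so that $\mathfrak{c}_1 \subsetneq \cdots \subsetneq \mathfrak{c}_k$), that the partial type
\[
\Sigma(x_1, \ldots, x_k) \ := \ \tp\!\bigl((a_{\lambda_j})_{j \leq k} \,\big|\, a\bigr) \ \cup\ \bigcup_{j \leq k}\ \lim_{+}(\mathfrak{c}_{\lambda_j}\,|\,A)(x_j)
\]
is consistent in~$\M$.

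The first key ingredient is a uniqueness statement for filler types: under NIP, any filler $b$ of a dedekind cut $\mathfrak{c}$ in $a$ satisfies $\tp(b/a) = \lim_{+}(\mathfrak{c}|a) = \lim_{-}(\mathfrak{c}|a)$. Indeed, for any $\L_a$-formula $\varphi(x,\bar c)$ with $\bar c \subseteq a$, indiscernibility of the filler-extended sequence $a_{\mathfrak{c}}{}^{\frown} b {}^{\frown} a_{\mathfrak{c}^{+}}$ forces $\varphi(b,\bar c)$ to equal $\varphi(a_{i},\bar c)$ whenever the order type of $(i,\text{indices of }\bar c)$ in $I$ matches that of $(\text{cut},\text{indices of }\bar c)$---and this holds for $i \in \mathfrak{c}^{+}$ as well as $i \in \mathfrak{c}$ provided $i$ is close enough to the cut. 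NIP-boundedness of alternations then identifies this common value with the stable $\lim_{\pm}$ value. Consequently each marginal of $\Sigma$ over $a$ coincides automatically with $\tp(a_{\lambda_j}/a)$, so there is no obstruction at the level of individual cuts.

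For $k = 1$, the lemma is now immediate: any realization $a'_{\lambda_1} \models \lim_{+}(\mathfrak{c}_{\lambda_1}|A)$ satisfies $\tp(a'_{\lambda_1}/a) = \lim_{+}(\mathfrak{c}_{\lambda_1}|a) = \tp(a_{\lambda_1}/a)$, giving $a'_{\lambda_1} \equiv_a a_{\lambda_1}$. The genuine difficulty occurs when $k \geq 2$, because $\tp((a_{\lambda_j})_j/a)$ is a specific joint type that need not coincide with any canonical tensor product of independent $\lim_{+}$-realizations over~$A$. My approach is to dualize---rather than build $(a'_{\lambda_j})_j$ directly, I look for a set $A^{*} \subseteq \M$ with $A^{*} \equiv_a A$ such that each given filler $a_{\lambda_j}$ already realizes $\lim_{+}(\mathfrak{c}_{\lambda_j}|A^{*})$ over $A^{*}$; any $a$-automorphism of $\M$ sending $A^{*}$ to $A$ then carries $(a_{\lambda_j})_{j}$ to a tuple realizing $\Sigma$.

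Existence of $A^{*}$ reduces by compactness to the finitary statement: for every finite $A_0 \subseteq A$ and every finite set of $\L$-formulas in play, produce $A_0^{*} \equiv_a A_0$ satisfying the required $\lim_{+}$-agreement conditions with the $a_{\lambda_j}$'s. I would construct $A_0^{*}$ inductively along the chain $\mathfrak{c}_1 \subsetneq \cdots \subsetneq \mathfrak{c}_k$, using at each stage the $\Aut(\M/a)$-invariance of $\lim_{+}(\mathfrak{c}_{\lambda_j}|\cdot)$ to guarantee the desired agreement with $a_{\lambda_j}$ while remaining consistent with prior choices. The main obstacle is exactly this last coordination---ensuring that a single $A^{*}$ simultaneously witnesses the $\lim_{+}$ conditions for all $k$ cuts. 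The pairwise distinctness (hence total ordering) of the cuts is what makes the outside-in chain argument feasible, since it lets one peel off one cut at a time while preserving the invariance data already arranged.
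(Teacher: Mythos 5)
You should first note that the paper does not prove this statement at all: it is imported verbatim as a Fact from \cite[Lemma~2.8]{SimonDistal}, so the relevant benchmark is Simon's proof, not anything in this paper. Your preparatory steps are fine: the compactness reduction to finitely many cuts on the primal side is valid; the observation that any filler $b$ of a dedekind cut $\mathfrak{c}$ satisfies $\tp(b/a)=\lim_{+}(\mathfrak{c}|a)=\lim_{-}(\mathfrak{c}|a)$ is correct (in fact it needs only indiscernibility of the extended sequence, since $\mathfrak{c}$ is dedekind; NIP is needed only so that $\lim_{+}(\mathfrak{c}_\lambda|A)$ is a complete consistent type over the large set $A$); hence the $k=1$ case is indeed immediate, and the dualization to producing $A^{*}\equiv_a A$ with each $a_{\lambda_j}$ realizing $\lim_{+}(\mathfrak{c}_{\lambda_j}|A^{*})$ is a legitimate reformulation via an automorphism over $a$.

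The problem is that the proof stops exactly where the lemma begins. For $k\geq 2$ the entire content is that the prescribed joint type $\tp\bigl((a_{\lambda_j})_j\,|\,a\bigr)$ --- which is arbitrary beyond its one-variable restrictions, since the fillers may be definably correlated with each other --- is consistent with $\bigcup_j\lim_{+}(\mathfrak{c}_{\lambda_j}|A)(x_j)$; you name this as ``the main obstacle'' and gesture at an outside-in induction along the chain of cuts, but that induction does not close as described. Having arranged the conclusion for $a_{\lambda_2},\dots,a_{\lambda_k}$, the remaining task is to move the corresponding image of $a_{\lambda_1}$ onto $\lim_{+}(\mathfrak{c}_{\lambda_1}|A)$ while preserving its type over $a$ \emph{together with} the already-placed fillers; nothing controls the type of one filler over the sequence plus the other fillers (only its restriction to $a$ is known), so the $k=1$ argument does not relativize, and the ``$\Aut(\M/a)$-invariance of $\lim_{+}$'' gives no mechanism for the coordination --- this is precisely where Simon's argument does genuinely NIP-specific work, and no NIP input beyond completeness of limit types appears in your sketch. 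There is also a technical flaw in the dual route: the condition that $a_{\lambda_j}$ realize $\lim_{+}(\mathfrak{c}_{\lambda_j}|A^{*})$ is not type-definable in an enumeration of $A^{*}$ (``eventually $\varphi(a_i,b^{*})$ as $i$ decreases to the cut'' is an infinite disjunction of partial types), so the finite approximations $A_0^{*}$ cannot be glued into a single $A^{*}$ by compactness as claimed; compactness must be run on the primal side, as in your first display, and even then the finite multi-cut consistency statement remains exactly the unproved point.
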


\noindent
Let $a=(a_i)_{i\in I}$ and $b=(b_j)_{j\in J}$ be sequences in $\M_x$ and $\M_y$, respectively, indexed by linearly ordered sets $I$, $J$. We say that $a$ is $b$-indiscernible if $a$ is $B$-indiscernible where $B:=\{b_j:j\in J\}$.
If  $a$ is $b$-indiscernible and $b$ is $a$-indiscernible, then $a$, $b$  are said to be {\bf mutually indiscernible}.

\begin{definition}(\cite[Definition 2.12]{SimonDistal})
	Indiscernible sequences  $a=(a_i)_{i \in I}$ and~$b=(b_i)_{i \in I}$ are {\bf weakly linked} if for all \emph{disjoint} subsets $I_1, I_2 \subseteq I$, the sequences~$a_{I_1}$ and $b_{I_2}$ are mutually indiscernible.
\end{definition}

\noindent
The following is \cite[Lemma~2.14(1)]{SimonDistal}. It is stated there with the additional assumption that the sequence of pairs $(a_i, b_i)_{i \in I}$ is indiscernible; however, this assumption is not needed, and this point is important in the proof of Proposition~\ref{prop: lifting distality over a predicate} given below.

\begin{lemma}\label{fac: wlinked} Let $a=(a_i)_{i \in I}$ and $b=(b_i)_{i \in I}$ be weakly linked indiscernible sequences, where~$a$ is distal; then $a$ and $b$ are mutually indiscernible.
\end{lemma}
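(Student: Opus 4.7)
The plan is to prove the two directions of mutual indiscernibility separately, using distality of $a$ only for the first. As a preliminary reduction, a routine compactness argument preserving individual indiscernibility of $a$, of $b$, and weak linking (all three being conditions on the joint finite EM-data of $(a,b)$) allows us to assume $I = \mathbb{Q}$; since distality of an indiscernible sequence depends only on its EM-type, the extended sequence remains distal.

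For the first direction, that $a$ is $b$-indiscernible, I fix a finite $J = \{j_1 < \cdots < j_k\} \subseteq I$ and aim to show that $a$ is $b_J$-indiscernible. Weak linking applied to the disjoint pair $(I \setminus J,\, J)$ gives that $a_{I \setminus J}$ is $b_J$-indiscernible. I would then add the points $j_l$ back one at a time, setting $I^{(l)} := I \setminus \{j_{l+1}, \ldots, j_k\}$, and prove by induction on $l$ that $a_{I^{(l)}}$ is $b_J$-indiscernible. For the inductive step, the point $j_l$ partitions $a_{I^{(l-1)}}$ into two pieces indexed by $\{i \in I^{(l-1)} : i < j_l\}$ and $\{i \in I^{(l-1)} : i > j_l\}$, each an infinite subset of $\mathbb{Q}$ without endpoints (only finitely many points have been removed from $\mathbb{Q}$). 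Since $a_{I^{(l-1)}}$ has the same EM-type as $a$, is $b_J$-indiscernible by the inductive hypothesis, and $a_{I^{(l)}}$ (the sequence obtained by inserting $a_{j_l}$ at its proper position) is indiscernible as a subsequence of $a$, Fact~\ref{fac: distSeqEquiv}(2) applies with parameter set $B = b_J$ and middle element $a_{j_l}$, yielding that $a_{I^{(l)}}$ is $b_J$-indiscernible. After $k$ steps, $a = a_{I^{(k)}}$ is $b_J$-indiscernible; letting $J$ vary completes this direction.

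For the second direction, that $b$ is $a$-indiscernible, I use only weak linking together with the conclusion of the first direction (no further appeal to distality). Fix a finite $L \subseteq I$ and two increasing tuples $\bar{j}, \bar{j}'$ in $I$ of the same length; the goal is $b_{\bar{j}} \equiv_{a_L} b_{\bar{j}'}$. Since $I$ is infinite I choose an increasing tuple $L'$ in $I$ of size $|L|$ with $L' \cap (\bar{j} \cup \bar{j}') = \emptyset$. Weak linking applied to the disjoint pair $(L',\, I \setminus L')$ yields $b_{\bar{j}} \equiv_{a_{L'}} b_{\bar{j}'}$ (both $\bar{j}$ and $\bar{j}'$ lie in $I \setminus L'$), while the first direction gives that $a$ is both $b_{\bar{j}}$- and $b_{\bar{j}'}$-indiscernible, hence $a_L \equiv_{b_{\bar{j}}} a_{L'}$ and $a_L \equiv_{b_{\bar{j}'}} a_{L'}$. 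Chaining these three type-equalities,
\[ (a_L, b_{\bar{j}}) \equiv (a_{L'}, b_{\bar{j}}) \equiv (a_{L'}, b_{\bar{j}'}) \equiv (a_L, b_{\bar{j}'}), \]
yields $b_{\bar{j}} \equiv_{a_L} b_{\bar{j}'}$, as required.

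The main delicacy is precisely that Simon's original formulation of this lemma assumed the combined sequence $(a_i, b_i)_{i \in I}$ to be indiscernible as a sequence of pairs, under which hypothesis the second direction would be automatic by symmetry. Without that assumption, symmetry fails and Direction~2 must be extracted by the disjointness trick above: the auxiliary tuple $L'$ is chosen disjoint from both $\bar{j}$ and $\bar{j}'$ precisely in order to invoke weak linking on the $b$-side, after which Direction~1 transports the resulting equality of types back to $a_L$.
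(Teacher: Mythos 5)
Your proposal is correct and follows essentially the same route as the paper's proof: your Direction~1 is exactly the paper's step of applying weak linking to get $a_{I\setminus J}$ indiscernible over $b_J$ and then re-inserting the removed points one at a time via Fact~\ref{fac: distSeqEquiv}(2), and your Direction~2 is just the direct (contrapositive-free) form of the paper's contradiction argument, moving the $a$-parameters to an index set disjoint from the relevant $b$-indices by Direction~1 and then invoking weak linking. The only point to watch is your opening reduction to $I=\mathbb{Q}$: since mutual indiscernibility concerns tuples with \emph{overlapping} indices, which is not part of the ``joint finite EM-data'' (individual EM-types plus types at disjoint tuples), producing a \emph{new} pair with the same data would not let you pull the conclusion back to $(a,b)$ --- one must instead \emph{extend} the given pair to a dense index set; the paper makes the same reduction (``we may arrange that $I$ is dense'') with no more justification, so this is a shared, minor elision rather than a defect of your argument relative to the paper.
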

\begin{proof}
We may arrange that $I$ is dense. To show that $a$ is indiscernible over $b$,
let~$I' \subseteq I$ be an arbitrary finite set; it is enough to show that $a$ is $b_{I'}$-indiscernible. Now $a_{I \setminus I'}$ is $b_{I'}$-indiscernible    as $a$, $b$ are weakly linked.
Since $a$ is distal, repeatedly applying Fact~\ref{fac: distSeqEquiv} we conclude that $a$ is $b_{I'}$-indiscernible.

Towards a contradiction assume that $b$ is not $a$-indiscernible. This yields finite sub\-sets~$I_1$,~$I_2$ of~$I$ such that~$b_{I_2}$ is not $a_{I_1}$-indiscernible. But then by indiscernibility of $a$ over~$b_{I_2}$, there exists some set~$I_1'$ disjoint from $I_2$   such that $a_{I'_1} \equiv_{b_{I_2}}  a_{I_1}$; in particular, $b_{I_2}$ is not $a_{I'_1}$-indiscernible, contradicting that $a$, $b$ are weakly linked.
\end{proof}



\begin{proof}[Proof of Proposition~\ref{prop: lifting distality over a predicate}]
Toward a contradiction   assume that  $(a_i)_{i\in\Q\setminus \{ 0 \}}$ is
$b$-in\-dis\-cer\-nible but~$(a_{i})_{i\in\mathbb{Q}}$ is not.
We will show that then there is an indiscernible sequence $(b_{n})$
with $b_{n}\equiv b$ which is not distal (in the sense of Definition \ref{def : dist ind seq}); since~$b_n\in D^N$, this will contradict distality of~$D_{\ind}$.
We proceed by establishing a sequence of claims.
In Claims~\ref{cla: Step0}--\ref{cla: Step2} below we let $I$ be a dense linearly ordered set  without endpoints
and $\mathfrak{c}$ be a dedekind cut in $I$.

\begin{claim} \label{cla: Step0}
	There is  a $b$-indiscernible sequence $(a'_i)_{i \in I}$
	and some~$a'$ filling the cut~$\mathfrak{c}$ in $(a_i')$ such that~$\tp(a', b) \neq \tp(a_i', b)$ for all $i \in I$.
\end{claim}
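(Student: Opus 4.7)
I would approach Claim~\ref{cla: Step0} by first extracting a single partitioned $\L$-formula whose parameters lie in $b$ alone and that separates $a_0$ from the other $a_i$'s over $b$, and then reindexing via compactness to match the given $I$ and $\mathfrak c$. Since $(a_i)_{i\in\Q}$ is not $b$-indiscernible while $(a_i)_{i\in\Q\setminus\{0\}}$ is, the failure of $b$-indiscernibility must already be witnessed by a tuple containing~$0$: there exist an $\L$-formula $\psi(x_1,\dots,x_n;y)$ and indices $i_1<\dots<i_n$ in $\Q$ with $i_k=0$ for some $k$ such that $\models\psi(a_{i_1},\dots,a_{i_n};b)$ but $\not\models\psi(a_{j_1},\dots,a_{j_n};b)$ for every $j_1<\dots<j_n$ in $\Q\setminus\{0\}$ (after possibly replacing $\psi$ by $\neg\psi$).

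Next I would implement a blow-up into a larger sort. Pick $\sigma\in\Q$ with $0<\sigma<|i_l|$ for all $l\neq k$, let $J:=\Q\cap(-\sigma,\sigma)$, and for $i\in J$ set
\[\tilde a_i\ :=\ (a_{i_1},\dots,a_{i_{k-1}},a_i,a_{i_{k+1}},\dots,a_{i_n})\in\M_{x^n}.\]
Since the order type of $(i_1,\dots,i_{k-1},i,i_{k+1},\dots,i_n)$ is independent of $i\in J$, indiscernibility of $(a_l)_{l\in\Q}$ passes to $\emptyset$-indiscernibility of $(\tilde a_i)_{i\in J}$, and $b$-indiscernibility of $(a_l)_{l\in\Q\setminus\{0\}}$ passes to $b$-indiscernibility of $(\tilde a_i)_{i\in J\setminus\{0\}}$. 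Read now as a partitioned formula $\psi(\tilde x;y)$ in $\tilde x=(x_1,\dots,x_n)$, the same formula satisfies $\models\psi(\tilde a_0;b)$ and $\models\neg\psi(\tilde a_i;b)$ for every $i\in J\setminus\{0\}$, so $\tp(\tilde a_0|b)\neq\tp(\tilde a_i|b)$ for all such~$i$.

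To obtain the claim in the stated form, I would transfer to the given $I$ and $\mathfrak c$ by compactness. The partial type in variables $(z_i)_{i\in I}$ and $z$ over $b$ asserting that $(z_i)_{i\in I}$ is $b$-indiscernible with the same $b$-EM-type as $(\tilde a_i)_{i\in J\setminus\{0\}}$, that the concatenation $(z_i)_{i\in\mathfrak c}{}^\frown z{}^\frown(z_i)_{i\in I\setminus\mathfrak c}$ is $\emptyset$-indiscernible with the same EM-type as $(\tilde a_i)_{i\in J}$, and that $\models\psi(z;b)\wedge\bigwedge_{i\in I}\neg\psi(z_i;b)$, is finitely satisfiable in $\M$ by taking corresponding finite subtuples from $J$ and interpreting $z$ as $\tilde a_0$. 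A realization supplies the required sequence $(a'_i)_{i\in I}$ in $\M_{x^n}$ and filler $a'\in\M_{x^n}$.

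The crux is the first step: the distinguishing formula must have parameters in $b$ alone—not in $b$ together with other sequence elements—because the whole point of Proposition~\ref{prop: lifting distality over a predicate} is to exploit the hypothesis $b\in D^N$ and, ultimately, distality of $D_{\ind}$. The blow-up is precisely what enforces this, by absorbing the auxiliary indices $i_l$ ($l\neq k$) into $\tilde a_i$ itself; the change of sort from $x$ to $x^n$ is harmless, since the claim does not fix the sort of~$a'_i$.
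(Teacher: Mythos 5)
Your proof is correct and takes essentially the same route as the paper: extract a single-formula witness to the failure of $b$-indiscernibility with one distinguished index at $0$, absorb the remaining auxiliary indices into a ``blown-up'' sequence of higher sort, and transfer to the prescribed $I$ and $\mathfrak c$ by compactness. The only superficial difference is that the paper phrases the blow-up via $a'_j = a_{J_1}{}^\frown a_j{}^\frown a_{J_2}$ over the interval $J=(\max J_1,\min J_2)$ while you restrict to a symmetric interval $(-\sigma,\sigma)$ and spell out the compactness step more explicitly, but these are expositional, not mathematical, distinctions.
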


\begin{proof}
By assumption $a:=(a_i)_{i \in \mathbb{Q}}$ is not $b$-indiscernible, so we find finite  subsets~$J_1$,~$J_2$  of~$\Q$
and a nonzero rational number $j$   such that $J_1 < 0,j < J_2$ and
\begin{equation}\label{eq:Step0}
a_{J_1}{}^{\frown}a_0{}^{\frown} a_{J_2} \not \equiv_b  a_{J_1}{}^{\frown}a_j{}^{\frown} a_{J_2}.
\end{equation}
We may assume $J_1,J_2\neq\emptyset$;
let $j_1 := \max J_1$,  $j_2 := \min J_2$, and set
$$a'_j := a_{J_1}{}^{\frown}a_{j}{}^{\frown} a_{J_2}\qquad\text{for $j\in J:=(j_1,j_2)\subseteq\Q$.}$$
Then \eqref{eq:Step0} holds for all $j\in J\setminus\{0\}$,
 the sequence~$(a'_j)_{j\in J}$ is still indiscernible, $(a'_j)_{j\in J\setminus \{0\}}$ is $b$-indiscernible, and $\tp(a'_0,b) \neq \tp(a'_j,b)$ for $j\in J\setminus\{0\}$. Using compactness,  this yields the claim.
\end{proof}

\noindent
Let now $(a_i')$ and $a'$ be as in  Claim~\ref{cla: Step0}; to simplify notation (and since
we have no use of our original sequence $(a_i)_{i\in\Q}$ anymore),
we now rename~$(a_i')_{i\in I}$,~$a'$ as~$(a_i)_{i\in I}$,~$a$, respectively. Thus
\begin{itemize}
\item $(a_i)_{i\in I}$ is $b$-indiscernible, and
\item $a$ fills the cut $\mathfrak{c}$ in $(a_i)$ and satisfies
$\tp(a, b) \neq \tp(a_i, b)$ for all $i \in I$.
\end{itemize}
We also fix an $\mathcal{L}$-formula $\theta(x,y)$ such that
$\models \neg \theta(a,b) \land \theta(a_i,b)$ for all $i \in I$.

\begin{claim}\label{cla: Step1}
	Let $\mathfrak c'$ be a dedekind cut in $I$ with $\mathfrak c\subseteq\mathfrak c'$. Then there exists an
	$a'\in\M_x$  such that
\begin{enumerate}
	\item $a'$ fills the cut $\mathfrak{c}'$ in $(a_i)$,
	\item $\tp(a,b)=\tp(a',b)$, so in particular $\models \neg \theta(a',b)$.
\end{enumerate}
\end{claim}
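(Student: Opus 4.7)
The plan is a compactness argument that produces $a'$ as an $\Aut(\M)$-image of $a$ under an automorphism fixing~$b$. Setting $A:=\{a_i:i\in I\}$, by saturation of $\M$ it suffices to show that the partial type
\[ \Sigma(x)\ :=\ \tp(a/b)\ \cup\ \bigl\{\text{``$x$ fills the cut $\mathfrak c'$ in $(a_i)_{i\in I}$''}\bigr\}\]
over $A\cup\{b\}$ is finitely satisfiable. Since any finite approximation of the second part mentions only finitely many of the~$a_i$, this reduces to the following: for each finite $F\subseteq I$, produce $a^*\in\M$ with $\tp(a^*/b)=\tp(a/b)$ whose type over $a_F$ is the ``filler type'' at the cut $F\cap\mathfrak c'\mid F\cap(I\setminus\mathfrak c')$.

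To construct such an $a^*$, I would first choose a finite $F'\subseteq I$ with
\[ |F'\cap\mathfrak c|=|F\cap\mathfrak c'|\qquad\text{and}\qquad|F'\cap(I\setminus\mathfrak c)|=|F\cap(I\setminus\mathfrak c')|,\]
which is possible since $\mathfrak c$ is a nontrivial dedekind cut of the endless dense linear order $I$, so both $\mathfrak c$ and $I\setminus\mathfrak c$ are infinite. By the $b$-indiscernibility of $(a_i)_{i\in I}$, the order-preserving bijection $a_{F'}\to a_F$ together with $b\mapsto b$ is a partial elementary map; extending it to some $\hat\sigma\in\Aut(\M)$ and setting $a^*:=\hat\sigma(a)$ gives $\tp(a^*/b)=\tp(a/b)$ automatically.

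To see that $a^*$ fills the desired cut, I note that since $\hat\sigma$ is an automorphism, $\tp(a^*,a_F/\emptyset)=\tp(a,a_{F'}/\emptyset)$, and since $a$ fills $\mathfrak c$, the tuple $(a,a_{F'})$ is an increasing $(|F|+1)$-tuple from the $\emptyset$-indiscernible sequence $(a_i)_{i\in I}\cup\{a\}$ with $a$ sitting at position $|F'\cap\mathfrak c|+1=|F\cap\mathfrak c'|+1$. This is exactly the position of $a_{j^*}$ in the increasing tuple $(a_{j^*},a_F)$ for any $j^*\in I\setminus F$ with $\max(F\cap\mathfrak c')<j^*<\min(F\cap(I\setminus\mathfrak c'))$, and such $j^*$ exists by density of~$I$. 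By $\emptyset$-indiscernibility of the extended sequence, these two $(|F|+1)$-tuples have the same type, so $\tp(a^*,a_F/\emptyset)$ is the filler type we sought.

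The only inputs to the argument are the $b$-indiscernibility of $(a_i)_{i\in I}$ and the hypothesis that $a$ fills $\mathfrak c$; neither NIP nor the distality of $D_{\ind}$ is required at this step. The only mildly delicate point is the counting choice of $F'$, which aligns the position of $a$ among $a_{F'}$ with the position a filler of $\mathfrak c'$ would occupy among $a_F$.
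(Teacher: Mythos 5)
Your proof is correct and is essentially the paper's own argument: the paper also gets $a'$ by compactness, using $b$-indiscernibility of $(a_i)$ to produce a $b$-automorphism matching a finite configuration around the cut $\mathfrak c'$ with one of the same shape around $\mathfrak c$ and then transporting $a$ through it. Your write-up merely makes the cardinality bookkeeping explicit and runs the automorphism in the opposite direction, which changes nothing.
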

\begin{proof}
As $(a_i)_{i \in I}$ is $b$-indiscernible, we can choose $a'$ satisfying (1) and (2) by
compactness: given finite subsets $I_1\subseteq\mathfrak{c}_\alpha$ and $I_2\subseteq I\setminus\mathfrak{c}_\alpha$
there is a $b$-automorphism of $\M$  which sends $a_{I_1}$, $a_{I_2}$ to $a_{J_1}$, $a_{J_2}$, respectively, where $J_1\subseteq\mathfrak{c}$,
$J_2\subseteq I\setminus\mathfrak{c}$.
\end{proof}

\noindent
In the next claim we let $\alpha$, $\beta$ be   ordinals,
and let $r$, $s$ (also with decorations) range over $\alpha$ respectively~$\beta$.
We also assume that we have a strictly increasing sequence~$(\mathfrak{c}_r)$ of dedekind cuts in $I$ with $\mathfrak{c}_0=\mathfrak{c}$.

\begin{claim}\label{cla: Step2}
	There exists an array $(a_{r,s})$ and a sequence $(b_s)$ such that:
\begin{enumerate}
	\item if $s < s'$, then $\models \theta(a_{r,s'}, b_s)$;
	\item $\models \neg \theta(a_{r,s}, b_s)$;
	\item for all $r_0<\cdots<r_n$ and pairwise distinct $s_0,\dots,s_n$, we have
	 $$(a_{r_0,s_0},\dots,a_{r_n,s_n}) \equiv (a_{i_0},\dots,a_{i_n})$$ for some \textup{(}equivalently, all\textup{)} $i_0<\cdots<i_n$ in $I$;
	\item $b_s \equiv b$.
\end{enumerate}

\end{claim}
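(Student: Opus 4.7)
My plan is to build the array $(a_{r,s})$ and the sequence $(b_s)$ simultaneously by compactness, producing $\beta$ many ``copies'' of a single enriched configuration arranged inside one common indiscernible frame. The finite-index case is handled by iterated applications of Claim~\ref{cla: Step1} together with the strong base change (Fact~\ref{fac: base change}), and the infinite case then follows by standard compactness.

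First I would fix a single master row. Using Claim~\ref{cla: Step1}, for each $r<\alpha$ choose an element $a^{(r)}$ filling the cut $\mathfrak c_r$ with $\tp(a^{(r)},b)=\tp(a,b)$; in particular $\models\neg\theta(a^{(r)},b)$. Applying Fact~\ref{fac: base change} to the family $(\mathfrak c_r)_{r<\alpha}$ over the base $A = \{a_i : i \in I\} \cup \{b\}$, I may further arrange that these filling elements realize the corresponding limit types $\lim_+(\mathfrak c_r\,|\,A)$, so that the sequence obtained by inserting each $a^{(r)}$ at $\mathfrak c_r$ is still indiscernible and each $a^{(r)}$ behaves over $b$ like ``an $a_i$ just above $\mathfrak c_r$''.

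Next I would replicate this master configuration $\beta$ many times. The crucial point is placement: I want copy $s'$ to sit \emph{above} the cut seen by copy $s$ whenever $s<s'$. Formally, by compactness it suffices, for finite $F\subseteq\alpha$ and $G=\{s_0<\cdots<s_k\}\subseteq\beta$, to build the copies in increasing order of $s$; at step $j$ I apply Fact~\ref{fac: base change} and Claim~\ref{cla: Step1} to insert a new copy of $\bigl((a^{(r)})_{r\in F},b\bigr)$ whose $b$-term $b_{s_j}$ sees only its own fillers $(a_{r,s_j})_{r\in F}$ as realizing $\neg\theta$, while all earlier copies' fillers have already been frozen and all later copies' fillers will realize $\lim_+$-types of cuts placed strictly above the position of $b_{s_j}$ in the extended sequence.

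Conditions (2) and (4) then hold by construction; condition (1) holds because $\models\theta(a_i,b)$ for every $i \in I$, so later copies look to $b_{s_j}$ like ordinary sequence elements lying above $\mathfrak c$; and condition (3) holds because all the $a_{r,s}$ sit inside a single extended indiscernible sequence, so tuples $a_{r_0,s_0}{}^{\frown}\cdots{}^{\frown}a_{r_n,s_n}$ with $r_0<\cdots<r_n$ and pairwise distinct $s_0,\dots,s_n$ realize, by the $\lim_+$ arrangement, the same type as a strictly increasing tuple from $(a_i)_{i \in I}$. The main obstacle is balancing (1) against (3): to make (1) work one wants the cross-copy fillers to look like ordinary sequence elements relative to earlier $b$'s, yet to secure (3) these same fillers must in combination realize the correct ordered-tuple type from the original sequence. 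Fact~\ref{fac: base change} is exactly tailored to produce both effects simultaneously, and the careful bookkeeping of cut placements across the $\beta$ copies is the heart of the argument.
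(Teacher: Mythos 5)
Your overall plan is in the right direction (transfinite insertion of copies of the pair $(a',b)$ with limit-type control), but the very first step is self-contradictory. You ask that the master-row fillers $a^{(r)}$ both satisfy $\models\neg\theta(a^{(r)},b)$ (from Claim~\ref{cla: Step1}) \emph{and} realize $\lim_+(\mathfrak c_r\mid A)$ with $b\in A$ (from Fact~\ref{fac: base change}). These are incompatible: since $\models\theta(a_i,b)$ for every $i\in I$, the limit type $\lim_+(\mathfrak c_r\mid b)$ contains $\theta(x,b)$, so any realization satisfies $\theta(\cdot,b)$. Strong base change only preserves the type over the sequence $a$, not over $b$, so applying it with $b$ in the base destroys exactly the property $\neg\theta(a^{(r)},b)$ you need for condition~(2). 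The point of the construction is that each row's fillers must \emph{disagree} with their own $b_s$ on $\theta$ while \emph{agreeing} with all earlier $b_{s'}$'s ($s'<s$); hence the initial row should realize limit types over $a$ only, and the limit-type requirement over $b$'s should appear only at later stages relative to earlier $b$'s. You are also missing the mechanism producing $b_s$: the paper first uses base change to obtain $a_s\models\lim_+(\mathfrak c_r\mid a\,a_{<s}\,b_{<s})$ with $a_s\equiv_a a'$, and then pushes $(a',b)$ to $(a_s,b_s)$ by an automorphism over $a$; the latter step is what yields $b_s a_s\equiv_a b a'$, hence $b_s\equiv_{a_i} b$ and $\models\theta(a_i,b_s)$ for all $i$, which is essential for deducing (1) and (4).

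Your justification of (3) (``all the $a_{r,s}$ sit inside a single extended indiscernible sequence'') is also too loose and likely false as stated: two fillers $a_{r,s}$ and $a_{r,s'}$ for the same $r$ both fill the same cut $\mathfrak c_r$, and no well-defined total order on them is provided. The paper proves (3) by an induction that repeatedly replaces the array entry with the currently largest $s$-index by an honest sequence element $a_{i_k}$ via the limit-type condition, then descends; the nesting $\mathfrak c_{r_0}\subset\cdots\subset\mathfrak c_{r_n}$ is what allows the resulting indices to be chosen increasing. This finer bookkeeping cannot be skipped.
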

\begin{proof}
By Claim~\ref{cla: Step1} we obtain a sequence $a'=(a_r')$ such that for all $r$,
\begin{itemize}
\item $a_r'$ fills the cut $\mathfrak{c}_r$ in $(a_i)$, and
	\item  $\models \neg \theta(a_r',b)$.
\end{itemize}
Let $a:=(a_i)$.
By induction on $\beta$ we now choose sequences $(a_{s})$ and tuples $(b_s)$, with~$a_{s} = (a_{r,s})$, such that
\begin{enumerate}
	\item[(a)] $a_{r,s} \models \lim_{+}(\mathfrak{c}_r| aa_{<s} b_{<s} )$, where $a_{<s}:=(a_{s'})_{s'<s}$ and $b_{<s}:=(b_{s'})_{s'<s}$; and
	\item[(b)]  $b_s a_s \equiv_{a} b a'$.
\end{enumerate}
We start with $a_0 := a'$ and $b_0 := b$. Then (a) holds since~$a_r'$ fills the cut $\mathfrak{c}_r$ in~$a=(a_i)$,
and (b) holds trivially.
 Assume that  $(a_s)$ and tuples $(b_s)$ have been chosen, for some given value of $\beta$. Applying Fact~\ref{fac: base change} to
 the family $(\mathfrak{c}_r)$ of dedekind cuts in~$I$ and the  family
 $a'=(a'_r)$, where each $a'_r$ fills $\mathfrak{c}_r$ in $a$,
 and a set of parameters~$A$ containing  all components of~$a$, $a_{<\beta}$, and $b_{<\beta}$,
 we find a sequence~$a_\beta = (a_{r,\beta})$  such that $a_{r,\beta} \models \lim_{+}(\mathfrak{c}_r|a a_
{<\beta}  b_{<\beta} )$ for each $r$ (so (a) is satisfied for $\beta$ in place of $s$) and~$a_\beta \equiv_{a} a'$. Using this, we can move $a'$ to $a_\beta$ by an automorphism over $a$, and  let $b_\beta$ be the corresponding image of $b$; then (b) holds for $\beta$ in place of $s$.

\medskip
\noindent
Now  let $(a_{r,s})$ and $(b_s)$ be sequences as just constructed, satisfying (a), (b).
We check that (1)--(4) are satisfied.
\begin{enumerate}
\item Let $r$ and $s < s'$ with $\models \neg \theta(a_{r,s'}, b_s)$. By (a) we have $a_{r,s'} \models \lim_{+}(\mathfrak{c}_r | b_s)$, hence we can take   some~$i \in I$ such that $\models \neg\theta(a_i, b_s)$. But by (b) we have~$b_s \equiv_{a_i} b$, hence $\models \neg \theta(a_i, b)$, contradicting our choice of $\theta$.
\item By (b) and choice of $a'$.

\item Indeed, let $r_0<\cdots<r_n$ and pairwise distinct $s_0,\dots,s_n$ be given, and
let $\varphi(x_0, \ldots, x_{n})$ be an $\mathcal{L}$-formula with
$\models \varphi(a_{r_0,s_0}, \ldots, a_{r_n, s_n})$.
Take the unique $k\in\{0,\dots,n\}$   such that~$s_k=\max\{ s_0, \ldots,  s_n\}$. Then by (a), for sufficiently large $i_k \in \mathfrak{c}_{r_k}$
we have
 $$\models \varphi(a_{r_0,s_0}, \ldots, a_{r_{k-1},s_{k-1}}, a_{i_k}, a_{r_{k+1},s_{k+1}}, \ldots,  a_{r_n, s_n}).$$
Repeating this procedure for the maximum of $\{ s_0, \ldots, s_{k-1}, s_{k+1}, \ldots, s_n  \}$, etc., we can thus successively choose   $i_0 < \cdots < i_{n}$ in $I$ (as $\mathfrak{c}_{r_0}\subset  \cdots\subset \mathfrak{c}_{r_{n}}$) such that $\models \varphi(a_{i_0}, \ldots, a_{i_{n}})$,  which is sufficient to conclude the claim.

\item is immediate by (b). \qedhere
\end{enumerate}
\end{proof}

\noindent
For the following claim, recall our standing convention that $m$, $n$ range over $\N$.

\begin{claim}\label{cla: Step 3}
There exists an array $(a_{m,n})$  and a sequence $(b_n)$  satisfying \textup{(1)--(4)} of Claim \ref{cla: Step2}  for~$\alpha=\beta=\omega$ such that additionally
\begin{enumerate}
	\item[\textup{(5)}]   $(a_n, b_n)$ is indiscernible, where $a_n = (a_{m,n})$, and
	\item[\textup{(6)}]  $\big((a_{m,n})_n\big)$ is $B$-indiscernible where $B=\{b_0,b_1,\dots\}$.
\end{enumerate}
\end{claim}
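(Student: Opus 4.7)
The plan is to apply Claim~\ref{cla: Step2} with $\alpha = \omega$ and $\beta = \kappa$ for a sufficiently large cardinal $\kappa$, and then to extract an $\omega\times\omega$ subarray with the additional indiscernibility properties (5) and (6) via a two-stage Erd\H{o}s--Rado argument. Conditions (1)--(4) will transfer automatically because they depend only on a bounded number of $s$-indices and hence are preserved under passing to any infinite subsequence in the $s$-coordinate; the actual work is to secure (5) and (6) simultaneously.

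For each $s<\kappa$ set $a_s := (a_{r,s})_{r<\omega}$ and $c_s := (a_s, b_s)$, regarded as a countable tuple. In the first stage, apply Erd\H{o}s--Rado to extract $T\subseteq \kappa$ of large uncountable cardinality such that the sequence $(c_s)_{s\in T}$ is indiscernible as a sequence of countable tuples. In the second stage, the goal is to further refine to a countable $S\subseteq T$ such that the row sequence $(a_s)_{s\in S}$ becomes indiscernible over the parameter set $B_T := \{b_s : s\in T\}$. This is arranged by splitting $T$ into two disjoint pieces $T = T_1 \sqcup T_2$ with $|T_1|$ much larger than $|T_2|$, extracting $S\subseteq T_1$ countable that is indiscernible over $B_{T_2}$ via (iterated) Erd\H{o}s--Rado, and then using the indiscernibility of $(c_s)_{s\in T}$ from the first stage to transport this into indiscernibility of $(a_s)_{s\in S}$ over the relevant $b$'s from $S$ itself. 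Relabeling $S$ via its unique order-isomorphism with $\omega$ yields the array $(a_{m,n})_{m,n<\omega}$ and sequence $(b_n)_{n<\omega}$.

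Verification of the six conditions is then routine. Conditions (1), (2), (4) carry over immediately from Claim~\ref{cla: Step2}, and (3) is inherited since it was required there for all pairwise distinct $s$-indices. Condition (5) holds because $(c_s)_{s\in S}$ is an infinite subsequence of the indiscernible sequence $(c_s)_{s\in T}$ from the first extraction. For (6), the relabeled parameter set $B = \{b_n:n<\omega\} = \{b_s:s\in S\}$ will lie inside the set over which $(a_s)_{s\in S}$ was extracted to be indiscernible, so indiscernibility of the rows over $B$ follows.

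The main obstacle is precisely the simultaneous achievement of (5) and (6). A single Erd\H{o}s--Rado extraction applied to the pair sequence yields (5) but not (6): indiscernibility of $(a_n,b_n)_n$ as a sequence of pairs does not imply that $(a_n)_n$ is indiscernible over the \emph{entire} set $B$, because the pair-indiscernibility only controls the types of $(a_{n_1},\ldots,a_{n_k})$ over $b_t$ when $t$ lies ``diagonally'' among the $n_i$'s, not when $t$ lies in the ``middle'' of them. The two-stage extraction above is the natural fix, the subtlety being the cardinal bookkeeping needed so that the second Erd\H{o}s--Rado application succeeds over a parameter set whose size is controlled by a proper sub-piece of $T$ rather than all of $T$, so that the resulting $S$ produces the desired $B$-indiscernibility after relabeling.
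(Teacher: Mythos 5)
Your proposal has a genuine gap, and it stems from fixing $\alpha=\omega$ at the outset and from aiming the second extraction at the wrong index. Condition (6) is about the sequence indexed by the \emph{first} index: the rows $\big((a_{m,n})_n\big)_m$ must be indiscernible over $B$. Your two stages both operate in the second-index ($s$) direction: stage one makes the columns $a_s=(a_{r,s})_r$ paired with $b_s$ indiscernible (that is (5)), and stage two tries to make the same column sequence $(a_s)_{s\in S}$ indiscernible over the set of $b$'s. That target is not (6), and in fact it is impossible: by (1) and (2), for $s<s'$ one has $\models\neg\theta(a_{r,s},b_s)$ but $\models\theta(a_{r,s'},b_s)$, so $(a_s)_{s\in S}$ can never be indiscernible over a set containing $b_s$ for $s\in S$ — yet (1)--(2) must be preserved, since they drive the final contradiction in the next claim. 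Moreover, once you have taken $\alpha=\omega$, there is no room left to obtain (6) by any extraction at all: getting row-indiscernibility over the countable parameter set $B$ requires extracting in the $r$-direction from a family of rows whose length is large compared to $|T|+\aleph_0$. This is exactly why the paper takes \emph{both} ordinals large — $\beta$ large relative to $\alpha$ for the first (column-direction) extraction yielding (5), and $\alpha$ large relative to $|T|$ for the second (row-direction) extraction over $B'=\{b'_0,b'_1,\dots\}$ yielding (6), with (1)--(4) preserved because they are conditions on finite subconfigurations reflected in the relevant EM-types.

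A secondary, fixable inaccuracy: Erd\H{o}s--Rado together with compactness does not produce an indiscernible \emph{subsequence} $T\subseteq\kappa$ of large cardinality (that would need large-cardinal hypotheses); it produces a new indiscernible sequence, of any prescribed order type, whose finite subtuples realize types realized in the original sequence. Your splitting $T=T_1\sqcup T_2$ and the ``transport'' step lean on $T$ and $S$ being actual subsets of the original index set, so even the correct version of your stage two would need to be rephrased in terms of EM-type extraction, as in the paper's proof.
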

\begin{proof}
We take an ordinal $\alpha$  sufficiently large compared to $\abs{T}$
(how large will become clear during the course of the rest of the proof), and
then an ordinal~${\beta\geq\alpha}$  and sufficiently large compared to~$\alpha$ (also to be determined).
Next, we take a linearly ordered set $I$ which has more than $\abs{\alpha}$ many
dedekind cuts, so that we can choose a  strictly increasing sequence $(\mathfrak{c}_r)$ of dedekind cuts in $I$.
Then Claim~\ref{cla: Step2} applies and yields~$(a_{r,s})$ and $(b_s)$ having properties (1)--(4) in that claim.
Set~${a_s = (a_{r,s})}$.

Assuming that $\beta$ is large enough compared to $\alpha$,   Erd\H{o}s-Rado and compactness (see, e.g., \cite[Pro\-po\-si\-tion~1.1]{SimonGuide}) give us an indiscernible sequence $(a'_n,b'_n)$ such that for every  $l \in \omega$ there exist some~$s_0 < \dots < s_{l}$ such that $(a'_k,b'_k)_{k \leq l} \equiv (a_{s_k}, b_{s_k})_{k \leq l}$. In particular, $(a'_{r,n})$  and $(b'_n)$   satisfy~(1)--(4)  for~$\beta=\omega$, and (5) holds as well.

Assuming $\alpha$ is large enough compared to $|T|$, we   similarly find a $B'$-indiscernible se\-quence~$\big((a''_{m,n})_n\big)$, where $B'=\{b_0',b_1',\dots\}$,  such that for every  $l \in \omega$ there exist some $r_0 < \cdots < r_l$ such that $$\big((a''_{k,n})\big)_{k \leq l} \equiv_B \big((a'_{r_k,n})\big)_{k \leq l}.$$
In particular, $(a''_{m,n})$, $(b'_n)$ still satisfy (1)--(5), and (6) holds as well.
\end{proof}

\noindent
Let now $(a_{m,n})$ and $(b_n)$ be as in Claim~\ref{cla: Step 3}; so (1)--(6) in Claims~\ref{cla: Step2} and \ref{cla: Step 3} hold.

\begin{claim}\label{cla: FinalStep}
The sequences $(a_{n,n})$  and $(b_n)$   are weakly linked, but not mutually indiscernible.
\end{claim}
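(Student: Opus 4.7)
The plan has two parts, corresponding to the two clauses of the claim.

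\textbf{Not mutually indiscernible} is immediate from (1) and (2): for every $n<n'$ we have ${\models\theta(a_{n',n'},b_n)}$ but $\models\neg\theta(a_{n,n},b_n)$, so $a_{n,n}\not\equiv_{b_n}a_{n',n'}$, preventing mutual indiscernibility.

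For \textbf{weak linking}, I must show, for any disjoint $I_1,I_2\subseteq\omega$, that $(a_{n,n})_{n\in I_1}$ and $(b_n)_{n\in I_2}$ are mutually indiscernible. The direction ``$(b_n)_{n\in I_2}$ over $(a_{n,n})_{n\in I_1}$'' I handle by a ``shift to the right'' trick: given increasing $n_0<\cdots<n_l$, $n'_0<\cdots<n'_l\in I_2$ and $m_0<\cdots<m_k\in I_1$, pick auxiliary $M_0<\cdots<M_k$ with $M_0>\max(n_l,n'_l)$. By (6) the $B$-automorphism sending $a_{m_i}\mapsto a_{M_i}$ preserves the tuple structure and hence gives $(a_{m_i,m_i})_i\equiv_B(a_{m_i,M_i})_i$. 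Then (5) applied to the ordered pair-tuple with indices $n_j<M_i$ (resp.\ $n'_j<M_i$), whose order is preserved, yields $(b_{n_j})_j\equiv_{(a_{m_i,M_i})_i}(b_{n'_j})_j$ after extracting components; composing with the previous equivalence produces $(b_{n_j})_j\equiv_{(a_{m_i,m_i})_i}(b_{n'_j})_j$, as desired.

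The opposite direction is the \emph{main obstacle}: (6) alone only yields $(a_{m_i,m_i})_i\equiv_B(a_{m_i,m'_i})_i$ (shifting the second coordinate), and no automorphism coming from (5) or (6) shifts the first coordinate of a diagonal entry. To resolve this I would strengthen the construction of Claim~\ref{cla: Step 3} by one additional application of Erd\H{o}s-Rado and compactness to ensure, in addition to (1)--(6), that the sequence $(a_{n,n})_n$ itself is $B$-indiscernible with $B=\{b_n:n\in\omega\}$. This is feasible because by (3) the diagonal sequence $(a_{n,n})_n$ is already $\emptyset$-indiscernible (second coordinates of a diagonal tuple with increasing first coordinates are automatically distinct), and under the standing NIP hypothesis every $\emptyset$-indiscernible sequence admits an infinite $B$-indiscernible subsequence for any countable $B$; passing to such a subsequence and relabelling preserves (1)--(6). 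With this additional indiscernibility in hand, the remaining direction of weak linking is immediate.

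\textbf{Conclusion.} Since $(b_n)_n$ is a sequence of tuples from $D^N$ and $D_{\ind}$ is distal by the hypothesis of Proposition~\ref{prop: lifting distality over a predicate}, the sequence $(b_n)$ is distal (as per Lemma~\ref{lem: easy distal over a predicate} extended to $N$-tuples from $D$), so Lemma~\ref{fac: wlinked} applied with $a:=(b_n)$ and $b:=(a_{n,n})$ forces the two sequences to be mutually indiscernible once weak linking is established --- contradicting the ``not mutually indiscernible'' clause of the claim. This contradiction completes the proof of Proposition~\ref{prop: lifting distality over a predicate}.
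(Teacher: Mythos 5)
Your proof of the ``not mutually indiscernible'' clause is correct and matches the paper. However, your treatment of weak linking contains a fatal flaw: the strengthening you propose cannot exist.

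You propose to arrange, by an additional Erd\H{o}s--Rado extraction, that $(a_{n,n})_n$ is $B$-indiscernible where $B=\{b_n:n\in\omega\}$. But this directly contradicts conditions (1) and (2) of Claim~\ref{cla: Step2}. Indeed, if $(a_{n,n})_n$ were $B$-indiscernible then in particular $a_{n,n}\equiv_{b_n}a_{n+1,n+1}$; yet (2) gives $\models\neg\theta(a_{n,n},b_n)$ while (1) (applied with $r=s'=n+1$, $s=n$) gives $\models\theta(a_{n+1,n+1},b_n)$. Thus $\theta(x,b_n)$ already witnesses that $(a_{n,n})_n$ is \emph{not} $B$-indiscernible, and no subsequence or relabelling can repair this while preserving (1)--(2), because the violation happens on every consecutive pair $n<n+1$ of the diagonal. (Your NIP-based justification is also not correct as stated --- in a NIP theory one cannot in general refine an $\emptyset$-indiscernible $\omega$-sequence to a $B$-indiscernible subsequence for an \emph{infinite} $B$ --- but this is moot given the contradiction above.) The ``main obstacle'' you identify is real and cannot be bypassed by strengthening the hypotheses; it is exactly where the NIP hypothesis must do work.

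The paper's actual argument handles this by the sequence of reductions $(*_1)\Leftarrow(*_2),(*_3)$ and then proves $(*_3)$ directly: it assumes a violating formula $\varphi(x;y,z)$, takes its alternation number $m$ (finite by NIP), arranges via $(*_2)$ a configuration $(*_4)$--$(*_6)$ in which $\varphi(\cdot;b_j)$ has constant truth value off the diagonal on a window of width $2m$, and then uses condition (4) (the full array indiscernibility) to exhibit an indiscernible sequence $(a_{n,\eta(n)})$ on which $\varphi(\cdot;b_j)$ alternates more than $m$ times --- contradiction. Your ``shift to the right'' trick for the other direction of weak linking is in the right spirit (it is roughly $(*_2)$ in the paper), though the details in your write-up are muddled: applying (6) shifts the \emph{first} coordinate, producing $(a_{m_i,m_i})_i\equiv_B(a_{M_i,m_i})_i$, not $(a_{m_i,m_i})_i\equiv_B(a_{m_i,M_i})_i$ as you wrote, and one also needs (5) to move second coordinates.
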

\begin{proof}
First note that $(a_{n,n})$ is indiscernible by (3) applied with $\eta$ given by $\eta(n)=n$ for each $n$,
and~$(b_n)$ is indiscernible by (5).
Clearly, the sequences are not mutually indiscernible because we have~$\models \theta(a_{n,n}, b_m)$ for all $m<n$ by (1), but $\models \neg \theta(a_{n,n}, b_n)$ for all $n$ by (2).

Given a finite tuple $\bm{i} = (i_0, \ldots, i_{n-1}) \in \N^n$, we write $a_{\bm{i}} := (a_{i_0, i_0}, \ldots, a_{i_{n-1},i_{n-1}})$ and
$b_{\bm{i}} := (b_{i_0}, \ldots, b_{i_{n-1}})$.
We call such a tuple   {\it strictly increasing}\/ if~${i_0<\cdots<i_{n-1}}$.
To show that $(a_{n,n})$  and $(b_n)$   are weakly linked,   it is enough to show that for all
strictly increasing    $\bm{i},\bm{i}',\bm{j},\bm{j}'\in\N^n$ we have:
\begin{equation}\tag{$*_1$}\label{eq:ast1}
	(\bm{i}\cup \bm{i}') \cap (\bm{j}  \cup \bm{j}') = \emptyset \implies a_{\bm{i}}  b_{\bm{j}} \equiv a_{\bm{i}'}  b_{\bm{j}'}.
\end{equation}
(Here in the antecedent we identify the   tuples $\bm{i}$, $\bm{i}'$, $\bm{j}$, $\bm{j}'$ with the  corresponding subsets of $\N$.)
First note that by (5) and (6), for  strictly increasing     $\bm{i},\bm{i}',\bm{j},\bm{j}'\in\N^n$ we easily have
\begin{equation}\tag{$*_2$}\label{eq:ast2}
	\bm{i} \bm{j} \equiv^{\operatorname{qf}}_{<} \bm{i}'\bm{j}' \implies a_{\bm{i}} b_{\bm{j}} \equiv a_{\bm{i}'} b_{\bm{j}'},
\end{equation}
where $\equiv^{\operatorname{qf}}_{<}$ indicates the equality of quantifier-free  types in the language of ordered sets.
Hence in order to prove \eqref{eq:ast1}, it is enough to show that for any finite tuples~$\bm{i}$,~$\bm{j}$,~$\bm{i}'$,~$\bm{j}'$ of natural numbers with~$\bm{i} \cap \bm{j} = \emptyset$ and $\bm{i}' \cap \bm{j}' = \emptyset$  and $i_1,i_2,j \in \N$ we have
\begin{equation}\tag{$*_3$}\label{eq:ast3}
\bm{i}, \bm{j} < i_1 < j < i_2 <\bm{i}', \bm{j}' \implies a_{(i_1)}\equiv_{a_{\bm{i}} a_{\bm{i}'}  b_j b_{\bm{j}} b_{\bm{j}'}} a_{(i_2)}.
\end{equation}
Indeed, suppose $\bm{i},\bm{i}',\bm{j},\bm{j}'\in\N^n$ are strictly increasing    with ${(\bm{i}\cup \bm{i}') \cap (\bm{j}  \cup \bm{j}')  = \emptyset}$ as in \eqref{eq:ast1}.
We claim that we can use \eqref{eq:ast2} and \eqref{eq:ast3} to arrange that~${\bm{i} \bm{j} \equiv^{\operatorname{qf}}_{<} \bm{i}'\bm{j}'}$.
To see this let $\bm{i}=(i_0,\dots,i_{n-1})$ and~$\bm{j}=(j_0,\dots,j_{n-1})$, and
suppose we have~$k$,~$l$ in~$\{0,\dots,n-1\}$ with $i_k<j_l$ whereas $i'_k>j_l'$.
If~$k=n-1$, then we take any integer~$\tilde{i}_k>j_l$; otherwise,
using \eqref{eq:ast2} we first arrange that   $i_{k+1}-j_{l}$ is as large as necessary
so that we may take an integer $\tilde{i}_k\notin \bm{j}\cup\bm{j}'$ with
$j_l<\tilde{i}_k<i_{k+1}$. In both cases set~$\tilde{i}_m:=i_m$ for $m\neq k$ and consider the strictly increasing tuple~$\tilde{\bm{i}}:=(\tilde{i}_0,\dots,\tilde{i}_{n-1})\in\N^n$; then by~\eqref{eq:ast3} we have
$a_{\bm{i}} b_{\bm{j}} \equiv a_{\tilde{\bm{i}}} b_{\bm{j}}$. Thus by induction on the number of pairs~$(k,l)$ with~$i_k<j_l$ and~$i'_k>j_l'$, we arrive at the case~$\bm{i} \bm{j} \equiv^{\operatorname{qf}}_{<} \bm{i}'\bm{j}'$, and then
$a_{\bm{i}}  b_{\bm{j}} \equiv a_{\bm{i}'}  b_{\bm{j}'}$ follows from \eqref{eq:ast2}.

\medskip
\noindent
To show \eqref{eq:ast3}, let now $\bm{i}$, $\bm{j}$, $\bm{i}'$, $\bm{j}'$ be finite tuples of natural numbers
with~$\bm{i}, \bm{j} < i_1 < j < i_2 <\bm{i}', \bm{j}'$. Towards a contradiction assume that we have an
$\mathcal L$-formula~$\psi(x,y,z)$ (for suitable disjoint multivariables $x$, $y$, $z$), such that with
$$\varphi(x,y):=\psi(x,y,a_{\bm{i}} a_{\bm{i}'}  b_{\bm{j}} b_{\bm{j}'})$$
we have $\models \varphi  (a_{(i_1)}, b_{j} )$, but~${\models \neg \varphi(a_{(i_2)}, b_j)}$. Recall that $T$ is NIP, so we may let~$m$ be the alternation number of the partitioned $\mathcal L$-formula $\varphi(x;y,z)$. (See \cite[Section~2.1]{SimonGuide}.) In view of \eqref{eq:ast2}, we can arrange:
\begin{gather}\tag{$*_4$}\label{eq:ast4}
i_1 < j-m < j < j+m < i_2, \\
\tag{$*_5$}\label{eq:ast5}\models \varphi(a_{i,n},b_j) \textrm{ for all $i$, $n$ with } j - m < n < j \textrm{ and } j-m <i < j+m,  \\
\tag{$*_6$}\label{eq:ast6}\models \neg \varphi(a_{i,n}, b_j) \textrm{ for all $i$, $n$ with } j < n < j +m \textrm{ and } j-m <i < j+m.
\end{gather}
To see this first replace the tuple $(\bm{i}, \bm{j}, i_1, j, i_2, \bm{i}', \bm{j}')$
by a tuple with the same order type such that~${i_1+m}<j<i_2-m$; modifying $\varphi$ accordingly,  we then still have~${\models \varphi(a_{(i_1)},b_j) \land \neg \varphi(a_{(i_2)},b_j)}$ by~\eqref{eq:ast2}, and \eqref{eq:ast4} holds.
Next, note that if~$i_1<n<j$, then
 the tuple $(\bm{i}, \bm{j}, i_1, j, i_2, \bm{i}', \bm{j}')$
has the same order type as  the tuple~$(\bm{i}, \bm{j}, n, j, i_2, \bm{i}', \bm{j}')$,
so $\models \varphi(a_{(n)},b_j)$ by \eqref{eq:ast2}.
Similarly we see that~$\models \neg\varphi(a_{(n)},b_j)$  for~$j<n<i_2$.
Property (6) then implies \eqref{eq:ast5} and \eqref{eq:ast6}.

\medskip
\noindent
Now let
$\eta\colon\omega\to\omega$ be an  injective function
such that
{\samepage\begin{itemize}
\item $\eta(n)=n$ for $\abs{n-j}\geq m$,
\item $\eta(n)<j$ for even $n$ with $\abs{n-j}<m$, and
\item $\eta(n)>j$ for odd $n$ with $\abs{n-j}<m$.
\end{itemize}}
\noindent Then  the sequence $(a_{n,\eta(n)})$ is   indiscernible   by (4), and the truth value of the formula $\varphi(x; b_j)$ alternates~$>m$ times on it by the choice of $\eta$ and
\eqref{eq:ast5} and \eqref{eq:ast6},  a contradiction.
\end{proof}

\noindent
By Lemma~\ref{fac: wlinked} and Claim~\ref{cla: FinalStep}, we conclude that the indiscernible sequence $(b_n)$ is not distal, and~$b_n \equiv b$ for all $n$ by (4), as promised.
\end{proof}

\begin{cor}\label{cor: finite cover distality}
Suppose $\mathbb{M} \subseteq \acl(D)$. Then $T$ is distal.
\end{cor}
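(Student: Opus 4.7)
I would verify the criterion of Proposition~\ref{prop: indisc char of distality for singletons} by contradiction. If $T$ is not distal, Corollary~\ref{cor: explicit witness of non-distality} produces an indiscernible $(a_i)_{i \in \Q}$ in $\M_x$ with $|x|=1$, a tuple $b \in \M_y$, and a formula $\varphi(x,y)$ such that $(a_i)_{i \neq 0}$ is $b$-indiscernible while $\models \varphi(a_i;b)\iff i \neq 0$. Using $\M \subseteq \acl(D)$, fix $\bar c \in D^N$ with $b \in \acl(\bar c)$, witnessed by a formula $\psi(y,\bar z)$ with $|\psi(\M,\bar c)|\leq k$.

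The construction in Claims~\ref{cla: Step0}--\ref{cla: FinalStep} of the proof of Proposition~\ref{prop: lifting distality over a predicate} nowhere uses that $b\in D^N$---only that $b\in\M_y$---so it applies verbatim and yields an indiscernible sequence $(b_n)_n$ in $\M_y$ with $b_n\equiv b$ for all $n$, together with an auxiliary array $(a_{m,n})_{m,n}$ in $\M_x$ such that $(a_{n,n})_n$ and $(b_n)_n$ are weakly linked but not mutually indiscernible. Using $\M\subseteq\acl(D)$ coordinatewise, I would choose $\bar c_n \in D^N$ with $\psi(b_n,\bar c_n)$ for each $n$, and tuples $\bar d_{m,n}$ in a suitable finite power of $D$ with $a_{m,n}\in\acl(\bar d_{m,n})$ via a fixed formula. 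By Erd\H{o}s--Rado and compactness, extract to an indiscernible extended array $(a_{m,n},\bar d_{m,n},b_n,\bar c_n)_{m,n}$ preserving the analogues of properties \textup{(1)--(6)} from Claims~\ref{cla: Step2} and~\ref{cla: Step 3}; in particular, the enlarged pairs $(a_{n,n},\bar d_{n,n})_n$ and $(b_n,\bar c_n)_n$ remain weakly linked but not mutually indiscernible.

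Next I would argue that the projections $(\bar d_{n,n})_n$ and $(\bar c_n)_n$---indiscernible sequences living in $D$---are themselves weakly linked but not mutually indiscernible. Weak linkage is immediate. For non-mutual indiscernibility, one uses that by the algebraicity constraints $a\in\acl(\bar d)$ and $b\in\acl(\bar c)$, together with indiscernibility of the extended array, mutual indiscernibility of $(\bar d_{n,n})$ and $(\bar c_n)$ would lift to mutual indiscernibility of the enlarged pairs, contradicting the previous paragraph. Since $D_{\ind}$ is distal, Fact~\ref{fac: distSeqEquiv} gives that $(\bar d_{n,n})_n$ is a distal indiscernible sequence in $D_{\ind}$, and Lemma~\ref{fac: wlinked} applied inside $D_{\ind}$ then forces the weakly linked pair $(\bar d_{n,n})_n$, $(\bar c_n)_n$ to be mutually indiscernible---the desired contradiction.

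The main obstacle is the lifting step: the extracted $D$-witnesses must be rich enough to pin down $a_{m,n}$ and $b_n$ uniquely within their finite algebraic orbits, so that indiscernibility at the $D$-level propagates to the full tuples. I expect this to be cleanest by passing to $\M^{\eq}$ and allowing $\bar d$, $\bar c$ to include $\eq$-canonical parameters of the relevant finite orbits, using that $\M\subseteq\acl(D)$ implies $\M^{\eq}\subseteq\acl^{\eq}(D)$ and that distality of $D_{\ind}$ is preserved upon passage to imaginaries (by Fact~\ref{fac: biinterp distal}).
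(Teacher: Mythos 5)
Your plan takes a much longer and more delicate route than the paper needs. The paper's proof is a direct four-line argument: given $b$ with $a_{\Q\setminus\{0\}}$ $b$-indiscernible, the hypothesis supplies $d\in D^N$ with $b\in\acl(d)$; one moves $d$ by a $b$-automorphism (Ramsey and compactness) so that $a_{\Q\setminus\{0\}}$ becomes $d$-indiscernible; Proposition~\ref{prop: lifting distality over a predicate} then applies with $d\in D^N$ playing the role of the $D$-tuple there, and gives that $a_\Q$ is $d$-indiscernible, hence $b$-indiscernible. The algebraicity witness $d$ is \emph{already} in $D$, so nothing about the indiscernible sequence $a$ ever needs to be relocated into $D$, and none of the array machinery from Claims~\ref{cla: Step0}--\ref{cla: FinalStep} has to be re-run.

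The real gap in your descent is the non-mutual-indiscernibility step. You claim that mutual indiscernibility of $(\bar d_{n,n})_n$ and $(\bar c_n)_n$ would lift, via the algebraicity constraints and indiscernibility of the extended array, to mutual indiscernibility of $(a_{n,n},\bar d_{n,n})_n$ and $(b_n,\bar c_n)_n$. But $a_{m,n}\in\acl(\bar d_{m,n})$ only places $a_{m,n}$ in a finite $\bar d_{m,n}$-definable set: an equality $\tp(\bar d_{n,n}\bar c_m)=\tp(\bar d_{n',n'}\bar c_{m'})$ gives an automorphism carrying $(\bar d_{n,n},\bar c_m)$ to $(\bar d_{n',n'},\bar c_{m'})$, but it takes $(a_{n,n},b_m)$ merely to \emph{some} pair of conjugates of $(a_{n',n'},b_{m'})$ over the $D$-witnesses, not to that pair itself. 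Hence the alternation $\models\theta(a_{n,n},b_m)\iff m<n$ does not translate into a fixed $D$-definable alternation along $(\bar d_{n,n},\bar c_m)$, and you cannot conclude that the projected $D$-sequences fail mutual indiscernibility. Passing to $\M^{\eq}$ does not repair this: the canonical parameter of the orbit of $a_{m,n}$ over $\bar d_{m,n}$ already lies in $\dcl^{\eq}(\bar d_{m,n})$, so adjoining it adds no information and in particular does not isolate $a_{m,n}$ within its orbit; what the lift would need is $a_{m,n}\in\dcl$ of a $D$-tuple, which the hypothesis $\M\subseteq\acl(D)$ does not give. (Your appeal to Fact~\ref{fac: biinterp distal} for distality of $(D_{\ind})^{\eq}$ is also delicate here, since the paper derives the corresponding statement, Corollary~\ref{cor:Meq distal}, precisely \emph{from} the corollary you are trying to prove.)
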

\begin{proof}
We verify that $T$ satisfies Definition~\ref{distaldef}. Let  $a=(a_i)_{i \in \mathbb{Q}}$ be an indiscernible sequence, and let some tuple~$b$ such that~$a_{\Q\setminus\{0\}}$ is $b$-indiscernible  be given. By assumption, there is some~$d\in D^n$ such that $b \subseteq \acl(d)$. By Ramsey and compactness, moving $d$ by an automorphism over $b$, we may assume that $a_{\Q\setminus\{0\}}$ is $d$-indiscernible.
By Proposition~\ref{prop: lifting distality over a predicate},   $a$ is $d$-indiscernible, hence it is also $b$-indiscernible as desired.
\end{proof}

\begin{cor}\label{cor:Meq distal}
$T$ is distal if and only if $T^{\eq}$ is distal.
\end{cor}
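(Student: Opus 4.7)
The plan is to deduce both directions directly from the results already established, taking $D$ to be the home sort of $\M^{\eq}$, viewed as a $\emptyset$-definable set in $\M^{\eq}$. The key observation, which I would record first, is that every $\L^{\eq}$-formula whose free variables all range over the home sort is equivalent modulo $T^{\eq}$ to an $\L$-formula. Consequently, the full induced structure $D_{\ind}$ computed inside $\M^{\eq}$ has exactly the same $\emptyset$-definable subsets of $D^n$ (for every $n$) as $\M$ itself, so $D_{\ind}$ and $\M$ are interdefinable (in particular bi-interpretable).

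For the easy direction, suppose $T^{\eq}$ is distal. Applying Lemma~\ref{lem: reduct of distal on a stab emb set is distal} to the $\emptyset$-definable set $D$ inside $\M^{\eq}$ gives that $D_{\ind}$ is distal. Since $D_{\ind}$ is bi-interpretable with $\M$, Fact~\ref{fac: biinterp distal}(1) yields that $T$ is distal.

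For the other direction, assume $T$ is distal, so $T$ is NIP; then $T^{\eq}$ is also NIP (a standard preservation result). By the observation above, $D_{\ind}$ computed in $\M^{\eq}$ is bi-interpretable with $\M$, hence distal by Fact~\ref{fac: biinterp distal}(1). Moreover, every element of $\M^{\eq}$ is (by construction) an equivalence class of a tuple from $D$ under a $\emptyset$-definable equivalence relation, hence lies in $\dcl(D)\subseteq\acl(D)$; thus $\M^{\eq}\subseteq\acl(D)$. We may therefore apply Corollary~\ref{cor: finite cover distality} to $T^{\eq}$ with the distal $\emptyset$-definable set $D$, concluding that $T^{\eq}$ is distal.

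The argument is essentially bookkeeping: no step presents a genuine obstacle, the only subtle point being the verification that $D_{\ind}$ (in the sense of $T^{\eq}$) captures no more definable structure than $\M$ already has, which is the standard fact recalled at the outset. Everything else is a direct application of Lemma~\ref{lem: reduct of distal on a stab emb set is distal}, Fact~\ref{fac: biinterp distal}(1), and Corollary~\ref{cor: finite cover distality}.
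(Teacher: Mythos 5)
Your proof is correct and follows essentially the same route as the paper's: the easy direction via Lemma~\ref{lem: reduct of distal on a stab emb set is distal}, and the converse by noting that $T^{\eq}$ is NIP, $\M^{\eq}\subseteq\acl(D)$ for $D$ the home sort, and then invoking Corollary~\ref{cor: finite cover distality}. You spell out more explicitly than the paper does the standard fact that the full induced structure on the home sort of $\M^{\eq}$ is (bi-)interdefinable with $\M$ itself, which the paper leaves implicit; this is harmless and arguably clarifying, but not a different argument.
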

\begin{proof}
If $T^{\eq}$ is distal then so is $T$, by Lemma~\ref{lem: reduct of distal on a stab emb set is distal}.
For the converse note that since~$T$ is NIP,  so is~$T^{\eq}$, and
$\mathbb M^{\eq}\subseteq \acl(\mathbb M)$, where $\acl$ is taken in the structure~$\mathbb M^{\eq}$.
Hence the previous corollary  applies to $T^{\eq}$ in place of $T$.
\end{proof}

\subsection{Distal expansions}
We say that {\bf $T$ has a distal expansion} if there is an expansion~$\mathcal L^*$ of $\mathcal L$ and a complete distal $\mathcal L^*$-theory $T^*$
which contains $T$. We also say that   an $\mathcal L$-structure   {\bf  has a distal expansion} if
it can be expanded to a distal structure (in some language expanding $\mathcal L$).
Clearly, if an $\mathcal L$-structure $\bm{M}$ has a distal expansion, then so does its complete theory; the converse holds if
$\bm{M}$ is sufficiently saturated.

\begin{lemma}\label{lem:distal exp}
Suppose $T$ is interpretable in a complete distal $\mathcal L^*$-theory $T^*$ \textup{(}for some language $\mathcal L^*$\textup{)}. Then~$T$ has a distal expansion.
\end{lemma}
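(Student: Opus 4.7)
The plan is to reduce the statement to two results already established: Corollary~\ref{cor:Meq distal} (distality passes to $T^{\eq}$) and Lemma~\ref{lem: reduct of distal on a stab emb set is distal} (full induced structure on a definable set inherits distality).

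First, I would unpack the hypothesis. ``$T$ is interpretable in $T^*$'' means that, in a monster model~$\M^*$ of $T^*$, there is an $\emptyset$-definable set $D$ (built from finitely many sorts, possibly involving a quotient by an $\emptyset$-definable equivalence relation) together with $\emptyset$-definable relations, functions, and constants on $D$ interpreting the symbols of $\mathcal{L}$, so that $D$ with this structure is a model of $T$. Because quotients are involved, I would carry out the argument inside $(\M^*)^{\eq}$, where~$D$ is genuinely an $\emptyset$-definable set. Denote by $\bm{N}$ the resulting $\mathcal{L}$-structure with universe $D$, so that $\bm{N}\models T$.

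Next, by Corollary~\ref{cor:Meq distal} applied to $T^*$, the theory of $(\M^*)^{\eq}$ is distal. Applying Lemma~\ref{lem: reduct of distal on a stab emb set is distal} to the $\emptyset$-definable set $D$ inside this distal structure yields that the full induced structure $D_{\ind}$ is distal. Since each interpreting relation of $\bm{N}$ is $\emptyset$-definable in $(\M^*)^{\eq}$, it appears (up to identification of symbols) as a basic relation of $D_{\ind}$, so $D_{\ind}$ is an expansion of $\bm{N}$ in a language $\mathcal{L}^*$ expanding $\mathcal{L}$. Setting $T^* := \Th(D_{\ind})$ produces a complete distal $\mathcal{L}^*$-theory containing $T$, which is exactly the distal expansion we need.

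I do not expect any serious obstacle; the substance is entirely contained in the two cited results. The only delicate point is the bookkeeping around the definition of interpretability: one must justify the passage to $(\M^*)^{\eq}$ (to absorb the quotient by the $\emptyset$-definable equivalence relation) and match the symbols of $\mathcal{L}$ with the corresponding relation symbols of~$\mathcal{L}_{\ind}$ so that $T^* \supseteq T$ holds literally in $\mathcal{L}^* \supseteq \mathcal{L}$. Both are routine.
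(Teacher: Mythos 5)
Your argument is correct and is essentially the paper's own proof: pass to $(T^*)^{\eq}$ via Corollary~\ref{cor:Meq distal} to turn the interpretation into a definition, then invoke Lemma~\ref{lem: reduct of distal on a stab emb set is distal} on the induced structure of the defining set. You have merely spelled out the bookkeeping that the paper leaves implicit (and, incidentally, you re-use the name $T^*$ for $\Th(D_{\ind})$ at the end, which collides with the hypothesis's $T^*$ — harmless but worth renaming).
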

\begin{proof}
The theory $T$ is definable in  $(T^*)^{\eq}$, which is distal by  Corollary~\ref{cor:Meq distal}.
Hence we may replace~$T^*$ by~$(T^*)^{\eq}$ and assume that $T$ is definable in $T^*$.
Now Lemma~\ref{lem: reduct of distal on a stab emb set is distal} yields a distal expansion of~$T$.
\end{proof}

\noindent
So for example, the theory $\ACF_0$ of algebraically closed fields of characteristic zero has a distal expansion, since it is
interpretable (in fact, definable) in the theory $\RCF$ of real closed ordered fields: if $K$ is a real closed ordered field then its
algebraic closure is $K[\imag]$ (where $\imag^2=-1$), and the field $K[\imag]$ is $\emptyset$-definable in $K$.

\subsection{Distality and the Shelah expansion}
Let $\bm{M}$ be an $\mathcal L$-structure.
Recall that the \emph{Shelah expansion} of  $\bm{M}$ is the structure $\bm{M}^{\Sh}$ in the language $\L^{\Sh}$ obtained from $\bm{M}$ by naming all  \emph{externally definable} subsets of $\bm{M}$, i.e., sets of the form
$$\phi(x,b)^{\bm{N}}\cap M_x=\big\{ a \in M_x : \bm{N} \models \phi(a,b) \big\}$$ with $\phi(x,y)$ an $\L$-formula and $b \in N_y$ for some elementary extension $\bm{N} \succeq \bm{M}$. (Here we can replace~$\bm{N}$ by an elementary extension if necessary and thus always
assume $\bm{N}$ is sufficiently saturated.)

\begin{fact}\label{fac: Shelah exp distal}
\mbox{}
\begin{enumerate}
\item   $\bm{M}$ is NIP if and only if $\bm{M}^{\Sh}$ is   NIP  \textup{(}Shelah~\cite{shelah2009dependent}, see also~\cite{chernikov2013externally}\textup{)};
\item $\bm{M}$ is distal if and only if $\bm{M}^{\Sh}$ is distal \textup{(}Boxall-Kestner~\cite{boxall2018theories}\textup{)}.
\end{enumerate}
\end{fact}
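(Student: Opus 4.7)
These are two transfer facts cited from the literature; I will sketch plausible proof strategies. Fix a sufficiently saturated elementary extension $\bm{N} \succeq \bm{M}$ in $\mathcal L$ (the one used to form the Shelah expansion). The common thread is that each $\mathcal L^{\Sh}$-formula $\varphi(x;y)$ can be written, on tuples from $M$, as $\phi(x, y, b)$ for some $\mathcal L$-formula $\phi(x, y, z)$ and a fixed $b \in \bm{N}_z$, by absorbing into one tuple $b$ the finitely many external parameters naming the $\mathcal L^{\Sh}$-predicates appearing in $\varphi$.

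For part~(1), the reverse direction is immediate, since $\bm{M}$ is a reduct of $\bm{M}^{\Sh}$ and NIP is inherited by reducts. For the forward direction, an IP witness for an $\mathcal L^{\Sh}$-formula $\varphi(x;y)$ in $\bm{M}^{\Sh}$---say $(a_i)_{i \in \omega}$ and $(c_S)_{S \subseteq \omega}$ in $M$---unravels into an IP witness for the $\mathcal L$-formula $\phi(x; y, z)$ in $\bm{N}$ via the parameters $(c_S, b)$, contradicting NIP of $\bm{N} \equiv \bm{M}$.

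For the forward direction of~(2), I would verify the strong honest definition criterion (Fact~\ref{SHDequiv}). Unravel an $\mathcal L^{\Sh}$-formula $\varphi(x;y)$ as $\phi(x, y, b)$. Since $\Th(\bm{M}) = \Th(\bm{N})$ is distal, the $\mathcal L$-formula $\phi\bigl(x; (y, z)\bigr)$ admits a strong honest definition $\psi\bigl(x; (y_1, z_1), \ldots, (y_n, z_n)\bigr)$. The subset of $M^{|x|+n|y|}$ cut out in $\bm{N}$ by $\psi\bigl(x; (y_1, b), \ldots, (y_n, b)\bigr)$ is externally definable in $\bm{M}$, hence named by some $\mathcal L^{\Sh}$-formula $\widetilde{\psi}(x; y_1, \ldots, y_n)$. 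For $a \in M_x$ and a finite $C \subseteq M_y$ with $|C| \geq 2$, apply the strong honest definition of $\phi$ to $a$ and the parameter set $C \times \{b\} \subseteq \bm{N}_{y, z}$ (still of cardinality $\geq 2$): the resulting $c_1, \ldots, c_n \in C$ satisfy $\bm{N} \models \psi\bigl(a; (c_1, b), \ldots, (c_n, b)\bigr)$ together with the entailment
$$\psi\bigl(x; (c_1, b), \ldots, (c_n, b)\bigr) \models \tp_{\phi}\bigl(a \,\big|\, C \times \{b\}\bigr),$$
which unravels to $\bm{M}^{\Sh} \models \widetilde{\psi}(a; c_1, \ldots, c_n)$ and $\widetilde{\psi}(x; c_1, \ldots, c_n) \models \tp_{\varphi}(a|C)$ in $\Th(\bm{M}^{\Sh})$---uniformly in all models of $\Th(\bm{M}^{\Sh})$, by the coherence of the Shelah expansion along elementary extensions. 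So $\widetilde{\psi}$ is a strong honest definition for $\varphi$, and $\bm{M}^{\Sh}$ is distal by Fact~\ref{SHDequiv}.

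The reverse direction of~(2) is the main obstacle, since distality is not in general preserved under reducts. The natural strategy is contrapositive: starting from a non-distality witness $(a_i)_{i \in \Q}$, $b$, $\varphi$ in $\mathcal L$ supplied by Corollary~\ref{cor: explicit witness of non-distality}, use Ramsey-plus-compactness in $\mathcal L^{\Sh}$ to refine to an $\mathcal L^{\Sh}$-indiscernible analogue and transfer the witness to $\bm{M}^{\Sh}$. The delicate point is that $\mathcal L^{\Sh}$ already contains an $\emptyset$-definable predicate for the externally definable set $\varphi(M, b)$; via $\mathcal L^{\Sh}$-indiscernibility of a refined sequence $(a'_i)_{i \neq 0}$, this forces $\varphi(x; b)$ into the $\mathcal L^{\Sh}$-cut type, so a cut realizer cannot play the role of the distinguished point as it does in the $\mathcal L$-witness. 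Overcoming this requires the subtler analysis of Boxall-Kestner; absent that, I would attempt to adapt the strong honest definition method by showing that any $\mathcal L$-formula's $\mathcal L^{\Sh}$-SHD can, after eliminating the extra external slot, be converted to an $\mathcal L$-SHD.
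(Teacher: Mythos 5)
The paper offers no proof of this Fact at all: both parts are quoted as black boxes, (1) from Shelah (see also Chernikov--Simon) and (2) from Boxall--Kestner, so your sketch can only be judged on its own terms, and it has a genuine gap at its foundation. Your ``common thread'' --- that every $\mathcal L^{\Sh}$-formula $\varphi(x;y)$ agrees on $M$ with $\phi(x,y,b)$ for a single $\mathcal L$-formula $\phi$ and a fixed external tuple $b\in N_z$, ``by absorbing the external parameters'' --- is automatic only for \emph{quantifier-free} $\mathcal L^{\Sh}$-formulas. Quantifiers in $\mathcal L^{\Sh}$ range over $M$, not over $\bm N$, so a projection of an externally definable set is not obviously externally definable; the assertion that it is (equivalently, that $\bm M^{\Sh}$ eliminates quantifiers) is precisely the nontrivial content of Shelah's theorem, and it is exactly where NIP and honest definitions are used. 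Consequently your argument for the forward direction of (1) assumes what is to be proved: the unravelling of an IP pattern that you perform is the easy, quantifier-free half, while closure under quantification over $M$ is the whole point. The same unravelling step occurs in your forward direction of (2); there it becomes legitimate, but only once you explicitly invoke the quantifier-elimination form of (1) (available since distal implies NIP). With that input, your transfer of strong honest definitions --- applying an SHD of $\phi\bigl(x;(y,z)\bigr)$ in $\Th(\bm N)$ to the parameter set $C\times\{b\}$ and naming the resulting externally definable set by an $\mathcal L^{\Sh}$-predicate --- is essentially sound; the ``coherence along elementary extensions'' you wave at is best justified by noting that the SHD property for a fixed pair of formulas is a first-order schema, so verifying it in $\bm M^{\Sh}$ itself suffices.

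For the converse of (2) you concede you have no argument, and that concession matters: distality does not pass to reducts in general, and neither your observation about the cut type nor the suggestion to ``eliminate the external slot'' from an $\mathcal L^{\Sh}$-SHD is carried out; this is where the actual work of Boxall--Kestner lies. So as it stands the proposal does not establish the Fact: part (1) is circular as written, part (2) forward is correct only modulo citing (the QE form of) part (1), and part (2) backward is deferred to the very reference the paper cites.
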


\noindent
This implies the following remark on how the operations of taking Shelah expansions and reducts interact with distality:

\begin{lemma}\label{lem: Sh exp commutes}
Let $\L'$ be an expansion of the language $\L$ and let $\bm{M}'$ be an $\L'$-structure whose $\L$-reduct is~$\bm{M}$.
If $\bm{M}'$ is distal, then $\bm{M}^{\Sh}$ has a distal expansion, namely $(\bm{M}')^{\Sh}$.
\end{lemma}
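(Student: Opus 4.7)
The plan is to verify two points: first that $(\bm{M}')^{\Sh}$ is a distal structure, and second that its language $(\L')^{\Sh}$ expands $\L^{\Sh}$ in a way compatible with the interpretations, so that $(\bm{M}')^{\Sh}$ is genuinely an expansion of $\bm{M}^{\Sh}$.

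The first point is immediate from Fact~\ref{fac: Shelah exp distal}(2) applied to the distal structure $\bm{M}'$: its Shelah expansion $(\bm{M}')^{\Sh}$ is distal.

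For the second point, I would argue that every externally definable subset of $\bm{M}$ (with respect to $\L$) is also externally definable in $\bm{M}'$ (with respect to $\L'$), and that the corresponding predicate symbols are interpreted in the same way. Concretely, suppose $S=\phi(x,b)^{\bm{N}}\cap M_x$ for some $\L$-formula $\phi(x,y)$ and some $b\in N_y$ where $\bm{N}\succeq_{\L}\bm{M}$. Take a sufficiently saturated $\bm{N}'\succeq_{\L'}\bm{M}'$; its $\L$-reduct is an $\L$-elementary extension of $\bm{M}$, and by saturation realizes the $\L$-type of $b$ over $\bm{M}$, say by some $b'\in (N')_y$. Since $\phi$ is an $\L$-formula and $S$ is determined by $\operatorname{tp}_{\L}(b/\bm{M})$, we get $\phi(x,b')^{\bm{N}'}\cap M_x=S$, so $S$ is externally definable in $\bm{M}'$. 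Thus each predicate of $\L^{\Sh}$ is (up to renaming) already present in $(\L')^{\Sh}$ with the same interpretation on $M_x$, which shows that $(\bm{M}')^{\Sh}$ expands $\bm{M}^{\Sh}$.

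There is no serious obstacle here; the only subtlety is to keep distinct the two notions of elementary extension ($\L$ versus $\L'$) and to observe that the interpretation of an $\L$-formula only depends on the $\L$-reduct, so that parameters for an externally definable set of $\bm{M}$ can be transferred into a saturated $\L'$-elementary extension of $\bm{M}'$ by a standard type-realization argument.
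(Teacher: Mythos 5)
Your proof is correct and takes essentially the same route as the paper's: the paper observes that a sufficiently saturated $\bm{N}\succeq_{\L}\bm{M}$ can be expanded to an $\L'$-elementary extension of $\bm{M}'$, while you run the same observation in the opposite direction by starting from a saturated $\bm{N}'\succeq_{\L'}\bm{M}'$ and transferring the parameter by realizing its $\L$-type; either way one concludes that every $\L$-externally definable subset of $\bm{M}$ is $\L'$-externally definable with the same trace on $M_x$, so $\bm{M}^{\Sh}$ is a reduct of $(\bm{M}')^{\Sh}$. The appeal to Fact~\ref{fac: Shelah exp distal}(2) for distality of $(\bm{M}')^{\Sh}$ is identical.
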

\begin{proof}
We first note that  $(\bm{M}')^{\Sh}$ is indeed an expansion of $\bm{M}^{\Sh}$, since every sufficiently saturated~$\bm{N}\succeq\bm{M}$
can be expanded to an $\L'$-structure $\bm{N}'$ with~${\bm{N}'\succeq\bm{M}'}$. Hence $\bm{M}^{\Sh}$ is a reduct of $(\bm{M}')^{\Sh}$, and the latter is distal by Fact~\ref{fac: Shelah exp distal}(2).
\end{proof}

\section{Distal Fields and Rings}\label{sec: distal fields and rings}

\noindent
We emphasize the following important fact:

\begin{fact}[{\cite[Corollary 6.3]{chernikov2015regularity}}] \label{fac: distal fields char 0} \label{fac: inf distal fields char 0}
No distal structure  interprets an infinite field of positive characteristic.
\end{fact}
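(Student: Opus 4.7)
This is the statement of~\cite[Corollary 6.3]{chernikov2015regularity}; I outline the natural strategy of proof. First, reduce to the case of a distal expansion of the field itself: if $T$ is distal and interprets an infinite field $K$ of characteristic $p>0$, then $T^{\eq}$ is distal by Corollary~\ref{cor:Meq distal}, $K$ appears as a $\emptyset$-definable sort $D$ of $T^{\eq}$, and the induced structure $D_{\ind}$ is distal by Lemma~\ref{lem: reduct of distal on a stab emb set is distal} and naturally expands the field operations on $K$. Thus it suffices to show that no distal structure $\bm{K}$ expands an infinite field of positive characteristic.

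The strategy is to produce an infinite non-constant totally indiscernible sequence in $\bm{K}$, contradicting Fact~\ref{distalnottotallyindisc}. The crucial input is that $(K,+,0)$ is an infinite $\emptyset$-definable abelian group of exponent $p$: its pure-group reduct is an infinite $\mathbb{F}_p$-vector space, which is $\omega$-categorical and totally transcendental, and in which every non-constant indiscernible sequence is totally indiscernible. What remains is to propagate this total indiscernibility from the additive reduct up to the full distal expansion $\bm{K}$.

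The main obstacle is exactly this propagation: one must show that a Morley sequence of a suitable global $\bm{K}$-invariant type extending the ``generic'' of $(K,+)$ remains totally indiscernible in $\bm{K}$. The natural tool is the theory of generically stable types in NIP groups (\cite{SimonGuide}): in bounded exponent, the NIP-generic type of $(K,+)$ is forced to be generically stable, essentially because every definable subgroup of finite index has index a power of $p$ and is therefore translation-invariant under the additive action, which rules out any $\bm{K}$-invariant linear order on a generic indiscernible sequence of the type supplied by Corollary~\ref{cor: distal indisc seq is definably ordered}. Since any Morley sequence of a generically stable type is totally indiscernible, this gives the desired contradiction, proving that no distal structure can interpret an infinite field of positive characteristic.
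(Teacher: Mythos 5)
The paper records this only as a citation to~\cite[Corollary 6.3]{chernikov2015regularity} and does not reprove it, so there is no internal argument to compare against; I will assess your proposal on its own terms. Your initial reduction to the case of a distal \emph{expansion} of the infinite field $K$, via Corollary~\ref{cor:Meq distal} and Lemma~\ref{lem: reduct of distal on a stab emb set is distal}, is fine, and aiming to contradict Fact~\ref{distalnottotallyindisc} by exhibiting a non-constant totally indiscernible sequence is a sensible target. The difficulty lies in how you propose to manufacture that sequence.

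Your argument uses only the additive structure: the sole input is that $(K,+)$ is an infinite $\mathbb{F}_p$-vector space, and you assert that in an NIP expansion of such a group a generic type is ``forced to be generically stable,'' whence a Morley sequence is totally indiscernible. This claim cannot be correct, and the present paper itself contains a counterexample. The Hahn space $H=H(\Q,\mathbb{F}_p)$ of Section~\ref{sec:Hahn spaces} is an infinite $\mathbb{F}_p$-vector space whose two-sorted valued expansion is distal by Corollary~\ref{cor:Hahn products distal}, so $(H,+)$ is an infinite $\emptyset$-definable abelian group of exponent $p$ living inside a distal structure. By Fact~\ref{distalnottotallyindisc} it therefore has no non-constant totally indiscernible sequence; concretely, Lemma~\ref{indiscernibleIsPCSeq} shows that every non-constant indiscernible sequence in $H$ is definably ordered through the valuation (pseudocauchy one way or the other), exactly as Corollary~\ref{cor: distal indisc seq is definably ordered} predicts. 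So bounded exponent alone does not make the additive group fsg, nor does it render a generic type generically stable. The intermediate sentence in your proposal is also a non-sequitur: additive translations are automorphisms of the group reduct but not of $\bm{K}$, and Corollary~\ref{cor: distal indisc seq is definably ordered} \emph{produces} a definable order on any non-constant indiscernible sequence in a distal theory, it is not something one can ``rule out.'' Any correct proof of the Fact must exploit the multiplicative (field) structure of $K$ in an essential way; the argument in~\cite{chernikov2015regularity} does so by invoking the combinatorial consequences of distality (the definable strong Erd\H{o}s--Hajnal property and regularity lemma) against an incidence configuration built from the field operations over large finite $\mathbb{F}_p$-substructures, which is a genuinely different mechanism from the one you sketch.
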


\noindent
We first observe that this generalizes from fields to rings without zero-divisors.
{\it In the rest of this section we let $R$ be a ring; here and in the rest of this paper, all rings  are assumed to be unital.}

\begin{fact}[{Jacobson, see e.g.,~\cite[Theorem 12.10]{lam2013first}}]\label{fac: Jacobson}
Assume that for every $r \in R$ there is some $n\geq 2$ such that $r^n = r$. Then $R$ is commutative.
\end{fact}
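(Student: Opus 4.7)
The plan is to give the classical proof of Jacobson's theorem via reduction to primitive factors and Wedderburn's little theorem on finite division rings. First I would observe that $R$ has no nonzero nilpotents: if $r^k = 0$ and $r^n = r$ with $n \geq 2$, iterating yields $r = r^{n^j}$ for all $j$, and choosing $j$ with $n^j \geq k$ forces $r = 0$. This absence of nilpotents forces the Jacobson radical $J(R)$ to vanish, since any $x \in J(R)$ with $x^n = x$ gives $x(x^{n-1} - 1) = 0$ with $1 - x^{n-1}$ a unit. Hence $R$ embeds subdirectly into a product of (left) primitive rings, each of which inherits the identity $\forall r\,\exists n \geq 2: r^n = r$. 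Since a subring of a product of commutative rings is commutative, it suffices to prove the theorem assuming $R$ is primitive.

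Next, by Jacobson's density theorem, a primitive $R$ sits densely inside $\operatorname{End}_D(M)$ for some right vector space $M$ over a division ring $D$. If $\dim_D M \geq 2$, density produces an element of $R$ acting as a rank-one nilpotent on a two-dimensional subspace of $M$ — contradicting $r^{n(r)} = r$. Hence $\dim_D M = 1$ and $R = D$ is a division ring. Since for every integer $k$ the element $k \cdot 1 \in D$ satisfies $k^n = k$ in $D$, the prime subring of $D$ is finite, so $\operatorname{char}(D) = p > 0$, and every $a \in D$ is algebraic over $\mathbb{F}_p$, making $\mathbb{F}_p(a)$ a finite subfield of $D$.

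The main obstacle is the final step: showing that a division ring $D$ in which every element is a root of some $x^n - x$ must be a field. The classical Jacobson argument takes $a \in D \setminus Z(D)$, considers the inner derivation $\delta_a\colon x \mapsto ax - xa$ on $D$, uses the identity $\delta_a^{p^m} = \delta_{a^{p^m}}$ available in characteristic $p$ together with $a^{p^m} = a$ for some suitable $m$ to see that $\delta_a$ is an algebraic $\mathbb{F}_p$-linear operator whose minimal polynomial splits over the finite field $\mathbb{F}_p(a)$, and then extracts a finite division subring $D_0 \subseteq D$ with $a \in D_0 \setminus Z(D_0)$ — contradicting Wedderburn's little theorem, which forces $D_0$ to be commutative. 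Hence $D = Z(D)$, which combined with the reductions above gives that $R$ is commutative. In practice, I would carry out the first two paragraphs in detail and simply cite the final division-ring step, as Lam does in \cite{lam2013first}.
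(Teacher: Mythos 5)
The paper does not prove this statement; it cites Lam~\cite{lam2013first}*{Theorem~12.10} and treats it as a black box, so there is no ``paper's proof'' to compare against. Your proposal is a correct reconstruction of the standard argument underlying that citation, and the two reduction steps you carry out in detail are sound: the observation that the identity $r^{n(r)}=r$ kills nilpotents and hence forces $J(R)=0$ (via $x(x^{n-1}-1)=0$ with $1-x^{n-1}$ a unit for $x\in J(R)$), the subdirect embedding of $R$ into a product of primitive factors each inheriting the hypothesis, and the density argument showing that a primitive factor with this property must be a division ring because any $r$ acting as a rank-one nilpotent on a $2$-dimensional subspace would violate $r^n=r$. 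The characteristic-$p$ and local-finiteness observations are also correct. You are right that the remaining content is the division-ring case, which Lam handles via Wedderburn's little theorem and the inner-derivation trick you sketch; since your stated plan is to cite that step rather than reprove it, and the paper itself cites the whole theorem, this is entirely in the spirit of the paper's usage. One small presentational caveat: in the subdirect reduction it is worth saying explicitly that ``subring of a product of commutative rings'' includes the case of \emph{noncommutative} multiplication a priori --- commutativity of the ambient product is what forces commutativity of the subring --- but this is exactly what you use, so the logic is fine.
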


\noindent
Recall that
the {\it characteristic $\ch(R)$}\/ of $R$  is the smallest $n \geq 1$ such that~$n \cdot 1 = 0$, if such an
$n$ exists, and $\ch(R):=0$ otherwise. For~$a\in R$ we let $${C(a):=\{b\in R:ab=ba\}},$$ a subring of $R$. We also let
 $$Z(R):=\bigcap_{a\in R} C(a),$$ a commutative subring of $R$,   the {\it center}\/ of $R$.

\begin{prop}\label{prop: distal rings}
Suppose $R$ is  infinite     without zero-divisors and interpretable in a distal structure. Then~$R$ has characteristic~zero.
\end{prop}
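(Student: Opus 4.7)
The plan is to derive a contradiction from the assumption $\ch(R) = p > 0$ (with $p$ prime, since $R$ has no zero-divisors) by interpreting an infinite field of characteristic~$p$ in the distal structure in which $R$ is interpretable, contrary to Fact~\ref{fac: distal fields char 0}. It is enough to produce a definable subring $D \subseteq R$ which is infinite, commutative, and without zero-divisors: then $D$ is an integral domain of characteristic~$p$, and its field of fractions $\Frac(D)$ is interpretable in~$D$ in the standard way as $(D \times (D \setminus \{0\}))/{\sim}$ with $(a,b) \sim (c,d) \Leftrightarrow ad = bc$. Composing interpretations, $\Frac(D)$ becomes an infinite field of characteristic~$p$ interpretable in the distal structure in which $R$ is interpretable, contradicting Fact~\ref{fac: distal fields char 0}.

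If $R$ happens to be commutative, I simply take $D := R$. In the general case, I would first apply the contrapositive of Jacobson's theorem (Fact~\ref{fac: Jacobson}) to find some $r \in R$ with $r^n \neq r$ for every $n \geq 2$. Since $R$ has no zero-divisors, this forces $r \neq 0$ (otherwise $r^2 = 0 = r$) and $r^m \neq 1$ for all $m \geq 1$ (otherwise $r^{m+1} = r$). Consequently, the powers $r, r^2, r^3, \dots$ are pairwise distinct: from $r^i = r^j$ with $i < j$ one would get $r^i(r^{j-i}-1) = 0$, hence $r^{j-i} = 1$ by absence of zero-divisors, a contradiction.

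For the required subring I would take the double centralizer $D := \{b \in R : ba = ab \text{ for all } a \in C(r)\}$, which is definable (with parameter $r$) by the formula $\forall a\,(ar = ra \to ba = ab)$. A routine check shows $D$ is a subring of $R$ containing $r$, hence it contains all powers of $r$ and is infinite; as a subring of $R$ it inherits the absence of zero-divisors. The main obstacle, and the reason for choosing the double centralizer over the naive but non-definable candidate $\mathbb{F}_p[r]$, is verifying that $D$ is commutative: given $b_1, b_2 \in D$, the element $b_1$ itself lies in $C(r)$ (because $r \in C(r)$ trivially, and $b_1$ commutes with every element of $C(r)$), so by definition of~$D$ the element $b_2$ commutes with $b_1$, yielding $b_1 b_2 = b_2 b_1$. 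This completes the construction of $D$ and hence the contradiction.
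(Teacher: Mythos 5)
Your proof is correct and follows essentially the same route as the paper's: handle the commutative case by interpreting the fraction field and invoking the fact that no distal structure interprets an infinite field of positive characteristic, and in the non-commutative case use Jacobson's theorem to produce $r$ with pairwise-distinct powers and pass to the definable infinite commutative subring $Z(C(r))$ (your ``double centralizer'' $D$ coincides with $Z(C(r))$, since $r \in C(r)$ forces $D \subseteq C(r)$). The only differences are cosmetic: you phrase the argument as a contradiction from $\ch(R)=p$ rather than concluding $\ch(R)=0$ directly, and you spell out the verification that the powers of $r$ are pairwise distinct, which the paper leaves implicit.
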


\begin{proof}
Note that $R$ having no zero-divisors implies that the only nilpotent element of $R$ is~$0$.
First assume that $R$ is commutative. Then $R$ is an integral domain, and  interprets its fraction field~$F$.
But~$F$ is of characteristic $0$ by Fact~\ref{fac: distal fields char 0}, and hence so is $R$.
Now suppose~$R$ is not commutative. In this case,  Fact~\ref{fac: Jacobson} yields some $r\in R$ such that $r^n\neq r$ for all~$n\geq 2$.
Then the powers $r^n$ of $r$ are pairwise distinct, so the  definable commutative subring $R'=Z(C(r))$ of $R$ is infinite.
By what we just showed, $\ch(R')=0$,   hence $\ch(R)=0$.
\end{proof}

\noindent
Here is  a slight strengthening of this proposition. An idempotent $e$ of $R$ is said to be {\it central}\/ if~$e\in Z(R)$, and
{\it centrally primitive}\/ if $e$ is central, $e\neq 0$,  and~$e$ cannot be written as a sum $e=a+b$ of two nonzero   central idem\-po\-tents~${a,b\in R}$ with~$ab=0$.
For every central idempotent~$e$ of $R$, the ideal~$Re$  of $R$ is a   ring with multiplicative identity~$e$;
we have a surjective ring morphism $r\mapsto re\colon R\to Re$, and if $R$ has no zero-divisors, then neither does $Re$.

\begin{cor}
Suppose $R$ is  infinite   and  interpretable in a distal structure, and that
for every centrally primitive idempotent
$e$ of $R$, the ring $Re$ is finite or has no zero-divisors.
Then $R$ has char\-ac\-ter\-is\-tic~zero.
\end{cor}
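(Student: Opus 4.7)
The plan is to reduce to Proposition~\ref{prop: distal rings}. Suppose toward a contradiction that $\ch(R)=n>0$. The crucial observation is: if a centrally primitive idempotent $e\in R$ has $Re$ infinite, then by hypothesis $Re$ has no zero-divisors; since $Re$ is definable in $R$ it is interpretable in the same distal structure, so Proposition~\ref{prop: distal rings} applied to $Re$ yields $\ch(Re)=0$. On the other hand $n\cdot e=(n\cdot 1)e=0$, so $\ch(Re)\mid n$, a contradiction. Thus the whole corollary reduces to producing such an $e$.

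For this, I would first reduce to the case where $n=p^a$ is a prime power. The inclusion $\mathbb Z/n\mathbb Z\hookrightarrow Z(R)$ combined with the Chinese Remainder Theorem yields orthogonal central idempotents $f_1,\ldots,f_s\in Z(R)$ summing to $1$, with $\ch(Rf_i)=p_i^{a_i}$. Since $R$ is infinite, some $Rf_i$ is infinite, and the hypothesis descends to $Rf_i$—a centrally primitive idempotent of $Rf_i$ is exactly a centrally primitive idempotent of $R$ lying below $f_i$ (using that $Z(Rf_i)=Z(R)\cap Rf_i$, and that $a\le e\le f_i$ in the Boolean algebra of central idempotents whenever $e=a+b$ orthogonally). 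Replacing $R$ by $Rf_i$ we may thus assume $\ch(R)=p^a$. Next, try to decompose $1$ as a finite sum $1=e_1+\cdots+e_k$ of pairwise orthogonal centrally primitive idempotents: then $R=\bigoplus_i Re_i$, and infinitude of $R$ forces at least one $Re_i$ to be infinite, furnishing the desired $e$. The simplest instance is when $1$ is itself centrally primitive: then the hypothesis gives either $R$ finite (contradicting our assumption) or $R$ without zero-divisors, in which case Proposition~\ref{prop: distal rings} applies directly.

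The main obstacle is the case where $1$ admits no finite orthogonal decomposition into centrally primitive idempotents—for instance, when the Boolean algebra of central idempotents of $R$ has infinitely many atoms or a nontrivial atomless part. There the hypothesis of the corollary is silent, and one must exploit distality directly to obtain a contradiction. I expect this to proceed by extracting, via Erd\H os-Rado / Ramsey, an indiscernible sequence of pairwise orthogonal nonzero central idempotents in $\mathbb M$, and then observing that the Boolean symmetry of such a family forces the extracted sequence to be totally indiscernible, contradicting Fact~\ref{distalnottotallyindisc}. Making this extraction rigorous—in particular, verifying that total indiscernibility really is forced rather than merely order-indiscernibility—is the technical heart of the argument, and is where I would expect to invest the most effort.
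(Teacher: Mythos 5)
Your first step is exactly right: it suffices to exhibit a centrally primitive idempotent $e$ with $Re$ infinite, since then the hypothesis gives $Re$ without zero-divisors, and Proposition~\ref{prop: distal rings} applied to the interpretable ring $Re$ yields $\ch(Re)=0$, whence $\ch(R)=0$. Your detour through prime power characteristic is harmless but unnecessary.

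The gap is in your treatment of the ``main obstacle.'' You correctly isolate that the remaining issue is producing a finite orthogonal decomposition $1=e_1+\cdots+e_k$ into centrally primitive idempotents, but the route you sketch --- extract an indiscernible sequence of pairwise orthogonal nonzero central idempotents by Erd\H{o}s-Rado, then argue it is totally indiscernible --- does not go through. Order-indiscernibility does not force total indiscernibility, and your appeal to ``Boolean symmetry'' only constrains formulas in the Boolean-ring reduct; the ambient distal structure may well carry a definable linear order (distal structures typically do) that indexes the sequence, in which case $\tp(e_1,e_2)\neq\tp(e_2,e_1)$ in the full structure and no contradiction results. Indeed, Fact~\ref{distalnottotallyindisc} tells you exactly that if such an infinite non-constant indiscernible sequence existed in a distal theory, it would \emph{not} be totally indiscernible, so this is not the contradiction to look for.

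The paper's argument is simpler and needs only NIP, not distality. The set $B(R)$ of central idempotents of $R$ is a definable Boolean subring of $R$, hence interpretable in a distal structure, hence NIP. But an infinite Boolean ring has IP: if $a_1,a_2,\dots$ are pairwise orthogonal nonzero idempotents, the formula $\varphi(x;y)$ given by $xy=x$ shatters $\{a_1,\dots,a_n\}$ for each $n$, witnessed by the elements $b_S=\sum_{i\in S}a_i$ for $S\subseteq\{1,\dots,n\}$. Thus $B(R)$ is finite, so it has finitely many atoms $e_1,\dots,e_n$ with $1=e_1+\cdots+e_n$; these are precisely the centrally primitive idempotents, and $R=Re_1\oplus\cdots\oplus Re_n$ by the standard structure theory (the paper cites Lam, \S{}22). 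Since $R$ is infinite, some $Re_i$ is infinite, and your first step finishes. The observation you are missing is that ``interpretable in a distal structure'' already gives NIP for free, and NIP alone rules out an infinite family of pairwise orthogonal idempotents --- no distality-specific indiscernibility argument is needed.
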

\begin{proof}
Let $B(R)$ be the set of central idempotents of $R$ forms a boolean subring of $R$. Since~$R$ has NIP, $B(R)$ is finite.
Thus there are some $n\geq 1$ and centrally primitive idempotents~$e_1,\dots,e_n$ of $R$ such that $R=Re_1\oplus\cdots\oplus Re_n$ (internal direct
sum of ideals of $R$); see~\cite[\S{}22]{lam2013first}. For some $i\in\{1,\dots,n\}$, the ring $Re_i$ is infinite, and hence has no zero-divisors;
by Proposition~\ref{prop: distal rings} we have $\ch(Re_i)=0$ and thus~$\ch(R)=0$.
\end{proof}

\noindent
In the next three subsections we show that the hypothesis of not having  zero-divisors cannot be dropped in Proposition~\ref{prop: distal rings}.
To produce an example, we employ a certain valued  $\mathbb F_p$-vector space; here and below, we fix a prime $p$.

%
%

\subsection{Hahn spaces over $\mathbb F_p$}\label{sec:Hahn spaces}
We first define a language $\L$ and an $\L$-theory $T$ whose intended model is
the  Hahn product
$H=H(\Q,\mathbb{F}_p)$, that is, the abelian group of all sequences $h=(h_q)_{q\in\Q}$ in $\mathbb{F}_p$ with well-ordered support
$$\operatorname{supp}h := \big\{ q\in\Q: h_q\neq0 \big\} \subseteq\Q,$$
equipped with
the valuation $v \colon H \to \Q_{\infty}$ satisfying
\[
v(h)=\min(\operatorname{supp} h)\quad\text{for $0\neq h\in H$,}
\]
which makes $H$ into a valued abelian group. (See, e.g., \cite[p.~74]{adamtt}.)
Let $\mathcal L$ be the two-sorted language with sorts
$s_{\text{g}}$ (for the underlying abelian group) and
$s_{\text{v}}$ (for the value set), and the following primitives:
a copy~$\{0,{-},{+}\}$  of the language of abelian groups on the sort  $s_{\text{g}}$;
a copy~$\{\leq,\infty\}$ of the language of ordered sets with an additional constant symbol $\infty$ on the sort $s_{\text{v}}$,
as well as a  function symbol~$v$ of sort~$s_{\text{g}}s_{\text{v}}$.
Next we define $T^-$ to be the (universal) $\L$-theory whose models~$(G,S;\dots)$ satisfy:
\begin{enumerate}
\setcounter{enumi}{3}
\item $(S;\leq)$ is a linearly ordered set with largest element $\infty$,
\item $(G;0,{-},{+})$ is an abelian group  with $pG=\{0\}$   (and hence is an $\mathbb{F}_p$-vector space in a natural way),
\item $v\colon G\to S$ is a (not necessarily surjective) $\mathbb{F}_p$-vector space valuation: for every $g,h\in G$,
\begin{enumerate}
\item $v(g)=\infty$ iff $h=0$,
\item $v(g+h)\geq \min\!\big(v(g),v(h)\big)$,
\item  $v(kg)=v(g)$ for every $k\in\Z\setminus p\Z$.
\end{enumerate}
\item
  for all $g,h\in G$ with   $vg=vh\neq\infty$  there is $k\in\{1,\ldots,p-1\}$ such that~$v(g-kh)>vg$
  (the Hahn space property \cite[p.~94]{adamtt}).
\end{enumerate}
Finally, we define $T$ to be the $\L$-theory containing $T^-$ whose models $(G,S;\ldots)$ satisfy in addition:
\begin{enumerate}
\setcounter{enumi}{7}
\item the ordered set $(S;\leq)$ is dense without smallest element, and
\item the map $v\colon G\to S$ is surjective.
\end{enumerate}

\noindent
Note that if $(G,S;\dots)$ is a model of $T^-$ which satisfies (9), then $(G,S,v)$ is a Hahn space over $\mathbb{F}_p$
in the sense of \cite[Section~2.3]{adamtt}.
All structures in the following two subsections will be models of $T^-$; we will denote them by $(G,S)$, $(G',S')$, $(G^*,S^*)$, and their valuation indiscriminately by $v$.

\subsection{Quantifier elimination}
There are three  relevant extension lemmas for models of $T^-$:

\begin{lemma}
Let  $s\in S\setminus  v(G)$. Then there are an extension $(G',S')$ of $(G,S)$   and~$g'\in G'$ such that
\begin{enumerate}
\item $v(g')=s$, and
\item given any embedding $i\colon (G,S)\to (G^*,S^*)$ and an element $g^*\in G^*$ such that~$v(g^*)=i(s)$, there is an embedding $i'\colon (G',S')\to (G^*,S^*)$ which extends $i$ such that $i'(g')=g^*$.
\end{enumerate}
Furthermore, given any $(G',S')$ and $g'\in G'$ which satisfy \textup{(1)} and \textup{(2)}, we have~$G'=G\oplus\mathbb{F}_p g'$ \textup{(}internal direct sum  of $\mathbb{F}_p$-vector spaces\textup{)}, $S'=S$,  $v(G')=v(G)\cup\{s\}$, and
the embedding $i'$ in \textup{(2)} is unique.
\end{lemma}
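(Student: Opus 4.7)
My plan is to define the extension explicitly, verify that it satisfies the axioms of $T^-$, establish the universal property directly, and then derive the ``furthermore'' clause by applying the universal property of the resulting minimal extension back onto itself.

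For existence, I take $G' := G\oplus \mathbb{F}_p g'$ (external direct sum of $\mathbb{F}_p$-vector spaces) with $S':=S$, and extend the valuation by
\[
v'(g+k g') \ :=\ \begin{cases} v(g) & \text{if $k=0$,} \\ \min\big(v(g),s\big) & \text{if $k\neq 0$,}\end{cases}
\]
using the convention $v(0)=\infty$. The crucial point is that since $s\notin v(G)$, the values $v(g)$ and $s$ never coincide for $g\in G\setminus\{0\}$, so the displayed minimum is always \emph{strict}. The ultrametric inequality then follows by a case analysis on whether the $g'$-components of the two summands cancel modulo $p$, and compatibility with scalar multiplication by $\mathbb{Z}\setminus p\mathbb{Z}$ is immediate since such scalars act as units on the $\mathbb{F}_p$-component. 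For the Hahn space property~(7), I distinguish whether the two elements have trivial or nontrivial $g'$-components and whether the common valuation equals $s$ or lies strictly below $s$; in each case a suitable scalar $k\in\{1,\dots,p-1\}$ is produced, either directly from~(7) for $G$ or after dividing out by $k_1 k_2^{-1}$ taken modulo $p$.

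For the universal property, given $i$ and $g^*$ as in~(2), I define $i'(g+kg') := i(g)+kg^*$. Because $s\notin v(G)$ and $i$ is injective on $S$, we have $i(v(g))\neq i(s)$ for all $g\in G\setminus\{0\}$, so $v(i(g)+kg^*)=\min(i(v(g)), i(s))$ whenever $k\neq 0$; this strict-minimum identity makes $i'$ simultaneously valuation-preserving and injective (a zero sum with $k\neq 0$ would force this minimum to equal $\infty$, impossible).

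For the ``furthermore'' clause, suppose $(G',S')$ and $g'$ satisfy~(1) and (2), and let $(\widetilde G,\widetilde S)$ and $\widetilde g$ denote the specific extension constructed above. Applying~(2) to the inclusion $(G,S)\hookrightarrow(\widetilde G,\widetilde S)$ together with the element $\widetilde g$ (which has $v(\widetilde g)=s$) yields an embedding $j\colon(G',S')\to(\widetilde G,\widetilde S)$ that is the identity on $(G,S)$ and sends $g'\mapsto\widetilde g$. The image of $j$ contains $G$ and $\widetilde g$, hence all of $\widetilde G = G\oplus\mathbb{F}_p\widetilde g$, and also all of $\widetilde S = S$; thus $j$ is an isomorphism. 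Transporting the description of $\widetilde G$, $\widetilde S$, and $v(\widetilde G)$ back through $j$ gives $G'=G\oplus\mathbb{F}_p g'$, $S'=S$, and $v(G')=v(G)\cup\{s\}$. Uniqueness of $i'$ in~(2) is then immediate: any two such maps must agree on $G$ and on $g'$, hence on all of $G'$ by the direct sum decomposition. The main obstacle I anticipate is the somewhat unglamorous case analysis in the Hahn space property when both summands have nonzero $g'$-components; once the strictness of the minima is fully exploited, however, this reduces routinely to~(7) for~$G$.
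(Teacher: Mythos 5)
Your proposal is correct and follows essentially the same approach as the paper's proof: take the external direct sum $G' = G \oplus \mathbb{F}_p g'$ with $S'=S$ and extend the valuation by $v(g+kg') = \min(v(g),s)$ for $k \in \mathbb{F}_p^\times$, exploiting throughout that $s \notin v(G)$ forces all the relevant minima to be strict. The paper's proof is a one-liner that leaves the verifications and the ``furthermore'' clause entirely to the reader; your fleshed-out case analysis of the Hahn space axiom and, in particular, your clean derivation of the ``furthermore'' clause --- applying property (2) for an arbitrary $(G',S')$ to the inclusion of $(G,S)$ into the explicit model $(\widetilde G,\widetilde S)$ and observing the resulting embedding is forced to be surjective --- are exactly the details the authors had in mind.
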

\begin{proof}
Let $g'$ be an element of an $\mathbb{F}_p$-vector space extension of $G$ with $g'\notin G$, and
set $G':=G\oplus\mathbb{F}_p g'$, and extend $v\colon G\to S$ to a map $G'\to S$, also denoted by $v$,
such that $v(g+kg')=\min(vg,s)$ for~$g\in G$, $k\in\mathbb F_p^\times$.
One verifies easily that then~$(G',S)$ is a model of $T^-$ and (1), (2) hold.
\end{proof}

\begin{lemma}\label{lem:fill in cut}
Let $P$ be a  cut in $S$ with $P\neq S$. Then there is an extension $(G',S')$ of $(G,S)$ and some~$s'\in S'$ such that
\begin{enumerate}
\item $s'$ realizes $P$, that is, $P<s'<S\setminus P$,
\item given any embedding $i\colon (G,S)\to(G^*,S^*)$ and an element $s^*\in S^*$ such that $i(P)<s^*<i(S\setminus P)$, there is an embedding $i'\colon (G',S')\to(G^*,S^*)$ which extends $i$ such that $i'(s')=s^*$.
\end{enumerate}
Furthermore, given any $(G',S')$ and $s'\in S'$ which satisfy \textup{(1)}, \textup{(2)}, we have~$G=G'$, $S' = P{}^\frown s'{}^\frown(S\setminus P)$, and  the embedding $i'$ in \textup{(2)} is unique.
\end{lemma}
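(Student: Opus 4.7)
The plan is to construct $(G', S')$ explicitly as the minimal extension: enlarge the value set by a single new element and leave the group unchanged. Specifically, pick a symbol $s' \notin S$ and set $S' := S \cup \{s'\}$, equipped with the unique total ordering extending that of $S$ for which $P < s' < S \setminus P$. Note that $s' < \infty$ because $P \neq S$ together with $P$ being downward closed forces $\infty \in S \setminus P$. Take $G' := G$ with the same $\mathbb{F}_p$-vector space structure, and regard $v$ as taking values in $S'$. Property (1) then holds by construction.

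To check that $(G', S')$ is a model of $T^-$: axiom (4) needs only that $S'$ is a linear order with largest element $\infty$, which is clear; axioms (5)--(7) concern only $G$ and $v(G) \subseteq S \subseteq S'$, so they carry over verbatim from $(G, S)$. For (2), given $i \colon (G, S) \to (G^*, S^*)$ and $s^* \in S^*$ with $i(P) < s^* < i(S \setminus P)$, the obvious assignment $i'|_G := i|_G$, $i'|_S := i|_S$, $i'(s') := s^*$ is clearly structure-preserving; the only nontrivial point is that the order on $S'$ is preserved, which is exactly the hypothesis on $s^*$.

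For the ``furthermore'' clauses, I would argue by minimality. Let $(G'', S'')$ and $s'' \in S''$ be any other candidates satisfying (1) and (2). Applying property (2) of $(G'', S'')$ to the inclusion $\mathrm{id} \colon (G, S) \hookrightarrow (G', S')$ and the element $s^* := s'$ yields an embedding $i' \colon (G'', S'') \to (G', S')$ fixing $(G, S)$ pointwise with $i'(s'') = s'$. Since $G' = G$ and $|S' \setminus S| = 1$, injectivity of $i'$ forces $G'' = G$ and $S'' = S \cup \{s''\}$, with $s''$ filling the cut $P$ by (1); this is the stated shape $P {}^\frown s'' {}^\frown (S \setminus P)$. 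Uniqueness of the embedding $i'$ in (2) is then immediate, since it is prescribed on all elements of $G' = G$ and on $S \cup \{s'\} = S'$.

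Honestly there is no real obstacle here: because we do not enlarge $G$ at all, the Hahn-space axioms transfer for free, and the only content is the universal property, which amounts to declaring $i'(s') := s^*$. The one point worth confirming carefully is that no axiom of $T^-$ forces the value set to ``remember'' which elements come from $v(G)$; inspection of (4)--(7) shows it does not, so adjoining an isolated new point to~$S$ is harmless.
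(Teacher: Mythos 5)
Your proof is correct, and since the paper declares this ``easy proof left to the reader,'' it is essentially the intended argument: adjoin one new point $s'$ to $S$ at the cut $P$, leave $G$ alone, observe that axioms (4)--(7) of $T^-$ only refer to $v(G)\subseteq S$ and so carry over, and then verify the universal property by sending $s'\mapsto s^*$. One small presentational point: in the ``furthermore'' step, the phrase ``injectivity of $i'$ forces $G''=G$'' elides a beat --- what actually does the work is that $i'$ is injective \emph{and} fixes $(G,S)$ pointwise, so for $g\in G''$ one has $i'(g)\in G$ and hence $i'(i'(g))=i'(g)$, giving $i'(g)=g$ and $g\in G$; you do state the pointwise-fixing hypothesis, so the argument is there, but making this chain explicit would remove any ambiguity. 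Everything else, including the observations that $\infty\in S\setminus P$ (so $\infty$ stays largest in $S'$) and that $s^*\notin i(S)$ automatically (so $i'$ stays injective), is exactly right.
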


\noindent
The easy proof of this lemma is left to the reader. Iterating the previous two lemmas  routinely implies:

\begin{cor}\label{cor:T-closure}
Every model $(G,S)$ of $T^-$ has a $T$-closure, that is, an extension~$(G',S')$ to a model of~$T$ such that
every embedding $(G,S)\to (G^*,S^*)$ into a model of $T$ extends to an embedding~$(G',S')\to (G^*,S^*)$.
\end{cor}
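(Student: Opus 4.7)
The plan is to construct $(G',S')$ by transfinite recursion, iteratively applying the two preceding lemmas to (a) force the valuation to be surjective and (b) force $(S;\leq)$ to be dense without smallest element. Set $(G_0,S_0):=(G,S)$. At a successor stage $\alpha+1$: if $(G_\alpha,S_\alpha)\models T$, we halt and set $(G',S'):=(G_\alpha,S_\alpha)$; otherwise, either there exists some $s\in S_\alpha\setminus v(G_\alpha)$, in which case we apply the first of the two lemmas to adjoin a preimage of $s$ to $G_\alpha$ (leaving $S_\alpha$ unchanged); or $v$ is already surjective onto $S_\alpha$ but $(S_\alpha;\leq)$ fails to be dense without smallest element, in which case some nontrivial cut $P$ in $S_\alpha$ is unrealized (either a ``gap'' between consecutive elements, or the empty cut if $S_\alpha$ has a smallest element below $\infty$), and Lemma~\ref{lem:fill in cut} adjoins a realization of $P$ (leaving $G_\alpha$ unchanged). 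At limit stages we take unions, which remain models of $T^-$ since $T^-$ is universal. Each successor step strictly enlarges $G_\alpha\cup S_\alpha$, so a standard cardinality bookkeeping argument shows the recursion terminates at some ordinal $\lambda$.

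For the universal property, given an embedding $j\colon(G,S)\to(G^*,S^*)$ into a model of $T$, we recursively extend $j$ along the chain to embeddings $j_\alpha\colon(G_\alpha,S_\alpha)\to(G^*,S^*)$. At a successor step arising from the first lemma with unrealized value $s\in S_\alpha\setminus v(G_\alpha)$, surjectivity of $v\colon G^*\to S^*$ yields some $g^*\in G^*$ with $v(g^*)=j_\alpha(s)$, and clause~(2) of that lemma produces the unique extension $j_{\alpha+1}$. At a step arising from Lemma~\ref{lem:fill in cut} with cut $P$, density of $(S^*;\leq)$ without smallest element realizes the nontrivial cut $j_\alpha(P)$ by some $s^*\in S^*$, and clause~(2) of that lemma again furnishes~$j_{\alpha+1}$. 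At limits we take the union of the $j_\beta$ for $\beta<\alpha$. The resulting $j_\lambda\colon(G',S')\to(G^*,S^*)$ extends $j$, as required.

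The only nonroutine point is ensuring that alternating between the two kinds of steps eventually produces a structure satisfying both density of $(S;\leq)$ and surjectivity of $v$ simultaneously; this is handled automatically by halting only when $(G_\alpha,S_\alpha)\models T$. The uniqueness clauses in the two preceding lemmas, while not strictly needed for existence of $(G',S')$, guarantee that the extensions $j_{\alpha+1}$ are canonical and so there is nothing to verify for coherence at limit stages.
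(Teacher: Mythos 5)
Your proof follows exactly the route the paper has in mind when it says the corollary is obtained by ``iterating the previous two lemmas,'' and the construction and the extension-of-embeddings argument are both correct. Two small remarks: the empty cut $P=\emptyset$, which you invoke to eliminate a smallest element, is a \emph{trivial} cut in the paper's terminology (Lemma~\ref{lem:fill in cut} only requires $P\neq S$, so this is harmless, but your phrase ``nontrivial cut'' is a slip); and the sentence ``Each successor step strictly enlarges $G_\alpha\cup S_\alpha$, so \dots the recursion terminates'' reads backwards --- strict growth by itself does not force termination, what you actually need is to organize the bookkeeping so that every unrealized value of $S$ and every gap that ever appears is scheduled, together with a cardinality bound on the final structure (e.g.\ by running $\omega$ many sweeps each of which handles all issues present at that stage, keeping the size at $\max(|G|,|S|,\aleph_0)$). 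Finally, the uniqueness clauses in the lemmas are a convenience, not a necessity: coherence at limit stages already follows from the $j_\beta$ forming a chain of extensions, whether or not each step is canonical.
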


\noindent
We recall some basic definitions about pseudoconvergence in valued abelian groups; our reference for this material is \cite[Section~2.2]{adamtt}.
Let $(g_\rho)$ be a sequence in $G$ indexed by  elements of an infinite well-ordered set without largest element.
Then $(g_\rho)$ is said to be a  {\it pseudocauchy} sequence (abbreviated: a {\it pc-sequence}\/) if there is some
index $\rho_0$ such that for all indices $\tau>\sigma>\rho>\rho_0$ we have $v(g_\tau-g_\sigma)>v(g_\sigma-g_\rho)$.
Given~$g\in G$, we write~$g_\rho\leadsto g$ if the sequence $\big(v(g-g_\rho)\big)$ in $S$ is eventually strictly increasing.
We say that a pc-sequence~$(g_\rho)$ in $G$ is {\it divergent}\/  if there is no $g\in G$ with $g_\rho\leadsto g$.
The next lemma is
immediate from~\cite[Lem\-ma~2.3.1]{adamtt}.

\begin{lemma}
Let $(g_{\rho})$ be a divergent pc-sequence in $G$. Then there are an extension~$(G',S')$ of $(G,S)$ and some $g'\in G'$ such that:
\begin{enumerate}
\item $g_{\rho}\leadsto g'$, and
\item given any embedding $i\colon (G,S)\to (G^*,S^*)$ and an element $g^*\in G^*$ such that $i(g_{\rho})\leadsto g^*$, there is an embedding $i'\colon (G',S')\to (G^*,S^*)$ which extends $i$ such that $i'(g')=g^*$.
\end{enumerate}
Furthermore, given any $(G',S')$ and $g'\in G'$ which satisfy \textup{(1)}, \textup{(2)}, we have~$G'=G\oplus\mathbb{F}_p g'$ \textup{(}internal direct sum of $\mathbb{F}_p$-vector spaces\textup{)}, $S'=S$,  and the embedding $i'$ in \textup{(2)} is unique.
\end{lemma}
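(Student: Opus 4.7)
The plan is to construct the extension $(G',S')$ explicitly and verify (1) and (2) directly; the furthermore clause will then follow from (2) by a standard mutual-embedding argument. I would take $g'$ to be a fresh formal symbol, set $G' := G \oplus \mathbb{F}_p g'$ as $\mathbb{F}_p$-vector spaces and $S' := S$, and extend the valuation $v \colon G \to S$ to $v \colon G' \to S$ using the following dichotomy for pc-sequences: for every $a \in G$, the sequence $(a + g_\rho)$ is itself a pc-sequence with the same eventual gaps $\gamma_\rho := v(g_\sigma - g_\rho)$ (for $\sigma$ close to $\rho$), so either $v(a + g_\rho) = \gamma_\rho$ eventually (equivalently, $g_\rho \leadsto -a$) or $v(a + g_\rho)$ is eventually constant in $S$. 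Since $(g_\rho)$ is divergent, only the second alternative can occur, and I set $v(a + g')$ to be that eventual constant value, extending $\mathbb{F}_p$-homogeneously by $v(a + k g') := v(k^{-1} a + g')$ for $a \in G$, $k \in \mathbb{F}_p^\times$, as forced by axiom (6)(c).

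Property (1) is immediate: for a fixed index $\rho_0$, the construction gives $v(g' - g_{\rho_0})$ as the eventual value of $v(g_\rho - g_{\rho_0})$, which equals $\gamma_{\rho_0}$ for large $\rho$ (since $(g_\rho)$ is a pc-sequence). Hence $v(g' - g_\rho) = \gamma_\rho$ is eventually strictly increasing, i.e., $g_\rho \leadsto g'$.

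The main technical obstacle is verifying that $v \colon G' \to S$ is an $\mathbb{F}_p$-vector space valuation satisfying the Hahn space property (7). For the ultrametric inequality (6)(b) applied to $h_i = a_i + k_i g'$ ($i=1,2$), I would pick an index $\rho$ large enough that $v(a_i + k_i g')$ is witnessed by $v(a_i + k_i g_\rho)$ for both $i$, and also witnesses the value of $v(h_1 + h_2) = v((a_1+a_2) + (k_1+k_2) g')$; then the required inequality in $G'$ transfers from the corresponding inequality in $(G, S)$ for $(a_1 + k_1 g_\rho) + (a_2 + k_2 g_\rho)$. For property (7), given $h_1, h_2 \in G'$ with $v(h_1) = v(h_2) \neq \infty$, writing $h_i = a_i + k_i g'$ and replacing $g'$ by $g_\rho$ for $\rho$ sufficiently large again reduces to the Hahn space property of $(G, S)$ applied to $a_i + k_i g_\rho$, producing the required $k \in \{1, \dots, p-1\}$ with $v(h_1 - k h_2) > v(h_1)$.

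For the universal property (2), given $i \colon (G, S) \to (G^*, S^*)$ and $g^* \in G^*$ with $i(g_\rho) \leadsto g^*$, I define $i' \colon G' \to G^*$ by $\mathbb{F}_p$-linearity with $i'|_G = i$ and $i'(g') = g^*$; this is well-defined because $g' \notin G$ (otherwise $(g_\rho)$ would pseudoconverge to $g' \in G$, contradicting divergence), and clearly the unique such extension of $i$. Its compatibility with the valuations reduces to the identity $v^*(i(a) + g^*) = i(v(a + g'))$ for $a \in G$: both sides are the eventual values of $v^*(i(a) + i(g_\rho)) = i(v(a + g_\rho))$, and since $i$ is order-preserving, it commutes with the operation of taking eventual constants. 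Finally, for the furthermore clause, given any $(G', S', g')$ satisfying (1), (2), applying (2) in both directions between it and the extension just constructed produces mutually inverse embeddings fixing $G$ and matching the two distinguished elements; this forces $G' = G \oplus \mathbb{F}_p g'$, $S' = S$, and the uniqueness of $i'$ in any such extension.
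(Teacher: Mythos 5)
The paper does not give its own proof of this lemma; it simply cites \cite[Lemma~2.3.1]{adamtt}, which is precisely the pseudolimit adjunction for Hahn spaces that you carry out explicitly. Your construction --- adjoin $g'$, set $v(a+g')$ equal to the eventually constant value of $v(a+g_\rho)$ (which exists by the pc-dichotomy and is forced by divergence), extend $\mathbb{F}_p$-homogeneously, and reduce the valuation axioms, the Hahn property, and the embedding property to $(G,S)$ by evaluating at a sufficiently late index $\rho$ --- is correct and is the same argument the cited lemma encapsulates. Two steps are stated a little too quickly and are worth tightening. In the compatibility of $i'$ with the valuation, the identity $v^*(i(a)+g^*)=i(v(a+g'))$ is not merely ``$i$ commutes with eventual constants'': one must observe that the eventual constant $\alpha:=v(a+g')$ lies strictly below the gaps $\gamma_\rho$ (again a consequence of divergence), so that $v^*(g^*-i(g_\rho))=i(\gamma_\rho)>i(\alpha)$ for late $\rho$, whence $v^*(i(a)+g^*)=v^*\bigl(i(a+g_\rho)+(g^*-i(g_\rho))\bigr)=i(\alpha)$. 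And in the Hahn-property check, the index $\rho$ should be chosen large enough to witness the eventual values of $v(h_1-kh_2)$ for all $k\in\{1,\dots,p-1\}$ simultaneously, with the degenerate case $k_1=kk_2$ (where $h_1-kh_2\in G$) read off directly in $(G,S)$; then the $k$ supplied by the Hahn axiom at that single $\rho$ transfers to $(G',S')$. Both are routine fills and your argument is otherwise complete.
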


\noindent
We now combine the embedding lemmas above to show:

\begin{prop}\label{T1QE}
The $\L$-theory $T$ has QE. 
\end{prop}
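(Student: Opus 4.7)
My plan is to apply the standard embedding test for quantifier elimination: given two models $(G,S),(G^*,S^*)\models T$ with $(G^*,S^*)$ sufficiently saturated (say $\kappa$-saturated for some $\kappa>|G|+|S|$), one shows that every embedding $i\colon(G_0,S_0)\to(G^*,S^*)$ from an $\mathcal{L}$-substructure of $(G,S)$ extends to an embedding of all of $(G,S)$ into $(G^*,S^*)$. The three extension lemmas proved earlier in this subsection, together with Corollary~\ref{cor:T-closure}, furnish exactly the building blocks needed. I would invoke Zorn's lemma to fix a maximal such pair $((G_0,S_0),i)$ and argue $(G_0,S_0)=(G,S)$ by ruling out the alternatives.

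If $S_0\subsetneq S$, I would pick $s\in S\setminus S_0$ and consider the cut $P=\{t\in S_0:t<s\}$ in $S_0$; since $\infty\in S_0$, one has $P\neq S_0$. Using density (axiom~(8)) and the absence of a smallest element in $S^*$, together with saturation, one finds $s^*\in S^*$ filling the image of $P$ in $i(S_0)$. Lemma~\ref{lem:fill in cut} will then extend $i$ by $s\mapsto s^*$, contradicting maximality.

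If instead $S_0=S$ but $G_0\subsetneq G$, I would pick $g\in G\setminus G_0$ and analyze the set of approximations $V(g):=\{v(g-g_0):g_0\in G_0\}\subseteq S$. If $V(g)$ attains a maximum at $v(g-g_0^*)$, then $h:=g-g_0^*$ satisfies $v(h)\notin v(G_0)$: otherwise axiom~(7) would produce $g_1\in G_0$ and $k\in\{1,\dots,p-1\}$ with $v(h-kg_1)>v(h)$, contradicting the maximality of $v(g-g_0^*)$. Surjectivity of $v$ on $G^*$ (axiom~(9)) then yields $h^*\in G^*$ with $v(h^*)=i(v(h))$, and the first extension lemma applies. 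If $V(g)$ has no maximum, I would take a well-ordered cofinal approximating sequence $(g_\rho)$ in $G_0$ with the values $v(g-g_\rho)$ strictly increasing and cofinal in $V(g)$; standard ultrametric computations show that $(g_\rho)$ is a pc-sequence with $g_\rho\leadsto g$, and saturation of $(G^*,S^*)$ yields a pseudo-limit $g^*\in G^*$ of $(i(g_\rho))$ to which the third extension lemma applies.

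The main obstacle will be the pc-sequence sub-case. Two subtle points require attention: first, divergence of $(g_\rho)$ in $G_0$, which holds because any pseudo-limit $g_0^{**}\in G_0$ would supply $v(g-g_0^{**})\in V(g)$ strictly exceeding every $v(g-g_\rho)$, contradicting cofinality; and second, consistency of the partial type asserting ``$x$ is a pseudo-limit of $(i(g_\rho))$'' over $i(G_0)$, which is finitely satisfiable in $G^*$ by taking sufficiently late terms $i(g_{\rho_n})$ as witnesses and is thus realized by $\kappa$-saturation. All remaining steps reduce to routine applications of the three extension lemmas.
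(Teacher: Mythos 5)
Your proof is correct, and it runs on the same engine as the paper's: the three extension lemmas of this subsection combined with saturation, packaged through a standard embedding test for quantifier elimination. The difference is organizational. The paper first invokes Corollary~\ref{cor:T-closure} together with the test in \cite[Corollary~B.11.11]{adamtt}, which reduces the problem to extending the inclusion of a model $(G,S)\models T$ by one step inside a proper extension $(G_1,S_1)\models T$; since the base is then a model of $T$ (so $v\colon G\to S$ is surjective and $S$ is dense), only two cases arise, namely a new value $s_1\in S_1\setminus S$ (handled by Lemma~\ref{lem:fill in cut} plus the first extension lemma) and $S_1=S$, where \cite[Lemma~2.2.18]{adamtt} is quoted to produce a divergent pc-sequence in $G$ pseudo-converging to any $g_1\in G_1\setminus G$. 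You instead run a Zorn argument over arbitrary substructures $(G_0,S_0)$, which are merely models of $T^-$, and must therefore add the sub-case in which the approximation set $V(g)=\{v(g-g_0):g_0\in G_0\}$ attains a maximum; your treatment of it (the Hahn axiom in $(G,S)$ forces $v(g-g_0^*)\notin v(G_0)$, and surjectivity of $v$ on $G^*$ supplies the target element) together with your no-maximum analysis is exactly what replaces the citation of \cite[Lemma~2.2.18]{adamtt}, and it re-proves the instance of that lemma that the paper needs. Your route buys independence from Corollary~\ref{cor:T-closure}; the paper's buys a shorter case analysis. One point you share with the paper, equally implicitly: in each step one uses that \emph{any} extension $G_0\oplus\mathbb{F}_p g$ realizing the prescribed data (new value, cut, or pseudo-limit of the given divergent pc-sequence) enjoys the universal embedding property stated in the lemmas, not just the extension constructed there; this is harmless, since the proofs of those lemmas show that the valuation on such an extension is forced, but it is worth being aware that the lemmas as literally stated only assert existence.
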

\begin{proof}
By Corollary~\ref{cor:T-closure} and one of the standard QE tests (see, e.g., \cite[Corollary~B.11.11]{adamtt}), it suffices to show:
Let $(G,S)\subsetneq (G_1,S_1)$ be a proper extension of models of $T$ and $(G^*,S^*)$ be an $|G|^+$-saturated elementary extension
of~$(G,S)$; then the natural inclusion $(G,S)\to (G^*,S^*)$ extends to
an embedding $(G',S')\to (G^*,S^*)$ of a substructure $(G',S')$
of $(G_1,S_1)$  properly extending $(G,S)$.

If $S\neq S_1$, pick an arbitrary $g_1\in G_1$ with $s_1:=v(g_1)\in S_1\setminus S$. Then $|G|^+$-saturation of $(G^*,S^*)$ yields an element $s^*$ of  $S^*$ such that  for each $s\in S$ we have~$s<s^*$ iff $s<s_1$,
and by Lemma~\ref{lem:fill in cut},  setting $G':=G\oplus \mathbb{F}_p g_1$ and $S':=S\cup\{s_1\}$ gives rise to a substructure~$(G',S')$ of $(G_1,S_1)$ with the required property.

Now suppose $S=S_1$. Then $G\neq G_1$; pick an arbitrary $g_1\in G_1\setminus G$.
Then~\cite[Lem\-ma~2.2.18]{adamtt} yields a divergent pc-sequence $(g_\rho)$ in $G$ with $g_\rho\leadsto g_1$, and
  $|G|^+$-saturation of $(G^*,S^*)$ yields an element~$g^*$ of~$G^*$ with $g_\rho\leadsto g^*$ (see the proof of~\cite[Lemma~2.2.5]{adamtt}).
  In this case, setting $G':=G\oplus\mathbb{F}_p g_1$ and~$S':=S$ we obtain a substructure~$(G',S')$ of $(G_1,S_1)$ with the required property.
\end{proof}

\begin{cor}
The $\L$-theory $T$ is complete; it is the model completion of $T^-$.
\end{cor}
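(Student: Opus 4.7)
The plan is to derive both claims from the quantifier elimination for $T$ just proved in Proposition~\ref{T1QE}, together with the existence of $T$-closures provided by Corollary~\ref{cor:T-closure}.

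To establish completeness of $T$, I would first observe that the $\L$-structure $(G_0,S_0)$ with $G_0 := \{0\}$ and $S_0 := \{\infty\}$, equipped with the only possible group operation and ordering and with $v(0) = \infty$, is a (trivial) model of $T^-$: axioms~(4)--(6) hold by direct inspection, and the Hahn space property~(7) is vacuous. Since $0$ and $\infty$ are among the primitives of $\L$, this structure embeds as an $\L$-substructure into every model of $T$. By the QE proved in Proposition~\ref{T1QE}, any two models of $T$ sharing a common $\L$-substructure are elementarily equivalent; applying this with $(G_0,S_0)$ as the common substructure yields completeness of $T$.

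For the model completion assertion, recall that $T$ is the model completion of $T^-$ precisely when $T \supseteq T^-$, every model of $T^-$ embeds into a model of $T$, and $T \cup \operatorname{Diag}(M)$ is complete for every $M \models T^-$. The inclusion $T \supseteq T^-$ is built into the definitions; embeddability is exactly the content of Corollary~\ref{cor:T-closure}; and the relative completeness follows by rerunning the preceding QE argument with an arbitrary $M \models T^-$ in place of the trivial model, using that any two models of $T$ extending $M$ share $M$ as a common $\L$-substructure and therefore, by Proposition~\ref{T1QE}, have the same $\L_M$-theory.

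I do not anticipate any serious obstacle: the substantial work has been absorbed into Proposition~\ref{T1QE} and Corollary~\ref{cor:T-closure}, and the only point requiring care is exhibiting an $\L$-substructure common to all models of $T$, which is supplied by the trivial Hahn space above.
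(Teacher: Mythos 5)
Your argument is correct and is essentially the proof the paper intends (left implicit after Proposition~\ref{T1QE} and Corollary~\ref{cor:T-closure}): quantifier elimination plus the trivial common substructure $(\{0\},\{\infty\})$ gives completeness, and quantifier elimination plus the existence of $T$-closures gives that $T\cup\operatorname{Diag}(M)$ is complete for every $M\models T^-$, i.e., that $T$ is the model completion of $T^-$. No gaps to report.
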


\noindent
Hence if $(G,S)\models T$ and $G_0$ is a subgroup of $G$ with $v(G_0)=S$, then $(G_0,S)$ is an elementary substructure of $(G,S)$. In particular, we have
$(H_0,\Q)\preceq (H,\Q)$ where~$H_0:=\big\{h\in H:\text{$\operatorname{supp}(h)$ finite}\big\}$.

\begin{remarkunnumbered}
The previous proposition and its corollary can also be deduced (in a one-sorted setting) from more general results in~\cite{KK97}.
\end{remarkunnumbered}

\subsection{Indiscernible sequences}
Let $(G,S)\models T$. In the following two lemmas we prove some properties of nonconstant indiscernible sequences in $G$.
For this let~$(g_i)_{i\in I}$ be a sequence in $G$ where $I$ is a nonempty linearly ordered set without a largest or smallest element.  We let $I^*$ be the  set $I$ equipped with the reversed ordering~$\geq$.

\begin{lemma}\label{indiscernibleIsPCSeq}
Suppose $(g_i)$ is nonconstant and indiscernible. Then exactly one of the following holds:
\begin{enumerate}
\item $v(g_i-g_j)<v(g_j-g_k)$ for all $i<j<k$ in $I$ \textup{(}we say that $(g_i)$  is \emph{pseudocauchy}\textup{)}; or
\item $v(g_i-g_j)>v(g_j-g_k)$ for all $i<j<k$ in $I$  \textup{(}so the sequence $(g_i)_{i\in I^*}$ is pseudocauchy\textup{)}.
\end{enumerate}
\end{lemma}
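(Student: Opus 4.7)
The plan is to first use indiscernibility to reduce the statement to a trichotomy, then eliminate the middle case using the Hahn space property.

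First, I would observe that for $i<j<k$ in $I$, the truth values of the three $\mathcal L$-formulas $v(g_i-g_j)<v(g_j-g_k)$, $v(g_i-g_j)=v(g_j-g_k)$, and $v(g_i-g_j)>v(g_j-g_k)$ depend only on the order type of $(i,j,k)$ by indiscernibility of $(g_i)$. Since the three alternatives are mutually exclusive and exhaustive, exactly one holds uniformly for all such triples. The first alternative is case (1), the third is case (2), so the entire problem reduces to ruling out the equality case.

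Assume therefore that $v(g_i-g_j)=v(g_j-g_k)$ for all $i<j<k$. A second use of indiscernibility shows that $v(g_a-g_b)$ depends only on the order type of $\{a,b\}$, yielding a single $\sigma\in S$ with $v(g_a-g_b)=\sigma$ for all $a<b$; moreover $\sigma\neq\infty$ because the sequence is nonconstant. By the Hahn space property (axiom~(7)) applied to $g_i-g_j$ and $g_j-g_k$, there is a \emph{unique} $m\in\{1,\dots,p-1\}$ with $v\bigl((g_i-g_j)-m(g_j-g_k)\bigr)>\sigma$, and uniqueness combined with indiscernibility forces $m$ to be independent of the triple. Rewriting, I would record this as the congruence
$$g_i\ \equiv\ (1+m)g_j - m g_k\pmod{G^{>\sigma}},$$
where $G^{>\sigma}:=\{g\in G:v(g)>\sigma\}$ is a subgroup of $G$.

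The main obstacle (and only genuinely computational step) is turning this congruence into a contradiction. The idea is to express $g_a$ modulo $G^{>\sigma}$ in two ways for $a<b<c<d$. Chaining the congruences for $(a,b,c)$ and then $(b,c,d)$ by substitution gives
$$g_a\ \equiv\ (1+m+m^2)g_c - (m+m^2)g_d\pmod{G^{>\sigma}},$$
while the congruence for the triple $(a,c,d)$ gives directly
$$g_a\ \equiv\ (1+m)g_c - m g_d\pmod{G^{>\sigma}}.$$
Subtracting, $m^2(g_c-g_d)\in G^{>\sigma}$, i.e.\ $v\bigl(m^2(g_c-g_d)\bigr)>\sigma$. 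But since $m\in\{1,\dots,p-1\}$, so is $m^2 \bmod p$, and axiom~(6)(c) yields $v(m^2(g_c-g_d))=v(g_c-g_d)=\sigma$, a contradiction. This eliminates the equality case and completes the trichotomy to cases~(1) and~(2).
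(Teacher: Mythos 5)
Your proof is correct, and it takes a genuinely different route from the paper's.

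The paper fixes specific indices $0<1<\cdots<p+1$ in $I$, sets $h_i:=g_i-g_{p+1}$, and splits into three cases according to how the values $v(h_m)$ compare for $m\in\{0,\dots,p\}$: all equal, some $v(h_m)<v(h_n)$ with $m<n$, or some $v(h_m)>v(h_n)$ with $m<n$. In the first case the paper applies the Hahn axiom to produce coefficients $k_m$ with $v(h_0-k_mh_m)>v(h_0)$, then invokes the pigeonhole principle (there are $p$ indices but only $p-1$ possible coefficient values) to find $m<n$ with $k_m=k_n$, from which an ultrametric computation shows the sequence actually lands in alternative (2). The third case is shown to be impossible. Note that the paper's case split is indexed by the values $v(h_m)$ relative to a fixed reference point $g_{p+1}$, which is not the same partition as your trichotomy in $v(g_i-g_j)$ vs.\ $v(g_j-g_k)$; in particular the paper's ``all $v(h_m)$ equal'' case subsumes both your equality case and your case~(2).

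You instead observe up front, via indiscernibility, that the truth value of each of the three formulas ``$v(g_i-g_j)\mathbin{\lessgtr} v(g_j-g_k)$'' is uniform in $i<j<k$, which immediately reduces the lemma to ruling out the equality alternative. You then use the \emph{uniqueness} of the coefficient $m$ supplied by the Hahn property (which you correctly deduce from axiom~(6)(c)), together with indiscernibility again, to pin down a single congruence $g_i\equiv(1+m)g_j-mg_k$ modulo $G^{>\sigma}$, and obtain a contradiction by comparing two expressions for $g_a$ modulo $G^{>\sigma}$ across a $4$-term chain $a<b<c<d$. Both arguments ultimately exploit the finiteness of $\mathbb F_p$ through the coefficient $k$ in the Hahn axiom; the paper does so via a pigeonhole count over $p$ indices, while you do so via uniqueness plus a telescoping substitution. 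Your reduction step (uniformity of the trichotomy) is cleaner and more immediate than the paper's three-way split over the $h_m$'s, and your congruence-chaining avoids the pigeonhole count at the cost of a slightly longer algebraic manipulation; the two are roughly comparable in total effort.
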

\begin{proof}
Choose elements $0<1<\cdots<p+1$ of $I$ and consider the $p+1$ elements~$h_i:={g_{i}-g_{p+1}}$ ($i=0,\dots,p$) of $G$.
Let $m$, $n$ range over $\{0,\dots,p\}$. We have three cases to consider:

\case[1]{$v(h_m)=v(h_n)$ for all $m$, $n$.} Then by the Hahn axiom, for   $m\geq 1$ we get $k_m\in\{1,\ldots,p-1\}$ such that
$v(h_0-k_{m}b_m) \ > \ v(h_0)$.
By the pigeonhole principle, there are $1\leq m< n$   such that~$k_{m}=k_{n}$. Now note that
\begin{multline*}
 v(h_0) \ < \ v\big((h_0-k_{m}h_m)-(h_0-k_{n}h_n)\big)
 \ = \  \\ v\big(k_{m}(h_{n}-h_{m})\big) \ =  \ v(h_n-h_m)
\ = \ v(g_{n}-g_{m})
\end{multline*}
and  thus
\[
v(g_{n}-g_{p+1}) \ = \ v(h_n)  \ = \ v(h_0)  \ < \ v(g_{n}-g_{m})
\]
and so we are in case (2), by indiscernibility.

\case[2]{There are $m<n$ such that $v(h_m)<v(h_n)$.} Then by indiscernibility we are in case (1).

\case[3]{There are $m<n$ such that $v(h_m)>v(h_n)$.} We will actually show that this case cannot happen. Note that in this case
\[
v(g_{m}-g_{n}) \ = \ v\big(h_m-h_n\big) \ = \ v(h_n) \ =\ v(g_{n}-g_{p+1}).
\]
Thus by indiscernibility, for all $i<j<k<l$ in $I$ we have
\[
v(g_i-g_j) \ = \ v(g_j-g_k) \ = \ v(g_k-g_{l})
\]
and thus taking an element $i<m$ in $I$ we have
\[
v(h_m) \, = \, v(g_{m}-g_{p+1}) \, = \, v(g_{i}-g_{m}) \, = \, v(g_{m}-g_{n}) \, = \, v(g_n-g_{p+1})\, = \, v(h_n),
\]
a contradiction.
\end{proof}

\noindent
In the rest of this subsection   we let
$A\subseteq G$ and $B\subseteq S$.

\begin{lemma}\label{indiscernibleP-cut}
Suppose $(g_i)$ is  nonconstant  and $AB$-indiscernible, and let $s\in v(A)\cup B$. Then either
\begin{enumerate}
\item  $v(g_i-g_j)>s$ for all $i\neq j$, or
\item  $v(g_i-g_j)<s$ for all $i\neq j$.
\end{enumerate}
\end{lemma}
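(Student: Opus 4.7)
The plan is to combine the previous lemma (Lemma~\ref{indiscernibleIsPCSeq}), which already classifies the coarse behavior of $v(g_i-g_j)$, with the extra parameters allowed by $AB$-indiscernibility. First I would reduce everything to pairs $i<j$: since $v$ is symmetric, $v(g_i - g_j) = v(g_j - g_i)$, so any conclusion phrased in terms of strict inequalities $> s$ or $< s$ is automatically symmetric in the indices, and extending from the case $i<j$ to all $i \neq j$ is automatic at the end.

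The main step is to observe that, for any fixed $s \in v(A) \cup B$, the three $\mathcal{L}_{AB}$-formulas
\[
v(x-y) > s, \qquad v(x-y) = s, \qquad v(x-y) < s
\]
(where if $s = v(a)$ for some $a\in A$, we write $v(a)$ in place of $s$) have well-defined truth values on any pair $(g_i, g_j)$ with $i < j$, and by $AB$-indiscernibility of $(g_i)_{i\in I}$ that truth value depends only on the order type of $(i,j)$, not on the particular pair. So exactly one of the three options $\{>, =, <\}$ holds uniformly for all $i<j$. The content of the lemma is then that the middle option $=$ cannot occur.

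To rule out the middle option, I invoke Lemma~\ref{indiscernibleIsPCSeq}: because $(g_i)$ is nonconstant and indiscernible, either $v(g_i - g_j) < v(g_j - g_k)$ for all $i < j < k$, or $v(g_i - g_j) > v(g_j - g_k)$ for all such triples. In either alternative the value $v(g_{i'} - g_{j'})$ is genuinely non-constant as $(i',j')$ ranges over pairs with $i' < j'$ — for instance, picking $i<j<k$ in $I$, the two values $v(g_i - g_j)$ and $v(g_j - g_k)$ are distinct. Hence the uniform equality $v(g_i - g_j) = s$ would contradict Lemma~\ref{indiscernibleIsPCSeq}, leaving only the two alternatives (1) and (2) of the statement.

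I don't anticipate a serious obstacle: the argument is essentially a one-line application of indiscernibility together with the pseudocauchy dichotomy already established. The only point to handle carefully is making sure that $s$ is permitted as a parameter in the formula, which is fine because $s \in v(A) \cup B$ — either $s \in B$ is already a parameter on the value-group sort, or $s = v(a)$ for some $a \in A$ and is then named by the composite $\mathcal{L}_{AB}$-term $v(a)$.
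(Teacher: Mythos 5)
Your proof is correct and follows essentially the same route as the paper's: you use $AB$-indiscernibility (noting that $s$ is named by a term over $A\cup B$) to see that the comparison $v(g_i-g_j)\,\Box\,s$ is uniform across pairs $i<j$, and then rule out the constant case $\Box={=}$ via the pseudocauchy dichotomy of Lemma~\ref{indiscernibleIsPCSeq}, which forces $v(g_i-g_j)\neq v(g_j-g_k)$ for $i<j<k$.
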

\begin{proof}
By Lemma~\ref{indiscernibleIsPCSeq} we  have   $v(g_i-g_j)\neq v(g_k-g_{l})$ for all $i<j<k<l$, and
with $\Box\in\{{<},{=},{>}\}$, by $s$-indiscernibility of $(g_i)$:
if $v(g_i-g_j)\,\Box\, s$ for some pair~$i<j$, then $v(g_i-g_j)\,\Box\, s$ for all $i<j$.
\end{proof}

\noindent
The two lemmas above motivate the following definition:

\begin{definition}\label{def:pre-indisc}
We say that  $(g_i)$ is {\bf pre-$AB$-indiscernible} if
\begin{enumerate}
\item exactly one of the following is true:
\begin{enumerate}
\item   $(g_i)_{i\in I}$ is  pseudocauchy, or
\item   $(g_i)_{i\in I^*}$ is  pseudocauchy;
\end{enumerate}
\item for each $s\in v(A)\cup B$, either
\begin{enumerate}
\item  $v(g_i-g_j)>s$ for all $i\neq j$, or
\item  $v(g_i-g_j)<s$ for all $i\neq j$;
\end{enumerate}
\item for every $a\in A$, exactly one of the following is true:
\begin{enumerate}
\item $\big(v(g_i-a)\big)$ is constant,
\item $\big(v(g_i-a)\big)$ is strictly increasing,
\item $\big(v(g_i-a)\big)$ is strictly decreasing.
\end{enumerate}
\end{enumerate}
\end{definition}

\noindent
If $(g_i)$ is nonconstant and $AB$-indiscernible, then it is pre-$AB$-indiscernible,
by Lemmas~\ref{indiscernibleIsPCSeq} and~\ref{indiscernibleP-cut} and $A$-indiscernibility of $(g_i)$.
To show a converse, we first record some   properties of pre-$AB$-indiscernible sequences. We say that $(g_i)$ is a ``pc-sequence'' if it is pseudocauchy.

\begin{lemma}\label{lem:pre-indisc}
Suppose $(g_i)$ is a pre-$AB$-indiscernible pc-sequence; then
 for each $i$ the value $s_i:=v(g_i-g_j)$, where $j>i$, does not depend on $j$, and
\begin{enumerate}
\item[\textup{(2$'$)}] for each $s\in v(A)\cup B$, either
\begin{enumerate}
\item[\textup{(a)}]  $s_i>s$ for all $i$, or
\item[\textup{(b)}]  $s_i<s$ for all $i$,
\end{enumerate}
\item[\textup{(3$'$)}] for each $a\in A$, either
\begin{enumerate}
\item[\textup{(a)}] $\big(v(g_i-a)\big)$ is constant, and   $s_i>v(g_j-a)$ for each $i$, $j$, or
\item[\textup{(b)}] $s_i=v(g_i-a)$ for all $i$.
\end{enumerate}
\end{enumerate}
\end{lemma}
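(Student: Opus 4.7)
The plan is to deduce each of the three clauses by combining the pc-property with the ultrametric inequality for the valuation $v$, using the fact (established in Lemma~\ref{indiscernibleIsPCSeq} for the indiscernible case, here assumed) that $(g_i)$ is a pc-sequence in the sense that $v(g_j-g_k) > v(g_i-g_j)$ whenever $i < j < k$.

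First I would verify that $s_i := v(g_i - g_j)$ is independent of $j > i$. Fix $i < j < k$; the pc-property yields $v(g_j - g_k) > v(g_i - g_j)$, and since $g_i - g_k = (g_i - g_j) + (g_j - g_k)$, the ultrametric inequality (6)(b) gives $v(g_i - g_k) = v(g_i - g_j)$. Iterating this settles well-definedness, and also shows that $(s_i)$ is strictly increasing in $i$. Clause (2$'$) is then a direct restatement of clause (2) of Definition~\ref{def:pre-indisc}, applied to $s_i = v(g_i - g_j)$ for any fixed pair $i < j$.

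For (3$'$), I would do a case analysis along the three alternatives of clause (3). Write $t_i := v(g_i - a)$. If $(t_i)$ is strictly increasing, then for $i < j$ the ultrametric inequality applied to $g_i - g_j = (g_i - a) - (g_j - a)$ gives $s_i = v(g_i - g_j) = \min(t_i, t_j) = t_i$, so (3$'$b) holds. If $(t_i)$ is strictly decreasing, then the same computation forces $s_i = t_j$ for every $j > i$, which contradicts the $j$-independence established above (since $(t_j)$ is strictly decreasing in $j$); this case therefore cannot occur. Finally, if $(t_i)$ is constant with common value $t$, I need to show $s_i > t$ for every $i$. If instead $s_i \leq t$ for some $i$, then applying the ultrametric inequality to $g_i - a = (g_i - g_j) - (g_j - a)$ with $j > i$ yields $t = v(g_i - a) = \min(s_i, t)$, which forces $s_i \geq t$; combined with $s_i \leq t$, we get $s_i = t$. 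But $(s_i)$ is strictly increasing, and $I$ has no smallest element, so any $i'$ strictly below $i$ would give $s_{i'} < t$, which (again by ultrametric) would force $v(g_{i'} - a) = s_{i'} < t$, contradicting constancy of $(t_i)$. Hence $s_i > t$ for all $i$, and (3$'$a) holds.

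The only step requiring care is the constant subcase of (3$'$), where the interaction between the strict monotonicity of $(s_i)$ and the absence of a smallest element of $I$ must be invoked to rule out $s_i = t$; everything else is a routine application of the ultrametric inequality together with the pc-property. Note in particular that the Hahn space axiom (7) is not needed here — it was used in Lemma~\ref{indiscernibleIsPCSeq} but is absorbed into the hypothesis that $(g_i)$ is already a pc-sequence.
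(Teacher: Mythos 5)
Your proof is correct and takes essentially the same route as the paper's: well-definedness and strict monotonicity of $(s_i)$ from the pc-property, (2$'$) read off directly from clause (2) of Definition~\ref{def:pre-indisc}, and (3$'$) by the same three-way case split on clause (3), with the strictly decreasing case excluded by the same ultrametric computation and the constant case settled (as in the paper) by strict monotonicity of $(s_i)$ together with $I$ having no smallest element. Only a trivial slip: in the constant case the decomposition should be $g_i-a=(g_i-g_j)+(g_j-a)$ rather than with a minus sign; the valuation argument is unaffected.
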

\begin{proof}
The first statement is clear since $(g_i)$ is a pc-sequence, and implies (2$'$) by property (2)
in Definition~\ref{def:pre-indisc}.
To show (3$'$), let $a\in A$.  Suppose (3)(a) in Definition~\ref{def:pre-indisc} holds, and let $s$ be the common value of the $v(g_i-a)$;
then~${v(g_i-g_j)\geq s}$ for all $i < j$, and since~$(s_i)$ is strictly increasing and~$I$ does not have a smallest element, we obtain $s_i=v(g_i-g_j)>s$ for  $i<j$.
If (3)(b) holds, then~$s_i=v(g_i-a)$ for each~$i$.
Case~(3)(c) does not occur: otherwise, for $i<j<k$ we have
$$s_i=v(g_i-g_j)=v\big( (g_i-a) + (a-g_j)\big)=v(g_j-a)$$ and similarly $s_i=v(g_k-a)$, which is impossible.
This yields (3$'$).
\end{proof}

\noindent
We now arrive at our classification of nonconstant indiscernible sequences from $G$:

\begin{prop}\label{preIndiscIsIndisc}
Suppose $A$ is a subgroup of $G$ and $(g_i)$ is nonconstant.
Then
$$\text{$(g_i)$ is $AB$-in\-discernible}\quad\Longleftrightarrow\quad\text{$(g_i)$ is pre-$AB$-indiscernible.}$$
\end{prop}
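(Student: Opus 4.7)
The direction $(\Rightarrow)$ was already recorded immediately after Definition~\ref{def:pre-indisc}, using Lemmas~\ref{indiscernibleIsPCSeq} and \ref{indiscernibleP-cut} together with $A$-indiscernibility of $(g_i)$. For the converse, assume $(g_i)_{i\in I}$ is pre-$AB$-indiscernible; after reversing the order on $I$ if needed, we may assume that $(g_i)$ is a pc-sequence, so Lemma~\ref{lem:pre-indisc} supplies a strictly increasing sequence $(s_i)$ with $s_i=v(g_i-g_j)$ for every $j>i$ satisfying (2$'$) and (3$'$). By quantifier elimination (Proposition~\ref{T1QE}) it suffices to show that for every atomic $\L$-formula $\varphi(x_1,\dots,x_m)$ with parameters in $A\cup B$, the truth value of $\varphi(g_{i_1},\dots,g_{i_m})$ is the same for all $i_1<\cdots<i_m$ in $I$. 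Since $pG=0$ and $A$ is a subgroup, every such atomic formula reduces to comparisons involving valuations of expressions of the form
\[
h=h(g_{i_1},\dots,g_{i_m})=\sum_{k=1}^{m}n_k g_{i_k}+a,\qquad n_k\in\mathbb F_p,\ a\in A,
\]
compared to one another, to $\infty$, or to a parameter $b\in B$.

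The heart of the argument is to show that $v(h)$ either equals a fixed element of $S\cup\{\infty\}$ independent of the indices, or equals $s_{i_{k^\ast}}$ for some $k^\ast\in\{1,\dots,m\}$ depending only on the coefficients $(n_k)$. To see this, I would use the telescoping identity
\[
\sum_{k=1}^{m}n_kg_{i_k}=\sum_{k=1}^{m-1}N_k(g_{i_k}-g_{i_{k+1}})+Ng_{i_m},\qquad N_k:=\sum_{j\leq k}n_j,\ N:=N_m.
\]
The key point is that each telescoping term $N_k(g_{i_k}-g_{i_{k+1}})$ has valuation exactly $s_{i_k}$ when $N_k\not\equiv 0\pmod p$ (and $\infty$ otherwise), and since $(s_i)$ is strictly increasing these values are pairwise distinct. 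If $N\equiv 0\pmod p$, the tail collapses to $a$ alone, of valuation $v(a)\in v(A)$, which (2$'$) places either strictly below every~$s_i$ or strictly above every~$s_i$. If $N\not\equiv 0\pmod p$, rewrite $Ng_{i_m}+a=N(g_{i_m}-a^\ast)$ with $a^\ast:=-N^{-1}a\in A$, and apply (3$'$) to conclude that $v(g_{i_m}-a^\ast)$ is either a constant $c$ satisfying $c<s_i$ for all $i$, or equals $s_{i_m}$. In each subcase the minimum of the (pairwise distinct) term valuations can be read off unambiguously, yielding the claimed form of $v(h)$.

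Granted this description, $AB$-indiscernibility is immediate: two valuations of type $s_{i_{k_1^\ast}}$, $s_{i_{k_2^\ast}}$ are ordered according to the ordering of $k_1^\ast,k_2^\ast$; a fixed constant value of~$v(h)$ arising when $h$ genuinely depends on the $g_{i_k}$'s satisfies $c<s_i$ for all $i$ (by (2$'$) or (3$'$)), making its comparison with any $s_{i_{k^\ast}}$-type valuation uniform, while the value $\infty$ and the ``large constant'' arising in the degenerate case $h=a$ involve no $g$-variables and thus trivially give index-independent comparisons; finally, a comparison of some $s_{i_{k^\ast}}$ with a parameter $b\in B$ is uniform by~(2$'$). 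I expect the main obstacle to be the bookkeeping in the second paragraph: one must verify in each subcase that the telescoping terms and the tail term genuinely realize their claimed valuations without any accidental cancellation -- this is precisely where the Hahn space property (ensuring that $\mathbb F_p$-scaling preserves valuation and that each homogeneous component has $\mathbb F_p$-dimension at most one) and the rewriting via $a^\ast$ carry their weight.
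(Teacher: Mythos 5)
Your argument is correct and takes essentially the same route as the paper: reduce to a pc-sequence, invoke quantifier elimination to pass to $\L_A$-terms, telescope into differences plus a tail, and case-split on whether the total coefficient is zero modulo~$p$, using~(2$'$) and (3$'$) of Lemma~\ref{lem:pre-indisc} to pin down the valuation. The only cosmetic difference is the telescoping: you use Abel summation with partial sums $N_k$ and consecutive differences $g_{i_k}-g_{i_{k+1}}$, while the paper subtracts the last variable $g_{i_n}$ from each of the others; since $v(g_i-g_j)=s_{\min(i,j)}$ for a pc-sequence, both yield the same distinct valuations $s_{i_1},\dots,s_{i_{n-1}}$ on the difference terms, so the two decompositions are interchangeable. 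One small correction: your closing sentence attributes the absence of ``accidental cancellation'' partly to the Hahn space property, but that axiom plays no role here — the invariance of the valuation under $\mathbb F_p$-scaling is axiom~(6)(c), and the uniqueness of the minimum among the summed valuations follows entirely from the strict monotonicity of $(s_i)$ together with the dichotomies (2$'$), (3$'$), which place any constant value strictly on one side of all the $s_i$. (The Hahn axiom does ultimately enter, but earlier, in the proof of Lemma~\ref{indiscernibleIsPCSeq}.)
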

\begin{proof}
Suppose $(g_i)$ is pre-$AB$-indiscernible.  To show that $(g_i)$ is $AB$-indiscernible
we can   assume that $(g_i)$ is a pc-sequence; so for each $i$ the value $s_i:=v(g_i-g_j)$, where $j>i$, does not depend on $j$.
For $a\in A$ such that $\big(v(g_i-a)\big)$ is constant, denote by $s_a$ the common value of the $v(g_i-a)$.
Let now
$$t(x_1,\dots,x_n) = k_1x_1+\cdots+k_nx_n+a \qquad (k_1,\dots,k_n\in\Z,\ a\in A)$$
be an $\L_A$-term of sort $s_{\text{g}}$. By quantifier elimination (Proposition~\ref{T1QE}) and Lem\-ma~\ref{lem:pre-indisc} it is enough to show
that
\begin{itemize}
\item $v\big(t(g_{i_1},\dots,g_{i_n})\big)$ is constant and contained in $v(A)$
for $i_1<\cdots<i_n$, or
\item there is    $g\in A$ such that $v(g_i-a)$ is constant and
$v\big(t(g_{i_1},\dots,g_{i_n})\big)=s_a$ for $i_1<\cdots<i_n$, or
\item there is an $m\in\{1,\dots,n\}$ with
$v\big(t(g_{i_1},\dots,g_{i_n})\big)=s_{i_m}$
for $i_1<\cdots<i_n$.
\end{itemize}
For this we can assume $k_m\notin p\Z$ for some $m$, since otherwise $t(g)=a$ for all $g\in G$, and we are done;
take~$m$ minimal such that $k_{m}\notin p\Z$.
Set $k:=k_1+\cdots+k_n$.
We distinguish two cases:

\case[1]{$k\in p\Z$.}
Then
$$t(h_1,\dots,h_n) = k_1(h_1-h_n)+\cdots+k_{n-1}(h_{n-1}-h_n)+a\quad\text{for all $h_1,\dots,h_n\in G$.}$$
Let $s:=va$. If statement (2$'$)(a) in Lemma~\ref{lem:pre-indisc} holds, then $v\big(t(g_{i_1},\dots,g_{i_n})\big)=s$ for~${i_1<\cdots<i_n}$ in $I$;
if (2$'$)(b) holds, then $m<n$, and $v\big(t(g_{i_1},\dots,g_{i_n})\big)=s_{i_m}$ for~$i_1<\cdots<i_n$ in~$I$.

\case[2]{$k\notin p\Z$.}
Then we can take $g\in A$ such that
$$t(h_1,\dots,h_n) = k_1(h_1-h_n)+\cdots+k_{n-1}(h_{n-1}-h_n)+k(h_n-h)\ \text{for all $h_1,\dots,h_n\in G$.}$$
If (3$'$)(a) holds, then
 $v\big(t(g_{i_1},\dots,g_{i_n})\big)=s_a$ for  $i_1<\cdots<i_n$ in $I$; whereas if
(3$'$)(b) holds, then~$v\big(t(g_{i_1},\dots,g_{i_n})\big)=s_{i_m}$ for  $i_1<\cdots<i_n$ in $I$.
\end{proof}

\begin{cor}\label{cor:Hahn products distal}
$T$ is distal.
\end{cor}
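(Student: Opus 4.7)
The plan is to apply Proposition~\ref{prop: indisc char of distality for singletons}(3), reducing distality to the following single-variable statement: for every nonconstant indiscernible sequence $(g_i)_{i\in\mathbb{Q}}$ in $\mathbb{M}_x$ with $|x|=1$ and every tuple $b$ such that $(g_i)_{i\neq 0}$ is $b$-indiscernible, the whole sequence is $b$-indiscernible. Write $b=(\bm a,\bm s)$ with $\bm a$ from $G$ and $\bm s$ from $S$; let $A$ be the $\mathbb{F}_p$-subspace of $G$ spanned by $\bm a$ and $B$ the entries of $\bm s$. We split on the sort of $x$.

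If $x$ is of the value sort, then (after possibly reversing the order) $(g_i)$ is strictly monotone in $S$, and the QE result (Proposition~\ref{T1QE}) reduces $b$-indiscernibility to the statement that, for each $c\in v(A)\cup B$, the sign of $g_i-c$ is independent of~$i$. Assuming this holds for $i\neq 0$, strict monotonicity sandwiches $g_0$ strictly between $g_{-1}$ and $g_1$, which already lie on the same side of $c$, so $g_0$ inherits the same sign.

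If instead $x$ is of the group sort, then by the classification in Proposition~\ref{preIndiscIsIndisc} it suffices to check the three clauses of pre-$AB$-indiscernibility (Definition~\ref{def:pre-indisc}) for the whole sequence $(g_i)_{i\in\mathbb{Q}}$, knowing them for $(g_i)_{i\neq 0}$. Clause~(1) holds by Lemma~\ref{indiscernibleIsPCSeq}; reversing the order if needed, assume $(g_i)$ is pseudocauchy and set $s_i:=v(g_i-g_j)$ for $j>i$, noting $(s_i)$ is strictly increasing. For clause~(2) with $s\in v(A)\cup B$, the dichotomy applied to $(g_i)_{i\neq 0}$ yields either $s_i>s$ for all $i\neq 0$ or $s_i<s$ for all $i\neq 0$; in the first case $s_0>s_{-1}>s$, and in the second $s_0<s_1<s$, so the dichotomy extends to all $i$. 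For clause~(3) with $a\in A$, Lemma~\ref{lem:pre-indisc} limits the behaviour on $i\neq 0$ to two subcases: either $v(g_i-a)=c$ is constant with $s_i>c$, or $v(g_i-a)=s_i$. In the first, $v(g_0-a)=v\bigl((g_0-g_{-1})+(g_{-1}-a)\bigr)=c$ because $s_{-1}>c$; in the second, $v(g_0-a)=v\bigl((g_0-g_1)+(g_1-a)\bigr)=s_0$ because $s_0<s_1$. Either way, the relevant property extends to index~$0$.

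The genuinely substantive work has already been absorbed into the classification (Proposition~\ref{preIndiscIsIndisc}) and its refinement for pc-sequences (Lemma~\ref{lem:pre-indisc}); the remaining argument is a careful but routine ultrametric bookkeeping. The main point to watch is clause~(3) in the group-sort case: one must treat both subcases of Lemma~\ref{lem:pre-indisc} in parallel and correctly exploit the strict monotonicity of $(s_i)$ together with the ultrametric identity to close the gap at index~$0$.
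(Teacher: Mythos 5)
Your proof is correct, and its core (the group-sort case) is essentially the argument given in the paper: reduce to verifying the three clauses of pre-$AB$-indiscernibility via Proposition~\ref{preIndiscIsIndisc}, and close the gap at index $0$ using the strict monotonicity of $(s_i)$ together with the ultrametric identity. The point of genuine departure is the preliminary reduction: the paper observes that $S\subseteq\dcl(G)$ (hence $\mathbb{M}\subseteq\acl(D)$ for $D$ the group sort) and invokes Corollary~\ref{cor: finite cover distality}, which reduces the claim at once to showing that $D_{\ind}$ is distal and thereby avoids ever considering indiscernible sequences in the value sort. You instead apply Proposition~\ref{prop: indisc char of distality for singletons}(3) directly to $T$ and split on the sort of $x$, disposing of the value-sort case by a short hands-on argument (a nonconstant indiscernible sequence in the linear order $S$ is strictly monotone, so $g_0$ is sandwiched between $g_{-1}$ and $g_1$, which already lie on the same side of each cut point from $v(A)\cup B$). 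Your route is somewhat more self-contained, since Corollary~\ref{cor: finite cover distality} in turn rests on the rather involved Proposition~\ref{prop: lifting distality over a predicate}; the paper's route is a bit more conceptual, explaining in one stroke why the value sort carries no new content. Both are correct, and the ultrametric bookkeeping at index $0$ is carried out accurately in your write-up.
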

\begin{proof}
By Corollary~\ref{cor: finite cover distality}  it suffices to prove that the structure
induced on the group sort $s_{\text{g}}$ of models of~$T$ is distal. For this,
suppose $(g_i)_{i\in I}$   as above is indiscernible, the linearly ordered set $I$ is dense, and~$0$ is an element of $I$ such that~$(g_i)_{i \in I^{\neq}}$  is $AB$-indiscernible,  where $I^{\neq}:=I\setminus\{0\}$;
by Proposition~\ref{prop: indisc char of distality for singletons}, it is enough
to show that then $(g_i)_{i\in I}$ is   $AB$-indiscernible.
This is clear if $(g_i)_{i\in I}$ is constant; thus we may assume that $(g_i)_{i\in I}$ is nonconstant.
Replacing $A$ by the subgroup of $G$ generated by~$A$ we can also arrange that $A$ is a subgroup of~$G$, and
by Lemma~\ref{indiscernibleIsPCSeq}, that~$(g_i)_{i\in I}$ is a pc-sequence. Let $s_i:=v(g_i-g_j)$ where~$j>i$ is arbitrary.
Let~${s\in v(A)\cup B}$; if $s_i>s$ for all $i\in I^{\neq}$, then also $s_0>s$, and similarly with~``$<$'' in place of~``$>$''.
Together with Lemma~\ref{indiscernibleP-cut} applied to $(g_i)_{i \in I^{\neq}}$, this implies that~(2) in Definition~\ref{def:pre-indisc}
holds.
Similarly, using Lemma~\ref{lem:pre-indisc}(3$'$) for $(g_i)_{i \in I^{\neq}}$ we see that
statement~(3) in Definition~\ref{def:pre-indisc} holds: Let $a\in A$.
Suppose~$\big(v(g_i-a)\big)_{i\in I^{\neq}}$ is constant and  $s_i>v(g_j-a)$ for all $i,j\in I^{\neq}$;
then $s_i>v(g_j-a)$ for all $i\in I$, $j\in I^{\neq}$ and thus
$$v(g_0-a) = v\big( (g_0-g_j) + (g_j-a) \big) = v(g_j-a)\quad\text{ for $j\neq 0$,}$$ hence (3)(a) holds.
If $s_i=v(g_i-a)$ for~$i\neq 0$, then $$v(g_0-a)=v\big( (g_0-g_j) + (g_j-a) \big) = s_0\quad\text{ for~$j>0$,}$$ hence (3)(b) holds.
This shows that~$(g_i)_{i\in I}$ is   pre-$AB$-indiscernible, and hence $AB$-indiscernible by Proposition~\ref{preIndiscIsIndisc}.
\end{proof}

\noindent
We   now use the above to give our promised example of an infinite ring of positive characteristic interpretable a distal structure.

\begin{exampleunnumbered}
Suppose $R=\mathbb{F}_p\times H$, where $H=H(\Q,\mathbb{F}_p)$ is as in the beginning of  Section~\ref{sec:Hahn spaces},
equipped with the componentwise addition  and multiplication given by
$$(k,g)\cdot (l,h) := (kl,kg+lh)\qquad\text{for $k,l\in\mathbb{F}_p$, $g,h\in H$.}$$
Then $R$ is a commutative ring of characteristic $p$, with multiplicative iden\-ti\-ty~$(1,0)$.
Moreover, $R$ is interpretable in the $\L$-structure $(H,\Q)\models T$, which is distal by Corollary~\ref{cor:Hahn products distal}.
\end{exampleunnumbered}

\begin{remarkunnumbered}
	Distality for a more general class of valued abelian groups and certain related structures is established in  \cite{DistExpStab}, and is used there to demonstrate that in fact every abelian group (in the pure group language) admits a distal expansion.
\end{remarkunnumbered}

\noindent
In the remainder of this section we
point out a consequence of Fact~\ref{fac: distal fields char 0} for  henselian valued fields with a distal expansion.

\subsection{NIP in henselian valued fields}
{\it In this subsection $K$  is a henselian valued field
with value group $\Gamma$ and residue field~$\k$.}\/ We view $K$ as a model-theoretic structure
$(K,\mathcal{O})$, where~$\mathcal O$ is the valuation ring of $K$.
We recall the following facts; the proofs below are courtesy of Franziska Jahnke.

\begin{fact}\label{fac: Belair}
Suppose $K$ is finitely ramified and  $\k$ is NIP and perfect;
then   $(K,\mathcal{O})$ is   NIP.
\end{fact}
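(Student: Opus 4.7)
My plan is to establish NIP via an Ax--Kochen--Er\v{s}ov style principle reducing NIP of $(K,\mathcal O)$ to the NIP of the value group $\Gamma$ and of the residue field $\k$. The value group $\Gamma$ is an ordered abelian group, hence NIP by Gurevich--Schmitt (cited earlier in the paper), and $\k$ is NIP by hypothesis. So the task reduces to an appropriate AKE transfer theorem, and I split the analysis according to $\operatorname{char}(\k)$.

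If $\Gamma = \{0\}$, then $K = \k$ with the trivial valuation, and $(K,\mathcal O)$ is bi-interpretable with $\k$, so NIP is immediate. Otherwise, by the discussion preceding the theorem, $K$ has characteristic zero. If in addition $\operatorname{char}(\k) = 0$, then $K$ is of equicharacteristic zero, and here I invoke Delon's classical AKE-style result: a henselian valued field of equicharacteristic zero is NIP if and only if its residue field is. Delon's theorem is proved through a relative (Pas-style) quantifier elimination reducing formulas of $K$ to $\Gamma$-$\k$-combinations, after which NIP passes through the two-sorted structure since NIP is preserved under products and definable reducts.

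In the mixed characteristic case $\operatorname{char}(\k) = p > 0$, finite ramification gives that the initial segment $[0, v(p)]$ of $\Gamma$ is finite. The plan is to apply B\'elair's AKE theorem for finitely ramified henselian valued fields with perfect residue field, which transfers NIP from $\Gamma$ and $\k$ to $(K,\mathcal O)$. The proof of B\'elair's theorem proceeds by a relative quantifier elimination after enriching the language (with suitable $\mathrm{RV}$-type or leading-coefficient sorts, as in the Basarab--Kuhlmann framework), where finite ramification ensures that the extra data needed to handle the ramification filtration above $p$ is coded by only finitely many parameters, and perfectness of $\k$ is what makes the Hensel--Rychlik style lifting lemmas work uniformly for $p$-th power extractions. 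Once the relative QE is in place, NIP of $(K,\mathcal O)$ follows from NIP of the enriched $\Gamma$- and $\k$-structures (the extra finite data being harmless).

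The main obstacle is clearly the mixed characteristic step. A tempting shortcut is to coarsen by the convex subgroup generated by $v(p)$ to pass to an equicharacteristic zero coarsening and then invoke Delon; however, the intermediate residue structure is itself a finitely ramified henselian valued field over $\k$ of ``Witt-type'', and establishing its NIP seems to require essentially the same input as the original problem, so this reduction only rearranges the difficulty rather than eliminating it. Thus the substantive content of the proof sits in B\'elair's relative quantifier elimination, with finite ramification and perfectness of $\k$ being precisely the hypotheses that make it go through.
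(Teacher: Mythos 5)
Your equicharacteristic zero branch matches the paper (Delon together with Gurevich--Schmitt for the value group), but the mixed characteristic branch rests on a misattribution: you invoke ``B\'elair's AKE theorem for finitely ramified henselian valued fields with perfect residue field,'' yet the paper cites B\'elair~\cite{belair1999types} only for the \emph{unramified} case (essentially Witt vectors over a perfect residue field), and explicitly says the finitely ramified case must be \emph{reduced} to the unramified one and to Delon. If B\'elair's theorem covered finitely ramified fields outright, the statement would be a one-line citation; the substance of the proof is precisely the bridge you skip.

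More tellingly, you dismiss the coarsening route as merely ``rearranging the difficulty,'' but this is exactly what the paper does, and your objection is where the gap lies. The point you miss is that the difficulty does not persist: after replacing $(K,\mathcal O)$ by an $\aleph_1$-saturated elementary extension, the specialization $\dot K$ (the residue field of the coarsening by the smallest convex subgroup $\Delta_0 \ni vp$) has cyclic value group $\Delta_0$ and, crucially, is \emph{complete} by saturation. By a structure theorem for complete discretely valued fields with perfect residue field (cited from Warner), $\dot K$ is then a \emph{finite} extension of a complete unramified discretely valued subfield $L$ with the same residue field $\k$. Now B\'elair's genuinely unramified result gives NIP of $(L,\mathcal O_L)$, NIP passes to the finite extension $\dot K$, Delon gives NIP of the $\Delta_0$-coarsening $(K,\dot{\mathcal O})$ (equicharacteristic zero with residue field $\dot K$), and finally $\mathcal O$ is definable in $(K,\dot{\mathcal O})$ because the valuation ring of $\dot K$ is definable in the pure field $\dot K$ (Koenigsmann). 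So the coarsening does not leave you with ``the same problem''; saturation and completeness convert it into the unramified case plus a finite extension. Your proof as written has no replacement for this step.
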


\begin{proof}
In the case  $\ch \k=0$ this follows from Delon~\cite{delon1981types} (using also~\cite{GurevichSchmitt}), and
for $\ch \k>0$ and unramified~$K$ this was shown by B\'elair~\cite{belair1999types}.
We reduce the finitely ramified case with $\ch \k=p>0$ to these cases. We use the notation and terminology of \cite[Section~3.4]{adamtt}.
First, after passing to an elementary extension we can assume that $(K,\mathcal{O})$ is $\aleph_1$-saturated.
Let~${\Delta:=\Delta_0}$ be the smallest convex subgroup of $\Gamma$  containing $vp$, and let
$\dot K$ be the corresponding specialization of $K$.
Then~$\dot K$ has characteristic~zero, cyclic value group $\Delta_0$,
and residue field isomorphic to $\k$; saturation implies that~$\dot K$ is complete. It is well-known (see, e.g. \cite[Theorem~22.7]{Warner})
that therefore~$\dot K$ is a finite extension of a complete unramified discretely valued subfield~$L$ with the same residue field $\k$ as $\dot K$. By \cite{belair1999types}, $(L,\mathcal O_L)$ is NIP,  hence so is~$(\dot K,\mathcal O_{\dot K})$.
Now the $\Delta$-coarsening~$(K,\dot{\mathcal O})$ of $K$ has residue field $\dot K$,
and hence is~NIP by~\cite{delon1981types}.
The valuation ring of $\dot K$ is definable in the pure field~$\dot K$~\cite[Lem\-ma~3.6]{Koenigsmann}. Hence $\mathcal O$ is definable in
$(K,\dot{\mathcal O})$, and thus $(K,\mathcal{O})$ is NIP.
\end{proof}

\noindent
See Corollaries~\ref{cor:equich 0 NIP} and \ref{cor:finite k NIP} below for versions of the preceding fact where $\k$ and $\Gamma$ are permitted to have additional structure.
Here is a partial converse of Fact~\ref{fac: Belair}:

\begin{fact}\label{fac: NIP Hens fields are fin ramified}
Suppose $(K,\mathcal{O})$ is NIP and $\k$ is finite; then $K$ is finitely ramified.
\end{fact}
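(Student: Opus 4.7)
The plan is to prove the contrapositive: if $K$ is not finitely ramified, then $(K,\mathcal{O})$ is not NIP. Since a trivially valued field with finite residue field is itself finite (hence trivially finitely ramified), we may assume $\Gamma\neq\{0\}$.

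I would first dispose of the equicharacteristic case. Suppose $\ch K=p>0$; then $\ch\k=p$, and since $\k$ is finite it is not Artin--Schreier closed. Pick $a\in\k$ not of the form $b^p-b$ for any $b\in\k$, and let $\tilde a\in\mathcal{O}$ be a lift. Any root $x\in K$ of $X^p-X-\tilde a$ must satisfy $v(x)\geq 0$ (else $v(x^p-x)=p\,v(x)<0$, while $v(\tilde a)\geq 0$); reducing modulo $\mathfrak m$ would yield a solution to $b^p-b=a$ in $\k$, contradicting the choice of $a$. Hence $K$ admits a proper Artin--Schreier extension, so by the theorem of Kaplan--Scanlon--Wagner, the pure field $K$ is not NIP; a fortiori, neither is $(K,\mathcal{O})$.

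So we may assume $\ch K=0$, and since $K$ is not finitely ramified and $\Gamma\neq\{0\}$, necessarily $\ch\k=p>0$ and the interval $[0,v(p)]\subseteq\Gamma$ is infinite. The plan is to coarsen $v$ by the convex hull $\Delta$ of $v(p)$, producing a henselian valued field $(\dot K,\dot v)$ of mixed characteristic with value group $\Delta$, residue field $\k$, and still satisfying $[0,\dot v(p)]=[0,v(p)]$ infinite, but now with $\Delta$ archimedean. Since the coarsened valuation ring $\mathcal{O}_c=\bigcup_{n}p^{-n}\mathcal{O}$ is only $\bigvee$-definable (not necessarily first-order definable) in $(K,\mathcal{O})$, transferring NIP to $\dot K$ requires working in a sufficiently saturated elementary extension and invoking the standard preservation of NIP under quotients by type-definable equivalence relations.

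Having reduced to the archimedean case, $\Gamma$ embeds densely in $\mathbb R$ (it cannot be discrete, since then $[0,v(p)]$ would be finite). The main obstacle, and the hardest step, is to construct an explicit IP formula in $(K,\mathcal O)$. I would choose $0<\gamma_1<\gamma_2<\cdots<v(p)$ in $\Gamma$ and lifts $\pi_i\in K$ with $v(\pi_i)=\gamma_i$, chosen---using henselianity and the freedom provided by the infinitely many residue-level approximations---so that for every $S\subseteq\{1,\dots,n\}$ the element $x_S:=\sum_{i\in S}\pi_i$ satisfies $v(x_S-\pi_j)>\gamma_j$ iff $j\in S$. The partitioned formula $\varphi(x;y,z):=v(x-y)>v(z)$ would then shatter arbitrarily large finite sets, contradicting NIP. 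The delicate point is controlling unwanted residue-level cancellations when forming the sums $x_S$; if needed, one absorbs them by allowing additional parameters in $\varphi$ or by passing to a multiplicative variant using $\prod(1+\pi_i)$ and the induced structure on the higher unit groups $1+p^n\mathcal O$.
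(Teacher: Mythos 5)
Your route differs from the paper's. You coarsen $v$ by the convex hull of $v(p)$ to reach a mixed-characteristic field with archimedean value group; the paper instead \emph{specializes} by the largest convex subgroup of $\Gamma$ \emph{not} containing $v(p)$, landing in equicharacteristic $p$, cites Kaplan--Scanlon--Wagner (their Proposition~5.3) to conclude that this convex subgroup is trivial, and finishes with an $\aleph_0$-saturation argument extracting a uniform $n$ with $n\gamma\geq v(p)$ for all $\gamma>0$. Your Artin--Schreier argument for the equicharacteristic-$p$ case is correct and is close in spirit to what underlies the cited proposition. Your transfer-of-NIP step, however, is only gestured at and is framed with the wrong tool: what is actually needed is to pass to a sufficiently saturated model, observe that the convex hull of $v(p)$ (like every convex subgroup of the value group) becomes externally definable there, and invoke that the Shelah expansion of an NIP structure is NIP (Fact~\ref{fac: Shelah exp distal}(1)). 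A side effect that undercuts your plan: in the saturated model the convex hull of $v(p)$ is no longer archimedean, so the intended reduction to the archimedean case does not survive the very step you need for the transfer.

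The decisive gap is the IP construction itself. With $v(\pi_i)=\gamma_i$ strictly increasing and $x_S=\sum_{i\in S}\pi_i$, the condition $v(x_S-\pi_j)>\gamma_j$ holds precisely when $j=\min S$, not when $j\in S$: for $S=\{1,2\}$ and $j=2$ one has $x_S-\pi_2=\pi_1$, whose valuation $\gamma_1$ is smaller than $\gamma_2$. So your $\varphi$ only detects the minimum of $S$ and shatters nothing. The alternatives you gesture at (extra parameters, the multiplicative variant with $\prod_{i\in S}(1+\pi_i)$) are not carried out and do not obviously repair this; constructing an explicit independent family when the residue field is finite is genuinely the hard point, and it is exactly what the paper delegates to the cited Kaplan--Scanlon--Wagner proposition. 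Without it, the mixed-characteristic case remains open in your proof.
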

\begin{proof}
We may assume that $(K,\mathcal{O})$ is $\aleph_0$-saturated.
This time, we let $\Delta$ be the biggest convex subgroup of $\Gamma$ {\it not}\/ containing $vp$, and let
$\dot K$ be the corresponding specialization of~$K$.
Then~$\dot K$ has characteristic $p$, value group $\Delta$,
and residue field isomorphic to $\k$.
The Shelah expansion of~$(K,\mathcal{O})$ interprets every convex subgroup of $\Gamma$, and hence also
the valued field~$(\dot K,\mathcal O_{\dot K})$; in particular, $(\dot K,\mathcal O_{\dot K})$ is NIP, by Fact~\ref{fac: Shelah exp distal}(1).
Now~\cite[Proposition~5.3]{kaplan2011artin} implies that $\Delta=\{0\}$, since $\k$ is finite.
Hence for every $\gamma>0$ in $\Gamma$ there is some~$n$ such that~$n\gamma\geq vp$.
Saturation yields some $n$ such that for every~$\gamma>0$ in $\Gamma$
we have $n\gamma\geq vp$; hence~$K$ is finitely ramified.
\end{proof}

\noindent
Combining \ref{fac: distal fields char 0} and \ref{lem: reduct of distal on a stab emb set is distal} with \ref{fac: NIP Hens fields are fin ramified} implies:

\begin{cor}\label{cor:distal=>fin ram}
If $(K,\mathcal{O})$ has a distal expansion, then $K$ is finitely ramified and~$\k$ has characteristic zero or is finite.
\end{cor}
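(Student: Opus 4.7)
The plan is to combine the three cited results in a direct fashion, handling the two conclusions (finite ramification, and the dichotomy on $\k$) essentially in parallel. Let $(K,\mathcal{O})^*$ denote a distal expansion of $(K,\mathcal{O})$. Since distality implies NIP and NIP is preserved under reducts, $(K,\mathcal{O})$ itself is~NIP. This single observation will be used for the finite ramification side.

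For the dichotomy on the residue field, I would argue as follows. The pure field $\k$ is $\emptyset$-interpretable in $(K,\mathcal{O})$ (as $\mathcal{O}/\mathfrak{m}$), and therefore also in the expansion $(K,\mathcal{O})^*$. Passing to $\big((K,\mathcal{O})^*\big)^{\eq}$ (which is distal by Corollary~\ref{cor:Meq distal}) realizes~$\k$ as a $\emptyset$-definable sort, and then Lemma~\ref{lem: reduct of distal on a stab emb set is distal} shows that the full induced structure on~$\k$ is distal. Since this induced structure interprets the pure field $\k$, Fact~\ref{fac: distal fields char 0} rules out $\k$ being an infinite field of positive characteristic. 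Hence $\ch\k=0$ or $\k$ is finite, which is the second conclusion.

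For finite ramification I would split into the two cases just obtained. If $\ch\k=0$ and $\Gamma\neq\{0\}$, then $K$ has equicharacteristic zero, and the remark immediately before the Main Theorem (in the Introduction) says such a valued field is automatically finitely ramified; the degenerate case $\Gamma=\{0\}$ is trivial. If instead $\k$ is finite, then since $(K,\mathcal{O})$ is NIP, Fact~\ref{fac: NIP Hens fields are fin ramified} directly supplies the finite ramification of $K$.

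There is no serious obstacle: the statement is essentially a bookkeeping consequence of the three prior results. The only point that deserves a moment's care is the transfer from the distal expansion to the residue field, i.e., ensuring that the interpretation of $\k$ in $(K,\mathcal{O})$ survives the expansion and places us squarely in the hypothesis of Fact~\ref{fac: distal fields char 0} (which is what the appeal to Corollary~\ref{cor:Meq distal} together with Lemma~\ref{lem: reduct of distal on a stab emb set is distal} accomplishes).
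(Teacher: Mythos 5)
Your proposal is correct and takes essentially the same route as the paper: the paper simply states the corollary as the combination of Fact~\ref{fac: distal fields char 0}, Lemma~\ref{lem: reduct of distal on a stab emb set is distal}, and Fact~\ref{fac: NIP Hens fields are fin ramified}, and your write-up just makes that combination explicit, with the same case split on $\ch\k$ for the finite-ramification half. (One minor remark: the detour through $\big((K,\mathcal O)^*\big)^{\eq}$ and Lemma~\ref{lem: reduct of distal on a stab emb set is distal} is slightly more than needed, since Fact~\ref{fac: distal fields char 0} already rules out \emph{interpretation} of an infinite positive-characteristic field in a distal structure and so applies directly to $(K,\mathcal O)^*$; and the case distinction on whether $\Gamma=\{0\}$ is superfluous, since $\ch\k=0$ forces $v(n)=0$ for all $n\geq 1$ regardless.)
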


\begin{remark}\label{rem:fin ram}
If $K$ is finitely ramified and $\k$ is finite, $p=\ch \k$, then $K$ has a specialization which is $p$-adically closed of finite $p$-rank.
(Let $\Delta=\Delta_0$ be as in the proof of Fact~\ref{fac: Belair} and let $\dot K$ be the $\Delta$-specialization of $K$;
then $\dot K$ is henselian of mixed characteristic $(0,p)$ with  cyclic value group and finite residue field $\k$, hence
is   $p$-adically closed   of finite $p$-rank \cite[Theorem~3.1]{PR}.)
\end{remark}

\noindent
See \cite[Section~5.1]{AJ2019} for a conjectural   characterization of all NIP henselian valued fields.

\section{Distality in Ordered Abelian Groups}\label{OAGsection}

\noindent
In 1984, Gurevich and Schmitt~\cite{GurevichSchmitt} showed that every ordered abelian group is NIP.
In this section, we investigate distality for ordered abelian groups; the main result is Theorem~\ref{Spfinitedistalthm} below. As a warmup, in Section~\ref{QEPres} we characterize distality for those ordered abelian groups which have quantifier elimination in the
Presburger language (see Theorem~\ref{dpminimaldistalPresQE}).
This already applies to a variety of familiar ordered abelian groups since it includes every ordered abelian group which is elementarily equivalent to an archimedean one.
\emph{In the rest of this section we assume $m,n\geq 1$,   and we let $p$,~$q$ range over the set of prime numbers.}

\medskip\noindent
An ordered abelian group $G$  is said to be {\it non-singular}\/ if $G/pG$ is finite for every~$p$.
The following fact from~\cite[Proposition~5.1]{jahnke2017dp} will be used several times:

\begin{fact}
\label{oagdpminnonsing}
An  ordered abelian group   is $\operatorname{dp}$-minimal if and only if it is non-singular.
\end{fact}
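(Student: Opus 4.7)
The plan is to prove both directions of the equivalence separately.

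For $\operatorname{dp}$-minimal $\Rightarrow$ non-singular I argue by contraposition. If $G/pG$ is infinite for some prime $p$, I build an inp-pattern of depth $2$, which contradicts $\operatorname{dp}$-minimality. In a saturated elementary extension of $G$, pick an indiscernible sequence $(a_i)_{i<\omega}$ whose cosets modulo $pG$ are pairwise distinct, and a strictly increasing indiscernible sequence $(b_j)_{j<\omega}$ placed cofinally above all $a_i$, so that coset membership modulo $pG$ is independent of the order relative to the $b_j$. The two rows
$$\varphi_1(x;y) := \exists z\,(x-y=pz), \qquad \varphi_2(x;z) := (x<z),$$
with parameters ranging respectively over the $a_i$'s and over the $b_j$'s, form an inp-pattern: each row is $2$-inconsistent (distinct cosets are disjoint; $x<b_j \wedge x\geq b_{j'}$ fails for $j<j'$), while any path is satisfiable by choosing $x$ in the specified coset modulo $pG$ and below the specified $b_j$. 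This witnesses $\operatorname{dp}$-rank $\geq 2$, contradicting the hypothesis.

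For non-singular $\Rightarrow$ $\operatorname{dp}$-minimal, I verify the indiscernible-cut characterization of $\operatorname{dp}$-minimality directly. Given a densely indexed indiscernible sequence $(a_i)_{i\in I}$ in the group sort and an arbitrary parameter $c$, I seek a cut $\mathfrak{i}\in\overline{I}$ such that both $(a_i)_{i<\mathfrak{i}}$ and $(a_i)_{i>\mathfrak{i}}$ are $c$-indiscernible. The Cluckers-Halupczok relative quantifier elimination in the many-sorted language $\mathcal{L}_{\text{qe}}$ expresses every $\mathcal{L}(c)$-formula $\psi(x)$ in one group variable as a Boolean combination of order inequalities $x<t(c)$ and congruence predicates pulled back from the auxiliary sorts that record the quotients $G/p^kG$ together with associated spine data. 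Under the non-singularity hypothesis each such quotient is finite, so the congruence part of $\psi$ takes only finitely many distinct truth values along the sequence $(a_i)$, which by indiscernibility is eventually constant on each side of some threshold; the order part contributes at most one further cut. Combining all such $\psi$ uniformly produces a single cut $\mathfrak{i}$ of the required form.

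The hard part will be the reverse direction, where careful bookkeeping via the Cluckers-Halupczok elimination is needed to handle all auxiliary quotients simultaneously and to verify that the congruence predicates contribute only finitely many equivalence classes uniformly in the parameters. The forward direction, by contrast, is the standard observation that modulo-$p$ coset membership and the linear order are mutually independent constraints, which directly gives rise to a rank $2$ pattern.
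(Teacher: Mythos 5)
The paper does not actually prove Fact~\ref{oagdpminnonsing}; it is imported verbatim as a citation of Jahnke--Simon--Walsberg (the reference labelled \texttt{jahnke2017dp}, Proposition~5.1). So you are supplying a proof where the paper supplies none, which is fine in principle, but it has to be right.

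Your forward direction has a genuine error. An inp-pattern row must consist of a family of \emph{pairwise} (or more generally $k$-wise) inconsistent instances of a single formula, yet for $\varphi_2(x;z):=(x<z)$ and a strictly increasing sequence $(b_j)_{j<\omega}$ the instances $\{x<b_j:j<\omega\}$ are jointly consistent --- any $x<b_0$ satisfies all of them. Your parenthetical ``$x<b_j\wedge x\geq b_{j'}$ fails for $j<j'$'' mixes $\varphi_2$ with its negation, which is not what $k$-inconsistency asks for. The usual repair is to take intervals, e.g.\ $\varphi_2(x;z_1,z_2):=(z_1<x\wedge x<z_2)$ with parameter pairs $(b_{2j},b_{2j+1})$ giving pairwise disjoint intervals; then the row is $2$-inconsistent, and path consistency with the coset row is obtained by saturation together with the observation that, when $G/pG$ is infinite, every coset of $pG$ is cofinal and coinitial in $G$, so the intervals can be chosen wide enough to meet every coset under consideration. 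As written, however, your second row is not a row of an inp-pattern at all, and the contradiction with $\operatorname{dp}$-minimality does not follow.

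Your reverse direction is an outline rather than a proof, and the part you defer (``careful bookkeeping via the Cluckers--Halupczok elimination'') is exactly where the work lies. Two specific points: first, the normal form of Fact~\ref{OAGRelQE} involves the orderings $<_\alpha$ and congruences $\equiv_{m,\alpha}$ in \emph{all} the quotients $G/G_\alpha$, not merely quotients by $p^kG$, so ``the order part contributes at most one further cut'' is not immediate --- one has to argue that non-singularity forces each $\mathcal{S}_p$ to be finite (true, since $G_p(a)$ depends only on $a+pG$, so $|\mathcal{S}_p|\le|G/pG|$), and then that for a fixed indiscernible $1$-variable sequence the cuts arising from the various $<_\alpha$ and from the various $\mathcal{L}_{\operatorname{qe}}(c)$-terms coincide. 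Second, even granting that, you must handle formulas in several sequence variables $\sum_l k_l a_{i_l}\diamond_\alpha t(c)$, where the sums are not constant along an increasing indiscernible sequence; compare how Lemma~\ref{rapidseqLPres} copes with this for rapidly increasing sequences. None of this is insurmountable, but it is precisely the content of the cited [jahnke2017dp, Proposition~5.1], and your sketch does not yet carry it.
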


\subsection{The case of QE in $\L_{\operatorname{Pres}}$}\label{QEPres}

\noindent
In this subsection we   consider ordered abelian groups in the \emph{Presburger language}
\[
\L_{\operatorname{Pres}} = \big\{0,\,1,\,{+},\,{-},\,{<},\,(\equiv_m)\big\}.
\]
We   naturally construe a given  ordered abelian group $G$  as an $\L_{\operatorname{Pres}}$-structure: the symbols~$0$,~$+$,~$-$,~$<$ have their usual interpretations; the constant symbol $1$ is interpreted by the least positive element of $G$, provided~$G$ has one, and by $0$ otherwise;
and for each $m$, the binary relation symbol $\equiv_m$ is interpreted as equivalence modulo~$m$, i.e., for $g,h\in G$,
$$
g\equiv_m h \quad :\Longleftrightarrow\quad g-h\in mG.
$$
\emph{In the rest of this subsection $G$ is an ordered abelian group, and all ordered abelian groups will be construed as $\L_{\operatorname{Pres}}$-structures.}
Recall that an ordered abelian group is {\it regular}\/ if it is elementarily equivalent to an archimedean ordered abelian group;
moreover, $G$ is regular if either $\abs{G/nG}=n$ for each $n\geq 1$, or $nG$ is dense in $G$ for each $n\geq 1$.
In the first case, $G$ is elementarily equivalent to~$(\Z;{+},{<})$, whereas
any two dense regular ordered abelian groups $G$, $H$ are elementarily equivalent iff for each $p$ either
$G/pG$ and $H/pH$ are infinite or
$\abs{G/pG}=\abs{H/pH}$.
(See~\cites{Robinson-Zakon,Zakon}.)
In this subsection we show the following.

\begin{theorem}
\label{dpminimaldistalPresQE}
Suppose $G$ is regular; then the following are equivalent:
\begin{enumerate}
\item $G$ is distal;
\item $G$ is $\operatorname{dp}$-minimal;
\item $G$ is non-singular.
\end{enumerate}
\end{theorem}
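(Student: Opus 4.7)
The equivalence (2) $\iff$ (3) is immediate from Fact~\ref{oagdpminnonsing}. For (3) $\Rightarrow$ (1): if $G$ is non-singular then $G$ is dp-minimal by Fact~\ref{oagdpminnonsing}, and since the unique group sort of $G$ carries a definable linear order, the second clause of Fact~\ref{fac: no tot indisc in distal} applies and directly gives distality.

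The substantive implication is (1) $\Rightarrow$ (3), which I plan to prove by contrapositive. Assume $G/pG$ is infinite for some prime~$p$; the goal is to exhibit an infinite non-constant totally indiscernible sequence somewhere in $G^{\eq}$, which will contradict distality of $G$ by Fact~\ref{distalnottotallyindisc} combined with Corollary~\ref{cor:Meq distal} (distality transfers between $G$ and $G^{\eq}$) and Lemma~\ref{lem: reduct of distal on a stab emb set is distal} (distality passes to the structure induced on a definable sort). The natural place to look is the imaginary sort $G/pG$ itself: I claim that the full structure induced on $G/pG$ from $G^{\eq}$ coincides with the pure $\mathbb{F}_p$-vector space structure. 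Granting this, any $\mathbb{F}_p$-linearly independent sequence in the infinite space $G/pG$ is indiscernible in the induced structure, and, since the pure theory of an infinite $\mathbb{F}_p$-vector space is stable, it is in fact totally indiscernible, yielding the desired contradiction. (The discrete case $G \equiv (\mathbb{Z};+,<)$ is immediately excluded since there $|G/pG|=p$.)

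The identification of the induced structure is the crux. Using the QE of $G$ in $\L_{\operatorname{Pres}}$, every definable subset of $G^n$ is a Boolean combination of atomic formulas $t(\bar{x})=0$, $t(\bar{x})<0$, and $t(\bar{x})\equiv_m 0$ for linear forms $t$ over $\mathbb{Z}$. A definable subset of $(G/pG)^n$ in $G^{\eq}$ corresponds to a definable subset of $G^n$ which is $pG$-invariant in each coordinate (equivalently, is the preimage of its image under $\pi_p^n\colon G^n \to (G/pG)^n$). In the (dense) regular case, the atomic formulas $t=0$ and $t<0$ impose non-$pG$-invariant conditions, so their $pG$-invariant trace reduces to a triviality; the $pG$-invariant trace of a congruence $t\equiv_m 0$ reduces to an $\equiv_p$-congruence, i.e., to an $\mathbb{F}_p$-linear condition on cosets. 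Tracking how these $pG$-invariant traces combine through arbitrary Boolean operations then shows that every definable subset of $(G/pG)^n$ is an $\mathbb{F}_p$-affine combination, proving the claim.

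The main technical obstacle is precisely this last Boolean-combination bookkeeping: one must verify that a Boolean combination of atomic formulas whose individual pieces are \emph{not} $pG$-invariant can define a $pG$-invariant set only when that set is already $\mathbb{F}_p$-linear. The cleanest implementation is to work coset-by-coset in $(G/pG)^n$ and use regularity (which ensures that the intersection of any cell of the cell decomposition of $G^n$ given by QE with a fixed $pG$-coset is either empty or unbounded in the expected sense) to force the Boolean structure to collapse onto the $\mathbb{F}_p$-linear relations.
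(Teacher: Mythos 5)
Your reduction is the same as the paper's up to the contrapositive of (1)~$\Rightarrow$~(3): the equivalence (2)~$\Leftrightarrow$~(3) is Fact~\ref{oagdpminnonsing}, and (3)~$\Rightarrow$~(1) via $\operatorname{dp}$-minimality is Fact~\ref{fac: no tot indisc in distal}.

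For (1)~$\Rightarrow$~(3) your route genuinely diverges from the paper's, and the divergence is where the gap lies. You want to produce a totally indiscernible sequence \emph{in the imaginary sort $G/pG$}, and for that you need the claim that the full structure induced on $G/pG$ from $G^{\eq}$ is the pure $\mathbb{F}_p$-vector space (or at least that it is stable). This is a stable-embeddedness assertion, not a corollary of quantifier elimination: QE in $\L_{\operatorname{Pres}}$ tells you that every $\emptyset$-definable $D\subseteq G^n$ is a Boolean combination of order, equality, and congruence atoms, but it does \emph{not} tell you that a $pG^n$-invariant Boolean combination must be a Boolean combination of $pG^n$-invariant atoms. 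Your sketch (``work coset-by-coset\dots use regularity\dots to force the Boolean structure to collapse onto the $\mathbb{F}_p$-linear relations'') is where all the work would actually have to happen, and you have not done it. To see that this is substantive: the induced-structure claim amounts to showing that whenever $\bar{a},\bar{b}\in G^n$ have the same pure-vector-space type in $G/pG$, there are representatives $\bar{a}'\in\bar{a}+pG^n$, $\bar{b}'\in\bar{b}+pG^n$ realizing the same $\L_{\operatorname{Pres}}$-type over $\emptyset$; proving this requires a careful analysis of how shifts by $pG^n$ move the sign data and the congruence data modulo $q^e$ for \emph{all} primes $q$ and all $e$ simultaneously, using density and regularity at each step. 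That is a lemma in its own right, and your proposal neither states nor proves it.

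The paper avoids this entirely. It does not try to understand the induced structure on $G/pG$; instead it constructs, in $G$ itself, a rapidly increasing indiscernible sequence $(b_i)_{i\in(-1,1)}$ with $b_i\not\equiv_p b_j$ for $i<j$, and a point $b_1$ (satisfiable by compactness) that is large, agrees with the $b_i$ in all relevant congruences, and in addition satisfies $b_1\equiv_p b_0$. The only stability input needed is Lemma~\ref{totalindiscLPres}, namely that an indiscernible sequence in $G$ is \emph{totally} indiscernible with respect to the $\{0,1,+,-,(\equiv_m)\}$-reduct; this is just the observation that the reduct is a stable abelian structure, and it is much weaker than (and does not imply) the stable embeddedness of $G/pG$. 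So the paper's witness-by-hand argument trades your conceptual reduction for an explicit computation, and sidesteps the hard part of your approach. If you want to salvage your route, you should isolate and prove the induced-structure claim as a separate proposition; as written, it is a genuine gap.
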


\noindent
Theorem~\ref{dpminimaldistalPresQE}  applies to  archimedean $G$, so
the ordered abelian groups $(\Z;{+},{<})$, $(\Q; {+},{<})$, and $(\Z_{(2)}; {+},{<})$ are distal, whereas  $(\Q^{>0};{\,\cdot\,},{<})$ is not.

\medskip
\noindent
The rest of this subsection is devoted to proving Theorem~\ref{dpminimaldistalPresQE}. We   rely on the following:

\begin{fact}[Weispfenning,~\cite{Weispfenning}] 
An ordered abelian group is regular if and only if
it has QE in $\L_{\operatorname{Pres}}$.
\end{fact}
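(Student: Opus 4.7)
The plan is to establish the equivalence via the two standard directions, with most of the work in ($\Rightarrow$), carried out by a one-variable substructure-completion test, and ($\Leftarrow$) handled by showing that QE forces the definable-set structure of an archimedean group.

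For ($\Rightarrow$), assume $G$ is regular. I would verify the back-and-forth test for QE: given two models $G_1, G_2 \models T := \Th(G)$ with $G_2$ sufficiently saturated, a common $\L_{\operatorname{Pres}}$-substructure $A \subseteq G_1$ together with an embedding $f\colon A \to G_2$, and an element $a \in G_1 \setminus A$, produce $a' \in G_2$ realizing the same quantifier-free type over $f(A)$ as $a$ does over $A$. Since atomic formulas in $\L_{\operatorname{Pres}}$ only involve $<$ and the congruences $\equiv_n$, the QF type of $a$ over $A$ is determined by (i) the cut $C(a) := \{c \in A : c < a\}$ and (ii) for each $n \geq 1$, the unique coset $a + nG_1$ meets $A$ (encoded via $a \equiv_n c$ for $c \in A$). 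I would realize this type in $G_2$ in two stages: first use saturation of $G_2$ to find some $b \in G_2$ realizing the cut $f(C(a))$, then correct its residues by adding a suitable element of $\mathbb Z \cdot 1$ in the discrete case, or a small element of $nG_2$ (using that $nG_2$ is dense when $G$ is dense regular) in the dense case. Chinese Remainder and the regularity hypothesis $|G_2/nG_2| \leq n$ ensure such a correction simultaneously achieves all finitely many required residues without displacing $b$ out of its cut.

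The two distinct subcases correspond to the two characterizations of regularity. In the discrete case one has $G_2/nG_2 \cong \Z/n\Z$, realized by $0,1,\dots,n{-}1$; here each coset representative is available in $A$ already (via $\Z \cdot 1$), so residue adjustment is straightforward. In the dense case, $nG_2$ is dense in $G_2$, so given any open interval around $b$ and any required residue we can translate $b$ by an element of $nG_2$ inside that interval. The delicate case is when $C(a)$ is an ``almost-realized'' cut, namely when $A$ contains elements $c_1 < a < c_2$ with $c_2 - c_1$ bounded by some $k \cdot 1$; here regularity of $G_1, G_2$ ensures the interval $(c_1,c_2)$ in $G_2$ contains the same number of elements with each prescribed residue as in $G_1$, so the match is possible.

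For ($\Leftarrow$), assume $T = \Th(G)$ has QE in $\L_{\operatorname{Pres}}$. I would produce a regular elementarily-equivalent model of $T$, which forces $G$ regular. Pass to a $\aleph_1$-saturated $G^* \succeq G$. In the discrete case, consider the $\L_{\operatorname{Pres}}$-substructure $G_0 := \Z \cdot 1 \subseteq G^*$; a direct check shows $k \cdot 1 \in nG^*$ iff $n \mid k$ (since $1$ is the least positive element), so $G_0 \cong (\Z; 0,1,+,-,<,\equiv_m)$ is the standard Presburger structure, which satisfies every QF $\L_{\operatorname{Pres}}$-sentence true in $G^*$ (because terms in $0,1$ have the same interpretation in any discrete OAG). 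Using QE in the form of model-completeness together with saturation, I would show $G_0$ is elementarily embedded, so $G \equiv G_0 \equiv \Z$, hence regular. For the dense case ($1 = 0$), I would adapt by fixing a parameter $a > 0$, working in $\L_{\operatorname{Pres}}(a)$, and realizing $G$ as elementarily equivalent to an archimedean group $H \subseteq \R$ with matching ``spine'' $|H/pH| = |G/pG|$, where QE is used to ensure these spine invariants are complete invariants of $T$.

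The principal obstacle is the residue-adjustment step in ($\Rightarrow$), specifically verifying that a candidate $b \in G_2$ realizing the correct cut can be shifted to satisfy every congruence $\equiv_n c$ simultaneously without leaving the prescribed cut; handling this cleanly requires CRT together with a quantitative use of regularity (either the finite exact size of $G_2/nG_2$ in the discrete case, or the density of $nG_2$ in the dense case). The subsidiary obstacle is the dense case of ($\Leftarrow$), where the $\L_{\operatorname{Pres}}$-substructure generated by $\emptyset$ collapses to $\{0\}$, and one must work with parameters or descend to a canonical archimedean quotient of $G^*$ to exhibit a regular model of $T$.
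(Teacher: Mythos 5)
First, a remark on context: the paper does not prove this statement at all---it is imported verbatim from Weispfenning's paper---so your proposal has to stand on its own, and as it stands it has genuine gaps in both directions. In the direction ``regular $\Rightarrow$ QE'', your claim that the quantifier-free type of $a$ over $A$ is determined by the cut $\{c\in A: c<a\}$ together with congruence data is not correct: atomic formulas compare arbitrary multiples $ka$ with elements of $A$, so you must control the cuts of \emph{all} multiples of $a$ over $A$ (equivalently, the cut of $a$ over the divisible hull of $A$), and the ``realize the cut, then correct residues'' scheme has to preserve this finer datum. More seriously, your congruence bookkeeping assumes that the coset $a+nG_1$ meets $A$ and that $\abs{G_2/nG_2}\leq n$. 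Both can fail: a dense regular group may have $G/pG$ infinite (this is exactly the phenomenon behind Theorem~\ref{dpminimaldistalPresQE}, e.g.\ $(\Q^{>0},{\,\cdot\,},{<})$), so the quantifier-free type may instead contain $a\not\equiv_n c$ for \emph{all} $c\in A$, and the realization argument must avoid finitely many prescribed cosets inside the cut, using that every coset of $nG_2$ is dense and then saturation; none of this appears in your sketch, and the quantitative bound you invoke is only available in the discrete case (where, incidentally, your ``delicate almost-realized cut'' case is vacuous, since $[c_1,c_1+k\cdot 1]\subseteq A$ forces $a\in A$).

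The direction ``QE $\Rightarrow$ regular'' is where the proposal really breaks down. In the discrete case, QE of $T=\Th(G)$ plus saturation does \emph{not} yield $\Z\cdot 1\preceq G^*$: model completeness governs embeddings between models of $T$, and $\Z\cdot 1$ is not known to be a model of $T$---that is essentially what you are trying to prove. Concretely, the Tarski--Vaught instance $\exists x\,\big(0<x\wedge \neg(x\equiv_2 0)\wedge\neg(x\equiv_2 1)\big)$ has a quantifier-free matrix with parameters in $\Z\cdot 1$, is true in any discrete $G^*$ with $\abs{G^*/2G^*}>2$, and has no witness in $\Z\cdot 1$; ruling this out is exactly proving $\abs{G/nG}=n$, so your plan begs the question. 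In the dense case, asserting that QE makes the invariants $\abs{G/pG}$ ``complete invariants of $T$'' is the Robinson--Zakon classification of \emph{regular} groups and again presupposes regularity; moreover, merely producing an archimedean $H$ with matching invariants does not give $G\equiv H$ from QE unless you already know $H\models T$. The standard way to get this direction is different in kind: assuming $G$ is not regular, one takes a witness of non-regularity (a nonzero convex subgroup $C$ with $G/C$ not $p$-divisible, equivalently an interval with at least $p$ elements containing no element of $pG$) and shows directly that a formula such as $\exists z\,(x<pz<y)$ cannot be equivalent to a quantifier-free formula, because quantifier-free formulas define boolean combinations of linear inequalities and congruences whose behaviour under translation along $C$ is incompatible with that of this set. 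Without an argument of this type, the right-to-left implication is not established.
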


\noindent
We first note that the direction (2)~$\Rightarrow$~(1) in Theorem~\ref{dpminimaldistalPresQE} holds by Fact~\ref{fac: no tot indisc in distal}. Furthermore, the equivalence~$(2)\Leftrightarrow(3)$ is Fact~\ref{oagdpminnonsing}. Thus it suffices to establish $(1)\!\Rightarrow(3)\!$.
We will actually prove the contrapositive. For the rest of the subsection we thus fix some $p$ and assume:
\begin{enumerate}
\item $G$ is regular;
\item $G/pG$ is infinite;
\item $G$ is sufficiently saturated.
\end{enumerate}
We shall prove that under these assumptions, $G$ is not distal.
By QE in $\L_{\operatorname{Pres}}$, we can easily describe in\-discernible sequences in a single variable:

\begin{lemma}
\label{indisccharacterizationLPres}
A sequence $(a_i)_{i\in I}$ in $G$ is indiscernible iff for all   $i_1<\cdots<i_n$ and~$j_1<\cdots<j_n$ from $I$, $k,k_1,\ldots,k_n\in\Z$, and $m\geq 2$ we have
\begin{enumerate}
\item
$\textstyle k\cdot1+\sum_l k_la_{i_l} > 0  \hskip0.75em\quad\Longleftrightarrow\quad k\cdot 1+\sum_l k_la_{j_l} > 0$;
\item  $\textstyle k\cdot1+\sum_l k_la_{i_l} = 0 \hskip0.75em\quad\Longleftrightarrow\quad k\cdot1+\sum_l k_la_{j_l} = 0$; and
\item $\textstyle k\cdot1+\sum_l k_la_{i_l} \equiv_m 0 \quad\Longleftrightarrow\quad k\cdot1+\sum_l k_la_{j_l} \equiv_m 0$.
\end{enumerate}
\end{lemma}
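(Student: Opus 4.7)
The plan is to reduce the lemma to an essentially syntactic statement by invoking Weispfenning's quantifier elimination theorem for regular ordered abelian groups in $\L_{\operatorname{Pres}}$ (stated just above as a Fact), which applies since $G$ is assumed regular. Under QE, the $\L_{\operatorname{Pres}}$-type of any tuple is determined by its quantifier-free type, hence indiscernibility of $(a_i)_{i\in I}$ amounts to invariance, under strictly increasing substitutions, of the truth value of every \emph{quantifier-free} $\L_{\operatorname{Pres}}$-formula in the variables $x_1,\ldots,x_n$. Since any quantifier-free formula is a boolean combination of atomic formulas, and boolean combinations preserve such invariance, it suffices to check invariance for atomic formulas.

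Next I would observe that the atomic $\L_{\operatorname{Pres}}$-formulas have one of the three shapes $\tau_1 = \tau_2$, $\tau_1 < \tau_2$, or $\tau_1 \equiv_m \tau_2$, where $\tau_1,\tau_2$ are $\L_{\operatorname{Pres}}$-terms. Any $\L_{\operatorname{Pres}}$-term in $x_1,\ldots,x_n$ can be normalized (using only the group operations and the fact that the sole constant symbol is $1$) to the form $k\cdot 1 + \sum_l k_l x_l$ with $k,k_l\in\Z$. Moving one side of each atomic formula to the other and absorbing signs, every atomic formula becomes equivalent to one of
\[
k\cdot 1 + \sum_l k_l x_l > 0, \qquad k\cdot 1 + \sum_l k_l x_l = 0, \qquad k\cdot 1 + \sum_l k_l x_l \equiv_m 0,
\]
which are precisely the three types appearing in conditions (1)--(3) of the lemma.

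The left-to-right direction of the lemma is then trivial, since each of (1)--(3) is an instance of an atomic $\L_{\operatorname{Pres}}$-formula being preserved under increasing substitutions, and indiscernibility gives this for every $\L_{\operatorname{Pres}}$-formula. For the converse, conditions (1)--(3) say exactly that every atomic $\L_{\operatorname{Pres}}$-formula has the same truth value on $(a_{i_1},\ldots,a_{i_n})$ and $(a_{j_1},\ldots,a_{j_n})$; by QE and closure under boolean combinations, the same then holds for every $\L_{\operatorname{Pres}}$-formula, so $(a_i)_{i\in I}$ is indiscernible. There is no serious obstacle here: the only ``content'' is Weispfenning's QE together with the elementary syntactic observation about the shape of $\L_{\operatorname{Pres}}$-terms; note that the assumption on $G/pG$ being infinite and the saturation hypothesis are not used in this lemma itself but only to exploit it in the sequel.
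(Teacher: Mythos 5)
Your proof is correct and is exactly the argument the paper intends; the paper merely asserts the lemma "By QE in $\L_{\operatorname{Pres}}$" without spelling it out. The reduction (QE reduces indiscernibility to invariance of quantifier-free, hence atomic, formulas under strictly increasing index substitutions; then normalize $\L_{\operatorname{Pres}}$-terms to the form $k\cdot 1 + \sum_l k_l x_l$ and move everything to one side of each atomic relation) is the standard and correct route, and your remark that neither saturation nor the hypothesis on $G/pG$ is used here is accurate.
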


\noindent
We think of (1) and (2) in Lemma~\ref{indisccharacterizationLPres} as \emph{geometric conditions} and of (3) as \emph{algebraic conditions}. It is easy to prescribe a certain choice of geometric conditions in a rapidly increasing sequence; here we say that a sequence
$(a_i)_{i\in I}$ in $G$  is \emph{rapidly increasing} if for all $i<j$ from~$I$ and $m$,~$n$,
\[
0\leq m1<na_i<a_j.
\]
(That is, $a_i>1$ for all $i$, and the $a_i$ and $1$ lie in distinct archimedean classes.)

\begin{lemma}
\label{rapidseqLPres}
Suppose $(a_i)_{i\in I}$ is a rapidly increasing sequence in $G$. Then for all~$i_1<\cdots<i_n$ and $j_1<\cdots<j_n$ from $I$ and all $k,k_1,\ldots,k_n\in\Z$, we have
\begin{enumerate}

\item  $
\textstyle k\cdot1+\sum_l k_la_{i_l} > 0 \ \Leftrightarrow\  k\cdot 1+\sum_l k_la_{j_l} > 0 \ \Leftrightarrow\  (k_n,\ldots,k_1,k)>_{\operatorname{lex}}(0,\ldots,0)$, and
\item   $
\textstyle k\cdot1+\sum_l k_la_{i_l} = 0 \ \Leftrightarrow\  k\cdot1+\sum_l k_la_{j_l} = 0\ \Leftrightarrow\   k=k_1=\cdots=k_n=0$.
\end{enumerate}
\end{lemma}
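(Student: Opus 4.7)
The plan is to show that in the expression $S_{\bm{i}} := k\cdot 1 + \sum_l k_l a_{i_l}$, the ``dominant'' term is $k_{l_0} a_{i_{l_0}}$ where $l_0 := \max\{l : k_l \neq 0\}$ (when such $l_0$ exists). Once this is established, both equivalences in (1) and (2) follow at once, because the resulting sign and vanishing criteria depend only on the tuple $(k, k_1, \dots, k_n)$ and not on the particular ascending indices chosen — so $S_{\bm{i}}$ and $S_{\bm{j}}$ automatically match.

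First I would dispose of the trivial case: if all $k_l = 0$, then $S_{\bm{i}} = k \cdot 1$, so $S_{\bm{i}} > 0 \Leftrightarrow k > 0 \Leftrightarrow (0,\dots,0,k) >_{\mr{lex}} (0,\dots,0)$, and similarly for equality with $0$. Otherwise define $l_0$ as above. The main estimate is
\[
\Big| k\cdot 1 + \sum_{l < l_0} k_l a_{i_l} \Big| < a_{i_{l_0}} \leq |k_{l_0}| a_{i_{l_0}}.
\]
When $l_0 = 1$ the left side is just $|k| \cdot 1$, and rapid increase gives $|k| \cdot 1 < a_{i_1}$. When $l_0 \geq 2$, rapid increase yields $|k| \cdot 1 < a_{i_{l_0 - 1}}$ and $|k_l| a_{i_l} \leq |k_l| a_{i_{l_0 - 1}}$ for each $l < l_0$; summing gives a bound of the form $N \cdot a_{i_{l_0 - 1}}$ for some integer $N \geq 1$, and one more application of rapid increase gives $N a_{i_{l_0 - 1}} < a_{i_{l_0}}$.

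From this estimate, $S_{\bm{i}}$ has the same sign as $k_{l_0} a_{i_{l_0}}$, hence the same sign as $k_{l_0}$ (since $a_{i_{l_0}} > 0$). The lex comparison $(k_n, \dots, k_1, k) >_{\mr{lex}} (0, \dots, 0)$ is exactly equivalent to $k_{l_0} > 0$, which gives (1); and $S_{\bm{i}} \neq 0$ whenever some $k_l \neq 0$, together with the trivial direction, gives (2). The same reasoning with $j_l$ in place of $i_l$ handles the $\bm{j}$-version, establishing the claimed equivalences.

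The only ``hard'' part here is purely bookkeeping: arranging the two-step use of the rapid increase condition ($|k|\cdot 1 < a_{i_{l_0-1}}$ for the constant term, $|k_l| a_{i_l} \leq |k_l| a_{i_{l_0-1}}$ for intermediate terms, then $N a_{i_{l_0-1}} < a_{i_{l_0}}$) so that the bound is strictly below $a_{i_{l_0}}$. There is no conceptual obstacle — this is essentially the non-archimedean ``leading term dominates'' principle specialized to rapidly increasing sequences.
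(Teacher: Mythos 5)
The paper states this lemma without proof; your argument is the natural one to supply and is correct in substance. The dominant-term analysis --- isolating $k_{l_0} a_{i_{l_0}}$ for $l_0 = \max\{l : k_l \neq 0\}$ and bounding the remainder strictly below $a_{i_{l_0}}$ by a two-stage application of rapid increase --- is exactly the archimedean-separation argument the lemma encodes, and your bookkeeping is careful.

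One subtlety worth flagging concerns the base case where all $k_l = 0$: there you assert $k \cdot 1 > 0 \Leftrightarrow k > 0$ and $k \cdot 1 = 0 \Leftrightarrow k = 0$, which require $1 > 0$, i.e.\ that $G$ is discrete. But in $\L_{\operatorname{Pres}}$ the constant $1$ is interpreted as $0$ when $G$ is dense, and in the lemma's only application (Proposition~\ref{nondistalLPres}) the group is necessarily dense --- a regular discrete ordered abelian group has all quotients $G/nG$ finite, contradicting the standing hypothesis that $G/pG$ is infinite. In that situation the third term of the chain of equivalences in each of (1) and (2) fails exactly when all $k_l = 0$ and $k \neq 0$. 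This is an infelicity in the lemma's statement rather than a gap in your reasoning: the first biconditional in each of (1) and (2), which compares the $i$-indexed and $j$-indexed sums and is the only part used downstream, holds unconditionally, since in the degenerate case both sides equal $k \cdot 1$. Your estimate establishes that part without qualification.
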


\noindent
In general, it is more difficult to prescribe all of the algebraic conditions which hold in an indiscernible sequence, but once we have an indiscernible sequence in $G$ we can use the following:

\begin{lemma}
\label{totalindiscLPres}
Suppose $(a_i)_{i\in I}$ is an indiscernible sequence in $G$. Then for  all distinct $i_1,\dots,i_n$ and distinct $j_1,\dots, j_n$ from $I$,
all $k,k_1,\ldots,k_n\in\Z$ and $m\geq 2$, we have
\begin{enumerate}
\item $\textstyle k\cdot1+\sum_l k_la_{i_l} = 0 \hskip0.75em\quad\Longleftrightarrow\quad k\cdot1+\sum_l k_la_{j_l} = 0$, and
\item $\textstyle k\cdot1+\sum_l k_la_{i_l} \equiv_m 0 \quad\Longleftrightarrow\quad k\cdot1+\sum_l k_la_{j_l} \equiv_m 0$.
\end{enumerate}
\end{lemma}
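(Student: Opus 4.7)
My plan is to reduce both parts to a single \emph{permutation-invariance} statement for the coefficient tuple, and then verify that statement by a short calculation using Lemma~\ref{indisccharacterizationLPres}. I would treat equality and congruence uniformly: write $x\equiv_\infty 0$ for $x=0$ and let $m$ range over $\{\infty,2,3,\dots\}$. The case $n=1$ is immediate from Lemma~\ref{indisccharacterizationLPres}, so assume $n\geq 2$.

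First I would reduce to the following claim: if $k\cdot 1+\sum_l k_l a_{i_l}\equiv_m 0$ holds for all increasing tuples $i_1<\cdots<i_n$ in $I$, then for every $\pi\in S_n$ also $k\cdot 1+\sum_l k_{\pi(l)}a_{i_l}\equiv_m 0$ for all such tuples. Granting the claim, given arbitrary distinct tuples $(i_l)$ and $(j_l)$, let $\sigma,\tau\in S_n$ reorder them to increasing tuples $(i_{\sigma(l)})$ and $(j_{\tau(l)})$. After reindexing the sums, Lemma~\ref{indisccharacterizationLPres} transports the condition between the two increasing tuples for any fixed coefficient vector, and the claim applied to $\pi=\tau^{-1}\sigma$ converts between the two relevant coefficient vectors.

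To establish the permutation-invariance, I would first show that $k_l(a_i-a_j)\equiv_m 0$ for every $l\in\{1,\dots,n\}$ and all distinct $i,j\in I$. Fix $l$, choose indices $i_1<\cdots<i_{l-1}<i_l<i'_l<i_{l+1}<\cdots<i_n$ from $I$, and subtract the relations for the two increasing tuples $(i_1,\dots,i_l,\dots,i_n)$ and $(i_1,\dots,i'_l,\dots,i_n)$ to obtain $k_l(a_{i_l}-a_{i'_l})\equiv_m 0$. The formula ``$k_l(x-y)\equiv_m 0$'' is $\emptyset$-definable and symmetric in $x,y$ modulo sign, so Lemma~\ref{indisccharacterizationLPres}(2) or~(3) applied to pairs of distinct elements propagates this to $k_l(a_i-a_j)\equiv_m 0$ for all distinct $i,j\in I$.

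Finally, for any $\pi\in S_n$ and increasing $i_1<\cdots<i_n$, setting $c_l:=k_{\pi(l)}-k_l$ we have $\sum_l c_l=0$, so
\[
\sum_l (k_{\pi(l)}-k_l)\,a_{i_l}\;=\;\sum_l c_l(a_{i_l}-a_{i_1})\;=\;\sum_l\bigl(k_{\pi(l)}(a_{i_l}-a_{i_1})-k_l(a_{i_l}-a_{i_1})\bigr),
\]
and each summand on the right is $\equiv_m 0$ by the previous step. Hence $k\cdot 1+\sum_l k_{\pi(l)}a_{i_l}\equiv_m k\cdot 1+\sum_l k_l a_{i_l}\equiv_m 0$. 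The main conceptual step is the propagation from one pair to all pairs in the middle step, which is nonetheless immediate once one recognizes that the pair-difference formula is $\emptyset$-definable; the rest is bookkeeping, with the equality and congruence cases treated in parallel under the unified $\equiv_m$ notation.
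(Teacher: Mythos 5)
Your proof is correct (under the standard convention that the index set $I$ of an indiscernible sequence is infinite, which your argument uses when inserting the extra index $i'_l$), but it takes a genuinely different route than the paper. The paper's proof is a one-line ``soft'' argument: drop the order symbol, note that the $\{0,1,{+},{-},(\equiv_m)\}$-reduct of $G$ is an expansion by definitions and constants of a pure abelian group and is therefore stable, and invoke the general fact that in a stable theory every indiscernible sequence is totally indiscernible. Your proof is instead a self-contained, purely computational verification: you reduce to permutation invariance of the coefficient vector on increasing tuples, and the mechanism that makes this work is the observation that $k_l(a_i - a_j) \equiv_m 0$ for every coefficient $k_l$ and all distinct $i,j$, extracted by subtracting the relations at two adjacent increasing tuples and propagating with Lemma~\ref{indisccharacterizationLPres}. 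From there the algebra ($\sum_l c_l = 0$ so $\sum_l c_l a_{i_l} = \sum_l c_l(a_{i_l}-a_{i_1})$) closes the argument. In effect you are unwinding, in this particular case, the standard Baur--Monk style reasoning that underlies stability of abelian structures. What the paper's argument buys is brevity and robustness: it generalizes immediately to any theory whose order-free reduct is stable. What your argument buys is transparency and independence from the stability black box; it makes explicit the divisibility identity $k_l(a_i - a_j)\equiv_m 0$ that is doing all the work. Both are valid, and your unified notation $\equiv_\infty$ for equality cleanly handles parts (1) and (2) in parallel.
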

\begin{proof}
The sequence $(a_i)$ is indiscernible in the $\big\{0,1,+,-,(\equiv_m)\big\}$-reduct of $G$. However, this reduct is just (an expansion by definitions and constants of) the underlying abelian group of $G$, which is stable. Thus the sequence $(a_i)$ in this reduct is totally indiscernible, which implies the conclusion of the lemma.
\end{proof}

\begin{prop}
\label{nondistalLPres}
$G$ is not distal.
\end{prop}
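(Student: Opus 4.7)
The plan is to apply Corollary~\ref{cor: explicit witness of non-distality}: I will construct in $G$ an indiscernible sequence $(a_i)_{i\in\Q}$ and an element $b\in G$ such that $(a_i)_{i\in\Q\setminus\{0\}}$ is $b$-indiscernible, and such that the formula $\varphi(x;y):=\neg(x\equiv_p y)$ satisfies ${}\models\varphi(a_i;b)\iff i\neq 0$; this is precisely the witness of non-distality demanded by the corollary.

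First I would build the sequence. Since $G/pG$ is infinite and $G$ is sufficiently saturated, I claim that $G$ contains a rapidly increasing indiscernible sequence $(a_i)_{i\in\Q}$ whose residues $\{\bar a_i\}_{i\in\Q}$ form an $\mathbb F_p$-linearly independent subset of $G/pG$. Indeed, since $G/pG$ is infinite, $G$ is dense, and in sufficiently saturated models of a dense regular ordered abelian group there are elements of distinct archimedean classes, so one can first produce a countable rapidly increasing sequence $(c_j)_{j\in\N}$ in $G$; by repeatedly exploiting the infinitude of $G/pG$ one can moreover arrange that $\{\bar c_j\}_{j\in\N}$ is $\mathbb F_p$-linearly independent. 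Applying Erd\H{o}s--Rado then yields an indiscernible subsequence, and both rapid increase (a $2$-tuple property) and $\mathbb F_p$-linear independence of residues (an $n$-tuple property for each $n$) persist on subsequences; stretching by compactness gives the desired $\Q$-indexed indiscernible sequence. Next, by saturation of $G$, I would realize over the $a_i$'s a single element $b$ satisfying $b-a_0\in mG$ for every $m\geq 1$ together with $na_i<b$ for every $i\in\Q$ and $n\in\N$; this partial type is finitely consistent in $G$ because in a dense regular ordered abelian group the $m$-divisible elements are cofinal for each $m\geq 1$.

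To verify that $(a_i)_{i\neq 0}$ is $b$-indiscernible, Weispfenning's quantifier elimination in $\L_{\operatorname{Pres}}$ reduces the task to showing, for each atomic formula $k+k_0 y+\sum_{l=1}^n k_l x_l\,\square\,0$ with $\square\in\{{>},{=},\equiv_m\}$, that its truth value at $(y,x_1,\ldots,x_n)=(b,a_{i_1},\ldots,a_{i_n})$ is independent of the choice of $i_1<\cdots<i_n$ in $\Q\setminus\{0\}$. For $\square\in\{{>},{=}\}$ this follows because $b$ lies in a strictly higher archimedean class than every $a_i$: if $k_0\neq 0$ the sign of $k_0 b$ dominates the entire expression, while if $k_0=0$ one reduces to Lemma~\ref{rapidseqLPres} applied to the rapidly increasing sequence $(a_i)$. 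For $\square={}\equiv_m$, since $b\equiv a_0\pmod m$, the condition rewrites as $k+k_0 a_0+\sum_l k_l a_{i_l}\equiv 0\pmod m$, whose truth value, by Lemma~\ref{totalindiscLPres} applied to the distinct $(n+1)$-tuple $(0,i_1,\ldots,i_n)$ in $\Q$, depends only on the distinctness of the indices and not on their specific values. Finally, the formula $\varphi(x;y):=\neg(x\equiv_p y)$ distinguishes $a_0$ from the other $a_i$: $\varphi(a_0;b)$ fails since $b\equiv a_0\pmod p$, while $\varphi(a_i;b)$ holds for $i\neq 0$, because otherwise $\bar a_i=\bar b=\bar a_0$ in $G/pG$, contradicting the $\mathbb F_p$-linear independence of $\bar a_0$ and $\bar a_i$.

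The main obstacle will be the combined existence argument: securing, simultaneously, a rapidly increasing indiscernible sequence whose residues are $\mathbb F_p$-linearly independent in $G/pG$, and a parameter $b$ that is congruent to $a_0$ modulo every positive integer and yet dominates the entire sequence geometrically. Both ingredients rely essentially on the sufficient saturation of $G$ together with the infinitude of $G/pG$, and their compatibility is what ultimately allows the algebraic conditions (via Lemma~\ref{totalindiscLPres}) and the geometric conditions (via Lemma~\ref{rapidseqLPres}) to be handled separately in the verification of $b$-indiscernibility.
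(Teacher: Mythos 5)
Your proposal is correct and follows essentially the same route as the paper: build a rapidly increasing indiscernible sequence with pairwise distinct residues modulo $p$, then use saturation to produce a parameter $b$ that geometrically dominates the sequence while sharing the $\equiv_m$-behavior of $a_0$, and finally invoke Corollary~\ref{cor: explicit witness of non-distality} with $\varphi(x;y):=\neg(x\equiv_p y)$. The one small streamlining you make is to require $b\equiv_m a_0$ for \emph{every} $m$, which lets you verify the algebraic conditions by directly substituting $a_0$ for $b$; the paper instead stipulates (via $(\Phi2)$) that $b_1$ satisfy the same $\equiv_m$-relations with the other terms as $b_0$ does, which is the logically weaker but equally sufficient condition, while your further requirement of full $\mathbb F_p$-linear independence of residues is harmless but unnecessary --- pairwise distinctness suffices.
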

\begin{proof}
First,   Ramsey yields a rapidly increasing indiscernible se\-quence~$(b_i)_{i\in (-1,1)}$ in $G$ such that
 $b_i\not\equiv_p b_j$ for all $i<j$ from $(-1,1)$. The argument uses  that $G/pG$ is infinite and that each coset of~$pG$ is cofinal in $G$. We will use $(b_i)$ to obtain our counterexample to distality. For this, consider  the collection~$\Phi(x)$ of $\L_{\operatorname{Pres}}$-formulas, with $x = (x_i)_{i\in (-1,1]}$, consisting exactly of the following formulas:
\begin{itemize}
\item[($\Phi$1)] for every $i<j$ from $(-1,1]$ and every $m$, $n$, the formula
\[
0\leq m1<nx_i<x_j,
\]
\item[($\Phi$2)] for  every $i_1<\cdots<i_n$ from $(-1,1)$ and $k,k_1\ldots,k_n\in\Z$, if $G\models k\cdot 1+\sum_l k_lb_{i_l}\equiv_m 0$,    the formulas
\[
\textstyle k\cdot 1+\sum_l k_lx_{i_l}\equiv_m0 \quad\text{and}\quad \big(k\cdot 1+\sum_l k_lx_{i_l}\equiv_m0\big)[x_1/x_0],
\]
and
otherwise the formulas
\[
\textstyle k\cdot 1+\sum_l k_lx_{i_l}\not\equiv_m 0 \quad\text{and}\quad \big(k\cdot 1+\sum_l k_lx_{i_l}\not\equiv_m0\big)[x_1/x_0],
\]
where $[x_1/x_0]$ denotes replacing each occurrence of $x_0$ in the preceding expression by $x_1$, and
\item[($\Phi$3)] the formula $x_0\equiv_p x_1$.
\end{itemize}
Thus $\Phi(x)$ expresses that the sequence $(x_i)_{i\in (-1,1]}$ is rapidly increasing and satisfies the same algebraic conditions as~$(b_i)$, $x_0$ and $x_1$ have the same algebraic relations with~$(x_i)_{i\in (-1,1)\setminus\{0\}}$, however~$x_1$ and $x_0$ are congruent modulo~$p$.

\begin{claim}
$\Phi(x)$ is finitely satisfiable in $G$.
\end{claim}
\begin{proof}[Proof of Claim]
Let $\Phi_0\subseteq\Phi$ be  finite.
Set $b_i^*:=b_i$ for $i\in (-1,1)$;
 clearly~$(b_i^*)_{i\in (-1,1)}$ satisfies  all formulas from ($\Phi$1), ($\Phi$2), and ($\Phi$3) which do not involve $x_1$.
We claim that we can choose $b_1^*\in G$ so that~$(b_i^*)_{i\in (-1,1]}$ satisfies $\Phi_0$.
To see this
let $N$ be the product of all moduli occurring in $\Phi_0$, and pick~$b_1^*$ to be a sufficiently large member of the coset $b_0+pNG$.
The ``sufficiently large''   ensures that all formulas in~$\Phi_0$ coming from ($\Phi$1) are satisfied, the choice of~$N$ ensures that $b_0\equiv_m b_1^*$ for all relevant $m$, and thus all formulas from ($\Phi$2) are satisfied, and clearly $b_0\equiv_p b_1^*$.
\end{proof}

\noindent
By the claim and after replacing our original sequence $(b_i)_{i\in (-1,1)}$, we can assume that we have some~$b_1\in G$ such that $(b_i)_{i\in (-1,1]}$ realizes $\Phi(x)$. It is clear that~$(b_i)_{i\in (-1,1)}$ is indiscernible, and that~$(b_i)_{i\in (-1,1)}$ is \emph{not} $b_1$-indiscernible. It remains to establish:

\begin{claim}
$(b_i)_{i\in (-1,1)\setminus\{0\}}$ is $b_1$-indiscernible.
\end{claim}
\begin{proof}[Proof of Claim]
It is sufficient to show that $(b_i)_{i\in (-1,1]\setminus\{0\}}$ is indiscernible. By ($\Phi$1) this sequence   is  rapidly increasing, thus  by Lemma~\ref{rapidseqLPres} the geometric conditions~(1) and (2) of Lemma~\ref{indisccharacterizationLPres} hold. It suffices to check condition~(3) from Lemma~\ref{indisccharacterizationLPres}. 
Let~$i_1<\cdots<i_{n-1}<i_n = 1$ from $(-1,0)\cup(0,1]$ and~${j_1<\cdots<j_n}$ from $(-1,1)$, and let~$k,k_1,\ldots,k_n\in\Z$;
it is sufficient to show that then
\[
\textstyle k\cdot 1+\sum_l k_lb_{i_l}\equiv_m0\ \Longleftrightarrow\ \textstyle k\cdot 1+\sum_l k_lb_{j_l}\equiv_m0.
\]
Now
\[
\textstyle k\cdot 1+\sum_l k_lb_{i_l}\equiv_m0\ \Longleftrightarrow\ \big(k\cdot 1+\sum_l k_lb_{i_l}\equiv_m0\big)[b_0/b_1]
\]
by ($\Phi$2), and
\[
\textstyle \big(k\cdot 1+\sum_l k_lb_{i_l}\equiv_m0\big)[b_0/b_1]\ \Longleftrightarrow\
k\cdot 1+\sum_l k_lb_{j_l}\equiv_m0,
\]
by Lemma~\ref{totalindiscLPres} and the fact that $(b_i)_{i\in (-1,1)}$ is indiscernible.
\end{proof}

\noindent
This concludes the proof of the proposition.
\end{proof}

\subsection{A review of the Cluckers-Halupczok language}
In the rest of the section, we   consider ordered abelian groups which do not in general have QE in $\mathcal{L}_{\operatorname{Pres}}$.
We  use the language $\mathcal{L}_{\operatorname{qe}}$ introduced by Cluckers and Halupczok~\cite{Cluckers} (see also~\cite{halupczok2009language}) for their (relative) quantifier elimination result for ordered abelian groups.
This language is similar in spirit to   one introduced by Gurevich and Schmitt~\cite{GurevichSchmitt}, however it is more in line with our modern paradigm of many-sorted languages and perhaps a little more intuitive.

\medskip\noindent
The rest of the subsection is taken essentially from~\cite{Cluckers}.
In what follows $G$ is an ordered abelian group and we use the notation $H\Subset G$ to denote that $H$ is a convex subgroup of $G$.
We introduce~$\mathcal{L}_{\operatorname{qe}}$ and at the same time describe how $G$ is viewed as an $\mathcal{L}_{\operatorname{qe}}$-structure $\bm{G}$.
We begin by listing the sorts of~$\mathcal{L}_{\operatorname{qe}}$: besides the main sort $\mathcal G$
whose underlying set is that of the ordered abelian group $G$, these are the
\textbf{auxiliary sorts} $\mathcal{S}_p$, $\mathcal{T}_p$, $\mathcal{T}_p^+$
(one for each~$p$) associated with $G$. Here is how they are interpreted in~$\bm{G}$:

\begin{definition}
\mbox{}

\begin{enumerate}
\item For $a\in G\setminus pG$, let $G_{p}(a)$ be the largest convex subgroup of $G$ such that~$a\notin G_{p}(a)+pG$,
and for $a\in pG$
let $G_{p}(a) := \{ 0 \} $; then the underlying set of sort $\mathcal{S}_p$ is~$\big\{G_p(a):a\in G\big\}$;
\item for $b\in G$, set $$G_{p}^-(b):=\bigcup \big\{ G_p(a): a\in G,\ b\notin G_{p}(a)\big\},$$ where the union over the empty set is declared to be~$\{ 0\} $;
then the underlying set of sort $\mathcal T_p$ is $\big\{G^-_p(b):b\in G\big\}$;
\item For $b\in G$, define $$G_p^+(b):=\bigcap \big\{ G_p(a): a\in G,\ b\in G_p(a)\big\},$$
where the intersection over the empty set is $G$; then the underlying set of sort $\mathcal T_p^+$ is~$\big\{G^+_p(b):b\in G\big\}$.
\end{enumerate}
\end{definition}

\noindent
Below we don't distinguish notationally between the sort $\mathcal{S}_p$ and its underlying set (so we can write~$\mathcal{S}_p=\big\{G_p(a):a\in G\big\}$), and similar for the other auxiliary sorts.
We let $\alpha$ range over (the underlying sets of) the auxiliary sorts. In each case, $\alpha$ is a convex subgroup of~$G$;
if we want to stress this role of $\alpha$ as a convex subgroup of $G$ (rather than as an abstract element of the underlying set of
a certain sort of the structure~$\bm{G}$),
we denote it  by $G_\alpha$, and
we let $\pi_\alpha\colon G\twoheadrightarrow G/G_{\alpha}$ be the natural surjection.
We let $1_\alpha$ denote the minimal
positive element of $G/G_{\alpha}$ if the ordered abelian group $G/G_{\alpha}$ is discrete,
and set $1_{\alpha}:=0\in G/G_{\alpha}$ otherwise;
for $k\in\mathbb{Z}$ we let $k_\alpha:=k\cdot 1_\alpha$. For $a,b\in G$ and~$\diamond$ denoting one of the relation symbols ${=}$, ${<}$,
or $\equiv_{m}$ we also write $a\diamond_{\alpha}b+k_{\alpha}$
if $\pi_\alpha(a)\diamond\pi_\alpha(b)+k_{\alpha}$ holds  in the ordered abelian group~$G/G_{\alpha}$.
We also set
\[
G_{\alpha}^{[m]}:=\bigcap_{G_{\alpha}\subsetneq  H\Subset G}\left(H+mG\right)\]
and 
\[ a\equiv_{n,\alpha}^{[m]}b\quad :\Longleftrightarrow\quad a-b\in G_{\alpha}^{[m]}+nG \qquad(a,b\in G).\]
We now describe the primitives  of  the $\mathcal{L}_{\operatorname{qe}}$-structure~$\bm{G}$; these are:
\begin{itemize}
\item[(G1)] on the main sort $\mathcal G$, the usual primitives $0$, $+$, $-$, $\leq$ of the language of ordered abelian groups;
\item[(G2)] binary relations ``$\alpha\leq\alpha'$''
on $\left(\mathcal{S}_{p}\overset{\cdot}{\cup}\mathcal{T}_{p}\overset{\cdot}{\cup}\mathcal{T}_{p}^{+}\right)\times\left(\mathcal{S}_{q}\overset{\cdot}{\cup}\mathcal{T}_{q}\overset{\cdot}{\cup}\mathcal{T}_{q}^{+}\right)$,
interpreted as $G_{\alpha}\subseteq G_{\alpha'}$ (each pair~$(p,q)$ giving rise to nine separate binary relations);
\item[(G3)] predicates for the relations $a\diamond_{\alpha}b+k_{\alpha}$,
where $\diamond\in\big\{ {=},{<},{(\equiv_{m})}\big\} $ and $k\in\mathbb{Z}$
(each of these being ternary relations on $G\times G\times\mathcal{X}$ where
$\mathcal{X}\in \{ \mathcal{S}_{p},\mathcal{T}_{p},\mathcal{T}_{p}^{+} \} $);
\item[(G4)]  for $m\geq n$, the ternary relation
$x\equiv_{q^n,\alpha}^{[q^m]}y$ on $G\times G\times\mathcal{S}_{p}$;
\item[(G5)] a unary predicate $\discr$
of sort $\mathcal{S}_{p}$ which holds of $\alpha$ if and only if $G/G_{\alpha}$ is
discrete;
\item[(G6)] for $d\in\mathbb{N}$ and $n$,  two unary predicates of sort $\mathcal{S}_{p}$ defining the sets
\[
\big\{ \alpha\in\mathcal{S}_{p}:\dim_{\mathbb{F}_{p}}\big(G_{\alpha}^{[p^{n}]}+pG\big)\big/\big(G_{\alpha}^{[p^{n+1}]}+pG\big)=d\big\} \mbox{ and}
\]
\[
\big\{ \alpha\in\mathcal{S}_{p}:\dim_{\mathbb{F}_{p}}\big(G_{\alpha}^{[p^{n}]}+pG\big)\big/(G_{\alpha}+pG)=d\big\}.
\]
\end{itemize}
We let $\mathcal A$ be the set of auxiliary sorts associated to $G$,
and let $\mathcal{L}_{\operatorname{qe}}^{\mathcal A}$ be the sublanguage of $\mathcal{L}_{\operatorname{qe}}$
with sorts $\mathcal A$ and primitives listed in (G2), (G5), (G6).


\begin{definition}
Let $\phi ( {x}, {\eta} )$ be an $\mathcal{L}_{\operatorname{qe}}$-formula,
where ${x}$ and ${\eta}$ are  multivariables of sort $\mathcal{G}$ and
$\mathcal{A}$, respectively. We say that $\phi({x},{\eta})$
is in \emph{family union form} if
\[
\phi({x},{\eta})\ =\ \bigvee_{i=1}^{n}\ \exists{\theta}\big(\xi_{i}({\eta},{\theta})\land\psi_{i}({x},{\theta})\big)\mbox{,}
\]
where ${\theta}$ is a multivariable of sort $\mathcal{A}$,  $\xi_{i} ( {\eta}, {\theta} )$ are $\mathcal{L}_{\operatorname{qe}}^{\mathcal A}$-formulas, each $\psi_{i} ( {x}, {\theta} )$
is a conjunction of basic formulas (i.e., atomic or negated atomic formulas), and  for
each ordered abelian group~$G$, viewed as an $\mathcal{L}_{\operatorname{qe}}$-structure $\bm{G}$ as above,  the formulas~$\xi_{i}({\eta},{\alpha})\land\psi_{i}({x},{\alpha})$, with~$i$ ranging over $\{1,\dots,n\}$ and $\alpha$ over
 tuples of the appropriate sorts in $\bm{G}$, are pairwise inconsistent.
\end{definition}

\noindent
The following is the main result from~\cite{Cluckers}:
\begin{fact}
\label{OAGRelQE}
In the theory of ordered abelian groups, each $\mathcal{L}_{\operatorname{qe}}$-formula is equivalent to an $\mathcal{L}_{\operatorname{qe}}$-formula in family union form.
\end{fact}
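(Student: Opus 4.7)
The plan is to prove Fact~\ref{OAGRelQE} by a syntactic quantifier elimination procedure, eliminating one main-sort quantifier at a time. Given a formula $\exists x\, \psi(x,\bar y,\bar\eta)$ with $x$ of sort $\mathcal G$, $\bar y$ a multivariable of sort $\mathcal G$, $\bar\eta$ a multivariable of auxiliary sorts, and $\psi$ a conjunction of basic $\mathcal L_{\operatorname{qe}}$-formulas, the goal is to produce an equivalent formula in family union form. Reducing to this case is standard (by prenex normalization, negation-closure of family union form, and processing innermost existentials first), and equality/ordering of auxiliary-sort variables can be folded into the $\xi_i$ part.

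First, I would isolate the literals of $\psi$ mentioning $x$ and split them into order/equality literals of the form $x \diamond_\alpha t(\bar y) + k_\alpha$ and congruence literals involving $\equiv_{n,\alpha}^{[m]}$. The crucial observation is that the $\mathcal L_{\operatorname{qe}}$-type of an element $g\in G$ over a finite parameter set is determined by finitely many pieces of data: the class of $g$ in each relevant quotient $G/G_\alpha$, together with which cosets of $G_\alpha^{[m]}+nG$ contain $g$. This finiteness is what makes a family union form possible: the outer $\exists\bar\theta$ ranges over the finite collection of auxiliary invariants that can arise.

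Next, I would introduce fresh auxiliary variables $\bar\theta$ recording the case distinctions needed to pin down the relevant configuration. These should capture: the relative order under inclusion of the convex subgroups $G_\alpha$ for $\alpha\in\bar\eta$, together with the subgroups $G_p(y_i-y_j)$ coming from pairwise differences of the $y_i$ and appropriate $\mathcal S_p$- and $\mathcal T_p$-parameters for each prime $p$ mentioned in $\psi$. Once a case is fixed (that is, $\bar\theta$ is chosen so all relative inclusions and discreteness/dimension predicates are determined), the existence of a main-sort $x$ satisfying $\psi$ reduces, after passing to the appropriate quotient $G/G_{\theta_\ell}$, to a Presburger-style existential statement on residues of the $y_i$; this is eliminated by the Presburger elimination carried out in the quotient. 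Finally, one checks that $\bar\theta$ can be obtained from finitely many basic auxiliary-sort terms built out of $\bar\eta$ and the $G_p(y_i-y_j)$, producing the required $\xi_i({\bar\eta},{\bar\theta})\land\psi_i({\bar y},{\bar\theta})$.

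The main obstacle is the bookkeeping: ensuring that the case-distinguishing formulas $\xi_i(\bar\eta,\bar\theta)$ really can be chosen in the restricted language $\mathcal L_{\operatorname{qe}}^{\mathcal A}$ (so that quantification over $\bar\theta$ stays on auxiliary sorts), that the disjuncts can be made pairwise inconsistent, and that the auxiliary sorts as defined---including the subtler $\mathcal T_p^+$ and the dimension predicates (G6)---simultaneously capture, across all primes $p$, enough $p$-divisibility information to describe every definable set. Verifying this last point is the technical heart of the argument and is essentially the content of the Gurevich--Schmitt classification of definable sets in ordered abelian groups, refined by Cluckers--Halupczok into the present syntactic statement.
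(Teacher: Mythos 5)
This statement is not proved in the paper at all: it is quoted as a black box from Cluckers--Halupczok (the reference \cite{Cluckers}), so there is no internal proof to compare with, and your task was in effect to reprove their main theorem. Your proposal does not do this. The decisive problem is that the step you yourself identify as ``the technical heart'' --- showing that the specific auxiliary sorts $\mathcal S_p$, $\mathcal T_p$, $\mathcal T_p^+$ together with the predicates (G2)--(G6) capture enough divisibility and convex-subgroup information to absorb a main-sort existential quantifier --- is not argued but deferred to ``the Gurevich--Schmitt classification, refined by Cluckers--Halupczok''. That is precisely the content of the fact to be proved, so the proposal is circular at its core: everything before that point is routine bookkeeping (prenex form, case splitting, folding auxiliary equalities into the $\xi_i$), and everything after it is the theorem itself.

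Two of the concrete claims you do make are also incorrect as stated. First, the assertion that the outer $\exists\theta$ ``ranges over the finite collection of auxiliary invariants that can arise'' is false in general: the sorts $\mathcal S_p$ can be infinite (finiteness is exactly the extra hypothesis of Theorem~\ref{Spfinitedistalthm} and Proposition~\ref{SpfiniteQE} later in the paper, where the quantifier $\exists\theta$ can then be replaced by a finite disjunction); the family union form is designed precisely so that no such finiteness is needed. Second, the claim that once a case is fixed the existence of $x$ ``reduces, after passing to the appropriate quotient $G/G_{\theta_\ell}$, to a Presburger-style existential statement \dots eliminated by the Presburger elimination carried out in the quotient'' cannot work as written: the quotients $G/G_\alpha$ are arbitrary ordered abelian groups and in general do not admit quantifier elimination in the Presburger language (by Weispfenning's theorem that is equivalent to regularity), which is exactly why the relations $\equiv^{[m]}_{n,\alpha}$ and the predicates (G4)--(G6) are in the language at all. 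So the reduction step, as well as the justification that the case-distinguishing data can be expressed by $\mathcal L^{\mathcal A}_{\operatorname{qe}}$-formulas $\xi_i(\eta,\theta)$ with pairwise inconsistent disjuncts, remains unestablished; the proposal is an outline of a plausible strategy, not a proof.
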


\subsection{The case where all $\mathcal{S}_p$ are finite}
The main result of this section is the following.

\begin{theorem}
\label{Spfinitedistalthm}
Suppose that $\mathcal{S}_p$ is finite for all $p$. Then $G$ is distal iff~$G$ is non-singular.
\end{theorem}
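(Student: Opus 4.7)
The ``if'' direction is immediate: by Fact~\ref{oagdpminnonsing}, non-singularity implies $\operatorname{dp}$-minimality, and each sort of $\bm{G}$ is linearly ordered---the main sort $\mathcal G$ by the group order, and each auxiliary sort $\mathcal S_p$, $\mathcal T_p$, $\mathcal T_p^+$ by inclusion of convex subgroups (via the relations $\alpha\le\alpha'$ from (G2))---so the ``in particular'' part of Fact~\ref{fac: no tot indisc in distal} delivers distality.

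\textbf{``Only if'' direction, setup.} I will argue the contrapositive: assuming $G/pG$ is infinite for some prime $p$, I will exhibit an indiscernible sequence of elements of sort $\mathcal G$ witnessing non-distality via Corollary~\ref{cor: explicit witness of non-distality}. First I observe that, since $\mathcal S_q$ is finite for every $q$, each $\mathcal T_q$, $\mathcal T_q^+$ is finite as well: every $G_q^-(b)$ is a union of elements of $\mathcal S_q\cup\{\{0\}\}$ and every $G_q^+(b)$ an intersection of elements of $\mathcal S_q\cup\{G\}$. Thus all auxiliary sorts are finite; naming each of their elements as a constant preserves distality by Fact~\ref{fac: biinterp distal}(2), and in the resulting expansion Fact~\ref{OAGRelQE} shows every main-sort formula to be a boolean combination of atomic formulas of the forms
\begin{equation*}
a\diamond_\alpha b+k_\alpha\quad(\diamond\in\{=,<,\equiv_m\}),\qquad a\equiv^{[m]}_{n,\alpha}b,
\end{equation*}
with $\alpha$ a constant---a mild enlargement of the Presburger primitives handled in Proposition~\ref{nondistalLPres}.

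\textbf{Construction.} Following the pattern of that proposition, I will produce by Ramsey and compactness an indiscernible sequence $(b_i)_{i\in(-1,1)}$ in $G$ that is \emph{rapidly increasing} with respect to every named convex subgroup $G_\alpha$---so that all ``geometric'' atoms $a\diamond_\alpha b+k_\alpha$ evaluated on tuples from $(b_i)$ are determined, as in Lemma~\ref{rapidseqLPres}, by the signs of $\Z$-linear combinations of parameters---and whose residues in $G/pG$ are pairwise distinct (using that $G/pG$ is infinite and each coset of $pG$ is cofinal in $G$). I will then write down the partial type $\Phi(x_1)$ over the $b_i$ asserting that $x_1$ extends the rapid-increase geometry, that $x_1$ has the same instances of the relations $\equiv_{m,\alpha}$ and $\equiv^{[m]}_{n,\alpha}$ with $(b_i)_{i\in(-1,1)\setminus\{0\}}$ as $b_0$ does, but $x_1\equiv b_0\pmod{pG}$. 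Finite satisfiability of $\Phi$ will be checked by choosing a witness in the unique coset of $N\cdot H+pG$ through $b_0$---where $H$ is an intersection of the finitely many subgroups $G_\alpha^{[p^k]}$ occurring in the finite subtype, and $N$ is the product of the moduli involved---and then taking it sufficiently large to satisfy the geometric constraints.

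\textbf{Conclusion and main obstacle.} For any realization $b_1$ of $\Phi$, the sequence $(b_i)_{i\in(-1,1]\setminus\{0\}}$ will be indiscernible (the geometric atoms by rapid increase, the congruence atoms via the stability of the reduct to these primitives, which forces indiscernibility there to upgrade to total indiscernibility, cf.\ Lemma~\ref{totalindiscLPres}), while $(b_i)_{i\in(-1,1]}$ will fail to be indiscernible at $0$ because the formula $x_0\equiv_p x_1$ separates $b_0$ from every $b_j$ with $j\ne 0$. Corollary~\ref{cor: explicit witness of non-distality} then yields non-distality. The hard part will be the bookkeeping in the finite-satisfiability step: unlike the pure Presburger case, I must simultaneously satisfy finitely many instances of the relative congruences $\equiv^{[m]}_{n,\alpha}$, and this is precisely where finiteness of the $\mathcal S_p$ enters, as it caps the number of distinct subgroups $G_\alpha^{[p^k]}$ that can arise.
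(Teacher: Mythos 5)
Your proposal is correct and follows essentially the same route as the paper: argue the contrapositive, name the (finitely many) elements of the auxiliary sorts as constants, use the Cluckers--Halupczok relative QE to reduce main-sort formulas to Presburger-style atoms in the quotients $G/G_\alpha$, and then rerun the argument of Proposition~\ref{nondistalLPres} with a sequence that is rapidly increasing in every quotient, using stability of the congruence reducts for the algebraic conditions. Two small remarks: the paper (Proposition~\ref{SpfiniteQE}) disposes of the relations $x\equiv^{[q^m]}_{q^n,\alpha}y$ outright via $G_\alpha^{[q^m]}=G_{\alpha'}+q^mG$ with $\alpha'$ the successor of $\alpha$ in $\mathcal S_{q^m}$, so your ``hard'' bookkeeping step is avoidable; and your witness coset is slightly off, since the $pG$-summand of $N\cdot H+pG$ need not lie in $nG$ for the moduli $n$ occurring in the finite subtype --- but taking the witness sufficiently large in $b_0+pNG$ (which is contained in every relevant subgroup $G_\alpha^{[m]}+nG$, $G_\alpha+mG$, and $pG$, and is cofinal) repairs this immediately.
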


\noindent
The hypothesis of the theorem
 holds if $G$ is strongly dependent, by~\cites{chernikov2015groups, dolich2017characterization, farre2017strong,  halevi2017strongly}.
The proof of Theorem~\ref{Spfinitedistalthm}, which we now outline, is a  generalization of the proof of Theorem~\ref{dpminimaldistalPresQE}, using Fact~\ref{OAGRelQE}.

\medskip\noindent
\emph{For the rest of this section, $G$ is an ordered abelian group such that  for each $p$ the underlying set of sort~$\mathcal{S}_p$ is finite.}
Note that then the underlying sets of sorts~$\mathcal{T}_p$ and $\mathcal{T}_p^+$ are also finite, for each $p$.
It suffices to show that if   $G/pG$ is infinite for some~$p$, then $G$ is \emph{not} distal. Here we   construe $G$ as an $\mathcal{L}_{\text{qe}}$-structure, together with constants which name all of $\mathcal{A}$; since each~$\mathcal{S}_p$ is finite, the underlying sets of auxiliary sorts will not grow when we pass to an elementary extension of~$G$. Thus we can also assume that $G$ is sufficiently saturated. In this setting, Fact~\ref{OAGRelQE} specializes as follows:

\begin{prop}
\label{SpfiniteQE}
In $G$, each $\mathcal{L}_{\operatorname{qe}}$-formula $\phi(x)$, where ${x}$ is a multivariable of sort $\mathcal{G}$,  is equivalent to a finite boolean combination of atomic formulas in which the only occurring predicates are those from \textup{(G3)}.
\end{prop}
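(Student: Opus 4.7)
The plan is to apply the Cluckers--Halupczok relative quantifier elimination (Fact~\ref{OAGRelQE}) and then eliminate the remaining auxiliary quantifiers, using the hypothesis that every element of the auxiliary sorts $\mathcal{A}$ is named by a constant and each such sort is finite.

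First, I apply Fact~\ref{OAGRelQE} to rewrite $\phi(x)$ in family union form. Since $\phi(x)$ has no free auxiliary variable, the $\eta$ in the statement of the fact is empty, and we obtain
$$\phi(x)\ \equiv\ \bigvee_{i=1}^{n}\exists\theta_{i}\,\big(\xi_{i}(\theta_{i})\wedge\psi_{i}(x,\theta_{i})\big),$$
where $\theta_{i}$ is a multivariable of auxiliary sort, $\xi_{i}$ is an $\mathcal{L}_{\operatorname{qe}}^{\mathcal A}$-formula, and $\psi_{i}$ is a conjunction of basic formulas. Because each auxiliary sort is finite and every one of its elements is a named constant, the quantifier $\exists\theta_{i}$ unfolds into a finite disjunction indexed by tuples of auxiliary constants~$c$. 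Each sentence $\xi_{i}(c)$ has a definite truth value in $\bm{G}$, so dropping the false cases reduces the problem to $\phi(x)\equiv\bigvee_{(i,c)\in S}\psi_{i}(x,c)$ for a finite index set $S$.

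Next, I work on each $\psi_{i}(x,c)$, a conjunction of basic formulas in which the only free $\mathcal{G}$-variables lie in $x$. Any basic formula whose arguments are all in auxiliary sorts (i.e., from primitives (G2), (G5), (G6)) is now a sentence and evaluates to a truth value. A (G1)-atomic formula $t_{1}(x)=t_{2}(x)$ or $t_{1}(x)\leq t_{2}(x)$ is rewritten as a (G3)-atom $t_{1}=_{\alpha_{0}}t_{2}+0_{\alpha_{0}}$ or $t_{1}\leq_{\alpha_{0}}t_{2}+0_{\alpha_{0}}$, where $\alpha_{0}\in\mathcal{S}_{p}$ is the constant naming the trivial convex subgroup. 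A (G4)-atomic formula $t_{1}\equiv_{q^{n},c}^{[q^{m}]}t_{2}$ is reduced to a boolean combination of (G3)-atoms as follows: since $\mathcal{S}_{q}$ is finite and $G$ is sufficiently saturated, the intersection $G_{c}^{[q^{m}]}=\bigcap_{G_{c}\subsetneq H\Subset G}(H+q^{m}G)$ is accounted for by the finitely many named convex subgroups of $\mathcal{S}_{q}$ strictly above $c$, so the relation becomes expressible in terms of (G3)-congruences $t_{1}\equiv_{q^{n},c'}t_{2}+0_{c'}$ for such $c'$.

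I expect the main obstacle to be the last step, the reduction of (G4)-atoms to (G3). Verifying that the intersection defining $G_{c}^{[q^{m}]}$ is realized by the finite list of $\mathcal{S}_{q}$-elements above $c$ --- that is, ruling out any new contribution from convex subgroups of $G$ falling strictly between consecutive members of $\mathcal{S}_{q}$ --- requires a careful argument relating the definition of $\mathcal{S}_{q}$ (via the largest convex subgroup $H$ with $a\notin H+qG$) to the structure of the cosets of $q^{m}G$. Once this is in place, the proposition follows by combining Fact~\ref{OAGRelQE} with the finite enumeration above.
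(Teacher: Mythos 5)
Your strategy is essentially the paper's: apply Fact~\ref{OAGRelQE}, replace $\exists\theta$ by a finite disjunction over constants naming the elements of the finite auxiliary sorts, observe that each $\xi_i$ and the relations from (G2), (G5), (G6) then evaluate to sentences (hence to $\top$ or $\perp$ in $\Th(G)$), and convert the surviving atoms to (G3)-form. Your observation that a (G1)-atom $t_1(x)\diamond t_2(x)$ becomes a (G3)-atom over the trivial convex subgroup is correct, though the paper leaves it implicit.

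The step you flag as the main obstacle --- reducing a (G4)-atom $t_1\equiv_{q^n,\alpha}^{[q^m]}t_2$ to (G3)-atoms --- is exactly where the real content lies, and your proposal does not prove it. It is also not a saturation or compactness argument, as your phrasing ``$G$ is sufficiently saturated'' suggests: the needed fact is algebraic. The paper closes the gap by citing \cite[Lemma~2.4(2)]{Cluckers}, which gives $G_\alpha^{[q^m]}=G_{\alpha'}+q^mG$ where $\alpha'$ is the successor of $\alpha$ under the $\leq$-ordering from (G2). Since the auxiliary sorts are finite, $\alpha'$ is a named constant, and since $n\le m$ (so $q^mG\subseteq q^nG$) the relation $t_1\equiv_{q^n,\alpha}^{[q^m]}t_2$ collapses to the single (G3)-atom $t_1\equiv_{q^n,\alpha'}t_2+0_{\alpha'}$, rather than a genuine boolean combination over several $c'$ as your description suggests. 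So your approach matches the paper's, but the proposal is incomplete at precisely the point you identify; the missing ingredient is this lemma from the Cluckers--Halupczok paper.
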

\begin{proof}
In Fact~\ref{OAGRelQE}, the quantifier ``$\exists{\theta}$'' can be replaced by a finite disjunction over all possible tuples of constants of the same sort as ${\theta}$. Upon substitution of these constants, each~``$\xi_i({\theta})$'' becomes a sentence, so in the theory of $G$, it is equivalent to~$\perp$ or~$\top$. Likewise for the unary relation $\discr(\alpha)$, the unary ``dimension'' relations applied to $\alpha$, and the binary relations $\alpha\leq \alpha'$. Finally, as each $\mathcal{S}_p$ is finite, the ternary relations $x\equiv_{q^n,\alpha}^{[q^m]}y$ from (G4) are already taken care of   by the relations~$x\equiv_{q^n,\alpha'}y$: by \cite[Lemma~2.4(2)]{Cluckers} we have
 $G_{\alpha}^{[q^m]} = G_{\alpha'}+q^m G$ where $\alpha'$ is the successor of $\alpha$ in $\mathcal{S}_{q^m}$ with respect to the linear ordering~$\leq$ of~$\mathcal{S}_{q^m}$ from~(G2).
\end{proof}

\noindent
Proposition~\ref{SpfiniteQE} should be viewed as saying that $G$ has QE in a language which is essentially a union of countably many copies of the Presburger language, one for each of the quotient groups $G/G_{\alpha}$. With this point of view, it is fairly straightforward to generalize everything in Section~\ref{QEPres} by including ``for every~$\alpha$'' in many places.
For instance, we have the following generalization of Lemma~\ref{indisccharacterizationLPres}:

{\samepage
\begin{lemma}
\label{indisccharacterizationSpfinite}
A sequence $(a_i)_{i\in I}$ in $G$ is indiscernible   iff for all $i_1<\cdots<i_n$ and~$j_1<\cdots<j_n$ from $I$,
all $k,k_1,\ldots,k_n\in\Z$, all $\alpha$, and $m\geq 2$,
we have
\begin{enumerate}
\item  $\textstyle \sum_l k_la_{i_l} >_{\alpha} k_{\alpha}  \hskip0.9em\quad\Longleftrightarrow\quad \sum_l k_la_{j_l} >_{\alpha} k_{\alpha}$;
\item  $\textstyle \sum_l k_la_{i_l} =_{\alpha} k_{\alpha}  \hskip0.9em\quad\Longleftrightarrow\quad \sum_l k_la_{j_l} =_{\alpha} k_{\alpha}$; and
\item  $\textstyle \sum_l k_la_{i_l} \equiv_{m,\alpha} k_{\alpha} \quad\Longleftrightarrow\quad \sum_l k_la_{j_l} \equiv_{m,\alpha} k_{\alpha}$.
\end{enumerate}
\end{lemma}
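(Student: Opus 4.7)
The plan is to mimic the proof of Lemma \ref{indisccharacterizationLPres}, invoking Proposition \ref{SpfiniteQE} in place of Weispfenning's QE result in $\mathcal{L}_{\operatorname{Pres}}$. The forward direction from indiscernibility to (1)--(3) is immediate, since each of these conditions is precisely the assertion that a particular quantifier-free $\mathcal{L}_{\operatorname{qe}}$-formula in main-sort variables takes the same truth value at any two increasing tuples drawn from the sequence.

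For the converse, since each $\mathcal{S}_p$ is finite and constants for all of $\mathcal{A}$ have been named in our standing setup, Proposition \ref{SpfiniteQE} applies directly: any $\mathcal{L}_{\operatorname{qe}}$-formula $\phi(x_1,\ldots,x_n)$ in main-sort variables is equivalent in $G$ to a boolean combination of atomic formulas of the shape $t(x)\diamond_\alpha k_\alpha$, where $t(x)=\sum_l k_l x_l$ is a $\Z$-linear combination with fixed integer coefficients, $\alpha\in\mathcal{A}$, $k\in\Z$, and $\diamond\in\{=,<,\equiv_m\}$. Indeed, the (G3)-predicates $t_1(x)\diamond_\alpha t_2(x)+k_\alpha$ applied to pairs of group terms $t_1,t_2$ collapse to this form after subtracting $t_2$ from both sides. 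Since the set of formulas $\phi$ whose truth value is preserved between $(a_{i_1},\ldots,a_{i_n})$ and $(a_{j_1},\ldots,a_{j_n})$ is closed under boolean combinations, it suffices to verify this preservation for the indicated atomic formulas.

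This final step is a bookkeeping check: the cases $\diamond={=}$ and $\diamond={\equiv_m}$ are precisely conditions (2) and (3) of the lemma (with the given term $t$ and constant $k$), while the case $\diamond={<}$ reduces to (1), since $t(a)<_\alpha k_\alpha$ is equivalent to $-t(a)>_\alpha -k_\alpha$, i.e., condition (1) applied with coefficients $-k_l$ and constant $-k$. I do not foresee any serious obstacle: the combinatorial content of the lemma has been absorbed into Proposition \ref{SpfiniteQE}, and the only remaining work is matching (up to sign flips) the atomic (G3)-formulas with the relations displayed in (1)--(3).
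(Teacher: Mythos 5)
Your proposal is correct and follows exactly the route the paper indicates: the paper presents the lemma as the "straightforward generalization" of Lemma~\ref{indisccharacterizationLPres} that falls out of Proposition~\ref{SpfiniteQE}, and your argument (forward direction trivial; converse by reducing to (G3)-atoms, collapsing $t_1(x)\diamond_\alpha t_2(x)+k_\alpha$ to $t(x)\diamond_\alpha k_\alpha$ by subtraction in the torsion-free quotient $G/G_\alpha$, and matching $<_\alpha$ to $>_\alpha$ via a sign flip) is precisely that.
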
}

\noindent
Next, following the proof of Theorem~\ref{dpminimaldistalPresQE}, the ``rapidly increasing sequence'' we construct here is a sequence~$(a_i)_{i\in I}$ in $G$ such that for all $i<j$ from~$I$, all $m$,~$n$, and all $\alpha$,
\[
0\leq_{\alpha} m\cdot 1_{\alpha}<_{\alpha} n\cdot a_i <_{\alpha} a_j.
\]
That is, the sequence $(a_i)$ is a rapidly increasing sequence in each of the countably many  quotients~$G/G_{\alpha}$.
This gives rise to an appropriate generalization of Lemma~\ref{rapidseqLPres}. We also use the fact that the (unordered) abelian group reducts of the quotients $G/G_{\alpha}$ are all stable, to get a generalization of Lemma~\ref{totalindiscLPres}. Finally, the proof of Proposition~\ref{nondistalLPres} generalizes to conclude our proof of Theorem~\ref{Spfinitedistalthm}.

\medskip\noindent We conclude this section with the following conjecture.

\begin{conjecture}\label{conj: all OAGs have distal expansions}
Every ordered abelian group admits a distal expansion.
\end{conjecture}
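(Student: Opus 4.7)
The plan is to extend the Hahn-space construction from Section~\ref{sec:Hahn spaces} systematically to handle every obstruction to distality in an arbitrary ordered abelian group $G$. By Lemma~\ref{lem:distal exp}, it suffices to exhibit a distal expansion of some sufficiently saturated elementary extension, so I may assume $G$ is saturated.

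By the Cluckers-Halupczok relative quantifier elimination (Fact~\ref{OAGRelQE}) together with the analysis underlying Theorem~\ref{Spfinitedistalthm}, the only genuine obstructions to distality come from quotients of the form $V_{\alpha,n,p} := (G_\alpha^{[p^n]}+pG)/(G_\alpha^{[p^{n+1}]}+pG)$ that are infinite as $\mathbb{F}_p$-vector spaces, and from the analogous phenomenon when the auxiliary sorts $\mathcal{S}_p$ themselves are infinite. The first step is to equip each such infinite $V_{\alpha,n,p}$ with an $\mathbb{F}_p$-vector space valuation making it a Hahn space over $\mathbb{F}_p$ in the sense of Section~\ref{sec:Hahn spaces}; saturation of $G$ guarantees that such valuations, with densely ordered value sets, exist. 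One then adds all these valuations (together with their value sets) to the language as new primitives, producing an expansion $G^*$ of $G$ in which each quotient $V_{\alpha,n,p}$ becomes distal as a stand-alone structure by Corollary~\ref{cor:Hahn products distal}.

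The second step is to verify that $G^*$ is globally distal. The approach would be a two-level indiscernibility analysis: given an indiscernible sequence in $G^*$ one first classifies its behavior at each auxiliary sort and each $V_{\alpha,n,p}$ using the Hahn-space classification of Lemma~\ref{indiscernibleIsPCSeq} and Proposition~\ref{preIndiscIsIndisc}, and then assembles these pieces via a generalization of the proof of Corollary~\ref{cor:Hahn products distal}, invoking the singleton-variable criterion of Proposition~\ref{prop: indisc char of distality for singletons} to reduce the global indiscernibility test to its per-sort analogues.

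The main obstacle will be compatibility. The Hahn-space valuations on the different $V_{\alpha,n,p}$ must be chosen coherently as $\alpha$ varies over $\mathcal{S}_p$, as $n$ varies, and as $p$ varies over the primes, so that refinement of convex subgroups yields consistent valuation data; and one must rule out that an indiscernible sequence in $G^*$ can witness non-distal behavior by ``leaking'' between different levels of the spine or between different primes. A natural implementation would be to build first a universal lexicographic iterated Hahn-product structure whose distality is proved inductively on the spine (combining the methods of Sections~\ref{sec:Hahn spaces} and~\ref{QEPres}), and then to distally expand an arbitrary saturated $G$ by embedding it into (an elementary extension of) this universal object while preserving the added primitives. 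Producing and verifying such a universal embedding appears to be the crux of the conjecture.
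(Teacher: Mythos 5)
This statement is an open \emph{conjecture} in the paper; the authors explicitly remark after it that ``the general case remains open.'' So there is no proof in the paper to compare against, and the question is whether your proposal actually constitutes a proof. It does not, and you in fact concede this yourself in the final sentence: ``Producing and verifying such a universal embedding appears to be the crux of the conjecture.'' A proposal that identifies the crux of the problem but leaves it unresolved is a research program, not a proof.

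Beyond that candid admission, there are more concrete gaps. First, your claim that ``the only genuine obstructions to distality come from quotients $V_{\alpha,n,p}$\dots and from the analogous phenomenon when the auxiliary sorts $\mathcal{S}_p$ themselves are infinite'' leans on the analysis behind Theorem~\ref{Spfinitedistalthm}, but that theorem and the accompanying simplification of the Cluckers--Halupczok quantifier elimination (Proposition~\ref{SpfiniteQE}) are established only under the hypothesis that every $\mathcal{S}_p$ is finite. Once some $\mathcal{S}_p$ is infinite, the relative quantifier elimination of Fact~\ref{OAGRelQE} retains the dimension predicates (G6), the relations $\equiv^{[q^m]}_{q^n,\alpha}$ of (G4), and genuine quantification over the auxiliary sorts, and nothing in the paper reduces the obstructions to distality to the vector-space quotients you name. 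Second, the Hahn-space machinery of Section~\ref{sec:Hahn spaces} (quantifier elimination for $T$ in Proposition~\ref{T1QE}, classification of indiscernibles in Proposition~\ref{preIndiscIsIndisc}, distality in Corollary~\ref{cor:Hahn products distal}) is developed for a two-sorted valued $\mathbb{F}_p$-vector space \emph{without} a compatible linear ordering. In an ordered abelian group the added Hahn-space valuations on the quotients $V_{\alpha,n,p}$ must coexist with the ordering and with the predicates $a \diamond_\alpha b + k_\alpha$ of (G3), and you give no ordered analogue of the quantifier elimination or of the indiscernibility classification that would let the per-sort distality arguments be assembled. Without controlling this interaction, the ``two-level indiscernibility analysis'' you propose has no foundation.

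Finally, even in the unordered case, the paper's Remark after Corollary~\ref{cor:Hahn products distal} cites an entirely separate paper for the result that every abelian group \emph{in the pure group language} has a distal expansion, indicating that even the easier, unordered version requires substantial further work not contained here. Your reduction does not bridge the gap from the single Hahn product $H(\Q,\mathbb{F}_p)$ of Section~\ref{sec:Hahn spaces} to an arbitrary ordered abelian group: the coherent choice of valuations across the auxiliary-sort spine and across all primes, and the verification that no non-distal witness can be built by combining data from different sorts, is precisely what is missing, and it is exactly the open problem.
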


\noindent
There are some partial results towards this conjecture, but the general case remains open.

\section{Distality and Short Exact Sequences of Abelian Groups}\label{sec: Distality and SES}

\noindent
In this section we prove a general quantifier elimination theorem for certain short exact sequences of abelian groups, and
analyze distality in this setting. These results are used in Sections~\ref{sec:relative QE} and~\ref{SectionHensVal} below.
In Section~\ref{sec: QE for SES} we show our main elimination result.
The remaining subsections of this section discuss an application to the preservation of distality as well as variants
and refinements.

\subsection{Quantifier elimination for pure short exact sequences}\label{sec: QE for SES}

Let $$0\to A\xrightarrow{\ \iota\ } B\xrightarrow{\ \nu\ }C\to 0$$ be a
short exact sequence of morphisms of abelian groups which is  \emph{pure}, which means that~$\iota(A)$ is
a pure subgroup of~$B$. (For example, this always holds if~$C$ is torsion-free.)
We treat such a pure short exact sequence as a three-sorted structure~$(A,B,C)$
consisting of three abelian groups, with the two maps $\iota\colon A\to B$ and~$\nu\colon B\to
C$ added as primitives. If~$A$ is $\aleph_1$-saturated, then 
the short
exact sequence splits, i.e., $B$ is the direct sum of $A$ and~$C$,
with~$\iota$ and~$\nu$ being the natural embedding and projection, respectively. (See, e.g., \cite[Corollary~3.3.38]{adamtt}.) So the complete  theory
of~$(A,B,C)$ is uniquely determined by the theory of $A$ and the
theory of $C$. Moreover, if $(A,C,R_0,R_1,\dotsc)$ is an arbitrary
expansion of the pair~$(A,C)$, then the  theory of
$(A,B,C,R_0,R_1,\dotsc)$ is determined by the theory of~$(A,C,R_0,R_1,\dotsc)$. For a syntactical formulation of this observation let us fix the
languages involved:
\begin{itemize}
\item
  $\lac=\{{0_{\mathrm{a}}}, {+_{\mathrm{a}}}, {-_{\mathrm{a}}}, {0_{\mathrm{c}}}, {+_{\mathrm{c}}}, {-_{\mathrm{c}}} \}$, the language of the pair
  $(A,C)$ of abelian groups;
  \item   $\lb = \{{0_{\mathrm{b}}}, {+_{\mathrm{b}}}, {-_{\mathrm{b}}}\}$, the language of abelian groups on $B$;
   \item
    $\labc=\lac\cup\lb\cup\{\iota,\nu\}$,
    the language of the three-sorted structure $(A,B,C)$;
  \item $\lacs$, the language of an   expansion
    $(A,C,R_0,R_1,\dotsc)$ of~$(A,C)$;
  \item $\labcs=\labc\cup\lacs$, the language of
    $(A,B,C,R_0,R_1,\dotsc)$.
\end{itemize}
Let $\tabc$ be the  $\labc$-theory of all structures  arising from pure exact sequences as above.
Viewing~$\tabc$ as a set of sentences in the expanded language $\labcs$,
the observation above then reads as follows:

\begin{cor}\label{cor:labcs}
  Every $\labcs$-sentence is equivalent in $\tabc$ to an
  $\lacs$-sentence.
\end{cor}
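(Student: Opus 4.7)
The plan is to prove the equivalent semantic statement: if $\mathfrak M_1, \mathfrak M_2 \models \tabc$ are $\labcs$-structures whose $\lacs$-reducts $\mathfrak N_i := (A_i, C_i; R_0^i, R_1^i, \dots)$ are elementarily equivalent, then $\mathfrak M_1 \equiv \mathfrak M_2$ as $\labcs$-structures. The corollary then follows from the standard duality between such ``preservation by $\lacs$-elementary equivalence of reducts'' and the existence of an equivalent $\lacs$-sentence modulo $\tabc$ (a routine compactness argument).

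To establish the semantic statement, fix a sufficiently large cardinal $\kappa > |\labcs|$ and pass to $\kappa$-saturated $\labcs$-elementary extensions $\mathfrak M_i^* \succeq \mathfrak M_i$ of cardinality $\kappa$. Their $\lacs$-reducts $\mathfrak N_i^*$ remain $\kappa$-saturated (reducts preserve saturation) and of cardinality $\kappa$, and $\mathfrak N_1^* \equiv \mathfrak N_2^*$, so a back-and-forth argument yields an $\lacs$-isomorphism $\varphi\colon \mathfrak N_1^* \to \mathfrak N_2^*$. Simultaneously, $\aleph_1$-saturation of each $\mathfrak M_i^*$ guarantees, by the observation recalled just before the corollary, that the short exact sequence of $\mathfrak M_i^*$ splits: we may fix sections $s_i\colon C_i^* \to B_i^*$ of $\nu$ such that $B_i^* = \iota(A_i^*) \oplus s_i(C_i^*)$.

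I would then extend $\varphi$ to the $B$-sort by setting
\[
\Phi\bigl(\iota(a) + s_1(c)\bigr)\ :=\ \iota(\varphi(a)) + s_2(\varphi(c))\qquad (a \in A_1^*,\ c \in C_1^*).
\]
A direct verification shows that $\Phi$ is an abelian group isomorphism satisfying $\Phi \circ \iota = \iota \circ \varphi$ and $\nu \circ \Phi = \varphi \circ \nu$, so together with $\varphi$ it yields an $\labcs$-isomorphism $\mathfrak M_1^* \to \mathfrak M_2^*$; hence $\mathfrak M_1 \equiv \mathfrak M_2$. The only genuine subtlety is that the splitting of the SES is not itself first-order expressible, so one cannot transfer a splitting from $\mathfrak M_i$ to elementary extensions directly; the argument bypasses this by passing to $\aleph_1$-saturated models, where splittings exist automatically. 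Apart from this, the proof is essentially bookkeeping, with the degenerate cases (e.g.\ $A$ or $C$ finite) handled trivially.
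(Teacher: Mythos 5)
Your argument is correct and is essentially the paper's own: the corollary is presented there precisely as the syntactic formulation of the observation that over $\aleph_1$-saturated models the pure sequence splits, so the $\labcs$-theory of $(A,B,C,R_0,R_1,\dots)$ is determined by the $\lacs$-theory of $(A,C,R_0,R_1,\dots)$ --- which is exactly the semantic statement you prove, the passage to an equivalent $\lacs$-sentence being the routine compactness duality you invoke (the paper also notes the corollary follows from the later quantifier elimination, Corollary~\ref{cor_expansion}, which you do not need). One minor caveat: $\kappa$-saturated models of cardinality $\kappa$ need not exist in ZFC, so at that step one should instead use special models of a suitable cardinality of uncountable cofinality (to retain $\aleph_1$-saturation of the $A$-sort) or isomorphic ultrapowers via Keisler--Shelah; this is a standard substitution that does not affect your argument.
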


\noindent This is also   a consequence of the quantifier elimination theorem
to be proved in this section. For its formulation we
note that for each $n$, our short exact sequence fits into a commutative diagram of group morphisms
$$\xymatrix@C=1.75em@R=2em{
		& 0 \ar[d]			& 0\ar[d]			& 0\ar[d]			&	\\
0 \ar[r]& nA \ar[r]\ar[d]^\subseteq	& nB \ar[r]\ar[d]^\subseteq 	& nC \ar[r]\ar[d]^\subseteq 	& 0 \\
0 \ar[r]& A	\ar[r]^\iota\ar[d]^{\pi_n}	& B \ar[r]^\nu\ar[d]	& C \ar[r]\ar[d]	& 0 \\
0 \ar[r]& A/nA \ar[r]\ar[d] & B/nB\ar[r]\ar[d]	& C/nC \ar[r]\ar[d] & 0 \\
		& 0					& 0					& 0					&
}$$
with exact rows and columns.
We now
expand $(A,B,C)$ by new sorts  with underlying sets~$A/nA$
together with two unary functions:
the natural surjection~$\pi_n\colon A\to A/nA$ and
a function  $\rho_n\colon B\to A/nA$,
which, on~$\nu^{-1}(nC)$, is the
composition of the group morphisms
\[\nu^{-1}(nC)=nB+\iota(A)\to \big(nB+\iota(A)\big)/nB
\xrightarrow{\sim} \iota(A)/\big(nB\cap \iota(A)\big)\xrightarrow{\sim}
A/nA,\] and zero outside $\nu^{-1}(nC)$.
Note that  $\rho_0\colon B\to A$   agrees with the inverse of
$\iota\colon A\xrightarrow{\sim}\iota(A)$ on
$\iota(A)=\nu^{-1}(0)$ and is zero on $B\setminus \iota(A)$. (We identify $A$ with $A/0A$ in the natural way.)
Note also that~$\pi_n=\rho_n \circ\iota$. Moreover,
if our short exact sequence splits, and $\pi'\colon B\to A$ is a
left inverse of $\iota$, then   $\rho_n$ agrees with $\pi_n\circ\pi'$ on
$\nu^{-1}(nC)$.

\medskip
\noindent
We denote the language of this expansion of the $\labc$-structure~$(A,B,C)$ by
$$\labcq=\labc\cup\{\rho_0,\rho_1,\dotsc,\pi_0,\pi_1,\dots\},$$
 and we let $\tabcq$ be the $\labcq$-theory of all these structures arising from a pure
exact sequence as above.
We also let
$$\lacq=\lac\cup\{\pi_0,\pi_1,\dots\},$$
a  sublanguage  of $\labcq$.
Note that  the group operations on~$A/nA$ are $0$-definable in the
reduct of~$\tabcq$ to the two-sorted language $\la\cup\{\pi_n\}$, where $\la=\{{0_{\mathrm{a}}}, {+_{\mathrm{a}}}, {-_{\mathrm{a}}}\}$ is the
language of the abelian group~$A$.
Note also that $\pi_n$, $\rho_n$ are interpretable in the $\labc$-reduct of~$\tabcq$; in particular, if
$\bm{M}=(A,B,C,\dots)$ and $\bm{M}'=(A',B',C',\dots)$ are models of $\tabcq$, then
every isomorphism between the $\labc$-reducts of~$\bm{M}$,~$\bm{M}'$ extends uniquely to an
$\labcq$-isomorphism~$\bm{M}\to\bm{M}'$.

\medskip
\noindent
Let the multivariables $x_\mathrm{a}$, $x_\mathrm{b}$,
$x_\mathrm{c}$ be of sort~$A$,~$B$ and~$C$, respectively. The $\labcq$-terms
of the form~$\rho_n\big(t(  x_\mathrm{b})\big)$ or~$\nu\big(t(
x_\mathrm{b})\big)$, for an $\lb$-term~$t(x_\mathrm{b})$,  are called
\emph{special}.

\begin{theorem}\label{qetheorem}
   In $\tabcq$ every $\labc$-formula $\phi(
  x_\mathrm{a},   x_\mathrm{b},   x_\mathrm{c})$ is
  equivalent to a formula
  \[\phi_{\mathrm{acq}}\bigl(  x_\mathrm{a},\sigma_1(  x_\mathrm{b}),
  \dotsc,\sigma_m(  x_\mathrm{b}),  x_\mathrm{c}\bigr)\] where
  the $\sigma_i$ are special terms and $\phi_{\mathrm{acq}}$ is a suitable
  $\lacq$-formula.
\end{theorem}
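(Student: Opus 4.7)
The plan is to prove the theorem by induction on the complexity of $\phi$, maintaining at each stage that the formula lies in the class $\mathcal{F}$ of formulas of the target shape.

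For the atomic base case, I would first classify $\labc$-terms by sort. Sort-$A$ terms are plain $\la$-terms in $x_{\mathrm{a}}$. Sort-$B$ terms have the form $t_{\mathrm{b}}(x_{\mathrm{b}}) + \iota(t_{\mathrm{a}}(x_{\mathrm{a}}))$ with $t_{\mathrm{b}}$ an $\lb$-term and $t_{\mathrm{a}}$ an $\la$-term. Using $\nu \circ \iota = 0$, every sort-$C$ term reduces to $t_{\mathrm{c}}(x_{\mathrm{c}}) + \nu(t_{\mathrm{b}}(x_{\mathrm{b}}))$. Thus equalities among sort-$A$ or sort-$C$ terms are directly in $\mathcal{F}$ (with $\nu(t_{\mathrm{b}}(x_{\mathrm{b}}))$ as the special term). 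An equality among sort-$B$ terms rearranges to $\tau(x_{\mathrm{b}}) = \iota(s(x_{\mathrm{a}}))$ for a suitable $\lb$-term $\tau$ and $\la$-term $s$, which is equivalent to
\[
\nu(\tau(x_{\mathrm{b}})) = 0 \;\wedge\; \rho_0(\tau(x_{\mathrm{b}})) = s(x_{\mathrm{a}}),
\]
since $\rho_0$ inverts $\iota$ on $\nu^{-1}(0) = \iota(A)$. The class $\mathcal{F}$ is trivially closed under Boolean combinations and under quantification over sort-$A$, sort-$C$, or sort-$A/nA$ variables, since the outer matrix is an $\lacq$-formula and $\lacq$ contains exactly these sorts.

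The essential task is closure of $\mathcal{F}$ under $\exists x_{\mathrm{b}}$. Assuming by induction that the formula under the quantifier already lies in $\mathcal{F}$, with special terms $\sigma_i(x_{\mathrm{b}}, y_{\mathrm{b}})$ of one of the two flavors $\nu(n_{i,0} x_{\mathrm{b}} + t_i^*(y_{\mathrm{b}}))$ or $\rho_{n_i}(n_{i,0} x_{\mathrm{b}} + t_i^*(y_{\mathrm{b}}))$, I would pass to an $\aleph_1$-saturated model of $\tabcq$, in which the sequence splits. Fixing a section $s\colon C \to B$ and writing $x_{\mathrm{b}} = \iota(a_x) + s(c_x)$ with fresh auxiliary variables $a_x, c_x$ of sorts $A, C$, the quantifier $\exists x_{\mathrm{b}}$ decomposes as $\exists a_x\, \exists c_x$, which lives inside the $\lacq$-layer. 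The $\nu$-type special terms rewrite cleanly as $n_{i,0} c_x + \nu(t_i^*(y_{\mathrm{b}}))$, already in target form; the $\rho_{n_i}$-type terms, using that $\rho_{n_i}$ is a group morphism on $\nu^{-1}(n_i C)$ with $\rho_{n_i} \circ \iota = \pi_{n_i}$, decompose on the relevant definable set as $\pi_{n_i}(n_{i,0} a_x)$ plus a residual term $\rho_{n_i}(n_{i,0} s(c_x) + t_i^*(y_{\mathrm{b}}))$.

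The principal obstacle is that this residual term involves the non-canonical section $s$ and is therefore not directly an $\lacq$-expression in the variables and special terms at hand. To navigate this, I would supplement the direct calculation with a back-and-forth argument between two $\aleph_1$-saturated models $\bm{M}, \bm{M}'$ of $\tabcq$: my claim is that any two tuples $(\bar a, \bar b, \bar c)$ and $(\bar a', \bar b', \bar c')$ having the same $\lacq$-type after adjoining the values of all special terms $\sigma(\bar b), \sigma(\bar b')$ have the same $\labc$-type, from which the existence of a target-form equivalent for each $\labc$-formula follows by compactness. In the back-and-forth, extensions by $A$- or $C$-elements are routine via $\lacq$-elementarity; extending by a $B$-element $b^* \in \bm{M}$ requires producing $b'^* \in \bm{M}'$ matching all special-term values over $\bar b'$. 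This is precisely where the purity hypothesis is essential: it guarantees exactness of $0 \to A/nA \to B/nB \to C/nC \to 0$ and the well-definedness of each $\rho_n$, so that saturation of the $A$- and $C$-sorts suffices to realize any coherent system of $\sigma$-values and complete the back-and-forth.
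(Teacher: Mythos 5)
Your overall skeleton coincides with the paper's: everything reduces to the semantic claim that if $(\bar a,\sigma(\bar b),\bar c)$ and $(\bar a',\sigma(\bar b'),\bar c')$ have the same $\lacq$-type (where $\sigma$ lists all special terms), then $(\bar a,\bar b,\bar c)$ and $(\bar a',\bar b',\bar c')$ have the same $\labc$-type, after which compactness gives the theorem; the atomic analysis and the passage to a saturated model where the sequence splits are likewise as in the paper. The genuine gap is at the only nontrivial step, the extension by a $B$-element in your back-and-forth. You correctly identify the obstacle---after splitting, the residual terms $\rho_{n}\big(k\,s(c_x)+t^*(\bar b)\big)$ involving the non-canonical section are not controlled by the data at hand---but your remedy merely reasserts the problem: the claim that ``saturation of the $A$- and $C$-sorts suffices to realize any coherent system of $\sigma$-values'' is exactly what has to be proved, and it does not follow from purity or from exactness of $0\to A/nA\to B/nB\to C/nC\to 0$ alone. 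To produce $b'^*$ you must solve, inside $B'$, a system consisting of exact equations $k\,b'^*+t(\bar b')=\iota(a')$ (one for each $\lb$-combination of the previously chosen elements that landed in $\iota(A)$ on the other side) together with prescribed $\nu$- and $\rho_n$-values of all such combinations; after splitting, this is a pp-type over $A'$ mixing equations and congruences, and its consistency is precisely the crux. Saturation of the sorts $A$ and $C$ by itself gives you elements of those sorts, not a solution in $B'$.

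The paper closes exactly this gap, and some version of that argument is what your proposal is missing. After reducing to a single model with $\bar a=\bar a'$, $\bar c=\bar c'$, $\sigma(\bar b)=\sigma(\bar b')$, the term-isomorphism $f_0\colon B_0\to B_0'$ satisfies $\nu(f_0(b_0))=\nu(b_0)$, so $b_0\mapsto \iota^{-1}\big(f_0(b_0)-b_0\big)$ is a morphism $B_0\to A$ which factors through $\nu$, giving $h_0\colon C_0=\nu(B_0)\to A$ with $f_0(b_0)=b_0+\iota\big(h_0(\nu(b_0))\big)$. The matching of the $\rho_n$-values, together with $\rho_n$ being a group morphism on $\nu^{-1}(nC)$ and $\rho_n\circ\iota=\pi_n$, yields $h_0(nC\cap C_0)\subseteq nA$, i.e.\ $h_0$ is a partial morphism $C\to A$ in Ziegler's sense; taking $A$ pure injective (again by saturation), $h_0$ extends to a total morphism $h\colon C\to A$, and $b\mapsto b+\iota\big(h(\nu(b))\big)$ is an automorphism of the structure over $A$ and $C$ carrying $\bar b$ to $\bar b'$, which settles the semantic claim in one stroke rather than element by element. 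Without this divisibility computation and the pure-injectivity extension step (or an equivalent, e.g.\ the abelian-structure/pp-elimination route of Section 5.5), your back-and-forth does not go through.
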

\noindent For example, the formula $x_\mathrm{b} = 0_\mathrm{b}$ is equivalent
to $\rho_0(x_\mathrm{b}) = 0_\mathrm{a}\land \nu(x_\mathrm{b}) = 0_\mathrm{c}$.
Also,   $x_\mathrm{b}$ is divisible by~$n$ if and only if
$\rho_n(x_\mathrm{b})=\pi_n(0_{\mathrm{a}})$    and $\nu(x_\mathrm{b})$ is divisible by
$n$.

\begin{proof}
  Let $\sigma_0,\sigma_1,\dotsc$ list   all special terms. Given a   tuple $b$  in a model of $\tabcq$
  of the same sort as $x_\mathrm{b}$,
  let
  us write $\sigma( b)$ for the  tuple
  $\sigma_0(  b),\sigma_1(  b),\dotsc$.
  Assume that we have two
  models $\bm{M}=(A,B,C,\dots)$ and $\bm{M}'=(A',B',C',\dots)$ of $\tabcq$.
  We let $a$, $b$, $c$ range over tuples  in~$\bm{M}$ of the same sort as $x_\mathrm{a}$, $x_\mathrm{b}$,
$x_\mathrm{c}$, respectively, and
  similarly with the tuples $a'$, $b'$, $c'$ in~$\bm{M}'$. Suppose we are given $a$, $b$, $c$ in $\bm{M}$ and~$
  a'$,~$b'$,~$c'$ in  $\bm{M}'$  such that the type of $
  a\sigma( b)  c$ in   the $\lacq$-reduct
  $\bm{M}_\mathrm{acq}$ of $\bm{M}$ is the same as the type of $
  a'\sigma( b')  c'$ in the $\lacq$-reduct~$\bm{M}'_\mathrm{acq}$ of $\bm{M}'$. It is enough
  to show that then $ a b c$ and $ a' b' c'$ have
  the same type in $\bm{M}$ and in~$\bm{M}'$, respectively.

  For this, after replacing  $\bm{M}$, $\bm{M}'$ by suitably saturated elementary extensions, we may assume that there is an isomorphism
    $\bm{M}_\mathrm{acq}\xrightarrow{\cong}\bm{M}'_\mathrm{acq}$ with~$
  a\sigma(  b)  c \mapsto  a'\sigma(  b')
  c'$. We can then also  assume that the short exact sequences underlying $\bm{M}$ and $\bm{M}'$ split. Thus
  this isomorphism extends to an iso\-mor\-phism~${\bm{M}\xrightarrow{\cong}\bm{M}'}$. Hence
  we may assume that $\bm{M}=\bm{M}'$, $ a= a'$, $ c= c'$ and
  $\sigma( b)=\sigma( b')$, and it suffices to show that there is an automorphism of $\bm{M}$ which is the identity on $A$ and $C$ and maps~$b$ to $b'$.

  Let $B_0$ denote the subgroup of $B$ generated by $b$ and $B_0'$
  the subgroup of $B'$ generated by $b'$. Since for each $\lb$-term $t(x_{\mathrm{b}})$ we have   $t(  b)=0$ iff~$t(
  b')=0$, we obtain an isomorphism~$f_0\colon B_0\to B_0'$ such that $f_0(t(  b))=t(  b')$ for all $\lb$-terms~$t(x_{\mathrm{b}})$;
  in particular, we have~$f_0(b)=b'$.
  Furthermore we have $\rho_n(b_0)=\rho_n(f_0(b_0))$ and
  $\nu(b_0)=\nu(f_0(b_0))$ for all~$b_0\in B_0$. Set
  \[A_0:=B_0\cap\iota(A)=B_0'\cap\iota(A),\qquad C_0:=\nu(B_0)=\nu(B_0').\]
  The map $b_0\mapsto \iota^{-1}\big(f_0(b_0)-b_0\big)$ is a group morphism $B_0\to A$. Since $f_0$ fixes all elements of $A_0$, the image of $b_0\in B_0$ under this   morphism only depends on $\nu(b_0)$. So $f_0$ induces   a group morphism~$h_0\colon C_0\to A$
  satisfying
  \[f_0(b_0)=b_0+\iota\big(h_0(\nu(b_0))\big) \qquad\text{for all $b_0\in B_0$.}\]
  We show now that $h_0$ is a \emph{partial morphism} $C\to A$ in the sense of \cite[p.~159]{Ziegler}, that is,
  $h_0(nC\cap C_0)\subseteq nA$ for each~$n$: given
  $c\in nC\cap C_0$,   choose $b_0\in B_0$ with~$\nu(b_0)=c$; since
  $\rho_n$ is a group morphism on~$\nu^{-1}(nC)$, we then have
  \[\pi_n\big(h_0(c)\big)=\rho_n\big(\iota(h_0(c))\big)=\rho_n\big(f_0(b_0)-b_0\big)=
  \rho_n\big(f_0(b_0)\big)-\rho_n(b_0)=0,\] from which we conclude that $h_0(c)\in
  nA$.

  Finally we may assume that $A$ is pure injective. Then the partial
  morphism~$h_0$ extends to a group mor\-phism~$h\colon C\to A$ \cite[Corollary~3.3]{Ziegler}. The formula
  $$b\mapsto b+\iota\big(h(\nu(b))\big)$$ defines an automorphism of~$B$ which
  together with the identity on all other sorts is an automorphism of
  $\bm{M}$ which maps $b$ to~$b'$, as required.
\end{proof}

\noindent
The   following corollary generalizes Corollary~\ref{cor:labcs}; here we view $\tabcq$ as a set of $\labcqs$-sentences.

\begin{cor}\label{cor_expansion}
   In $\tabcq$ every $\labcs$-formula $\phi^\ast(  x_\mathrm{a},
     x_\mathrm{b},   x_\mathrm{c})$ is equivalent to a
   formula
  \[\phi_{\mathrm{acq}}^\ast\bigl(  x_\mathrm{a},\sigma_1(  x_\mathrm{b}),
  \dotsc,\sigma_m(  x_\mathrm{b}),  x_\mathrm{c}\bigr)\] where
  the  $\sigma_i$ are special terms and $\phi_{\mathrm{acq}}^\ast$ is a suitable formula
  in the language $\lacqs := \lacq\cup\lacs$.
\end{cor}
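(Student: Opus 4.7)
The plan is to adapt the proof of Theorem~\ref{qetheorem} essentially verbatim, working now in the expanded language $\labcqs = \labcq \cup \lacs$. Let $\sigma_0,\sigma_1,\dotsc$ enumerate all special $\labcq$-terms, and write $\sigma(b) := \sigma_0(b),\sigma_1(b),\dotsc$ for a tuple $b$ of sort $B$. It suffices to show: for any two models $\bm{M}$ and $\bm{M}'$ of $\tabcq$ (now viewed as $\labcqs$-structures) and tuples $a,b,c$ in $\bm{M}$ and $a',b',c'$ in $\bm{M}'$ of the appropriate sorts, if $a\,\sigma(b)\,c$ and $a'\,\sigma(b')\,c'$ realize the same $\lacqs$-type, then $abc$ and $a'b'c'$ realize the same $\labcs$-type.

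After passing to sufficiently saturated elementary extensions, I fix an $\lacqs$-isomorphism between the $\lacqs$-reducts of $\bm{M}$ and $\bm{M}'$ sending $a\,\sigma(b)\,c$ to $a'\,\sigma(b')\,c'$. By $\aleph_1$-saturation the pure short exact sequences underlying $\bm{M}$ and $\bm{M}'$ both split, and since no primitive of $\lacs$ involves the sort $B$, the chosen $\lacqs$-isomorphism extends, through any pair of splittings, to an $\labcqs$-isomorphism $\bm{M} \to \bm{M}'$. Composing with this isomorphism reduces the problem to the case $\bm{M} = \bm{M}'$, $a = a'$, $c = c'$ and $\sigma(b) = \sigma(b')$, where it remains to produce an $\labcs$-automorphism of $\bm{M}$ fixing $A$ and $C$ pointwise and mapping $b$ to $b'$.

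This is precisely what the closing paragraphs of the proof of Theorem~\ref{qetheorem} supply. The equality $\sigma(b) = \sigma(b')$ yields an isomorphism $f_0 \colon B_0 \to B_0'$ of the subgroups generated by $b$ and $b'$ compatible with $\rho_n$ and $\nu$; this induces a partial morphism $h_0 \colon C_0 \to A$ on $C_0 := \nu(B_0)$, which, by pure injectivity of a sufficiently saturated $A$, extends to a morphism $h \colon C \to A$. The map $b_1 \mapsto b_1 + \iota\big(h(\nu(b_1))\big)$ is then an automorphism of $B$ which, together with the identity on the other sorts, defines an $\labcq$-automorphism of $\bm{M}$ carrying $b$ to $b'$. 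Since every primitive of $\lacs \setminus \lac$ lives entirely on the sorts $A$ and $C$, on which this automorphism acts as the identity, it is automatically an $\labcqs$-automorphism, and hence an $\labcs$-automorphism. The only point to check beyond Theorem~\ref{qetheorem} is that the new symbols of $\lacs$ are inert on the sort $B$, so there is no genuine obstacle: the entire argument transports unchanged from $\labc$-types to $\labcs$-types.
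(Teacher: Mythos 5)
Your proof is correct, but it follows a different path from the one the paper actually writes out. You rerun the model-theoretic back-and-forth argument of Theorem~\ref{qetheorem} in the expanded language $\labcqs$, observing that the extra $\lacs$-primitives live only on the sorts $A$ and $C$: consequently (i) any $\lacqs$-isomorphism between the reducts extends, via a splitting, to an $\labcqs$-isomorphism, and (ii) the automorphism $b\mapsto b+\iota\big(h(\nu(b))\big)$ fixing $A$ and $C$ pointwise is automatically an $\labcqs$-automorphism. Both observations are correct and complete the argument. The paper, by contrast, opens its proof of Corollary~\ref{cor_expansion} by saying that ``this has exactly the same proof as Theorem~\ref{qetheorem}''---so it explicitly acknowledges your route---but then chooses to give a purely \emph{syntactic} derivation of the corollary from Theorem~\ref{qetheorem} itself: one checks that the class of formulas of the required shape contains the atomic $\labcs$-formulas and is closed under boolean combinations and quantification over $A$ and $C$; the only nontrivial step is closure under $\exists\,y_{\mathrm b}$, which the paper handles by applying Theorem~\ref{qetheorem} to an auxiliary $\labcq$-formula that pins down the values of the special terms, and then pulling the result back through $\psi^*$. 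Your approach is more direct and self-contained; the paper's derivation has the advantage that the same formal step from ``$\L$-version'' to ``$\L^*$-version'' can be reused verbatim later (e.g.\ to obtain Corollary~\ref{cor_expansion, infty} from Proposition~\ref{qetheorem, infty}, and the $\lacds$-version from Theorem~\ref{qetheorem_general}), keeping the model theory and the bookkeeping cleanly separated.
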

\begin{proof}
  This has exactly the same proof as Theorem~\ref{qetheorem}. We show
  instead that the corollary follows directly from the theorem itself.
  It is clear that the collection of all formulas equivalent in $\tabcq$ to one having the form in the statement of the corollary contains all
  atomic formulas, is closed under boolean combinations and under
  quantification over $A$ and over $C$. It remains to show that this collection of
  formulas is also closed under quantification over $B$. Let $y_\mathrm{b}$ be a   multivariable of sort $B$ disjoint from~$x_\mathrm{b}$, and consider the
  formula
  \[\phi^\ast(  x_\mathrm{a},
     x_\mathrm{b},
   x_\mathrm{c})=\exists\,y_\mathrm{b}\;\psi^\ast\bigl(
   x_\mathrm{a},\sigma_1(  x_\mathrm{b},y_\mathrm{b}),
   \dotsc,\sigma_m(  x_\mathrm{b},y_\mathrm{b}),
   x_\mathrm{c}\bigr)\]
   with special terms $\sigma_i$  and a suitable $\lacqs$-formula $\psi^*$.
   We may assume that we have~$k\in\{0,\dots,m\}$ and   $n_1,\dots,n_k\in\N$ such that $\sigma_i$ is of sort $A/n_i A$
   for~${i=1,\dots,k}$ and of sort $C$ for $i=k+1,\dots,m$.
   Theorem~\ref{qetheorem} implies that  for
   distinct variables~$z_1,\dots,z_k$ of   sort $A$ and $z_{k+1},\dots,z_m$ of sort $C$, the $\labcq$-formula
   \[\exists\,y_\mathrm{b}\;
   \left( \bigwedge_{i=1}^k \pi_{n_i}(z_i) =\sigma_i(  x_\mathrm{b},y_\mathrm{b})\land \bigwedge_{i=k+1}^m  z_i = \sigma_i(
   x_\mathrm{b},y_\mathrm{b})\right)\]
   is equivalent in $\tabcq$ to a formula
  \[\chi\bigl(z_1,\dotsc,z_m, \tau_1(  x_\mathrm{b}),
  \dotsc,\tau_n(  x_\mathrm{b})\bigr)\] where the
  $\tau_j$ are special terms and $\chi$ is a suitable $\lacq$-formula.
  Then $\phi^\ast$ is equivalent to
\begin{multline*} \exists z_1\cdots \exists z_m\Bigl(\chi\bigl(z_1,\dotsc,z_m,\tau_1(  x_\mathrm{b}),
  \dotsc,\tau_n(  x_\mathrm{b})\bigr)\,\land\, \\
  \psi^\ast\bigl(  x_\mathrm{a},\pi_{n_1}(z_1), \dotsc, \pi_{n_k}(z_k), z_{k+1},\dots,z_m,
  x_\mathrm{c}\bigr)\Bigr),
\end{multline*}
which has the desired form.
\end{proof}

\begin{remarkunnumbered}
Corollary~\ref{cor_expansion} implies the quantifier elimination result in~\cite{chernikov2016henselian}: when all quotients~$A/nA$ are finite, the maps $\rho_n$ are quantifier-free definable in the language used there.
\end{remarkunnumbered}

\subsection{Preservation of distality}\label{sec: distality preserv for SES}

In this section we prove a result on preservation of distality in pure short exact sequences. Let
$$0\to A\xrightarrow{\ \iota\ } B\xrightarrow{\ \nu\ }C\to 0$$ be a pure short exact sequence of morphisms of abelian groups. 
We allow here~$A$ and~$C$ to be equipped with  arbitrary additional structure, and denote the respective languages of these expansions by~$\las$ and~$\lcs$.
We also let $$\bm{M} = \big(A, (A/nA)_{n \geq 0},B,C;\dots\big)$$ be the corresponding $\labcqs$-structure
 as in  Section~\ref{sec: QE for SES}.
In this situation we have:

\begin{remark}\label{rem: stably emb sorts in RV}

\mbox{}

\begin{enumerate}
\item The $\labcqs$-structure $\bm{M}$ and its $\labcs$-reduct $(A,B,C)$   are bi-interpretable.
\item
The collection of the sorts $A$ and  ${A/nA}$ ($n\geq 0$)  is  fully stably embedded in $\bm{M}$ (by the QE result in the previous section), and the full structure induced on it is bi-interpretable with $A$.
\item Similarly, the sort $C$ is fully stably embedded in $\bm{M}$.
\end{enumerate}
\end{remark}

\begin{lemma}\label{lem:AC NIP}
$\bm{M}$ is NIP  iff both the $\las$-structure $A$ and the $\lcs$-structure~$C$  are~NIP.
\end{lemma}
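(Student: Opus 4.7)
My plan is to handle the easy direction using the stable embeddedness statements in Remark \ref{rem: stably emb sorts in RV}, and to prove the converse by passing to a sufficiently saturated elementary extension, splitting the sequence there, and exhibiting $\bm{M}$ as interpretable in the two-sorted disjoint union of $A$ and $C$.

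For the direction ``$\Rightarrow$'', I appeal directly to Remark \ref{rem: stably emb sorts in RV}(2), (3): the full induced structures on the $A$-sorts (together with the $A/nA$) and on the sort $C$ of $\bm{M}$ are bi-interpretable with the $\las$-structure $A$ and the $\lcs$-structure $C$, respectively. Since NIP is inherited by structures bi-interpretable with induced structures on fully stably embedded sorts, NIP of $\bm{M}$ passes to $A$ and to $C$.

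For ``$\Leftarrow$'', assume both $A$ and $C$ are NIP. As NIP is a property of the complete theory, and as by Remark \ref{rem: stably emb sorts in RV}(1) the $\labcs$-reduct of $\bm{M}$ is bi-interpretable with $\bm{M}$, it suffices to verify NIP of this reduct in some $\aleph_1$-saturated elementary extension $\bm{M}^* = (A^*, B^*, C^*;\dots)$. In $\bm{M}^*$ the (pure) short exact sequence splits, so I pick a group-theoretic section $s\colon C^*\to B^*$ of $\nu$. The map $(a,c)\mapsto \iota(a)+s(c)$ is then an isomorphism $A^*\oplus C^*\xrightarrow{\cong} B^*$. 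Under this identification, $\iota$ and $\nu$ are the canonical inclusion and projection, $\pi_n$ is $\emptyset$-definable from the $\las$-structure on $A^*$, and $\rho_n(a,c)$ equals $\pi_n(a)$ when $c\in nC^*$ (an $\emptyset$-definable condition on $C^*$) and equals $0$ otherwise. Hence $\bm{M}^*$, once $s$ is named as a parameter, is interpretable in the two-sorted structure consisting of the $\las$-structure on $A^*$ together with the $\lcs$-structure on $C^*$, with no cross-sort primitives.

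This two-sorted structure is NIP by the standard fact that the disjoint union (equivalently, the product viewed as a two-sorted structure without mixed symbols) of two NIP theories is NIP. Since NIP is preserved under interpretability and under naming parameters, $\bm{M}^*$ is NIP, and therefore so is $\bm{M}$. The only subtle point---routine, but worth flagging---is this last preservation step, needed because the section $s$ is typically not $\emptyset$-definable and so is treated as an added constant.
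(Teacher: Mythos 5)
Your proof is correct and follows essentially the same route as the paper's: pass to a saturated elementary extension so the sequence splits, name a section $s$ of $\nu$, and observe that the resulting structure is (bi-)interpretable in the disjoint union of the $\las$-structure on $A$ and the $\lcs$-structure on $C$, which is NIP since NIP is preserved under disjoint unions, interpretability, and reducts. The only difference is cosmetic: you spell out the explicit descriptions of $\pi_n$ and $\rho_n$ under the splitting, whereas the paper leaves this implicit.
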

\begin{proof}
The forward direction is clear. Suppose  $A$ and $C$ are NIP. To show the $\bm{M}$ is NIP we may assume that it is a monster model of its theory.
Adding a function symbol for a right-inverse of~$\nu$ to the language~$\labcqs$, we obtain a structure that is bi-interpretable with a two-sorted structure consisting of two   sorts given by $A$ and~$C$ with their full induced structure; this implies that $\bm{M}$ is NIP, as a reduct of a NIP structure.
\end{proof}

\begin{thm}\label{thm: dist in SES}
$\bm{M}$ is distal if and only if both $A$ and $C$ are distal.
\end{thm}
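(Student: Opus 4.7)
The plan is to combine Theorem \ref{qetheorem} (and its extension Corollary \ref{cor_expansion}) with the stable embeddedness observations of Remark \ref{rem: stably emb sorts in RV} to reduce distality of $\bm{M}$ to that of its reduct $\bm{M}_0$ to the sorts $A$, $A/nA$ ($n \geq 0$), $C$; the QE identifies $\bm{M}_0$ as a ``disjoint union'' of distal structures, and so makes distality of $\bm{M}_0$ a formal consequence of distality of $A$ and $C$.

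For the forward direction ($\bm{M}$ distal $\Rightarrow A, C$ distal), Remark \ref{rem: stably emb sorts in RV}(2) identifies the full $\bm{M}$-induced structure on $A \cup \bigcup_n A/nA$ as a stably embedded reduct bi-interpretable with the $\las$-structure $A$, and (3) does the same for the $\lcs$-structure $C$. Distality passes to each stably embedded reduct by Lemma \ref{lem: reduct of distal on a stab emb set is distal}, and Fact \ref{fac: biinterp distal}(1) then yields distality of $A$ and of $C$.

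For the backward direction, set $A_\mathrm{full} := A \cup \bigcup_n A/nA$ and let $\bm{M}_0$ be the reduct of $\bm{M}$ to the sorts in $A_\mathrm{full} \cup C$ with its full $\bm{M}$-induced structure. By Corollary \ref{cor_expansion} this induced structure is the $\lacqs$-structure, where $\lacqs = \las \cup \lcs \cup \{\pi_n : n \geq 0\}$ has no primitive mixing the $A$-sorts and the $C$-sort, so each $\lacqs$-formula $\theta(\bar x, \bar y; \bar u, \bar v)$ (with $\bar x, \bar u$ of $A$-sorts and $\bar y, \bar v$ of sort $C$) is a Boolean combination of $(\las \cup \{\pi_n\})$-formulas $\alpha_k(\bar x; \bar u)$ and $\lcs$-formulas $\beta_\ell(\bar y; \bar v)$. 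I use this decomposition to verify $\bm{M}_0$ is distal directly via Definition \ref{distaldef}: for $(\bar\alpha_i)_{i \in \mathbb{Q}} = ((\bar a_i, \gamma_i))_{i \in \mathbb{Q}}$ indiscernible in $\bm{M}_0$ and $b = (\bar a_b, \gamma_b) \in \bm{M}_0$ such that $(\bar\alpha_i)_{i \neq 0}$ is $b$-indiscernible, the decomposition forces $(\bar a_i)$ to be indiscernible in $A_\mathrm{full}$ and $\bar a_b$-indiscernible on $i \neq 0$ (likewise for $(\gamma_i)$ over $\gamma_b$ in $C$); distality of $A$ (hence of $A_\mathrm{full}$ by Fact \ref{fac: biinterp distal}(1)) and of $C$ extends each to all of $\mathbb{Q}$, and recomposing gives $b$-indiscernibility of $(\bar\alpha_i)_{i \in \mathbb{Q}}$.

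Finally I deduce distality of $\bm{M}$: assume otherwise; by Corollary \ref{cor: explicit witness of non-distality}(3) fix an indiscernible $(a_i)_{i \in \mathbb{Q}}$ in $\bm{M}_x$ with $|x|=1$, a parameter $b \in \bm{M}_y$ with $(a_i)_{i \neq 0}$ being $b$-indiscernible, and $\varphi(x;y)$ with $\models\varphi(a_i; b) \Leftrightarrow i \neq 0$. If $x$ has sort in $A_\mathrm{full} \cup C$, then $(a_i)$ lies in $\bm{M}_0$ and stable embeddedness (Remark \ref{rem: stably emb sorts in RV}) lets us replace $b, \varphi$ by a witness with parameters in $\bm{M}_0$, contradicting the distality of $\bm{M}_0$ established above. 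If $x$ has sort $B$, then Corollary \ref{cor_expansion} rewrites $\varphi(x; b)$ as $\chi(\sigma_1(x), \dots, \sigma_m(x); b)$ for special terms $\sigma_j$ and an $\lacqs$-formula $\chi$; the tuple-valued sequence $\bar\alpha_i := (\sigma_1(a_i), \dots, \sigma_m(a_i))$ then lies in $\bm{M}_0$, is indiscernible, has $(\bar\alpha_i)_{i \neq 0}$ $b$-indiscernible, and satisfies $\chi(\bar\alpha_i; b) \Leftrightarrow i \neq 0$, so Corollary \ref{cor: explicit witness of non-distality}(2) combined with stable embeddedness once more delivers a witness to non-distality of $\bm{M}_0$, the required contradiction. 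The chief obstacle is the distality of $\bm{M}_0$: it hinges on using the QE to present $\lacqs$ as a non-interacting union of $\las$, $\lcs$, and $\{\pi_n\}$ and on the resulting Boolean decomposition of formulas, so that indiscernibility of a pair sequence splits cleanly into indiscernibility of its $A$- and $C$-projections; Remark \ref{rem:Qinfty} is a convenient bookkeeping tool when translating the non-distality witness through the special terms in the sort-$B$ case.
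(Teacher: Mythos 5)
Your decomposition --- forward by stable embeddedness, backward by establishing distality of the induced structure on $A \cup \bigcup_n A/nA \cup C$ (say $\bm{M}_0$) and then pushing a sort-$B$ non-distality witness into $\bm{M}_0$ via the QE --- parallels the paper's argument, and your disjoint-union argument for distality of $\bm{M}_0$ is a fine alternative to the paper's per-sort handling via Remark~\ref{rem: stably emb sorts in RV} and Lemma~\ref{lem: easy distal over a predicate}. But the chief obstacle is the sort-$B$ case, not distality of $\bm{M}_0$, and there your argument has a genuine gap. Writing $y=(y_\mathrm{a},y_\mathrm{b},y_\mathrm{c})$, Corollary~\ref{cor_expansion} gives $\varphi(x;y)\equiv\chi\bigl(y_\mathrm{a},\sigma_1(x,y_\mathrm{b}),\ldots,\sigma_m(x,y_\mathrm{b}),y_\mathrm{c}\bigr)$, and the special terms $\sigma_j$ necessarily depend on the $B$-sort parameter components $y_\mathrm{b}$, not on $x$ alone. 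So $\bar\alpha_i := \bigl(\sigma_j(a_i, b_\mathrm{b})\bigr)_j$ is only $b_\mathrm{b}$-definable from $a_i$, and since you only know that $(a_i)_{i\neq 0}$ is $b_\mathrm{b}$-indiscernible (not $(a_i)_{i\in\Q}$ --- that failure is precisely what makes this a non-distality witness), you obtain indiscernibility of $(\bar\alpha_i)_{i\neq 0}$ but not of $(\bar\alpha_i)_{i\in\Q}$; the assertion that the latter holds carries no justification.

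Nor can you simply push the parameter dependence out of $\sigma_j$ into $\chi$: this works for the $\nu$-terms since $\nu$ is a group morphism on all of $B$ (so $\nu(rx+u)=r\nu(x)+\nu(u)$), but $\rho_n$ is a group morphism only on $\nu^{-1}(nC)$, and $\rho_n(rx+u)$ does not split in general. The paper resolves this with three ingredients your proposal skips: Remark~\ref{rem: distal formulas closed under pos bool comb} to reduce from a positive Boolean combination to a single pure-$\nu$ or pure-$\rho_n$ subformula; a preliminary reduction (via the $\nu$-case applied to the $\lcs$-formula defining $nC$) to the situation where the $\rho_{n_k}$-arguments all land in $\nu^{-1}(n_kC)$; and, crucially, Remark~\ref{rem:Qinfty}, used to replace $\Q$ by $\Q_\infty$ so that one has an extra element $a_\infty$ over which $(a_i)_{i\in\Q}$ \emph{is} indiscernible, whereupon $\beta^k_i:=\rho_{n_k}(r_ka_i-r_ka_\infty)$ is $a_\infty$-definable, lies in $\nu^{-1}(n_kC)$, and satisfies $\rho_{n_k}(r_ka_i-b'_k)=\beta^k_i-\rho_{n_k}(b'_k-r_ka_\infty)$. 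It is $(\beta^k_i)_{i\in\Q}$, not $(\bar\alpha_i)_{i\in\Q}$, that is a genuine indiscernible sequence in the $A$-sorts. Characterizing Remark~\ref{rem:Qinfty} as ``a convenient bookkeeping tool'' substantially understates its role: without the $a_\infty$-anchoring the sort-$B$ case does not close.
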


\begin{proof}
The forward implication is immediate by Lemma~\ref{lem: reduct of distal on a stab emb set is distal} and Remark~\ref{rem: stably emb sorts in RV}; we prove the converse. Suppose~$A$ and $C$ are distal; again, we may assume that~$\bm{M}$ is a monster model of its theory,
and by Lemma~\ref{lem:AC NIP}, $\bm{M}$ is NIP.
Assume towards  contradiction that $\bm{M}$ is not distal; then by Remark~\ref{rem: stably emb sorts in RV}(1),
its $\labcs$-reduct is also not distal, and hence satisfies
condition (3) in Corollary~\ref{cor: explicit witness of non-distality}. Thus, also using Remark~\ref{rem:Qinfty}, we obtain a partitioned $\labcs$-formula~$\varphi(x;y)$, where $|x|=1$, as well as an indiscernible sequence $(b_i)_{i \in \mathbb{Q}_\infty}$ of the same sort as~$x$ and some tuple~$d$ of the same sort as the multivariable $y$  such that $(b_i)_{i\in\Q_{\infty}\setminus\{0\}}$  is $d$-indiscernible   and~${\bm{M}\models \varphi(b_i;d) \Leftrightarrow i \neq 0}$.
By assumption, Remark~\ref{rem: stably emb sorts in RV} and Lemma~\ref{lem: easy distal over a predicate}, the variable $x$  is necessarily
of sort~$B$. We say that a tuple is contained in $d$ if all its components appear as components of $d$.

It is easy to see from the QE (Corollary~\ref{cor_expansion}) that the formula $\varphi(x;d)$ is equivalent to a positive boolean combination of formulas of the form:

\begin{enumerate}
\item $\psi^*\big(\nu(t_1(x,b')), \ldots, \nu(t_m(x,b')), c\big)$ where $b'$ is a tuple of sort $B$, $c$ is a tuple of sort $C$,
both contained in $d$, the $t_k$ are $\lb$-terms, and $\psi^*$ is an $\lcs$-formula.


\item $\theta^*\big(a,\rho_{n_1}(t_1(x,b')), \ldots, \rho_{n_m}(t_m(x,b'))\big)$ where $a$ is a tuple in $A$, $b'$ is a tuple in $B$,
both contained in~$d$, the $t_k$ are $\lb$-terms, $n_k\in\N$, and $\theta^*$ is an $\laqs$-formula, where $\laqs=\las\cup\{\pi_0,\pi_1,\dots\}$.

\end{enumerate}
By Remarks~\ref{rem: distal formulas closed under pos bool comb} and \ref{rem:Qinfty}, it is enough to show that $\varphi(x;y)$ cannot be of any of these forms. Below we let~$i$,~$j$ range over $\Q_{\infty}$ and $k$   over $\{1,\dots,m\}$.

\medskip
\noindent
Suppose first that (1) holds.
As $\nu$ is a group morphism, $$\psi^*\big(\nu(t_1(x,b')), \ldots, \nu(t_m(x,b')), c\big)$$ is equivalent
to a formula of the form $\psi_1^*\big(\nu(x),c'\big)$ where $c'$ is a $d$-definable tuple of sort $C$  and
$\psi_1^*$ is an $\lcs$-formula. By choice of  $(b_i)$, the sequence $\big(\nu(b_i)\big)$ in $C$ is   indiscernible, $\big(\nu(b_i)\big)_{i\neq 0}$ is $c'$-indiscernible, and
$$\bm{M}\models \psi_1^*(\nu(b_i), c')\quad \iff\quad \bm{M}\models \varphi(b_i, d)\quad \iff\quad i \neq 0.$$
This contradicts distality of the structure induced on $C$.

\medskip\noindent Now suppose that we are in case~(2).
We may assume that for each $k$ we have
$r_k\in\Z$ and a $d$-definable~$b'_k\in B$ with $t_k(b_i,b')=r_k b_i - b'_k$ for each $i$.
Set  $B_{n_k} = \nu^{-1}(n_kC)$. By Case (1) applied to the $\lc$-formulas defining~$n_kC$ and its complement,   the truth value of the condition ``$r_k b_i - b'_k \in B_{n_k}$'' doesn't depend on~$i$.
 If $r_k b_i - b'_k \notin B_{n_k}$ for some/all~$i$, then $\rho_{n_k}(r_k b_i-b'_k) = 0 = \rho_{n_k}(0_{\mathrm{b}})$ for all~$i$ by definition. Thus, replacing the term $t_k$ by $0_{\mathrm{b}}$,  we still have
$$\bm{M}\models \theta^*\big(a,\rho_{n_1}(t_1(b_i,b')), \ldots, \rho_{n_m}(t_m(b_i,b'))\big) \iff i \neq 0.$$
Hence we may assume that $r_k b_i - b'_k \in B_{n_k}$ for all~$i$.
Repeating this argument for each~$k$ one by one, we may reduce to the case
that $r_k b_i - b'_k \in B_{n_k}$ for all $i$,~$k$.
As~$B_{n_k}$ is a subgroup of $B$, we have
$$r_{k}b_i - r_k b_j = (r_k b_i-b'_k) - (r_k b_j - b'_k) \in B_{n_k}\qquad\text{for all $i$, $j$.}$$
Let $b^k_i := r_k b_i - r_k b_\infty \in B_{n_k}$ and $b^k := b'_k - r_k b_\infty$. Note that
$$b^k_i - b^k = r_k b_i - r_k b_\infty - (b'_k - r_k b_\infty) = r_k b_i - b'_k \in B_{n_k}$$
and hence $b^k \in B_{n_k}$.
As $\rho_{n_k}$ restricts to a group morphism $B_{n_k} \to A/n_kA$, we have
$$\rho_{n_k}(r_k b_i - b'_k) = \rho_{n_k}(b^k_i - b^k) = \rho_{n_k}(b^k_i) - \rho_{n_k}(b^k)\qquad\text{ for all $i$.}$$
Let $\beta_i := (\beta^1_i, \ldots, \beta^{m}_i)$ and   $\beta := (\beta^1, \ldots, \beta^{m})$ where
$\beta^k_i := \rho_{n_k}(b^k_i)$, $\beta^k := \rho_{n_k}(b^k)$, and let $x_1,\dots,x_m$ be distinct variables with $x_k$ of sort
$A/n_kA$.
Consider the $\laqs$-formula
$$\theta^*_1(x_1, \ldots, x_{m}, a, \beta) := \theta^*(a,x_1-\beta^1, \ldots, x_{m} - \beta^{m}).$$
We then have:

\begin{itemize}
\item  $(\beta_i)_{i \in \mathbb{Q}}$ is indiscernible (by construction, as  $(b_i)_{i\in\Q}$ is $b_\infty$-indiscernible),
\item  $(\beta_i)_{i \in\Q\setminus\{0\}}$ is $a\gamma$-indiscernible (again by construction, since  $(b_i)_{i\in\mathbb{Q} \setminus \{0\}}$ is  $a b_\infty b'_1 \ldots b'_{m}$-indiscernible),
\end{itemize}
and, unwinding, for every $i \in \mathbb{Q}$, in $\bm{M}$ we have
\begin{align*}
\models \theta_1^*(\beta_i, a, \beta) &\quad \iff\quad  \models \theta^*\big(a,\beta^1_i - \beta^1, \ldots, \beta^{m}_i - \beta^{m}\big)  \\
&\quad \iff\quad \models  \theta^*\big(a,\rho_{n_1}(b^1_i) - \rho_{n_1}(b^1), \ldots, \rho_{n_{m}}(b^{m}_i) - \rho_{n_{m}}(b^{m})\big)   \\
&\quad \iff\quad \models  \theta^*\big(a,\rho_{n_1}(r_1b_i - b'_1), \ldots, \rho_{n_{m}}(r_{m}b_i - b'_{m})\big)\\
&\quad \iff\quad i \neq 0.
\end{align*}
This contradicts distality of the $\laqs$-structure $A$.
\end{proof}

\begin{remarkunnumbered}
  In this subsection  we assumed that the $\lac^*$-structure $(A,C,R_0,R_1,\dots)$ expanding the~$\lac$-struc\-ture~$(A,C)$ is obtained by combining separate  expansions of the $\la$-structure $A$  and of the $\lc$-structure~$C$.
  Let now $(A,C)^\circ$ be an arbitrary expansion of $(A,C)$, and 
  denote its language   by $\lac^\circ$ 
  and the corresponding $\labcq^\circ$-structure by $\bm{M}^\circ$.
  A straightforward adaption of the proofs shows that Lemma~\ref{lem:AC NIP} and Theorem~\ref{thm: dist in SES} remain true:
  $\bm{M}^\circ$ is NIP (distal) iff~$(A,C)^\circ$ is NIP (distal, respectively). 
\end{remarkunnumbered}

\subsection{A variant for abelian monoids}\label{sec:variant}
For later use, we now consider a slight variant of Corollary~\ref{cor_expansion} for abelian groups augmented by absorbing elements. For this, let~$(A,0,{+})$ be an abelian monoid.
An element~$\infty$ of $A$ is said to be {\it absorbing}\/ if $\infty+a=\infty$ for all $a\in A$. (Clearly there is at most one absorbing element.) For example, if $R$ is a commutative ring, then $(R,1,{\,\cdot\,})$ is an abelian monoid with absorbing element~$0$.
If $A$ is an abelian group and $\infty\notin A$ is a new element, then~$A_\infty:=A\cup\{\infty\}$ with the group operation~$+$ of $A$ extended to
a binary operation on $A_\infty$ such that
$$a+\infty=\infty+a=\infty\qquad\text{ for all $a\in A_\infty$}$$
is an abelian monoid with absorbing element $\infty$.
In this case we also extend $a\mapsto -a\colon A\to A$ to a map $A_\infty\to A_\infty$ by setting  $-\infty:=\infty$.
 Every   morphism $f\colon A\to B$ of abelian groups extends uniquely to a
monoid morphism $f_\infty\colon A_\infty\to B_\infty$.
Here is a special case of this construction: 

\begin{notation}
Given a commutative ring $R$ and a subgroup $G$ of the multiplicative group $R^\times$ of units of~$R$ we let $R/G:=(R^\times/G)_\infty$. In this case we always denote the absorbing element of $R/G$ by $0$,
so the residue morphism $R^\times\to R^\times/G$ extends to a surjective monoid morphism
$R\to R/G$ which maps~$0\in R$ to $0\in R/G$.
\end{notation}

\noindent
Let now
$$0\to A\xrightarrow{\ \iota\ } B\xrightarrow{\ \nu\ }C\to 0$$ be a pure short exact sequence of abelian groups.
We redefine the languages introduced at the beginning of this section as follows:
\begin{itemize}
\item
  $\lac=\{{0_{\mathrm{a}}}, {+_{\mathrm{a}}}, {-_{\mathrm{a}}}, \infty_{\textrm{a}}, {0_{\mathrm{c}}}, {+_{\mathrm{c}}}, {-_{\mathrm{c}}}, \infty_{\textrm{c}} \}$, the language of the pair
  $(A_\infty,C_\infty)$;
  \item   $\lb = \{{0_{\mathrm{b}}}, {+_{\mathrm{b}}}, {-_{\mathrm{b}}}, \infty_{\textrm{b}} \}$, the language of $B_\infty$;
   \item
    $\labc=\lac\cup\lb\cup\{\iota_\infty,\nu_\infty\}$,
    the language of the three-sorted struc\-ture $(A_\infty,B_\infty,C_\infty)$.
\end{itemize}
We denote the extension of  $\pi_n\colon A\to A/nA$ to a morphism $A_\infty\to (A/nA)_\infty$ also  by $\pi_n$,
and   now introduce $\rho_n\colon B_\infty\to (A/nA)_\infty$  by  defining $\rho_n(b)\in A/nA$ for~$b\in\nu^{-1}(nC)$ as before and declaring~$\rho_n(b):=\infty$ for $b\in B_\infty\setminus\nu^{-1}(nC)$.
Thus the map~$\rho_0\colon B_\infty\to A_\infty$  agrees with the inverse of $\iota$ on~$\iota(A)$ and
  is constant $\infty$ on~$B_\infty\setminus\iota(A)$.
 We let
$$\labcq=\labc\cup\{\rho_0,\rho_1,\dotsc,\pi_0,\pi_1,\dots\}, \qquad \lacq=\lac\cup\{\pi_0,\pi_1,\dots\},
$$
and we let
 $\tabcq^\infty$ be the theory of all   $\labcq$-structures arising from a pure
exact sequence of abelian groups as above.
 The $\labcq$-terms
of the form~$\rho_n\big(t(  x_\mathrm{b})\big)$ or~$\nu\big(t(
x_\mathrm{b})\big)$, for a term~$t(x_\mathrm{b})$ in the sublanguage $\{{0_{\mathrm{b}}}, {+_{\mathrm{b}}}, {-_{\mathrm{b}}}\}$ of $\lb$,  are called
\emph{special}.

{\samepage
\begin{prop}\label{qetheorem, infty}
  In $\tabcq^\infty$ every $\labc$-formula $\phi(
  x_\mathrm{a},   x_\mathrm{b},   x_\mathrm{c})$ is
  equivalent to a formula
  \[\phi_{\mathrm{acq}}\bigl(  x_\mathrm{a},\sigma_1(  x_\mathrm{b}),
  \dotsc,\sigma_m(  x_\mathrm{b}),  x_\mathrm{c}\bigr)\] where
  the $\sigma_i$ are special terms and $\phi_{\mathrm{acq}}$ is a suitable
  $\lacq$-formula.
\end{prop}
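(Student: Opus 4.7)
The strategy is to reduce Proposition~\ref{qetheorem, infty} to Theorem~\ref{qetheorem} by case analysis on which components of the variables take the absorbing value~$\infty$. The key observation is that the pure short exact sequence $0\to A\to B\to C\to 0$ is $\emptyset$-interpretable in the $\labcq$-structure $(A_\infty,B_\infty,C_\infty,\dots)$ as the ``non-$\infty$'' part: the sorts $A$, $B$, $C$ are the complements of the $\labcq$-definable singletons $\{\infty_{\mathrm{a}}\}$, $\{\infty_{\mathrm{b}}\}$, $\{\infty_{\mathrm{c}}\}$, and the restrictions of $\iota_\infty$, $\nu_\infty$, $\pi_n$, $\rho_n$ to the non-$\infty$ parts give essentially a $\tabcq$-structure on $(A,B,C,(A/nA)_{n\geq 0})$, modulo the fact that the new $\rho_n$ may still take the value $\infty_{\mathrm{a}}$ on elements of $B\setminus \nu^{-1}(nC)$.

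First, for every $\lb$-term $t(x_{\mathrm{b}})$, a straightforward induction on the build-up of $t$, using that $\infty_{\mathrm{b}}$ is absorbing, shows that $t(x_{\mathrm{b}})=\infty_{\mathrm{b}}$ is equivalent modulo $\tabcq^\infty$ to a quantifier-free positive Boolean combination of atomic formulas $x_{\mathrm{b},i}=\infty_{\mathrm{b}}$ (or to $\top$, if $\infty_{\mathrm{b}}$ occurs syntactically in $t$). Analogous statements hold for terms in the other sorts, and for the values of $\iota_\infty(t(x_{\mathrm{a}}))$, $\nu_\infty(t(x_{\mathrm{b}}))$, $\pi_n(t(x_{\mathrm{a}}))$, and $\rho_n(t(x_{\mathrm{b}}))$. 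I would use these to case-split each atomic subformula of $\phi$ on whether each relevant subterm equals $\infty$; iterating, $\phi$ becomes equivalent to a finite disjunction of conjunctions $\chi_P(x)\wedge\phi_P(x)$, where $\chi_P$ specifies a ``pattern'' $P$ of which components of $x_{\mathrm{a}}$, $x_{\mathrm{b}}$, $x_{\mathrm{c}}$ equal their respective $\infty$, and $\phi_P$ describes the behavior under that pattern.

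After substituting $\infty$ for the components specified by $P$ and simplifying terms via absorption, the remaining free variables in $\phi_P$ all range over the non-$\infty$ sorts $A,B,C$, and $\phi_P$ restricts to an $\labc$-formula on $(A,B,C)$ in the original sense of Section~\ref{sec: QE for SES}. Applying Theorem~\ref{qetheorem} yields an equivalent formula of the required form using $\lacq$ applied to $x_{\mathrm{a}}$, special terms $\sigma_i(x_{\mathrm{b}})$, and $x_{\mathrm{c}}$. I expect the main bookkeeping step to be handling the fact that $\rho_n(t(x_{\mathrm{b}}))$ in the new sense can equal $\infty_{\mathrm{a}}$ even when $t(x_{\mathrm{b}})\neq \infty_{\mathrm{b}}$: this happens precisely when $\nu_\infty(t(x_{\mathrm{b}}))\notin nC$, a condition on the special term $\nu(t(x_{\mathrm{b}}))$ expressible in $\lacq$ (since $nC$ is an $\lc$-definable subset of $C$). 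Recombining the disjunction over patterns $P$ and reabsorbing the boundary atoms $x_{\mathrm{a},i}=\infty_{\mathrm{a}}$, $x_{\mathrm{c},j}=\infty_{\mathrm{c}}$, $\sigma_k(x_{\mathrm{b}})=\infty$ into the $\lacq$-part then yields a single formula of the required form.
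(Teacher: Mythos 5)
Your proposal takes a genuinely different route from the paper's. The paper proves Proposition~\ref{qetheorem, infty} by re-running the back-and-forth argument of Theorem~\ref{qetheorem} semantically, with a single change: the subgroup $B_0$ is taken to be generated only by the non-$\infty$ entries of the tuple $b$ (and similarly for $B_0'$); the resulting automorphism of $B$ is then extended to $B_\infty$ by fixing $\infty_{\mathrm{b}}$. Your approach is instead a syntactic reduction of Proposition~\ref{qetheorem, infty} to Theorem~\ref{qetheorem} itself, via a finite case analysis over $\infty$-patterns, exploiting that the group-version $\labcq$-structure $(A,B,C,(A/nA)_n)$ is $\emptyset$-interpretable inside the monoid-version one. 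You correctly identify the two essential bookkeeping points: the pattern conditions $x_{\mathrm{b},i}=\infty_{\mathrm b}$ are recoverable from the special term $\nu(x_{\mathrm{b},i})$, and the group-version $\rho_n$ (which is $0$ outside $\nu^{-1}(nC)$) differs from the monoid-version $\rho_n$ (which is $\infty$ there), a discrepancy detectable and correctable via the $\lacq$-definable condition $\nu(t(x_{\mathrm b}))\in nC$.

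There is, however, a step you pass over too quickly. You assert that after fixing a free-variable pattern $P$ and substituting, ``$\phi_P$ restricts to an $\labc$-formula on $(A,B,C)$.'' But $\phi_P$ still contains quantifiers ranging over $A_\infty$, $B_\infty$, $C_\infty$, so it is not literally an $\labc$-formula in the group sense; to make it one you must also push the $\infty$-split through every bound variable, rewriting $\exists y_{\mathrm b}\in B_\infty\,\psi$ as $\psi[\infty_{\mathrm b}/y_{\mathrm b}]\vee\exists y_{\mathrm b}\in B\,\psi$ and simplifying recursively through the quantifier structure of $\phi$ (and likewise for the $A$- and $C$-sorts). Only after this does the interpretation of $(A,B,C)$ in the monoid structure convert $\phi_P$ into a genuine $\labc$-formula to which Theorem~\ref{qetheorem} applies. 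Similarly, the output of Theorem~\ref{qetheorem} is a group-version $\lacq$-formula applied to group-version special terms, and both must be translated back: relativize the $\lacq$-quantifiers to the non-$\infty$ parts, and correct for the $\rho_n$ mismatch (and for the fact that $\rho_n(t(x_{\mathrm b}))=\infty$ when some $x_{\mathrm{b},i}$ occurring in $t$ is $\infty$) exactly as you sketch. With these additions the argument does close, so the gap is one of exposition rather than substance, but it shows why the paper instead opts for the one-line ``mutatis mutandis'' adaptation of the semantic proof: the back-and-forth argument absorbs all of this bookkeeping in the single modification of $B_0$, while the syntactic reduction has to chase $\infty$ through every term, quantifier, and special-term evaluation.
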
}

\noindent
Mutatis mutandis, the proof of this proposition is similar to that of Theorem~\ref{qetheorem}.
(Main change: $B_0$ is the subgroup of $B$ generated by those entries of $b$ which do not equal $\infty$,
and similarly for $B_0'$.)
Next, let $\lacs$ be the language of an   expansion~$(A_\infty,C_\infty,R_0,R_1,\dotsc)$ of the $\lac$-structure $(A_\infty,C_\infty)$, let
    $\labcs=\labc\cup\lacs$ be the language of
    $(A_\infty,B_\infty,C_\infty,R_0,R_1,\dotsc)$, and $\lacqs=\lacq\cup\lacs$.
As in the proof of Corollary~\ref{cor_expansion}, the preceding proposition implies:

\begin{cor}\label{cor_expansion, infty}
   In $\tabcq^\infty$ every $\labcs$-formula $\phi^\ast(  x_\mathrm{a},
     x_\mathrm{b},   x_\mathrm{c})$ is equivalent to a
   formula
  \[\phi_{\mathrm{acq}}^\ast\bigl(  x_\mathrm{a},\sigma_1(  x_\mathrm{b}),
  \dotsc,\sigma_m(  x_\mathrm{b}),  x_\mathrm{c}\bigr)\] where
  the $\sigma_i$ are special terms and $\phi_{\mathrm{acq}}^\ast$ is a suitable $\lacqs$-formula.
\end{cor}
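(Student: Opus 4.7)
The plan is to mimic the proof of Corollary~\ref{cor_expansion} essentially verbatim, substituting Proposition~\ref{qetheorem, infty} for Theorem~\ref{qetheorem}. I would let $\mathcal{C}$ denote the class of $\labcs$-formulas $\phi^\ast(x_{\mathrm{a}}, x_{\mathrm{b}}, x_{\mathrm{c}})$ that are equivalent in $\tabcq^\infty$ to one of the form $\phi^\ast_{\mathrm{acq}}(x_{\mathrm{a}}, \sigma_1(x_{\mathrm{b}}), \ldots, \sigma_m(x_{\mathrm{b}}), x_{\mathrm{c}})$, with each $\sigma_i$ special and $\phi^\ast_{\mathrm{acq}}$ an $\lacqs$-formula. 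First I would check that $\mathcal{C}$ contains all atomic $\labcs$-formulas: for those lying in $\lacs$ this is immediate, while for the atomic $\labc$-formulas this is precisely the content of Proposition~\ref{qetheorem, infty}. Closure of $\mathcal{C}$ under boolean combinations is obvious, and closure under existential quantification over $A_\infty$ and $C_\infty$ follows because these quantifiers commute past the special terms $\sigma_i(x_{\mathrm{b}})$ and can be absorbed into a new $\lacqs$-formula.

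The main (and only substantive) step is closure of $\mathcal{C}$ under existential quantification over $B_\infty$. So consider $\phi^\ast(x_{\mathrm{a}}, x_{\mathrm{b}}, x_{\mathrm{c}}) = \exists y_{\mathrm{b}}\,\psi^\ast(x_{\mathrm{a}}, \sigma_1(x_{\mathrm{b}}, y_{\mathrm{b}}), \ldots, \sigma_m(x_{\mathrm{b}}, y_{\mathrm{b}}), x_{\mathrm{c}})$ with $\psi^\ast$ an $\lacqs$-formula and each $\sigma_i$ a special term, and after reindexing assume that $\sigma_1, \ldots, \sigma_k$ take values in respective sorts $(A/n_i A)_\infty$ while $\sigma_{k+1}, \ldots, \sigma_m$ take values in $C_\infty$. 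Introducing fresh variables $z_1, \ldots, z_m$ of the corresponding sorts, I would apply Proposition~\ref{qetheorem, infty} to the $\labc$-formula
\[
\exists y_{\mathrm{b}}\left(\bigwedge_{i=1}^k \pi_{n_i}(z_i) = \sigma_i(x_{\mathrm{b}}, y_{\mathrm{b}}) \ \wedge \ \bigwedge_{i=k+1}^m z_i = \sigma_i(x_{\mathrm{b}}, y_{\mathrm{b}})\right)
\]
to obtain an equivalent formula $\chi(z_1, \ldots, z_m, \tau_1(x_{\mathrm{b}}), \ldots, \tau_n(x_{\mathrm{b}}))$ with each $\tau_j$ special and $\chi$ an $\lacq$-formula. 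Then $\phi^\ast$ is equivalent to
\[
\exists z_1 \cdots \exists z_m\, \bigl(\chi(z_1, \ldots, z_m, \tau_1(x_{\mathrm{b}}), \ldots, \tau_n(x_{\mathrm{b}})) \ \wedge \ \psi^\ast(x_{\mathrm{a}}, \pi_{n_1}(z_1), \ldots, \pi_{n_k}(z_k), z_{k+1}, \ldots, z_m, x_{\mathrm{c}})\bigr),
\]
and the quantifiers $\exists z_i$ now range only over sorts $A_\infty$ (including its quotients) and $C_\infty$, so the result lies in $\mathcal{C}$ by the previously established closure properties.

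I anticipate no genuine obstacle: the argument transfers directly, since the primitives $\iota_\infty$, $\nu_\infty$, $\pi_n$, $\rho_n$ and the notion of special term have been defined in the infinity setting precisely so that the quantifier reduction in Proposition~\ref{qetheorem, infty} applies. The only point worth verifying, which is absorbed into the construction, is that the extensions of $\iota$, $\nu$ and of $\rho_n$ to $B_\infty$ at the absorbing element $\infty_{\mathrm{b}}$ are chosen consistently so that no additional case analysis is required when the variable $y_{\mathrm{b}}$ takes the value $\infty_{\mathrm{b}}$.
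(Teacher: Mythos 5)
Your proof is correct and follows exactly the route the paper intends: the paper's own justification of this corollary consists of the single remark that it follows from Proposition~\ref{qetheorem, infty} ``as in the proof of Corollary~\ref{cor_expansion},'' which is precisely the argument you have reproduced (introducing the auxiliary variables $z_i$, applying the proposition to eliminate $\exists y_\mathrm{b}$, and absorbing the remaining quantifiers into an $\lacqs$-formula). Your closing observation that the extensions of $\iota$, $\nu$, and $\rho_n$ at the absorbing element are arranged so that the $z_i$ (of sort $A_\infty$ for $i\le k$) still exhaust all possible values of the special terms, including $\infty$, is the one point where the $\infty$-setting differs from the pure short exact sequence case, and you handle it correctly.
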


\begin{rem}\label{remark-n}
In the previous corollary one may   assume
  that no special terms of the form $\rho_1\big(t(  x_\mathrm{b})\big)$ appear among the $\sigma_j$. Since $\rho_n(b-b')=\rho_n\big(b+(n-1)b'\big)$ for $n\geq 2$ and $b,b'\in B_\infty$,
   we can also arrange that the   terms $\rho_n\big(t(  x_\mathrm{b})\big)$, $n\geq 2$
   appearing among the $\sigma_j$  do not involve the function symbol~$-_{\mathrm{b}}$.
  Moreover, since $\nu$ is a group morphism on its proper
  domain of definition,  we can  achieve that none of the terms of the form $\nu\big(t(  x_\mathrm{b})\big)$ appearing as some $\sigma_j$
  involve~$-_{\mathrm{b}}$.
\end{rem}

\subsection{Weakly pure exact sequences}\label{sec:weakly pure exact}
Consider   a sequence
\begin{equation}\label{eq:weakly pure exact}
0\to A\xrightarrow{\ \iota\ } B\xrightarrow{\ \nu\ }C\to 0
\end{equation}
of morphisms of abelian
groups where $\iota$ is injective, $\nu$ is
surjective, and $\ker\nu\subseteq \im\iota$, and let $\delta:=\nu\circ\iota\colon A\to C$.
Note that  with $\overline{\nu}$ denoting the composition of $\nu$ with the natural surjection
$$c\mapsto\overline{c}:=c+\im\delta\colon C\to \overline{C}:=C/\im\delta,$$ we   obtain a short
exact sequence
$$0\to A\xrightarrow{\ \iota\ } B\xrightarrow{\ \overline{\nu}\ } \overline{C}\to 0,$$
which we call the
short  exact sequence associated to our given sequence \eqref{eq:weakly pure exact}.

\begin{lemma}\label{lem:split weakly pure}
Suppose  the short exact sequence associated to \eqref{eq:weakly pure exact} as well as the short exact sequence
$$0\to \ker A \xrightarrow{\ \subseteq\ } A \xrightarrow{\ \delta\ } \im\delta \to 0$$ both split.
Then with
$A_1:=\ker\delta$, $B_1:=\im\delta$ and $C_1:=\coker\delta=\overline{C}$, we have a commutative diagram
$$\xymatrix{
0 \ar[r]& A \ar[r]^{\iota} \ar[d]^{f_A}_{\cong} & B \ar[r]^{\nu}\ar[d]^{f_B}_{\cong} 		& C \ar[r] \ar[d]^{f_C}_{\cong}	& 0 \\
0 \ar[r]& A_1\oplus B_1 \ar[r] 			& A_1 \oplus B_1 \oplus C_1 \ar[r]	& B_1\oplus C_1 \ar[r]	& 0
}$$
where the second arrow on the bottom row is the natural inclusion and the third arrow the natural projection.
\end{lemma}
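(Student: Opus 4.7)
The plan is to build the three vertical isomorphisms directly from sections obtained via the two splitting hypotheses. The splitting of $0\to\ker\delta\to A\xrightarrow{\delta}\im\delta\to 0$ supplies a section $\sigma\colon B_1=\im\delta\to A$ of $\delta$, yielding $A\cong A_1\oplus B_1$ through $f_A(a)=\bigl(a-\sigma\delta(a),\,\delta(a)\bigr)$. The splitting of the associated short exact sequence $0\to A\xrightarrow{\iota}B\xrightarrow{\overline{\nu}}\overline{C}\to 0$ supplies a section $\tau\colon\overline{C}\to B$ of $\overline{\nu}$, so each $b\in B$ decomposes uniquely as $b=\iota(a)+\tau\bigl(\overline{\nu}(b)\bigr)$ with $a\in A$. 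Combining these, I would define
$$f_B(b)\ :=\ \bigl(a-\sigma\delta(a),\,\delta(a),\,\overline{\nu}(b)\bigr),$$
which is an isomorphism $B\to A_1\oplus B_1\oplus C_1$.

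The only nonformal point is the construction of $f_C$, since a splitting of $0\to\im\delta\to C\to\overline{C}\to 0$ is not hypothesized a priori. The key observation is that $\nu\circ\tau\colon\overline{C}\to C$ is automatically a section of the natural projection $C\twoheadrightarrow\overline{C}$: indeed, the class of $\nu\tau(\overline{c})$ in $C/\im\delta$ equals $\overline{\nu}\tau(\overline{c})=\overline{c}$. Since $\ker(C\to\overline{C})=\im\delta=B_1$, this produces $C\cong B_1\oplus C_1$ via $f_C(c):=\bigl(c-\nu\tau(\overline{c}),\,\overline{c}\bigr)$, using only the two hypothesized splittings.

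Commutativity of the two squares is then a direct unwinding. For the left square, if $b=\iota(a)$ then $\overline{\nu}(b)=0$, so $f_B(\iota(a))=(f_A(a),0)$, which is the image of $f_A(a)$ under the natural inclusion. For the right square, writing $b=\iota(a)+\tau\overline{\nu}(b)$ gives $\nu(b)=\delta(a)+\nu\tau\overline{\nu}(b)$, whence $f_C(\nu(b))=\bigl(\delta(a),\overline{\nu}(b)\bigr)$, which agrees with the projection of $f_B(b)$. I expect the construction of $f_C$ to be the only step requiring genuine thought; everything else is bookkeeping once the two sections $\sigma$ and $\tau$ are fixed.
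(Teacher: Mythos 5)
Your proof is correct and follows essentially the same route as the paper's: your sections $\sigma$ and $\tau$ are the paper's $s$ and $t$, your element $a$ in the decomposition $b=\iota(a)+\tau\overline{\nu}(b)$ is the paper's $\iota^{-1}(g(b))$ with $g(b)=b-t\overline{\nu}(b)$, and the three formulas for $f_A$, $f_B$, $f_C$ (including the ``key observation'' that $\nu\circ\tau$ splits $C\twoheadrightarrow\overline{C}$) coincide with those in the paper.
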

\begin{proof}
Take group morphisms  $s\colon B_1\to A$  and  $t\colon  C_1\to B$ such that  $\delta\circ s=\id_{B_1}$ and~$\overline{\nu}\circ t = \id_{C_1}$.
Since $\nu$ induces an isomorphism $B/\im\iota\to C/\im\delta$, we have~$g(b):=b-t(\overline{\nu}(b))\in\im\iota$.
One checks that $f_A$, $f_B$, $f_C$ defined by
\begin{align*}
f_A(a) &\ =\ \big(a-s(\delta(a))\big)+\delta(a), \\  f_B(b) &\ =\  f_A\big(\iota^{-1}(g(b))\big)+\overline{\nu}(b), \\  f_C(c) &\ =\  \big(c-\nu(t(\overline{c}))\big)+\overline{c} \qquad\qquad (a\in A,\ b\in B,\ c\in C)
\end{align*} 
have the required properties.
\end{proof}

\noindent
We say that \eqref{eq:weakly pure exact}  is
 {\bf weakly pure exact} if  $\im\iota$ is a pure subgroup of $B$ and $\ker \nu$ is a
pure subgroup of~$\im\iota$.
Thus every pure short exact sequence is weakly pure exact; moreover, if
\eqref{eq:weakly pure exact}  is weakly pure exact, then its associated short exact sequence is pure.

\begin{lemma}\label{lem:weakly pure exact crit}
Suppose $\overline{C}=C/\im\delta$ and $\im\delta$ are both torsion-free; then \eqref{eq:weakly pure exact} is weakly pure exact.
\end{lemma}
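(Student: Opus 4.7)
The plan is to verify the two purity conditions defining ``weakly pure exact'' separately, reducing each to the following standard fact: in any short exact sequence of abelian groups $0 \to A' \to B' \to C' \to 0$ with torsion-free cokernel $C'$, the image of $A'$ is pure in $B'$. (Proof: if $a \in A'$ and $a = nb$ for some $b \in B'$, then the image of $b$ in $C'$ is $n$-torsion, hence zero, so $b \in A'$.)

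First I would dispense with the purity of $\im\iota$ in $B$: this is exactly the conclusion of the standard fact applied to the associated short exact sequence $0 \to A \xrightarrow{\iota} B \xrightarrow{\overline{\nu}} \overline{C} \to 0$, whose cokernel $\overline{C} = C/\im\delta$ is torsion-free by hypothesis.

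Next I would address the purity of $\ker\nu$ in $\im\iota$. The key observation is that $\ker\nu = \iota(\ker\delta)$: indeed, $\ker\nu \subseteq \im\iota$ by assumption, and for $a \in A$ we have $\iota(a) \in \ker\nu$ iff $\delta(a) = \nu(\iota(a)) = 0$. Since $\iota$ is injective, it induces an isomorphism $A \xrightarrow{\sim} \im\iota$ sending $\ker\delta$ onto $\ker\nu$, so purity of $\ker\nu$ in $\im\iota$ is equivalent to purity of $\ker\delta$ in $A$. But the latter follows from a second application of the standard fact, this time to the short exact sequence
\[
0 \to \ker\delta \to A \xrightarrow{\delta} \im\delta \to 0,
\]
whose cokernel $\im\delta$ is torsion-free by hypothesis.

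There is no real obstacle here; the entire argument is a routine unwinding of definitions once one isolates the standard ``torsion-free cokernel implies pure subgroup'' principle and notes that $\ker\nu$ corresponds to $\ker\delta$ under the canonical identification $A \cong \im\iota$.
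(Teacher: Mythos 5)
Your proof is correct and takes essentially the same approach as the paper's: the paper directly unwinds, for each of the two purity claims, exactly the argument you package as the ``standard fact'' (torsion-free quotient implies pure subgroup), applied respectively to the associated short exact sequence $0\to A\to B\to\overline{C}\to 0$ and to $0\to\ker\delta\to A\to\im\delta\to 0$. The only cosmetic difference is that you isolate the standard fact and invoke it twice, while the paper inlines the computation; the underlying steps (including the use of $\ker\nu\subseteq\im\iota$ and the identification $\im\iota/\ker\nu\cong\im\delta$) coincide.
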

\begin{proof}
Let $b\in B$ and $n\geq 1$ with $nb\in\im\iota$. Take $a\in A$ with $\iota(a)=nb$; then~$n\nu(b)=\delta(a)\in\im\delta$ and
hence $\nu(b)\in\im\delta$ (since $\overline{C}$ is torsion-free), so~$\nu(b)=\nu(\iota(a'))$ where $a'\in A$; then $b-\iota(a')\in\ker\nu\subseteq\im\iota$ and hence $b\in\im\iota$. This shows that $\im\iota$ is a pure subgroup of $B$. Next, let $a\in A$ and $n\geq 1$ with~$n\iota(a)\in\ker\nu$;
then $n\delta(a)=0$ and thus $\delta(a)=0$ (since $\im\delta$ is torsion-free), that is, $\iota(a)\in\ker\nu$. Therefore $\ker\nu$
is a pure subgroup of $\im\iota$.
\end{proof}

\noindent
A variant of Theorem~\ref{qetheorem} holds for weakly pure exact sequences. To make  this precise,
view each weakly pure exact sequence \eqref{eq:weakly pure exact} as an $\labc$-structure in the natural way.
For each $n$ let $\pi_n\colon A\to A/nA$ be the natural surjection,
define~$\rho_n\colon B\to A/nA$ according to the pure exact sequence  associated to~\eqref{eq:weakly pure exact},
and expand the $\labc$-structure \eqref{eq:weakly pure exact}  to a structure in the language
$\labcd:=\labcq\cup\{\delta\}$ in the natural way.
Let $\tabcd$ be the theory of   $\labcd$-structures
\[(A,B,C,\pi_0,\pi_1,\dots,\rho_0,\rho_1, \dots,\delta)\] which arise from a weakly pure exact
sequence  \eqref{eq:weakly pure exact} in this way.
Let
$\lacd$ be the sublanguage $\lacq\cup\{\delta\}$  of~$\labcd$.
We then have:

\begin{theorem}\label{qetheorem_general}
   In $\tabcd$ every\/ $\labc$-formula $\phi(  x_\mathrm{a},   x_\mathrm{b},   x_\mathrm{c})$ is
  equivalent to a formula
  \[\phi_{\mathrm{acd}}\bigl(  x_\mathrm{a},\sigma_1(  x_\mathrm{b}),
  \dotsc,\sigma_m(  x_\mathrm{b}),  x_\mathrm{c}\bigr)\] where
  the $\sigma_i$ are special terms and $\phi_{\mathrm{acd}}$ is a suitable
  $\lacd$-formula.
\end{theorem}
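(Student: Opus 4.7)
The plan is to adapt the back-and-forth argument from the proof of Theorem~\ref{qetheorem} to the weakly pure setting, using the map $\bar{\nu}\colon B\to \bar{C}:=C/\im\delta$ associated to \eqref{eq:weakly pure exact} in place of $\nu$, and constraining the partial morphism appearing in the construction to take values in $\ker\delta$. Let $\bm{M}$ and $\bm{M}'$ be sufficiently saturated models of $\tabcd$, and take tuples $a,b,c$ in $\bm{M}$ and $a',b',c'$ in $\bm{M}'$ such that $a\,\sigma(b)\,c$ and $a'\,\sigma(b')\,c'$ realize the same type in the $\lacd$-reducts $\bm{M}_{\mathrm{acd}}$ and $\bm{M}'_{\mathrm{acd}}$, respectively, where $\sigma$ lists the values of all special terms. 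It suffices to show that $abc$ and $a'b'c'$ realize the same type in $\bm{M}$ and $\bm{M}'$. By $\aleph_1$-saturation, both the associated pure exact sequence $0\to A\to B\to \bar{C}\to 0$ and the sequence $0\to\ker\delta\to A\to\im\delta\to 0$ split (and likewise in $\bm{M}'$), so Lemma~\ref{lem:split weakly pure} supplies explicit direct-sum decompositions of $A$, $B$, and $C$ which allow us to extend the assumed isomorphism of $\lacd$-reducts to a full $\labcd$-isomorphism $\bm{M}\cong\bm{M}'$. This reduces the problem to the case $\bm{M}=\bm{M}'$, $a=a'$, $c=c'$, and $\sigma(b)=\sigma(b')$, and it remains to construct an automorphism of $\bm{M}$ fixing $A$ and $C$ pointwise and sending $b$ to $b'$.

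Set $B_0:=\langle b\rangle$, $B_0':=\langle b'\rangle$, and $\bar{C}_0:=\bar{\nu}(B_0)=\bar{\nu}(B_0')$. The weak purity $\ker\nu\subseteq\iota(A)$ yields that for $b_1\in B$, $b_1=0$ iff $\nu(b_1)=0$ and $\rho_0(b_1)=0$; combined with $\sigma(b)=\sigma(b')$, this implies $t(b)=0\iff t(b')=0$ for every $\lb$-term $t(x_{\mathrm{b}})$, so $t(b)\mapsto t(b')$ defines a group isomorphism $f_0\colon B_0\to B_0'$ fixing $B_0\cap\iota(A)=B_0'\cap\iota(A)$ pointwise, commuting with $\nu$, and commuting with each $\rho_n$. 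From $\nu(f_0(b_0)-b_0)=0$ and weak purity, we obtain $f_0(b_0)-b_0\in\iota(A)$ for $b_0\in B_0$, so the formula $\bar{\nu}(b_0)\mapsto \iota^{-1}(f_0(b_0)-b_0)$ defines a group morphism $h_0\colon \bar{C}_0\to A$; moreover, $\delta\bigl(\iota^{-1}(f_0(b_0)-b_0)\bigr)=\nu(f_0(b_0)-b_0)=0$, so the image of $h_0$ lies in $\ker\delta$. For $\bar{c}\in n\bar{C}\cap\bar{C}_0$, the $\rho_n$-commutativity of $f_0$ gives $h_0(\bar{c})\in nA$, and the decomposition $A=\ker\delta\oplus\im\delta$ from Lemma~\ref{lem:split weakly pure} implies $nA\cap\ker\delta=n\ker\delta$; hence $h_0$ is a partial morphism $\bar{C}\to\ker\delta$ in the sense of \cite[p.~159]{Ziegler}.

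By saturation of $\bm{M}$, the abelian group $\ker\delta$ is pure injective, so \cite[Corollary~3.3]{Ziegler} extends $h_0$ to a group morphism $h\colon \bar{C}\to\ker\delta$. The map $b\mapsto b+\iota(h(\bar{\nu}(b)))$ then defines an automorphism of $B$ which, together with the identity on $A$ and $C$, gives an automorphism of $\bm{M}$ sending $b$ to $b'$: it fixes $\iota(A)$ pointwise because $\bar{\nu}\iota=0$, and it commutes with $\nu$ because $\nu\bigl(\iota(h(\bar{\nu}(b)))\bigr)=\delta(h(\bar{\nu}(b)))=0$ by $h(\bar{C})\subseteq\ker\delta$. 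The principal obstacle not present in the pure case of Theorem~\ref{qetheorem} is ensuring that $h$ takes values in $\ker\delta$: the weak purity hypothesis $\ker\nu\subseteq\iota(A)$ is used to place the image of $h_0$ in $\ker\delta$, while the splitting of $A$ supplied by Lemma~\ref{lem:split weakly pure} is what converts the $\rho_n$-agreement into the partial-morphism condition \emph{into} $\ker\delta$ rather than merely into $A$.
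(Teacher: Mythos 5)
Your proof is correct and follows essentially the same route as the paper's. In fact, it's slightly more careful in one spot: the paper states that $h_0$ is a partial morphism $C_1\to A_1=\ker\delta$ but then writes ``extend $h_0$ to a group morphism $h\colon C_1\to A$,'' where the intended extension is clearly into $\ker\delta$ (this is what makes the constructed map on $B$ commute with $\nu$); you state this explicitly and verify the commutation $\nu(\iota(h(\overline{\nu}(b))))=\delta(h(\overline{\nu}(b)))=0$. You also make explicit why $h_0$ is a partial morphism into $\ker\delta$: the direct-sum decomposition $A=\ker\delta\oplus\im\delta$ gives $nA\cap\ker\delta=n\ker\delta$, whereas the paper simply invokes purity of $A_1$ in $A$. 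These are the same fact; your derivation of it is a small clarification rather than a different method. The remaining steps — reducing to $\bm{M}=\bm{M}'$ via Lemma~\ref{lem:split weakly pure}, defining $f_0$ from the agreement of special terms and using $\ker\nu\subseteq\im\iota$ to place $f_0(b_0)-b_0$ in $\iota(A)$, and extending $h_0$ by pure injectivity — all match the paper's argument.
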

\begin{proof}
 The proof is similar to the proof of Theorem~\ref{qetheorem} with
  the following modifications.
  Let $\bm{M}=(A,B,C,\dots)$ and $\bm{M}'=(A',B',C',\dots)$ be
   models of $\tabcd$, and with the same notational conventions as in the  proof of Theorem~\ref{qetheorem},     assume that we are given
  $a$, $b$, $c$ in $\bm{M}$ and $
  a'$,  $b'$,  $c'$ in  $\bm{M}'$  such that the type of $
  a\sigma( b)  c$ in   the $\lacd$-reduct
  $\bm{M}_\mathrm{acd}=(A,C,\delta)$ of $\bm{M}$ is the same as the type of $
  a'\sigma( b')  c'$ in the $\lacd$-reduct~$\bm{M}'_\mathrm{acd}=(A',C',\delta')$ of $\bm{M}'$; we   need
  to show that then $ a b c$ and $ a' b' c'$ have
  the same type in $\bm{M}$ and in~$\bm{M}'$, respectively.

  Assuming, as we may, that~$\bm{M}$,~$\bm{M}'$ are sufficiently saturated, we first show that
  a given isomorphism~$\bm{M}_\mathrm{acd}\to\bm{M}'_\mathrm{acd}$
  extends to an isomorphism $\bm{M}\to\bm{M}'$. For this, by Lemma~\ref{lem:split weakly pure} we may assume that $B=A_1\oplus B_1\oplus C_1$ where
  $A=A_1\oplus B_1$, $C=B_1\oplus C_1$, and $\iota$ and $\nu$ are the
  natural injection and the natural projection;  then~$\delta(a_1+b_1)=b_1$ for $a_1\in A_1$, $b\in B_1$. Similarly with $A'$, $B'$, $C'$, etc.~in place of $A$, $B$, $C$, etc.  If the isomorphisms $f_A\colon A\to A'$ and $f_C\colon C\to C'$
  are compatible with $\delta$, $\delta'$, then they have the form
  \begin{align*}
    f_A(a_1+b_1)&\ =\ f(a_1+b_1)+g(b_1)\\
    f_C(b_1+c_1)&\ =\ \big(g(b_1)+h_1(c_1)\big)+h_2(c_1) \qquad (a_1\in A_1,\ b_1\in B_1,\ c_1\in C_1)
  \end{align*}
  for group morphims $f\colon A\to A'_1$,  $g\colon B_1\to B'_1$,
  $h_1\colon C_1\to B'_1$, and $h_2\colon C_1\to C'_1$. Then
  \[(a_1+b_1+c_1)\mapsto f(a_1+b_1)+\big(g(b_1)+h_1(c_1)\big)+h_2(c_1)\quad (a_1\in A_1,\ b_1\in B_1,\ c_1\in C_1)\]
  is a group isomorphism $f_B\colon B\to B'$, and $(f_A,f_B,f_C)$ is an isomorphism between the $\labc$-reducts of~$\bm{M}$ and $\bm{M}'$,
  which gives rise to an isomorphism $\bm{M}\to\bm{M}'$ of $\labcd$-structures as required.

  Therefore, as in the proof of  Theorem~\ref{qetheorem} we can assume $\bm{M}=\bm{M}'$, $a= a'$, $ c= c'$,
  $\sigma( b)=\sigma( b')$, and it suffices to show that there is an automorphism of $\bm{M}$ which is the identity on $A$ and $C$ and sends~$b$ to~$b'$.   Let~$B_0$,~$B_0'$ and the group isomorphism~$f_0\colon B_0\to B_0'$  be as in  the proof of Theorem~\ref{qetheorem}.
    Identifying~$\overline{C}=\coker\delta$ with $C_1$ in the natural way, the short exact sequence associated to our given weakly pure exact sequence  is
  $$0 \to A= A_1\oplus B_1 \xrightarrow{\ \iota\ } B=A_1\oplus B_1\oplus C_1 \xrightarrow{\ \overline{\nu}\ } C_1\to 0$$
  where $\iota$ is the natural inclusion and $\overline{\nu}$  the natural projection.
  In particular $A_1=\ker \overline{\nu}$, and since $\overline{\nu}(b_0)=\overline{\nu}(f_0(b_0))$, we have $f_0(b_0)-b_0\in A_1$
 for each  $b_0\in B_0$.
  Set
  $$A_0 := B_0\cap A=B_0'\cap A, \qquad C_0 := \overline{\nu}(B_0)=\overline{\nu}(B_0')\subseteq C_1.$$
As in the proof of Theorem~\ref{qetheorem}  we see that
we have a morphism $h_0\colon C_0\to A_1$ satisfying
   $$f_0(b_0) = b_0 + h_0\big(\overline{\nu}(b_0)\big)\qquad\text{for all $b_0\in B_0$.}$$
 Now $h_0$ is a partial morphism $C_1\to A$, and thus also a partial morphism $C_1\to A_1$ since $A_1$ is pure in $A$.
Extend $h_0$ to a group morphism $h\colon C_1\to A$; then
  $b\mapsto b+h(\overline{\nu}(b))$ defines an automorphism of~$B$ which, together with the identity
  on all other sorts, is an automorphism of $\bm{M}$    fixing $A$ and $C$ and mapping $b$ to~$b'$ as desired.
\end{proof}

\noindent
The theorem above yields a quantifier elimination result for arbitrary
  expansions of $\lacd$ just as in Corollary~\ref{cor_expansion}.
We also have a version of Theorem~\ref{qetheorem_general} for abelian monoids, just like Proposition~\ref{qetheorem, infty}. To formulate this, redefine
the languages~$\lac$,~$\lb$, and~$\labc$ as in Section~\ref{sec:variant}.
Given a weakly  pure exact sequence \eqref{eq:weakly pure exact}, denote the extension of $\pi_n\colon A\to A/nA$ to
a morphism $A_\infty\to (A/nA)_\infty$ by~$\pi_n$. We modify~$\rho_n\colon B_\infty \to (A/nA)_\infty$ by  defining
$\rho_n(b)\in A/nA$ for $b\in \overline{\nu}^{-1}(n\overline{C})=nB+\iota(A)$ as before and $\rho_n(b):=\infty$ for $b\in B_\infty\setminus\big(nB+\iota(A)\big)$. With $\labcd$, $\lacd$ as before, let $T^\infty_{\mathrm{abcd}}$ be the theory of all
$\labcd$-structures which arise this way from a weakly  pure exact sequence \eqref{eq:weakly pure exact}.
Then Theorem~\ref{qetheorem_general} goes through, with a similar proof, and implies a version with additional structure on the $\lac$-structure~$(A,C)$
as in Corollary~\ref{cor_expansion, infty}.

\subsection{Connection to abelian structures}\label{sec:abelian structures}
In this subsection we generalize Theorems~\ref{qetheorem} and~\ref{qetheorem_general}   to pure exact sequences of abelian structures in the sense of Fisher~\cite{Fisher}; for this we use a well-known generalization of the Baur-Monk quantifier simplification   for modules to the case of abelian structures. (This is not used later in the paper.)
Recall that an {\it abelian structure}\/ is an $S$-sorted structure~$\bm{A}=\big((A_s);(R_i),(f_j)\big)$
where for each sort~$s\in S$, among the   primitives of~$\bm{A}$ are distinguished    a constant~$0_s\in A_s$, a unary function ${-_s}\colon A_s\to A_s$,
and a binary function~${{+_s}\colon A_s\times A_s\to A_s}$, such that the (one-sorted) structure~$(A_s;0_s,{-_s},{+_s})$ is an abelian group,
and all other relations $R_i\subseteq A_{s_1}\times\cdots\times A_{s_m}$ are subgroups and all
functions $f_j\colon  A_{s_1}\times\cdots\times A_{s_n}\to A_s$ are group morphisms.
Also recall that given a language~$\L$, the set of positive primitive (p.p.)~$\L$-formulas is the closure
of the set of atomic $\L$-formulas under conjunction and existential quantification.
Let now $\mathcal L$ be the language of an abelian structure $\bm{A}$ as above.
For each p.p.~$\L$-formula $\phi(x)$,
$$\phi^{\bm{A}}=\big\{a\in A_x:\bm{A}\models\phi(a)\big\}$$ is a subgroup of $A_x$. Given two   p.p.~$\L$-formulas $\phi(x)$, $\psi(x)$ where $x$ is a single variable of sort $s\in S$, we set
$$\dim_{\phi,\psi}^{\geq n} := \exists x_1\cdots\exists x_n\left( \bigwedge_{1\leq i\leq n} \phi(x_i)\wedge \bigwedge_{1\leq i<j\leq n} \neg\psi(x_i-x_j)\right),$$
so
$\bm{A}\models \dim_{\phi,\psi}^{\geq n}$ iff $\abs{\phi^{\bm{A}}/(\phi\wedge\psi)^{\bm{A}}}\geq n$;
the $\L$-sentences $\dim_{\phi,\psi}^{\geq n}$ are called {\it dimension sentences.}\/
The following is a version of the Baur-Monk Theorem for abelian structures~\cite{WeispfenningQEAbelian}.

\begin{prop}\label{prop:BaurMonk}
Each $\mathcal L$-formula is equivalent, in the theory of abelian $\mathcal L$-structures, to a boolean combination of p.p.~$\mathcal L$-formulas and dimension sentences.
\end{prop}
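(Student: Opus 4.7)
The plan is to run the standard Baur--Monk argument, adapted to the many-sorted abelian setting following Weispfenning~\cite{WeispfenningQEAbelian}. Denote by $\Delta$ the set of formulas obtainable as boolean combinations of p.p.~$\L$-formulas and dimension sentences. Clearly $\Delta$ contains all atomic formulas and is closed under boolean combinations; the content is thus closure under existential quantification, which we verify semantically. Concretely, by a standard quantifier simplification criterion it suffices to show: if $\bm{A}$ and $\bm{B}$ are $\aleph_1$-saturated abelian $\L$-structures satisfying the same dimension sentences, and $a\in A_x$, $b\in B_x$ realize the same p.p.~$\L$-formulas, then $(\bm{A},a)\equiv(\bm{B},b)$. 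This we establish by back-and-forth on the atomic diagram.

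The key structural input is the fact that for any p.p.~formula $\phi(x;y)$ in a single variable $x$ of sort $s$, over a parameter tuple $b\in B_y$, the set $\phi(\bm{B};b)$ is either empty or a coset of the p.p.-definable subgroup $\phi(\bm{B};0)\subseteq B_s$. Given the back-and-forth hypothesis, a one-step extension by $a'\in A_s$ amounts to finding $b'\in B_s$ realizing, over $b$, the partial type consisting of all $\phi(x;b)$ with $\bm{A}\models\phi(a';a)$ together with the negations of all p.p.~formulas $\psi(x;b)$ for which $\bm{A}\not\models\psi(a';a)$. The positive part is consistent in $\bm{B}$ by saturation, since the equality of p.p.~types of $a$ and $b$ propagates: any finite conjunction of positive conditions reduces, via the coset property, to the nonemptiness of an intersection of finitely many cosets of p.p.-definable subgroups, and this nonemptiness is itself expressible by a p.p.~formula in the parameters.

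The crux is the compatibility with the negative conditions, which is where dimension sentences intervene. By compactness one has to handle only finitely many $\psi_1,\dots,\psi_k$ at a time: it suffices to show that the finite coset intersection on the $\bm{B}$-side is not contained in a finite union of cosets of the subgroups $\psi_j(\bm{B};0)$. The classical lemma of B.~H.~Neumann on coverings of a group by finitely many cosets of subgroups reduces this to a statement about the finiteness of certain indices $[H:H\cap H']$ where $H$ is a finite intersection of positive p.p.-subgroups and $H'=\psi_j(\bm{B};0)$. Such finite-index statements are, for each fixed bound $n$, precisely the dimension sentences $\dim^{\geq n}_{\phi,\psi}$ of Proposition~\ref{prop:BaurMonk}; they transfer from $\bm{B}$ to $\bm{A}$ by hypothesis, and once transferred they contradict the existence of $a'$ in $\bm{A}$ witnessing the original negative conditions.

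The main obstacle is bookkeeping in the many-sorted setting: relations and functions mix sorts, so the p.p.~formulas defining the relevant subgroups live in various product sorts, and one must check that Neumann's lemma and the coset intersection arguments go through uniformly. This is routine but requires care. Once the back-and-forth step is in place, iterating it builds a partial elementary map and thus establishes $(\bm{A},a)\equiv(\bm{B},b)$, completing the proof.
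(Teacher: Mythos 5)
The paper states Proposition~\ref{prop:BaurMonk} without proof, citing Weispfenning~\cite{WeispfenningQEAbelian} for the Baur--Monk theorem in the abelian-structures setting, and your outline is recognizably the standard argument from that tradition (coset description of p.p.-definable sets, B.~H.~Neumann's covering lemma, dimension sentences).

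One step is compressed in a way that, read literally, does not quite work: after invoking Neumann's lemma to see that the assumed covering $\phi(\bm{B};b)\subseteq\bigcup_{j\le k}\psi_j(\bm{B};b)$ must in fact use cosets of subgroups of finite index in $\phi(\bm{B};0)$, you say the resulting dimension sentences ``transfer from $\bm{B}$ to $\bm{A}$ by hypothesis, and once transferred they contradict the existence of $a'$.'' Transferring only the finite-index bounds to $\bm{A}$ does not by itself re-establish the covering $\phi(\bm{A};a)\subseteq\bigcup_j\psi_j(\bm{A};a)$ and so does not yet contradict the choice of $a'$. What the standard proof actually shows --- usually by induction on $k$, peeling off one $\psi_j$ at a time and splitting according to whether $[\phi(\cdot,0):(\phi\wedge\psi_j)(\cdot,0)]$ is finite or infinite, with Neumann justifying the discarding of infinite-index cosets --- is that the covering condition $\forall x\,\bigl(\phi(x,y)\to\bigvee_{j\le k}\psi_j(x,y)\bigr)$ is itself equivalent, modulo the theory of abelian $\L$-structures, to a boolean combination of p.p.~$\L$-formulas \emph{in $y$} together with dimension sentences. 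It is this full boolean combination that carries across from $(\bm{B},b)$ to $(\bm{A},a)$, using \emph{both} the dimension-sentence agreement and the equality of p.p.~types, and the contradiction is then immediate. The rest of the plan (reduction to a back-and-forth between $\aleph_1$-saturated models, consistency of the positive conditions via the coset property, and the many-sorted bookkeeping) is sound and matches the intended route.
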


\noindent
We call a family of p.p.~$\L$-formulas {\bf fundamental} (for $\bm{A}$) if
every p.p.~$\L$-formula is  equivalent in $\bm{A}$ to a conjunction of formulas
$\phi(t(x))$ where $\phi$ is fundamental and~$t$ is a tuple of $\L$-terms.
For example, it is well-known that if~$\bm{A}$ is just an abelian group, then the formulas of the form $n|x$ for
$n=0,2,3,\dotsc$ comprise a fundamental family \cite[A.2.1]{Hodges}.

\medskip
\noindent
Let now $\bm{A}$, $\bm{B}$, $\bm{C}$ be abelian $\L$-structures. Let $\iota\colon \bm{A}\to\bm{B}$ be a
morphism of $\L$-structures. Recall that $\iota$ is said to be an embedding if $\iota$ is injective and for each relation symbol $R$ of $\L$ we have~$R^{\bm{A}}=\iota^{-1}(R^{\bm{B}})$; as a consequence,  $\phi^{\bm{A}} \subseteq \iota^{-1}(\phi^{\bm{B}})$ for each p.p.~$\L$-formula $\phi(x)$.
We say that such an embedding $\iota$ is {\bf pure} if $\phi^{\bm{A}} = \iota^{-1}(\phi^{\bm{B}})$ for each p.p.~$\L$-formula $\phi(x)$. If $\bm{A}$ is a substructure of $\bm{B}$ and the natural inclusion~$\bm{A}\to\bm{B}$ is a pure embedding, then
$\bm{A}$ is said to be a pure substructure of~$\bm{B}$. A mor\-phism~$\nu\colon \bm{B}\to\bm{C}$ is said to be a {\bf projection}  if $\nu$ is surjective and~$R^{\bm{C}}=\nu(R^{\bm{B}})$ for every
relation symbol $R$ of $\L$, and such a projection~$\nu$ is said to be {\bf pure} if $\phi^{\bm{C}}=\nu(\phi^{\bm{B}})$
for each p.p.~$\L$-formula~$\phi(x)$.

\medskip
\noindent
In the following, we assume for notational simplicity that our language $\L$ is one-sorted, and we denote the structures
$\bm{A}$, $\bm{B}$, $\bm{C}$ by $A$, $B$, $C$, respectively.

\begin{lemma}
  Let $0\to A\xrightarrow{\ \iota\ } B\xrightarrow{\ \nu\ }C\to 0$ be a short
  exact sequence of morphisms of $\L$-structures, where $\iota$ is an embedding and $\nu$ is a
  projection. Then $\iota$ is pure   iff $\nu$ is  pure.
  \end{lemma}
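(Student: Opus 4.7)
My plan is to unwind the definition of purity for both directions using positive primitive formulas. I will first record two observations. (a) For any morphism $f\colon X\to Y$ of $\L$-structures and any p.p.~$\L$-formula $\phi(x)$ one has $f(\phi^X)\subseteq\phi^Y$, so the nontrivial content of ``$\iota$ is pure'' is $\iota^{-1}(\phi^B)\subseteq\phi^A$, and of ``$\nu$ is pure'' is $\phi^C\subseteq\nu(\phi^B)$. (b) Since $\iota$ is an embedding, it reflects atomic formulas, so for every quantifier-free p.p.~formula $\psi(z)$ (a finite conjunction of atomic formulas) one already has $\iota^{-1}(\psi^B)=\psi^A$. In both directions I may therefore work with a p.p.~formula of the form $\phi(x)=\exists y\,\psi(x,y)$ with $\psi$ quantifier-free.

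For $\nu$ pure $\Rightarrow$ $\iota$ pure, given $a\in A^{|x|}$ with $\iota(a)\in\phi^B$ witnessed by some $e\in B^{|y|}$, I will push forward along $\nu$ to obtain $(0,\nu(e))\in\psi^C$, so that $\nu(e)$ satisfies the p.p.~formula $\tilde\psi(y):=\psi(0,y)$ in $C$. Purity of $\nu$ then produces some $e'\in\tilde\psi^B$ with $\nu(e')=\nu(e)$. Subtracting $(0,e')$ from $(\iota(a),e)$ inside the subgroup $\psi^B$ yields $(\iota(a),e-e')\in\psi^B$ with $e-e'\in(\ker\nu)^{|y|}=\iota(A)^{|y|}$, say $e-e'=\iota(\beta)$; observation (b) then transports the $\psi$-membership back to $A$, giving a witness $\beta\in A^{|y|}$ for $a\in\phi^A$.

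For the converse direction $\iota$ pure $\Rightarrow$ $\nu$ pure, I will start with $c\in\phi^C$ and lift an arbitrary $\psi$-witnessing pair $(c,d)\in\psi^C$ to some $(b,e)\in B$. For each atomic conjunct $\theta_j$ of $\psi$ I will then identify an element of $A$ measuring the failure of $\theta_j$ on $(b,e)$: for an equation conjunct $t_j=0$, an element $a_j\in A$ with $t_j(b,e)=\iota(a_j)$; for a relation conjunct $R_j(s_{j,1},\ldots,s_{j,k_j})$, using $R_j^C=\nu(R_j^B)$ (from $\nu$ being a projection), a tuple $u_j\in R_j^B$ with $\nu(u_j)=(s_{j,l}(c,d))_l$ together with $\bar a_j\in A^{k_j}$ satisfying $(s_{j,l}(b,e))_l-u_j=\iota(\bar a_j)$. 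I will then package the requirement that some adjustment $(\alpha,\beta)\in A^{|x|+|y|}$ repair all conjuncts simultaneously as the single p.p.~$\L$-formula
\[
\chi(w)\;:=\;\exists v\,\Bigl(\bigwedge_{j\text{ eq}}(t_j(v)+w_j=0)\ \land\ \bigwedge_{j\text{ rel}} R_j\big(s_{j,1}(v)+w_{j,1},\,\ldots,\,s_{j,k_j}(v)+w_{j,k_j}\big)\Bigr),
\]
and check, by direct computation, that $v:=-(b,e)\in B$ witnesses $\iota(w^\ast)\in\chi^B$, where $w^\ast:=((a_j)_{j\text{ eq}},(\bar a_j)_{j\text{ rel}})$. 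Purity of $\iota$ will then yield $w^\ast\in\chi^A$, and any $A$-witness $(\alpha,\beta)$ of $\chi(w^\ast)$ will give a new lift $(b+\iota(\alpha),e+\iota(\beta))\in\psi^B$ still lying above $c$.

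The main obstacle will be the design of $\chi$ in the second direction and the verification that $v=-(b,e)$ serves as a witness in $B$; the classical module case (no relation symbols) is standard, but the abelian-structure generality forces genuine use of the projection hypothesis on $\nu$ and careful bookkeeping with the subgroup property of each $R_j^B$ in order to handle the relation conjuncts uniformly with the equation conjuncts.
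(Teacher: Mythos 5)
Your proposal is correct and follows essentially the same route as the paper's own proof. For the direction $\nu$ pure $\Rightarrow$ $\iota$ pure, your argument (push $(\iota(a),e)$ forward along $\nu$, apply purity of $\nu$ to the p.p.~formula $\psi(0,y)$, subtract inside the subgroup $\psi^B$, and then use that $\iota$ is an embedding) matches the paper step for step, up to notation. For the converse, the paper's version is also precisely your $\chi$-formula argument, but presented more compactly: instead of splitting equation conjuncts from relation conjuncts, it writes every atomic conjunct as $R_i(t_i(x,x'))$ where $R_i$ is allowed to be either a genuine relation symbol or an equation, so that a single correction tuple $a_i$ (chosen so that $t_i(b,b')+\iota(a_i)\in R_i^B$, using that $\nu$ is a projection) handles both kinds uniformly; the p.p.~formula $\exists x\,\exists x'\bigwedge_i R_i(t_i(x,x')+y_i)$ applied via purity of $\iota$ is exactly your $\chi(w)$. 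Your sign convention (adding $\iota(\alpha),\iota(\beta)$) versus the paper's (subtracting $\iota(a),\iota(a')$) is immaterial. The unification of atomic types you flag as requiring ``careful bookkeeping'' is what the paper's $R_i$-notation streamlines, but the underlying computation with subgroup membership and $\ker\nu=\iota(A)$ is the same.
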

\begin{proof}
  First assume that $\iota$ is pure.
  Consider a p.p.~$\L$-formula $$\phi(x)=\exists
  x'\,\bigwedge_{i=1}^n R_i\big(t_i(x,x')\big),$$ where each~$t_i$ is a tuple of
  $\L$-terms and each $R_i$ is a relation symbol of $\L$ or an equation between components of $t_i$, and let $c\in C_x$ with $C\models\phi(c)$. Take $c'\in C_{x'}$ such that
  $C\models\bigwedge_iR_i\big(t_i(c,c')\big)$, and let~$b$,~$b'$ be preimages of $c$, $c'$, respectively, under $\nu$. Since $\nu$ is a projection, we can take appropriate tuples~$a_i$ in $A$
  such that~$B\models\bigwedge_iR_i\big(t_i(b,b')+\iota(a_i)\big)$. Since $\iota$ is
  pure, there are $a\in A_x$, $a'\in A_{x'}$ such that~$A\models \bigwedge_iR_i\big(t_i(a,a')+a_i\big)$. This implies
  $$B\models \bigwedge_iR_i\big(t_i(b-\iota(a),b'-\iota(a'))\big).$$ So~$b-\iota(a)$ is a
  preimage of $c$ under $\nu$ satisfying $\phi$. This shows that $\nu$ is pure.

  For the converse assume that $\nu$ is pure, and let $a\in A_x$ where $\iota(a)$
  satisfies a p.p.-formula~$\phi(x)$ as above. So there is $b'\in B_{x'}$ such that
  $$B\models\bigwedge_iR_i\big(t_i(\iota(a),b')\big).$$ Therefore
  $C\models\bigwedge_iR_i\big(t_i(0,\nu(b')\big)$, and by assumption we get
  $a'\in A_{x'}$ such that $${B\models\bigwedge_iR_i\big(t_i(0,b'-\iota(a')\big)}.$$ This implies
  $B\models\bigwedge_iR_i\big(t_i(\iota(a,a'))\big)$. So
  $A\models\bigwedge_iR_i\big(t_i(a,a')\big)$ since $\iota$ is an embedding, and $a$
  satisfies $\phi$.
\end{proof}

\noindent
A short exact sequence $0\to A\xrightarrow{\ \iota\ }
  B\xrightarrow{\ \nu\ }C\to 0$ of morphisms of $\L$-structures where $\iota$ is a pure embedding and
  $\nu$ is a pure projection is called  {\bf pure}.

\begin{remarkunnumbered}
  If $B$ is the direct sum of the abelian $\L$-structures $A$ and $C$ (defined in the obvious way), then the resulting sequence
  $A\xrightarrow{\iota} B\xrightarrow{\nu}C$ is pure exact. All pure
  exact sequences where $A$ is~$|\L|^+$-sa\-tu\-rated are of this form.
\end{remarkunnumbered}

\begin{lemma}\label{L:nu-el}
  Let $\nu\colon B\to C$ be a pure projection, $\phi(x,x')$ be a p.p.~$\L$-formula,
  $b\in B_x$, and $c'\in C_{x'}$. Then the following are equivalent:
  \begin{enumerate}
  \item\label{L:nu-el-stark}There is $b'\in B_{x'}$ such that
    $B\models\phi(b,b')$ and $\nu(b')=c'$;
  \item\label{L:nu-el-schwach}$B\models\exists x'\phi(b,x')$ and
    $C\models\phi\big(\nu(b),c'\big)$.
  \end{enumerate}
\end{lemma}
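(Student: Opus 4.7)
The plan is to show (1) $\Rightarrow$ (2) first, which is routine: if $b'\in B_{x'}$ witnesses (1), then applying $\nu$ (a morphism of $\L$-structures, hence preserving p.p.~formulas) to $B\models\phi(b,b')$ yields $C\models\phi\big(\nu(b),\nu(b')\big)=\phi\big(\nu(b),c'\big)$, while $\exists x'\phi(b,x')$ is witnessed by $b'$ itself.

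For the nontrivial implication (2) $\Rightarrow$ (1), the key observation is that since $\phi$ is p.p., the set $\phi^B\subseteq B_x\times B_{x'}$ is a subgroup, and consequently, for each $b\in B_x$, the fiber
$$\Phi_b\ :=\ \big\{b'\in B_{x'}:B\models\phi(b,b')\big\}$$
is either empty or a coset of the subgroup $\psi^B$, where $\psi(x'):=\phi(0,x')$ is itself a p.p.~$\L$-formula. The analogous statement holds in $C$ with the fibers $\Phi^C_c$ being cosets of $\psi^C$ (or empty).

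Now assuming (2), the first conjunct gives some $b''\in\Phi_b$, so $\Phi_b=b''+\psi^B$; applying $\nu$ yields $\nu(b'')\in\Phi^C_{\nu(b)}$, and the second conjunct $c'\in\Phi^C_{\nu(b)}$ then forces the difference $c'-\nu(b'')$ to lie in $\psi^C$. Here is where I would invoke the purity of $\nu$: by definition of a pure projection applied to the p.p.~formula $\psi$, we have $\psi^C=\nu(\psi^B)$, so I can pick $b_0\in\psi^B$ with $\nu(b_0)=c'-\nu(b'')$. Setting $b':=b''+b_0$, then $b'\in b''+\psi^B=\Phi_b$, hence $B\models\phi(b,b')$, and $\nu(b')=\nu(b'')+\nu(b_0)=c'$, as required.

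The main (mild) subtlety is recognizing that $\psi(x')=\phi(0,x')$ is itself p.p.~(obtained by substituting $0$ into a p.p.~formula), so that purity of $\nu$ applies to it directly and yields the surjection $\psi^B\twoheadrightarrow\psi^C$ needed to correct $b''$ to a preimage of $c'$ within the coset $\Phi_b$. No further machinery is needed.
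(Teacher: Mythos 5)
Your proof is correct, and it takes a genuinely different (and arguably leaner) route than the paper's. The paper also applies purity of $\nu$ to the full p.p.~formula $\phi(x,x')$ to produce a pair $(b_1,b_1')\in\phi^B$ with $\nu(b_1)=\nu(b)$, $\nu(b_1')=c'$, but then it needs a second purity input: it computes $b-b_1\in\ker\nu=A$, invokes the \emph{preceding} lemma (that $A=\ker\nu$ is a pure substructure of $B$ because $\nu$ is a pure projection) to find $a'\in A_{x'}$ with $B\models\phi(b-b_1,a')$, and sets $b'=b_1'+a'$. You instead isolate the subgroup $\psi^B$ with $\psi(x'):=\phi(0,x')$ controlling the fiber $\Phi_b$, observe that $c'-\nu(b'')\in\psi^C$, and apply purity of $\nu$ once, directly to $\psi$, to correct $b''$ inside the coset $\Phi_b$. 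This avoids the auxiliary lemma linking pure projections to pure embeddings entirely and makes the underlying group-theoretic mechanism (cosets of $\psi^B$) explicit. Both arguments are short; yours has the merit of being self-contained, while the paper's organization front-loads the pure-embedding/pure-projection equivalence because it is reused elsewhere in the section.
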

\begin{proof}
  The direction
  $(\ref{L:nu-el-stark})\Rightarrow(\ref{L:nu-el-schwach})$ is clear; we
  only use  that $\nu$ is morphism.
  For the converse assume~(\ref{L:nu-el-schwach}). Take $b_0'\in B_{x'}$
  such that $B\models\phi(b,b_0')$. Since $\nu$ is a pure projection,
  there are $b_1\in B_x$ and $b_1'\in B_{x'}$ such hat $\nu(b_1)=\nu(b)$, $\nu(b_1')=c'$
  and $B\models\phi(b_1,b'_1)$. So $B\models\phi(b-b_1,b_0'-b'_1)$. By the last lemma,
  $A:=\ker\nu$ is (the underlying set of) a pure
  substructure of $B$. Since $b-b_1\in A$, purity gives an~$a'\in A_{x'}$ such that $B\models\phi(b-b_1,a')$. So we have
  $B\models\phi(b,b')$ for $b'=b'_1+a'$. We see now that $\nu(b')=c'$,
  and~(\ref{L:nu-el-stark}) holds.
\end{proof}

\noindent
We now consider a  sequence
\begin{equation}\label{eq:pure exact ab str}
0\to A\xrightarrow{\ \iota\ } B\xrightarrow{\ \nu\ }C\to0
\end{equation}
of morphisms of abelian $\L$-structures. We let $\la$, $\lb$, $\lc$ be pairwise disjoint copies of
$\L$ (for~$A$,~$B$,~$C$, respectively), introduce
a three-sorted language $\labc=\la\cup\lb\cup\lc\cup\{\iota,\nu\}$, and view $(A,B,C)$
as an   $\labc$-structure in the natural way. This $\labc$-structure $(A,B,C)$  is also  abelian, hence Proposition~\ref{prop:BaurMonk} applies to~$(A,B,C)$. (As a consequence,~$(A,B,C)$ is stable \cite[A.1.13]{Hodges}.)
Let the multivariables~$x_\mathrm{a}$,~$x_\mathrm{b}$,~$x_\mathrm{c}$ be of sort~$A$,~$B$ and~$C$, respectively, and similarly with $y$ in place of $x$.

\subsubsection{Pure exact sequences}
In this subsection we assume that the sequence~\eqref{eq:pure exact ab str} is pure exact.
Furthermore
we consider an arbitrary expansion $(A,C)^\ast$ of the re\-duct~$(A,C)$ of $(A,B,C)$ with
language $\lacs$, and we let $\labcs := \lacs\cup\lb$. Unless mentioned otherwise, in the following, ``equivalent'' means ``equivalent in the $\labcs$-structure $(A,B,C)$''.
By an {\bf ac-existential
  quantification} of an $\labcs$-formula $\psi$ we mean a formula of the form $\exists x_{\mathrm{a}}\exists x_\mathrm{c}\,\psi$, for some
  multivariables $x_{\mathrm{a}}$, $x_{\mathrm{c}}$.

\begin{lemma}\label{L:pp-gen}
  Every p.p.~$\labcs$-formula $\phi^*_{\mr{abc}}(x_\mr{a},x_\mr{b},x_\mr{c})$ is equivalent to an ac-existential
  quantification of a formula
  \[\phi_\mr{b}\big(\iota(x_\mr{a}),x_\mr{b}\big)\land
  \phi^*_\mr{ac}\big(x_\mr{a},\nu(x_\mr{b}),x_\mr{c}\big),\] where $\phi_\mr{b}$ is a p.p.~$\lb$-formula and
   $\phi^*_\mr{ac}$ is a p.p.~$\lacs$-formula.
\end{lemma}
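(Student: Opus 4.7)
The plan is to put $\phi^*_{\mr{abc}}$ into prenex p.p.\ form, classify its atomic subformulas into two kinds (``B-atoms'' and ``ac-atoms'') via a term-normalization step, and then peel off the $B$-existential quantifier using Lemma~\ref{L:nu-el}.

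First I would use that $\iota,\nu$ are group morphisms, that $\nu\circ\iota=0$, and that every primitive relation of $\la,\lb,\lc,\lacs$ defines a subgroup (since we are in an abelian structure). It follows that each $\lb$-term in mixed variables $x_{\mr{a}},x_{\mr{b}}$ is equivalent to one of the form $u_{\mr{b}}(x_{\mr{b}})+\iota(s_{\mr{a}}(x_{\mr{a}}))$ for some $\lb$-term $u_{\mr{b}}$ and $\la$-term $s_{\mr{a}}$, while each $\lc$-term in mixed variables is equivalent to $v_{\mr{c}}(x_{\mr{c}})+\nu(u_{\mr{b}}(x_{\mr{b}}))$ (the $\iota$-contribution being annihilated by $\nu$). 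Consequently every atomic $\labcs$-formula is equivalent either to a \emph{B-atom} $\beta(\iota(s_{\mr{a}}(x_{\mr{a}})),u_{\mr{b}}(x_{\mr{b}}))$ with $\beta$ atomic in $\lb$, or to an \emph{ac-atom} $\gamma(s_{\mr{a}}(x_{\mr{a}}),v_{\mr{c}}(x_{\mr{c}}),\nu(u_{\mr{b}}(x_{\mr{b}})))$ with $\gamma$ atomic in $\lacs$. Writing $\phi^*_{\mr{abc}}\equiv\exists y_{\mr{a}}\exists y_{\mr{b}}\exists y_{\mr{c}}\,\bigwedge_i\alpha_i$ and partitioning the $\alpha_i$ accordingly produces
\[
\phi^*_{\mr{abc}}\equiv\exists y_{\mr{a}}\exists y_{\mr{b}}\exists y_{\mr{c}}\bigl[\phi'_{\mr{b}}(\iota(x_{\mr{a}}),\iota(y_{\mr{a}}),x_{\mr{b}},y_{\mr{b}})\land\phi^{*\prime}_{\mr{ac}}(x_{\mr{a}},y_{\mr{a}},\nu(x_{\mr{b}}),\nu(y_{\mr{b}}),x_{\mr{c}},y_{\mr{c}})\bigr],
\]
with $\phi'_{\mr{b}}$ a p.p.\ $\lb$-formula and $\phi^{*\prime}_{\mr{ac}}$ a p.p.\ $\lacs$-formula.

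The core step is the elimination of $\exists y_{\mr{b}}$. Introducing a fresh ac-variable $z_{\mr{c}}$ of sort $C$ to stand for $\nu(y_{\mr{b}})$ and pulling $\exists y_{\mr{b}}$ past $\phi^{*\prime}_{\mr{ac}}$ (where $y_{\mr{b}}$ does not occur) rewrites the formula as
\begin{multline*}
\exists y_{\mr{a}}\exists y_{\mr{c}}\exists z_{\mr{c}}\Bigl[\bigl(\exists y_{\mr{b}}\,[\phi'_{\mr{b}}(\iota(x_{\mr{a}}),\iota(y_{\mr{a}}),x_{\mr{b}},y_{\mr{b}})\land\nu(y_{\mr{b}})=z_{\mr{c}}]\bigr) \\
\land\ \phi^{*\prime}_{\mr{ac}}(x_{\mr{a}},y_{\mr{a}},\nu(x_{\mr{b}}),z_{\mr{c}},x_{\mr{c}},y_{\mr{c}})\Bigr].
\end{multline*}
Since $\iota$ is pure, the preceding lemma ensures that $\nu$ is a pure projection, so Lemma~\ref{L:nu-el} applies to the inner $\exists y_{\mr{b}}$-subformula: it is equivalent to the conjunction of $\exists y_{\mr{b}}\,\phi'_{\mr{b}}(\iota(x_{\mr{a}}),\iota(y_{\mr{a}}),x_{\mr{b}},y_{\mr{b}})$ with the ``$C$-interpreted copy'' $\phi'_{\mr{b}}(0,0,\nu(x_{\mr{b}}),z_{\mr{c}})$, using $\nu\circ\iota=0$. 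The former is a p.p.\ $\lb$-formula in $\iota(x_{\mr{a}}),\iota(y_{\mr{a}}),x_{\mr{b}}$; the latter is a p.p.\ $\lc$-formula, hence a p.p.\ $\lacs$-formula that can be absorbed into $\phi^{*\prime}_{\mr{ac}}$. Taking these as the required $\phi_{\mr{b}}$ and $\phi^*_{\mr{ac}}$, with $\exists y_{\mr{a}}\exists y_{\mr{c}}\exists z_{\mr{c}}$ as the outer ac-existential quantification, yields the desired form.

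The main technical challenge is the initial term-normalization step: inside an arbitrary $\labcs$-atom one must separate the $B$-contribution (visible through $\iota$) from the ac-contribution (visible through $\nu$), using crucially that $\nu\circ\iota=0$ and that every primitive relation defines a subgroup. Once atomic formulas are in this normal form, the quantifier manipulation is routine and the conclusion follows from a single invocation of Lemma~\ref{L:nu-el}.
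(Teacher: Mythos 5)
Your proof follows the same overall architecture as the paper's: normalize terms, separate the atoms of a basic formula into $\lb$-atoms and $\lacs$-atoms, introduce a fresh $C$-variable for $\nu(y_\mr{b})$, and then invoke Lemma~\ref{L:nu-el} once to eliminate $\exists y_\mr{b}$. That core step is correct.

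However, your term-normalization claim has a genuine gap. You assert that each $\labcs$-term of sort~$B$ is equivalent to $u_\mr{b}(x_\mr{b})+\iota\bigl(s_\mr{a}(x_\mr{a})\bigr)$ with $s_\mr{a}$ an $\la$-term in $x_\mr{a}$ alone. But $\lacs$ is an \emph{arbitrary} expansion of the language of the pair $(A,C)$, so it may contain function symbols whose codomain is sort~$A$ and whose arguments include sort~$C$. A term of sort~$A$ is then in general an $\lacs$-term $t\bigl(x_\mr{a},\nu(x_\mr{b}),x_\mr{c}\bigr)$: for instance, if $f$ is an $\lacs$-symbol of sort $C\to A$, then $\iota\bigl(f(\nu(x_\mr{b}))\bigr)$ is a perfectly good $\labcs$-term of sort~$B$ that is not of your proposed form. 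Consequently the $\iota$-arguments inside your ``B-atoms'' can depend on $\nu(x_\mr{b})$, $\nu(y_\mr{b})$, $x_\mr{c}$, and $y_\mr{c}$, so the partition of atoms producing the displayed formula $\phi'_\mr{b}\land\phi^{*\prime}_\mr{ac}$ is not actually available. The paper closes exactly this gap with one extra move that you omit: replace each such $\lacs$-term $t$ appearing inside $\iota$ by a fresh ac-existentially quantified variable $x'_\mr{a}$ of sort~$A$ and add the equation $x'_\mr{a}=t(x_\mr{a},\nu(x_\mr{b}),x_\mr{c})$ to the ac-part. After that substitution the B-part genuinely is an $\lb$-formula $\psi_\mr{b}(\iota(x_\mr{a}),x_\mr{b},y_\mr{b})$ (with $x_\mr{a}$ now enlarged to include $x'_\mr{a}$), the added equations are absorbed into $\phi^*_\mr{ac}$, and the remainder of your argument---including the $\exists y_\mr{b}$-elimination via Lemma~\ref{L:nu-el}---goes through exactly as you wrote it.
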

\begin{proof}
  Recall that each p.p.~formula is equivalent to an existential quantification of a \emph{basic}
  formula, i.e., a conjunction of atomic formulas. Since $\nu$ is a morphism of
  $\L$-structures and $\nu\circ\iota=0$, every term~$\nu(t)$ can
  be replaced by a sum of terms~$\nu(x_\mr{b})$. So every basic
  formula is equivalent to a formula
  $$\psi_\mr{b}\big(\iota(t),x_\mr{b}\big)\land
  \psi^*_\mr{ac}\big(x_\mr{a},\nu(x_\mr{b}),x_\mr{c}\big),$$ where~$\psi_\mr{b}$
  is a basic $\lb$-formula, $\psi^*_\mr{ac}$ is a basic $\lacs$-formula, and~$t$ is a tuple of $\lacs$-terms in $x_\mr{a}$, $\nu(x_\mr{b})$, and~$x_\mr{c}$. We can replace $t$ by existentially quantified
  multivariables~$x'_\mr{a}$ of sort~$A$ and add the equations~$x'_\mr{a} = t$.
  Thus we may assume that our p.p.~formula has the
  form \[\exists
  y_\mr{b}\bigl(\psi_\mr{b}(\iota(x_\mr{a}),x_\mr{b},y_\mr{b})\land
  \psi^*_\mr{ac}(x_\mr{a},\nu(x_\mr{b}),\nu(y_\mr{b}),x_\mr{c})\bigr).\]
  This formula in turn is equivalent to
  \begin{multline*} \exists y_\mr{c}\Bigl(\theta(x_\mr{a},x_\mr{b},y_\mr{c}) \land
  \psi^*_\mr{ac}(x_\mr{a},\nu(x_\mr{b}),y_\mr{c},x_\mr{c})\Bigr)\\ \quad\text{where
   $\theta:=\exists
  y_\mr{b}\bigl(\psi_\mr{b}(\iota(x_\mr{a}),x_\mr{b},y_\mr{b})\land
  \nu(y_\mr{b}) = y_\mr{c}\bigr)$,}\end{multline*}
  and by Lemma
  \ref{L:nu-el},  $\theta$ is equivalent to
  \[\exists
  y_\mr{b}\psi_\mr{b}\big(\iota(x_\mr{a}),x_\mr{b},y_\mr{b}\big)\land
  \psi_\mr{c}\big(0,\nu(x_\mr{b}),y_\mr{c}\big),\]
  where
  $\psi_\mr{c}$ is the $\lc$-copy of~$\psi_\mr{b}$.
\end{proof}

\noindent
For a p.p.~$\L$-formula $\phi(x)$ let  $A_\phi$ be
the quotient group $A_x/\phi^A$ and $\pi_\phi\colon A_x\to A_\phi$ be the natural surjection.
Define the map $\rho_\phi\colon B_x\to A_\phi$ on $\nu^{-1}(\phi^C)$ as the
composition of the maps
\[\nu^{-1}(\phi^C)=\phi^B+\iota(A_x)\to\big(\phi^B+\iota(A_x)\big)/\phi^B
\xrightarrow{\ \sim\ } \iota(A_x)/\big(\phi^B\cap\iota(A_x)\big)
\xrightarrow{\ \sim\ }A_\phi,\] and identically zero outside $\nu^{-1}(\phi^C)$. The
following lemma is clear from the definitions.

\begin{lemma}\label{L:rho}
Let $a\in A_x$, $b\in B_x$. Then
  $$\iota(a)+b\in\phi^B\quad\Longleftrightarrow\quad\text{$\pi_\phi(a)+\rho_\phi(b)=0$ and
  $\nu(b)\in \phi^C$.}$$
\end{lemma}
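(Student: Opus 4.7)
My plan is to unwind the definition of $\rho_\phi$ and apply the second isomorphism theorem together with the purity of $\iota$ and $\nu$. The key preliminary observation I would record first is an explicit formula for $\rho_\phi$: if $b\in\nu^{-1}(\phi^C)$ decomposes as $b=b_0+\iota(a_0)$ with $b_0\in\phi^B$ and $a_0\in A_x$, then $\rho_\phi(b)=\pi_\phi(a_0)$. To see this, one traces through the three maps in the definition: $b\mapsto b+\phi^B = \iota(a_0)+\phi^B$ in $\phi^B+\iota(A_x)/\phi^B$; the isomorphism to $\iota(A_x)/(\phi^B\cap\iota(A_x))$ (second isomorphism theorem) sends this to $\iota(a_0)+(\phi^B\cap\iota(A_x))$; and since $\iota$ is pure, $\phi^B\cap\iota(A_x)=\iota(\phi^A)$, so the final iso to $A_\phi=A_x/\phi^A$ sends $\iota(a_0)+(\phi^B\cap\iota(A_x))$ to $\pi_\phi(a_0)$. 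In particular, the value $\pi_\phi(a_0)$ does not depend on the choice of decomposition, which also justifies well-definedness of $\rho_\phi$.

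For the forward direction, suppose $\iota(a)+b\in\phi^B$. Applying $\nu$ and using that $\nu\circ\iota=0$ and $\nu$ is a morphism (so $\nu(\phi^B)\subseteq\phi^C$), one sees $\nu(b)\in\phi^C$, so $\rho_\phi(b)$ is computed by the nontrivial branch of its definition. Now write $b=(\iota(a)+b)+\iota(-a)$; this is a decomposition with the first summand in $\phi^B$ and the second in $\iota(A_x)$, so the observation above gives $\rho_\phi(b)=\pi_\phi(-a)=-\pi_\phi(a)$, whence $\pi_\phi(a)+\rho_\phi(b)=0$.

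For the converse, assume $\nu(b)\in\phi^C$ and $\pi_\phi(a)+\rho_\phi(b)=0$. Since $\nu$ is a pure projection, there is $b_1\in\phi^B$ with $\nu(b_1)=\nu(b)$; by exactness, $b-b_1=\iota(a_0)$ for some $a_0\in A_x$. Thus $b=b_1+\iota(a_0)$ is a decomposition of the required form, so $\rho_\phi(b)=\pi_\phi(a_0)$, and the hypothesis becomes $\pi_\phi(a+a_0)=0$, i.e., $a+a_0\in\phi^A$. Then $\iota(a+a_0)\in\iota(\phi^A)\subseteq\phi^B$, and since $\phi^B$ is a subgroup containing $b_1$, we get $\iota(a)+b=b_1+\iota(a+a_0)\in\phi^B$, as desired.

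The proof is essentially a bookkeeping exercise; there is no genuine obstacle. The only point that requires care is the well-definedness of the explicit formula $\rho_\phi(b)=\pi_\phi(a_0)$ (used implicitly to make sense of the forward direction's rewriting $b=(\iota(a)+b)+\iota(-a)$), and this is exactly where purity of $\iota$ enters in the identification $\phi^B\cap\iota(A_x)=\iota(\phi^A)$.
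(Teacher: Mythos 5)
Your proof is correct, and it is exactly the direct unwinding of the definitions that the paper has in mind: the paper itself gives no proof of this lemma, dismissing it as ``clear from the definitions.'' Your explicit formula $\rho_\phi(b_0+\iota(a_0))=\pi_\phi(a_0)$ (for $b_0\in\phi^B$, $a_0\in A_x$) is the right thing to isolate, and the two directions — using $\nu\circ\iota=0$ and the decomposition $b=(\iota(a)+b)+\iota(-a)$ forward, and purity of $\nu$ to produce $b_1\in\phi^B$ with $\nu(b_1)=\nu(b)$ for the converse — are precisely the ``clear'' steps.
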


\noindent
We now fix a   family of p.p.~$\L$-formulas which is fundamental for $B$. We expand~$(A,C)^\ast$
by a new sort~$A_\phi$  together with the corresponding
projection map~$\pi_\phi$, for every fundamental $\L$-formula $\phi$. Let
$$\lacqs:=\lacs\cup\{\pi_\phi :\phi\text{ fundamental}\}$$ be the
language of this expansion.
We call terms of the form $\rho_\phi\big(t(x_\mr{b})\big)$ or $\nu(x_\mr{b})$
for a fundamental $\phi$ and a tuple $t$ of $\lb$-terms
\emph{special}.

\begin{lemma}\label{L:qepp}
  Every p.p.~$\labcs$-formula $\phi^*_\mr{abc}(x_\mr{a},x_\mr{b},x_\mr{c})$ is equivalent to a formula
  \[\phi^*_\mr{acq}\bigl(x_\mr{a},\sigma_1(x_\mr{b}),
  \dotsc,\sigma_m(x_\mr{b}),x_\mr{c}\bigr)\] where
  the $\sigma_i$ are special terms and $\phi^*_\mr{acq}$ is a suitable p.p.~$\lacqs$-formula.
\end{lemma}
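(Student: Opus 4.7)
The approach mirrors the proof of Theorem~\ref{qetheorem}, with Lemma~\ref{L:rho} and the fundamental family playing the roles that divisibility and the maps $\pi_n,\rho_n$ played there. The plan is: reduce via Lemma~\ref{L:pp-gen} to a conjunction in which the $B$-sort part has the shape $\phi_\mr{b}(\iota(x_\mr{a}),x_\mr{b})$; expand $\phi_\mr{b}$ via the fundamental family; and finally use Lemma~\ref{L:rho} componentwise to eliminate the occurrences of $\iota$ in favor of special terms.

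First, apply Lemma~\ref{L:pp-gen} to rewrite $\phi^*_\mr{abc}$ as an ac-existential quantification
\[\exists y_\mr{a}\exists y_\mr{c}\,\bigl(\phi_\mr{b}(\iota(x_\mr{a},y_\mr{a}),x_\mr{b})\;\land\;\phi^*_\mr{ac}(x_\mr{a},y_\mr{a},\nu(x_\mr{b}),x_\mr{c},y_\mr{c})\bigr)\]
with $\phi_\mr{b}$ a p.p.~$\lb$-formula and $\phi^*_\mr{ac}$ a p.p.~$\lacs$-formula. The right conjunct is already p.p.~in $\lacqs$ in the variables $x_\mr{a},y_\mr{a},x_\mr{c},y_\mr{c}$ and the single special term $\nu(x_\mr{b})$.

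To treat $\phi_\mr{b}(\iota(x_\mr{a},y_\mr{a}),x_\mr{b})$, use the fixed fundamental family to write it as a conjunction of formulas $\phi_j(t_j(\iota(x_\mr{a},y_\mr{a}),x_\mr{b}))$, each $\phi_j$ fundamental and each $t_j$ a tuple of $\lb$-terms matching the arity of $\phi_j$. Since every function symbol of $\lb$ is a group morphism, a straightforward induction on term complexity gives an additive decomposition $t(z_1,z_2)=t(z_1,0)+t(0,z_2)$; hence
\[t_j(\iota(x_\mr{a},y_\mr{a}),x_\mr{b})=u_j(\iota(x_\mr{a},y_\mr{a}))+v_j(x_\mr{b})\]
for tuples of $\lb$-terms $u_j,v_j$. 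Because $\iota$ is an $\L$-morphism, $u_j(\iota(\,\cdot\,))=\iota(u_j^\la(\,\cdot\,))$, where $u_j^\la$ denotes the $\la$-copy of $u_j$. Applying Lemma~\ref{L:rho} with $a:=u_j^\la(x_\mr{a},y_\mr{a})$ and $b:=v_j(x_\mr{b})$ yields the equivalence
\[\phi_j\bigl(t_j(\iota(x_\mr{a},y_\mr{a}),x_\mr{b})\bigr)\;\Longleftrightarrow\;\pi_{\phi_j}(u_j^\la(x_\mr{a},y_\mr{a}))+\rho_{\phi_j}(v_j(x_\mr{b}))=0\;\land\;\phi_j\bigl(\nu(v_j(x_\mr{b}))\bigr).\]
The first conjunct is an atomic $\lacqs$-condition in $x_\mr{a},y_\mr{a}$ and the special term $\rho_{\phi_j}(v_j(x_\mr{b}))$. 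For the second, since $\nu$ is an $\L$-morphism, $\nu(v_j(x_\mr{b}))=v_j^\lc(\nu(x_\mr{b}))$, so it is a p.p.~$\lc$-condition applied to the special term $\nu(x_\mr{b})$, and hence p.p.~in $\lacqs$.

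Combining all the resulting conjuncts and leaving the existential quantifiers over $y_\mr{a},y_\mr{c}$ outside, we obtain a p.p.~$\lacqs$-formula whose $A$- and $C$-sort free variables are $x_\mr{a},x_\mr{c}$ and whose only remaining dependence on $x_\mr{b}$ occurs through the special terms $\nu(x_\mr{b})$ and $\rho_{\phi_j}(v_j(x_\mr{b}))$, as required. The only point needing care is the additive decomposition of $\lb$-terms used above, but this is routine given that every symbol of an abelian-structure language acts by a group morphism; apart from that, the argument is essentially a bookkeeping exercise organized around Lemmas~\ref{L:pp-gen} and~\ref{L:rho}.
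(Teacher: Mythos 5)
Your proof is correct and follows essentially the same route as the paper: reduce via Lemma~\ref{L:pp-gen} and the fundamental family to a single fundamental conjunct $\phi_\mr{b}\bigl(t_\mr{b}(\iota(x_\mr{a}),x_\mr{b})\bigr)$, additively split the $\lb$-terms to isolate the $\iota$-part, and apply Lemma~\ref{L:rho} to replace it by the $\pi_\phi$-$\rho_\phi$ condition plus a $\lc$-condition on $\nu(x_\mr{b})$. The extra explicitness about term decomposition and about $\iota$, $\nu$ being $\L$-morphisms is harmless and matches what the paper leaves implicit in the phrase ``we may arrange.''
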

\begin{proof}
  By Lemma \ref{L:pp-gen} it suffices to prove this for formulas
  $$\phi^*_\mr{abc}(x_\mr{a},x_\mr{b})=\phi_\mr{b}\big(t_\mr{b}(\iota(x_\mr{a}),x_\mr{b})\big)$$ where~$\phi_\mr{b}$ is fundamental and $t_b$ is a tuple of $\lb$-terms. We may arrange that $$t_\mr{b}\big(\iota(x_\mr{a}),x_\mr{b}\big)=
  \iota\big(r_\mr{a}(x_\mr{a})\big)+s_\mr{b}(x_\mr{b})$$ for a tuple $r_\mr{a}$
  of $\la$-terms and a tuple $s_\mr{b}$ of $\lb$-terms. Let
  $\phi_\mr{c}$ and $s_\mr{c}$ be the $\lc$-copies of $\phi_\mr{b}$
  and~$s_\mr{b}$, respectively; then by Lemma~\ref{L:rho}, $\phi^*_\mr{abc}(x_\mr{a},x_\mr{b})$ is equivalent to
  \[\pi_\phi\bigl(r_\mr{a}(x_\mr{a})\bigr)+
  \rho_\phi\bigl(s_\mr{b}(x_\mr{b})\bigr) = 0
  \land\phi_\mr{c}\bigl(s_\mr{c}(\nu(x_\mr{b}))\bigr).\qedhere\]
\end{proof}

\noindent
We now obtain versions of Theorem~\ref{qetheorem} and Corollary~\ref{cor_expansion} for our pure exact sequence \eqref{eq:pure exact ab str}:

\begin{theorem}
  Every $\labc$-formula
  $\phi( x_\mr{a}, x_\mr{b}, x_\mr{c})$ is equivalent to a formula
  \[\phi_{\rm{acq}}\bigl(x_\mr{a},\sigma_1(x_\mr{b}),
  \dotsc,\sigma_m(x_\mr{b}),x_\mr{c}\bigr)\] where
  the $\sigma_i$ are special terms and $\phi_{\rm{acq}}$ is a suitable
  $\lacq$-formula.
\end{theorem}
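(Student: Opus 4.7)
My plan is to combine the Baur--Monk quantifier simplification for abelian structures (Proposition~\ref{prop:BaurMonk}) with the positive primitive reduction already established in Lemma~\ref{L:qepp}. The three-sorted $\labc$-structure $(A,B,C)$ built from \eqref{eq:pure exact ab str} is itself abelian: its distinguished primitives are the three sorted abelian group structures, and its remaining relation symbols name subgroups while all function symbols (including $\iota$ and $\nu$) are group morphisms. Consequently, Proposition~\ref{prop:BaurMonk} applies and tells us that every $\labc$-formula $\phi(x_\mr{a}, x_\mr{b}, x_\mr{c})$ is equivalent in $(A,B,C)$ to a boolean combination of p.p.~$\labc$-formulas and $\labc$-dimension sentences.

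For the p.p.~conjuncts I invoke Lemma~\ref{L:qepp} in the trivial case $\lacs = \lac$, so that $\lacqs = \lacq$, and replace each p.p.~$\labc$-formula $\phi^*_\mr{abc}(x_\mr{a}, x_\mr{b}, x_\mr{c})$ by an equivalent $\phi^*_\mr{acq}\bigl(x_\mr{a}, \sigma_1(x_\mr{b}), \dots, \sigma_k(x_\mr{b}), x_\mr{c}\bigr)$ with $\phi^*_\mr{acq}$ a p.p.~$\lacq$-formula and each $\sigma_j$ a special term. Each dimension sentence, having no free variables, has a determinate truth value in the ambient model $(A,B,C)$ and may therefore be replaced by $\top$ or $\bot$; these are trivially $\lacq$-formulas (with empty list of special terms). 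Gathering all special terms that occur across the various conjuncts into a single master tuple $\sigma_1(x_\mr{b}), \dots, \sigma_m(x_\mr{b})$, padding each conjunct with dummy slots where appropriate, the whole boolean combination becomes a single $\lacq$-formula $\phi_\mr{acq}$ in the variables $x_\mr{a}$, $\sigma_1(x_\mr{b}), \dots, \sigma_m(x_\mr{b})$, $x_\mr{c}$, as required.

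Given the groundwork laid by Lemma~\ref{L:qepp} and Proposition~\ref{prop:BaurMonk}, no step of this plan presents a serious obstacle; the only mild subtlety lies in treating dimension sentences on the $B$-sort, which are \emph{a priori} not obviously $\lacq$-expressible at the theory level---but since, following the convention fixed earlier in Section~\ref{sec:abelian structures}, our notion of equivalence is in the fixed model $(A,B,C)$, their truth values are determined and the issue vanishes. If one instead wanted uniform equivalence across all pure exact sequences of abelian $\L$-structures, the harder step would be to show that for each such sort-$B$ dimension sentence, the value of the quotient $\bigl|\phi^B / (\phi\wedge\psi)^B\bigr|$ is controlled by the $\lacq$-structure; this would be the main additional technical input and could be obtained by applying Lemma~\ref{L:qepp} to the two-variable p.p.~formula $\psi(x_\mr{b}-y_\mr{b})$ and exploiting that each special term $\sigma_j$ is a group morphism on the appropriate preimage subgroup.
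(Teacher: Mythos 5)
Your proposal follows the paper's own proof almost verbatim: invoke Baur--Monk (Proposition~\ref{prop:BaurMonk}) to reduce to boolean combinations of p.p.~$\labc$-formulas and dimension sentences, then apply Lemma~\ref{L:qepp} with the trivial expansion $\lacs=\lac$. Your explicit treatment of the dimension sentences---replacing each by $\top$ or $\bot$, justified by the convention that ``equivalent'' means equivalent in the fixed structure $(A,B,C)$---is a small clarification of a point the paper's two-line proof leaves implicit.
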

\begin{proof}
By Proposition~\ref{prop:BaurMonk},
every $\labc$-formula is equivalent to a
  boolean combination of p.p.~$\labc$-formulas. Now apply  Lemma~\ref{L:qepp}  to the trivial expansion of~$(A,C)$.
\end{proof}

\begin{cor}
  Every $\labcs$-formula $\phi^*( x_\mr{a}, x_\mr{b}, x_\mr{c})$ is equivalent to
  a formula
  \[\phi^\ast_\mr{acq}\bigl(x_\mr{a},\sigma_1(x_\mr{b}),
  \dotsc,\sigma_m(x_\mr{b}),x_\mr{c}\bigr)\] where the $\sigma_i$ are
  special terms and $\phi^\ast_\mr{acq}$ a suitable $\lacqs$-formula.
\end{cor}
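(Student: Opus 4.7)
The plan is to mimic the proof of Corollary~\ref{cor_expansion}, substituting the preceding Theorem (the version of Theorem~\ref{qetheorem} for pure exact sequences of abelian $\mathcal{L}$-structures) for its $\labcq$-counterpart. Specifically, I would show that the class $\mathcal F$ of $\labcs$-formulas equivalent to one of the required form is closed under boolean combinations, under quantification over the sorts of $A$ and $C$, and under quantification over the sort of $B$; since $\mathcal F$ plainly contains all atomic $\lacs$-formulas (which are already $\lacqs$-formulas) and all atomic $\labc$-formulas (by the preceding theorem applied to the trivial expansion of $(A,C)$), this suffices.

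Closure under boolean combinations and under $A$-/$C$-quantification is immediate, because the special terms all output into the sorts $A_\phi$ or $C$, so quantifiers of sort $A$ or $C$ may be absorbed into the $\lacqs$-part. The substantive step is closure under $B$-quantification. Given
\[
\phi^\ast(x_\mr{a},x_\mr{b},x_\mr{c})\ =\ \exists y_\mr{b}\,\psi^\ast\bigl(x_\mr{a},\sigma_1(x_\mr{b},y_\mr{b}),\dotsc,\sigma_m(x_\mr{b},y_\mr{b}),x_\mr{c}\bigr),
\]
where $\psi^\ast$ is $\lacqs$, I introduce fresh variables $z_1,\dots,z_m$ of the appropriate auxiliary or $C$-sort and apply the preceding theorem to the pure $\labc$-formula
\[
\exists y_\mr{b}\ \bigwedge_{i=1}^{m}\bigl(z_i\doteq \sigma_i(x_\mr{b},y_\mr{b})\bigr),
\]
where ``$\doteq$'' becomes $\pi_\phi(z_i) = \sigma_i$ if $\sigma_i$ outputs to $A_\phi$ and $z_i = \sigma_i$ if it outputs to $C$. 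The theorem rewrites this equivalently as a formula
\[
\chi\bigl(z_1,\dotsc,z_m,\tau_1(x_\mr{b}),\dotsc,\tau_n(x_\mr{b})\bigr)
\]
with $\tau_j$ special terms in $x_\mr{b}$ alone and $\chi$ in $\lacq$. Then $\phi^\ast$ is equivalent to
\[
\exists z_1\cdots\exists z_m\bigl(\chi\bigl(\bar z,\tau_1(x_\mr{b}),\dotsc,\tau_n(x_\mr{b})\bigr)\land \psi^\ast(x_\mr{a},\bar z,x_\mr{c})\bigr),
\]
which is of the required form, the $\exists z_i$ being absorbed into the $\lacqs$-part.

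The only point requiring attention is verifying that the preceding theorem can indeed be applied to the auxiliary formula $\exists y_\mr{b}\bigwedge_i(z_i\doteq\sigma_i)$: this is an $\labc$-formula (possibly modulo adding the projection symbols $\pi_\phi$ to its $\lacq$-part of the conclusion) once we observe that $\pi_\phi(z_i) = \rho_\phi(t(x_\mr{b},y_\mr{b}))$ is equivalent, via Lemma~\ref{L:rho} and the definition of $\rho_\phi$, to a p.p.~$\labc$-condition on $\iota(z_i)-t(x_\mr{b},y_\mr{b})$ together with $\nu(t(x_\mr{b},y_\mr{b}))\in\phi^C$. I do not expect any genuine obstacle here; the argument is entirely parallel to the one already given for Corollary~\ref{cor_expansion}, with the preceding theorem in the role of Theorem~\ref{qetheorem}.
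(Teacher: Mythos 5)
Your proposal is correct and follows the paper's own route: the paper's proof simply points to the argument of Corollary~\ref{cor_expansion} with the abelian-structures version of Theorem~\ref{qetheorem} substituted for the original. One small bookkeeping slip: to apply that theorem as stated (with $x_{\mathrm{a}}$ ranging over the $A$-sort), the auxiliary variables $z_i$ paired with $\rho_\phi$-terms should be of sort $A$ rather than of the quotient sort $A_\phi$ --- so that $\pi_\phi(z_i)=\sigma_i$ type-checks, as you in fact write --- and correspondingly $\pi_\phi(z_i)$ rather than $z_i$ must be fed into $\psi^\ast$ in your final displayed formula; the alternative of taking $z_i$ of sort $A_\phi$ requires instead the extension of the theorem indicated in Remark~\ref{rem:Rosario}.
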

\begin{proof}
  This follows from the theorem above like Corollary~\ref{cor_expansion} follows from  Theorem~\ref{qetheorem}.
\end{proof}

\begin{remark}\label{rem:Rosario}
For  simplicity we assumed above that the multivariable  $x_{\mr{a}}$ is of sort~$A$.
The    preceding theorem and its corollary generalize naturally to the case where $\phi$ is an
$\labcq$-formula and~$\phi^*$ is an
$\labcqs$-formula, respectively, and the multivariable $x_{\mr{a}}$  is now allowed to also have components   of sort~$A_\phi$ (for varying
fundamental $\mathcal L$-formulas $\phi$).
We leave the details to the interested reader.
\end{remark}

\subsubsection{Weakly pure exact sequences} In  this subsection we assume that \eqref{eq:pure exact ab str} is
{\bf weakly pure exact}, i.e., $\iota$ a pure
embedding, $\nu$ a pure projection, and $\im\iota\subseteq\ker\nu$. As in Section~\ref{sec:weakly pure exact} let $\delta:=\nu\circ\iota$. The
pair $(A,C)$ is then an abelian
$\lacd$-structure, where $\lacd=\lac\cup\{\delta\}$.
Let $(A,C)^\ast$ be an expansion of $(A,C)$ with language~$\lacds$,
let $\labcds:=\lacds\cup\lb$. ``Equivalent'' now means ``equivalent in the $\labcds$-structure~$(A,B,C)$'', and we define ac-existential quantifications as in the previous subsection. We have then the
following generalization of Lemma \ref{L:pp-gen}:

\begin{lemma}\label{L:pp-gen-w}
    Every p.p.~$\labcds$-formula $\phi^*_{\mr{abcd}}(x_\mr{a},x_\mr{b},x_\mr{c})$ is equivalent to an ac-existential
  quantification of a formula
  \[\phi_\mr{b}\big(\iota(x_\mr{a}),x_\mr{b}\big)\land
  \phi^*_\mr{acd}\big(x_\mr{a},\nu(x_\mr{b}),x_\mr{c}\big),\] where $\phi_\mr{b}$ is a p.p.~$\lb$-formula and
   $\phi^*_\mr{acd}$ is a p.p.~$\lacds$-formula.
\end{lemma}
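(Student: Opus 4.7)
The plan is to follow the argument of Lemma~\ref{L:pp-gen} with a single technical adjustment: the identity $\nu \circ \iota = 0$ available in the pure exact case must be replaced by $\nu \circ \iota = \delta$. As a first step, I would reduce to the case of a basic formula (a conjunction of atomic formulas), since every p.p.~formula is an existential quantification of such; it then suffices to rewrite the basic subformula in the required shape.

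The key step handles any term of the form $\nu(t)$, where $t$ is an $\lb$-term in $\iota(x_\mr{a})$ and $x_\mr{b}$. Using that $\nu$ is a group morphism and that $\nu(\iota(a)) = \delta(a)$, such a $\nu(t)$ can be rewritten as a sum of the form $\delta(r_\mr{a}(x_\mr{a})) + \nu(s_\mr{b}(x_\mr{b}))$ for suitable tuples of $\la$-terms $r_\mr{a}$ and $\lb$-terms $s_\mr{b}$. Because $\delta$ is now a primitive of $\lacds$, the summand $\delta(r_\mr{a}(x_\mr{a}))$ is a legitimate $\lacds$-term in $x_\mr{a}$, so each atomic formula splits cleanly into an $\lb$-part depending only on $(\iota(x_\mr{a}), x_\mr{b})$ and an $\lacds$-part depending only on $(x_\mr{a}, \nu(x_\mr{b}), x_\mr{c})$, after introducing auxiliary $\la$-variables with equations to absorb the $\lacds$-terms of sort $A$ that appear as arguments of $\iota$.

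The final step handles the existentially quantified $y_\mr{b}$-variables exactly as in the proof of Lemma~\ref{L:pp-gen}: I would introduce a fresh $\lc$-multivariable $y_\mr{c}$ standing for $\nu(y_\mr{b})$, together with the conjunct $\nu(y_\mr{b}) = y_\mr{c}$, and then invoke Lemma~\ref{L:nu-el} to push the quantification over $y_\mr{b}$ inside the $\lb$-part while preserving the $\lacds$-part. I do not expect a substantive obstacle here: Lemma~\ref{L:nu-el} requires only that $\nu$ be a pure projection, which still holds in the weakly pure setting. The point of expanding the language to include $\delta$ is precisely to absorb the $\nu \circ \iota$-terms that would otherwise escape the clean $\lb$/$\lacds$ dichotomy achieved in the pure exact case.
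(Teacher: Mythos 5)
Your proposal is correct and matches the paper's own proof, which is stated tersely as: proceed exactly as in Lemma~\ref{L:pp-gen}, but replace occurrences of $\nu(\iota(t))$ by $\delta(t)$ rather than by $0$, noting that Lemma~\ref{L:nu-el} only needs $\nu$ to be a pure projection. You have expanded the same argument, including the correct observation about why purity of $\nu$ suffices for the $y_\mathrm{b}$-elimination step.
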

\begin{proof}
  The proof is   the same as the proof of Lemma~\ref{L:pp-gen},
  except that terms $\nu(\iota(t))$ are not replaced by $0$ but by
  $\delta(t)$. Note that we use here, in Lemma~\ref{L:nu-el}, that
  $\nu$ is a pure projection.
\end{proof}

\noindent
Let $\overline{C}:=\coker \delta=C/\im\delta$ equipped with its induced structure under the natural surjection
$c\mapsto\overline{c}\colon C\to\overline{C}$. This surjection $c\mapsto\overline{c}$ is a pure projection; composition with $\nu$ yields a pure projection~$\overline{\nu}\colon B\to \overline{C}$  as in Section~\ref{sec:weakly pure exact}.
 The natural inclusion~$\ker\delta\to A$ is a pure embedding. Moreover, $\ker\overline{\nu}=A$, and the
 short exact sequence
 $$0\to A\xrightarrow{\ \iota\ } B\xrightarrow{\ \overline{\nu}\ } \overline{\nu}\to 0$$
 of morphisms of $\L$-structures associated to  \eqref{eq:pure exact ab str} is pure exact. We define for
every p.p.~$\L$-formula $\phi(x)$ the map
$\rho_\phi\colon B_x\to A_\phi=A_x/\phi^A$ as in the last subsection but according to the
pure exact sequence associated to \eqref{eq:pure exact ab str} displayed above.  Lemma~\ref{L:rho} then becomes:

\begin{lemma}\label{L:rho-w}
Let $a\in A_x$, $b\in B_x$; then
  $$\iota(a)+b\in\phi^B\quad\Longleftrightarrow\quad\text{$\pi_\phi(a)+\rho_\phi(b)=0$ and
  $\delta(a)+\nu(b)\in \phi^C$.}$$
\end{lemma}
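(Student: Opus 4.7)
The plan is to reduce to Lemma~\ref{L:rho}, applied not to our weakly pure exact sequence itself but to its associated pure exact sequence
$$0\to A\xrightarrow{\ \iota\ } B\xrightarrow{\ \overline{\nu}\ } \overline{C}\to 0.$$
By construction, $\rho_\phi$ in the weakly pure case is defined precisely via this associated pure sequence, and $\pi_\phi$ depends only on $A$ (not on $B$ or $C$). Hence Lemma~\ref{L:rho} directly yields the equivalence
$$\iota(a)+b\in\phi^B\quad\Longleftrightarrow\quad \pi_\phi(a)+\rho_\phi(b)=0 \ \text{ and }\ \overline{\nu}(b)\in\phi^{\overline{C}}.$$

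It therefore suffices to show that, in the presence of the other hypotheses, the condition $\overline{\nu}(b)\in\phi^{\overline{C}}$ is equivalent to the desired condition $\delta(a)+\nu(b)\in\phi^C$. For the forward direction I would argue globally from $\iota(a)+b\in\phi^B$: since $\nu$ is a morphism of $\mathcal L$-structures and $\nu\circ\iota=\delta$, we get $\delta(a)+\nu(b)=\nu(\iota(a)+b)\in\phi^C$. For the reverse direction I would compose with the natural surjection $\pi\colon C\twoheadrightarrow\overline{C}$, which is itself a morphism; applying it to $\delta(a)+\nu(b)\in\phi^C$ and using that $\pi\circ\delta=0$ on $A$ collapses the expression to $\overline{\nu}(b)=\pi(\nu(b))\in\phi^{\overline{C}}$, as required.

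I do not expect any single step to present a genuine obstacle. The setup in Section~\ref{sec:weakly pure exact} was arranged exactly so that the weakly pure case reduces mechanically to the pure case of Lemma~\ref{L:rho}, after accounting for the ``correction'' $\delta(a)$ that distinguishes the target $C$ of $\nu$ from the target $\overline{C}=C/\im\delta$ of $\overline{\nu}$ in the associated pure sequence. The only mildly delicate point is keeping track of which of the two summarizing conditions (the $\pi_\phi$-$\rho_\phi$ equation versus the $C$-valued containment) needs to appeal to which property of the various morphisms, but this is forced by the shape of Lemma~\ref{L:rho} and requires no calculation beyond $\pi\circ\delta=0$ and $\nu\circ\iota=\delta$.
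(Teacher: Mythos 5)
Your proposal is correct and is essentially the paper's own argument: both directions reduce to Lemma~\ref{L:rho} for the associated pure exact sequence, with the forward $C$-containment obtained by applying the morphism $\nu$ to $\iota(a)+b\in\phi^B$, and the converse obtained by pushing $\delta(a)+\nu(b)\in\phi^C$ through the surjection $C\to\overline{C}$, which kills $\delta(a)$ and yields $\overline{\nu}(b)\in\phi^{\overline{C}}$. Your care in deriving the forward $C$-containment from the global hypothesis rather than from $\overline{\nu}(b)\in\phi^{\overline{C}}$ alone is exactly the right (and necessary) move, and matches the paper.
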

\begin{proof}
  The implication $\Rightarrow$ is clear since
  $\iota(a)+b\in\phi^B$ implies $\nu(\iota(a)+b)\in\phi^C$. The
  converse follows from Lemma~\ref{L:rho} since
  $\delta(a)+\nu(b)\in\phi^C$ implies~$\overline{\nu}(b)\in\phi^{\overline{C}}$.
\end{proof}

\noindent
As in the last subsection we fix now a   family of p.p.~$\L$-formulas which is fundamental for $B$ and
expand $(A,C)^\ast$ by the new sorts $A_\phi$ for every fundamental
$\phi$ together with the projection map $\pi_\phi$. Let $\lacdqs$ be
the language of the resulting expansion. Lemma~\ref{L:qepp} is now:

\begin{lemma}\label{L:qepp-w}
 Every p.p.~$\labcds$-formula $\phi^*_\mr{abcd}(x_\mr{a},x_\mr{b},x_\mr{c})$ is equivalent to a formula
  \[\phi^*_\mr{acdq}\bigl(x_\mr{a},\sigma_1(x_\mr{b}),
  \dotsc,\sigma_m(x_\mr{b}),x_\mr{c}\bigr)\] where
  the $\sigma_i$ are special terms and $\phi^*_\mr{acdq}$ is a suitable p.p.~$\lacdqs$-formula.
 \end{lemma}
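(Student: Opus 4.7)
The plan is to mimic the proof of Lemma~\ref{L:qepp}, substituting the weakly pure analogues at each step. By the ac-existential reduction of Lemma~\ref{L:pp-gen-w}, together with the observation that ac-existential quantifications of formulas of the target form are again of the target form (existential quantifiers over sort $A$ and sort $C$ variables can be absorbed into the $\lacdqs$-formula $\phi^*_\mr{acdq}$), it suffices to treat a p.p.~formula of the special shape
\[
\phi^*_\mr{abcd}(x_\mr{a},x_\mr{b})\ =\ \phi_\mr{b}\bigl(t_\mr{b}(\iota(x_\mr{a}),x_\mr{b})\bigr),
\]
where $\phi_\mr{b}$ belongs to the fixed fundamental family and $t_\mr{b}$ is a tuple of $\lb$-terms.

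Next, since $\iota$ is a group morphism, I can split $t_\mr{b}$ additively as
\[
t_\mr{b}\bigl(\iota(x_\mr{a}),x_\mr{b}\bigr)\ =\ \iota\bigl(r_\mr{a}(x_\mr{a})\bigr)+s_\mr{b}(x_\mr{b})
\]
for a tuple $r_\mr{a}$ of $\la$-terms and a tuple $s_\mr{b}$ of $\lb$-terms. Let $\phi_\mr{c}$ and $s_\mr{c}$ denote the $\lc$-copies of $\phi_\mr{b}$ and~$s_\mr{b}$. Applying Lemma~\ref{L:rho-w} componentwise to $a=r_\mr{a}(x_\mr{a})$ and $b=s_\mr{b}(x_\mr{b})$ (this is the step where the weakly pure setting differs from the pure one) gives that $\phi^*_\mr{abcd}$ is equivalent to
\[
\pi_\phi\bigl(r_\mr{a}(x_\mr{a})\bigr)+\rho_\phi\bigl(s_\mr{b}(x_\mr{b})\bigr)=0 \ \land\ \phi_\mr{c}\bigl(\delta\bigl(r_\mr{a}(x_\mr{a})\bigr)+s_\mr{c}\bigl(\nu(x_\mr{b})\bigr)\bigr).
\]
Both conjuncts are p.p.~$\lacdqs$-formulas in the variables $x_\mr{a}$ and the special terms $\rho_\phi(s_\mr{b}(x_\mr{b}))$ and $\nu(x_\mr{b})$ (the first conjunct uses the projection symbol $\pi_\phi$ together with the group structure on the sort $A_\phi$, and the second conjunct uses $\delta$, which is why passage from $\lacq$ to $\lacdq$ is necessary). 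This yields the required form, completing the reduction.

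The one point requiring a little care is the ac-existential absorption in the first step: quantifiers of sort $A$ over variables that appear inside arguments of $\pi_\phi$, and quantifiers of sort $C$, produce only p.p.~$\lacdqs$-formulas in the remaining free variables (together with the special terms), because the special terms $\sigma_i(x_\mr{b})$ do not involve the quantified variables. This is precisely as in the proof of Lemma~\ref{L:qepp}. I do not foresee a genuine obstacle here; the argument is essentially a bookkeeping exercise once Lemmas~\ref{L:pp-gen-w} and~\ref{L:rho-w} are in place, and the only substantive change compared to Lemma~\ref{L:qepp} is the appearance of the $\delta$-term in the second conjunct.
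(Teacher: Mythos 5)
Your proposal is correct and follows the paper's own route: reduce via Lemma~\ref{L:pp-gen-w} and ac-existential absorption to the case $\phi_\mr{b}\bigl(t_\mr{b}(\iota(x_\mr{a}),x_\mr{b})\bigr)$ with $\phi_\mr{b}$ fundamental, split the term as $\iota\bigl(r_\mr{a}(x_\mr{a})\bigr)+s_\mr{b}(x_\mr{b})$, and apply Lemma~\ref{L:rho-w} to obtain exactly the paper's displayed equivalent formula, with the $\delta$-term in the $\lc$-conjunct being the only change from Lemma~\ref{L:qepp}. (The paper states this more tersely as ``same as Lemma~\ref{L:qepp} except for the displayed formula''; your spelled-out bookkeeping of the ac-existential absorption matches what is implicit there.)
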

\begin{proof}
  As the proof Lemma \ref{L:qepp}, except that
  $\phi_\mr{b}(t_\mr{b}(\iota(x_\mr{a}),x_\mr{b}))$ is equivalent to
  \[\pi_\phi\bigl(r_\mr{a}(x_\mr{a})\bigr)+
  \rho_\phi\bigl(s_\mr{b}(x_\mr{b})\bigr)=0
  \land\phi_\mr{c}\bigl(\delta(r_\mr{a}(x_\mr{a}))+
  s_\mr{c}(\nu(x_\mr{b}))\bigr).\qedhere\]
\end{proof}

\noindent
As in the last subsection we can conclude:

\begin{cor}
  Every $\labcds$-formula $\phi^*( x_\mr{a}, x_\mr{b}, x_\mr{c})$ is equivalent to
  a formula
  \[\phi^\ast_\mr{acdq}\bigl(x_\mr{a},\sigma_1(x_\mr{b}),
  \dotsc,\sigma_m(x_\mr{b}),x_\mr{c}\bigr)\] where the $\sigma_i$ are
  special terms and $\phi^\ast_\mr{acdq}$ a suitable $\lacdqs$-formula.
\end{cor}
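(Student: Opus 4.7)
The plan is to imitate, \emph{mutatis mutandis}, the derivation of the corresponding corollary in the pure exact case from Lemma~\ref{L:qepp}, replacing the invocation of that lemma by its weakly pure analog Lemma~\ref{L:qepp-w}.

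First, the $\labcds$-structure $(A,B,C,\dots)$ is abelian: the primitives of $\lacds$ expand the abelian $\lacd$-structure $(A,C)$, the sort $B$ carries its abelian group structure, and $\iota$, $\nu$ are group morphisms. Hence Proposition~\ref{prop:BaurMonk} applies to it: a given $\labcds$-formula $\phi^*(x_\mr{a},x_\mr{b},x_\mr{c})$ is equivalent in $(A,B,C,\dots)$ to a boolean combination of p.p.~$\labcds$-formulas $\chi_1^*,\dots,\chi_N^*$ (with free variables among $x_\mr{a}$, $x_\mr{b}$, $x_\mr{c}$) and dimension $\labcds$-sentences $\delta_1,\dots,\delta_M$. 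Each $\delta_\ell$ is a sentence, so it has a definite truth value in $(A,B,C,\dots)$; replacing each $\delta_\ell$ by $\top$ or $\bot$ accordingly, we may arrange that $\phi^*$ is equivalent to a boolean combination of the $\chi_k^*$ alone.

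Next, I apply Lemma~\ref{L:qepp-w} to each $\chi_k^*$ to obtain an equivalent formula of the form
$$\chi^*_{k,\mr{acdq}}\bigl(x_\mr{a}, \sigma^{(k)}_1(x_\mr{b}), \dots, \sigma^{(k)}_{m_k}(x_\mr{b}), x_\mr{c}\bigr),$$
with the $\sigma^{(k)}_i$ special terms and $\chi^*_{k,\mr{acdq}}$ a p.p.~$\lacdqs$-formula. Let $\sigma_1,\dots,\sigma_m$ enumerate all the special terms $\sigma^{(k)}_i$ that occur. Applying the same boolean combination to the $\chi^*_{k,\mr{acdq}}$, viewed as $\lacdqs$-formulas in the longer argument list by padding with dummy variables, produces an $\lacdqs$-formula $\phi^\ast_\mr{acdq}$ such that $\phi^*$ is equivalent to $\phi^\ast_\mr{acdq}(x_\mr{a}, \sigma_1(x_\mr{b}),\dots,\sigma_m(x_\mr{b}), x_\mr{c})$, as required.

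No step presents a genuine obstacle: the whole argument is bookkeeping around Baur-Monk plus Lemma~\ref{L:qepp-w}, which was explicitly designed to furnish the reduction of p.p.~formulas in the weakly pure case (this is where the morphism $\delta$ and the definition of $\rho_\phi$ via the \emph{associated} pure exact sequence, together with Lemma~\ref{L:rho-w}, do the essential work). The only point meriting care is to note that dimension sentences in Baur-Monk are permitted to involve the $B$-sort, but since they are sentences this causes no issue.
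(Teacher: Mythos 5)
Your opening claim is the problem: you assert that the $\labcds$-structure $(A,B,C,\dots)$ is abelian, but this is false in general. The expansion $(A,C)^\ast$ is \emph{arbitrary} --- the paper imposes no requirement that the new primitives $R_0,R_1,\dots$ of $\lacds$ be subgroups or group morphisms. So while the $\labcd$-reduct of $(A,B,C)$ \emph{is} an abelian structure (and Proposition~\ref{prop:BaurMonk} applies there), the full $\labcds$-structure typically is not, and Baur-Monk cannot be invoked to reduce an arbitrary $\labcds$-formula to a boolean combination of p.p.~$\labcds$-formulas and dimension sentences. This collapses the very first step of your argument, and the rest of it (passing the boolean combination through Lemma~\ref{L:qepp-w}, collecting the special terms, padding with dummies) has nothing to operate on.

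The intended route is a two-stage argument mirroring the pure abelian-structure subsection. First, one proves the analog of the Theorem for $\labcd$-formulas only: since the trivial expansion yields a genuine abelian structure, Baur-Monk does apply, and Lemma~\ref{L:qepp-w} (with trivial $\lacds=\lacd$) handles the resulting p.p.~pieces. Second, the Corollary for $\labcds$-formulas is obtained by induction on formula complexity, exactly as Corollary~\ref{cor_expansion} is deduced from Theorem~\ref{qetheorem}: the class of $\labcds$-formulas equivalent to one of the desired shape contains the atomic formulas and is closed under boolean combinations and under quantification over $A$ and over $C$; the crux is closure under quantification over $B$, which is handled by the $\labcd$-Theorem from the first stage (one writes an $\exists\,y_\mr{b}$-formula as a p.p.\ $\labcd$-condition on the special terms and then pushes the remaining $\lacdqs$-content through existential quantifiers over the auxiliary sorts). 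Your proposal short-circuits this by trying to Baur-Monk everything at once, which is exactly what the arbitrariness of the expansion forbids.
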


\begin{remarksunnumbered}
\mbox{}
\begin{enumerate}
\item  There is always a fundamental family of p.p.~ $\L$-formulas, namely the
  set of {\it all}\/ p.p.~$\L$-formulas. So, by the previous corollary and following the proofs of Lemma~\ref{lem:AC
    NIP} and Theorem~\ref{thm: dist in SES}, we see that a weakly pure
  exact sequence~$(A,B,C)$ of abelian $\mathcal L$-structures with an
  expansion $(A,C)^\ast$ of~$(A,C,\delta)$ is NIP (or distal) if and
  only if $(A,C)^\ast$ is NIP (or distal).
\item If $(A,C,\delta)$ comes from a weakly pure exact sequence, then
  $\delta\colon A\to\im\delta$ is a pure projection and the natural inclusion~$\im\delta\to C$ a pure
  embedding. The converse   may be true, but we know it only if $\ker\delta$
  is a direct summand of $A$ or $\im\delta$ is a direct summand of
  $C$.
\end{enumerate}
\end{remarksunnumbered}

\section{Eliminating Field Quantifiers in Henselian Valued Fields}\label{sec:relative QE}

\noindent
In this section we discuss two frameworks for elimination of field quantifiers in henselian valued fields of characteristic zero construed as
multi-sorted structures. The first one is the familiar $\RV$ (leading term) setting,  for which we use~\cite{flenner2011relative}
as our reference.
Here the additional sorts are quotients of the multiplicative group of the underlying field by   groups of higher $1$-units.
(See Sections~\ref{sec:QE hens val}--\ref{sec:NIP for RV}.)
In our second context we instead use, besides the value group, certain imaginary sorts obtained from quotient rings   of the valuation ring, and employ the results of Section~\ref{sec: Distality and SES} to
prove the relevant elimination theorems.
In the equicharacteristic zero case, which we treat first,  this setting simplifies even more, to  quotients of the multiplicative group of the residue field; see Section~\ref{sec:stronger Flenner} below. Each of these various settings has advantages that make it more convenient for some  tasks rather than others;
in this spirit,  the elimination theorems from the present section are applied in combination  to prove our main theorem in the next section.

\subsection{Quantifier elimination in henselian valued fields}\label{sec:QE hens val}
{\it Throughout this section we fix a valued field~$K$ of characteristic zero.}\/ We let   $v\colon K^\times\to\Gamma=v(K^\times)$ be the valuation of $K$,
and $\mathcal{O}$ its valuation ring.
As in Section~\ref{sec:variant} we consider the abelian monoid $\gi:=\Gamma\cup\{\infty\}$
with absorbing element~$\infty\notin\Gamma$, and extend the ordering of $\Gamma$ to a
total ordering on~$\gi$ with $\gamma<\infty$ for all $\gamma\in\Gamma$;
as usual we denote the extension of $v$ to a monoid morphism~$K\to\gi$ also by $v$.
Let $\gamma$, $\delta$ range over $\Gamma^{\geq 0}$.
Let $$\mathfrak{m}_\delta := \{ x \in K : vx > \delta \},$$ so
$\mathfrak{m}_\delta$ is an ideal of $\mathcal O$ with $\mathfrak m_\gamma\subseteq \mathfrak m_\delta$ if $\gamma\geq\delta$.
 The maximal ideal of $\mathcal{O}$ is
$\mathfrak{m} := \mathfrak{m}_0$, and its residue field is~$\k :=  \mathcal{O}/\mathfrak{m}$.
Let also
$$\RV_\delta:=K/(1 + \mathfrak{m}_\delta),\qquad \RV_\delta^\times:=\RV_\delta\setminus\{0\},$$
with residue morphism $\rv_\delta\colon K\to\RV_\delta$. Thus 
$\rv_\delta(a)=a(1+\mathfrak{m}_\delta)\in\RV_\delta^\times$ for $a\in K^\times$, and $\rv_\delta$ sends $0\in K$
to the absorbing element $0$ of~$\RV_\delta$.
We
write
$$\RV := \RV_0 = K/(1+\mathfrak{m}),\qquad \rv := \rv_0.$$
For~$a\in\mathcal O\setminus\mathfrak m$, the element~$a(1+\mathfrak{m})$ of $\RV^\times$ only depends on the coset~$a+\mathfrak{m}$,
and we hence obtain a group embedding~$\k^\times\to\RV^\times$ which sends the element~$a+\mathfrak{m}$ of~$\k^\times$ to
$a(1+\mathfrak{m})\in\RV^\times$.
Together with the group morphism $v_{\rv}\colon\RV^\times\to\Gamma$  induced by the valuation $v\colon K^\times \to\Gamma$, this group embedding fits into a pure short exact
sequence
$$1\to \k^{\times}\to\RV^\times\xrightarrow{\ v_{\rv}\ }\Gamma\to0.$$
We denote the extension of $v_{\rv}$ to a morphism $\RV\to\gi$ of monoids by the same symbol.
Besides the induced multiplication, $\RV_\delta$ also inherits a partially defined addition from $K$ via the ternary relation
\begin{equation}\label{eq:oplus}
\oplus_\delta(r,s,t)\iff \exists x,y,z\in K\big(r=\rv_\delta(x)\land s=\rv_\delta(y)\land t=\rv_\delta(z)\land x+y=z\big).
\end{equation}
For $\gamma \geq \delta$ we also have a natural surjective monoid morphism $\rv_{\gamma \to \delta} \colon \RV_\gamma \to \RV_\delta$.

\medskip\noindent
It turns out that for what follows, not all of the $\RV_\delta$'s will be needed.
Therefore, from now on we let~$\gamma$ and~$\delta$ (possibly with decorations) range over $\{0\}$ if  $\ch\k = 0$, and over
the set $v(p^\N):={\big\{v(p^n):{n\geq 0}\big\}}$ if $\ch\k=p>0$.
We introduce a  many-sorted structure~$\bm{K}$ whose sorts  are
$K$ and the sets~$\RV_\delta$,   equipped with the following primitives:
\begin{enumerate}
\item[(K1)]  the ring primitives on $K$;
\item[(K2)] on each sort $\RV_\delta$,
the monoid primitives and the partial addition relation $\oplus_\delta$ defined above;
\item[(K3)] for each $\delta$, the map $\rv_\delta\colon K \to \RV_\delta$; and
\item[(K4)] for each $\gamma\geq\delta$, the maps  $\rv_{\gamma \to \delta}\colon \RV_\gamma \to \RV_\delta$.
\end{enumerate}
We also     denote by $\RV_*$   the
structure with underlying sorts $\RV_\delta$ and   primitives listed under \textup{(K2)}
and~\textup{(K4)} above, with associated language $\mathcal L_{\RV_*}$.

\begin{remark}
\label{rem: basic definability in RV}
The relation $v_{\rv}(x)\leq v_{\rv}(y)$ on $\RV$ is definable
in $\RV_*$~\cite[Pro\-po\-si\-tion~2.8(1)]{flenner2011relative}.
Namely,
\begin{align*} v_{\rv}(x) \leq 0 &\quad\iff\quad \neg\oplus_0(x,1,1), \\ v_{\rv}(x)=0 &\quad\iff\quad  v_{\rv}(x)\leq 0\land\exists y\big(x\cdot y=1\land  v_{\rv}(y)\leq 0\big)
\end{align*}
and hence
\begin{align*}
v_{\rv}(x)=v_{\rv}(y)&\quad\iff\quad\exists z\big(v_{\rv}(z)=0\land x=y\cdot z\big),\\
v_{\rv}(x)<v_{\rv}(y)&\quad\iff\quad x\neq 0\land \oplus_0(x,y,x).
\end{align*}
Hence the multiplicative group $\ker v_{\rv}\cong\k^\times$ is definable in $\RV_*$.
As a consequence the   ordered abelian group $\Gamma=v(K^\times)$ is  interpretable in $\RV_*$,
 and using $\oplus_0$ it follows that the field $\k$ is also interpretable in $\RV_*$.
\end{remark}

\begin{remark}\label{rem:bmK and K}
Our valued field viewed as a structure $(K,\mathcal{O})$ in the language of rings expanded by a
unary predicate for the valuation ring $\mathcal O$ of $K$ is bi-interpretable with $\bm{K}$ (regardless of the characteristic of~$\k$). Hence~$(K,\mathcal{O})$ is distal, respectively has  a distal expansion,  iff $\bm{K}$ has the corresponding property, by Fact~\ref{fac: biinterp distal}(1).
\end{remark}

\begin{fact}[Flenner {\cite[Propositions~4.3 and 5.1]{flenner2011relative}}]
\label{fac: Flenner's cell decomposition}
Suppose $K$ is henselian.

\begin{enumerate}
\item If $S\subseteq K$ is   $A$-definable in $\bm{K}$, for some parameter set $A$ in $\bm{K}$, then there are $a_{1},\ldots,a_{m} \in K\cap\acl(A)$ and an $\acl(A)$-definable
  $D\subseteq\RV_{\delta_1}\times\cdots\times\RV_{\delta_m}$, for some $\delta_1, \ldots, \delta_m$, such that
\[
S=\big\{ x\in K: \big(\!\rv_{\delta_1}(x-a_{1}),\ldots,\rv_{\delta_m}(x-a_{m})\big)\in D\big\}.
\]
\item $\RV_*$   is fully stably embedded \textup{(}i.e., the structure on $\RV_*$
induced from $\bm{K}$, with parameters, is precisely the one described
above\textup{)}.
\end{enumerate}
\end{fact}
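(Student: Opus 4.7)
The plan is to deduce both parts from a \emph{relative quantifier elimination} for henselian valued fields of characteristic zero (due to Pas in equicharacteristic zero, and extended to mixed characteristic by Flenner): in $\Th(\bm{K})$, every formula should be equivalent to a Boolean combination of $\mathcal{L}_{\RV_*}$-formulas applied to terms of the form $\rv_\delta\big(P(x)\big)$, where $P$ is a polynomial in the $K$-variables $x$ with parameters and $\delta$ ranges over our index set. Granting this, part~(1) follows by factoring the relevant polynomials over $K^{\mathrm{alg}}$, and part~(2) follows because field parameters can enter only through their $\rv_\delta$-images.

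To establish the relative QE, I would run a back-and-forth between two sufficiently saturated models $\bm{K}$, $\bm{K}'$ of $\Th(\bm{K})$, starting from a partial isomorphism $f$ whose restriction to the $\RV_*$-sorts extends to a full $\mathcal{L}_{\RV_*}$-elementary map. The nontrivial moves are: (i)~algebraic extensions of the field part, resolved by Hensel's lemma---the extension is pinned down by how $f$ must act on the $\rv_\delta$-images of a minimal polynomial and its derivatives; (ii)~immediate transcendental extensions, resolved by Kaplansky-type pseudo-convergence arguments using that a saturated henselian valued field realizes every pseudo-Cauchy sequence coming from a small subset; and (iii)~purely transcendental extensions, resolved by prescribing a consistent set of $\rv_\delta$-values and appealing to saturation of $\bm{K}'$.

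Given the relative QE, part~(1) proceeds as follows. A definable $S \subseteq K$ is given by some $\varphi(x;a)$ with $a$ a tuple from $\bm{K}$. Rewrite $\varphi(x;a)$ as a Boolean combination of $\RV_*$-conditions on the terms $\rv_{\delta_i}\big(P_i(x)\big)$ for polynomials $P_i \in (K \cap \dcl(A))[x]$. Factoring each $P_i$ over $K^{\mathrm{alg}}$ yields roots $a_{i,j} \in K \cap \acl(A)$; using the monoid structure on $\RV_{\delta_i}$ together with the refinement maps $\rv_{\delta \to \delta'}$, each $\rv_{\delta_i}(P_i(x))$ becomes a definable function of the $\rv_{\delta'}(x - a_{i,j})$ (modulo the leading coefficient of $P_i$, which lies in $\dcl(A)$). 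Consolidating the centers into a single list $a_1, \ldots, a_m$ and collecting the $\RV_*$-conditions into one set $D$ gives the statement of~(1). Part~(2) is more direct: a $\bm{K}$-definable $D \subseteq \RV_{\delta_1} \times \cdots \times \RV_{\delta_n}$ is given by some $\psi(\xi; b)$ with $b$ from $\bm{K}$; the relative QE rewrites $\psi$ as an $\mathcal{L}_{\RV_*}$-formula whose parameters come from the $\RV_*$-part of $\dcl(b)$.

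The main obstacle is the relative QE in mixed characteristic, where one must coherently handle the whole family $\{\RV_\delta : \delta \in v(p^{\mathbb{N}})\}$ together with its refinement maps, and verify that the back-and-forth step for immediate extensions carries through. In equicharacteristic zero, $\RV = \RV_0$ alone encodes enough residue information; in mixed characteristic the finer quotients $\mathcal{O}/\mathfrak{m}_\delta$ are needed to detect pseudo-limits of pseudo-Cauchy sequences whose breakpoints are controlled by $p$-adic ramification, which is exactly why the larger family of sorts must be built into $\bm{K}$ from the start.
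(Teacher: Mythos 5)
The first thing to say is that the paper itself contains no proof of this statement: it is quoted as a Fact directly from Flenner \cite{flenner2011relative} (Propositions 4.3 and 5.1), so there is no in-paper argument to compare against. Your outline — relative quantifier elimination down to the leading-term sorts $\RV_\delta$, then cell decomposition for one field variable and stable embeddedness of $\RV_*$ — is essentially the strategy of the cited source, not an alternative route.

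As a standalone proof, however, there is a genuine gap in the passage from the relative QE to part (1). You assert that factoring each $P_i$ over $K^{\mathrm{alg}}$ yields roots $a_{i,j}\in K\cap\acl(A)$ and that each $\rv_{\delta_i}\big(P_i(x)\big)$ is then a definable function of the $\rv_{\delta'}(x-a_{i,j})$. This is only immediate when $P_i$ splits over $K$, where multiplicativity of $\rv_\delta$ gives $\rv_\delta\big(P_i(x)\big)=\rv_\delta(c)\prod_j\rv_\delta(x-a_{i,j})$. In general the roots lie in $K^{\mathrm{alg}}\setminus K$, hence are not elements of $K\cap\acl(A)$, and $\rv_\delta$ of differences computed in a proper algebraic extension is not part of the structure $\bm{K}$ at all. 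Handling irreducible factors of degree at least $2$ is precisely the technical core of Flenner's proof of his Proposition 4.3: a Hensel/Newton-polygon argument showing that, after increasing $\delta$ within $v(p^{\N})$ in mixed characteristic and choosing suitable centers $a\in K$ algebraic over the coefficients, $\rv_\delta\big(P(x)\big)$ factors — piecewise on $\rv$-balls and annuli around those centers — through finitely many terms $\rv_{\delta'}(x-a)$. Without that lemma (or an equivalent), (1) does not follow from the relative QE. The relative QE itself is likewise only a heading in your sketch: the back-and-forth steps (i)--(iii), in particular the immediate-extension step in mixed characteristic where the whole family $\RV_\delta$, $\delta\in v(p^{\N})$, and the maps $\rv_{\gamma\to\delta}$ must be used coherently, is where the real work lies, as you yourself note. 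Two smaller points: in (1) the set $D$ must be $\acl(A)$-definable, which your consolidation step should track explicitly; and when $A$ contains $\RV$-sort parameters you cannot in general take the coefficients of your polynomials in $K\cap\dcl(A)$ — only in $K\cap\acl(A)$ — which is exactly why the statement is phrased with $\acl$. Your derivation of (2) from the relative QE is fine.
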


\noindent
Fact~\ref{fac: Flenner's cell decomposition}
is uniform in $K$; moreover, it
continues to hold if we add arbitrary additional structure on~$\RV_*$; see the discussion before~\cite[Pro\-po\-si\-tion~4.3]{flenner2011relative}.

{\samepage
\begin{rems} \mbox{} \label{rem:Flenner}

\begin{enumerate}
\item
Among the primitives of $\RV_*$ we have the projections $\rv_{\gamma \to \delta}$ ($\gamma \geq \delta$); thus in Fact~\ref{fac: Flenner's cell decomposition} we may   assume that $\delta_1 = \cdots = \delta_m =\delta$, after possibly modifying~$D$ and taking $\delta := \max \{ \delta_1,\dots,\delta_m \}$.

\item Note that for any  $x\in K$, $y\in K^\times$, we have
$\rv_\delta(x)=\rv_\delta(y)$ iff $v(x-y)>vy + \delta$; hence for any $z\in K$ and $x,y\in K\setminus \{ z \} $,
$\rv_\delta(x-z)=\rv_\delta(y-z)$ iff~$v(x-y)>v(y-z) + \delta$.
\end{enumerate}
\end{rems}}

\subsection{The finitely ramified case}
For later use   we analyze the kernels of the group morphisms
$$\rv_{\gamma\to\delta}\colon\RV_\gamma^\times\to\RV_\delta^\times\qquad (\gamma\geq\delta).$$
In the following well-known lemma and its corollary we  assume that we have a generator $\pi$  for the maximal ideal:~$\pi\mathcal{O} = \mathfrak m$.

\begin{lemma}\label{lem:ker 1-unit}
Suppose $n\geq 1$. Then the map
$$\varphi\colon 1+\pi^n\mathcal{O}\to \mathcal{O}/\pi\mathcal{O} = \k,\qquad
\varphi(1+\pi^na) := a+\pi\mathcal{O}\text{ for $a\in\mathcal O$}$$
is a surjective group morphism from the multiplicative abelian group $1+\pi^n\mathcal{O}$ to the  additive abelian group~$\k$ with kernel $1+\pi^{n+1}\mathcal{O}$. Thus, as abelian groups:
$${(1+\pi^n\mathcal{O})/(1+\pi^{n+1}\mathcal{O}) \cong \k}.$$
\end{lemma}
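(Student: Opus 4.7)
The plan is a direct verification, essentially a standard calculation for units in a valuation ring. I would proceed as follows.

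First, I would check that $\varphi$ is well-defined: since $\pi^n \neq 0$ and $K$ is a field, if $1+\pi^n a = 1+\pi^n a'$ with $a,a' \in \mathcal{O}$, then $a=a'$, so $\varphi$ assigns a unique value to each element of $1+\pi^n\mathcal{O}$. Surjectivity is immediate: any $a+\pi\mathcal{O} \in \k$ is hit by $1+\pi^n a \in 1+\pi^n\mathcal{O}$.

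Next, the multiplicative-to-additive homomorphism property. Given $a,b \in \mathcal{O}$, I would expand
\[
(1+\pi^n a)(1+\pi^n b) \;=\; 1 + \pi^n(a+b) + \pi^{2n}ab \;=\; 1 + \pi^n\big(a+b+\pi^n ab\big),
\]
which lies in $1+\pi^n\mathcal{O}$. Applying $\varphi$ gives $(a+b+\pi^n ab)+\pi\mathcal{O}$; since $n\geq 1$ we have $\pi^n ab \in \pi\mathcal{O}$, so this equals $(a+b)+\pi\mathcal{O} = \varphi(1+\pi^n a) + \varphi(1+\pi^n b)$, as required.

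Finally, the kernel computation. An element $1+\pi^n a$ lies in $\ker\varphi$ iff $a\in\pi\mathcal{O}$, i.e., $a=\pi c$ for some $c\in\mathcal{O}$, which is equivalent to $1+\pi^n a = 1+\pi^{n+1} c \in 1+\pi^{n+1}\mathcal{O}$. This gives $\ker\varphi = 1+\pi^{n+1}\mathcal{O}$, and the first isomorphism theorem yields the claimed isomorphism $(1+\pi^n\mathcal{O})/(1+\pi^{n+1}\mathcal{O}) \cong \k$. There is no real obstacle here; the only mildly subtle point is keeping track of the multiplicative versus additive group structures on the two sides.
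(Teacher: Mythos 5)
Your proof is correct and is the standard direct verification one would give here; the paper itself omits the proof, leaving it to the reader. Each of your four steps (well-definedness via cancellation of $\pi^n$ in the domain $K$, surjectivity, the multiplicative-to-additive homomorphism property using $\pi^{2n}ab\in\pi\mathcal O$, and the kernel computation $\ker\varphi=1+\pi^{n+1}\mathcal O$ followed by the first isomorphism theorem) is accurate, and nothing essential is missing.
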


\noindent
We leave the proof of Lemma~\ref{lem:ker 1-unit} to the reader;
an easy induction on $r$ based on this lemma yields:

\begin{cor}\label{cor:ker 1-unit}
Suppose $\k$ is finite. Then
$\abs{ (1+\pi^n\mathcal{O})/(1+\pi^{n+r}\mathcal{O}) } = \abs{\k}^{r}$ for each~$n\geq 1$ and $r\in\N$.
\end{cor}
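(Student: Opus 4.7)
The plan is to proceed by induction on $r$, with the case $r=0$ being immediate since the group $(1+\pi^n\mathcal{O})/(1+\pi^n\mathcal{O})$ is trivial and $\abs{\k}^0=1$. For the inductive step, I would assume that $\abs{(1+\pi^n\mathcal{O})/(1+\pi^{n+r}\mathcal{O})}=\abs{\k}^r$ holds for some $r\geq 0$ (and for every $n\geq 1$) and use Lemma~\ref{lem:ker 1-unit} to bump $r$ up by one.

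The key tool is the short exact sequence of abelian groups
\[
1 \to (1+\pi^{n+r}\mathcal{O})/(1+\pi^{n+r+1}\mathcal{O}) \to (1+\pi^n\mathcal{O})/(1+\pi^{n+r+1}\mathcal{O}) \to (1+\pi^n\mathcal{O})/(1+\pi^{n+r}\mathcal{O}) \to 1,
\]
where the first map is induced by inclusion $1+\pi^{n+r}\mathcal{O}\hookrightarrow 1+\pi^n\mathcal{O}$ and the second is the canonical surjection; exactness is routine. By Lemma~\ref{lem:ker 1-unit} (applied with $n+r\geq 1$ in place of $n$), the leftmost quotient is isomorphic to the finite group $\k$, hence has order $\abs{\k}$. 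By the inductive hypothesis the rightmost quotient has order~$\abs{\k}^r$. Multiplicativity of orders in a short exact sequence of finite abelian groups then yields $\abs{(1+\pi^n\mathcal{O})/(1+\pi^{n+r+1}\mathcal{O})}=\abs{\k}\cdot\abs{\k}^r=\abs{\k}^{r+1}$, completing the induction.

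There is no real obstacle: the only thing to check is that Lemma~\ref{lem:ker 1-unit} applies at each inductive step, and this is immediate because $n+r\geq 1$ whenever $n\geq 1$ and $r\geq 0$. The finiteness of $\k$ is used only to convert the isomorphisms into equalities of cardinalities.
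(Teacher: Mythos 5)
Your proof is correct and matches the paper's intended argument exactly: the paper states only that the corollary follows by ``an easy induction on $r$ based on'' Lemma~\ref{lem:ker 1-unit}, and the short exact sequence you write down is precisely the standard way to carry out that induction.
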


\noindent
We now obtain our desired result:

\begin{lemma}\label{lem:ker(rv) finite}
Suppose $K$ is finitely ramified  with finite residue field $\k = \mathcal{O}/\mathfrak m$ of characteristic~$p$. Then for each $n$, the kernel of the group morphism
\[
 \rv_{v(p^{n+1})\to v(p^n)}\colon \RV^\times_{v(p^{n+1})}\to\RV^\times_{v(p^n)}
 \]
  is finite.
\end{lemma}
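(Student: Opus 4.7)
The plan is to reduce to Corollary~\ref{cor:ker 1-unit} by exhibiting a uniformizer and identifying the relevant $\mathfrak m_\delta$'s as its powers. First I would observe that $\ch\k=p>0$ forces $v(p)>0$, and that finite ramification implies $\Gamma$ has a smallest positive element: otherwise the interval $[0,v(p)]$, which is assumed to be finite, would be infinite. Fix $\pi\in K^\times$ with $v(\pi)$ equal to this minimum; then $\mathfrak m=\pi\mathcal O$, which is the hypothesis needed by Lemma~\ref{lem:ker 1-unit} and Corollary~\ref{cor:ker 1-unit}.

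Next I would establish that $v(p)=ev(\pi)$ for some integer $e\geq1$ (the absolute ramification index). Indeed, the sequence $v(p),\ v(p)-v(\pi),\ v(p)-2v(\pi),\ldots$ lies in $[0,v(p)]$ as long as its terms remain nonnegative; by finiteness of that interval the sequence must reach $0$, say at step $e$. So $v(p)=ev(\pi)$, and similarly $v(p^n)=nev(\pi)$ for each $n\in\N$.

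The key intermediate step is the identification
\[
\mathfrak m_{v(p^n)}\ =\ \pi^{ne+1}\mathcal O\qquad(n\in\N).
\]
This uses that every $\gamma\in\Gamma\cap[0,v(p^n)]$ is a multiple of $v(\pi)$: if $kv(\pi)<\gamma<(k+1)v(\pi)$ for some $k$, then $\gamma-kv(\pi)$ would be a positive element of $\Gamma$ strictly smaller than $v(\pi)$, contradicting the minimality of $v(\pi)$. Hence $vx>nev(\pi)$ is equivalent to $vx\geq(ne+1)v(\pi)$, i.e., to $x\in\pi^{ne+1}\mathcal O$.

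Combining these observations, the kernel of $\rv_{v(p^{n+1})\to v(p^n)}$ is
\[
(1+\mathfrak m_{v(p^n)})/(1+\mathfrak m_{v(p^{n+1})})\ =\ (1+\pi^{ne+1}\mathcal O)/(1+\pi^{(n+1)e+1}\mathcal O),
\]
which by Corollary~\ref{cor:ker 1-unit} (applied with exponent $ne+1\geq1$ and increment $e$) has cardinality $|\k|^e$, finite because $\k$ is finite. There is no real obstacle here: once the uniformizer and ramification index are extracted from the finite-ramification hypothesis, the argument is bookkeeping.
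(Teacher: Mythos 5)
Your proof is correct and follows essentially the same route as the paper: extract a uniformizer $\pi$ and the ramification index $e$ from the finite-ramification hypothesis, identify the kernel as $(1+\pi^{ne+1}\mathcal O)/(1+\pi^{(n+1)e+1}\mathcal O)$, and invoke Corollary~\ref{cor:ker 1-unit}. You are a bit more explicit than the paper in justifying that $\mathfrak m$ is principal and that $v(p)$ is a multiple of $v(\pi)$, but these are the same observations the paper's one-line proof ``Take $\pi\in\mathcal O$ with $\mathfrak m=\pi\mathcal O$; then $p=\pi^e u$'' leaves implicit.
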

\begin{proof}
Take  $\pi\in\mathcal O$ with $\mathfrak m=\pi\mathcal O$; then   $p=\pi^eu$ where $e\in\N$, $e\geq 1$,  $u\in\mathcal O^\times$.
By Corollary~\ref{cor:ker 1-unit},
$$(1+p^n\mathfrak m)/(1+p^{n+1}\mathfrak m) = (1+\pi^{en+1}\mathcal O)/(1+\pi^{(en+1)+e}\mathcal O)$$
is finite, as required.
\end{proof}

\noindent
We also need additive versions of the results above. In the following lemma and its corollary,  we again assume that $\pi$ satisfies $\pi\mathcal O=\maxi$:

\begin{lemma}
The map
$$\pi^n a\mapsto a+\pi\mathcal O\colon \pi^n\mathcal{O}\to \mathcal{O}/\pi\mathcal{O} = \k$$
is a surjective group morphism from the additive abelian group $\pi^n\mathcal{O}$ to the  additive abelian group~$\k$ with kernel $\pi^{n+1}\mathcal{O}$. Thus
$\pi^n\mathcal{O}/\pi^{n+1}\mathcal{O}\cong \k$.
\end{lemma}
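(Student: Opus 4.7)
My plan is to verify each claimed property of the map $\varphi\colon \pi^n\mathcal{O}\to\k$, $\pi^n a\mapsto a+\pi\mathcal{O}$, in turn; this is the straightforward additive analog of the multiplicative lemma on $1$-units proved just above, and I expect no real obstacles.

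First I would check that $\varphi$ is well-defined. Since $K$ has characteristic zero (in particular, $K$ is a field and $\pi\neq 0$), multiplication by $\pi^n$ is injective on $\mathcal{O}$, so every element of $\pi^n\mathcal{O}$ has a unique representative of the form $\pi^n a$ with $a\in\mathcal{O}$. Hence the assignment $\pi^n a\mapsto a+\pi\mathcal{O}$ is unambiguous. Additivity is immediate from $\pi^n a + \pi^n b = \pi^n(a+b)$, so $\varphi$ is a group morphism of additive abelian groups.

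Next, surjectivity is clear, since for each $a\in\mathcal{O}$ the coset $a+\pi\mathcal{O}\in\k$ is the image of $\pi^n a\in\pi^n\mathcal{O}$. For the kernel, given $\pi^n a\in\pi^n\mathcal{O}$ with $a\in\mathcal{O}$, we have $\varphi(\pi^n a)=0$ iff $a\in\pi\mathcal{O}$, which is in turn equivalent to $\pi^n a\in\pi^{n+1}\mathcal{O}$; thus $\ker\varphi=\pi^{n+1}\mathcal{O}$. The first isomorphism theorem then yields the stated isomorphism $\pi^n\mathcal{O}/\pi^{n+1}\mathcal{O}\cong\k$, completing the proof.
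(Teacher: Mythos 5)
Your proof is correct and is exactly the routine verification the paper has in mind (the lemma is stated without proof as the evident additive analog of Lemma~\ref{lem:ker 1-unit}): well-definedness via cancellation of $\pi^n$ in the domain $\mathcal{O}$, additivity, surjectivity, kernel $=\pi^{n+1}\mathcal{O}$, and the first isomorphism theorem. One tiny quibble: $\pi\neq 0$ is not a consequence of $\operatorname{char}K=0$ but of the (implicit) nontriviality of the valuation, i.e.\ $\maxi=\pi\mathcal{O}\neq\{0\}$; this does not affect the argument.
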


\begin{cor}
Suppose $\k$ is finite. Then
$\abs{ \pi^n\mathcal{O}/\pi^{n+r}\mathcal{O} } = \abs{\k}^{r}$ for each $r\in\N$.
\end{cor}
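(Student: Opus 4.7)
The plan is to proceed by induction on $r$, entirely parallel to the proof of Corollary~\ref{cor:ker 1-unit} but in the additive setting provided by the immediately preceding lemma. The base case $r=0$ is trivial, since $\pi^n\mathcal{O}/\pi^n\mathcal{O}$ is the zero group, which has cardinality $\abs{\k}^0 = 1$.

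For the inductive step, suppose the statement holds for a given $r$ and \emph{every} $n$; we prove it for $r+1$. The key observation is that for any $n$, the natural group morphisms fit into a short exact sequence of additive abelian groups
\[
0 \to \pi^{n+1}\mathcal{O}/\pi^{n+r+1}\mathcal{O} \to \pi^n\mathcal{O}/\pi^{n+r+1}\mathcal{O} \to \pi^n\mathcal{O}/\pi^{n+1}\mathcal{O} \to 0,
\]
where the first map is induced by inclusion and the second by the natural surjection. The rightmost term has cardinality $\abs{\k}$ by the preceding lemma, while the leftmost term has cardinality $\abs{\k}^r$ by the inductive hypothesis (applied with $n+1$ in place of $n$). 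Multiplicativity of cardinalities in a short exact sequence of finite abelian groups then yields
\[
\abs{\pi^n\mathcal{O}/\pi^{n+r+1}\mathcal{O}} \ = \ \abs{\k}^r \cdot \abs{\k} \ = \ \abs{\k}^{r+1},
\]
completing the induction.

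There is no substantive obstacle: the argument is a routine unwinding of the preceding lemma via a short exact sequence, and the finiteness of $\k$ is used only to make the cardinalities finite so that the multiplicative relation in a short exact sequence applies without ambiguity.
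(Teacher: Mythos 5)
Your proof is correct and is precisely the ``easy induction on $r$'' that the paper alludes to (the paper gives no explicit proof, stating only that it follows from the preceding lemma in the same way Corollary~\ref{cor:ker 1-unit} does). The short exact sequence you write down is the natural formalization of that inductive step, and the argument is complete.
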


\noindent
Now given a prime $p$ and some $n$ we let $R_{p^n}:=\mathcal O/p^n\maxi$ (so $R_{p^0}=\k$). In the same way as Corollary~\ref{cor:ker 1-unit} gave rise to  Lemma~\ref{lem:ker(rv) finite}, from the previous corollary we obtain:

\begin{lemma}\label{lem:ker(resN) finite}
Suppose $K$ is finitely ramified  with finite residue field of characteristic~$p$. Then for each $n$, the kernel of the natural surjective group morphism~$R_{p^{n+1}}\to R_{p^n}$ is finite. \textup{(}Hence $R_{p^n}$ is finite for each $n$.\textup{)}
\end{lemma}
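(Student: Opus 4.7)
The plan is to mimic the argument for Lemma~\ref{lem:ker(rv) finite}, using the additive corollary that immediately precedes the statement. First I would choose a uniformizer $\pi \in \mathcal O$ with $\maxi = \pi\mathcal O$; such a $\pi$ exists because $K$ is finitely ramified with finite residue field, so in particular $K$ is a discretely valued field (its value group, being finitely generated and torsion-free of positive rank, is isomorphic to $\Z$). Write $p = \pi^e u$ with $e \geq 1$ and $u \in \mathcal O^\times$.

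Next I identify the kernel of the natural surjection $R_{p^{n+1}} = \mathcal O/p^{n+1}\maxi \twoheadrightarrow \mathcal O/p^n\maxi = R_{p^n}$, which is manifestly $p^n\maxi / p^{n+1}\maxi$. In terms of the uniformizer, $p^n\maxi = \pi^{en+1}\mathcal O$ and $p^{n+1}\maxi = \pi^{en+e+1}\mathcal O$, so this kernel equals $\pi^{en+1}\mathcal O/\pi^{en+e+1}\mathcal O$. By the additive corollary just stated (cardinality of $\pi^n\mathcal O/\pi^{n+r}\mathcal O$ equals $|\k|^r$ when $\k$ is finite), this kernel has size $|\k|^e$, which is finite.

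Finally, the parenthetical claim follows by induction on $n$: $R_{p^0} = \k$ is finite by assumption, and if $R_{p^n}$ is finite, then $R_{p^{n+1}}$ is an extension of the finite group $R_{p^n}$ by the finite kernel we just computed, hence finite. I do not anticipate any real obstacle here; the statement is essentially the additive analogue of Lemma~\ref{lem:ker(rv) finite}, and the only minor point worth noting is the justification that a uniformizer exists, which is immediate from the finite ramification hypothesis together with finiteness of $\k$ (as recorded in Remark~\ref{rem:fin ram}).
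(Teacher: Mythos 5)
Your proof follows the same route the paper has in mind: the paper states the lemma without a displayed proof, saying only that it follows from the additive corollary in the same way Lemma~\ref{lem:ker(rv) finite} followed from Corollary~\ref{cor:ker 1-unit}, and your central computation, $p^n\maxi/p^{n+1}\maxi = \pi^{en+1}\mathcal O/\pi^{en+e+1}\mathcal O$ of size $|\k|^e$, is exactly that.

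However, your justification for the existence of a uniformizer is not correct. Finite ramification together with finite residue field does \emph{not} imply that $K$ is discretely valued, and the value group of $K$ need not be finitely generated; for instance, $K = \Q_p(\!(t)\!)$ with the composite of the $t$-adic and $p$-adic valuations is finitely ramified with residue field $\mathbb F_p$, yet its value group is $\Z\times\Z$ (lexicographically ordered), which is not cyclic. Also, Remark~\ref{rem:fin ram} produces a cyclic value group only for a certain \emph{specialization} $\dot K$ of $K$, not for $K$ itself. What is actually true, and what you need, is this: since $\ch\k = p$ forces $v(p) > 0$, and finite ramification makes the interval $[0, v(p)]$ in $\Gamma$ finite, the smallest nonzero element $\gamma_1$ of that interval is in fact a smallest positive element of all of $\Gamma$; hence any $\pi\in\mathcal O$ with $v(\pi)=\gamma_1$ satisfies $\maxi = \pi\mathcal O$. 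Moreover, finiteness of $[0, v(p^n)]$ for each $n$ shows the convex subgroup generated by $v(p)$ is cyclic with generator $\gamma_1$, so $v(p) = e\gamma_1$ for some $e\geq 1$ and $p = \pi^e u$ with $u\in\mathcal O^\times$. With this correction to the opening paragraph, the rest of your argument goes through as written.
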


\subsection{NIP for $\protect\RV_*$}\label{sec:NIP for RV}
{\it In this subsection $K$ is henselian, and the structure~$\bm{K}$ and its reduct~$\RV_*$ are as introduced in Section~\ref{sec:QE hens val}.} We allow   $\RV_*$  to be equipped with additional structure, and equip its expansion~$\bm{K}$ with the corresponding additional structure.
Recall that then, by part (2) of Fact~\ref{fac: Flenner's cell decomposition} and the remark following it, $\RV_*$ is fully stably embedded in $\bm{K}$. As a warm-up to the proof of Proposition~\ref{prop: reducing K to RV} below, we show a version of Fact~\ref{fac: Belair}:


\begin{prop}\label{prop:RV NIP}
Suppose $\k$ is finite or of characteristic zero. Then
$\bm{K}$ is NIP if and only if $K$ is finitely ramified and $\RV_*$ is NIP.
\end{prop}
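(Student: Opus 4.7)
The forward direction should be straightforward. I would first observe that $\RV_*$ is fully stably embedded in $\bm{K}$ by Fact~\ref{fac: Flenner's cell decomposition}(2), so NIP of $\bm{K}$ automatically descends to $\RV_*$. For finite ramification, when $\ch\k = 0$ the condition is automatic (as noted in the Introduction); otherwise $\k$ is finite (by the hypothesis of the proposition) of some positive characteristic, and I would invoke the bi-interpretability of $\bm{K}$ with $(K,\mathcal{O})$ from Remark~\ref{rem:bmK and K} to transfer NIP to $(K,\mathcal{O})$, then apply Fact~\ref{fac: NIP Hens fields are fin ramified} to conclude finite ramification.

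For the converse, assuming $K$ is finitely ramified and $\RV_*$ is NIP, my plan is to use Fact~\ref{fac: Flenner's cell decomposition}(1) to rewrite any formula of $\bm{K}$ in terms of $\mathcal L_{\RV_*}$-formulas applied to $\rv_\delta$-images. Concretely, given a partitioned formula $\varphi(x; y)$ with $x$ a single field variable, I would apply the Fact uniformly---using compactness over the parameter $y$---to obtain a finite list of templates, each of the form
\[
\psi\!\left(\rv_{\delta_1}(x - a_1(y)), \ldots, \rv_{\delta_m}(x - a_m(y));\, \tau(y)\right),
\]
where $\psi$ is an $\mathcal L_{\RV_*}$-formula, the $a_i(\cdot)$ are (possibly multi-valued) definable functions taking values in $K\cap\acl^{\eq}(y)$, and $\tau(\cdot)$ is an $\RV_*$-valued definable function, such that for every $y$ at least one template correctly describes the set $\varphi(\bm{K},y)\subseteq K$. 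If $\varphi(x;y)$ were to witness IP in $\bm{K}$, a pigeonhole argument would isolate one such template along an infinite IP-witnessing sequence of parameters $(y_i)$; absorbing the finitely many algebraic choices of the $a_i(y)$ (by working in $\bm{K}^{\eq}$ and using preservation of NIP under finite algebraic covers and imaginaries) would then produce an IP-configuration for $\psi$ inside $\RV_*$, contradicting the hypothesis. The case where $x$ is a tuple with components in various sorts reduces to the single-field-variable case by iterating over the field components, the $\RV_*$-components contributing only NIP structure by assumption.

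The principal obstacle will be handling the Flenner parameters $a_i(y)\in K\cap\acl^{\eq}(y)$: they are field elements rather than $\RV_*$-elements, so a careful use of $\bm{K}^{\eq}$ (or an explicit finite-cover argument) is required to code them on the $\RV_*$-side of the reduction without breaking the NIP transfer. A secondary complication in the positive residue characteristic case is that the exponents $\delta_j$ may a priori vary with $y$; however, finite ramification restricts these to the discrete set $v(p^{\mathbb N})$, and Lemma~\ref{lem:ker(rv) finite}, together with the projective system of the $\RV_{v(p^n)}$ under the maps $\rv_{\gamma\to\delta}$, ensures that transitioning between these sorts introduces only finite kernels, so no infinite-kernel pathology can intrude to spoil NIP.
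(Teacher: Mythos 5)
Your forward direction is correct and matches the paper's: stable embeddedness of $\RV_*$ gives NIP of $\RV_*$, and combining Remark~\ref{rem:bmK and K} with Fact~\ref{fac: NIP Hens fields are fin ramified} gives finite ramification when $\k$ is finite (the equicharacteristic-zero case being automatic).

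For the converse, however, your route diverges from the paper's and contains a gap that your "coding" strategy does not repair. You correctly identify the obstacle---the Flenner centers $a_1(y),\dots,a_m(y)$ are \emph{field} elements---but appealing to $\bm{K}^{\eq}$ or a finite-cover argument does not make them vanish into $\RV_*$. The difficulty is not that these elements are algebraic of unbounded multiplicity; it is that $\rv_{\delta_j}\!\big(a-a_j(y)\big)$ mixes the object $a$ with the parameter $y$ through field subtraction, so you never obtain a formula in which the object-tuple and the parameter-tuple separately live in $\RV_*$. There is no finite cover of $\RV_*$ whose total space contains these centers, so the IP configuration does not transfer.

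The paper avoids this by not working uniformly in the parameter $y$ at all. Instead it fixes a single IP witness: an indiscernible sequence $(a_i)_{i\in\Z}$ of \emph{field} elements and one definable set $S$ with $a_i\in S$ iff $i$ even, applies Fact~\ref{fac: Flenner's cell decomposition} once to obtain fixed centers $b_1,\dots,b_m$ and a fixed level $\delta$, and then runs the trichotomy of Lemma~\ref{lem: pseudo-conv or fan} (pseudocauchy / reverse pseudocauchy / fan) on the indiscernible sequence. This is the step your proposal is missing. In the pseudocauchy case one adjoins a pseudolimit $a_\infty$ and uses the finite-ramification estimate \eqref{eq:v(ainfty-ai)} (the corollary to Lemma~\ref{lem: cutting a pseudo-convergent sequence}) to show that for each $k$, eventually $\rv_\delta(a_i-b_k)$ is either the constant $\rv_\delta(a_\infty-b_k)$ or equals $r_i:=\rv_\delta(a_i-a_\infty)$; in the fan case, $\k$ must be infinite, hence $\ch\k=0$ and $\delta=0$, and $\rv(a_i-b_k)$ is a constant or $r_i\oplus s_k$. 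Either way the membership test for $S$ becomes an $\mathcal L_{\RV_*}$-formula in the single $\RV_*$-object $r_i$ and fixed $\RV_*$-parameters, giving IP inside $\RV_*$. This rewriting along an indiscernible sequence is the actual role of finite ramification. Your invocation of Lemma~\ref{lem:ker(rv) finite} and "finite kernels between the $\RV_{v(p^n)}$" is misdirected: that lemma is used in the proof of Proposition~\ref{prop: reducing RV to k and gamma} (the distality step), not here, and the varying of $\delta_j$ with $y$ is a phantom problem created by your uniform-in-$y$ setup, which the paper sidesteps by fixing the parameter from the start.
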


\noindent
Here the forward direction is obvious by Remark~\ref{rem:bmK and K}, Fact~\ref{fac: NIP Hens fields are fin ramified}, and the fact that NIP is preserved under reducts.
The proof of the converse relies on an analysis of indiscernible sequences in valued fields,
with the distinction of cases similar to~\cite{chernikov2016henselian}
or~\cite[Section~7.2]{Chernikov:2012uq}. (A similar case distinction is at the heart of the proof of Proposition~\ref{prop: reducing K to RV}.)
Given a linearly ordered set $I$ we   let~$I_{\infty}:=I\cup\{\infty\}$ where~$\infty$ is a new   element, equipped
with the extension of the ordering~$\leq$ of $I$ to the linear ordering on $I_{\infty}$, also denoted by $\leq$, such that $i<\infty$ for all $i\in I$. Recall that $I^*$ denotes the  set $I$ equipped with the reversed ordering~$\geq$.
In the  two lemmas   and their corollary below we let~$(a_{i})_{i\in I}$
be an indiscernible sequence of elements of the field sort in $\bm{K}$ where $I$ does not have a largest or smallest element.
For the first lemma see \cite{chernikov2010indiscernible}. (Also compare with Lemma~\ref{indiscernibleIsPCSeq} above.)

\begin{lemma}
\label{lem: pseudo-conv or fan}
Exactly one of the following cases occurs:

\begin{enumerate}
\item $v(a_i-a_j)<v(a_j-a_k)$ for all $i<j<k$ in $I$ \textup{(}we say that $(a_i)$  is \emph{pseudocauchy}\textup{)};
\item $v(a_i-a_j)>v(a_j-a_k)$ for all $i<j<k$ in $I$  \textup{(}so the sequence $(a_i)_{i\in I^*}$ is pseudocauchy\textup{)}; or
\item $v(a_i-a_j)=v(a_j-a_k)$ for all $i<j<k$ in $I$ \textup{(}we refer to such a sequence~$(a_i)$ as a \emph{fan}\textup{)}.
\end{enumerate}
\end{lemma}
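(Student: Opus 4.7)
The plan is elementary: I would deduce the trichotomy directly from indiscernibility of $(a_i)_{i\in I}$, once I observe that each of the three ternary relations $v(x-y)<v(y-z)$, $v(x-y)=v(y-z)$, and $v(x-y)>v(y-z)$ is $\emptyset$-definable in $\bm{K}$. Fix any $i_1<i_2<i_3$ in $I$; exactly one of the three comparisons holds for $v(a_{i_1}-a_{i_2})$ versus $v(a_{i_2}-a_{i_3})$. Since $I$ has no endpoints, every strictly increasing triple $(i,j,k)$ in $I$ has the same order type as $(i_1,i_2,i_3)$, so by indiscernibility the tuple $(a_i,a_j,a_k)$ realizes the same $\emptyset$-type in $\bm{K}$ as $(a_{i_1},a_{i_2},a_{i_3})$. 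The same one of the three comparisons therefore holds for every $i<j<k$, and since the three are pairwise incompatible and exhaustive, exactly one of cases (1)--(3) of the statement occurs.

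It remains to verify that the comparisons are $\emptyset$-definable. Already in the reduct $\RV_*$, the preorder $v_{\rv}(x)\leq v_{\rv}(y)$ on $\RV$ is $\emptyset$-definable via $\oplus_0$ as noted in Remark~\ref{rem: basic definability in RV}, and the valuation $v\colon K\to\gi$ factors through $\rv_0$ as $v=v_{\rv}\circ\rv_0$ (with the absorbing element $0\in\RV$ understood to have $v_{\rv}$-value~$\infty$). Alternatively, via the bi-interpretation of $\bm{K}$ with $(K,\mathcal O)$ from Remark~\ref{rem:bmK and K}, the relation $v(x)\leq v(y)$ is expressed by $y=0 \,\vee\, \exists u\,(u\in\mathcal O \wedge x = y\cdot u)$. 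Either route yields the required $\emptyset$-definability, and so the argument really presents no significant obstacle; this is why the lemma is cited as a standard fact in the analysis of indiscernible sequences in valued fields.
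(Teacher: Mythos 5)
Your argument is correct and is indeed the standard, elementary one; the paper itself does not supply a proof here but merely cites \cite{chernikov2010indiscernible} and points to the analogy with Lemma~\ref{indiscernibleIsPCSeq}. Your observation that all three comparisons are $\emptyset$-definable and that indiscernibility uniformizes which one holds across all increasing triples is exactly the right mechanism, and once the trichotomy on $\Gamma_\infty$ is invoked the proof is complete. It is worth noting that this lemma is genuinely easier than its Hahn-space counterpart Lemma~\ref{indiscernibleIsPCSeq}: there the ``fan'' alternative must be \emph{ruled out} (using the Hahn axiom and a pigeonhole argument), whereas here it is simply admitted as case (3), so no analogous work is needed.

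One small slip: your displayed formula for $v(x)\leq v(y)$ is wrong as written. The clause $\exists u\,(u\in\mathcal O\wedge x=y\cdot u)$ (for $y\neq 0$) expresses $v(x)\geq v(y)$, not $v(x)\leq v(y)$; and the leading disjunct $y=0$ also misbehaves when $x\neq 0$. The clean formula is simply $\exists u\,(u\in\mathcal O\wedge y=x\cdot u)$, which correctly handles all cases including $x=0$ or $y=0$ (no extra disjunct needed). This is a harmless transcription error that does not affect the validity of your argument, since $\emptyset$-definability of the relation is all that is used.
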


\noindent
Note that if $(a_i)_{i\in I}$ is pseudocauchy and $a_{\infty}\in K$ is such that~$(a_i)_{i\in I_\infty}$
is indiscernible, then $(a_i)_{i\in I_\infty}$ is also
pseudocauchy, and similarly with ``fan'' in place of ``pseudocauchy''.

\begin{lemma}
\label{lem: cutting a pseudo-convergent sequence}
Suppose $(a_i)_{i\in I}$ is pseudocauchy, and let  $a_{\infty}\in K$ such that~$(a_i)_{i\in I_\infty}$
is indiscernible.
Then the sequence $i\mapsto v(a_\infty-a_i)$ is strictly increasing.
\end{lemma}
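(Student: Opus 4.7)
My plan is to first upgrade the pseudocauchy condition from $I$ to $I_\infty$, then use an ultrametric triangle inequality computation to express $v(a_\infty - a_i)$ in terms of $v(a_i - a_j)$ for any $j > i$ in $I$, and finally read off strict monotonicity from the pseudocauchy property on $I$.

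More precisely, the first step is to observe that $(a_i)_{i\in I_\infty}$ is itself pseudocauchy: since $I$ has no largest element, given any pair $i<j$ in $I$ we may pick $k \in I$ with $k > j$, and then by pseudocauchy of $(a_i)_{i\in I}$ we have $v(a_i - a_j) < v(a_j - a_k)$. Because the triples $(i,j,k)$ and $(i,j,\infty)$ have the same order type in $I_\infty$, indiscernibility of $(a_i)_{i\in I_\infty}$ forces $v(a_i - a_j) < v(a_j - a_\infty)$.

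For the second step, fix $i \in I$ and pick any $j \in I$ with $j > i$. By the step just carried out, $v(a_j - a_i) < v(a_\infty - a_j)$. Applying the ultrametric inequality to the identity
\[
a_\infty - a_i \ =\ (a_\infty - a_j) + (a_j - a_i),
\]
the minimum on the right is attained uniquely at the second term, so $v(a_\infty - a_i) = v(a_j - a_i)$. In particular this value does not depend on the choice of $j > i$.

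The third step is then immediate: given $i < j$ in $I$, pick $k \in I$ with $k > j$; by the previous paragraph $v(a_\infty - a_i) = v(a_j - a_i)$ and $v(a_\infty - a_j) = v(a_k - a_j)$, and the original pseudocauchy hypothesis on $I$ yields $v(a_j - a_i) < v(a_k - a_j)$, hence $v(a_\infty - a_i) < v(a_\infty - a_j)$. There is no real obstacle in this argument; the only point requiring even a moment of care is the transfer of pseudocauchy from $I$ to $I_\infty$ via indiscernibility, which uses that $I$ has no largest element so that for any $i < j < \infty$ we can squeeze some $k \in I$ with $j < k$ between $j$ and $\infty$.
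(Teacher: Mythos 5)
Your proof is correct and follows essentially the same route as the paper's: the paper also first notes (just before the lemma) that $(a_i)_{i\in I_\infty}$ remains pseudocauchy by indiscernibility, then for $i<j$ in $I$ uses $v(a_j-a_i)<v(a_\infty-a_j)$ and the ultrametric identity $v(a_\infty-a_i)=v\bigl((a_\infty-a_j)+(a_j-a_i)\bigr)=v(a_j-a_i)$ to conclude $v(a_\infty-a_i)<v(a_\infty-a_j)$. Your third step re-derives the final inequality via an auxiliary $k>j$, which is a harmless slight detour, but the substance is identical.
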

\begin{proof}
Since   $(a_i)_{i\in I_\infty}$ remains pseudocauchy, if $i<j$ are in $I$,
then $v(a_j-a_i) < v(a_\infty-a_j)$ and so
\[v(a_\infty-a_i) = v\big(a_\infty-a_j + (a_j-a_i)\big) = v(a_j-a_i) < v(a_\infty-a_j).\qedhere\]
\end{proof}

\begin{cor}
Suppose $K$ is finitely ramified. Then
with $(a_i)_{i\in I}$ and $a_{\infty}$ as in Lemma~\ref{lem: cutting a pseudo-convergent sequence},
\begin{equation} \label{eq:v(ainfty-ai)}
v(a_\infty - a_i) >  v(a_\infty - a_j) + \delta\quad\text{ for all $\delta$ and $i > j$ in $I$.}
\end{equation}
\end{cor}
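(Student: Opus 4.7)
The plan is to reduce to the previous lemma and exploit finite ramification. When $\delta = 0$ (always the case if $\ch \k = 0$), the conclusion is just the strict increase asserted by Lemma~\ref{lem: cutting a pseudo-convergent sequence}. The remaining case is $\ch\k = p > 0$ and $\delta = v(p^n)$ with $n \geq 1$, and here I would proceed by contradiction.

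Assume that $v(a_\infty - a_i) \leq v(a_\infty - a_j) + \delta$ for some pair $i > j$ in $I$. The key observation is that the condition $v(a_\infty - a_i) \leq v(a_\infty - a_j) + v(p^n)$ is an $\emptyset$-definable property of the triple $(a_j, a_i, a_\infty)$ in $\bm{K}$ (since $v(p^n)$ is itself $\emptyset$-definable). As every such triple has the same order type $(j < i < \infty)$ in $I_\infty$, the indiscernibility of $(a_i)_{i \in I_\infty}$ forces the condition to hold for \emph{every} pair $i > j$ in $I$.

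Now I would fix some $j \in I$ and consider the map $i \mapsto v(a_\infty - a_i)$ restricted to the infinite set $I^{>j}$. By Lemma~\ref{lem: cutting a pseudo-convergent sequence} this map is strictly increasing, while by the previous step its image is contained in the translate $v(a_\infty - a_j) + S$, where $S := \{\gamma \in \Gamma : 0 < \gamma \leq v(p^n)\}$ is finite by the hypothesis that $K$ is finitely ramified. An infinite strictly increasing sequence inside a finite set is absurd, producing the required contradiction. No step is a real obstacle: the only point that deserves emphasis is that  indiscernibility is used to upgrade a single bad pair to all pairs, so that the strict monotonicity from the previous lemma can be played against the finiteness of $S$.
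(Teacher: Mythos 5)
Your proposal is correct and is essentially the same argument as the paper's: assume the inequality fails for one pair $i>j$, use indiscernibility over $\delta\in\dcl(\emptyset)$ to propagate the failure to all pairs, and then play the strict monotonicity of $i\mapsto v(a_\infty-a_i)$ from Lemma~\ref{lem: cutting a pseudo-convergent sequence} against the finiteness of the interval $\{\gamma\in\Gamma:0\leq\gamma\leq\delta\}$ guaranteed by finite ramification. The only cosmetic difference is that you phrase the contradiction as an infinite strictly increasing sequence inside a finite set, whereas the paper says the interval $[v(a_\infty-a_j),v(a_\infty-a_j)+\delta]$ would have to be infinite; these are the same observation.
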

\begin{proof}
Assume that we have some $\delta$ such that
$$v(a_\infty - a_i) \leq v(a_\infty - a_j) + \delta\quad\text{ for some  $i>j$  in $I$;}$$
then by $\delta$-indiscernibility  (as $\delta \in \dcl(\emptyset)$),
$$v(a_\infty - a_i) \leq v(a_\infty - a_j) + \delta\quad\text{ for all $i > j $ in $I$,}$$
so  for each $j$ the interval $\big[v(a_\infty - a_j), v(a_\infty - a_j) + \delta \big]$ in $\Gamma$ is infinite, contradicting finite ramification.
\end{proof}

\noindent
Now  suppose $\k$ is finite or of characteristic zero, $K$ is finitely ramified, and
$\RV_*$ is NIP. To show that~$\bm{K}$ is NIP we may assume that~$\bm{K}$ is a monster model of its theory.
Suppose $\bm{K}$ is not NIP. Then there are an indiscernible sequence~$(a_{i})_{i\in\Z}$ of elements of the field sort of $\bm{K}$
and a definable~$S\subseteq K$ such that~$i\in\Z$ is even iff $a_i\in S$.
By Fact~\ref{fac: Flenner's cell decomposition} and the remark following it we may choose
 $b=(b_{1},\ldots,b_{m}) \in K^m$, some~$\delta$, as well as  a
definable subset $D$ of $\RV_\delta^m$,    such that  for $a\in K$:
\[
a\in S \qquad\Longleftrightarrow\qquad  \big(\!\rv_{\delta}(a-b_{1}),\ldots,\rv_{\delta}(a-b_{m})\big)\in D.
\]
By Lemma~\ref{lem: pseudo-conv or fan}, one of the following three cases occurs.

\case[1]{$(a_{i})_{i\in\mathbb{Z}}$ is pseudocauchy.}
Using saturation take some $a_{\infty} \in K$   such that~$(a_{i})_{i\in \mathbb{Z}_\infty}$
is indiscernible. Let $i$, $j$ range over $\Z$, and let $k\in\{1,\dots,m\}$.
Suppose first that $v(b_k-a_\infty)>v(a_\infty-a_j)$ for all $j$.
Using~\eqref{eq:v(ainfty-ai)} we then obtain~$v(b_k-a_{\infty})>v(a_{\infty}-a_{j}) + \delta$ and hence $\rv_\delta (b_k-a_{j})=\rv_\delta (a_{\infty}-a_{j})$, for all $j$.
Now suppose $v(b_k-a_\infty) \leq v(a_\infty-a_j)$ for some $j$;
then $v(b_k-a_\infty) +\delta < v(a_\infty-a_i)$ for all $i>j$, and hence
  $\rv_\delta (b_k-a_{i})=\rv_\delta (b_k-a_{\infty})$ for  $i>j$.
Permuting the components of $b$, we can thus arrange that we have some $l\in\{1,\dots,m+1\}$ and some $j$ such that for $i>j$  and $k = 1,\dots,m$ we have
$$\rv_\delta (b_k-a_{i}) = \begin{cases}
\rv_\delta (a_{\infty}-a_{i}) & \text{if $k<l$} \\
\rv_\delta (b_k-a_{\infty}) & \text{otherwise.}
\end{cases}$$
Put $r_i:=\rv_\delta (a_i-a_{\infty})$ for $i>j$ and
$s_k:=\rv_\delta (a_{\infty}-b_k)$ for $k=l,\dots,m$.
The sequence $(r_i)_{i>j}$ is indiscernible, and for $i>j$ we have
$$ (r_i,\dots,r_i,s_l,\dots,s_m ) \in D \qquad\Longleftrightarrow\qquad \text{$i$ is even,}$$
in contradiction with $\RV_*$ being NIP.

\case[2]{$(a_{i})_{i\in\mathbb{Z}^*}$
is pseudocauchy.} Then we
apply Case~1 to the sequence $(a_{-i})_{i\in\mathbb{Z}}$ in place of $(a_i)_{i\in\Z}$.

\case[3]{$(a_{i})_{i\in\mathbb{Z}}$
is a fan.}
 Note that then $\k$ necessarily is infinite, hence $\ch\k=0$ by hypothesis, so $\delta=0$.
Let $i$, $j$ range over $\Z$ and $k$ over $\{1,\dots,m\}$, and
let~$\gamma$ be the common value of $v(a_{i}-a_{j})$
for all $i\neq j$.
Let $c\in K$ and $j$ be given; if~$\gamma<v(c-a_j)$, then $\gamma=v(c-a_i)$ for all $i\neq j$,
whereas if $\gamma>v(c-a_j)$, then~$v(c-a_i)=v(c-a_j)<\gamma$ for each $i\neq j$.
Hence we can choose an even $j$ such that for each $k$ we either have
$\gamma>v(b_k-a_i)$ for all $i\geq j$ or $\gamma=v(b_k-a_i)$ for all~$i \geq j$.
Now if
 $\gamma>v(b_k-a_j)$, then  $\rv(b_k-a_i)=\rv(b_k-a_j)$ for $i>j$, whereas
if $\gamma=v(b_k-a_{j})$, then~$\rv(b_k-a_i)=\rv(b_k-a_{j})\oplus\rv(a_{j}-a_i)$ for   $i>j$.
Hence by reindexing the components of $b$ we can arrange that
 we have some $l\in\{1,\dots,m+1\}$  such that
 with $r_i := \rv (a_{i}-a_j)$ for $i>0$ and~$s_k:=\rv(a_j-b_k)$ for $k=1,\dots,m$,
 for~$i>j$ and $k=1,\dots,m$:
 $$\rv(a_{i}-b_k) = \begin{cases}
r_i\oplus s_k	& \text{if $k<l$} \\
s_k		& \text{otherwise.}
\end{cases}$$
The sequence $(r_i)_{i>j}$ is indiscernible, and for $i>j$ we have
$$ (r_i\oplus s_1,\dots,r_i\oplus s_{l-1},s_l,\dots,s_m ) \in D \qquad\Longleftrightarrow\qquad \text{$i$ is even,}$$
in contradiction with $\RV_*$ being NIP. \qed

\subsection{A quantifier elimination in equicharacteristic zero}\label{sec:stronger Flenner}
We use the quantifier elimination result for pure short exact sequences from Section~\ref{sec: Distality and SES} to prove a variant of the QE result of Flenner, already used earlier, in the  equicharacteristic zero case.
As above we extend the valuation $v\colon K^\times\to\Gamma$ to a monoid morphism~$K\to\Gamma_\infty$, also denoted by $v$,
with $v(0)=\infty$. Recall that $\Gamma_\infty=\Gamma\cup\{\infty\}$ where~$\gamma<\infty$ for all $\gamma\in\Gamma$ and
$\gamma+\infty=\infty+\gamma=\infty$ for all $\gamma\in\Gamma_\infty$. We also extend the residue morphism
$$a\mapsto \res(a):=a+\maxi\colon\vr\to \k=\mathcal O/\mathfrak m$$
to $K$ by setting $\res(a):=0$ for $a\in K\setminus\vr$. {\it In the rest of this subsection $\k$  has characteristic zero.}\/

\medskip\noindent We   consider $K$ as a three-sorted structure with sorts $\k$, $K$,
$\gi$ in the language
\[\L_{\mathrm{rkg}}=\L_{\mathrm{r}}\cup \L_{\mathrm{k}}\cup
\L_{\mathrm{g}}\cup\{v,\res\}\] where
$$\L_{\mathrm{r}}=\{0_{\mathrm{r}},1_{\mathrm{r}},{+_\mathrm{r}},{-_\mathrm{r}},{\cdot_\mathrm{r}}\}, \quad \L_{\mathrm{k}}=\{0_{\mathrm{k}},1_{\mathrm{k}},{+_\mathrm{k}},{-_\mathrm{k}},{\cdot_\mathrm{k}}\}, \quad
\L_{\mathrm{g}}=\{0_{\mathrm{g}},{+_\mathrm{g}},{<},\infty\}.$$
For our quantifier elimination result we expand
$(\k,\gi)$ by a new sort $\k/(\k^\times)^n$ for every
$n\geq 2$, together with the natural surjections $\pi^n\colon \k\to
\k/(\k^\times)^n$.
Let \[\L_{\mathrm{rgq}}=\L_{\mathrm{r}}\cup
\L_{\mathrm{g}}\cup\{\pi^2,\pi^3,\dotsc\}\] be the language of this
expansion.  


\medskip
\noindent Define, for every $n$, a map $\res^n\colon K\to
\k/(\k^\times)^n$ in the following way: If $v(a)\notin n\Gamma$, set $\res^n(a):=0$.
Otherwise, let $b$ be any element of $K$ with $n v(b)=v(a)$ and set~$\res^n(a):=\pi^n\res(a\cdot b^{-n})$. This does not depend on the
choice of $b$ since~$n v(c)=v(a)$ implies that $b\cdot c^{-1}$ has
value $0$, so is a unit in $\vr$ and~$\res(a\cdot c^{-n})=\res(a\cdot
b^{-n})\cdot\res(b\cdot c^{-1})^n$. One verifies easily that the restriction of $\res^n$ to~$v^{-1}(n\Gamma)$ is a group morphism $v^{-1}(n\Gamma)\to \k^\times/(\k^\times)^n$.
We identify $\k$ with $\k/(\k^\times)^0$ in the natural way, so~$\res=\res^0$.
We also extend the multiplicative inverse function
$$a\mapsto a^{-1}\colon K^\times\to K^\times$$ to a function~$K\to K$ by
setting $0^{-1}:=0$, and let
$$\L_{\mathrm{rkgq}}:=\L_{\mathrm{rkg}}\cup\{{\,}^{-1},\pi^2,\pi^3,\dots,\res^2,\res^3,\dots\}.$$
Let the multivariables  $x_{\mathrm{r}}$, $x_{\mathrm{k}}$, $x_{\mathrm{g}}$ be of
sort  $\k$, $K$, and $\gi$, respectively. We call $\L_{\mathrm{rkgq}}$-terms
of the form~$v\big(p(x_{\mathrm{k}})\big)$, $\res\!\big(p(x_{\mathrm{k}})q(x_{\mathrm{k}})^{-1}\big)$  or $\res^n\!\big(p(x_{\mathrm{k}})\big)$ (where~${n \geq 2}$), for polynomials $p$, $q$ with integer coefficients,  \emph{special}.
We have   the following
analogue of Theorem~\ref{qetheorem}:

\begin{theorem}\label{qe_vf0}
   In the theory of henselian valued fields with residue field of
   characteristic zero, viewed as $\L_{\mathrm{rkgq}}$-structures in the natural way, every $\L_{\mathrm{rkg}}$-for\-mu\-la~$\phi( x_{\mathrm{r}}, x_{\mathrm{k}}, x_{\mathrm{g}})$ is equivalent to a formula
  \[\phi_{\mathrm{rgq}}\bigl(x_{\mathrm{r}},\sigma_1(x_{\mathrm{k}}),
  \dotsc,\sigma_m(x_{\mathrm{k}}),x_{\mathrm{g}}\bigr)\] where the $\sigma_i$ are
  special terms and $\phi_{\mathrm{rgq}}$ is a suitable $\L_{\mathrm{rgq}}$-formula.
\end{theorem}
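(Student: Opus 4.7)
The plan is to combine Flenner's relative quantifier elimination (Fact \ref{fac: Flenner's cell decomposition}) with the QE for pure short exact sequences of abelian monoids (Corollary \ref{cor_expansion, infty}), applied to the pure exact sequence
$$1 \to \k^\times \to \RV^\times \xrightarrow{\ v_{\rv}\ } \Gamma \to 0,$$
extended in the natural way to a sequence of abelian monoids $\k \to \RV \to \Gamma_\infty$ with absorbing elements $0$ and $\infty$. For the $\lacs$-structure on $(\k, \Gamma_\infty)$ I would take the full field structure of $\k$ together with the ordered abelian group structure of $\Gamma_\infty$, so that the resulting language $\lacqs$ of Corollary \ref{cor_expansion, infty}, after discarding the redundant $\pi_0$ and $\pi_1$, is essentially $\L_{\mathrm{rgq}}$.

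First, I would use Fact \ref{fac: Flenner's cell decomposition} to reduce the given $\L_{\mathrm{rkg}}$-formula $\phi(x_{\mathrm{r}}, x_{\mathrm{k}}, x_{\mathrm{g}})$ to an equivalent $\labcs$-formula of the form
$$\phi'\big(x_{\mathrm{r}}, \rv(p_1(x_{\mathrm{k}})), \ldots, \rv(p_m(x_{\mathrm{k}})), x_{\mathrm{g}}\big),$$
where $p_i \in \Z[x_{\mathrm{k}}]$ and $\phi'$ involves only the sorts $\k$, $\RV$, and $\Gamma_\infty$. This uses an iterated application of part~(1) of Fact \ref{fac: Flenner's cell decomposition} (one $K$-variable at a time) together with the full stable embeddedness of $\RV_*$ from part~(2); Remark \ref{rem:Flenner}(1) lets us collapse all $\RV_\delta$ to the single sort $\RV = \RV_0$ available in equicharacteristic zero, and algebraic parameters $a_i \in K \cap \acl(\emptyset)$ produced by Flenner's theorem can be absorbed into integer-coefficient polynomials via their minimal polynomials (cf.~Remark \ref{rem:Flenner}(2)).

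Next, I would apply Corollary \ref{cor_expansion, infty} together with Remark \ref{remark-n} to the pure exact sequence above to rewrite $\phi'$ as an $\lacqs$-formula in which the $\RV$-sort variables occur only through $\labcq$-special terms. Writing $y := (\rv(p_1(x_{\mathrm{k}})), \ldots, \rv(p_m(x_{\mathrm{k}})))$ and using multiplicative notation for $\RV$, the permitted $\lb$-terms $t(y)$ are monomials $\prod \rv(p_i)^{a_i}$ with integer exponents; such a monomial equals $\rv$ of a rational function $p(x_{\mathrm{k}}) q(x_{\mathrm{k}})^{-1}$ with $p, q \in \Z[x_{\mathrm{k}}]$, and by Remark \ref{remark-n} we may take $q = 1$ whenever the enclosing special term is $\nu(t)$ or $\rho_n(t)$ with $n \geq 2$.

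Finally, I would translate the resulting $\labcq$-special terms into $\L_{\mathrm{rkgq}}$-special terms. Using the monoid-morphism properties of $\rv$, $\nu$, and $\rho_n$ together with the convention $0^{-1} := 0$, one verifies
$$\nu\big(\rv(p(x_{\mathrm{k}}))\big) = v(p(x_{\mathrm{k}})), \qquad \rho_0\big(\rv(p(x_{\mathrm{k}}))\rv(q(x_{\mathrm{k}}))^{-1}\big) = \res(p(x_{\mathrm{k}})q(x_{\mathrm{k}})^{-1}),$$
$$\rho_n\big(\rv(p(x_{\mathrm{k}}))\big) = \res^n(p(x_{\mathrm{k}})) \quad (n \geq 2),$$
while compound $\nu$-terms such as $\nu(\rv(p)\rv(q)^{-1}) = v(p) - v(q)$ decompose via $\L_{\mathrm{g}}$-arithmetic into the atomic special terms $v(p_i(x_{\mathrm{k}}))$. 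The main obstacle is the bookkeeping around the absorbing elements: one must check that the extensions of $\rv$, $\res$, $\res^n$, and the various monoid inverses all agree on the singular loci---where denominators vanish or valuations fall outside $n\Gamma$---so that the displayed identities hold on all of $K$ rather than merely on $K^\times$, and that the polynomial-with-integer-coefficients form of the resulting special terms is preserved throughout.
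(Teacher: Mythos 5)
Your high-level plan---combining Flenner's quantifier elimination with the short-exact-sequence QE of Corollary~\ref{cor_expansion, infty} and then translating the resulting special terms---is exactly the paper's strategy, and your steps~2 and~3 track the paper's argument closely (the identities $\nu\circ\rv = v$, $\rho_0\circ\rv=\res$, $\rho_n\circ\rv=\res^n$ and the monomial manipulations licensed by Remark~\ref{remark-n} are precisely what the paper uses). Step~1 is where there is a genuine gap. You invoke the cell-decomposition statement Fact~\ref{fac: Flenner's cell decomposition}(1), which concerns a definable subset of $K$ in a \emph{single} field variable and produces centers $a_i\in K\cap\acl(A)$, and claim that iterating it one $K$-variable at a time and then ``absorbing the algebraic parameters into integer-coefficient polynomials via their minimal polynomials'' yields the form $\phi'\bigl(x_\mathrm{r},\rv(p_1(x_\mathrm{k})),\dotsc,\rv(p_m(x_\mathrm{k})),x_\mathrm{g}\bigr)$ with $p_i\in\Z[x_\mathrm{k}]$. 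This is not automatic: in the iteration the centers produced while processing one field variable are only algebraic over the \emph{remaining} field variables, not over~$\emptyset$, and even for absolutely algebraic $a_i$, rewriting $\rv_\delta(x-a_i)$ in terms of $\rv$-values of polynomials with integer coefficients is a nontrivial ``removal of centers'' argument. That work is precisely what Flenner's Proposition~4.3 carries out, and the paper imports the finished product as Fact~\ref{lem:Flenner}, which already gives the $\Z[x_\mathrm{k}]$-polynomial formulation with no further effort. Replacing Fact~\ref{fac: Flenner's cell decomposition} with Fact~\ref{lem:Flenner} in your step~1 closes the gap; the rest of your proposal is essentially the paper's proof.
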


\noindent
In the proof  we   make use of Flenner's quantifier elimination theorem, already stated in Section~\ref{sec:QE hens val}
above. For convenience let us slightly paraphrase this result,  in the case of  equicharacteristic zero.
  Recall that in this case the structure $\RV_*$ has  a single new (interpretable) sort
  \[\mathrm{RV}=K/(1+\maxi),\]  which comes equipped with the binary operation $\cdot_\mathrm{rv}$ which gives $\mathrm{RV}$ the structure of an abelian monoid
   and
  makes the natural projection   $\mathrm{rv}\colon K\to\mathrm{RV}$ a monoid morphism.
  Note that    $0_{\mathrm{RV}}:=\mathrm{rv}(0)$ is an absorbing element of $\mathrm{RV}$ and
    $$\mathrm{RV}^\times:=\mathrm{RV}\setminus \{0_{\mathrm{RV}}\}=K^\times/(1+\maxi)$$ is a group.
  The projection~$\mathrm{rv}$ and the valuation $v\colon K\to\gi$  also induce morphisms~$\iota\colon\k\to\mathrm{RV}$ and $\nu\colon\mathrm{RV}\to\gi$ of abelian monoids, which give rise to a pure
  short exact sequence
  \begin{equation}\label{eq:ses}
  1\to \k^\times\to\mathrm{RV}^\times\to\Gamma\to 0
  \end{equation}
  of abelian groups.  Let
  \[\L_{\mathrm{rv}}=\L_{\mathrm{r}}\cup
  \L_{\mathrm{g}}\cup\{\cdot_\mathrm{rv},\iota,\nu\}\] be the language
  of the structure $(\k,\mathrm{RV},\gi)$, and let $$\L_{\mathrm{rkg,rv}}:=\L_{\mathrm{rkg}}\cup \{\mathrm{rv},\cdot_\mathrm{rv},\iota,\nu\}=\L_{\mathrm r}\cup \L_{\mathrm k}\cup \L_{\mathrm g}\cup \{v,\res,\mathrm{rv},\cdot_\mathrm{rv},\iota,\nu\}.$$
  Now Flenner's result \cite[Pro\-po\-sition~4.3]{flenner2011relative} is:

  \begin{fact}\label{lem:Flenner}
    In the theory of henselian valued fields with residue field of
    characteristic zero, formulated in the language $\L_{\mathrm{rkg,rv}}$, every $\L_{\mathrm{rkg}}$-formula $\phi(
    x_{\mathrm{r}}, x_{\mathrm{k}}, x_{\mathrm{g}})$ is equivalent to a formula
    \[\phi_{\mathrm{rv}}\bigl(x_{\mathrm{r}},\mathrm{rv}(q_1(x_{\mathrm{k}})),
    \dotsc,\mathrm{rv}(q_k(x_{\mathrm{k}})),x_{\mathrm{g}}\bigr)\] where the $q_i$
    are polynomials with integer coefficients  and $\phi_{\mathrm{rv}}$ is a suitable
    $\L_{\mathrm{rv}}$-formula.
  \end{fact}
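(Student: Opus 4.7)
My plan is a two-step reduction. First, I would invoke Fact~\ref{lem:Flenner} to replace the given $\L_{\mathrm{rkg}}$-formula by an equivalent formula whose only field-sort contributions occur inside $\rv$ applied to polynomials. Then I would apply the quantifier elimination for pure short exact sequences of abelian groups in the monoid presentation of Section~\ref{sec:variant} (Corollary~\ref{cor_expansion, infty}) to the sequence~\eqref{eq:ses}, so as to eliminate the $\RV$-sort in favor of $\k$, $\gi$, and the quotients $\k/(\k^\times)^n$.

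Concretely, Fact~\ref{lem:Flenner} produces polynomials $q_1,\dots,q_\ell \in \Z[x_\mathrm{k}]$ and an $\L_{\mathrm{rv}}$-formula $\phi_{\mathrm{rv}}(x_\mathrm{r},y_1,\dots,y_\ell,x_\mathrm{g})$, in free variables of sorts $\k$, $\RV,\dots,\RV$, $\gi$, such that $\phi$ is equivalent to $\phi_{\mathrm{rv}}\bigl(x_\mathrm{r},\rv(q_1(x_\mathrm{k})),\dots,\rv(q_\ell(x_\mathrm{k})),x_\mathrm{g}\bigr)$. The pure short exact sequence~\eqref{eq:ses} fits the framework of Section~\ref{sec:variant} via $A = \k^\times$, $B = \RV^\times$, $C = \Gamma$, with absorbing elements yielding $A_\infty = \k$, $B_\infty = \RV$, $C_\infty = \gi$ and $A/nA = \k^\times/(\k^\times)^n$. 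The abstract maps $\pi_n$ and $\rho_n$ of Section~\ref{sec:variant} then correspond, respectively, to the projections $\pi^n\colon \k \to \k/(\k^\times)^n$ and to the unique maps satisfying $\rho_n\circ\rv = \res^n$ on $K$; likewise $\nu\circ\rv = v$. Taking the extra structure $\lacs$ on $(A_\infty,C_\infty)$ to be the additive operations of $\k$ together with the ordering on $\gi$ makes $\lacqs$ essentially coincide with $\L_{\mathrm{rgq}}$, up to the harmless additional symbols $\pi_0,\pi_1$ (both definable from $\L_\mathrm{r}$ alone).

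Corollary~\ref{cor_expansion, infty} now converts $\phi_{\mathrm{rv}}$, viewed as an $\labcs$-formula in free variables of sorts $A_\infty$, $B_\infty^\ell$, $C_\infty$, into an equivalent formula
\[\phi^\ast_{\mathrm{acq}}\bigl(x_\mathrm{r},\tau_1(y),\dots,\tau_m(y),x_\mathrm{g}\bigr),\]
where $\phi^\ast_{\mathrm{acq}}$ is an $\lacqs$-formula and each $\tau_j$ is a special term of the $\labcq$-type, i.e.\ of the form $\nu(t(y))$ or $\rho_n(t(y))$ for some $\lb$-monomial $t$. By Remark~\ref{remark-n} I may further assume that no $\rho_1$ occurs and that $t$ involves no inversion symbol (equivalently, has only nonnegative exponents) whenever $\tau_j$ is $\nu(t(y))$ or $\rho_n(t(y))$ with $n\geq 2$. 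Substituting $y_i = \rv(q_i(x_\mathrm{k}))$ and using that $\rv$ is a monoid morphism $K\to\RV$, each $\tau_j$ becomes: $v(p(x_\mathrm{k}))$ for a polynomial $p$; or $\res^n(p(x_\mathrm{k}))$ for a polynomial $p$, when $n\geq 2$; or $\res(p(x_\mathrm{k})\,q(x_\mathrm{k})^{-1})$ for polynomials $p, q$, when $n=0$. Each of these is a special term in the sense of Theorem~\ref{qe_vf0}, while $\phi^\ast_{\mathrm{acq}}$ on the outside is an $\L_{\mathrm{rgq}}$-formula, as required.

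The main bookkeeping challenge I foresee is in the second step: to match the abstract additive monoid framework of Section~\ref{sec:variant} against the multiplicative setup of $\RV^\times$ with absorbing element $0_\RV$ (and the extension $0^{-1}=0$ of inversion to $K$), and to confirm that the simplifications afforded by Remark~\ref{remark-n} are exactly what is needed to recover the special terms of Theorem~\ref{qe_vf0} after the substitution step.
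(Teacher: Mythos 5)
Your argument does not prove the statement at hand: the statement is Fact~\ref{lem:Flenner} itself, i.e.\ Flenner's relative quantifier elimination, and the very first step of your plan is to ``invoke Fact~\ref{lem:Flenner}'', so as a proof of that fact the proposal is circular. What your two-step reduction actually establishes is the different, downstream result Theorem~\ref{qe_vf0}: your argument (Flenner's QE, then Corollary~\ref{cor_expansion, infty} and Remark~\ref{remark-n} applied to the pure short exact sequence \eqref{eq:ses}, then substitution of $y_i=\rv\big(q_i(x_{\mathrm{k}})\big)$ together with the identities $\nu\circ\rv=v$ and $\rho_n\circ\rv=\res^n$) is essentially the paper's own proof of Theorem~\ref{qe_vf0}, and that part of the write-up is fine; but it is not a proof of the quoted fact.

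For Fact~\ref{lem:Flenner} itself the paper gives no proof: it is an external result, quoted from \cite{flenner2011relative} (Proposition~4.3 there), and is just the equicharacteristic-zero specialization of Fact~\ref{fac: Flenner's cell decomposition}. Nor could it be obtained by the route you sketch even if the circular first step were removed: the hard content of Flenner's theorem is the elimination of quantifiers over the \emph{field} sort --- reducing an arbitrary field-sort formula to conditions on $\rv$ of polynomials in the field variables --- and this rests on henselianity and a field-theoretic analysis of definable sets relative to $\RV$ (the cell-decomposition-type arguments of \cite{flenner2011relative}). The abelian-group/monoid quantifier elimination of Section~\ref{sec: Distality and SES} only reorganizes structure \emph{inside} the $\RV$-sort (splitting it into $\k$, $\Gamma$, and the quotients $\k/(\k^\times)^n$) and never eliminates a field quantifier. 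So either you misidentified the target statement (in which case what you wrote is, in substance, the paper's proof of Theorem~\ref{qe_vf0}), or there is a genuine gap: no independent argument for Flenner's theorem is supplied.
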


\noindent
Actually, Flenner's theorem is a bit stronger,   allowing variables   ranging over the $\RV$-sort; in addition, Fact~\ref{lem:Flenner} also works for arbitrary expansions of the $\L_{\mathrm{rv}}$-structure~$(\k,\RV,\Gamma_\infty)$. (See the discussion pre\-ce\-ding~\cite[Pro\-po\-sition~4.3]{flenner2011relative}.)

\medskip
\noindent
We now apply the material of Section~\ref{sec:variant} to the short exact sequence~\eqref{eq:ses}.
For this, let~$\phi(
    x_{\mathrm{r}}, x_{\mathrm{k}}, x_{\mathrm{g}})$ be an $\L_{\mathrm{rkg}}$-formula and take $q_1,\dots,q_k$ and
     $\phi_{\mathrm{rv}}$  as in Fact~\ref{lem:Flenner}.
Corollary~\ref{cor_expansion, infty} and Remark~\ref{remark-n}
applied to~$\phi_{\mathrm{rv}}$     show   that~$\phi(x_{\mathrm{r}}, x_{\mathrm{k}}, x_{\mathrm{g}})$
  is equivalent to a formula
    \[\phi_{\mathrm{rgq}}\bigl(x_{\mathrm{r}},\sigma_1(x_{\mathrm{k}}),
  \dotsc,\sigma_{m}(x_{\mathrm{k}}),x_{\mathrm{g}}\bigr)\] where the $\sigma_j$ are
  terms of the form
  \[\rho_0\bigl(\mathrm{rv}(q_1(x_{\mathrm{k}}))^{e_1}\cdots\mathrm{rv}(q_k(x_{\mathrm{k}}))^{e_k}\bigr) \qquad (e_1,\dots,e_k\in\Z)\] or
  \[\rho_n\bigl(\mathrm{rv}(q_1(x_{\mathrm{k}}))^{e_1}\cdots\mathrm{rv}(q_k(x_{\mathrm{k}}))^{e_k}\bigr) \qquad (e_1,\dots,e_k\in\N,\ n \geq 2)\] or
  \[\nu\bigl(\mathrm{rv}(q_1(x_{\mathrm{k}}))^{e_1}\cdots\mathrm{rv}(q_k(x_{\mathrm{k}}))^{e_k}\bigr) \qquad (e_1,\dots,e_k\in\N),\] and
  $\phi_{\mathrm{rgq}}$ is a suitable $\L_{\mathrm{rgq}}$-formula. Here  the maps $\rho_n\colon \RV\to \k/(\k^\times)^n$ are as defined in Section~\ref{sec:variant}.
Since~$\mathrm{rv}$ is
  a monoid morphism, for each appropriate tuple~$a$ of the field sort and $e_1,\dots,e_k\in\Z$ we have
  $$\mathrm{rv}(q_1(a))^{e_1}\cdots\mathrm{rv}(q_k(a))^{e_k}=\mathrm{rv}\big(p(a)q(a)^{-1}\big) \ \text{ where
  $p=\prod_{e_j\geq 0} q_j^{e_j}$ and $q=\prod_{e_j<0} q_j^{-e_j}$.}$$
  We have $\nu\circ\mathrm{rv}=v$.
   Recall  that $\rho_n$ is identically zero outside~$\nu^{-1}(n\Gamma)$, hence   $\rho_n\circ\mathrm{rv}=\res^n$.  Thus   each term~$\sigma_j$ is special. This finishes the proof of Theorem~\ref{qe_vf0}. \qed 

{\samepage
\begin{remarksunnumbered} \mbox{} 
\begin{enumerate}

\item Suppose   $K_{\mathrm{rgq}}$ is equipped with additional structure, and we equip its expansion
to an $\mathcal L_{\mathrm{rkgq}}$-structure with the corresponding additional structure.
  The theorem above then remains true in this setting; this is shown just as in
  Corollary~\ref{cor_expansion, infty}. As a consequence, $K_{\mathrm{rgq}}$ is fully stably embedded in
  the $\mathcal L_{\mathrm{rkgq}}$-structure~$K$, and the induced structure on $K_{\mathrm{rgq}}$ is the given one.
\item Suppose now that $\k$ and $\gi$ come equipped with additional structure, and  the
$\mathcal L_{\mathrm{rkgq}}$-struc\-ture~$K$ is expanded by these structures on its sorts~$\k$ and~$\gi$;
then   the sorts~$\k$,~$\gi$ are fully stably embedded in $K$, with the induced structure on
these sorts just the given ones.
\end{enumerate}
\end{remarksunnumbered}
}

\noindent
We finish this subsection with observing that the structure $\RV_*$   introduced in Section~\ref{sec:QE hens val} is only
ostensibly richer than the structure
$(\k,\RV,\Gamma_\infty)$ of $\RV$ viewed as pure short exact sequence:

\begin{lemma}
The  $\L_{\mathrm{rv}}$-structure $(\k,\RV,\Gamma_\infty)$ and the $\L_{\RV_*}$-structure $\RV_*$ are bi-interpretable.
\end{lemma}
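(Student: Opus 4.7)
The plan is to exhibit interpretations in both directions and verify that the compositions are identifiable with the identity by formulas in each structure. Note first that because $\ch\k = 0$, the running index $\delta$ ranges only over $\{0\}$, so $\RV_* = (\RV;\cdot_{\rv},\oplus_0)$ is essentially one-sorted (the trivial projection $\rv_{0\to 0}$ being the identity). Thus the task is to recover the partial ternary relation $\oplus_0$ from the data $(\k,\RV,\Gamma_\infty;\iota,\nu)$, and conversely to reconstruct $\k$, $\Gamma_\infty$, $\iota$, $\nu$ from $(\RV;\cdot_{\rv},\oplus_0)$.

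For the interpretation of $\RV_*$ in $(\k,\RV,\Gamma_\infty)$, the sort $\RV$ and its monoid structure $\cdot_{\rv}$ are already primitives; the partial addition $\oplus_0(r,s,t)$ will be defined by a case analysis on the valuations $\nu(r)$, $\nu(s)$: (i) if $r=0$ (resp.\ $s=0$), then $t=s$ (resp.\ $t=r$); (ii) if $\nu(r)<\nu(s)$ (resp.\ $\nu(r)>\nu(s)$), then $t=r$ (resp.\ $t=s$); (iii) if $\nu(r)=\nu(s)=\gamma\in\Gamma$, pick any $u\in\RV^\times$ with $\nu(u)=\gamma$, express $r=\iota(a)u$ and $s=\iota(b)u$ uniquely with $a,b\in \k^\times$, and declare $\oplus_0(r,s,t)$ to hold iff either $a+b\neq 0$ in $\k$ and $t=\iota(a+b)u$, or $a+b=0$ in $\k$ and ($t=0$ or $\nu(t)>\gamma$). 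One then verifies from the definition \eqref{eq:oplus}, using that $v(x+y)=\min\!\big(v(x),v(y)\big)$ when $v(x)\neq v(y)$ and the behavior of addition modulo $1+\maxi$ when valuations agree, that this is an accurate description of $\oplus_0$. Crucially, all clauses are expressible in $\L_{\mathrm{rv}}$ since $\iota$, $\nu$ and the ring operations on $\k$ and the ordered-group operations on $\Gamma_\infty$ are primitives.

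For the converse, to interpret $(\k,\RV,\Gamma_\infty;\iota,\nu)$ in $\RV_*$ we use Remark~\ref{rem: basic definability in RV}: the relations $v_{\rv}(x)\le v_{\rv}(y)$, $v_{\rv}(x)=v_{\rv}(y)$, and $v_{\rv}(x)=0$ are all $\RV_*$-definable. Hence the subgroup $H:=\{r\in\RV:v_{\rv}(r)=0\}\cup\{0\}$ of $\RV$ is definable; we interpret $\k$ on the set $H$, with multiplication inherited from $\cdot_{\rv}$ and addition given by: $a+_{\k}b:=c$ iff $\oplus_0(a,b,c)$ holds with $c\in H$ (with the convention that $\oplus_0$ together with the case analysis above forces $c$ uniquely on $H$ when $a+b\neq 0$, and $c=0$ when the sum cancels). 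The ordered abelian group $\Gamma_\infty$ is interpreted as the quotient $\RV/H^\times$ with the ordering induced by $v_{\rv}(x)\le v_{\rv}(y)$; the morphisms $\iota$ and $\nu$ are then, tautologically, the natural inclusion $H\hookrightarrow\RV$ and the natural projection $\RV\to\RV/H^\times$.

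Finally, one checks that the two compositions are the identity up to definable isomorphism: starting from $(\k,\RV,\Gamma_\infty)$, build $\RV_*$ and then reconstruct $(\k',\RV',\Gamma'_\infty)$, and observe that the identity on $\RV$ plus the natural identifications $\k\cong\k'$ (via $\iota$) and $\Gamma\cong\Gamma'$ (via $\nu$) are definable isomorphisms; and symmetrically in the other direction, the recovered $\oplus_0$ coincides with the original by construction. The only genuinely non-cosmetic point will be verifying case (iii) above, where the Hahn-space-like behavior of addition in $\RV$ precisely matches addition in $\k$ rescaled by a unit of fixed valuation; everything else is bookkeeping. I expect this case to be the sole mild obstacle, and it is handled uniformly by the observation that residue-field addition, transported by multiplication with any element of valuation $\gamma$, describes $\oplus_0$ on pairs of equal valuation.
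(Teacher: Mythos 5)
Your proposal is correct and follows essentially the same route as the paper: one direction defines $\oplus_0$ in $(\k,\RV,\Gamma_\infty)$ by a case distinction according to $\nu$, with the equal-valuation case transported from addition in $\k$, and the other direction recovers $\k$, $\Gamma_\infty$, $\iota$, $\nu$ inside $\RV_*$ via Remark~\ref{rem: basic definability in RV}, plus the (routine) identification of the composite interpretations with the identity. Your equal-valuation clause is in fact a bit more careful than the formula displayed in the paper, since you also include the cancellation subcase ($a+b=0$ in $\k$, with any $t$ satisfying $t=0$ or $\nu(t)>\gamma$), which that formula omits for $a,b,c\in\RV^\times$.
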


\noindent
To see this note  that  the relation $\oplus=\oplus_0$ on $\RV$ introduced in \eqref{eq:oplus} is definable in~$(\k,\mathrm{RV},\gi)$:
for~$a,b,c\in\RV^{\times}$ we have
\begin{align*}
\oplus(a,b,c)	\quad\Longleftrightarrow\quad  & \Big[ \nu(a)=\nu(b)  \ \&\
\exists y\in\k \big( \iota(y)\cdot_\mathrm{rv} a=b \ \&\ \iota(1+y)\cdot_\mathrm{rv} a=c\big) \Big]\ \vee \\
& \Big[\nu(a)>\nu(b) \ \&\  b=c\Big]\ \vee\  \Big[\nu(b)>\nu(a) \ \&\  a=c\Big].
\end{align*}
Conversely, Remark~\ref{rem: basic definability in RV}  shows that $\k$, $\gi$ and the morphisms $\iota$, $\nu$ are interpretable in
$\RV_*$.
Note that in this lemma we may allow $\k$ and $\gi$ to be equipped with additional structure, and
$\RV_*$ with the corresponding structure, that is,
 by all relations $S\subseteq \RV^m$ where
$S\subseteq(\ker v_{\rv})^m=(\k^\times)^m$ is definable in  $\k$
or  $S=v_{\rv}^{-1}(v_{\rv}(S))$ where $v_{\rv}(S)\subseteq\Gamma^m$ is definable in $\Gamma_\infty$.

\begin{cor}\label{cor:equich 0 NIP}
 Suppose  that $\k$ and $\gi$ are equipped with additional structure; then
$K$ is NIP iff both~$\k$ and~$\gi$ are NIP.
\end{cor}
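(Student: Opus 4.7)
The forward direction is immediate from the second remark following Theorem~\ref{qe_vf0}: both $\k$ and $\gi$ are fully stably embedded in $K$ with their given induced structures, so NIP transfers from $K$ to each of them.

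For the converse, I proceed in two stages. First I observe that the reduct $K^{*}_{\mathrm{rgq}}$ of $K$ to the language $\L_{\mathrm{rgq}}$ augmented by the additional primitives on $\k$ and $\gi$ is NIP. The key point is that $\L_{\mathrm{rgq}}$ provides no primitive connecting $\gi$ to the residue-field sorts, so $K^{*}_{\mathrm{rgq}}$ is essentially a disjoint union of $\gi$ (NIP by hypothesis) with the multi-sorted structure on $\bigl(\k,(\k/(\k^\times)^n)_{n\geq 2}\bigr)$; the latter is a definitional expansion of $\k$ alone (each projection $\pi^n$ being $\emptyset$-definable in $\k$), so NIP of $\k$ yields its NIP.

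The second stage lifts NIP from $K^{*}_{\mathrm{rgq}}$ to $K$ by means of Theorem~\ref{qe_vf0}, in the enhanced form noted in the first remark after the theorem that permits arbitrary additional structure on $\k$ and $\gi$. Suppose toward a contradiction that some $\L_{\mathrm{rkg}}$-formula $\varphi(x;y)$ has IP in $K$. A standard Ramsey/extraction argument produces a tuple $a$ and a sequence $(b_j)_{j\in\omega}$ indiscernible over $a$ such that $\varphi(a;b_j)$ alternates with $j$. Rewriting $\varphi$ by QE as
\[
\phi_{\mathrm{rgq}}\bigl(x_{\mathrm{r}},\sigma_1(x_{\mathrm{k}}),\dotsc,\sigma_m(x_{\mathrm{k}}),x_{\mathrm{g}}\bigr)
\]
with the $\sigma_i$ special terms, and setting $c_j:=\bigl(\sigma_i(a^{(\mathrm{k})},b_j^{(\mathrm{k})})\bigr)_{i}$, the sequence $(b_j^{(\mathrm{r})},c_j,b_j^{(\mathrm{g})})_{j}$ lies in the non-field sorts of $K^{*}_{\mathrm{rgq}}$, remains indiscernible over $(a^{(\mathrm{r})},a^{(\mathrm{g})})$ there, and witnesses IP for the $\L_{\mathrm{rgq}}$-formula $\phi_{\mathrm{rgq}}$, contradicting NIP of $K^{*}_{\mathrm{rgq}}$.

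The main subtlety will be the NIP-transfer step, in particular the Ramsey extraction guaranteeing that $(b_j)$ is indiscernible over the \emph{full} parameter tuple $a$ (not merely over $\emptyset$), since the special terms feed the field-sort components of $a$ into the $c_j$. An alternative route bypassing the direct QE argument would thread the proof through the pure short exact sequence $1\to\k^\times\to\mathrm{RV}^\times\to\Gamma\to 0$ by combining Lemma~\ref{lem:AC NIP} with the bi-interpretability of $\RV_*$ and $(\k,\RV,\gi)$ recorded just above and Proposition~\ref{prop:RV NIP}; here equicharacteristic zero makes finite ramification automatic.
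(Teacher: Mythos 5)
Your forward direction is correct. In the converse, the first stage --- showing the $\L_{\mathrm{rgq}}$-reduct (with the additional structure) is NIP because it splits as a disjoint union of $\gi$ and a structure interpretable in $\k$ --- is sound.

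The second stage has a genuine gap, one which you flag but which cannot be repaired as stated. No Ramsey-type extraction can give a tuple $a$ together with a sequence $(b_j)_{j\in\omega}$ \emph{indiscernible over $a$} such that the truth value of $\varphi(a;b_j)$ alternates with $j$: indiscernibility over $a$ forces $\tp(b_j/a)$ to be independent of $j$, so $\models\varphi(a;b_j)$ would be constant. The standard witness of IP yields $(b_j)$ indiscernible over $\emptyset$ only. But then the derived tuple $c_j=\sigma\big(a^{(\mathrm{k})},b_j^{(\mathrm{k})}\big)$ is \emph{not} indiscernible over $a^{(\mathrm{k})}$ (nor over $\emptyset$), since the special terms feed the field-sort components of $a$ and of $b_j$ jointly through polynomials inside $v$, $\res$ and $\res^n$; hence $\big(b_j^{(\mathrm{r})},c_j,b_j^{(\mathrm{g})}\big)_j$ need not be an indiscernible sequence in the $\L_{\mathrm{rgq}}$-reduct, and no contradiction with its NIP follows. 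This is precisely the obstruction the proof of Proposition~\ref{prop:RV NIP} is built to overcome: it places the indiscernible sequence in the \emph{field} sort, applies Flenner's $\RV$-cell decomposition to the alternating definable set, and then invokes the trichotomy of Lemma~\ref{lem: pseudo-conv or fan} (pseudocauchy / anti-pseudocauchy / fan) to explicitly manufacture, in each case, a genuine indiscernible sequence in $\RV_*$ that still carries the alternation. Quantifier elimination by itself does not supply this case analysis.

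Your ``alternative route'' --- Proposition~\ref{prop:RV NIP} (with finite ramification automatic in equicharacteristic zero), the bi-interpretability of $\RV_*$ with $(\k,\RV,\gi)$ stated just before the corollary, and Lemma~\ref{lem:AC NIP} --- is precisely the paper's two-line proof. Take that route; the direct argument, as written, does not close.
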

\begin{proof}
By Proposition~\ref{prop:RV NIP} and the remark preceding the corollary, $K$ is NIP iff~$(\k,\RV,\Gamma_\infty)$ is NIP, and by Lemma~\ref{lem:AC NIP},
the latter is equivalent to~$\k$ and~$\gi$ being NIP.
\end{proof}

\subsection{A generalization}
In this subsection we put the QE result for weakly pure   exact sequences from Section~\ref{sec:weakly pure exact} to work by proving a version of Theorem~\ref{qe_vf0} for  henselian valued fields of characteristic zero with arbitrary residue field. Only Corollary~\ref{cor:finite k NIP} from this subsection is used later. Throughout this subsection we assume that $K$ is henselian, and we let $M$, $N$ range over $\N^{\geq 1}$.

\medskip
\noindent
Let $R_N$ be the   ring $\vr/N\maxi$, and extend
  the residue morphism $$x\mapsto \res_N(x):=x+N\maxi\colon \vr\to R_N$$   to a map $K\to R_N$, also denoted by $\res_N$, by setting $\res_N(x):=0$ for
$x\in K\setminus\vr$. The valuation~${v\colon K\to\gi}$ induces a map $v_N\colon R_N\to\gi$ with
$$v_N(r) = \begin{cases}
v(x)	& \text{if $r=\res_N(x)\neq 0$,} \\
\infty	& \text{if $r=0$.}
\end{cases}$$
Note that $0\leq v_N(r) \leq v(N)$ for all $r \in R_N\setminus\{0\}$.
We have $R_1=\k$. If $\ch\k=0$, then $R_N=\k$ and~$v_N(R_N)=\{0,\infty\}$ for all
$N$. If $\ch\k=p>0$, then $R_M=R_{N}$ if~$M$ and~$N$
are divisible by the same powers of $p$.
If $N$ divides $M$, let $$\res^M_N\colon R_M\to R_N$$ be the natural surjection; its kernel is
$$\res_M(N\maxi)=\big\{r\in R_M: v_N(r)>v(N)\big\},$$
and 
$$v_N\big(\!\res^M_N(r)\big) = v_M(r)\qquad\text{ for $r\in R_M$ with $\res^M_N(r)\neq 0$.}$$
Let
\[\lrng=\big\{{+_N},{\,\cdot_N\,},v_N,\res^M_N : \text{$N$ divides $M$}\big\}\cup\mathcal L_{\mathrm{g}}\]
be the language of the multi-sorted structure $K_\mathrm{rng}=(R_1,R_2,\dots,\gi)$.
The  family of rings $(R_N)$ and the family of mor\-phisms~$(\res^M_N)_{N|M}$ forms an inverse
system; let $\lim\limits_{\longleftarrow} R_N$ denote its inverse limit. The   morphisms~$\res_N\colon \mathcal O\to R_N$
induce a ring morphism $\mathcal O\to \lim\limits_{\longleftarrow} R_N$ whose kernel is
$$\dot\maxi := \bigcap_N N\maxi = \big\{ x\in K :\text{$v(x)>v(N)$ for every $N$} \big\},$$
and hence induces an embedding $\varphi\colon\mathcal O/\dot\maxi \to \lim\limits_{\longleftarrow} R_N$.
Clearly we have:

\begin{lemma}\label{lem:varphi iso}
Suppose $K_\mathrm{rng}$ is $\aleph_1$-saturated; then $\varphi$ is an isomorphism.
\end{lemma}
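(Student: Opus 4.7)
The plan is as follows. First, observe that $\varphi$ is injective by construction: an element $x+\dot\maxi\in\vr/\dot\maxi$ maps to $0$ in every $R_N=\vr/N\maxi$ if and only if $x\in N\maxi$ for all $N$, i.e., $x\in\dot\maxi$. So the entire task is to show surjectivity of $\varphi$ under the saturation hypothesis.

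For surjectivity, fix a coherent family $(r_N)_{N\geq 1}\in\lim\limits_{\longleftarrow}R_N$ (so $\res^M_N(r_M)=r_N$ whenever $N\mid M$), and seek $x\in\vr$ with $\res_N(x)=r_N$ for all $N$. For each $N$, choose a lift $x_N\in\vr$ with $\res_N(x_N)=r_N$; coherence then forces $x_N\equiv x_M\pmod{N\maxi}$ (equivalently, $v(x_N-x_M)>v(N)$) whenever $N\mid M$. Specializing to $N=n!$, the sequence $(x_{n!})_{n\geq 1}$ is a pseudocauchy sequence in $\vr$. The goal is to produce a pseudolimit $x\in\vr$ satisfying $v(x-x_N)>v(N)$ for every $N$: then $\res_N(x)=\res_N(x_N)=r_N$ for each $N$, giving the required preimage of $(r_N)$ under $\varphi$.

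To produce $x$, we invoke $\aleph_1$-saturation to realize the countable partial type
\[
\Phi(y)\ :=\ \bigl\{\, v(y-x_N) > v(N) \ :\ N\geq 1\,\bigr\},
\]
whose conditions translate, in terms of the sorts of $K_{\mathrm{rng}}$, into the equations $\res_N(y)=r_N$. Finite satisfiability of $\Phi$ is immediate from coherence: for a finite $F\subseteq\N^{\geq 1}$, take $M$ to be the product (or least common multiple) of the elements of $F$; then the element $x_M$ satisfies $v(x_M-x_N)>v(N)$ for each $N\in F$ (since $N\mid M$). Given a realization $y=x$ of $\Phi$, automatically $x\in\vr$ (because $v(x-x_1)>v(1)=0$ and $x_1\in\vr$), and the argument of the preceding paragraph yields $\res_N(x)=r_N$ for every $N$, so $\varphi(x+\dot\maxi)=(r_N)$.

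The only delicate point is the use of saturation, since the type $\Phi(y)$ nominally lives on the field sort, which is not one of the sorts of $K_{\mathrm{rng}}$ itself. However, each instance $v(y-x_N)>v(N)$ is equivalent to the $\lrng$-condition $\res_N(y)=r_N$ once we view $y$ as a coherent family of representatives in the $R_N$'s; alternatively, the statement can be read, as is standard in the context of Flenner-style relative elimination, for $K$ construed together with its auxiliary sorts as one multi-sorted structure whose $\aleph_1$-saturation descends to $\aleph_1$-saturation of $K_{\mathrm{rng}}$. Either way, the pseudolimit $x$ exists, completing the proof of surjectivity.
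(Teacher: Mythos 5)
Your injectivity and finite-satisfiability arguments are correct, and you have put your finger on the one genuine subtlety: the type $\Phi(y)$ has its free variable ranging over the field sort, which is not a sort of $K_{\mathrm{rng}}$. Unfortunately, neither of your two proposed resolutions actually closes this gap.

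The first one --- translating each $v(y - x_N) > v(N)$ into $\res_N(y) = r_N$ and ``viewing $y$ as a coherent family of representatives'' --- is circular. The coherent family $(r_N)$ is exactly the given element of the inverse limit; what you must produce is a lift of it along the maps $\res_N$ to a single element of $\mathcal{O}$, i.e.\ an element of the field sort. Since $K_{\mathrm{rng}}$ has no field sort, $\aleph_1$-saturation of $K_{\mathrm{rng}}$ by itself says nothing about realizability of $\Phi$. For a sanity check that this obstruction is real and not merely a matter of formalization: when $\k$ is finite and $K$ is unramified, each $R_N$ is a finite ring, so $\aleph_1$-saturation of $K_{\mathrm{rng}}$ amounts to $\aleph_1$-saturation of $\gi$, which puts no constraint on $K$ itself; $\dot K = \Frac(\mathcal{O}/\dot\maxi)$ can then be a proper (incomplete) henselian subfield of $\Q_p$, and $\varphi$ fails to be onto.

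Your second reading is the one that actually works, but you present it as if it were interchangeable with the first --- it is not, it strengthens the hypothesis. You should commit to it: the relevant assumption is $\aleph_1$-saturation of $K$ as the full multi-sorted $\lrkng$-structure, which implies $\aleph_1$-saturation of the reduct $K_{\mathrm{rng}}$, not conversely. With this in place, $\Phi(y)$ is a countable partial type over countably many parameters from the sorts $K$ and $\gi$, and realizing it on the field sort gives the required preimage exactly as you compute. (This is also what is available in the application in the proof of Theorem~\ref{qe_vfp}, where $K$ is taken special.)
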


\noindent
We now consider $K$ as a many-sorted structure
$(K,R_1,R_2,\dots,\gi)$ in the language \[\lrkng=
\lk\cup\lrng\cup\{v,\res_1,\res_2,\dotsc\}.\]

\begin{lemma}\label{n2-lemma}
  Suppose    $n^2$ divides $N$, and for $i=1,2$ let $x_i\in K$ with $v(x_i)+2v(n)\leq v(N)$. Then with $r_i=\res_N(x_i)$,
  the following are equivalent:
  \begin{enumerate}
  \item\label{L:1} $x_1\cdot x_2^{-1}\in (K^\times)^n$;
  \item\label{L:2} $r_1r^n=r_2$ or\/ $r_1=r_2r^n$ for some $r\in R_N$.
  \end{enumerate}
\end{lemma}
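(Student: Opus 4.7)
Both directions transfer an $n$-th power identity between $K^\times$ and $R_N = \vr/N\maxi$, with henselianity of $K$ supplying the lift. We first reduce to the nontrivial case $x_1, x_2 \in \vr$: then the hypothesis $v(x_i) \leq v(N) - 2v(n) \leq v(N)$ forces $x_i \notin N\maxi$, so $r_i \neq 0$ in $R_N$; the degenerate cases where some $r_i$ vanishes are handled separately.

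For $(\ref{L:1}) \Rightarrow (\ref{L:2})$, write $x_1 x_2^{-1} = y^n$ with $y \in K^\times$. If $v(y) \geq 0$, then $y \in \vr$ and $x_1 = y^n x_2$ in $\vr$, so applying the ring morphism $\res_N \colon \vr \to R_N$ gives $r_1 = r^n r_2$ with $r := \res_N(y)$, matching the second disjunct of~(\ref{L:2}). If instead $v(y) < 0$, then $y^{-1} \in \vr$ and $x_2 = (y^{-1})^n x_1$ in $\vr$, so analogously $r_2 = r^n r_1$ with $r := \res_N(y^{-1})$, matching the first disjunct.

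For $(\ref{L:2}) \Rightarrow (\ref{L:1})$, by the symmetry swapping $x_1 \leftrightarrow x_2$ we may suppose $r_1 = r^n r_2$. Lift $r$ to some $y \in \vr$ (nonzero since $r \neq 0$, as $r_1 \neq 0$). Because $\res_N$ is a ring morphism on $\vr$, we obtain $\res_N(x_1 - y^n x_2) = r_1 - r^n r_2 = 0$, hence $\epsilon := x_1 - y^n x_2 \in N\maxi$ and $v(\epsilon) > v(N) \geq v(x_1) + 2v(n) > v(x_1)$. Combined with $x_1 = y^n x_2 + \epsilon$ and the ultrametric inequality, $v(\epsilon) > v(x_1)$ forces $v(x_1) = v(y^n x_2)$ (otherwise $v(\epsilon)$ equals the minimum of $v(x_1)$ and $v(y^n x_2)$, contradicting $v(\epsilon) > v(x_1)$). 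Therefore $u := \epsilon/(y^n x_2) \in \vr$ satisfies $v(u) = v(\epsilon) - v(x_1) > v(N) - v(x_1) \geq 2v(n)$. Applying Hensel's lemma to $f(X) := X^n - (1 + u) \in \vr[X]$ at $1$ (where $v(f(1)) = v(u) > 2v(n) = 2v(f'(1))$) yields $z \in \vr$ with $z^n = 1 + u$, so $x_1 = (yz)^n x_2$ and thus $x_1 x_2^{-1} = (yz)^n \in (K^\times)^n$. The main subtlety is the valuation computation pinning down $v(x_1) = v(y^n x_2)$; the hypothesis $v(x_i) + 2v(n) \leq v(N)$ is calibrated precisely so that $v(\epsilon) > v(N)$ yields the Hensel inequality $v(u) > 2v(n)$, while the divisibility $n^2 \mid N$ ensures that this hypothesis on $v(x_i)$ is non-vacuous.
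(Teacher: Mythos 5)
Your argument for the case $x_1,x_2\in\vr$ is correct and is essentially the paper's own proof: lift $r$ to $y\in\vr$, note $x_1$ and $y^n x_2$ agree modulo $N\maxi$, and absorb the unit discrepancy $1+u$ into an $n$-th power by Hensel's lemma in Newton form, the hypothesis $v(x_i)+2v(n)\le v(N)$ being exactly what yields $v(u)>2v(n)$. The forward direction is the same modulo the cosmetic choice of splitting on $\sign v(y)$ rather than assuming $v(x_1)\le v(x_2)$. (One tiny slip: in the chain $v(N)\ge v(x_1)+2v(n)>v(x_1)$ the last inequality should be $\ge$, since $v(n)$ may be $0$; the needed conclusion $v(\epsilon)>v(x_1)$ survives.)

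The genuine gap is the promise that the "degenerate cases where some $r_i$ vanishes are handled separately." When $x_i\in\vr$ the hypothesis indeed forces $r_i\ne 0$, as you say; but the lemma allows $x_i\in K$, so $r_i=0$ occurs exactly when $x_i\notin\vr$, and there the stated equivalence is simply \emph{false}. For instance, take $K=\Q_p$ with $p$ odd, $n=2$, $N=4$, so $v(n)=v(N)=0$ and $R_N=\vr/\maxi=\mathbb F_p$; let $x_1=p^{-1}$, $x_2=1$. The hypothesis $v(x_i)\le 0$ holds, $r_1=0$ and $r_2=1$, and (2) holds with $r=0$ (as $r_1=r_2\cdot 0^n$), yet $x_1x_2^{-1}=p^{-1}$ has odd value and is no square in $\Q_p^\times$. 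So there is no separate argument to supply: the correct move is to note that the paper's own proof (which likewise applies $\res_N$ multiplicatively) silently assumes $x_i\in\vr$, and that this is all that is needed where the lemma is invoked (residues of elements of $\vr$ in Claim~2 of the proof of Theorem~\ref{qe_vfp}), rather than to defer a nonexistent treatment.
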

\begin{proof}
Suppose $x_1\cdot x_2^{-1}\in (K^\times)^n$, and say $v(x_1)\leq v(x_2)$; then
   $x_1z^n=x_2$ for some~$z\in\vr$, so $r_1r^n=r_2$ for
  $r=\res_N(z)$.
Conversely, suppose  $r_1r^n=r_2$ where~$r\in R_N$. Take $y\in\vr$ with
  $v(x_1y^n-x_2)>v(N)$; then \[v(x_1x_2^{-1}y^n-1)>v(N)-v(x_2)\geq 2v(n).\]
  Hensel's Lemma (in the Newton formulation) applied to the polynomial $x_1x_2^{-1}y^n-X^n\in \vr[X]$ yields
  an~$x\in K$ such that $x_1x_2^{-1}y^n-x^n=0$, so $x_1x_2^{-1}\in
  (K^\times)^n$.
\end{proof}

\noindent
From Lemma~\ref{n2-lemma} we see  that for $N$, $n$ as in the lemma,
$$r_1 \sim_N^n r_2 \quad:\Longleftrightarrow\quad
 \exists s\; (r_1s^n =  r_2\lor r_1 =
r_2s^n)$$
defines   an equivalence relation
on the subset
$$R_N^{n}=\big\{r\in R_N : v_N(r)+2v(n)\leq v(N)\big\}$$
of $R_N$. For such  $N$, $n$ we introduce a new
sort \[S^n_N:=(R_N^{n}/{\sim_N^n})\cup\{0\}\] together with the
  map $\pi_N^n\colon R_N\to S_N^n$ which
 agrees with the quotient map $R_N^n\to R_N^{n}/{\sim_N^n}$ on $R^n_N$ and  is $0$ on $R_N\setminus R_N^{n}$. Let
\[\lrngq=\lrng\cup\{\pi_N^n : \text{$n^2$ divides $N$}\}\]
be the language of the expansion $(K_\mathrm{rng},S_N^n)$ of $K_{\mathrm{rng}}$. Note that $(K_\mathrm{rng},S_N^n)$ is interpretable in
$K_{\mathrm{rng}}$.

\medskip
\noindent
Finally, we define, for every $n$ such that $n^2$ divides $N$, the following map
$\res^n_N\colon K\to S_N^n$: If there is some $\gamma\in\Gamma$ such that
$0\leq v(x)-n\gamma\leq v(N)-2v(n)$, choose  $y\in K$ with $v(y)=\gamma$
and set
\[\res_N^n(x)=\pi^n_N\big(\res_N(x\cdot y^{-n})\big);\] one verifies easily that this does
not depend on the choice of $\gamma$ and $y$. If there is no such $\gamma$, set~$\res^n_N(x):=0$.
We view each henselian valued field of characteristic zero in the natural way as an $\mathcal L_{\mathrm{krngq}}$-structure  where
$$\mathcal L_{\mathrm{rkngq}} := \lrkng \cup \{\res^n_N: \text{$n^2$ divides $N$} \}.$$
Let the multivariables  $x_{\mathrm{r}}$, $x_{\mathrm{k}}$, $x_{\mathrm{g}}$ be of
sort  $R_1,R_2,\ldots$, $K$, and $\gi$, respectively. We call $\mathcal L_{\mathrm{rkngq}}$-terms
of the form~$v\big(p(x_{\mathrm{k}})\big)$, $\res_N^0\!\big(p(x_{\mathrm{k}})q(x_{\mathrm{k}})^{-1}\big)$  or $\res_N^n\!\big(p(x_{\mathrm{k}})\big)$ (where~$n \geq 1$), for polynomials $p$, $q$ with integer coefficients, \emph{special}. We  then have
the following theorem.

\begin{theorem}\label{qe_vfp}
   In the $\mathcal L_{\mathrm{rkngq}}$-theory of characteristic zero
 henselian valued fields,    every $\lrkng$-for\-mu\-la~$\phi(x_{\mathrm{r}}, x_{\mathrm{k}}, x_{\mathrm{g}})$ is
   equivalent to a formula
  \[\phi_{\mathrm{rngq}}\bigl( x_{\mathrm{r}},\sigma_1(x_{\mathrm{k}}),
  \dotsc,\sigma_m(x_{\mathrm{k}}),x_{\mathrm{g}}\bigr)\] where the $\sigma_i$ are
  special terms and $\phi_{\mathrm{rngq}}$ is a suitable $\mathcal L_{\mathrm{rngq}}$-formula.
\end{theorem}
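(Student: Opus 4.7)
The plan is to adapt the proof of Theorem \ref{qe_vf0} to mixed characteristic. In equicharacteristic zero that argument combined Flenner's QE (Fact \ref{lem:Flenner}) with the QE for pure short exact sequences (Corollary \ref{cor_expansion, infty}) applied to $1\to\k^\times\to\RV^\times\to\Gamma\to 0$. In the present setting the role of the single sort $\RV$ is played by the family $(\RV_\delta)_{\delta\in v(p^\N)}$ of Fact \ref{fac: Flenner's cell decomposition}, and the relevant short exact sequences are
\[ 1 \to R_N^\times \to \RV_{v(N)}^\times \to \Gamma \to 0 \qquad (N = p^n,\ n\geq 0). \]
Each of these is pure: if $r\in R_N^\times$ is an $m$-th power in $\RV_{v(N)}^\times$, any representative $\tilde s\in K^\times$ of an $m$-th root satisfies $mv(\tilde s)=v(\tilde r)=0$, so $\tilde s\in\vr^\times$ and $r$ is already an $m$-th power in $R_N^\times$. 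The connecting morphisms $\rv_{v(p^{n+1})\to v(p^n)}$ restrict on the units to the surjections $\res_{p^n}^{p^{n+1}}$ already primitive in $\lrng$.

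The first step is to apply Fact \ref{fac: Flenner's cell decomposition} in its formula-level form (with $\RV_*$ allowed arbitrary additional structure), using the full stable embeddedness of $\RV_*$. This rewrites any $\lrkng$-formula $\phi(x_{\mathrm r},x_{\mathrm k},x_{\mathrm g})$ as
\[ \phi_{\RV_*}\bigl(x_{\mathrm r},\rv_{\delta_1}(p_1(x_{\mathrm k})),\ldots,\rv_{\delta_k}(p_k(x_{\mathrm k})),x_{\mathrm g}\bigr), \]
with polynomials $p_i\in\Z[X]$, $\delta_i\in v(p^\N)$, and $\phi_{\RV_*}$ in the language of $\RV_*$ enriched by the ring structures on the $R_N$, the connecting maps $\res^M_N$, and the quotient sorts $S_N^n$. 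The second step applies the abelian-monoid version of Corollary \ref{cor_expansion, infty}, together with Remark \ref{remark-n}, to each of the pure short exact sequences displayed above, taking the additional structure on $(R_N,\gi)$ to be the full ring operations on $R_N$ and the order on $\gi$. This rewrites $\phi_{\RV_*}$ as a boolean combination of formulas in which every remaining $\rv_\delta$-term appears inside either $\nu$ or $\rho_m$ applied to a multiplicative-monoid polynomial expression in the $\rv_{\delta_i}(p_i(x_{\mathrm k}))$'s.

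The third step matches these output terms with the special terms of the theorem statement. The identity $\nu\circ\rv_{v(N)}=v$ turns $\nu$-terms into $v(p(x_{\mathrm k}))$. For $\rho_m\circ\rv_{v(N)}$ with $m\geq 1$, I would choose $N'=Np^r$ large enough that $m^2\mid N'$ and compose with the primitive $\res_{N}^{N'}$; Lemma \ref{n2-lemma} identifies the coset structure of $R_{N'}^\times/(R_{N'}^\times)^m$ with a subset of $S_{N'}^m$, and unwinding the definition of $\res_{N'}^m$ shows that $\rho_m\bigl(\rv_{v(N)}(p(x_{\mathrm k}))\bigr)$ is expressible via $\res_{N'}^m(p(x_{\mathrm k}))$. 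The case $m=0$ is handled analogously, yielding terms of the form $\res_N^0\bigl(p(x_{\mathrm k})q(x_{\mathrm k})^{-1}\bigr)$: since $\rho_0$ is a monoid morphism on its proper domain, negatives in the multiplicative monoid expressions may be rewritten as multiplicative inverses of polynomials.

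The main technical obstacle will be the bookkeeping required to simultaneously handle the infinite family of $\RV_\delta$'s, to juggle multiple $N$'s (especially the passage to $N'=Np^r$ when $m^2\nmid N$), and to keep the output a genuine $\mathcal L_{\mathrm{rngq}}$-formula; here one relies on the compatibility of the connecting morphisms noted above and on the interpretability of the sorts $S_N^n$ in $(R_N,\gi)$.
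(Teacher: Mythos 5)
Your approach is genuinely different from the paper's, and while the underlying intuition is reasonable, the proposal as written has gaps that would require substantial work to close.

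The paper's proof does \emph{not} work directly with the mixed-characteristic $\RV_*$-structure. Instead it coarsens: taking $\Delta$ to be the convex hull of all $v(N)$, passing to the coarsened valuation $\dot v$ with residue field $\dot K$ (which is equicharacteristic zero), and then applying Flenner's QE (Fact~\ref{lem:Flenner}) in \emph{that} setting. The mixed-characteristic family of sorts $\RV_{v(N)}$ is thereby replaced by a \emph{single} sort $\dot{\mathrm{RV}}=K/(1+\dot\maxi)$, and the key QE step (Claim~3) invokes Theorem~\ref{qetheorem_general} for the \emph{weakly pure} exact sequence $1\to\dot K^\times\to\dot{\mathrm{RV}}^\times\to\Gamma\to 0$. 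The weak purity is essential here: the composition $\nu\circ\iota$ is the specialization $v\colon\dot K^\times\to\Delta\subseteq\Gamma$, which is \emph{not} zero, so the pure-SES machinery you invoke does not apply to this sequence. The inverse-limit identification $\vr/\dot\maxi\cong\varprojlim R_N$ (Lemma~\ref{lem:varphi iso}) then lets the paper recover the $R_N$-data from $\dot K$ in Claim~2, using Lemma~\ref{n2-lemma} to control the passage to the quotient sorts $S^n_N$.

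Your proposal, by contrast, tries to stay in mixed characteristic and to run the pure SES machinery on the infinite family $1\to R_N^\times\to\RV_{v(N)}^\times\to\Gamma\to 0$. Your observation that each of these sequences is pure is correct (by torsion-freeness of $\Gamma$). But two things don't go through as stated. First, you invoke ``Fact~\ref{fac: Flenner's cell decomposition} in its formula-level form''; the paper states Flenner's formula-level QE (Fact~\ref{lem:Flenner}) only in equicharacteristic zero, whereas in mixed characteristic it only states a one-variable cell decomposition (Fact~\ref{fac: Flenner's cell decomposition}(1)) plus stable embeddedness. Getting from that to the clean form $\phi_{\RV_*}(x_{\mathrm r},\rv_{\delta_1}(p_1(x_{\mathrm k})),\ldots,x_{\mathrm g})$ for arbitrary polynomials and multivariables $x_{\mathrm k}$ is itself a nontrivial step that needs justification. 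Second, even granting that, the resulting $\RV_*$-formula has quantified variables ranging over the sorts $\RV_{v(N)}$ for varying $N$; Corollary~\ref{cor_expansion, infty} eliminates quantifiers over the $B$-sort of \emph{one} pure short exact sequence, and eliminating over the whole inverse system coherently --- the $\RV_{v(N)}$'s are linked by the $\rv_{\gamma\to\delta}$ maps and the output sorts $R_N^\times/(R_N^\times)^m$ differ from the sorts $S^m_{N'}$ of $\lrngq$ --- is exactly the bookkeeping you defer to the final sentence. The paper's coarsening trick collapses the entire family to a single (weakly pure) sequence precisely to avoid this.

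In short: the pure sequences you identify are correct objects, and the translation $\rho_m\circ\rv_{v(N)}\rightsquigarrow\res^m_{N'}$ (after enlarging $N'$ so that $m^2\mid N'$) can be made to work via Lemma~\ref{n2-lemma}. But without the coarsening step, you are missing both a usable formula-level Flenner QE in mixed characteristic and a way to handle the infinite family of $\RV_\delta$-sorts simultaneously, and for the latter the relevant device in the paper is the \emph{weakly} pure SES of Section~\ref{sec:weakly pure exact}, not the pure one.
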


\noindent For the proof of this theorem,  suppose our valued field $K$ (as always, of characteristic zero) is henselian. Let $r$,
  $a$, $\gamma$ be finite tuples in $K$ of the same sort as~$x_{\mathrm{r}}$,~$x_{\mathrm{k}}$,~$x_{\mathrm{g}}$, respectively. Let
  $\sigma_0,\sigma_1,\dots$ list all special terms, and let $\sigma(a)$ denote the tuple~$\sigma_0(a),\sigma_1(a),\dots$.
  We have to show that the type of~$\big(r,\sigma(a),\gamma\big)$ in $K_\mathrm{rng}$ determines the
  type of $(r,a,\gamma)$ in the $\mathcal L_{\mathrm{rkngq}}$-structure $K$. For this we may assume that $K$ is
  special of some
  suitable cardinality $\kappa$, e.g., $\kappa=\beth_\omega(\omega)$ (see \cite[Theorem~10.4.2(c)]{Hodges}).
  The following claim is then clear (see \cite[Theorems~10.4.4 and 10.4.5~(a)]{Hodges}:

  \medskip
  \noindent {\bf Claim 1.} {\it The type of $\big(r,\sigma(a),\gamma\big)$ in $K_\mathrm{rng}$ determines the isomorphism type
  of the structure~$\bigl(K_\mathrm{rng},  r, \sigma(a), \gamma\bigr)$.}

\medskip
\noindent
 In the following we use the notation and terminology of \cite[Section~3.4]{adamtt}.
 Let $\Delta$ be the smallest convex subgroup of $\Gamma$ containing all
  $v(N)$.
  Let $\dot v\colon K^\times\to \dot \Gamma:=\Gamma/\Delta$ be the    coarsening of $v$ by $\Delta$,
  with residue field~$\dot K$ of characteristic zero, and let~$v\colon \dot K^\times\to\Delta$ be the corresponding specialization of $v$.
  The valuation ring    of the valuation $v$ on~$\dot K$ is
  $\vr_{\dot K}:=\vr/\dot\maxi$, where
  $$\dot\maxi:=\big\{x\in K :
  v(x)>v(N) \text{ for all $N$}\big\}$$ is the maximal ideal of the valuation ring
  $$\dot{\mathcal O} := \big\{x\in K :
  v(x)>-v(N) \text{ for some $N$}\big\}$$  of
  $\dot v$, and the maximal ideal of  $\vr_{\dot K}$ is $\maxi_{\dot K}:=\maxi/\dot\maxi$.
  The valued field~$\dot K$ is henselian~\cite[Lemma~3.4.2]{adamtt}. (In fact, even better: $\dot K$ is complete with archimedean value group; cf.~the proof of Claim~2 below.)
  We view $\dot K$ as the two-sorted structure
   $\bigl(\dot K, \Gamma_\infty,v\bigr)$, with the ring structure on $\dot K$ and the ordered group
  structure on $\Gamma$, and the valuation~$v\colon\dot K^\times\to\Delta\subseteq\Gamma$   extended to  a map $\dot K\to\dot\Gamma_\infty$ as usual.
   The natural surjection~$\vr \to \vr_{\dot K}$ induces an isomorphism
   $$R_N = \vr/N\maxi \to \vr_{\dot K}/N\maxi_{\dot K} = (\vr/\dot\maxi)/(N\maxi/\dot\maxi),$$
   and we identify $R_N$ with its image; note that then $R_N$ is interpretable in  $\dot K$, and we may view $r$ as a tuple of elements in $\dot K^{\eq}$.
   The maps $\dot\res^n\colon   K\to
   \dot K/(\dot K^\times)^n$ are defined as before Theorem~\ref{qe_vf0},   for the valuation~$\dot v$ in place of $v$.
   Now let $\theta(a)$ be a sequence enumerating all terms
   of the form $\dot\res^n\big(q(a)\big)$ or~$v\big(q(a)\big)$ for polynomials~$q$ with integer coefficients.

  \medskip
  \noindent
 {\bf Claim 2.} {\it The isomorphism type of the structure
  $\bigl(K_\mathrm{rng},  r, \sigma(a), \gamma\bigr)$  determines that of $\bigl(\dot K,r,\theta(a),\gamma\bigr)$.}

  \begin{proof}
  By Lemma~\ref{lem:varphi iso}, since $K_\mathrm{rng}$ is $\aleph_1$-saturated, we have an
  isomorphism $$\vr_{\dot K}=\vr/\dot\maxi \xrightarrow{\cong} \lim\limits_{\longleftarrow} R_N,$$
  and~$\dot K$ is the fraction field of $\vr_{\dot K}$. It remains to show that $\sigma(a)$ determines each value
  $\dot\res^n(b)$ where~$b=q(a)$ for some polynomial $q$ with integer coefficients.
  For this we may assume $\dot v(b)\in n\dot\Gamma$. Take $c\in K$ with $n\dot v(c)=\dot v(b)$, so~$bc^{-n}\in\dot{\mathcal O}$; then
  with $y:=\dot\res(bc^{-n})\in \dot K^\times$ we have
  $$\dot\res^n(b)= y \cdot(\dot K^\times)^n \in (\dot K^\times)/(\dot K^\times)^n,$$
  where $\dot\res\colon\dot{\mathcal O}\to\dot K$ is the natural surjection. If necessary replacing $b$, $c$, $y$ by their respective inverses,  we can arrange that $0\leq   v(b) - n  v(c) \leq v(M)$ for some~$M$. Set $N:=n^2M$; then
  $\res^n_N(b)\in S^n_N$ is the equivalence class of $\dot\res_N(y)\in R_N$; here~$\dot\res_N\colon{\mathcal O}_{\dot K}\to R_N$ is the natural surjection.
  Now suppose~$\sigma(a)=\sigma(a')$ where~$a'$ is a tuple in $K$ of the same sort as $a$, and let~$b':=q(a')$.
  Then~$v(b)=v(b')$, so~$n\dot v(c)=\dot v(b')$ and $0\leq v(b')-nv(c)\leq v(M)$. Thus setting
  $y':=\dot\res(b'c^{-n})$, we have
      $$\dot\res^n(b')= y' \cdot(\dot K^\times)^n \in (\dot K^\times)/(\dot K^\times)^n.$$
By hypothesis we have $\res^n_N(b)=\res^n_N(b')$ and hence $\dot\res_N(y)\sim^n_N\dot\res_N(y')$.
By Lemma~\ref{n2-lemma} applied to $\dot K$ in place of $K$ we therefore obtain  $y/y'\in (\dot K^\times)^n$ and thus~$\dot\res^n(b)=\dot\res^n(b')$ as required.
  \end{proof}

  \noindent
   Let $\dot{\mathrm{RV}} := K/(1+\dot\maxi)$ be the abelian monoid introduced in Section~\ref{sec:stronger Flenner}, with $\dot v$ in place of $v$, and let~$\dot{\mathrm{rv}}\colon K\to\dot{\mathrm{RV}}$ be the natural surjection.
   Note that since $\dot\maxi\subseteq\maxi$, we have a natural surjective monoid morphism
   $\dot{\mathrm{RV}}\to\mathrm{RV}=K/(1+\maxi)$, and we hence obtain a sequence
   \begin{equation}\label{eq:wpe coarsened}
   1 \to \dot K^\times \xrightarrow{\ \iota\ }\dot{\mathrm{RV}}{}^\times
  \xrightarrow{\ \nu\ }\Gamma \to 0
  \end{equation}
  of morphisms of abelian groups
  where $\iota$ is injective, $\nu$ is surjective, and $\ker\nu\subseteq\im\iota$; since $\Delta=\im (\nu\circ\iota)$ and~$\Gamma/\Delta$ are both torsion-free, this sequence is weakly pure exact, by Lemma~\ref{lem:weakly pure exact crit}.
  We
   consider now the structure
  $(\dot K,\dot{\mathrm{RV}},\gi)$ in the  three-sorted  language $\mathcal L_{\mathrm{rv}}$ (see Section~\ref{sec:stronger Flenner}), which comprises of the field~$\dot K$, the abelian monoid structures on $\gi$ and
   $\dot{\mathrm{RV}}$, and   the   maps $\iota$, $\nu$.
  Let~$\tau(a)$ be an enumeration of all terms~$\dot{\mathrm{rv}}\big(q(a)\big)$, where $q$ ranges over polynomials with integer coefficients.

 \medskip
  \noindent
 {\bf Claim 3.} {\it  The type of $\bigl(r, \theta(a), \gamma\bigr)$ in $\dot K$ determines the type of $\big(r, \tau(a),\gamma\big)$ in the structure~$(\dot K,\dot{\mathrm{RV}},\gi)$.}

\begin{proof}
This claim follows from Theorem~\ref{qetheorem_general} applied to the weakly pure exact se\-quence~\eqref{eq:wpe coarsened} as in the proof of Theorem~\ref{qe_vf0}.
\end{proof}

\noindent
{\bf Claim 4.} {\it The type of $\big(r,\tau(a),\gamma\big)$ in
  $(\dot K,\dot{\mathrm{RV}},\gi)$ determines the type of $(r,a,\gamma)$ in the  $\mathcal L_{\mathrm{rkngq}}$-struc\-ture~$K$.}

\begin{proof}
This follows from Flenner's QE (Fact~\ref{lem:Flenner}). To see this,
let $(\dot K,\dot{\mathrm{RV}},\dot\Gamma_\infty)$ be the  $\L_{\mathrm{rv}}$-structure associated
to the $\Delta$-coarsening of the valued field $K$, as in Section~\ref{sec:stronger Flenner}: that is,
$(\dot K,\dot{\mathrm{RV}},\dot\Gamma_\infty)$
consists of the field $\dot K$, the abelian monoids~$\dot\Gamma_\infty$,~$\dot{\mathrm{RV}}$, the map $\iota\colon\dot K\to\dot\RV$ from above,
and the composition~$\dot\nu\colon\dot\RV\to\dot\Gamma_\infty$ of $\nu$ with the natural surjection $\pi\colon\Gamma_\infty\to\dot\Gamma_\infty$.
Expand this structure by a sort for~$\Gamma_\infty$ as well as the primitives~$\nu$,~$\pi$.
Note that $\dot\Gamma=\Gamma/\nu(\iota(\dot K^\times))$ and $\dot\nu=\pi\circ\nu$.
Hence the type of~$\big(r,\tau(a),\gamma\big)$ in~$(\dot K,\dot{\mathrm{RV}},\gi)$ determines the type of~$\big(r,\tau(a),\gamma\big)$ in this expanded structure~$(\dot K,\dot{\mathrm{RV}},\dot\Gamma_\infty)$.
Now by    Fact~\ref{lem:Flenner} and the remark following it, the type of~$\big(r,\tau(a),\gamma\big)$ in~$(\dot K,\dot{\mathrm{RV}},\dot\Gamma_\infty)$
implies the type of~$(r,a,\gamma)$ in
the $\Delta$-coarsening of   $K$, viewed as $\mathcal L_{\mathrm{rkg,rv}}$-structure
in the natural way, and expanded by a sort for~$\Gamma_\infty$ and the primitives~$\nu$,~$\pi$.
This $\mathcal L_{\mathrm{rkg,rv}}$-structure defines the valuation~$v$ on $K$  (as~$v=\nu\circ\dot\rv$),
and hence interprets
  $K$ viewed as $\mathcal L_{\mathrm{rkngq}}$-structure.
This yields the claim.
\end{proof}

\noindent
The combinations of the four claims above   completes the proof of Theorem~\ref{qe_vfp}.

\begin{rem}\label{rem:qe_vfp}
  Theorem~\ref{qe_vfp} implies a quantifier elimination result for arbitrary
  expansions of $\L_{\mathrm{rngq}}$ just as in
  Corollary~\ref{cor_expansion, infty}.
\end{rem}

\noindent
In the following corollary we assume that  $\gi$ comes equipped with additional structure, and  the
$\mathcal L_{\mathrm{krng}}$-structure $K$ is expanded by this structure on its sort $\gi$;
by Remark~\ref{rem:qe_vfp}, $\gi$ is then stably embedded in~$K$, and the structure induced on $\gi$ is the given one.

\begin{cor}\label{cor:finite k NIP}
Suppose  $\k$ is finite.
Then  $K$ is NIP iff $K$ is finitely ramified and~$\Gamma_\infty$ is NIP.
\end{cor}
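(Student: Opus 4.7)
The plan is to mirror the argument for Corollary~\ref{cor:equich 0 NIP}, using Theorem~\ref{qe_vfp} and Remark~\ref{rem:qe_vfp} in place of Theorem~\ref{qe_vf0}.

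For the forward direction, if $K$ is NIP then Fact~\ref{fac: NIP Hens fields are fin ramified} applies (since $\k$ is finite) and forces $K$ to be finitely ramified. The remark preceding the corollary says that $\gi$ (with its given expansion) is stably embedded in $K$ with induced structure matching the given one, so $\gi$ is NIP as a reduct of the NIP structure $K$.

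For the converse, assume $K$ is finitely ramified and $\gi$ (with its given structure) is NIP. If $\operatorname{char} \k = 0$ the conclusion follows from Corollary~\ref{cor:equich 0 NIP}, so assume $p := \operatorname{char} \k > 0$. Lemma~\ref{lem:ker(resN) finite} then shows that every $R_{p^n}$ is finite; for $N$ coprime to $p$ we have $N\maxi = \maxi$, whence $R_N = \k$ is finite. Thus each $R_N$, and therefore each auxiliary sort $S_N^n$ (a quotient of a subset of $R_N$), is finite. The $\mathcal L_{\mathrm{rngq}}$-reduct of $K$, augmented by the given structure on $\gi$, therefore consists of the NIP sort $\gi$ together with finitely many finite sorts that interact with $\gi$ only via maps $v_N$ of finite domain, and among themselves via maps $\res_N^M$, $\pi_N^n$ of finite domain.

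Such a reduct is NIP: any partitioned formula decomposes, by cases on the values of its finite-sort variables, into a finite boolean combination of formulas in $\gi$ with finitely many named parameters (the values $v_N(r)$), and NIP is preserved under both naming constants and finite boolean combinations. By Theorem~\ref{qe_vfp} together with Remark~\ref{rem:qe_vfp}, every formula in the full $\mathcal L_{\mathrm{rkngq}}$-structure $K$ is equivalent to one in this reduct after pre-composition with special terms applied to field variables; since pre-composition with terms preserves NIP of a partitioned formula, $K$ is NIP. The principal input is Lemma~\ref{lem:ker(resN) finite}, which is already in hand, so no genuinely new obstacle arises compared to the equicharacteristic zero case.
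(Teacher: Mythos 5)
Your forward direction and your use of Lemma~\ref{lem:ker(resN) finite} to make all the sorts $R_N$ and $S^n_N$ finite match the paper, and the observation that the auxiliary structure (the value group $\gi$ with its given expansion together with the countably many finite sorts, each formula only ever touching finitely many of them) is NIP is fine. The gap is in your final step. Theorem~\ref{qe_vfp} (with Remark~\ref{rem:qe_vfp}) rewrites a formula as $\phi_{\mathrm{rngq}}\bigl(\sigma_1(x_{\mathrm k}),\dots,\sigma_m(x_{\mathrm k}),x_{\mathrm g}\bigr)$ where the special terms are applied to the \emph{whole} tuple of field variables. When you partition the variables into object and parameter variables to test NIP, a special term such as $v\bigl(q(x,y)\bigr)$ mixes the two sides; after substituting field parameters $b$ it becomes the map $a\mapsto v\bigl(q(a,b)\bigr)$, whose dependence on $b$ is not mediated through the auxiliary structure at all. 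The principle ``pre-composition with terms preserves NIP of a partitioned formula'' is valid only when the terms respect the partition, i.e.\ are functions of $x$ alone and of $y$ alone; for a joint map $t(x,y)$ into an NIP structure it fails outright (any bipartite relation on $K$ is the pre-composition of the NIP formula $u=u_0$ with a suitable two-variable map into a two-element sort, so NIP of the target formula gives no control over the fibres of $t$). Thus your argument says nothing about exactly the instances that matter, namely $\psi\bigl(v(q_1(x,b)),\dots,v(q_m(x,b))\bigr)$ with field parameters $b$.

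This is where the paper's proof has its real content: assuming $K$ is not NIP, it takes an indiscernible sequence $(a_i)$ of field elements and a definable set witnessing alternation, uses the finiteness of the $R_N$ and $S^n_N$ to reduce to special terms of valuation type $v\bigl(q_j(x_{\mathrm k})\bigr)$ only, and then invokes \cite[Lemma~A.18]{SimonGuide} to write $v\bigl(q_j(a_i)\bigr)=\gamma_j+r_j\alpha_i$ for large $i$ with $(\alpha_i)$ indiscernible in $\Gamma$, which transfers the alternation into a formula of $\gi$ and contradicts its NIP. Some analysis of this kind of the values $v\bigl(q(a_i,b)\bigr)$ along indiscernible sequences (or a case analysis in the style of Proposition~\ref{prop:RV NIP}) is indispensable and is entirely absent from your proposal. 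A small additional remark: since $\k$ is finite it has positive characteristic, so your appeal to Corollary~\ref{cor:equich 0 NIP} for a ``characteristic zero case'' is vacuous --- harmless, but a sign that the reduction was not thought through in the mixed-characteristic situation where the work actually lies.
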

\begin{proof}
The forward direction is clear by earlier results. For the converse, suppose~$K$ is finitely ramified but not NIP.
 We may assume that $K$ is a monster model of its theory.
 Then there is an indiscernible sequence~$(a_{i})_{i\in\N}$ of elements of the field sort
and a definable subset $S\subseteq K$ such that for all~$i$, we have $a_i\in S$ iff $i$ is even.
By Theorem~\ref{qe_vfp} there are special terms $\sigma_1(x_{\mathrm{k}}),\dots,\sigma_m(x_{\mathrm{k}})$ and a suitable
 $\mathcal L_{\mathrm{rngq}}$-formula $\psi$ (possibly involving parameters) such that for $a\in K$:
$$a\in S \quad\Longleftrightarrow\quad K \models \psi\big(\sigma_1(a),\dots,\sigma_m(a)\big).$$
In particular,
$$K \models \psi\big(\sigma_1(a_i),\dots,\sigma_m(a_i)\big) \quad\Longleftrightarrow\quad \text{$i$ is even.}$$
Since  $\k$ is finite, so are $R_N$ and hence all   $S^n_N$, by Lemma~\ref{lem:ker(resN) finite}. Hence after  modifying~$\psi$ and the $\sigma_j$ suitably, we can assume that  each $\sigma_j$ has the form
$\sigma_j(x_{\mathrm{k}})=v\big(q_j(x_{\mathrm{k}})\big)$ for some polynomial~$q_j(x_{\mathrm{k}})$ with integer coefficients.
From~\cite[Lem\-ma~A.18]{SimonGuide} we obtain $\gamma_1,\dots,\gamma_m\in\Gamma$, $r_1,\dots,r_m\in\N$, and an indiscernible sequence~$(\alpha_i)$ of elements of $\Gamma$ such that
$$v\big(q_j(a_i)\big) = \gamma_j + r_j\alpha_i\qquad\text{for sufficiently large $i$.}$$
With $x_{\mathrm{g}}$   a variable of sort $\Gamma_\infty$ and
$$\psi_{\mathrm{g}}(x_{\mathrm{g}}):=\psi\big(\gamma_1+r_1x_{\mathrm{g}},\dots,\gamma_m+r_mx_{\mathrm{g}}\big),$$
for sufficiently large $i$ we then have
$$K\models \psi_{\mathrm{g}}(\alpha_i)  \quad\Longleftrightarrow\quad \text{$i$ is even,}$$
showing that  $\gi$ has IP.
\end{proof}

\section{Distality in Henselian Valued Fields}
\label{SectionHensVal}

\noindent
The main aim of this section is to prove the  theorem stated in the introduction. In Section~\ref{sec: naming val distal} we consider when naming a henselian valuation on a distal field preserves distality.
After some valuation-theoretic preliminaries in Section~\ref{sec:canonical valuations},  we investigate the structure of fields with a distal expansion in Section~\ref{sec:distal fields}.
Using work of Johnson~\cite{johnson2015dp}, we obtain  some consequences in the $\operatorname{dp}$-minimal case in~Sec\-tion~\ref{sec: dp-minimal fields}.

\subsection{Reduction to $\protect\RV_*$} \label{sec:reduction 1}
{\it In this subsection $K$ is a henselian valued field of characteristic zero, and the structure~$\bm{K}$ and its reduct~$\RV_*$ are as introduced in Section~\ref{sec:QE hens val}, where  $\RV_*$ may carry  additional structure.}\/
The aim of the present subsection is to prove the following:

\begin{prop}\label{prop: reducing K to RV}
$\bm{K}$ is
distal if and only if $K$ is finitely ramified and $\RV_*$ is distal.
\end{prop}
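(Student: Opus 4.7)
\emph{Forward direction.} Since $\RV_*$ is a fully stably embedded reduct of $\bm{K}$ (Fact~\ref{fac: Flenner's cell decomposition}(2)), Lemma~\ref{lem: reduct of distal on a stab emb set is distal} yields distality of $\RV_*$. Moreover $\bm{K}$ is bi-interpretable with $(K,\mathcal{O})$ (Remark~\ref{rem:bmK and K}), so by Fact~\ref{fac: biinterp distal}(1) the latter is distal and thus has a distal expansion; Corollary~\ref{cor:distal=>fin ram} then gives finite ramification of $K$.

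\emph{Backward direction, setup.} Assume $K$ is finitely ramified and $\RV_*$ is distal. The residue field $\k$ is interpretable in $\RV_*$ (Remark~\ref{rem: basic definability in RV}) and hence distal, so by Fact~\ref{fac: distal fields char 0} it is either finite or of characteristic zero, and Proposition~\ref{prop:RV NIP} yields that $\bm{K}$ is NIP. Suppose toward a contradiction that $\bm{K}$ is not distal. Working in a monster $\bm{M}$, Proposition~\ref{prop: indisc char of distality for singletons} together with Corollary~\ref{cor: explicit witness of non-distality} supplies an indiscernible sequence $(b_i)_{i\in\Q}$ in a single variable $x$, a tuple $d$, and an $\L$-formula $\varphi(x;y)$ such that $(b_i)_{i\neq 0}$ is $d$-indiscernible and $\models\varphi(b_i;d)\Longleftrightarrow i\neq 0$. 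If the sort of $x$ lies in $\RV_*$, then that sort is distal and fully stably embedded in $\bm{K}$, and Lemma~\ref{lem: easy distal over a predicate} produces an immediate contradiction; hence $x$ is of the field sort $K$.

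\emph{Flenner reduction and case analysis.} By Fact~\ref{fac: Flenner's cell decomposition}(1) together with Remark~\ref{rem:Flenner}(1), there are $a_1,\dots,a_m\in K\cap\acl(d)$, a single $\delta$, and an $\acl(d)$-definable $D\subseteq\RV_\delta^m$ with
\[
\varphi(K;d)\ =\ \big\{a\in K:\big(\rv_\delta(a-a_1),\dots,\rv_\delta(a-a_m)\big)\in D\big\}.
\]
Using that $a_k$ and a defining tuple $e$ for $D$ lie in $\acl(d)$, a standard automorphism/Ramsey argument over $d$ (moving $(a_1,\dots,a_m,e)$ within its $\acl(d)$-orbit) lets us absorb these parameters into $d$ while preserving the $d$-indiscernibility of $(b_i)_{i\neq 0}$. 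We now distinguish cases via Lemma~\ref{lem: pseudo-conv or fan}. In Case~1 ($(b_i)$ pseudocauchy), a direct compactness argument in $\bm{M}$, using both the $\emptyset$-indiscernibility of $(b_i)_{i\in\Q}$ and the $d$-indiscernibility of $(b_i)_{i\neq 0}$ and approximating $b_\infty$ by elements far out in the sequence, yields $b_\infty\in\bm M$ such that $(b_i)_{i\in\Q_\infty}$ is $\emptyset$-indiscernible and $(b_i)_{i\in(\Q\setminus\{0\})\cup\{\infty\}}$ is $d$-indiscernible. The extended sequence remains pseudocauchy, and finite ramification applied to Lemma~\ref{lem: cutting a pseudo-convergent sequence} yields \eqref{eq:v(ainfty-ai)}, so $v(b_\infty-b_i)$ grows strictly by more than $\delta$ per step. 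Since $a_k\in d$ and $(b_i)_{i\neq 0}$ is $db_\infty$-indiscernible, the truth value of each relation $v(a_k-b_\infty)\,\Box\,v(b_\infty-b_i)+\delta$ ($\Box\in\{<,=,>\}$) is constant in $i\neq 0$; combined with \eqref{eq:v(ainfty-ai)} and with the finiteness of intervals of length $2\delta$ in $\Gamma$ (from finite ramification), the borderline subcase $|v(a_k-b_\infty)-v(b_\infty-b_i)|\leq\delta$ is excluded, so for each $k$ either $\rv_\delta(b_i-a_k)=\rv_\delta(b_i-b_\infty)$ for all $i\neq 0$, or $\rv_\delta(b_i-a_k)$ equals some constant $s_k\in\RV_\delta$ for all $i\neq 0$. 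Setting $r_i:=\rv_\delta(b_i-b_\infty)$, the sequence $(r_i)_{i\in\Q}$ is indiscernible in $\RV_*$, the subsequence $(r_i)_{i\neq 0}$ is indiscernible over a tuple $e'$ in $\RV_*$ built from $e$ and the $s_k$, and $\varphi(b_i;d)$ is equivalent to an $\RV_*$-formula $\chi(r_i;e')$; applying Corollary~\ref{cor: explicit witness of non-distality} inside $\RV_*$ contradicts its distality. Case~2 is handled symmetrically with the ordering reversed. In Case~3 ($(b_i)$ a fan), a Hahn-space style pigeonhole argument forces $\k$ to be infinite (otherwise a pseudocauchy subsequence appears), hence of characteristic zero and $\delta=0$; fixing some $j\in\Q\setminus\{0\}$, expressing $\rv(b_i-a_k)$ in terms of $r_i:=\rv(b_i-b_j)$ via the $\oplus$ relation, and restricting to the one-sided subsequence $(b_i)_{i<j}$ to preserve indiscernibility of $(r_i)_{i\neq 0,\,i<j}$ over the relevant parameters, yields a contradiction via the non-distality of $\RV_*$ as in Case~3 of the proof of Proposition~\ref{prop:RV NIP}.

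\emph{Main obstacle.} The crux of the argument is Case~1: simultaneously arranging the $\emptyset$- and $d$-indiscernibility conditions on $b_\infty$ by a careful compactness argument, and then combining finite ramification with $db_\infty$-indiscernibility of $(b_i)_{i\neq 0}$ to exclude the borderline configuration $|v(a_k-b_\infty)-v(b_\infty-b_i)|\leq\delta$; the remaining cases reduce to variants of the same principle.
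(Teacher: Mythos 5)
Your argument is correct and follows the paper's route: the same forward direction via stable embeddedness of $\RV_*$ and Corollary~\ref{cor:distal=>fin ram}, and the same backward direction via Flenner's cell decomposition (Fact~\ref{fac: Flenner's cell decomposition}), absorption of $\acl$-parameters, and the pseudocauchy / reverse-pseudocauchy / fan trichotomy, with finite ramification in the first two cases and distality of $\RV_*$ supplying the contradiction in each. The only noteworthy deviation is in the fan case, where the paper keeps $b_\infty$ as the common reference and works with the full two-sided sequence $(\rv(b_i-b_\infty))_{i\in\Q}$, while you anchor at an element $b_j$ of the sequence itself and restrict to a one-sided subsequence, as in the NIP proof (Proposition~\ref{prop:RV NIP}); this works, but since the NIP argument has no special element to track, you should still treat the analogue of the paper's sub-case where $v(a_k-b_0)>\gamma$, which forces $r_0=\rv(a_k-b_j)$ and then gives the contradiction via Corollary~\ref{cor: distal indisc seq is definably ordered} and the $a_kb_j$-indiscernibility of $(b_i)_{i\neq 0}$.
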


\noindent The ``only if'' part is straightforward by Lemma~\ref{lem: reduct of distal on a stab emb set is distal},
full stable embeddedness of~$\RV_*$ in $\bm{K}$ (see Fact~\ref{fac: Flenner's cell decomposition}(2)), and Corollary~\ref{cor:distal=>fin ram}.
In the rest of this subsection we assume that $K$ is finitely ramified and $\RV_*$ is distal,  and show that then  $\bm{K}$ is also
distal. We may assume that $\bm{K}$ is  a monster model of $\Th(\bm{K})$.
 Note that~$\bm{K}$ is automatically NIP by Fact~\ref{fac: inf distal fields char 0} and Proposition~\ref{prop:RV NIP}.
Suppose towards a contradiction that $\bm{K}$ is not distal.
By Corollary~\ref{cor: explicit witness of non-distality}
there are an indiscernible sequence~$(a_{i})_{i\in\mathbb{Q}}$
with $a_{i}\in K$ and finite tuples ${b}=(b_1,\dots,b_n)$ in~$K$ and ${c}$ in $\RV_*$, as well as a formula~$\phi(x, {b},{c})$, such
that~$(a_{i})_{i\in\mathbb{Q}^{\neq}}$
is $bc$-indiscernible, where $\Q^{\neq}:=\Q\setminus\{0\}$, but~$\models \phi(a_i,b,c)$ iff $i \neq 0$.
By Fact~\ref{fac: Flenner's cell decomposition} and Remark~\ref{rem:Flenner}(1),
$\phi(x,b,c)$ is equivalent to a formula of the form
\begin{equation}\label{eq:psi}
\psi\big(\!\rv_\delta (x-b'_{1}),\ldots,\rv_\delta (x-b'_{m}), {c}'\big)
\end{equation}
for some $\delta$, some $m$ and   $b'=(b'_{1},\dots,b'_m)\in K^m$, some tuple $c'$  from $\RV_*$, and an $\mathcal{L}_{\RV_*}$-formula~$\psi$,
where in addition $b'_{1},\ldots, b'_{m},{c}'\in\acl({b}{c})$. In
particular, $(a_{i})_{i\in\mathbb{Q}^{\neq}}$
is ${b}' {c}'$-indiscernible, hence after replacing our
original formula with this new one, we can assume that $\phi(x,{b},{c})$ itself is of the form~\eqref{eq:psi}
with $b=b'$. So for $i\in\Q$:
\begin{equation}\label{eq:psi at 0}
\models \psi\big(\!\rv_\delta (a_i-b_1),\ldots,\rv_\delta (a_i-b_{n}), {c}\big)\quad\iff\quad i\neq 0.
\end{equation}
As the structure induced on $\RV_*$ is distal by Fact~\ref{fac: Flenner's cell decomposition} and $\pmb{K}$ is NIP,  Proposition~\ref{prop: lifting distality over a predicate} implies
that~$(a_{i})_{i\in\mathbb{Q}}$ is $c$-indiscernible.
By Lemma~\ref{lem: pseudo-conv or fan}, the following three cases
exhaust all the possibilities.

\case[1]{$(a_{i})_{i\in\mathbb{Q}}$ is pseudocauchy.}
Take   $a_{\infty} \in K$   such that~$(a_{i})_{i\in \mathbb{Q}^{\neq}_\infty}$
is $bc$-in\-dis\-cer\-nible   and~$(a_{i})_{i\in\mathbb{Q}_\infty}$
is $c$-indiscernible. (Such an $a_\infty$ exists by assumption and saturation.)
Then the sequence $\big( v(a_\infty - a_i) \big)_{i \in \mathbb{Q}}$  is strictly increasing. Now for each~$k \in \{1, \ldots, n \}$, one of the following must occur.

\begin{enumerate}
\item[(a)] \textit{$v(b_k-a_{\infty})>v(a_{\infty}-a_{i})$
for all $i\in\mathbb{Q}$.}\/
As the sequence $(a_i)_{i \in \mathbb{Q}}$ is endless, in view of \eqref{eq:v(ainfty-ai)} we then have 
$$v(b_k-a_{\infty})>v(a_{\infty}-a_{i}) + \delta$$ and hence~${\rv_\delta (b_k-a_{i})=}\rv_\delta (a_{\infty}-a_{i})$ for all $i\in\mathbb{Q}$.
\item[(b)] \textit{$v(b_k-a_{\infty})<v(a_{\infty}-a_{i})$
for each $i\in\mathbb{Q}$.}\/  As in (a), this implies that
$${v(b_k-a_\infty) + \delta }< v(a_\infty - a_i)$$ and hence $\rv_\delta (b_k-a_{i})=\rv_\delta (b_k-a_{\infty})$
for all $i\in\mathbb{Q}$.
\item[(c)] \textit{There are   $i>j$ in $\Q$ such that
$v(a_{\infty}-a_{i}) \geq v(b_k-a_{\infty}) \geq v(a_{\infty}-a_{j})$.}\/
After increasing~$i$ or decreasing~$j$ if necessary we can assume that $i,j\neq 0$.
As the relation $v(x)\leq v(y)$
is $\emptyset$-definable, we obtain a contradiction with $b_r$-indiscernibility of $(a_{i})_{i\in \mathbb{Q}^{\neq}_\infty}$.
\end{enumerate}
Permuting the components of $b$, we can thus arrange to have an~$l\in\{1,\dots,{n+1}\}$ such that for each~$i\in\Q$  and $k = 1,\dots,n$ we have
$$\rv_\delta (a_{i}-b_k) = \begin{cases}
\rv_\delta (a_i-a_{\infty}) & \text{if $k<l$} \\
\rv_\delta (a_{\infty}-b_k) & \text{otherwise.}
\end{cases}$$
Set $r_i := \rv_\delta (a_i-a_{\infty})$ for $i\in\Q$ as well as
 $s_k:=\rv_\delta (a_{\infty}-b_k)$ for $k=l,\dots,n$, and $r:=(r_l,\dots,r_n)$.
Now the sequence $(r_i)_{i \in \mathbb{Q}}$ is indiscernible, and $(r_i)_{i \in \Q^{\neq}}$ is $s c$-indiscernible (as $(a_i)_{i \in \Q^{\neq}_\infty}$ is $bc$-indiscernible). As $\RV_*$ is distal, by Proposition~\ref{prop: indisc char of distality for singletons} this implies that $(r_i)_{i \in \mathbb{Q}}$ is also $s c$-indiscernible. But then
\begin{align*}
&  \phantom{\iff\quad} \models \psi\big(\rv_\delta \left(a_1-b_{1}\right),\ldots,\rv_\delta\left(a_1-b_{n}\right),{c}\big) \\
&  \iff\quad  \models \psi ( r_1,\dots,r_1, s, {c} ) \\ 
&  \iff\quad  \models \psi ( r_0,\dots,r_0, s,  {c} ) \\ 
&  \iff\quad  \models \psi\big(\rv_\delta (a_0-b_{1}),\ldots,\rv_\delta(a_0-b_{n}),{c}\big),
\end{align*}
contradicting \eqref{eq:psi at 0}.

\case[2]{$(a_{i})_{i\in\mathbb{Q}^*}$
is pseudocauchy.} Then we
apply Case~1 to the sequence $(a_{-i})_{i\in\mathbb{Q}}$ in place of $(a_i)_{i\in\Q}$.

\case[3]{$(a_{i})_{i\in\mathbb{Q}}$
is a fan.}
 Note again that then $\k$  is infinite, hence $\ch\k=0$ by Fact~\ref{fac: inf distal fields char 0}, and thus~$\delta=0$.
Take some $a_{\infty}$  as in Case~1, and
let $\gamma$ be the common value of $v(a_{i}-a_{j})$
for all $i\neq j$ in~$\mathbb{Q}_\infty$. Let $k\in\{1,\dots,n\}$; then one of the following must occur.

\begin{enumerate}
\item[(a)] \textit{$v(b_k-a_{\infty})<\gamma$.}\/ Then $\rv(b_k-a_{i})=\rv(b_k-a_{\infty})$ for
 all $i\in\mathbb{Q}$.
\item[(b)] \textit{$v(b_k-a_{i})>\gamma$ for some $i\in\mathbb{Q}^{\neq}$.}\/
Then  for each $j\in\mathbb{Q}\setminus \{ 0,i \} $ we have
$$ \hskip3em v(b_k-a_{i})>v(a_{\infty}-a_{i})\ \text{ and }\ v(b_k-a_{j}) \leq v(a_{\infty}-a_{j}), $$
contradicting $b_k$-indiscernibility of~$(a_i)_{i\in\Q^{\neq}_\infty}$.
\item[(c)] \textit{$v(b_k-a_{0})>\gamma$.}\/ Then $\rv(a_0 - a_\infty) = \rv(b_k - a_\infty)$. Note that the sequence~$\big(\!\rv(a_i - a_\infty)\big)_{i \in \mathbb{Q}}$ is indiscernible, and hence not totally indiscernible, by distality and stable embeddedness of~$\RV_*$.  So~$\big(\!\rv(a_i - a_\infty)\big)_{i \in \Q^{\neq}}$ is not indiscernible over $\rv(a_0 - a_\infty) = \rv(b_k - a_\infty)$ by Corollary~\ref{cor: distal indisc seq is definably ordered}. But this again contradicts the $b_k$-indiscernibility of  $(a_i)_{i\in\Q^{\neq}_\infty}$.
\item[(d)] \textit{$v(b_k-a_{i})=\gamma$
for all $i\in\mathbb{Q}$.}\/ Then $\rv(b_k-a_{\infty})=\gamma$ and thus
$$\rv(b_k-a_{i})=\rv(b_k-a_{\infty})\oplus\rv(a_{\infty}-a_{i})\quad\text{for all $i\in\mathbb{Q}$.}$$ 
\end{enumerate}
Reindexing the components of $b$, we can thus arrange to have some~$l\in\{1,\dots,n+1\}$ such that for~$i\in\Q$ and $k=1,\dots,n$,
with $r_i := \rv (a_i-a_{\infty})$ and  $s_k := \rv(a_\infty-b_k)$:
$$\rv(a_i-b_k) = \begin{cases}
r_i \oplus s_k	& \text{if $k<l$} \\
s_k		& \text{otherwise.}
\end{cases}$$
Let $s:=(s_1,\dots,s_n)$.   Then $(r_i)_{i \in \mathbb{Q}}$ is indiscernible and $(r_i)_{i\in\Q^{\neq}}$
is $sc$-indiscernible, since $(a_{i})_{i\in\mathbb{Q}_\infty}$ is indiscernible and $(a_{i})_{i\in \mathbb{Q}^{\neq}_\infty}$
is $bc$-indiscernible. Hence $(r_i)_{i\in\Q}$ is $sc$-indiscernible by Proposition~\ref{prop: indisc char of distality for singletons},
as $\RV_*$ is distal. But then
\begin{align*}
&  \phantom{\iff\quad} \models \psi\big(\!\rv (a_1-b_{1}),\ldots,\rv(a_1-b_{n}),{c}\big)\\
&\iff\quad \models \psi (r_1 \oplus s_1,\ldots,r_1 \oplus s_{l-1}, s_l, \ldots, s_n, {c} ) \\
&\iff\quad\models \psi (r_0 \oplus s_1,\ldots,r_0  \oplus s_{l-1}, s_l, \ldots, s_n, {c} ) \\
&\iff\quad\models \psi\big(\!\rv (a_0-b_{1}),\ldots,\rv (a_0-b_{n}),{c}\big),
\end{align*}
contradicting \eqref{eq:psi at 0}. This finishes the proof of Proposition~\ref{prop: reducing K to RV}. \qed

\subsection{Reduction of distality from $\protect\RV_*$ to $\k$ and $\Gamma$}  \label{sec:reduction 2}
{\it In this subsection we assume that the structure on $\RV_*$ is obtained from
structures on $\k$, $\gi$ by expanding $\RV_*$ by all relations $S\subseteq \RV^m$ where~$S\subseteq(\ker v_{\rv})^m=(\k^\times)^m$ is definable in $\k$
or~$S=v_{\rv}^{-1}(v_{\rv}(S))$ and $v_{\rv}(S)\subseteq\Gamma^m$ is definable in $\Gamma$.}\/
 
\begin{prop}\label{prop: reducing RV to k and gamma}
Suppose $K$ is finitely ramified. Then
${\RV}_*$ is distal if and only if both $\k$ and $\Gamma$ are.
\end{prop}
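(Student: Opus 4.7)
The forward direction is immediate from Remark~\ref{rem: basic definability in RV} combined with Lemma~\ref{lem: reduct of distal on a stab emb set is distal}: both $\k$ and $\gi$ are fully stably embedded sorts in $\RV_*$, so if $\RV_*$ is distal, then the induced structures on $\k$ and $\gi$ are distal, which under the hypothesis of this subsection are the given ones.

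For the converse, assume $\k$ and $\gi$ are distal, and split cases on the residue characteristic. If $\ch \k = 0$, then $\RV_*$ consists of the single sort $\RV = \RV_0$, and by the bi-interpretability lemma at the end of Section~\ref{sec:stronger Flenner}, the $\L_{\RV_*}$-structure $\RV$ is bi-interpretable with the three-sorted structure $(\k, \RV, \Gamma_\infty)$ viewed as a pure short exact sequence $1 \to \k^\times \to \RV^\times \to \Gamma \to 0$ together with the given additional structures on $\k$ and $\gi$. Theorem~\ref{thm: dist in SES}, applied with these arbitrary expansions of $A = \k$ and $C = \gi$ (per the remark following its proof), then yields that this pure SES is distal, and Fact~\ref{fac: biinterp distal}(1) delivers distality of $\RV_*$.

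When $\ch \k = p > 0$, the assumption that $\k$ is distal forces $\k$ to be finite, by Fact~\ref{fac: distal fields char 0} (no infinite field of positive characteristic is interpretable in a distal structure). Then Lemma~\ref{lem:ker(rv) finite}, which uses finite ramification together with finiteness of $\k$, shows that each kernel of the natural projection $\RV^\times_{v(p^{n+1})} \to \RV^\times_{v(p^n)}$ is finite. Iterating, every element of each sort $\RV_{v(p^n)}$ is algebraic (in the $\RV_*$-structure) over some element of $\RV = \RV_{v(p^0)}$, so $\RV_* \subseteq \acl(\RV)$. Combined with the equicharacteristic-zero argument applied to the pure SES reduct $(\k, \RV, \Gamma_\infty)$ of $\RV_*$, which is still distal, Corollary~\ref{cor: finite cover distality} then lifts distality from this reduct to all of $\RV_*$.

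The main obstacle is the last step: Corollary~\ref{cor: finite cover distality} requires the \emph{induced} structure on the distinguished sort (here, $\RV$, viewed as carrying the induced structure from the whole $\RV_*$) to be distal, not merely that the reduct to $(\k, \RV, \gi)$ is distal. So the delicate point is to verify that any $\emptyset$-definable relation on $\RV$ in $\RV_*$ which a priori uses quantifiers over the higher $\RV_{v(p^n)}$ sorts can be rewritten using only the pure SES sorts, thanks to the finite-fiber nature of the projections. Once this is confirmed, bi-interpretability of the pure SES reduct with $(\k, \RV, \gi)$ ensures that $\RV_{\ind}$ is indeed distal, and the argument closes.
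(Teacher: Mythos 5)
Your forward direction and your treatment of the equicharacteristic-zero case are essentially the same as the paper's (bi-interpretability of $\RV_*$ with the pure short exact sequence, then Theorem~\ref{thm: dist in SES}). The problem lies in the positive residue characteristic case, and you have correctly identified it yourself without repairing it.

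When $\ch\k=p>0$, you argue $\RV_*\subseteq\acl(\RV)$ (via Lemma~\ref{lem:ker(rv) finite}) and then try to apply Corollary~\ref{cor: finite cover distality} with $D=\RV$. But that corollary requires two inputs you have not supplied: first, that $\RV_*$ is NIP; and second, that the \emph{full induced structure} $\RV_{\ind}$ on $D=\RV$ from the entire $\RV_*$ is distal. You say you will get the second from the pure SES reduct $(\k,\RV,\gi)$, but those are not obviously the same structure: formulas with quantifiers over the higher sorts $\RV_{v(p^n)}$ could a priori define new subsets of $\RV$, and the finite-fiber observation alone does not uniformize over such quantifiers without further argument. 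Your last paragraph concedes exactly this, which means the proof is not closed. The paper avoids the issue entirely by descending all the way to $\Gamma$ rather than stopping at $\RV$: since $\k^\times=\ker v_{\rv}$ is finite (as $\k$ is finite) and the projections $\rv_{\gamma\to 0}$ have finite kernels, one has $\RV_*\subseteq\acl(\Gamma)$ directly. The induced structure $\Gamma_{\ind}$ is the given one by the stable embeddedness coming from the quantifier elimination of Theorem~\ref{qe_vfp} (see the remark before Corollary~\ref{cor:finite k NIP}), and this structure is distal by hypothesis. NIP of $\RV_*$ is supplied by Corollary~\ref{cor:finite k NIP}. Corollary~\ref{cor: finite cover distality} then applies with $D=\Gamma$, no delicate rewriting required.

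In short: switch from $\acl(\RV)$ to $\acl(\Gamma)$, invoke Corollary~\ref{cor:finite k NIP} to get NIP of $\RV_*$, and use the stable embeddedness of $\Gamma$ from the QE to identify $\Gamma_{\ind}$ with the given distal structure. That closes the gap you flagged.
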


\noindent
For the proof, it is natural to distinguish two cases.

\subsubsection{$\ch \k>0$} Here we may assume that $\k$ is  finite, by Fact~\ref{fac: inf distal fields char 0}.
The structure induced on~$\Gamma$ is the given one; see the remarks preceding
Corollaries~\ref{cor:finite k NIP}.
The forward direction now follows from Lemma~\ref{lem: reduct of distal on a stab emb set is distal}.
For the converse, suppose $\Gamma$ is distal; then $\Gamma$ is NIP and hence so is the structure~$\RV_*$ interpretable in $K$, by Corollary~\ref{cor:finite k NIP}. By Lemma~\ref{lem:ker(rv) finite},
the group morphisms $$\rv_{\gamma\to 0}\colon \RV^\times_{\gamma} \to \RV^\times_{0}=\RV^\times$$ have finite fibers; moreover, since $v_{\rv}\colon \RV^\times \to \Gamma$ has kernel $\k^\times$, this group morphism also has finite fibers. Hence each element of $\RV_*$ is algebraic over $\Gamma$. As~$\Gamma$ is distal, applying Corollary~\ref{cor: finite cover distality}  we conclude that $\RV_*$ is distal.

\subsubsection{$\ch \k=0$}
In this case, we note that $\RV_*$ is bi-interpretable with the pure short exact sequence
$$1 \to \k^\times \to \RV^\times \to \Gamma \to 0,$$ in the sense of Section~\ref{sec: QE for SES}, where $\k$, $\Gamma$ carry the given additional structure.
But then the conclusion holds by Theorem~\ref{thm: dist in SES}. \qed

\medskip
\noindent
Combining Propositions~\ref{prop: reducing K to RV} and~\ref{prop: reducing RV to k and gamma} with Remark~\ref{rem:bmK and K}   finishes the proof of the main theorem.

\subsection{When naming a henselian valuation preserves distality}\label{sec: naming val distal}
Let $(K,\mathcal O)$ be a henselian valued field
with residue field $\k$ and value group $\Gamma$.
The following is~\cite[Theo\-rem~A]{jahnke2016does}:

\begin{fact}\label{fac: Jahnke def}
If $\k$ is  \emph{not} separably closed, then $\mathcal O$ is definable in
the Shelah  expansion~$K^{\Sh}$ of the field~$K$.
\end{fact}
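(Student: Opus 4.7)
My plan is to reduce the problem to showing that $\mathcal{O}$ is \emph{externally definable} in the pure field $K$, since $K^{\Sh}$ is by construction equipped with a predicate for every externally definable set. Recall that a subset $S\subseteq K$ is externally definable in $K$ precisely when there exist an elementary pure-field extension $K\preceq K^*$, a formula $\phi(x,\bar y)$ in the language of rings, and parameters $\bar b\in K^*$ such that $S = \{a\in K : K^*\models\phi(a,\bar b)\}$.

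To produce such a $\phi$, I would pass to a sufficiently saturated elementary extension $(K^*,\mathcal{O}^*)\succeq(K,\mathcal{O})$ of valued fields; then $K^*\succeq K$ as pure fields and $\mathcal{O}=\mathcal{O}^*\cap K$, so it suffices to parametrically define $\mathcal{O}^*$ in the pure field~$K^*$. This reduces matters to the central claim that the valuation ring of a henselian valued field with non-separably-closed residue field is parametrically definable in the pure field, once one is allowed to pass to a saturated extension to access the needed parameters.

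The bulk of the work is then a case analysis on $\k$ following the structure theory of definable henselian valuations of Koenigsmann and Jahnke-Koenigsmann \cite{JK15}. When $\k$ is not separably closed and not ``separably real closed,'' Koenigsmann's theorem gives an $\emptyset$-definable henselian valuation on the pure field~$K^*$ which is comparable with $\mathcal{O}^*$; from this coarsening or refinement one recovers $\mathcal{O}^*$ using finitely many additional parameters to pin down the corresponding valuation ring inside the residue field of the coarsening. In the ``separably real closed'' case, one instead uses parameters in $K^*$ to select an ordering compatible with~$\mathcal{O}^*$ (all such orderings agree on~$\mathcal{O}^*$ being convex), and defines $\mathcal{O}^*$ via this convexity condition in the pure field.

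The main obstacle is executing this case analysis uniformly so that one recovers $\mathcal{O}^*$ on the nose rather than merely a related henselian valuation ring; the non-separably-closedness of $\k$ enters in an essential way to rule out the obstruction in which the pure field~$K$ admits no non-trivial definable (even externally definable) henselian valuation at all, and to ensure the existence of the necessary first-order witnesses in~$K^*$. This is precisely what \cite{JK15} provides.
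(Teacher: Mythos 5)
The paper does not prove this statement at all: it is quoted as Theorem~A of \cite{jahnke2016does}, so there is no internal argument to compare yours with; judged on its own terms, your proposal has a genuine gap at its central reduction. You reduce the problem to \emph{parametrically defining $\mathcal O^*$ in the pure field $K^*$} after passing to a saturated elementary extension $(K^*,\mathcal O^*)$. That sufficient condition is not attainable in general: take $K$ to be any real closed field carrying a non-trivial henselian valuation, e.g.\ the Hahn field $\R((t^{\Q}))$, whose residue field $\R$ is not separably closed. By o-minimality, every parametrically definable subset of a real closed field in one variable is a finite union of points and intervals with endpoints in the field, so no non-trivial valuation ring is parametrically definable in the pure field; and $K^*$ is again real closed, so the same obstruction persists after saturation. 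The Fact nevertheless holds in this example, but for a different reason: $\mathcal O$ is convex, its cut is realized only in a further elementary extension, and $\mathcal O$ is the trace on $K$ of an interval whose endpoints lie \emph{outside} $K$. External definability asks for a formula with parameters in an elementary extension whose trace on $K$ is $\mathcal O$ --- not for an internal definition of $\mathcal O^*$ in $K^*$ --- and your reduction conflates the two.

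The same confusion undermines your ``separably real closed'' case: convexity with respect to an ordering is not a first-order definition of $\mathcal O^*$ with parameters from $K^*$; to cut out a convex valuation ring you must name its cut, which in general is only realized externally. A secondary misalignment: the definability criteria of \cite{JK15} are phrased in terms of the residue field of the canonical (respectively canonical $p$-)henselian valuation, not of $\k$ itself, and even when they produce a definable henselian valuation comparable with $\mathcal O$, recovering $\mathcal O$ from it passes through a coarsening, i.e.\ through a convex subgroup of the value group, which again is in general only externally definable. The correct shape of the argument (as in Jahnke's proof) is therefore to aim directly at external definability of the trace: obtain, possibly after a finite extension supplying roots of unity, a definable henselian valuation comparable with $\mathcal O$ via the canonical ($p$-)henselian valuation machinery, pass to $\mathcal O$ by an externally definable coarsening (cuts in the value group), and handle the real closed residue case via convexity with respect to an ordering, with the defining parameters taken in a further elementary extension.
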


\noindent
Together with Lemma~\ref{lem: Sh exp commutes} this immediately implies:

\begin{cor} If
the field $K$ has a distal expansion and $\k$ is not separably closed, then the valued field~$(K,\mathcal O)$ has a distal expansion.
\end{cor}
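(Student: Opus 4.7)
The plan is simply to combine Fact~\ref{fac: Jahnke def} with Lemma~\ref{lem: Sh exp commutes}. Let $\L_{\mathrm{rng}}$ denote the language of rings, so that $K$ is viewed as an $\L_{\mathrm{rng}}$-structure. By hypothesis, there is a language $\L^*\supseteq\L_{\mathrm{rng}}$ and an $\L^*$-expansion $K^*$ of $K$ which is distal.

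First I would apply Lemma~\ref{lem: Sh exp commutes} with $\bm{M}=K$ and $\bm{M}'=K^*$: since the $\L_{\mathrm{rng}}$-reduct of $K^*$ is $K$ and $K^*$ is distal, the Shelah expansion $K^{\Sh}$ has a distal expansion, namely $(K^*)^{\Sh}$.

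Next, by Fact~\ref{fac: Jahnke def}, the valuation ring $\mathcal O$ is definable in $K^{\Sh}$ (here we use the hypothesis that $\k$ is not separably closed). Consequently, $(K,\mathcal O)$ is a reduct of $K^{\Sh}$, and hence also a reduct of its distal expansion $(K^*)^{\Sh}$. Equivalently, $(K^*)^{\Sh}$ is a distal expansion of $(K,\mathcal O)$, which is precisely the desired conclusion.

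There is really no obstacle here: the lemma and the fact have been set up so that the corollary is immediate. The only small point to check is the direction of the reduction — that a distal expansion of a structure $\bm{M}$ remains a distal expansion of any reduct of $\bm{M}$ — which is built into the definition of ``has a distal expansion'' given at the beginning of Section~\ref{SectionDistalPreliminaries}.
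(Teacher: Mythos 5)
Your proof is correct and follows exactly the route the paper intends when it says the corollary is ``immediate'' from Fact~\ref{fac: Jahnke def} and Lemma~\ref{lem: Sh exp commutes}: you obtain $(K^*)^{\Sh}$ as a distal expansion of $K^{\Sh}$, use Jahnke's theorem to see $\mathcal O$ is definable there, and conclude. (The only cosmetic point worth noting is that strictly speaking one passes through a definitional expansion of $(K^*)^{\Sh}$ that names the predicate $\mathcal O$; this is harmless and is what the paper also leaves implicit.)
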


\noindent
Our main theorem allows us to treat the case of separably closed residue field:

\begin{cor}
Suppose $\k$ is separably closed.
Then the valued field~$(K,\mathcal O)$  has a distal expansion if and only if
 $\Gamma$ has a distal expansion and  $\k$ has characteristic~zero.
\end{cor}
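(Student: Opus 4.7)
The plan is to obtain the corollary as a direct application of the Main Theorem. For the forward direction, assume that $(K,\mathcal O)$ has a distal expansion. Then by the Main Theorem, $K$ is finitely ramified and both $\Gamma$ and $\k$ have distal expansions, and moreover $\k$ is either finite or of characteristic zero. Since $\k$ is assumed to be separably closed, it cannot be finite (every finite field admits proper separable algebraic extensions), hence $\k$ must have characteristic zero.

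For the converse, assume that $\Gamma$ has a distal expansion and $\ch\k = 0$. The first step is to observe that a separably closed field of characteristic zero is algebraically closed, so $\k \models \ACF_0$. Second, as noted in the discussion following Lemma~\ref{lem:distal exp}, the theory $\ACF_0$ is interpretable (in fact definable) in $\RCF$, which is distal (being o-minimal); by Lemma~\ref{lem:distal exp}, $\k$ therefore has a distal expansion. Third, since $\ch \k = 0$, the valued field $K$ has equicharacteristic zero, and hence is automatically finitely ramified (as recalled in the introduction).

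Having verified both conditions of the Main Theorem---finite ramification and the existence of distal expansions of $\k$ and $\Gamma$---we conclude that $(K,\mathcal O)$ has a distal expansion. None of the three steps in the converse direction presents any real difficulty; the only substantive observation is the one used in the forward direction, namely that the assumption ``separably closed'' rules out the ``finite'' branch of the dichotomy provided by the Main Theorem, forcing $\ch\k = 0$.
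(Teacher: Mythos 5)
Your proof is essentially correct and follows the same route as the paper's: invoke the Main Theorem in both directions, observe that a separably closed field of characteristic zero is algebraically closed and hence admits a distal expansion, and note that equicharacteristic zero gives finite ramification. One small imprecision worth flagging: you appeal to Lemma~\ref{lem:distal exp} (interpretability of $\ACF_0$ in $\RCF$) to conclude that the \emph{structure} $\k$ has a distal expansion, but that lemma and the surrounding discussion only yield a distal expansion of the \emph{theory} $\Th(\k)$, and the paper explicitly cautions that the passage from theory to structure requires saturation in general. The paper's own proof sidesteps this by working directly at the structure level: every algebraically closed field $\k$ of characteristic zero has a maximal real closed subfield $R$ with $\k = R[\imag]$, so naming $R$ gives a distal expansion of $\k$ itself. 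Your conclusion is therefore still correct, but the cleaner justification is the one the paper gives rather than the theory-level citation.
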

\begin{proof}
Note that   $\k$ is necessarily infinite, and if $\k$ has characteristic~zero, then
  $\k$ is algebraically closed, hence has   distal expansion: just add a predicate for a maximal proper subfield of $\k$. Now the claim follows from our main theorem.
\end{proof}

%

\noindent
In view of Conjecture~\ref{conj: all OAGs have distal expansions} we expect that in order for $(K,\mathcal O)$ to have a distal expansion, we only need to require that $\k$ has a distal expansion.
Before we turn to discussing our conjectural classification of fields with distal expansion,  we recall some  definitions and basic facts about canonical  valuations.

\subsection{Canonical valuations} \label{sec:canonical valuations}
{\it In this subsection we let $K$ be a field.}\/ We collect some notions and basic facts used in the next subsection.
Let $\mathcal O_1$, $\mathcal O_2$ be valuation rings of $K$. One says that $\mathcal O_2$ is {\it coarser}\/ than $\mathcal O_1$, and
that $\mathcal O_1$ is {\it finer}\/ than $\mathcal O_2$, if~$\mathcal O_1\subseteq\mathcal O_2$, that is, if $\mathcal O_2$ is the valuation ring of a coarsening of $(K,\mathcal O_1)$.

\medskip
\noindent
Let now $H$ be the set of  henselian valuation rings of $K$, and let $H_{\operatorname{c}}$ be the subset of $H$ consisting of those
  valuation rings  with separably closed residue field.
Then~$H\setminus H_{\operatorname{c}}$
is linearly ordered by inclusion. If $H_{\operatorname{c}}\neq\emptyset$, then~$H_{\operatorname{c}}$ contains a coarsest valuation ring~$\mathcal O_{\operatorname{c}}$ of $K$; this valuation ring   is (strictly) finer than every valuation ring in~$H\setminus H_{\operatorname{c}}$.
If $H_{\operatorname{c}}=\emptyset$, then there is a finest henselian valuation ring of $K$, which we also denote by~$\mathcal O_{\operatorname{c}}$.
We refer to~\cite[Theorem~4.4.2]{EP} for these facts.
The valuation ring $\mathcal O_{\operatorname{c}}$ is called the {\it canonical henselian valuation ring}\/ of the field $K$.

\medskip
\noindent
Let now $p$ be a prime.
We denote by $K(p)$   the compositum of all finite normal field extensions $L|K$ of $p$-power degree.
If $K(p)=K$, then $K$ is called {\it $p$-closed.}\/

\begin{lemma}\label{lem:p-closed}
Suppose $K$ is separably closed   and $p\neq \ch K$; then $K$ is $p$-closed.
\end{lemma}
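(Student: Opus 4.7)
The plan is to show that every finite normal extension $L \mid K$ of $p$-power degree must already equal $K$, whence the compositum $K(p)$ of all such extensions is $K$ itself.

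First I would observe that, since $p \neq \ch K$, any such finite extension $L \mid K$ with $[L:K]$ a power of $p$ is automatically separable. This is clear if $\ch K = 0$; if $\ch K = q$ is a prime different from $p$, then for any $\alpha \in L$, the degree $[K(\alpha):K]$ divides $[L:K]$ and is therefore a power of $p$, hence coprime to $q$. Writing the minimal polynomial of $\alpha$ over $K$ as $g(X^{q^k})$ with $g$ separable, we find that $[K(\alpha):K] = q^k \cdot \deg g$, forcing $k = 0$; so $\alpha$ is separable over $K$. As this holds for every $\alpha \in L$, the extension $L \mid K$ is separable.

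Next I would invoke the hypothesis that $K$ is separably closed: it admits no nontrivial finite separable extension. Combined with the previous step, this shows that every finite normal extension $L \mid K$ of $p$-power degree satisfies $L = K$. Hence the compositum $K(p)$ of all such $L$ is simply $K$, i.e., $K$ is $p$-closed.

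There is no real obstacle here; the argument is a short verification, the only mildly subtle point being the reminder that in characteristic $q > 0$ an extension whose degree is coprime to $q$ is automatically separable, which I would spell out for clarity as above.
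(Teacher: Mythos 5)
Your proof is correct and amounts to essentially the same argument as the paper's. Where the paper simply notes that in characteristic $q > 0$ every finite extension of a separably closed field has $q$-power degree (so a $p$-power-degree extension must be trivial), you equivalently verify that a $p$-power-degree normal extension is automatically separable and then invoke separable closedness directly—two phrasings of the same degree-coprimality observation.
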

\begin{proof}
If $\ch K=0$, then $K$ is algebraically closed, and if $\ch K=q>0$ then the degree of each finite field extension of $K$ is a power of $q$.
\end{proof}

\noindent
Following \cite[Section~9.5]{johnson2016fun} we say that  $K$ is {\it $p$-corrupted}\/ if no finite extension of $K$ is $p$-closed;
as a consequence of a theorem of Becker~\cite{Becker}, one has (see \cite[Lemma~9.5.2]{johnson2016fun}):

\begin{lemma}[Johnson]\label{lem:Johnson 9.5.2}
Every perfect field which is neither algebraically closed nor real closed has a finite $p$-corrupted extension.
\end{lemma}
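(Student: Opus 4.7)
The plan is to invoke Becker's pro-$p$ analogue of the Artin--Schreier theorem as the sole nontrivial input: for any field $F$ and any prime $p$, if $1<[F(p):F]<\infty$, then $F$ is real closed and $p=2$. Given $K$ perfect and neither algebraically closed nor real closed, I would split into two cases according to the size of the extension $K(p)/K$.

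First suppose $K\ne K(p)$. Since $K$ is not real closed, Becker's theorem forces $[K(p):K]=\infty$, so the maximal pro-$p$ quotient $G_K^{(p)}$ of the absolute Galois group is an infinite pro-$p$ group. I claim that $K$ itself is then already $p$-corrupted. Indeed, for any finite extension $L/K$ the composition $G_L\hookrightarrow G_K\twoheadrightarrow G_K^{(p)}$ has image of finite index in $G_K^{(p)}$, so the image is open in the infinite profinite group $G_K^{(p)}$ and hence itself infinite; being a closed subgroup of a pro-$p$ group, the image is pro-$p$, so the composed map factors through the maximal pro-$p$ quotient $G_L^{(p)}$. Consequently $G_L^{(p)}$ surjects onto an infinite pro-$p$ group, so $[L(p):L]=\infty$ and in particular $L\ne L(p)$. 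Thus no finite extension of $K$ is $p$-closed.

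Now suppose $K=K(p)$. The task reduces to finding a single finite extension $L/K$ with $[L(p):L]=\infty$, since the previous paragraph applied to $L$ in place of $K$ then yields that $L$ is $p$-corrupted. To produce such $L$ I would use that by Artin--Schreier $G_K$ is infinite (a finite nontrivial absolute Galois group forces real-closedness) and is not cyclic of order two, and combine this with Becker's theorem applied to finite subextensions of $\bar K/K$: the $p$-closedness of $K$ together with the failure of $K$ to be algebraically closed or real closed forces the existence of an open subgroup $U\le G_K$ whose maximal pro-$p$ quotient $U^{(p)}$ is infinite; its fixed field $\bar K^U$ is then the sought finite $p$-corrupted extension.

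The hard part will be this last step. One must rule out, under the hypotheses ``perfect, not algebraically closed, not real closed, $K=K(p)$'', the existence of an absolute Galois group all of whose open subgroups are pro-$p'$. This is exactly where the full strength of Becker's theorem is used, applied not to $K$ itself but along a carefully chosen tower of finite subextensions $K\subseteq L_1\subseteq L_2\subseteq\cdots$ inside $\bar K$: the obstruction to each $L_i$ being $p$-corrupted produces a nontrivial finite $p$-power quotient somewhere along the tower, and pushing this down to a single open subgroup of $G_K$ provides the desired~$U$.
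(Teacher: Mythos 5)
The prime $p$ in this lemma must be read as existentially quantified (``there is a prime $p$ and a finite $p$-corrupted extension''), as the paper's sole use of it, in the proof of Lemma~\ref{lem:Johnson 9.5.7}, makes explicit. Your proof fixes $p$ at the outset, and with $p$ fixed the statement is false. Take a prime $\ell\neq p$ and let $F\subseteq\overline{\mathbb{F}_q}$ be the fixed field of a pro-$\ell$ Sylow subgroup $\Z_\ell$ of $\widehat{\Z}$: then $F$ is perfect, neither algebraically closed nor real closed, but every finite extension $L$ of $F$ has absolute Galois group isomorphic to $\Z_\ell$, whose maximal pro-$p$ quotient is trivial; so every such $L$ is $p$-closed and $F$ has no finite $p$-corrupted extension. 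Thus the obstruction you try to dismiss at the end of your second case---``an absolute Galois group all of whose open subgroups are pro-$p'$''---genuinely occurs, and no Becker-along-a-tower argument can rule it out for the given $p$.

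A second problem is that the version of Becker's theorem you quote is too strong: its actual conclusion is that $p=2$ and $F$ is \emph{euclidean} (formally real with $F^{\times 2}$ an ordering), not that $F$ is real closed. The field $F$ of real constructible numbers is euclidean, with $[F(2):F]=2$ and $F(2)=F(\sqrt{-1})$ quadratically (hence $2$-) closed, yet it is not real closed, since $\sqrt[3]{2}\notin F$. So ``$K$ not real closed and $K\neq K(p)$'' does not force $[K(p):K]=\infty$ when $p=2$. Your first-case index computation (an infinite maximal pro-$p$ quotient passes to every open subgroup) is correct and is the right tool; what is missing is the choice of $p$. The intended route, roughly, is: by Artin--Schreier $G_K$ is infinite with torsion of order at most $2$; hence either $G_K$ is pro-$2$, in which case $K$ is itself $2$-corrupted, or some pro-$p$ Sylow subgroup with $p$ odd is nontrivial, hence torsion-free and infinite, and one descends the failure of $p$-closedness from its fixed field to a finite subextension $L/K$, where Becker (for odd $p$) forces $[L(p):L]=\infty$ and your first-case argument finishes.
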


\noindent
A valuation ring $\mathcal O$ of $K$ is said to be {\it $p$-henselian}\/ if only one valuation ring of~$K(p)$ lies over $\mathcal O$.
Let~$H_p$ be the set of  $p$-henselian valuation rings of $K$, and let $H^p_{\operatorname{c}}$ be the subset of $H^p$ consisting of those
  valuation rings  with $p$-closed residue field.
As before,   $H^p\setminus H^p_{\operatorname{c}}$
is linearly ordered by inclusion. If~$H^p_{\operatorname{c}}\neq\emptyset$, then~$H^p_{\operatorname{c}}$ contains a coarsest valuation ring $\mathcal O^p_{\operatorname{c}}$ of $K$, which is then finer than every valuation ring in~$H^p\setminus H^p_{\operatorname{c}}$.
If $H^p_{\operatorname{c}}=\emptyset$, then there is a finest $p$-henselian valuation ring of $K$,   also denoted by~$\mathcal O^p_{\operatorname{c}}$.
One calls $\mathcal O^p_{\operatorname{c}}$  the {\it canonical $p$-henselian valuation ring}\/ of $K$.
See~\cite{JK15a}, which also contains a proof of the following fact:

\begin{prop}[Jahnke-Koenigsmann] \label{prop:JK}
If $K$ is not orderable  and contains all $p$th roots of unity, then $\mathcal O^p_{\operatorname{c}}$
is $\emptyset$-definable in $K$.
\end{prop}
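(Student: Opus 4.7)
The plan is to follow the strategy of Jahnke and Koenigsmann in \cite{JK15a}, which relies on Kummer theory. Because $\zeta_p \in K$, Kummer duality identifies $K^\times/(K^\times)^p$ with $\operatorname{Hom}(\operatorname{Gal}(K(p)/K), \mathbb{F}_p)$; in particular, the lattice of Galois subgroups of index a power of $p$ becomes accessible through the first-order structure of $K$, and $p$-henselian valuations manifest themselves through combinatorial conditions on $p$-th power cosets in~$K$. The overarching strategy is to write down a finite number of formula schemes $\phi_i(x)$ in the language of rings, each expressing a suitable condition on $p$-th powers (typically that elements of the form $1 + x y^p$ lie in $(K^\times)^p$, together with closure conditions), and to verify case-by-case that one of the $\phi_i$ defines $\mathcal{O}^p_{\operatorname{c}}$.

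First I would split according to the dichotomy $H^p_{\operatorname{c}} = \emptyset$ versus $H^p_{\operatorname{c}} \neq \emptyset$. In the former case, $\mathcal{O}^p_{\operatorname{c}}$ is the finest $p$-henselian valuation ring, whose residue field admits no $p$-henselian valuation with $p$-closed residue field; here I would use a formula~$\phi_{\mathrm{fin}}(x)$ that forces the valuation to be as fine as possible, exploiting that the residue field is not $p$-closed so that Kummer theory on it faithfully reflects the valuation. In the latter case, $\mathcal{O}^p_{\operatorname{c}}$ is the coarsest $p$-henselian valuation ring with $p$-closed residue field; here a different formula $\phi_{\mathrm{coar}}(x)$ captures coarseness via the requirement that suitable $p$-th power residue conditions on $x$ hold uniformly.

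Next I would verify that the dichotomy itself is first-order definable. The existence of a $p$-henselian valuation with $p$-closed residue field is equivalent to a combinatorial property of $K^\times/(K^\times)^p$ under the Kummer pairing, and this property is expressible by a sentence $\sigma$ in the language of rings. Then the formula $(\sigma \wedge \phi_{\mathrm{coar}}(x)) \vee (\neg\sigma \wedge \phi_{\mathrm{fin}}(x))$ $\emptyset$-defines $\mathcal{O}^p_{\operatorname{c}}$. Verifying the correctness of $\phi_{\mathrm{fin}}$ and $\phi_{\mathrm{coar}}$ is a careful but routine exercise combining $p$-henselianity with Hensel-style lifting along the lines of the classical proofs of definability of henselian valuations.

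The main obstacle is the case $p = 2$, and this is precisely where the non-orderability hypothesis is needed. If $K$ were orderable, the residue field of the canonical $2$-henselian valuation could be real closed, in which case the group $K^\times/(K^\times)^2$ fails to reflect the valuation structure faithfully via the Kummer pairing, since $-1$ is not a square and the $2$-power residues degenerate. Non-orderability of $K$ ensures $-1 \in \sum K^2$, which transfers through $p$-henselianity to rule out real closed residue behavior and restores the Kummer-theoretic description. Handling this case, and more generally the interaction between orderings and $2$-henselianity, is where the delicate part of the argument lies; once it is dealt with, the definability formulas uniformly cover all $p$.
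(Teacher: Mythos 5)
The paper does not prove this proposition: it is stated as a cited result, attributed explicitly to Jahnke--Koenigsmann with a pointer to~\cite{JK15a}. So there is no ``paper's own proof'' to compare against; what you are implicitly being measured against is the argument in that reference.

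Your sketch tracks the broad shape of that argument---Kummer theory via $\zeta_p\in K$, a case split on $H^p_{\operatorname{c}}=\emptyset$ versus $H^p_{\operatorname{c}}\neq\emptyset$, explicit $p$-th-power residue formulas defining the ring, and special care for $p=2$ where non-orderability is the crucial hypothesis. But as written it is a plan, not a proof, and the gaps sit exactly where the theorem's content is. You never write down $\phi_{\mathrm{fin}}$ or $\phi_{\mathrm{coar}}$; you never verify that the sets they cut out are valuation rings, are $p$-henselian, and coincide with $\mathcal O^p_{\operatorname{c}}$ in the respective cases; and the claim that the dichotomy $H^p_{\operatorname{c}}=\emptyset$ versus $H^p_{\operatorname{c}}\neq\emptyset$ is expressible by a single ring sentence $\sigma$ is asserted rather than argued. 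The statement ``verifying the correctness \dots\ is a careful but routine exercise'' hands back essentially the whole theorem. In addition, your explanation of why $p=2$ is delicate is not quite on target: a real closed field is not $2$-closed (rather, its quadratic extension is algebraically closed), so ``real closed residue field degenerates the Kummer pairing'' is not the precise obstruction. The non-orderability hypothesis is indeed what Jahnke--Koenigsmann need at $p=2$, but the actual interaction runs through formally real residue fields and the $2$-power residue structure in a more specific way that you would need to spell out. To turn this into a proof you would have to exhibit the defining formulas (e.g.\ of the shape ``$1+x\,y^p\in(K^\times)^p$ for all $y$'' appropriately nested), show they define valuation rings, identify them with $\mathcal O^p_{\operatorname{c}}$ in each case, and make the case distinction itself first-order.
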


\noindent
Here we recall that $K$ is said to be {\it orderable}\/ if there is an ordering on $K$
making~$K$ an ordered field.

\subsection{Distal fields}\label{sec:distal fields}
{\it In this subsection $K$ is a field.}\/
The following   is commonly attributed to Shelah:

\begin{conjecture}\label{conj:Shelah}
If $K$ is     NIP, then $K$ is finite, separably
closed, real closed, or admits a non-trivial henselian valuation.
\end{conjecture}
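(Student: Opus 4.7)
The statement to be proved is Shelah's conjecture on NIP fields, which remains a major open problem in full generality. My ``proposal'' can therefore only describe the general framework that has produced partial results, together with an honest assessment of where the obstruction lies. The guiding strategy is to \emph{extract a non-trivial henselian valuation directly from the NIP hypothesis}, by first producing a definable valuation of a weaker kind ($p$-henselian) and then upgrading its tameness. Throughout, I would assume $K$ is infinite, not separably closed, and not real closed, and aim to produce a non-trivial $\mathcal O\in H$.

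The first moves are field-theoretic and rely on the preparatory material in Section~\ref{sec:canonical valuations}. Since existence of a non-trivial henselian valuation is preserved by finite extensions and descends from them, I would replace $K$ by a finite extension in order to invoke Lemma~\ref{lem:Johnson 9.5.2}: after passing to the perfect hull (another step that itself requires care under NIP) and a finite extension, one may assume $K$ is $p$-corrupted for some prime $p$. I would then adjoin a primitive $p$-th root of unity; since $K$ is not real closed and NIP is preserved under algebraic extension, the resulting field is still NIP, non-orderable, and contains $\mu_p$. Proposition~\ref{prop:JK} of Jahnke--Koenigsmann now yields a $\emptyset$-definable canonical $p$-henselian valuation ring $\mathcal O^p_{\mathrm c}$ on $K$. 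Because $K$ is $p$-corrupted, $K(p)\neq K$, so $\mathcal O^p_{\mathrm c}$ must be non-trivial; its residue field is $p$-closed but, by Lemma~\ref{lem:p-closed} and the $p$-corruption hypothesis, not separably closed in the relevant characteristic.

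The crux is then the transition from $p$-henselian to henselian: I need to show that the definable valuation $\mathcal O^p_{\mathrm c}$ (or a suitable coarsening) lies in $H$ rather than merely in $H^p$. The main obstacle is exactly here. In the known partial cases, additional model-theoretic tameness does the work: Johnson's analysis in the dp-minimal case~\cite{johnson2015dp} sets up a canonical topology on $K$ whose rigidity forces henselianity, and strong dependence hypotheses (cited in the introduction) yield analogous conclusions. For general NIP, no mechanism is known to upgrade $p$-henselianity to henselianity, nor to rule out that the candidate valuation becomes trivial exactly when one needs it not to. Consequently my plan, taken to its natural limit, reproduces what is already established modulo this obstruction: the plausible way forward is either to sharpen the definability in Proposition~\ref{prop:JK} so that a full henselian valuation becomes $\emptyset$-definable under NIP, or to develop a canonical NIP topology generalizing Johnson's dp-minimal construction---both of which seem to require essentially new ideas.
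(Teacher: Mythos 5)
This statement is Shelah's conjecture on NIP fields, which the paper explicitly labels as a \emph{conjecture} and does not prove; it is used only as a hypothesis in the conditional results that follow it (Lemma~\ref{lem:Shelah definable}, Corollary~\ref{cor:distal fields}, Theorem~\ref{thm:distal fields}, Corollary~\ref{cor:distal fields, 2}). There is therefore no ``paper's own proof'' to compare against, and your decision to give an honest status report rather than a purported proof is the right call.

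Your survey of the standard partial strategy is accurate and mirrors the toolkit the paper assembles in Section~\ref{sec:canonical valuations}: reduce to an infinite, non--separably-closed, non--real-closed $K$; after finite extensions and perfection (Lemma~\ref{lem:Johnson 9.5.2}, Becker's theorem) pass to a $p$-corrupted field containing $\mu_p$; invoke Proposition~\ref{prop:JK} (Jahnke--Koenigsmann) to get a $\emptyset$-definable non-trivial canonical $p$-henselian valuation; then hope to upgrade $p$-henselian to henselian. You correctly identify the last step as the open obstruction, and correctly note that the known unconditional cases (dp-minimal via Johnson, and strongly dependent) rely on additional tameness that is unavailable for general NIP. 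One small remark: the paper deploys these same lemmas but in the \emph{converse} direction --- assuming the conjecture as a black box to derive structural consequences (e.g.\ Lemma~\ref{lem:Johnson 9.5.4} and Lemma~\ref{lem:Johnson 9.5.7}) --- whereas you are using them in the forward, and genuinely open, direction. Your appeal to ``existence of a non-trivial henselian valuation descends along finite extensions'' is a correct and standard fact (Engler--Prestel), and your caveat about passing to the perfect hull in positive characteristic under NIP is well placed. In short: no gap in your reasoning, because you correctly refuse to claim more than is known.
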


\noindent
This conjecture has numerous consequences; for example, by \cite[Proposition~6.3]{HHJ2019},
it implies that every NIP valued field is henselian.
In \cite[Theorem~B]{jahnke2016does} it is shown that if $K$ is   NIP   and $\mathcal O$ is  a   henselian valuation ring of $K$,
then the valued field~$(K,\mathcal O)$ is also NIP.
Hence    if Conjecture~\ref{conj:Shelah} holds, then
every valuation ring on a NIP field is henselian, and its residue field is NIP. Moreover, under  Conjecture~\ref{conj:Shelah}, any two (externally) definable valuation rings on a NIP field
are comparable~\cite[Corollary~5.4]{HHJ2019}. In Theorem~\ref{thm:distal fields} below we show that
 Conjecture~\ref{conj:Shelah} also gives rise to a classification of all fields admitting a distal expansion.
We first note that the non-trivial henselian valuation stipulated in Conjecture~\ref{conj:Shelah} may be taken to be $\emptyset$-definable, by results
in~\cites{jahnke2016does,JK15} (see also~\cite[Corollary~7.6]{HHJ2019}):

\begin{lemma}\label{lem:Shelah definable}
Suppose Conjecture~\ref{conj:Shelah} holds, and
suppose $K$ is infinite and NIP;
then   $K$ is separably closed or real closed, or $K$
 has an $\emptyset$-definable non-trivial henselian valuation ring.
\end{lemma}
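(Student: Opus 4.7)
The plan is to combine Conjecture~\ref{conj:Shelah} (which ensures that $K$ admits a non-trivial henselian valuation) with the Jahnke--Koenigsmann $\emptyset$-definability theorem for canonical $p$-henselian valuations (Proposition~\ref{prop:JK}) to produce an $\emptyset$-definable non-trivial henselian valuation on $K$.

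First, assume $K$ is infinite NIP, neither separably closed nor real closed. By Conjecture~\ref{conj:Shelah}, $K$ admits some non-trivial henselian valuation, so the canonical henselian valuation ring $\mathcal{O}_c$ of $K$ (introduced in Section~\ref{sec:canonical valuations}) is non-trivial: if $H_c \neq \emptyset$, then $\mathcal{O}_c$ is the coarsest henselian valuation with separably closed residue field and is necessarily proper since $K$ itself is not separably closed; if $H_c = \emptyset$, then $\mathcal{O}_c$ is the finest henselian valuation, proper because some non-trivial henselian valuation on $K$ is available. This reduces the lemma to producing an $\emptyset$-definable non-trivial henselian valuation.

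Next, I would upgrade to $\emptyset$-definability by splitting on the orderability of $K$. If $K$ is not orderable, I would select a prime $p$ for which $K$ contains a primitive $p$-th root of unity (for instance $p=2$ in characteristic $\neq 2$, and otherwise $p\neq \operatorname{char} K$ chosen using that $K$ is infinite with sufficient algebraic content), and apply Proposition~\ref{prop:JK} to conclude that $\mathcal{O}^p_c$ is $\emptyset$-definable; non-triviality of $\mathcal{O}^p_c$ then follows from comparing it with $\mathcal{O}_c$, as every henselian valuation is in particular $p$-henselian. If instead $K$ is orderable but not real closed, I would invoke Lemma~\ref{lem:Johnson 9.5.2} to obtain a finite $p$-corrupted extension $L$ of $K$ (after passing to the perfect hull if needed, using that imperfect NIP fields already carry definable henselian valuations by Kaplan--Scanlon--Wagner), and then use the canonical henselian valuation on $L$ together with the Galois structure of $L/K$ to define a non-trivial henselian valuation on $K$ over $\emptyset$, following the strategy of Johnson~\cite[Chapter~9]{johnson2016fun}.

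The main obstacle will be the orderable case, where Proposition~\ref{prop:JK} does not directly apply: here the argument must exploit the special structure of orderable NIP fields that are not real closed, combining the definability theorems from \cite{jahnke2016does,JK15} with the Galois-theoretic content of Lemma~\ref{lem:Johnson 9.5.2}. A secondary subtlety is ensuring, in the non-orderable case, that the choice of prime $p$ yields a non-trivial $\mathcal{O}^p_c$; this may require varying $p$ over finitely many primes and comparing carefully with the already produced non-trivial canonical henselian valuation $\mathcal{O}_c$.
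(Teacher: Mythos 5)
Your first step (non-triviality of the canonical henselian valuation ring $\mathcal{O}_{\operatorname{c}}$) is fine, but the route you propose for $\emptyset$-definability has a genuine gap, in fact two. First, Proposition~\ref{prop:JK} only produces the canonical \emph{$p$-henselian} valuation ring $\mathcal{O}^p_{\operatorname{c}}$, which is $p$-henselian but in general not henselian, whereas the lemma asks for a henselian valuation ring; nothing in your argument upgrades $p$-henselianity to henselianity. Moreover its non-triviality can fail: if $K$ is $p$-closed (e.g.\ a non-orderable field whose absolute Galois group has no non-trivial pro-$p$ quotient for the available primes $p$), then the trivial valuation ring lies in $H^p_{\operatorname{c}}$ and $\mathcal{O}^p_{\operatorname{c}}$ is trivial, no matter how non-trivial $\mathcal{O}_{\operatorname{c}}$ is; ``varying $p$ over finitely many primes'' does not repair this. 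Second, the orderable, non-real-closed case is only a plan in your write-up: you say one should combine the definability theorems of \cite{jahnke2016does} and \cite{JK15} with Lemma~\ref{lem:Johnson 9.5.2} following Johnson, but give no actual argument, and this is precisely a case the lemma must cover.

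The paper's proof avoids the $p$-henselian machinery entirely (that machinery is used later, in Lemma~\ref{lem:Johnson 9.5.4} and Theorem~\ref{thm:distal fields}, not here). It splits on the residue fields of henselian valuations of $K$: if some henselian valuation on $K$ has separably closed or real closed residue field, then \cite[Theorem~3.10 and Corollary~3.11]{JK15} directly yield an $\emptyset$-definable non-trivial henselian valuation ring. Otherwise the residue field $\k$ of $\mathcal{O}:=\mathcal{O}_{\operatorname{c}}$ is neither separably closed nor real closed, so $\mathcal{O}$ is the \emph{finest} henselian valuation ring of $K$ and $\k$ admits no non-trivial henselian valuation; since $\k$ is NIP, a \emph{second} application of Conjecture~\ref{conj:Shelah} --- to $\k$ rather than to $K$ --- forces $\k$ to be finite, hence to have non-universal absolute Galois group, and $\emptyset$-definability of $\mathcal{O}$ then follows from \cite[Theorem~3.15 and Observation~3.16]{JK15}. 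The idea you are missing is this second use of the conjecture on the residue field of the canonical henselian valuation, combined with the residue-field-based definability criteria of \cite{JK15}, which treats the orderable and non-orderable cases uniformly.
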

\begin{proof}
Suppose $K$ is neither separably closed nor real closed; so
 according to Conjecture~\ref{conj:Shelah},   $K$  has a non-trivial henselian valuation.
 If $K$ has such a   valuation with residue field which is separably closed or real closed,
 then  by \cite[Theorem~3.10 and Corollary~3.11, respectively]{JK15}, there is an $\emptyset$-definable non-trivial henselian valuation ring of $K$.
 Hence we may assume that the residue field of each henselian valuation on $K$ is not separably closed
 and not real closed. In particular, the residue field~$\k$ of $\mathcal O:=\mathcal O_{\operatorname{c}}$
 is neither separably closed nor real closed.
 Hence $\mathcal O$ is the finest henselian valuation ring of~$K$; in particular,
 $\k$ does not have a non-trivial henselian valuation.
 Now $\k$ is NIP, and so
 by Conjecture~\ref{conj:Shelah} applied to $\k$, this field is finite.
 Its absolute Galois group is non-universal, so $\mathcal O$ is $\emptyset$-definable
 by \cite[Theorem~3.15 and Observation~3.16]{JK15}.
\end{proof}

\noindent
Recall that every infinite field with a distal expansion has characteristic zero.

\begin{cor}\label{cor:distal fields}
Suppose Conjecture~\ref{conj:Shelah} holds, and
  $K$ is infinite  and has a distal expansion.
Then~$K$ is algebraically closed or real closed, or $K$ has an  $\emptyset$-definable non-trivial henselian valuation ring $\mathcal O$ whose residue field
\begin{enumerate}
\item is finite, or
\item is a field of characteristic zero with a distal expansion.
\end{enumerate}
\end{cor}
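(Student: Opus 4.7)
My plan is to deduce the corollary by combining three ingredients already available in the paper: the characteristic-zero restriction for infinite distal fields, Lemma~\ref{lem:Shelah definable} (which hands us either the desired exceptional form or an $\emptyset$-definable henselian valuation), and the main theorem on distal expansions of henselian valued fields stated in the introduction.

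First I would observe that the field $K$, being infinite and admitting a distal expansion, has characteristic zero by Fact~\ref{fac: distal fields char 0}. Since distality implies NIP, $K$ is an infinite NIP field, so Lemma~\ref{lem:Shelah definable} (which invokes Conjecture~\ref{conj:Shelah}) applies: either $K$ is separably closed, or $K$ is real closed, or $K$ carries an $\emptyset$-definable non-trivial henselian valuation ring $\mathcal O$. In the first case, since $\ch K = 0$, separably closed means algebraically closed, giving the first alternative. The second case gives the second alternative directly. So it remains to treat the third case.

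Assume then that $\mathcal O$ is an $\emptyset$-definable non-trivial henselian valuation ring on $K$ with residue field $\k$. Let $\mathcal L^*$ be a language expanding the language of rings and let $K^*$ be a distal $\mathcal L^*$-expansion of $K$. Because $\mathcal O$ is $\emptyset$-definable in $K$, and hence in $K^*$, the structure $K^*$ is also a distal expansion of the valued field $(K,\mathcal O)$ when the latter is viewed in the language of rings augmented by a predicate for $\mathcal O$. Therefore $(K,\mathcal O)$ is a henselian valued field admitting a distal expansion.

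At this point the Main Theorem applies to $(K,\mathcal O)$: it yields that $K$ is finitely ramified, that $\Gamma$ and $\k$ both admit distal expansions, and moreover that $\k$ is either finite or of characteristic zero. If $\k$ is finite we land in alternative (1); if $\k$ has characteristic zero, then the distal expansion of $\k$ provided by the Main Theorem puts us in alternative (2). The argument is essentially a bookkeeping exercise, so the only conceptual step is recognizing that a distal expansion of the pure field $K$ in which an $\emptyset$-definable henselian valuation ring lives automatically serves as a distal expansion of the valued field, enabling the Main Theorem to be invoked; I do not anticipate any serious obstacle beyond this.
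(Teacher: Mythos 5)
Your proof is correct and follows the same route the paper takes: reduce to characteristic zero via Fact~\ref{fac: distal fields char 0}, invoke Lemma~\ref{lem:Shelah definable} to produce (outside the algebraically/real closed cases) an $\emptyset$-definable henselian valuation ring, note that the distal expansion of $K$ is then a distal expansion of $(K,\mathcal O)$, and apply the forward direction of the main theorem to conclude that $\k$ has a distal expansion, hence is finite or of characteristic zero. You spell out a couple of steps the paper leaves implicit (notably that separably closed plus characteristic zero gives algebraically closed), but the argument is the same.
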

\begin{proof}
Suppose $K$ is neither algebraically closed nor real closed; then by Lem\-ma~\ref{lem:Shelah definable} we can take
an $\emptyset$-definable non-trivial henselian valuation ring $\mathcal O$ of $K$.
Let $\k$ be the residue field of $\mathcal O$; then $\k$   also has a distal expansion by the forward direction in our main theorem; in particular, if $\ch\k>0$, then~$\k$ is finite.
\end{proof}

\noindent
In connection with option (1) in Corollary~\ref{cor:distal fields} recall that if
  $(K,\mathcal O)$ is an infinite NIP henselian valued field of characteristic zero with finite residue field, then
 $(K,\mathcal O)$   has a specialization which is   $p$-adically closed   of finite $p$-rank, for some prime~$p$.
 (Remark~\ref{rem:fin ram}.)
We do not know whether we can upgrade (2) in this corollary to ``is algebraically closed of characteristic zero, or real closed''
(even while simultaneously weakening the condition that $\mathcal O$ be $\emptyset$-definable in $K$  to $\mathcal O$ being externally definable, say).
Instead we show:

\begin{theorem}\label{thm:distal fields}
Suppose Conjecture~\ref{conj:Shelah} holds, and $K$ is NIP and does not define a valuation ring whose residue field is infinite of positive characteristic; then  $K$ has a  henselian valuation ring,  type-definable over the empty set, whose residue field is algebraically closed of characteristic zero,   real closed, or finite.
\end{theorem}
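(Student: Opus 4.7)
The plan is to take the canonical henselian valuation ring $\mathcal{O}_{\mathrm{c}}$ of $K$ and verify, via the Jahnke-Koenigsmann definability results in the style of \cite[Chapter~9]{johnson2016fun}, that it witnesses the theorem. First dispose of the trivial cases: if $K$ is finite, algebraically closed of characteristic zero, or real closed, then the trivial valuation ring $\mathcal{O}:=K$ is $\emptyset$-definable, henselian, and has residue field~$K$ of the required form. Assume henceforth none of these holds. Since the trivial valuation is $\emptyset$-definable on~$K$ with residue field~$K$, the hypothesis on~$K$ forces $\operatorname{char}(K) = 0$: otherwise $K$ would be infinite of positive characteristic, providing a forbidden $\emptyset$-definable valuation ring. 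By Lemma~\ref{lem:Shelah definable} (using Conjecture~\ref{conj:Shelah}), $K$ then admits an $\emptyset$-definable nontrivial henselian valuation ring, and in particular $K$ is not separably closed.

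Next I would analyze the residue field $k_{\mathrm{c}}$ of $\mathcal{O}_{\mathrm{c}}$. By the definition of~$\mathcal{O}_{\mathrm{c}}$: either $H_{\mathrm{c}}\neq\emptyset$ and $k_{\mathrm{c}}$ is separably closed, or $H_{\mathrm{c}}=\emptyset$ and $k_{\mathrm{c}}$ admits no nontrivial henselian valuation. Since~$K$ is NIP, so is $k_{\mathrm{c}}$ by~\cite[Theorem~B]{jahnke2016does}; hence, in the second case, Conjecture~\ref{conj:Shelah} applied to~$k_{\mathrm{c}}$ forces it to be finite, separably closed, or real closed. In every case $k_{\mathrm{c}}$ is separably closed, real closed, or finite. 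Provided $\mathcal{O}_{\mathrm{c}}$ is type-$\emptyset$-definable, we are done unless $k_{\mathrm{c}}$ is separably closed of positive characteristic~$p$.

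To handle both the type-definability of $\mathcal{O}_{\mathrm{c}}$ and the exclusion of the last case, I would invoke the Jahnke-Koenigsmann definability results~\cite{JK15}: under our assumptions ($K$ not separably closed, $K$ admits a nontrivial henselian valuation), $\mathcal{O}_{\mathrm{c}}$ is $\emptyset$-definable, or at least expressible as an intersection of $\emptyset$-definable henselian valuation rings built from the canonical $p$-henselian valuations $\mathcal{O}_{\mathrm{c}}^{p}$ via Proposition~\ref{prop:JK} (possibly after passing to $K(i)$ when $K$ is orderable and Galois-descending as in \cite[Chapter~9]{johnson2016fun}). In the excluded case, where $k_{\mathrm{c}}$ is separably closed of characteristic~$p$ and hence infinite, one of these $\emptyset$-definable approximants must have residue field that is either finite (in which case we use it instead of $\mathcal{O}_{\mathrm{c}}$) or infinite of characteristic~$p$ (contradicting the hypothesis on~$K$).

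The principal obstacle is precisely this last step: combining the Jahnke-Koenigsmann machinery with the mixed-characteristic structure of $K$ relative to $\mathcal{O}_{\mathrm{c}}$ to extract, in the offending case, a genuine $\emptyset$-definable henselian valuation ring with the correct residue-field behaviour. Handling orderable~$K$ requires care: one must pass to $K(i)$, apply Proposition~\ref{prop:JK} there, and descend via the Galois-theoretic arguments of \cite[Chapter~9]{johnson2016fun}, invoking Lemma~\ref{lem:Johnson 9.5.2} (Becker) on $p$-corrupted extensions when necessary.
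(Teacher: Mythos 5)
Your proposal takes a genuinely different route from the paper's. You aim to show that the \emph{canonical} henselian valuation ring $\mathcal{O}_{\mathrm{c}}$ is type-$\emptyset$-definable and has the required residue field. The paper instead works with $\mathcal{O}_\infty$, the intersection of \emph{all} $\emptyset$-definable valuation rings of $K$. This object is type-definable over $\emptyset$ for free; the work goes entirely into pinning down its residue field, which is the content of Lemma~\ref{lem:Johnson 9.5.7} (adapted from Johnson's Theorem~9.5.7). Since under Conjecture~\ref{conj:Shelah} all valuation rings on a NIP field are henselian and linearly ordered by inclusion, $\mathcal{O}_\infty$ is automatically a valuation ring, and the paper need never claim anything about $\mathcal{O}_{\mathrm{c}}$.

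The gap in your argument is exactly at the point you flag: the assertion that $\mathcal{O}_{\mathrm{c}}$ ``is $\emptyset$-definable, or at least expressible as an intersection of $\emptyset$-definable henselian valuation rings built from the canonical $p$-henselian valuations $\mathcal{O}_{\mathrm{c}}^{p}$'' is neither proved nor a known consequence of Proposition~\ref{prop:JK}. There is no reason in general for $\mathcal{O}_{\mathrm{c}}$ to coincide with $\mathcal{O}_\infty$, nor for $\mathcal{O}_{\mathrm{c}}$ to lie in the definable closure of $\emptyset$ in any useful sense; that is precisely why the paper does not use it. Even granting that one could present $\mathcal{O}_{\mathrm{c}}$ as such an intersection, the final step --- that in the case $k_{\mathrm{c}}$ separably closed of characteristic $p$ ``one of these $\emptyset$-definable approximants must have residue field that is either finite \ldots or infinite of characteristic $p$'' --- is a conclusion, not an argument. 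The paper's Lemma~\ref{lem:Johnson 9.5.7} does the actual work here: assuming $\boldsymbol{k}_\infty$ is neither algebraically closed nor real closed, it applies Becker's theorem (Lemma~\ref{lem:Johnson 9.5.2}) to get a $p$-corrupted finite extension $\bm{l}$ of $\boldsymbol{k}_\infty$, passes to a carefully chosen finite field extension $L$ of $K$ containing $4p$-th roots of unity whose residue field contains $\bm{l}$, invokes Lemma~\ref{lem:Johnson 9.5.4} to obtain a $\emptyset$-definable valuation on $L$ that is not a coarsening of $w_\infty$, restricts it to $K$, and then compares coarsenings to reach a contradiction. None of this construction appears in your sketch; it is not routine, and your proposal does not supply a substitute for it.

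A correct repair would be to abandon $\mathcal{O}_{\mathrm{c}}$ as the target, set $\mathcal{O}_\infty := \bigcap\{\mathcal{O} : \mathcal{O}$ an $\emptyset$-definable valuation ring of $K\}$, observe that the hypothesis on $K$ lets you assume all $\emptyset$-definable valuation rings have residue characteristic zero (otherwise one of them already has finite residue field and you are done, since under Conjecture~\ref{conj:Shelah} it is henselian), and then carry out the Johnson-style argument for $\mathcal{O}_\infty$.
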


\noindent
Before we give the proof of Theorem~\ref{thm:distal fields}, we establish  analogues of two results from \cite{johnson2016fun}
(9.5.4 and~9.5.7, respectively):

\begin{lemma}\label{lem:Johnson 9.5.4}
Suppose Conjecture~\ref{conj:Shelah} holds and $K$ is NIP, non-orderable, and contains all $p$-th roots of unity, where $p$ is a prime. Let $\mathcal O=\mathcal O^p_{\operatorname{c}}$ be   the canonical $p$-henselian valuation ring   of $K$; then~$\mathcal O$  is $\emptyset$-definable, and its residue field is finite, has characteristic $p$,
or is $p$-closed.
\end{lemma}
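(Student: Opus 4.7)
The $\emptyset$-definability of $\mathcal O = \mathcal O^p_{\operatorname{c}}$ is immediate from Proposition~\ref{prop:JK}, whose hypotheses (non-orderability and containment of all $p$-th roots of unity) match those of the present lemma. For the residue field classification, my plan is to argue by contradiction: suppose the residue field $\k$ of $\mathcal O$ is infinite, satisfies $\ch \k \neq p$, and is not $p$-closed. Since $\mathcal O$ is now $\emptyset$-definable in the NIP field $K$, the expansion $(K,\mathcal O)$ is NIP, and hence so is the field $\k$, interpretable as $\mathcal O/\maxi$ in this expansion. Moreover $\k$ contains all $p$-th roots of unity (lifting from $K$, using $\ch\k \neq p$), so applying Conjecture~\ref{conj:Shelah} to the infinite NIP field $\k$ yields that $\k$ is separably closed, real closed, or carries a non-trivial henselian valuation.

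The next step is a dichotomy coming from the definition of $\mathcal O^p_{\operatorname{c}}$. If $H^p_{\operatorname{c}}(K) \neq \emptyset$, then by construction $\mathcal O$ already has $p$-closed residue field, directly contradicting our assumption. Otherwise $\mathcal O$ is the \emph{finest} $p$-henselian valuation ring of $K$, and I would rule out each Shelah-conjecture alternative for $\k$ in turn: if $\k$ is separably closed, then $\k$ is $p$-closed by Lemma~\ref{lem:p-closed} (using $\ch \k \neq p$), contradiction; and if $\k$ carries a non-trivial henselian valuation $\mathcal O'$, then the composite valuation ring $\pi^{-1}(\mathcal O')$ on $K$ (where $\pi\colon\mathcal O\to\k$ is the residue map) is strictly finer than $\mathcal O$ and remains $p$-henselian, since compositions of $p$-henselian valuations are $p$-henselian (a standard fact from the Koenigsmann-Jahnke theory of $p$-henselian valuations), again contradicting the finesty of $\mathcal O$.

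The remaining case, where $\k$ is real closed, is the subtlest and I expect to be the main technical obstacle. For $p \geq 3$ it is easily excluded: $\k$ then contains a primitive $p$-th root of unity, which cannot lie in any real closed field. For $p = 2$ the hypothesis ``$K$ contains all $p$-th roots of unity'' is vacuous, and I would instead invoke a Baer-Krull-type result adapted to $2$-henselian valuations to the effect that a $2$-henselian valuation on $K$ with real closed residue field of characteristic zero forces $K$ itself to be orderable, contradicting our standing hypothesis. The justification of this last step would draw on the same body of Jahnke-Koenigsmann work used in Proposition~\ref{prop:JK}.
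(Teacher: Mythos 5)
Your proposal is correct and follows essentially the same route as the paper's proof: $\emptyset$-definability from Proposition~\ref{prop:JK}, then a contradiction by applying Conjecture~\ref{conj:Shelah} to the residue field $\k$, ruling out the separably closed and real closed alternatives and using the composition of places to produce a strictly finer $p$-henselian valuation ring. You are in fact more explicit than the paper at two points that it leaves implicit — the dichotomy on whether $H^p_{\operatorname{c}}\neq\emptyset$, and the splitting of the real-closed case into $p\geq 3$ (via roots of unity) and $p=2$ (via a Baer--Krull type argument) — both of which are correct and needed.
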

\begin{proof}
Proposition~\ref{prop:JK} yields
the $\emptyset$-definability of $\mathcal O$.
Suppose the residue field $\k$   of $\mathcal O$ is infinite, has characteristic~$\neq p$, and is not $p$-closed.
Then by Lemma~\ref{lem:p-closed}, $\k$ cannot be separably closed; since~$K$ is non-orderable, $\k$ is also not real closed. Hence
by  Conjecture~\ref{conj:Shelah} we may equip $\k$ with a non-trivial henselian valuation
ring; let $\k\to \bm{k}_1$ be the corresponding place.
Composition of the   places~$K\to \k\to \k_1$ then gives rise to a henselian valuation ring $\mathcal O_1$ of $K$ with residue field $\k_1$ such that~$\k$ is a specialization of~$(K,\mathcal O_1)$, and then $\mathcal O_1$ is a strictly
finer $p$-henselian valuation ring than~$\mathcal O$, a contradiction.
\end{proof}

\begin{lemma} \label{lem:Johnson 9.5.7}
Suppose Conjecture~\ref{conj:Shelah} holds, and
suppose $K$ is infinite NIP, and the residue field of each $\emptyset$-definable valuation ring of $K$ has   characteristic zero. 
Let $\mathcal O_\infty$ be the intersection of all $\emptyset$-definable valuation rings of $K$.
Then  $\mathcal O_\infty$ is a   valuation ring of~$K$ whose residue field is
 algebraically closed of characteristic zero or real closed.
\end{lemma}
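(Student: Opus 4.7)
The plan is to show first that $\mathcal O_\infty$ is a henselian valuation ring, then that its residue field $\k_\infty$ is NIP of characteristic zero, and finally to derive a contradiction if $\k_\infty$ is neither algebraically closed nor real closed.

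To begin, under Conjecture~\ref{conj:Shelah} every NIP valued field is henselian by \cite[Proposition~6.3]{HHJ2019}, so every $\emptyset$-definable valuation ring of $K$ is henselian; and by \cite[Corollary~5.4]{HHJ2019} any two $\emptyset$-definable valuation rings of the NIP field $K$ are comparable, so this family forms a chain under inclusion. A routine argument shows that the intersection of a chain of valuation rings of a field is again a valuation ring, and henselianness is preserved (e.g., via the characterization by unique extension to $K^{\operatorname{alg}}$). Thus $\mathcal O_\infty$ is an $\emptyset$-type-definable henselian valuation ring. Moreover, $\ch\k_\infty=0$: for each prime $p$, the hypothesis says that every $\emptyset$-definable valuation ring containing $\mathcal O_\infty$ has residue field of characteristic zero, so $p$ is a unit in each such ring, and hence $1/p\in\mathcal O_\infty$. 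Since $\mathcal O_\infty$ is $\emptyset$-type-definable in the NIP structure $K$, the residue field $\k_\infty$ is interpretable in $K^{\eq}$ and hence NIP.

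Suppose now for contradiction that $\k_\infty$ is neither algebraically closed nor real closed. Being NIP, infinite (as it has characteristic zero), not separably closed, and not real closed, Lemma~\ref{lem:Shelah definable} applied to $\k_\infty$ produces an $\emptyset$-definable non-trivial henselian valuation ring $\mathcal O_{\k}$ of $\k_\infty$. Its pullback $\pi^{-1}(\mathcal O_{\k})$ along the residue map $\pi\colon\mathcal O_\infty\twoheadrightarrow\k_\infty$ is a valuation ring of $K$ strictly contained in $\mathcal O_\infty$. The goal is then to promote this to an \emph{$\emptyset$-definable} valuation ring of $K$ strictly contained in $\mathcal O_\infty$: such an object would contradict the definition of $\mathcal O_\infty$ as the intersection of all $\emptyset$-definable valuation rings of $K$, completing the proof.

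The main obstacle is exactly this last step: passing from $\emptyset$-definability of $\mathcal O_{\k}$ inside the merely type-definable residue field $\k_\infty$ to $\emptyset$-definability of a valuation ring of $K$ itself. The strategy (following Johnson, \cite[Chapter~9]{johnson2016fun}) combines Becker's theorem in the form of Lemma~\ref{lem:Johnson 9.5.2}, which furnishes a finite $p$-corrupted extension of $\k_\infty$ for some prime $p$, with the uniform $\emptyset$-definability of the canonical $p$-henselian valuation (Proposition~\ref{prop:JK} and Lemma~\ref{lem:Johnson 9.5.4}). Passing to a finite extension $K^\sharp$ of $K$ containing $\zeta_p$ and a lift of the $p$-corrupted extension of $\k_\infty$, the canonical $p$-henselian valuation of $K^\sharp$ will be $\emptyset$-definable and will strictly refine the extension of $\mathcal O_\infty$ to $K^\sharp$; finiteness of $K^\sharp/K$ will then let us descend to an $\emptyset$-definable valuation ring of $K$ strictly contained in $\mathcal O_\infty$, yielding the desired contradiction.
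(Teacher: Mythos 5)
Your set-up is correct and you have the right ingredients in hand (Lemma~\ref{lem:Johnson 9.5.2}, Lemma~\ref{lem:Johnson 9.5.4}, passage to a finite extension containing enough roots of unity and a lift of a $p$-corrupted extension), but the end-game — the part you yourself flag as "the main obstacle" — is where your sketch breaks, and what you propose there would not go through. You say: finiteness of $K^\sharp/K$ will let us "descend to an $\emptyset$-definable valuation ring of $K$ strictly contained in $\mathcal O_\infty$, yielding the desired contradiction." Two problems. First, there is no reason why restricting an $\emptyset$-definable valuation ring from a finite extension $L$ down to $K$ should produce something $\emptyset$-definable in $K$; what one gets is only a valuation ring that is definable \emph{with parameters} in $K$. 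So the contradiction cannot be run against the definition of $\mathcal O_\infty$ as the intersection of the $\emptyset$-definable valuation rings. The paper supplies exactly the fact needed to fix this: it cites from Johnson's thesis (Theorem~9.5.7(2)) that $\mathcal O_\infty$ actually coincides with the intersection of \emph{all} definable valuation rings of $K$, not just the $\emptyset$-definable ones. You never invoke this, and without it there is no way to conclude anything from the restricted valuation.

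Second, even granting that, the nature of the contradiction is not "a strictly finer definable valuation ring of $K$." In the paper, Lemma~\ref{lem:Johnson 9.5.4} applied to $L$ produces an $\emptyset$-definable valuation $w$ on $L$ that is \emph{not a coarsening} of $w_\infty$; you instead assert that the canonical $p$-henselian valuation "strictly refines" the extension of $\mathcal O_\infty$, which is not what the lemma gives and is not needed. The actual contradiction runs the other way: restricting $w$ to $K$ gives a definable valuation $v$, which (by the fact above) must contain $\mathcal O_\infty$, i.e.\ $v=(v_\infty)_\Delta$ is a coarsening of $v_\infty$ by some convex subgroup $\Delta$. Letting $\Delta_L$ be the convex hull of $\Delta$ in the value group of $w_\infty$, the coarsening $(w_\infty)_{\Delta_L}$ restricts to $v$ on $K$; but $v$ is henselian and $L/K$ is finite, so $v$ has a unique extension to $L$, forcing $w=(w_\infty)_{\Delta_L}$ to be a coarsening of $w_\infty$ after all — contradicting the choice of $w$. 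Also, your intermediate step of first applying Lemma~\ref{lem:Shelah definable} to $\k_\infty$ to obtain an $\emptyset$-definable henselian valuation on $\k_\infty$ is a detour that does not feed into the argument; the paper passes directly from "$\k_\infty$ neither algebraically closed nor real closed" to Lemma~\ref{lem:Johnson 9.5.2}.
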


\begin{proof}
The hypothesis and the remarks following Conjecture~\ref{conj:Shelah} yield that the
set of  all   valuation rings of $K$ is
linearly ordered by inclusion;  in particular, $\mathcal O_\infty$ is a  valuation
ring of $K$. As in the proof of \cite[Theorem~9.5.7(2)]{johnson2016fun} one also sees that
$\mathcal O_\infty$ equals the intersection of all definable valuation rings of~$K$.
Let $\k_\infty$ be the residue field of $\mathcal O_\infty$.
We have $\ch\k_\infty=0$, since otherwise some $\emptyset$-definable valuation ring~$\mathcal O\supseteq\mathcal O_\infty$ of~$K$
would have residue field  $\k$ with $\ch \k=\ch\k_\infty>0$ \cite[Remark~9.5.6]{johnson2016fun}. Towards a contradiction suppose that~$\k_\infty$ is neither algebraically closed nor real closed.
By Lemma~\ref{lem:Johnson 9.5.2} we then obtain a prime~$p$ and a finite $p$-corrupted extension $\bm{l}$ of $\k_\infty$.
Let $v_\infty\colon K^\times\to\Gamma_\infty$ denote the valuation associated to~$\mathcal O_\infty$.
Choose a finite field extension~$L$ of~$K$  which contains all $4p$-th roots of unity and such that
the residue field of the unique  valuation   $w_\infty$ on $L$ extending~$v_\infty$
contains~$\bm{l}$, and hence is not $p$-closed.
Lemma~\ref{lem:Johnson 9.5.4} yields a valuation~$w$ on~$L$ which is $\emptyset$-definable (that is, its valuation ring is
$\emptyset$-definable in the field $L$) and not a coarsening of~$w_\infty$.
Let $v$ be the restriction of $w$ to a valuation on $K$;  then $v$ is definable, hence a
coarsening of~$v_\infty$, say $v=(v_\infty)_\Delta$ where $\Delta$ is a convex subgroup of $\Gamma_\infty$.
Let $\Delta_L$ be the convex hull of $\Delta$ in the value group of~$w_\infty$.   The restriction of the $\Delta_L$-coarsening $(w_\infty)_{\Delta_L}$ of~$w_\infty$ to $K$ is $v$.
But $v$ is henselian, so $w=(w_\infty)_{\Delta_L}$ is a coarsening of~$w_\infty$, a contradiction.
\end{proof}

\noindent
Now Theorem~\ref{thm:distal fields} follows easily:
If $K$ has  an $\emptyset$-definable valuation ring   with   residue field of positive characteristic, then this residue field is finite by hypothesis, and we are done.
Thus we may assume that the residue field of every $\emptyset$-definable valuation ring of $K$ has  characteristic zero.  Then   Lemma~\ref{lem:Johnson 9.5.7} yields a henselian valuation ring $\mathcal O_\infty$, type-definable over $\emptyset$, whose residue field is algebraically closed of characteristic zero, or real closed.  \qed

\begin{cor}\label{cor:distal fields, 2}
Suppose Conjectures~\ref{conj: all OAGs have distal expansions} and~\ref{conj:Shelah} hold, and $K$ is  NIP; then  the following are equivalent:
\begin{enumerate}
\item $K$ has a distal expansion;
\item $K$ does not interpret an infinite field of positive characteristic;
\item $K$ does not define a valuation ring whose residue field is infinite of positive characteristic;
\item $K$ has  a  henselian valuation ring whose residue field   is algebraically closed of characteristic zero,
real closed, or finite.
\end{enumerate}
\end{cor}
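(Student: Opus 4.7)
The plan is to establish the four implications in a cycle $(1)\Rightarrow(2)\Rightarrow(3)\Rightarrow(4)\Rightarrow(1)$.

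First, $(1)\Rightarrow(2)$ would follow immediately from Fact~\ref{fac: inf distal fields char 0}: any interpretation of an infinite field of positive characteristic in $K$ would persist in every expansion of $K$, contradicting the existence of a distal expansion. The implication $(2)\Rightarrow(3)$ is trivial, since an $\emptyset$-definable valuation ring of $K$ with infinite positive-characteristic residue field $\bm{l}$ would give an interpretation of $\bm{l}$ in $K$.

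For $(3)\Rightarrow(4)$ I would simply invoke Theorem~\ref{thm:distal fields}, which under Conjecture~\ref{conj:Shelah} produces a henselian valuation ring of $K$ whose residue field is algebraically closed of characteristic zero, real closed, or finite --- exactly the conclusion of~(4). Here the assumption of Conjecture~\ref{conj: all OAGs have distal expansions} is not needed.

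The main work is $(4)\Rightarrow(1)$. Given a henselian valuation ring $\mathcal O$ of $K$ as in~(4), with residue field $\k$ and value group $\Gamma$, I would apply the Main Theorem to the valued field $(K,\mathcal O)$. By \cite[Theorem~B]{jahnke2016does} (cited in Section~\ref{sec:distal fields}), $(K,\mathcal O)$ is NIP, and in each of the three cases of~(4) the residue field $\k$ has characteristic zero or is finite. Finite ramification of $(K,\mathcal O)$ holds trivially when $\mathrm{char}\,\k = 0$, and when $\k$ is finite it follows from Fact~\ref{fac: NIP Hens fields are fin ramified}. The residue field $\k$ has a distal expansion in each case: finite fields are trivially distal, real closed fields are o-minimal and so distal, and $\ACF_0$ is interpretable (in fact, definable) in $\RCF$ and therefore has a distal expansion by Lemma~\ref{lem:distal exp}. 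Finally, $\Gamma$ is an ordered abelian group and so has a distal expansion by Conjecture~\ref{conj: all OAGs have distal expansions}. The Main Theorem then yields that $(K,\mathcal O)$ has a distal expansion; since any expansion of $(K,\mathcal O)$ is automatically an expansion of the pure field $K$, assertion~(1) follows. The main obstacle here is really just bookkeeping --- verifying, case by case, that the dichotomy in~(4) matches the hypotheses of the Main Theorem, with the NIP assumption on $K$ invoked to deduce finite ramification in the finite residue field case.
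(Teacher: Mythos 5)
Your proposal is correct and follows essentially the same route as the paper: (1)$\Rightarrow$(2)$\Rightarrow$(3) via Fact~\ref{fac: distal fields char 0}, (3)$\Rightarrow$(4) by Theorem~\ref{thm:distal fields}, and (4)$\Rightarrow$(1) by feeding a distal expansion of the residue field and (via Conjecture~\ref{conj: all OAGs have distal expansions}) of the value group into the Main Theorem. If anything, you are slightly more careful than the paper's own proof, which does not spell out finite ramification in the finite-residue-field case; your derivation of it from Jahnke's Theorem~B together with Fact~\ref{fac: NIP Hens fields are fin ramified} is exactly the right justification.
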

\begin{proof}
The implications (1)~$\Rightarrow$~(2)~$\Rightarrow$~(3) are clear (using Fact~\ref{fac: distal fields char 0}),
and (3)~$\Rightarrow$~(4) follows from Theorem~\ref{thm:distal fields}.
To show (4)~$\Rightarrow$~(1), suppose $K$ has characteristic zero.
If $\mathcal O$ is a henselian valuation ring of $K$ whose residue field $\k$  is algebraically closed of characteristic zero,
real closed, or finite, then~$\k$ has a distal expansion, and after choosing a distal expansion of the value group of~$\mathcal O$,   our main theorem yields that~$(K,\mathcal O)$ has a distal expansion, which is also a distal expansion of $K$.
\end{proof}

\noindent
We also note a consequence of Theorem~\ref{thm:distal fields} for ordered fields.
In \cite{DF}, a field is defined to be {\it almost real closed}\/ if it
has a henselian valuation ring with real closed residue field.

\begin{cor}
Suppose Conjecture~\ref{conj:Shelah} holds, and
  $K$ is orderable and has a distal expansion; then~$K$ is almost real closed.
\end{cor}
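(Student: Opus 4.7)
The plan is to apply Theorem~\ref{thm:distal fields} to restrict the possible residue fields of a canonical henselian valuation on $K$, and then to rule out all but the real closed case using orderability together with Hensel's lemma.

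First I would verify the hypotheses of Theorem~\ref{thm:distal fields}. Since $K$ is orderable it is infinite of characteristic zero, and since it admits a distal expansion it is NIP; moreover, by Fact~\ref{fac: distal fields char 0}, $K$ cannot interpret an infinite field of positive characteristic, so in particular no valuation ring definable in $K$ has residue field that is infinite of positive characteristic. The theorem then yields a henselian valuation ring $\mathcal O_\infty$ on $K$, type-definable over $\emptyset$, whose residue field $\k_\infty$ is algebraically closed of characteristic zero, real closed, or finite. It suffices to exclude the first and third possibilities.

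The algebraically closed case is immediate: $\sqrt{-1}\in\k_\infty$ is a simple root of $X^2+1$ modulo the maximal ideal of $\mathcal O_\infty$ and so lifts via Hensel's lemma to $K$, contradicting orderability. The main work will be in the finite residue field case, where I must produce a representation of $-1$ as a sum of squares in $K$. If the residual characteristic $p$ is odd, a pigeonhole counting argument in $\mathbb{F}_q$ (where $q=|\k_\infty|$) produces $a,b\in\mathbb{F}_q$ with $a\neq 0$ and $a^2+b^2=-1$; applying Hensel to the polynomial $X^2+\tilde b^2+1$, whose derivative $2X$ is a unit at any lift of $a$ since $p$ is odd, lifts this relation to $K$. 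When $p=2$, I would instead use the Newton form of Hensel for $f(X):=X^2+7$ at the approximate root $X=1$: since $v(f(1))=3v(2)>2v(2)=2v(f'(1))$, there exists $c\in K$ with $c^2=-7$, and combining this with the identity $7=2^2+1^2+1^2+1^2$ in $\mathbb Z$ yields
\[
-1 \;=\; \frac{c^2}{7} \;=\; \left(\tfrac{c}{7}\right)^{2}\!\cdot 7 \;=\; \left(\tfrac{2c}{7}\right)^{2} + 3\left(\tfrac{c}{7}\right)^{2},
\]
a sum of squares in $K$. In every subcase the orderability of $K$ is contradicted, so $\k_\infty$ must be real closed, i.e., $K$ is almost real closed.
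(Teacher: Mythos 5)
Your proof is correct, but it takes a more hands-on route than the paper. The paper's proof is a two-liner: fix an ordering on $K$; cite the well-known fact that every henselian valuation ring of an ordered field is convex, hence has orderable residue field; now apply Theorem~\ref{thm:distal fields}, and observe that of the three options (algebraically closed of characteristic zero, real closed, finite) only a real closed residue field is orderable. You instead bypass the citation and re-derive its needed consequence from scratch: starting from the same trichotomy for the residue field $\k_\infty$ of $\mathcal O_\infty$, you lift an explicit witness of the failure of formal realness from $\k_\infty$ to $K$ via Hensel's lemma (splitting into odd residue characteristic, where $X^2+\tilde b^2+1$ works, and residue characteristic $2$, where you use the Newton-form Hensel applied to $X^2+7$ at $1$ and the four-square identity). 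In effect, you are proving by hand the special case of the general theorem that a henselian valuation on a formally real field has formally real residue field, tailored to the two non-real-closed cases allowed by Theorem~\ref{thm:distal fields}. What the paper's approach buys is brevity; what yours buys is self-containedness and, arguably, a cleaner treatment of the residue characteristic $2$ case, which is the delicate point in the usual proof of the cited convexity theorem. One small aside: in the odd-characteristic case, one should note that the pigeonhole solution $a^2+b^2=-1$ in $\mathbb F_q$ can always be chosen with $a\neq 0$ — if $a=0$ forces $b^2=-1$, and $-1$ happens to be a square, just swap the roles and take $b=0$ with $a^2=-1$ — which you implicitly do but should make explicit.
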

\begin{proof}
Equip $K$ with an ordering making it an ordered field; it is well-known that then every henselian valuation ring
of $K$ is convex, and hence its residue field is orderable. Now use Theorem~\ref{thm:distal fields}.
\end{proof}

\noindent
Based on Theorem~\ref{thm:distal fields} we conjecture:

\begin{conjecture}
Suppose $K$ has a distal expansion; then
$K$ has a  henselian valuation ring whose residue field is  algebraically closed of characteristic zero,   real closed, or finite.
\end{conjecture}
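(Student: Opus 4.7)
The plan is to follow the architecture of Theorem~\ref{thm:distal fields} and Corollary~\ref{cor:distal fields, 2}, but to replace every invocation of Conjecture~\ref{conj:Shelah} by an argument internal to distality. First I would reduce to the case where $K$ is infinite, so that $\ch K = 0$ by Fact~\ref{fac: distal fields char 0}. Since a distal $K$ cannot interpret any infinite field of positive characteristic, no definable (in particular, $\emptyset$-definable) valuation ring of $K$ can have infinite residue field of positive characteristic. Thus $K$ automatically satisfies the ``no bad residue field'' hypothesis of Theorem~\ref{thm:distal fields}, and the only obstruction to applying that theorem is that its proof rests on Shelah's Conjecture.

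The heart of the proposal, and the main obstacle, is to establish a distal analogue of Lemma~\ref{lem:Shelah definable}: every infinite field $K$ admitting a distal expansion is either real closed, algebraically closed of characteristic zero, or carries a nontrivial $\emptyset$-definable henselian valuation. One would again work with the canonical henselian valuation $\mathcal O_{\operatorname{c}}$ and its residue field $\k$. If $\k$ is separably closed or real closed, the Jahnke--Koenigsmann results \cite[Theorem~3.10, Corollary~3.11]{JK15} already give $\emptyset$-definability of some nontrivial henselian $\mathcal O$, and the forward direction of our main theorem ensures $\k$ has a distal expansion as well; in the real closed case the desired $\emptyset$-definable valuation appears directly, and in the separably closed case $\k$ has characteristic zero (again by Fact~\ref{fac: distal fields char 0}) and is hence algebraically closed. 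The hard case is when the residue field of every henselian valuation of $K$ is neither separably closed nor real closed: here Shelah's conjecture was used in Lemma~\ref{lem:Shelah definable} to force such a residue field to be finite (via non-universal absolute Galois group). A distal argument would have to independently rule out the existence of an infinite distal field with no nontrivial henselian valuation, no ordering, and not separably closed; this is the single point at which genuinely new input beyond what is developed in the paper is required.

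Assuming that distal analogue has been secured, the remainder of the argument should go through with only cosmetic changes. The Jahnke--Koenigsmann $p$-henselian machinery (Proposition~\ref{prop:JK}) is unconditional, so Lemma~\ref{lem:Johnson 9.5.4} can be re-proved verbatim once one observes that its single appeal to Conjecture~\ref{conj:Shelah} is to a residue field of an $\emptyset$-definable henselian valuation of $K$, which inherits a distal expansion from $K$ and is thus covered by the distal analogue established in the previous step. The same iteration principle lets one reprove Lemma~\ref{lem:Johnson 9.5.7}: one intersects all $\emptyset$-definable valuation rings to get a type-definable valuation ring $\mathcal O_\infty$ with residue field $\k_\infty$ of characteristic zero, then uses Lemma~\ref{lem:Johnson 9.5.2} and the $p$-henselian machinery, applied at each stage to residue fields that continue to satisfy the distal hypothesis, to force $\k_\infty$ to be algebraically closed or real closed. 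Combined with the reduction of the first paragraph, this yields the conjecture.

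The overwhelmingly hard step is the distal analogue of Shelah's Conjecture described in the second paragraph; the rest is, by contrast, essentially a bookkeeping exercise tracking that distality descends to the relevant residue fields. A plausible avenue toward that step would be to leverage the combinatorial/topological consequences of distality (e.g., strong Erd\H{o}s--Hajnal, distal cell decomposition, or the existence of nicely behaved definable V-topologies on one-variable definable sets) to produce a nontrivial henselian valuation, perhaps via topological characterizations of henselianity in the spirit of \cite{JK15}; but as with the field case of Shelah's conjecture itself, no such approach is currently known to succeed.
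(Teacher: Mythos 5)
You were asked to prove what the paper itself only states as a conjecture, and your proposal does not close the gap that makes it a conjecture: the paper proves this statement only conditionally, as Theorem~\ref{thm:distal fields} together with the observation (as in Corollary~\ref{cor:distal fields, 2}) that a field with a distal expansion cannot define a valuation ring with infinite residue field of positive characteristic, and every step of that chain that produces a henselian valuation in the first place rests on Conjecture~\ref{conj:Shelah}. Your second paragraph correctly isolates the missing ingredient --- an unconditional ``distal analogue'' of Lemma~\ref{lem:Shelah definable}, i.e., that an infinite field with a distal expansion which is neither algebraically closed nor real closed carries a nontrivial ($\emptyset$-definable) henselian valuation --- and then concedes that no argument for it is known. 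That concession is precisely why the statement appears in the paper as a conjecture rather than a theorem, so what you have written is a plan contingent on an open problem, not a proof.

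Two further points where your ``the rest is bookkeeping'' claim understates the dependence on Conjecture~\ref{conj:Shelah}. First, in Lemma~\ref{lem:Johnson 9.5.4} the valuation $\mathcal O^p_{\operatorname{c}}$ is only $p$-henselian, not henselian, so its residue field does not inherit a distal expansion via the main AKE-style theorem; one must instead argue through $\emptyset$-definability and interpretability (Lemmas~\ref{lem: reduct of distal on a stab emb set is distal} and \ref{lem:distal exp}), and then one still needs the full strength of the hypothetical distal Shelah statement applied to that residue field, exactly as the paper needs Conjecture~\ref{conj:Shelah} there. Second, the proof of Lemma~\ref{lem:Johnson 9.5.7} uses not only Shelah's conjecture applied to residue fields but also its consequences recorded after Conjecture~\ref{conj:Shelah} --- that NIP valued fields are henselian and that (externally) definable valuation rings on a NIP field are pairwise comparable, which is what makes $\mathcal O_\infty$ a valuation ring at all --- so a distal replacement for these would also have to be supplied. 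In short, your reduction faithfully mirrors the paper's conditional argument, but the conjecture remains open, and your proposal acknowledges rather than bridges the essential gap.
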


\subsection{Distality in the $\operatorname{dp}$-minimal case}\label{sec: dp-minimal fields}
In this subsection we show that for $\operatorname{dp}$-minimal $K$, the conclusion of Corollary~\ref{cor:distal fields, 2} holds even without assuming
Conjectures~\ref{conj: all OAGs have distal expansions} and~\ref{conj:Shelah}; this relies again on work of  Johnson~\cite{johnson2016fun}.
We  first  recall a few facts about $\operatorname{dp}$-minimal fields and related structures.
(For (1) see Fact~\ref{fac: no tot indisc in distal}; part (2) follows from \cites{johnson2015dp, jahnke2017dp}.)
\begin{fact}\label{fac: dp-minimal valued fields}
\mbox{}
\begin{enumerate}
\item Every $\operatorname{dp}$-minimal expansion of an ordered abelian group is distal. 
\item Every $\operatorname{dp}$-minimal valued field  is henselian.
\end{enumerate}

\end{fact}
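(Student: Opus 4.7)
The plan proceeds by handling the two clauses separately. Clause (1) is essentially immediate from machinery already in hand, whereas clause (2) is a substantially deeper classification-type result, so the bulk of the work goes into (2).

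For (1), the plan is to apply the ``in particular'' clause of Fact~\ref{fac: no tot indisc in distal} directly. Let $\bm{G}$ be a $\operatorname{dp}$-minimal expansion of an ordered abelian group $(G;0,{+},{-},{<},\dots)$. The structure $\bm{G}$ has a single sort, whose primitives include the linear ordering~${<}$, so this sort expands a linearly ordered set in the sense of Fact~\ref{fac: no tot indisc in distal}. Combined with the assumed $\operatorname{dp}$-minimality, that fact delivers distality of $\bm{G}$, so nothing more is needed. (One could alternatively verify the hypothesis directly: an indiscernible sequence $(a_i)_{i\in I}$ of singletons in an ordered structure is either constant or strictly monotone, hence never totally indiscernible unless constant.)

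For (2), the plan follows Johnson's program for $\operatorname{dp}$-minimal fields, specialised to the valued setting as in~\cites{johnson2015dp, jahnke2017dp}. Let $(K,v)$ be $\operatorname{dp}$-minimal with valuation ring $\mathcal O$, maximal ideal $\maxi$ and residue field $\k$. First I would equip $K$ with its \emph{canonical} $\operatorname{dp}$-minimal field topology~$\tau$, the unique non-discrete Hausdorff definable group topology on $K$ making $K$ into a topological field, and show that the $v$-topology coincides with~$\tau$ (by uniqueness, since the $v$-topology is also a non-discrete definable Hausdorff field topology on the same $\operatorname{dp}$-minimal expansion). Next, assume towards a contradiction that $(K,v)$ is not henselian; then a proper henselisation $(K^h,v^h)\supsetneq(K,v)$ exists inside $K^{\operatorname{alg}}$, and there is a polynomial $f(X)\in\mathcal O[X]$ and $\bar a\in\k$ with $f(\bar a)=0$, $f'(\bar a)\ne 0$, having no root in $\mathcal O$ reducing to $\bar a$.

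The key step is to convert this failure of Hensel's lemma into a violation of $\operatorname{dp}$-minimality. The plan is: pass to a sufficiently saturated elementary extension, extract a pseudo-Cauchy sequence $(a_\rho)$ in $\mathcal O$ with $f(a_\rho)\leadsto 0$ but without pseudo-limit in $K$ (using the existence of a root in $K^h$), and then exploit~$\operatorname{dp}$-minimality together with~\cite[Lemma~1.4]{SimonDPordered} to locate, for each external parameter~$c$, a cut~$\mathfrak i$ in the index set such that the two halves are $c$-indiscernible. Combining this with the value-group action and translating in the $f$-image gives a second, independent way to cut the sequence, producing an $\operatorname{ict}$-pattern of depth $2$ — the sought contradiction. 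The main obstacle is this final combinatorial step: one must arrange that the cut coming from the $f$-image is genuinely independent from the cut coming from the $\operatorname{dp}$-minimality analysis of $(a_\rho)$ itself. This is where the detailed work of Johnson and Jahnke is used, relying on the fact that the $v$-topology agrees with the canonical topology so that neighbourhood bases and finite-index subgroups can be interchanged freely.
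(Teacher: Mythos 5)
For part (1) your approach matches the paper exactly: you invoke Fact~\ref{fac: no tot indisc in distal} via its ``in particular'' clause, since every sort of a $\operatorname{dp}$-minimal expansion of an ordered abelian group expands a linear order. Nothing to add there.

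For part (2), you should be aware that the paper does not prove this clause at all; it merely cites \cite{johnson2015dp} and \cite{jahnke2017dp} (the latter being Jahnke--Simon--Walsberg, where henselianity of $\operatorname{dp}$-minimal valued fields is established). Your attempted proof sketch heads in roughly the right direction, and it recognisably mirrors the strategy in those references: identify the $v$-topology with the canonical $\operatorname{dp}$-minimal field topology, reduce a failure of Hensel's lemma to a diverging pseudo-Cauchy sequence, then try to manufacture an $\operatorname{ict}$-pattern of depth~$2$. However, there is a real gap at exactly the step you yourself flag as ``the main obstacle'': you assert that the cut coming from the $f$-image is ``genuinely independent'' from the cut produced by the $\operatorname{dp}$-minimality analysis, and that this yields the depth-$2$ pattern, but you give no argument for why these two cuts cannot be made to coincide. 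This is precisely where all the work lies in the cited sources (it occupies several pages of case analysis and relies on the interaction between neighbourhood bases of $\tau$ and multiplicative cosets), and simply naming the obstacle does not close it. Also note that the uniqueness of the canonical topology, which you use to identify it with the $v$-topology, is itself a nontrivial theorem of Johnson that you would need to cite rather than assume. Since part (2) is a deep classification-type result that this paper deliberately takes as a black box, the correct move for your purposes is to cite \cite{johnson2015dp} and \cite{jahnke2017dp} rather than reprove it.
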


\noindent Combining Fact~\ref{fac: dp-minimal valued fields} and the main theorem of this paper, we get:

\begin{cor}\label{cor: AKE for dp-min}
A $\operatorname{dp}$-minimal valued field  is distal \textup{(}has a distal expansion\textup{)} if and only if its residue field is distal \textup{(}respectively, has a distal expansion\textup{)}.
\end{cor}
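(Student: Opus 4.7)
The plan is to apply the main theorem directly, with only a small amount of additional input needed for the converse. By part~(2) of Fact~\ref{fac: dp-minimal valued fields}, a $\operatorname{dp}$-minimal valued field $(K,\mathcal O)$ is henselian, so our main theorem applies. The forward direction is then immediate from it: distality (respectively, existence of a distal expansion) for $(K,\mathcal O)$ entails the same property for $\k$, with no further argument required.

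For the converse, assuming $\k$ is distal (respectively has a distal expansion), I would aim to verify the two conditions appearing in the main theorem, namely that $K$ is finitely ramified and that the value group $\Gamma$ is distal (respectively has a distal expansion). The condition on $\Gamma$ is the easier of the two: since $\Gamma \cong K^\times/\mathcal O^\times$ is interpretable in $(K,\mathcal O)$, it inherits $\operatorname{dp}$-minimality, and then part~(1) of Fact~\ref{fac: dp-minimal valued fields} supplies distality of $\Gamma$ directly, which in particular yields a distal expansion.

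For finite ramification, I would split into cases on $\ch\k$. In the equicharacteristic zero case this is automatic, as recalled in the discussion preceding the statement of the main theorem. If $\ch\k = p > 0$, then the hypothesis that $\k$ has a distal expansion, combined with Fact~\ref{fac: distal fields char 0}---which forbids any distal structure from interpreting an infinite field of positive characteristic---forces $\k$ to be finite. Fact~\ref{fac: NIP Hens fields are fin ramified}, applied to the NIP henselian valued field $(K,\mathcal O)$, then delivers finite ramification. I do not expect any serious obstacle here: the corollary is essentially an assembly of the main theorem with the $\operatorname{dp}$-minimal incarnations of its two hypotheses, the mild subtlety being merely the case split on $\ch\k$ that is needed to bootstrap finite ramification from distality of $\k$.
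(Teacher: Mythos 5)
Your proof is correct and follows exactly the route the paper intends: the paper's own justification is the one-line "combine Fact~\ref{fac: dp-minimal valued fields} with the main theorem," and your argument is precisely that combination, spelled out. The details you supply beyond the paper's one-liner (dp-minimality of $\Gamma$ as a definable quotient of $K^\times$ plus Fact~\ref{fac: dp-minimal valued fields}(1), and finite ramification via Fact~\ref{fac: distal fields char 0} forcing $\k$ finite and then Fact~\ref{fac: NIP Hens fields are fin ramified}) are exactly the implicit steps, so there is nothing genuinely different to compare.
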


\noindent A $\operatorname{dp}$-minimal (pure) field can fail to admit a distal expansion only in the most obvious way:

\begin{cor}
Let $K$ be an infinite $\operatorname{dp}$-minimal   field; then the following are equivalent:
\begin{enumerate}
\item $K$ has a distal expansion;
\item $K$ does not interpret an infinite field of positive characteristic;
\item $K$ does not define a valuation ring whose residue field is infinite   of positive characteristic;
\item $K$ has a  henselian valuation ring whose residue field is algebraically closed of characteristic zero,
 real closed, or  finite.
\end{enumerate}
\end{cor}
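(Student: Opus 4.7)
The plan is to prove the cycle $(1) \Rightarrow (2) \Rightarrow (3) \Rightarrow (4) \Rightarrow (1)$. The implication $(1) \Rightarrow (2)$ is immediate from Fact~\ref{fac: distal fields char 0}, and $(2) \Rightarrow (3)$ is immediate as well: any definable valuation ring $\mathcal O$ of $K$ has a definable maximal ideal (the set of non-units of $\mathcal O$), so the corresponding residue field is interpretable in $K$.

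For $(3) \Rightarrow (4)$, I plan to repeat the proof of Theorem~\ref{thm:distal fields} verbatim, replacing each appeal to Conjecture~\ref{conj:Shelah} by the unconditional version available in the dp-minimal setting thanks to Johnson's classification of dp-minimal fields~\cite{johnson2015dp} (see also Chapter~9 of~\cite{johnson2016fun}). More precisely, the dp-minimal analogues of Lemma~\ref{lem:Shelah definable}, Lemma~\ref{lem:Johnson 9.5.4}, and Lemma~\ref{lem:Johnson 9.5.7} all hold without assuming any conjecture: every infinite dp-minimal field is algebraically closed, real closed, or carries a non-trivial $\emptyset$-definable henselian valuation with the appropriate constraints on its residue field. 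Running the argument of Theorem~\ref{thm:distal fields} with these unconditional ingredients yields the desired henselian valuation on $K$ whose residue field has the prescribed form.

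For $(4) \Rightarrow (1)$, let $\mathcal O$ be a henselian valuation ring of $K$ whose residue field $\k$ is algebraically closed of characteristic zero, real closed, or finite, and let $\Gamma$ be its value group. I will apply the main theorem of the paper to $(K, \mathcal O)$; any distal expansion of $(K, \mathcal O)$ is automatically a distal expansion of the pure field $K$. Three hypotheses need verification. First, $\k$ has a distal expansion: when $\k$ is real closed or finite, $\k$ is already distal, and when $\k$ is algebraically closed of characteristic zero the example following Lemma~\ref{lem:distal exp} exhibits a distal expansion of $\ACF_0$ via $\RCF$. Second, by the results of~\cite{jahnke2017dp}, the valued field $(K, \mathcal O)$ is itself dp-minimal, so its value group $\Gamma$ is dp-minimal as a reduct, and therefore distal by Fact~\ref{fac: dp-minimal valued fields}(1). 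Third, $K$ is finitely ramified: this is automatic when $\ch \k = 0$, while in the remaining case where $\k$ is finite it follows from $(K, \mathcal O)$ being NIP together with Fact~\ref{fac: NIP Hens fields are fin ramified}.

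The main obstacle is clearly $(3) \Rightarrow (4)$: this implication replaces Conjecture~\ref{conj:Shelah} by Johnson's unconditional classification and is essentially a transcription of the proof of Theorem~\ref{thm:distal fields} with Johnson's deep results substituted for the conjecture. The other implications reduce straightforwardly to the main theorem and the tools already developed in the earlier sections.
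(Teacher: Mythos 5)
Your overall route is the paper's: (1)$\Rightarrow$(2)$\Rightarrow$(3) are handled exactly as there, your (3)$\Rightarrow$(4) is in substance the paper's argument (the paper simply cites Johnson's unconditional results on the intersection $\mathcal O_\infty$ of all $\emptyset$-definable valuation rings of a dp-minimal field -- \cite{johnson2016fun}, Theorem~9.1.4 together with 9.4.18(3) and Remark~9.5.6 -- rather than literally re-running the proof of Theorem~\ref{thm:distal fields}; your proposed unconditional analogues of Lemmas~\ref{lem:Shelah definable}, \ref{lem:Johnson 9.5.4} and \ref{lem:Johnson 9.5.7} are exactly what those results provide), and (4)$\Rightarrow$(1) goes through the main theorem in both treatments, with the same handling of the residue field and of finite ramification.

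The step I cannot accept as written is your justification that the value group is distal in (4)$\Rightarrow$(1). You assert that ``by the results of \cite{jahnke2017dp} the valued field $(K,\mathcal O)$ is itself dp-minimal,'' where $\mathcal O$ is an arbitrary (possibly undefinable) henselian valuation ring as in (4). That reference contains no such transfer theorem: besides henselianity of dp-minimal valued fields, what it gives is a characterization of dp-minimality of a henselian valued field \emph{in terms of} its residue field and value group, so invoking it in your direction presupposes exactly the non-singularity of $\Gamma$ that you are trying to obtain. The paper instead argues on the value group itself: $\Gamma$ is non-singular, hence dp-minimal by Fact~\ref{oagdpminnonsing}, hence distal by Fact~\ref{fac: dp-minimal valued fields}(1). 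When $\k$ is finite or real closed your claim can be repaired honestly: by Fact~\ref{fac: Jahnke def}, $\mathcal O$ is definable in $K^{\Sh}$, the Shelah expansion preserves dp-minimality, so $(K,\mathcal O)$ is dp-minimal, and since every element of $\Gamma$ is the image of a field singleton under $v$, the ordered abelian group $\Gamma$ is dp-minimal (note it is an interpreted quotient, not a ``reduct'' of $(K,\mathcal O)$ as you write). But when $\k$ is algebraically closed of characteristic zero the residue field is separably closed, Fact~\ref{fac: Jahnke def} does not apply, and your argument gives no reason why $(K,\mathcal O)$, or even just $\Gamma$, is dp-minimal; this case needs a separate argument, for instance observing that $\Gamma/p\Gamma$ is a quotient of $K^\times/(K^\times)^p$ and appealing to Johnson's results on dp-minimal fields to bound the latter, or some other use of the classification. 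As it stands, this is a genuine gap in your (4)$\Rightarrow$(1).
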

\begin{proof}
As in the proof of Corollary~\ref{cor:distal fields, 2}, the implications (1)~$\Rightarrow$~(2)~$\Rightarrow$~(3) are clear.
For (3)~$\Rightarrow$~(4), we argue as in the proof of the corresponding implication in
Theorem~\ref{thm:distal fields}:
If $K$ has  an $\emptyset$-definable valuation ring   with   residue field of positive characteristic, then   (4) holds. Otherwise,
let $\mathcal O_\infty$ be the intersection of all $\emptyset$-definable valuation rings of~$K$;
by \cite[Theorem~9.1.4]{johnson2016fun}, $\mathcal O_\infty$ is a henselian valuation ring of $K$ (with $\mathcal{O}_\infty = K$ if~$K$ admits no $\emptyset$-definable non-trivial valuations) whose residue field $\k_\infty$ is algebraically closed, real closed, or finite.
Moreover, $\ch\k_\infty=0$ by  \cite[Theorem~9.4.18(3), Remark~9.5.6]{johnson2016fun}.
Finally, (4)~$\Rightarrow$~(1) is shown as in the proof of (4)~$\Rightarrow$~(1) in Corollary~\ref{cor:distal fields, 2}, using
Facts~\ref{oagdpminnonsing} and~\ref{fac: dp-minimal valued fields} in place of Conjecture~\ref{conj: all OAGs have distal expansions}.
\end{proof}

\noindent Note that there are indeed $\operatorname{dp}$-minimal fields of characteristic zero without distal expansions.

\begin{exampleunnumbered}
Let $\Q_p^{\operatorname{unr}}$ be the maximal unramified extension of the valued field $\Q_p$.
Its value group is~$\Z$ and its residue field is the algebraic closure $\mathbb{F}_p^{\operatorname{a}}$ of $\mathbb{F}_p$.
Let $\mathcal{O}_K$  be the unique valuation ring of
$$K = \Q_p^{\operatorname{unr}}\big(p^{1/p}, p^{1/p^2},\ldots\big)$$ lying over
that of $\Q_p^{\operatorname{unr}}$. Its value group  $\bigcup_n\frac{1}{p^{n}}\Z$ is archimedean (hence regular) but non-divisible, and $(K,\mathcal{O}_K)$ is henselian; so it follows from~\cite[Theorem 5]{Hong2014} that $\mathcal{O}_K$ is $\emptyset$-definable in the field~$K$. Hence $K$ is a field of characteristic zero which is $\operatorname{dp}$-minimal by~\cite[Theorem 9.1.5, 1(c)]{johnson2016fun} but has no distal expansion since it interprets an infinite field of characteristic~$p$.
\end{exampleunnumbered}

\section{Distality in Expansions of Fields by  Operators}
\label{SectionFieldExpansions}

\noindent
In this section we use a ``forgetful functor'' approach to show that various expansions of distal fields by operators remain distal.
Most of the results of this section were obtained and circulated in 2014. We have learned that   recently some of them were observed independently in~\cite{PabloPoint}.

\subsection{An abstract distality criterion}
We  fix a language $\mathcal{L}$ and a complete $\mathcal{L}$-theory  $T = \operatorname{Th}(\bm{M})$.
As usual all variables here are assumed to be (finite) multivariables. 
Recall that by Fact~\ref{SHDequiv}, $T$ is distal if and only if every partitioned $\mathcal{L}$-formula~$\varphi(x;y)$ has a strong honest definition in $T$, i.e.,  a formula~$\psi(x;y_1,\ldots,y_N)$, where $y_1,\ldots,y_N$ are disjoint multivariables (for some~${N\in\N}$), each of the same sort as $y$, such that for all $a\in M_x$ and finite subsets~$B$ of $M_y$ with~$|B|\geq 2$, there are~$b_1,\ldots,b_N\in B$ such that  $\psi(x; b_1,\ldots,b_n)$ isolates $\operatorname{tp}_\varphi(a|B)$:
\begin{enumerate}
\item $a\in\psi(M_x;b_1,\ldots,b_N)$; and
\item for all $b\in B$, either
\[
\psi(M_x;b_1,\ldots,b_N)\subseteq \varphi(M_x;b)\quad\text{or}\quad\psi(M_x;b_1,\ldots,b_N)\cap \varphi(M_x;b) = \emptyset.
\]
\end{enumerate}
We also consider an extension $\mathcal{L}(\mathfrak{F})$ of the language $\mathcal{L}$
by a set $\mathfrak{F}$ of new   function symbols. We assume that~$\mathcal{L}(\mathfrak{F})$ has the same sorts as $\mathcal{L}$, and we consider $\mathfrak{F}$ itself as a language by declaring the sorts of~$\mathfrak{F}$ to be those of~$\mathcal{L}$. Finally, we let $T(\mathfrak{F})$ be a complete $\mathcal{L}(\mathfrak{F})$-theory extending $T$.

\begin{prop}
\label{FFdistalcriterion}
Suppose $T$ is   distal  and the following conditions hold:
\begin{enumerate}
\item  $T(\mathfrak{F})$ has quantifier elimination;
\item all function symbols in $\mathfrak{F}$ are unary; and
\item for every $\mathcal{L}(\mathfrak{F})$-term $t(x)$ there are an $\mathcal{L}$-term $s$ in $n$ variables of the appropriate sorts and $\mathfrak{F}$-terms $t_1(x),\ldots,t_n(x)$ such that
$$
T\models t(x) = s\big(t_1(x),\ldots,t_n(x)\big).
$$
\end{enumerate}
Then $T(\mathfrak{F})$ is distal.
\end{prop}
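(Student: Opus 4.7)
The plan is to verify distality of $T(\mathfrak{F})$ via the indiscernible-sequence characterization from Proposition~\ref{prop: indisc char of distality for singletons}(3), rather than attempting a direct construction of strong honest definitions. A direct SHD construction runs into awkward case-splits when the $\mathfrak{F}$-terms collapse distinct parameters of $T(\mathfrak{F})$ to equal values in $T$, whereas the indiscernibility route allows one to translate every $T(\mathfrak{F})$-indiscernibility assertion into a family of $T$-indiscernibility assertions about tuples of $\mathfrak{F}$-term values, and then invoke distality of $T$ termwise.

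First I would combine QE for $T(\mathfrak{F})$ with conditions (2) and (3) to show that every $\L(\mathfrak{F})$-formula $\varphi(x_1,\dots,x_n;y)$ is $T(\mathfrak{F})$-equivalent to an $\L$-formula of the shape
$$\varphi^*\bigl(\bar\tau(x_1),\dots,\bar\tau(x_n);\bar\sigma(y)\bigr),$$
where $\bar\tau$ is a finite tuple of $\mathfrak{F}$-terms in a single variable of the common sort of the $x_j$ (taken uniformly across the $x_j$ after harmless padding), $\bar\sigma$ is a finite tuple of $\mathfrak{F}$-terms each in a single component of $y$, and $\varphi^*$ is a suitable $\L$-formula. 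By QE it is enough to treat atomic $\L(\mathfrak{F})$-formulas $R(t_1,\dots,t_m)$; each term $t_l$ can by (3) be rewritten as an $\L$-term applied to $\mathfrak{F}$-terms, and because $\mathfrak{F}$-symbols are unary by (2), each $\mathfrak{F}$-term involves only one variable, so the whole formula collapses to an $\L$-formula evaluated on $\mathfrak{F}$-term-values in single variables. Consequently, for any sequence $(a_l)_{l\in I}$ in $\M_x$ and any $b\in\M_y$, the sequence $(a_l)$ is $b$-indiscernible in $T(\mathfrak{F})$ if and only if for every such finite $\bar\tau,\bar\sigma$ the sequence $(\bar\tau(a_l))_{l\in I}$ is $\bar\sigma(b)$-indiscernible in $T$.

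Now to invoke Proposition~\ref{prop: indisc char of distality for singletons}(3), take a $T(\mathfrak{F})$-indiscernible sequence $(a_l)_{l\in I}$ in $\M_x$ with $|x|=1$, some $i\in I$ such that $I^{<i}$ and $I^{>i}$ are infinite, and a tuple $b\in\M_y$ such that $(a_l)_{l\in I\setminus\{i\}}$ is $b$-indiscernible in $T(\mathfrak{F})$. Fix any finite $\bar\tau$ and $\bar\sigma$ as above. The translation step shows that $(\bar\tau(a_l))_{l\in I}$ is $T$-indiscernible and that $(\bar\tau(a_l))_{l\in I\setminus\{i\}}$ is $\bar\sigma(b)$-indiscernible in $T$, so distality of $T$ (Definition~\ref{distaldef}, applied with the finite parameter set $\bar\sigma(b)$) yields that the full sequence $(\bar\tau(a_l))_{l\in I}$ is $\bar\sigma(b)$-indiscernible in $T$. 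Since $\bar\tau,\bar\sigma$ were arbitrary, the translation goes back to give that $(a_l)_{l\in I}$ is $b$-indiscernible in $T(\mathfrak{F})$, and Proposition~\ref{prop: indisc char of distality for singletons}(3) then yields distality of $T(\mathfrak{F})$. The main delicate point is the translation step: one must check that condition (3) together with unarity really forces every $\mathfrak{F}$-term in a mixed tuple of variables to depend on exactly one variable, and that a single tuple $\bar\tau$ can be used uniformly across all object variables $x_j$ so that $T(\mathfrak{F})$-indiscernibility becomes testable via $T$-indiscernibility of $(\bar\tau(a_l))$ over $\bar\sigma(b)$. Once that bookkeeping is in place, the rest is a one-line appeal to distality of $T$.
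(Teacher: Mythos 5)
Your proposal is correct, and it takes a genuinely different route from the paper. The paper proves Proposition~\ref{FFdistalcriterion} by directly constructing strong honest definitions (via Fact~\ref{SHDequiv}): given a quantifier-free $\varphi(x;y)$, it rewrites it as $\varphi'(s(x),t(y))$ for an $\L$-formula $\varphi'$ and tuples of $\mathfrak{F}$-terms $s$, $t$, pads $t$ with the identity terms $y_1,\dots,y_k$ so that $b\mapsto t(b)$ is injective (ensuring $|t(B)|=|B|\geq 2$), takes a strong honest definition $\psi'$ for $\varphi'$ in $T$, and pulls it back along $s$, $t$. You instead verify the indiscernible-sequence criterion of Proposition~\ref{prop: indisc char of distality for singletons}(3). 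The translation step you rely on is the same as the paper's: QE plus (2) and (3) reduce $\L(\mathfrak{F})$-formulas to $\L$-formulas applied to tuples of $\mathfrak{F}$-term values, each $\mathfrak{F}$-term depending on a single variable since $\mathfrak{F}$ is unary. Once you have the biconditional ``$(a_l)$ is $b$-indiscernible in $T(\mathfrak{F})$ iff $(\bar\tau(a_l))$ is $\bar\sigma(b)$-indiscernible in $T$ for all finite $\bar\tau,\bar\sigma$'' (which is correct in both directions, and the $\L$-reduct of a monster model of $T(\mathfrak{F})$ is indeed a monster model of $T$), distality of $T$ transfers termwise. Your route sidesteps the injectivity/padding bookkeeping entirely because you never have to compare cardinalities of finite sets under $t$; on the other hand, the paper's SHD construction is more informative, since it produces an explicit strong honest definition and hence bounds on the number of parameters $N$, which the remark following Proposition~\ref{prop: str honest defs in 1 var} highlights as combinatorially relevant. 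One small quibble: you describe the direct SHD approach as running into ``awkward case-splits when the $\mathfrak{F}$-terms collapse distinct parameters,'' but the paper's padding trick handles this cleanly with no case-split; the awkwardness you anticipate does not actually materialize.
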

\begin{proof}
Fix a model $\bm{M}$ of $T(\mathfrak{F})$, and let $\varphi(x;y)$ be a partitioned $\mathcal{L}(\mathfrak{F})$-formula;
we show that $\varphi(x;y)$ has a strong honest definition in $T(\mathfrak{F})$.
By assumption (1),   we may assume that~$\varphi(x;y)$ is quantifier-free. Then by assumptions (2) and (3) there are an $\mathcal{L}$-formula $\varphi'(x';y')$ as well as $\mathfrak{F}$-terms~$s_1(x),\ldots,s_m(x)$ and  $t_1(y),\ldots,t_n(y)$,   such that for all $a\in M_x$, $b\in M_y$ we have
\[
\bm{M}\models \varphi(a,b) \quad\Longleftrightarrow\quad \bm{M}\models \varphi'\big(s(a), t(b)\big),
\]
where
\[
s(a) := \big(s_1(a),\ldots,s_m(a)\big)\quad\text{and}\quad t(b):= \big(t_1(b),\ldots,t_n(b)\big).
\]
Suppose  $y=(y_1,\dots,y_k)$ where $k=\abs{y}$;
we can assume that the terms $t_1,\dots,t_n$ contain the terms~$y_1,\dots,y_k$;
thus $b\mapsto t(b)\colon M_y\to M_{y'}$ is injectve.
By distality of $T$, take a strong honest definition  $\psi'(x'; y_1',\ldots,y_N')$ for $\varphi'(x';y')$ in $T$, where
$y_1',\dots,y_N'$ are disjoint new multivariables of the same sort as $y'$; thus for all $a'\in M_{x'}$ and any finite subset $B'$ of~$M_{y'}$ with~$|B'|\geq 2$, there are~$b_1',\ldots,b_N'\in B'$ such that
\begin{enumerate}
\item $\bm{M}\models \psi'(a';b_1',\ldots,b_N')$; and
\item for all $b'\in B'$, either
\[
\psi'(M_{x'}; b_1',\ldots,b_N')\subseteq \varphi' (M_{x'};b') \quad\text{or}\quad
\psi'(M_{x'}; b_1',\ldots,b_N')\cap \varphi' (M_{x'};b') = \emptyset.
\]
\end{enumerate}
We claim that
\[
\psi(x;y_1,\ldots,y_N):= \psi'\big(s(x); t(y_1),\ldots,t(y_N)\big)
\]
where $y_1,\ldots,y_N$ are disjoint new multivariables of the same sort as $y$, is a strong honest definition for~$\varphi(x;y)$ in $T(\mathfrak{F})$. To see this, let $a\in M_x$ and  $B\subseteq M_y$ be finite with~$|B|\geq 2$. Set $a':= s(a)$ and $B':= t(B)\subseteq M_{y'}$ (so~${\abs{B'}=\abs{B}\geq 2}$), and take~$b_1,\ldots,b_N\in B$ such that (1) and~(2) above hold with $b_i':= t(b_i)$ (${i=1,\ldots,N}$). Then $\bm{M}\models \psi(a;b_1,\ldots,b_N)$, and $\psi(x;b_1,\ldots,b_N)$ isolates
$\operatorname{tp}_\varphi(a|B)$, as required.
\end{proof}

\noindent
In a similar way as the preceding proposition, one shows:

\begin{lemma}\label{lem:FFdistalcriterion}
Suppose $T$ is   distal  and
for every partitioned $\mathcal{L}(\mathfrak{F})$-formula $\varphi(x;y)$, where $\abs{x}=1$,
there is a partitioned $\mathcal L$-formula $\varphi'(x;z)$ and a tuple of
$\mathcal{L}(\mathfrak{F})$-terms~$t(y)$ of length $\abs{z}$ such that
\[
T\vdash \varphi(x;y) \leftrightarrow \varphi'\big(x;t(y)\big).
\]
Then $T(\mathfrak{F})$ is distal.
\end{lemma}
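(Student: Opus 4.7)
The plan is to follow the template of the proof of Proposition~\ref{FFdistalcriterion}, but restrict attention to single-variable object formulas using Proposition~\ref{prop: str honest defs in 1 var}. By Fact~\ref{SHDequiv} combined with Proposition~\ref{prop: str honest defs in 1 var}, in order to show $T(\mathfrak F)$ is distal it is enough to produce a strong honest definition in $T(\mathfrak F)$ for every partitioned $\mathcal L(\mathfrak F)$-formula $\varphi(x;y)$ with $|x|=1$. So I would fix such a $\varphi(x;y)$ and, by the hypothesis, take a partitioned $\mathcal L$-formula $\varphi'(x;z)$ and an $\mathcal L(\mathfrak F)$-term tuple $t(y)$ of length $|z|$ with $T(\mathfrak F)\vdash \varphi(x;y)\leftrightarrow \varphi'(x;t(y))$.

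Next, I would apply distality of $T$ (via Fact~\ref{SHDequiv}) to obtain a strong honest definition $\psi'(x;z_1,\dots,z_N)$ for $\varphi'(x;z)$ in $T$, where $z_1,\dots,z_N$ are disjoint multivariables of the same sort as $z$. Taking fresh multivariables $y_1,\dots,y_N$ of the same sort as $y$, I set
\[ \psi(x;y_1,\dots,y_N)\ :=\ \psi'\bigl(x;t(y_1),\dots,t(y_N)\bigr), \]
an $\mathcal L(\mathfrak F)$-formula, and my claim is that $\psi$ is a strong honest definition of $\varphi$ in $T(\mathfrak F)$. The verification proceeds by fixing $a\in M_x$ and a finite $B\subseteq M_y$ with $|B|\geq 2$, setting $B':=t(B)\subseteq M_z$, and applying the strong honest definition $\psi'$ to $a':=a$ and $B'$ to pull back elements $b_1',\dots,b_N'\in B'$ to suitable $b_1,\dots,b_N\in B$ with $t(b_i)=b_i'$. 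That $\models\psi(a;b_1,\dots,b_N)$ and that $\psi(x;b_1,\dots,b_N)\models \operatorname{tp}_\varphi(a|B)$ follow from the corresponding properties of $\psi'$ together with the biconditional $\varphi(x;b)\leftrightarrow\varphi'(x;t(b))$.

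The one subtlety to address is that $\psi'$ is only guaranteed to produce a strong honest definition on parameter sets of size $\geq 2$, whereas the image $B'=t(B)$ could collapse to a single element (so that $|B'|<2$). Unlike in Proposition~\ref{FFdistalcriterion}, there is no a priori guarantee that $b\mapsto t(b)$ is injective, so this is the main obstacle. I would resolve it by enlarging the parameter block: replace $\varphi'(x;z)$ by the $\mathcal L$-formula $\tilde\varphi'(x;z,y):=\varphi'(x;z)$ with $y$ added purely as a dummy parameter multivariable of sorts in $\mathcal L$, and set $\tilde t(y):=(t(y),y)$, which is an injective tuple of $\mathcal L(\mathfrak F)$-terms realizing $\varphi(x;y)\leftrightarrow\tilde\varphi'(x;\tilde t(y))$. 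Applying the distality of $T$ to $\tilde\varphi'$ and then substituting $\tilde t$ as above yields an honest definition whose parameter set $\tilde t(B)\subseteq M_{(z,y)}$ has the same cardinality as $B$, so $|\tilde t(B)|\geq 2$ automatically, and the verification from the previous paragraph goes through verbatim. This finishes the plan.
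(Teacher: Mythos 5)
Your proposal is correct and follows the same route as the paper: reduce to $|x|=1$ via Proposition~\ref{prop: str honest defs in 1 var}, use the hypothesis to write $\varphi(x;y)$ as $\varphi'(x;t(y))$, pull back a strong honest definition $\psi'$ for $\varphi'$ along $t$, and ensure $b\mapsto t(b)$ is injective so that $|t(B)|=|B|\geq 2$. The paper handles the injectivity point by silently assuming that the components of $t$ include the identity terms $y_1,\dots,y_k$; your trick of padding to $\tilde t(y)=(t(y),y)$ and feeding the extra block into a dummy $\mathcal L$-variable of $\tilde\varphi'$ is exactly this arrangement, just spelled out explicitly (and it is legitimate because $\mathcal L(\mathfrak F)$ is assumed to share the sorts of $\mathcal L$).
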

\begin{proof}
Let $\varphi(x;y)$ be a partitioned $\mathcal{L}(\mathfrak{F})$-formula, where $\abs{x}=1$;
by Proposition~\ref{prop: str honest defs in 1 var} it is  enough to show that~$\varphi(x;y)$
has a strong honest definition in $T(\mathfrak{F})$. By our hypothesis we can assume~$\varphi(x;y)=\varphi'\big(x;t(y)\big)$ where $\varphi'(x;y')$ is an  $\mathcal L$-formula and~$t(y)=\big(t_1(y),\dots,t_n(y)\big)$ is an appropriate tuple of
$\mathcal{L}(\mathfrak{F})$-terms whose components contain the terms
$y_1,\dots,y_k$ for  $y=(y_1,\dots,y_k)$.
Distality of $T$ yields a strong honest definition  $\psi'(x; y_1',\ldots,y_N')$ for $\varphi'(x;y')$ in $T$, where
$y_1',\dots,y_N'$ are disjoint new multivariables of the same sort as $y'$. Then
  $$\psi(x; y_1,\dots,y_N):=\psi'\big(x;t(y_1),\dots,t(y_N)\big)$$
is a strong honest definition for $\varphi(x;y)$ in $T(\mathfrak{F})$.
\end{proof}

\noindent
In practice, condition (3) in Proposition~\ref{FFdistalcriterion} is easily verified whenever $T$ is a relational expansion
of the theory of fields,  and the functions symbols in $\mathfrak{F}$  are interpreted as   derivations in
models of $T(\mathfrak{F})$.
We now give several applications of these criteria.

\subsection{Transseries} \label{sec:transseries}
In this subsection we assume that the reader is familiar with~\cite[Chap\-ter~16]{adamtt}.
Consider the language
\[
\mathcal{L}_{\Upl\Upo} = \{0,\,1,\,{+},\,{-},\,{\,\cdot\,},\,\der,\,\iota,\,{\leq},\,{\preccurlyeq},\,\Upl,\,\Upo\}
\]
introduced there. The $\mathcal{L}_{\Upl\Upo}$-theory $T^{\operatorname{nl}}$ of $\upo$-free newtonian Liouville closed $H$-fields eliminates quantifiers~\cite[Theo\-rem~16.0.1]{adamtt} and has two completions: $T^{\operatorname{nl}}_{\operatorname{small}}$, of which the differential field $\mathbb{T}$ of logarithmic-exponential transseries is a model, and $T^{\operatorname{nl}}_{\operatorname{large}}$.
Both completions are distal:

\begin{cor}
The $\mathcal{L}_{\Upl\Upo}$-theories $T^{\operatorname{nl}}_{\operatorname{small}}$ and $T^{\operatorname{nl}}_{\operatorname{large}}$ are distal.
\end{cor}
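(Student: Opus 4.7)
The plan is to apply Proposition~\ref{FFdistalcriterion} with $\mathfrak{F}:=\{\der\}$ and with $\mathcal{L}$ the sublanguage $\mathcal{L}_{\Upl\Upo}\setminus\{\der\}=\{0,1,{+},{-},{\,\cdot\,},\iota,{\leq},{\preccurlyeq},\Upl,\Upo\}$; we let $T(\mathfrak{F})$ be either of the two completions $T^{\operatorname{nl}}_{\operatorname{small}}$ or $T^{\operatorname{nl}}_{\operatorname{large}}$, and we let $T$ be the $\mathcal{L}$-reduct of $T(\mathfrak{F})$. Thus $T(\mathfrak{F})$ and $\mathfrak{F}$ have the same sort (the field sort), and $\der$ is the only new function symbol.

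The three hypotheses of the proposition are then easy to check. Condition~(1), quantifier elimination of $T(\mathfrak{F})$ in $\mathcal{L}_{\Upl\Upo}$, is Theorem~16.0.1 of \cite{adamtt} and holds for both completions. Condition~(2) is immediate since $\der$ is unary. For condition~(3), given an $\mathcal{L}(\mathfrak{F})$-term $t(x)$ in a multivariable $x=(x_1,\dots,x_k)$, repeated application of the Leibniz rule $\der(uv)=\der(u)\,v+u\,\der(v)$ and of the analogous rule for $\iota$ (namely $\der(\iota(u))=-\iota(u)^2\cdot\der(u)$) rewrites $t(x)$ as an $\mathcal{L}$-term (i.e., a rational expression built from $+,-,\cdot,\iota$ and constants) applied to finitely many $\mathfrak{F}$-terms of the form $\der^j(x_i)$, as required.

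The main obstacle is verifying that $T$ is itself distal. The $\mathcal{L}$-reduct of a model $\mathbb{K}\models T^{\operatorname{nl}}$ carries the structure of a real closed valued field: the order is $\leq$ and the convex valuation ring corresponds to $\preccurlyeq$; so the pure $\mathcal{L}_{\mathrm{rv}}$-reduct, where $\mathcal{L}_{\mathrm{rv}}:=\{0,1,{+},{-},{\,\cdot\,},\iota,{\leq},{\preccurlyeq}\}$, is weakly o-minimal by Cherlin-Dickmann. The subsets of $\mathbb{K}$ named by $\Upl$ and $\Upo$ are, by inspection of their definitions in \cite{adamtt} (see Chapters~11 and 13), downward closed with respect to $\leq$ and hence are convex. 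Since expanding a weakly o-minimal ordered structure by unary predicates naming convex subsets preserves weak o-minimality, $T$ is weakly o-minimal; in particular $T$ is $\operatorname{dp}$-minimal and expands a linear order, so $T$ is distal by Fact~\ref{fac: no tot indisc in distal}.

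With all hypotheses verified, Proposition~\ref{FFdistalcriterion} yields that $T(\mathfrak{F})$ is distal; since this argument applies equally well to $T^{\operatorname{nl}}_{\operatorname{small}}$ and $T^{\operatorname{nl}}_{\operatorname{large}}$, both theories are distal. The only delicate point in the whole argument is the convexity of $\Upl$ and $\Upo$ in the $\mathcal{L}$-reduct, which requires unwinding their definitions in \cite{adamtt}, but once this is granted the conclusion follows formally.
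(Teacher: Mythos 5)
Your overall strategy is identical to the paper's: strip away the derivation via Proposition~\ref{FFdistalcriterion} and reduce to showing the $\L$-reduct $T$ is distal, with conditions (1)--(3) checked exactly as you do. The difference, and the potential gap, lies in how you establish distality of $T$.

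You go through two stages: first apply Cherlin--Dickmann to get weak o-minimality of the $\{0,1,{+},{-},{\cdot},\iota,{\leq},{\preccurlyeq}\}$-reduct (real closed valued fields), and then invoke the claim that ``expanding a weakly o-minimal ordered structure by unary predicates naming convex subsets preserves weak o-minimality.'' This last assertion is \emph{not} the Baisalov--Poizat theorem, which is about expansions of \emph{o-minimal} structures, and it is not obviously a standard fact: the Baisalov--Poizat argument shows that every externally definable subset of an o-minimal structure is a finite union of convex sets, and it does not directly transfer to weakly o-minimal reducts where quantification over the new predicate could in principle produce sets that are not finite unions of convex pieces. At the very least this step needs a precise citation or a proof, and as stated it is not justified. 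The paper avoids the issue entirely by not passing through RCVF: it treats the valuation ring (encoding $\preccurlyeq$), $\Upl$, and $\Upo$ simultaneously as three convex subsets of the o-minimal real closed field in the language $\{0,1,{+},{-},{\cdot},\iota,{\leq}\}$, applies Baisalov--Poizat once to this o-minimal structure, and concludes that $T$ is weakly o-minimal, hence distal.

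So: rather than Cherlin--Dickmann followed by a not-clearly-established closure property of weak o-minimality under convex expansions, apply Baisalov--Poizat directly to the o-minimal real closed field reduct with $\mathcal O$, $\Upl$, $\Upo$ all added at once. The remaining minor point---that $\Upl$ and $\Upo$ are convex (you observe they are downward closed)---is correct and is also needed in the paper's version. Once that is in place, your verification of the three hypotheses of Proposition~\ref{FFdistalcriterion}, including the Leibniz and quotient rules for condition (3), matches the paper and is fine; the argument for $T^{\operatorname{nl}}_{\operatorname{large}}$ is identical.
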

\begin{proof}
Let $\mathcal{L}:= \mathcal{L}_{\Upl\Upo}\setminus\{\der\}$ (so $\mathcal L(\der)=\mathcal{L}_{\Upl\Upo}$), let $T(\der)=T^{\operatorname{nl}}_{\operatorname{small}}$, and let $T$ be the
$\L$-theory of $\T$.  Each model of $T$ is a real closed ordered field~$K$, viewed as a structure in the language $\{0,1,{+},{-},{\,\cdot\,},\iota,{\leq}\}$ in the natural way, equipped with a convex dominance relation $\preccurlyeq$ and interpretations of the unary relation symbols $\Upl$ and~$\Upo$ as certain convex subsets of $K$. By Baisalov-Poizat~\cite{BaisalovPoizat}, the theory of each expansion of an o-minimal structure by convex subsets of its domain is weakly o-minimal,   hence distal; in particular,  $T$ is distal.
(Alternatively, we could use Fact~\ref{fac: Shelah exp distal}.)
  Proposition~\ref{FFdistalcriterion} (and the quotient rule for derivations) implies that $T^{\operatorname{nl}}_{\operatorname{small}} = T(\der)$ is distal. The argument for $T^{\operatorname{nl}}_{\operatorname{large}}$ is similar.
\end{proof}

\noindent
Combining Fact~\ref{fac: distal fields char 0} with the preceding corollary shows that no infinite field of positive characteristic is interpretable in
$\mathbb{T}$. We venture the following:

\begin{conjecture}
The only infinite fields interpretable in $\mathbb T$ are $\mathbb T$, $\mathbb R$, and their respective algebraic closures~$\mathbb T[\imag]$, $\mathbb C=\mathbb R[\imag]$.
\end{conjecture}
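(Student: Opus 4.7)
The plan is to pattern the argument after the classification of infinite fields interpretable in tame structures pioneered by Peterzil--Starchenko in the $o$-minimal setting, adapting it to the distal, weakly-$o$-minimal-plus-derivation context of $\mathbb{T}$. First I would reduce from interpretable to definable by establishing a form of elimination of imaginaries for $T^{\operatorname{nl}}_{\operatorname{small}}$. The quantifier elimination theorem for $T^{\operatorname{nl}}$ cited just above, combined with the weakly $o$-minimal structure of the $\mathcal{L}_{\Upl\Upo}\setminus\{\der\}$-reduct (via Baisalov--Poizat, as already used in Section~\ref{sec:transseries}), should yield at least geometric elimination of imaginaries. This would imply that any infinite interpretable field $K$ is, up to a finite-to-one correspondence, definably isomorphic to a definable infinite field $K' \subseteq \mathbb{T}^n$.

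Next, I would invoke the dimension theory of the weakly-$o$-minimal reduct; any definable infinite $K'$ has a well-defined dimension $d$ in this sense. If $d = 1$, then $K'$ lives, up to finitely many points, inside a convex neighborhood on which the induced structure is essentially $o$-minimal, and a Peterzil--Starchenko-style trichotomy should force $K'$ to be definably isomorphic to $\mathbb{R}$ or $\mathbb{C}$. If $d$ is the full dimension of the ambient power of $\mathbb{T}$, then $K'$ is (up to a set of smaller dimension) a definable codimension-zero subset of $\mathbb{T}^n$, and using that $\mathbb{T}$ is a $\upo$-free newtonian Liouville closed $H$-field one argues that the only field structures on such a set compatible with the transserial geometry are those of $\mathbb{T}$ or $\mathbb{T}[\imag]$. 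Intermediate dimensions should be ruled out by a direct analysis: a definable field of intermediate dimension would decompose in a way incompatible with the $H$-field structure, e.g., via the interaction of its multiplicative group with the convex predicates $\Upl$ and $\Upo$.

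The principal obstacle will be the presence of the derivation $\der$. Unlike in a pure weakly $o$-minimal setting, a definable subset $K' \subseteq \mathbb{T}^n$ inherits additional differential structure, and the classification of definable fields in differentially closed fields (where new fields arising from constants appear) cautions against naively transporting the $o$-minimal trichotomy. One needs to show either that any definable field structure on $K'$ is already captured by its $\mathcal{L}_{\Upl\Upo}\setminus\{\der\}$-reduct---so that the derivation contributes no exotic examples, perhaps by exploiting the forgetful functor argument of Proposition~\ref{FFdistalcriterion} to control definable families---or alternatively to develop a differential analog of Peterzil--Starchenko specific to newtonian $H$-fields, using the Newton degree and $\upo$-freeness machinery of \cite{adamtt}. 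Establishing the requisite elimination of imaginaries for $T^{\operatorname{nl}}_{\operatorname{small}}$, and carefully reconciling the two notions of dimension (the $o$-minimal dimension, in which the constant field $\mathbb{R}$ is $1$-dimensional, versus the ``differential'' perspective in which $\mathbb{R}$ should be regarded as lower-dimensional), will together constitute the technical heart of the argument.
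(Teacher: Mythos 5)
The statement you are addressing is explicitly presented in the paper as a \emph{conjecture}: the authors write ``We venture the following'' and offer no proof. There is therefore no argument in the paper to compare yours against, and no proof was expected here. Your text is accordingly not a proof but a research outline, and it is important to be clear that, as written, none of its key steps is actually established.

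A few of the proposed steps face genuine obstacles that you touch on but do not resolve. First, elimination of imaginaries (even in the geometric form you invoke) is not known for $T^{\operatorname{nl}}$; quantifier elimination in $\mathcal{L}_{\Upl\Upo}$ does not yield it, and the Baisalov--Poizat theorem gives weak o-minimality of the $\der$-free reduct but says nothing about imaginaries of the full theory with the derivation. Second, the Peterzil--Starchenko trichotomy is an o-minimal theorem; passing to weak o-minimality already requires substantial work, and the relevant analogue for definable fields in weakly o-minimal theories is not available off the shelf. Third, and most seriously, the derivation changes the picture in a way that your dimension-based trichotomy cannot simply absorb: the constant field $\mathbb R = C_{\mathbb T}$ is a definable differential-algebraic object of ``codimension one'' from the ordered-field viewpoint but of ``full dimension'' in its own right, and (as you note) differentially closed fields exhibit a rich supply of interpretable fields of intermediate differential type. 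Reconciling the o-minimal dimension of the reduct with the $H$-field and newtonian structure (Newton degree, $\upo$-freeness) is precisely the hard part; the forgetful-functor argument of Proposition~\ref{FFdistalcriterion} controls distality, not the classification of definable field structures, so it cannot do the work you assign to it. In short, your outline identifies the right circle of ideas and the right obstacles, but each of the three steps---elimination of imaginaries, a weakly o-minimal trichotomy, and the differential correction---is itself an open problem; the statement remains a conjecture.
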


\subsection{Other distal differential fields}
Proposition~\ref{FFdistalcriterion} can be used to show that many other theories of interest are distal as well. In general, whenever $T$ is the theory of an expansion of a differential field (perhaps with several derivations) by relations and constants, and we know that
\begin{enumerate}
\item $T$ has QE, and
\item the reduct of $T$ to the language without derivations is distal,
\end{enumerate}
then Proposition~\ref{FFdistalcriterion} implies that $T$ itself is distal.
In the literature, one finds many theories which satisfy these conditions. For instance:

{\samepage
\begin{cor}\label{cor: ff distal examples}
The following theories are distal:
\begin{enumerate}
\item $\operatorname{CODF}$, the model completion of the theory of ordered differential fields from~\cite{SingerCODF};
\item $\operatorname{CODF}_m$, the model completion of the theory of ordered differential fields with $m$ commuting derivations from~\cites{Riviere,TresslUC};
\item $\operatorname{pCDF}_{d,m}$, the model completion of the theory of  $p$-valued   fields  of $p$-rank $d$   with $m$ commuting derivations from~\cite{TresslUC}.
\end{enumerate}
\end{cor}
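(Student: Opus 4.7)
The plan is to apply Proposition~\ref{FFdistalcriterion} in each of the three cases. In every case, I take $\mathcal{L}(\mathfrak{F})$ to be the ambient language of the theory under consideration, with $\mathfrak{F}$ the collection of derivation symbols and $\mathcal{L}$ the sublanguage obtained by deleting them. Thus for (1), I set $T=\operatorname{RCF}$ and $\mathfrak{F}=\{\der\}$, so that $T(\mathfrak{F})=\operatorname{CODF}$; for (2), $T=\operatorname{RCF}$ and $\mathfrak{F}=\{\der_1,\ldots,\der_m\}$, with $T(\mathfrak{F})=\operatorname{CODF}_m$; and for (3), $T$ is the theory of $p$-valued fields of $p$-rank $d$, $\mathfrak{F}=\{\der_1,\ldots,\der_m\}$, and $T(\mathfrak{F})=\operatorname{pCDF}_{d,m}$.

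I will verify the three hypotheses of Proposition~\ref{FFdistalcriterion} in each case. Hypothesis~(1) (quantifier elimination for $T(\mathfrak{F})$) is precisely the content of the cited model-completion results of Singer~\cite{SingerCODF}, Rivi\`ere~\cite{Riviere}, and Tressl~\cite{TresslUC}. Hypothesis~(2) is immediate, as each derivation is a unary function symbol. For hypothesis~(3), I will argue by induction on the construction of $\mathcal{L}(\mathfrak{F})$-terms: every model of $T(\mathfrak{F})$ is a differential ring in which each $\der_i$ is additive and satisfies the Leibniz rule (and, in cases (2) and (3), the $\der_i$ commute with one another). Consequently, every $\mathcal{L}(\mathfrak{F})$-term $t(x_1,\ldots,x_n)$ is equivalent in $T(\mathfrak{F})$ to a polynomial with integer coefficients in the iterated derivatives $\der_{i_1}\cdots\der_{i_k}(x_j)$, each of which is an $\mathfrak{F}$-term in the single variable $x_j$, as required by hypothesis~(3).

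It remains to observe that the reduct $T$ is distal in each instance. For (1) and (2), $T=\operatorname{RCF}$ is o-minimal and hence distal, as recalled in the introduction. For (3), the theory of $p$-valued fields of $p$-rank $d$ is $P$-minimal, and therefore likewise distal. Applying Proposition~\ref{FFdistalcriterion} then yields the distality of $\operatorname{CODF}$, $\operatorname{CODF}_m$, and $\operatorname{pCDF}_{d,m}$, respectively. The only step that involves any work is the syntactic verification of hypothesis~(3); this is entirely routine once the defining identities of a differential ring (additivity, Leibniz, and in the multi-derivation case commutativity) are invoked, so there is no substantive obstacle beyond that bookkeeping.
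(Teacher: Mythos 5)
Your proof is correct and takes essentially the same route as the paper: the paragraph preceding the corollary lays out exactly the general template you use (verify quantifier elimination for the theory with derivations, verify distality of the derivation-free reduct, check condition~(3) of Proposition~\ref{FFdistalcriterion} via additivity and the Leibniz rule, and apply the proposition). One small imprecision in case~(3): the complete $\mathcal L$-theory~$T$ playing the role of the reduct should be the theory of $p$-\emph{adically closed} fields of $p$-rank~$d$ (i.e., the model completion of the theory of $p$-valued fields of that rank, which is what is $P$-minimal and distal), since ``the theory of $p$-valued fields of $p$-rank~$d$'' is not complete; your subsequent appeal to $P$-minimality makes clear this is what you intend.
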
}

\noindent
The fact that $\operatorname{CODF}$ is NIP was first shown (also using the ``forgetful functor'') in~\cite{MichauxRiviere}, and  generalized to $\operatorname{CODF}_m$ in~\cite{guzy2010topological}.
The paper \cite{FornasieroKaplan} considers a generalization of $\operatorname{CODF}_m$: Given
a complete, model complete o-minimal theory $T$ expanding the theory of real closed ordered fields, the theory whose models
are models of $T$ equipped with $m$ commuting derivations which satisfy the Chain Rule with respect
to the continuously differentiable definable functions in $T$
has a model completion~$T_m$,
and if~$T$ has quantifier elimination and a universal axiomatization, then~$T_m$ has quantifier elimination~\cite[Theorem~6.8]{FornasieroKaplan}. (Note that the latter hypothesis on~$T$ can always be achieved by expanding the language by function symbols
for all $\emptyset$-definable functions and expanding $T$ accordingly.)
Our criterion implies that then~$T_m$ is distal; this has also been observed in  \cite[Proposition~6.10]{FornasieroKaplan}.

\medskip
\noindent
The topological fields with generic valuations considered in~\cite{guzy2010topological} are also distal. For example, let~$\L=\{0,1,{+},{-},{\,\cdot\,},{\leq},{\preccurlyeq}\}$ and let
$\operatorname{OVF}$ be the $\L$-theory of ordered fields equipped with a non-trivial convex dominance relation;
its model completion is $\operatorname{RCVF}$, the theory of real closed valued fields (see~\cite[Sec\-tion~3.6]{adamtt}).
By \cite[Corollary~6.4]{guzy2010topological}, the $\L(\der)$-theory  whose models are the expansions of models of
$\operatorname{OVF}$ by a derivation~$\der$, has a model completion; this model completion is distal
because $\operatorname{RCVF}$ is weakly o-minimal.
In \cite{nigel} it is shown that the $\L$-theory  of
pre-$H$-fields with gap $0$ has a model completion. Here, a {\it pre-$H$-field}\/ is a model of the universal part of the theory
$T^{\operatorname{nl}}$ from Section~\ref{sec:transseries}, and such a pre-$H$-field  has gap $0$ if it satisfies the $\L$-sentence
$\forall y ( y' \preccurlyeq y \rightarrow y \preccurlyeq 1)$. This model completion has   quantifier elimination~\cite[Theorem~7.2, Corollary~7.4]{nigel}, and its distality follows
in the same way as above from distality of $\operatorname{RCVF}$.
(In \cite[Theorem~7.6]{nigel} it is already shown that this model completion is NIP.)

\medskip
\noindent As pointed out in the introduction, definable relations in   a theory which has a distal expansion  satisfy strong combinatorial bounds~\cites{chernikov2016cutting, chernikov2018model}. This is often used in incidence combinatorics in a more explicit form, e.g., the proof of the Szemer\'edi-Trotter Theorem over the field of complex numbers (which is a stable structure) relies on interpreting the field $\mathbb{C}$ in the distal field of reals in the usual way~\cites{toth2015szemeredi, zahl2015szemeredi}. Corollary~\ref{cor: ff distal examples} implies a qualitative analog for the \emph{stable} theories $\DCF_{0,m}$  of differentially closed fields of characteristic $0$ with~$m$ commuting derivations. For this we need the following facts~\cites{SingerCODFtoDCF0, TresslOmar}:

\begin{fact}\label{fac: Singer}
If $K\models\operatorname{CODF}$, then the differential field extension $K[\imag]$ of $K$
\textup{(}where $\imag^2=-1$\textup{)} is a differentially closed field of characteristic $0$, i.e.,~$K[\imag] \models \DCF_{0}$. More generally, if $K \models \CODF_m$, then $K[\imag] \models \DCF_{0,m}$.
\end{fact}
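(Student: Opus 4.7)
The plan is to use the standard characterization of $\DCF_{0,m}$ as the model companion of the theory of differential fields of characteristic zero with $m$ commuting derivations. Since $K$ is real closed, $K[\imag]$ is algebraically closed of characteristic zero, and so it suffices to verify that $K[\imag]$ is existentially closed among its differential field extensions: given differential polynomials $f_1,\dots,f_k,g\in K[\imag]\{X_1,\dots,X_n\}$ (in the sense of $m$ commuting derivations) for which the system $f_1=\cdots=f_k=0,\ g\neq 0$ has a solution in some differential field extension of $K[\imag]$, one must produce a solution in $K[\imag]$ itself.

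The first step is to separate real and imaginary parts. Writing $X_i=Y_i+\imag Z_i$, each $f_j$ decomposes as $f_j(\bar Y+\imag\bar Z)=F_j^{(1)}(\bar Y,\bar Z)+\imag F_j^{(2)}(\bar Y,\bar Z)$ with $F_j^{(\ell)}\in K\{\bar Y,\bar Z\}$, and similarly $g(\bar Y+\imag\bar Z)=G^{(1)}+\imag G^{(2)}$. The original system is then equivalent to the real system
\[
F_j^{(\ell)}(\bar Y,\bar Z)=0\ (1\leq j\leq k,\ \ell=1,2),\qquad \bigl(G^{(1)}(\bar Y,\bar Z)\bigr)^2+\bigl(G^{(2)}(\bar Y,\bar Z)\bigr)^2>0
\]
over $K$. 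Since $K\models\CODF_m$ is existentially closed in the class of ordered differential fields with $m$ commuting derivations extending it, and this extends unchanged to systems of differential equations combined with strict inequalities, the proof reduces to exhibiting a solution of this real system in \emph{some} ordered differential field extension of $K$ (rather than merely in a differential field extension of $K[\imag]$).

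This last reduction is the technical heart of the argument and the step I expect to be the main obstacle. Starting from a solution $\bar a=\bar y+\imag\bar z$ in an arbitrary differential field extension $L\supseteq K[\imag]$, the subfield $K\langle\bar y,\bar z\rangle$ of $L$ need not be formally real over $K$, so naively transferring $(\bar y,\bar z)$ into an ordered extension fails. The remedy is to replace $\bar a$ by a sufficiently generic solution: select a minimal prime differential ideal $\mathfrak p$ of the radical differential ideal generated by $f_1,\dots,f_k$ in $K[\imag]\{\bar X\}$ with $g\notin\mathfrak p$, take $\bar a$ to be the generic zero of $\mathfrak p$ in $\Frac(K[\imag]\{\bar X\}/\mathfrak p)$, and---if necessary---adjoin further differentially transcendental parameters so that the resulting real and imaginary parts generate a formally real differential extension of $K$. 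That extension then admits an ordering extending the ordering of $K$, to which $\CODF_m$ applies, producing $(\bar y,\bar z)\in K^{2n}$ solving the real system; hence $\bar y+\imag\bar z\in K[\imag]^n$ solves the original system. Specializing to $m=1$ recovers the first assertion.
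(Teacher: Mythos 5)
The paper does not supply its own proof of this Fact; it cites the literature (\cites{SingerCODFtoDCF0, TresslOmar}). So there is no "paper proof" to compare against, and your attempt has to stand on its own.

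Your high-level plan --- decompose into real and imaginary parts (Weil restriction) and then invoke existential closedness of $K$ in ordered differential fields --- is indeed the right strategy and matches what one finds in the literature. The issue is the step you yourself flag as the technical heart: you must pass from a solution of the real system $F_j^{(\ell)}=0,\ (G^{(1)})^2+(G^{(2)})^2\neq 0$ in \emph{some} differential field extension of $K$ to a solution in a \emph{formally real} differential field extension, and your proposed remedy does not close this gap. Adjoining differentially transcendental parameters to $K\langle\bar y,\bar z\rangle$ cannot make it formally real if $-1$ is already a sum of squares there (in particular if $\imag\in K\langle\bar y,\bar z\rangle$): formal reality is inherited by subfields, so once it fails it fails for every overfield. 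Likewise, taking $\bar a$ to be the generic zero of a minimal prime component of the radical differential ideal with $g\notin\mathfrak p$ gives you a domain $K[\imag]\{\bar X\}/\mathfrak p$ whose fraction field contains $\imag$; there is nothing in the construction forcing the subfield generated over $K$ by the real and imaginary parts of $\bar a$ to avoid $\imag$. So the claim that the real system is solvable in a formally real extension is not justified by your argument.

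What is actually needed is a genuine descent/equivariance step: one must show that the original system over $K[\imag]$ admits a solution in a differential extension $L$ of $K[\imag]$ equipped with a differential involution $\tau$ of order two restricting to complex conjugation on $K[\imag]$, so that the fixed field $L^\tau$ is a formally real differential extension of $K$ containing $\bar y,\bar z$. That this can be arranged is the real content of Singer's argument (for $m=1$, proceeding via Blum's axioms and explicit constructions in one differential variable) and of the Tressl--Point generalization; it is not a purely formal consequence of genericity. A secondary small point: the observation that $K[\imag]$ is algebraically closed does no work in reducing to existential closedness --- being a model of $\DCF_{0,m}$ is equivalent to being existentially closed among differential fields of characteristic zero with $m$ commuting derivations, and algebraic closedness is a consequence of, not a shortcut towards, that.
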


\noindent
This immediately yields (see Lemma~\ref{lem:distal exp}):

\begin{cor}\label{cor: DCF0 distal exp}
The theory $\operatorname{DCF}_{0,m}$ has a distal expansion.
\end{cor}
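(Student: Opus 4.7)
The plan is to deduce this corollary directly from the combination of three results already established in the excerpt: Fact~\ref{fac: Singer}, Corollary~\ref{cor: ff distal examples}(2), and Lemma~\ref{lem:distal exp}. The strategy is the same one used for $\ACF_0$ via $\RCF$ at the end of Section~\ref{SectionDistalPreliminaries}: exhibit $\DCF_{0,m}$ as interpretable in a theory already known to be distal, then invoke the general fact that an interpretable theory of a distal theory has a distal expansion.

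Concretely, I would proceed as follows. First, I would recall Corollary~\ref{cor: ff distal examples}(2), which tells us that $\CODF_m$ is distal. Next, I would use Fact~\ref{fac: Singer}: for each model $K \models \CODF_m$, the differential ring extension $K[\imag]$ (with $\imag^2=-1$) is a model of $\DCF_{0,m}$. Since $K[\imag]$ is $\emptyset$-interpretable in $K$ as a pure field (coordinates in $K^2$, componentwise addition, the usual field multiplication twisted by $\imag^2=-1$), and since the $m$ commuting derivations on $K[\imag]$ are obtained by applying the derivations of $K$ componentwise (which is again $\emptyset$-definable in the $\L(\der_1,\dots,\der_m)$-structure $K$), we conclude that $\DCF_{0,m}$ is $\emptyset$-interpretable in $\CODF_m$.

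Finally, Lemma~\ref{lem:distal exp} applied to $T = \DCF_{0,m}$ and $T^* = \CODF_m$ yields that $\DCF_{0,m}$ has a distal expansion. Since there are essentially no calculations involved, the only conceptual issue to verify carefully is that the interpretation of $\DCF_{0,m}$ inside $\CODF_m$ is genuinely an interpretation in the sense used in Lemma~\ref{lem:distal exp}, i.e., that all primitive symbols of the language of differential rings are $\emptyset$-definable on the interpreting set $K \times K$. This is routine: the $\{0,1,+,-,\cdot\}$-structure on $K[\imag] \cong K \times K$ is the standard one used for $\RCF \rightsquigarrow \ACF_0$, and the derivations extend coordinatewise by $\der_i(a+\imag b) = \der_i(a) + \imag\,\der_i(b)$, which is manifestly $\emptyset$-definable. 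Hence no obstacle arises, and the corollary follows in one line once the three preceding ingredients are in place.
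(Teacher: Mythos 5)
Your proof is correct and matches the paper's own argument exactly: the paper also deduces the corollary from Fact~\ref{fac: Singer} (interpreting $\DCF_{0,m}$ in $\CODF_m$ via $K\mapsto K[\imag]$), the distality of $\CODF_m$ from Corollary~\ref{cor: ff distal examples}(2), and Lemma~\ref{lem:distal exp}. Your extra paragraph verifying that the derivations extend componentwise and that the interpretation is $\emptyset$-definable is routine and is left implicit in the paper.
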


\begin{problem}
By~\cite[Lemma~4.5.9]{brouette2015differential}, the theory $\CODF$ is not strongly dependent. Does $\DCF_0$ admit a strongly dependent distal expansion?
\end{problem}

\subsection{Henselian valued fields with analytic structure}
We finish  by showing that  the forgetful functor argument (in the form of Lemma~\ref{lem:FFdistalcriterion}) also allows us to extend the main theorem from the introduction   to the \emph{analytic expansions} of henselian valued fields  introduced in~\cite{CluckersLipshitzRobinson}; for this we rely on some
arguments from~\cite[Section~5]{rideau2017some}. 
We need to recall the relevant definitions from \cite{CluckersLipshitzRobinson}.

\medskip
\noindent
We   fix   a noetherian commutative ring $A$  and  an ideal $I\neq A$ of $A$ such that $A$ is separated and complete for its $I$-adic topology.
Let $A\langle X\rangle=A\langle X_1,\dots,X_m\rangle$ be the ring of power series in the distinct indeterminates $X_1,\dots,X_m$ with coefficients in $A$ whose coefficients $I$-adically converge to~$0$,
and set~$A_{m,n}:=A\langle X\rangle[[Y]]$ where $X=(X_1,\dots,X_m)$ and $Y=(Y_1,\dots,Y_n)$ are disjoint tuples of distinct indeterminates over $A$. We expand the (one-sorted) language  of valued fields
to a language~$\mathcal L_A$ by introducing
a unary function symbol $\iota$ as well as
an $(m+n)$-ary function symbol
for each element of $A_{m,n}$ (which we denote by the same symbol). We let $T_A$ be the $\mathcal L_A$-theory whose models are the
$\mathcal L_A$-structures expanding a
valued field  $(K,\mathcal O)$
of characteristic zero,  such that with $\mathfrak m=$~maximal ideal of $\mathcal O$:
\begin{enumerate}
\item[(A1)] $\iota$ is interpreted by the map $K\to K$ with $a\mapsto 1/a$ if $a\neq 0$ and $0\mapsto 0$;
\item[(A2)] each function symbol $f\in A_{m,n}$ is interpreted by a function $$f^{K}\colon K^m\times K^n\to K$$
which is identically zero outside of $\mathcal O^m\times\mathfrak m^n$  and satisfies~$f^{K}(\mathcal O^m\times\mathfrak m^n)\subseteq\mathcal O$;
\item[(A3)]  the map $f\mapsto f^{K}$ is a ring morphism from $A_{m+n}$ to the
ring of functions~$K^m\times K^n\to K$;
\item[(A4)]   each $f\in A_{m,n}$, viewed as an element of $A_{m,n+1}$ under the natural inclusion~$A_{m,n}\subseteq A_{m,n+1}$, is interpreted as a function
 $K^m\times K^{n+1}\to K$ which does not depend on the last coordinate, and
 similarly for the inclusion~$A_{m,n}\subseteq A_{m+1,n}$;
\item[(A5)] each $a\in I\subseteq A=A_{0,0}$ is interpreted by a constant function  with value in~$\mathfrak m$;
\item[(A6)] for $a=(a_1,\dots,a_m)\in\mathcal O^m$ and $b=(b_1,\dots,b_n)\in\mathfrak m^n$  we have
$X_i^K(a,b)= a_i$ ($i=1,\dots,m$) and
$Y_j^K(a,b)= b_j$ ($j=1,\dots,n$).
\end{enumerate}
The valued field underlying each model of $T_A$ is automatically henselian; see \cite[Proposition~3.5]{rideau2017some}.

\medskip
\noindent
Let now $K\models T_A$, and as in  Section~\ref{sec:QE hens val} expand $K$ to  a multi-sorted structure $\bm{K}$
whose  sorts are~$K$ (called the field sort below) and the sets $\RV_\delta$ (called the $\RV$-sorts below), with the primitives specified in (K1)--(K4).
Let  $\bm{K}_*$ be an expansion of $\bm{K}$ obtained by imposing additional structure on the reduct~$\RV_*$ of $\bm{K}$,
including,
\begin{enumerate}
\item[(A7)] for each $u\in A_{m+n}$, the function $u_\delta^{K}\colon\RV_\delta^{m+n}\to\RV_\delta$  satisfying  
$$u_\delta^{K}\!\big(\!\rv_\delta(a)\big)=\rv_\delta\!\big(u^{K}(a)\big)\qquad\text{ for $a\in K^{m+n}$.}$$
\end{enumerate}
(See \cite[Corollary~3.9]{rideau2017some}.)
Let also $\L$  be the reduct of  the language $\L_*$ of $\bm{K}_*$ obtained by removing all symbols listed under~(A1)--(A7) above.
The following is a consequence of  \cite[Corollary~5.5]{rideau2017some} (a generalization of a theorem in \cite{DHM}):

\begin{prop}\label{prop:rideau}
Let $\varphi(x, y, r)$ be an $\L_*$-formula where the multivariables $x$, $y$,  are  of the field sort   with $\abs{x}=1$, and
$r$ is of the $\RV$-sort.
Then there exists
an $\L$-formula $\varphi'(x,z,r)$
and an appropriate tuple of $\L_A$-terms~$t(y)$ such that
$$\bm{K}_*\models \varphi(x,y,r) \leftrightarrow \varphi'\big(x,t(y),r\big).$$
\end{prop}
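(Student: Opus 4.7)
My plan is to derive this proposition from the relative quantifier elimination of Rideau-Kikuchi \cite[Corollary~5.5]{rideau2017some}, which in turn generalizes the analytic cell decomposition of \cite{CluckersLipshitzRobinson}. That result says that every $\mathcal L_*$-formula is equivalent, in the theory of $\bm{K}_*$, to a boolean combination of conditions of the form $\rv_\delta(\tau(\cdot))\in E$, where $\tau$ is an $\mathcal L_A$-term in the field variables and $E$ is definable purely on the $\RV$-sorts. Applying this to $\varphi(x,y,r)$ yields an equivalent formula of the shape
\[
\psi\bigl(\rv_{\delta_1}(\tau_1(x,y)),\dots,\rv_{\delta_k}(\tau_k(x,y)),r\bigr),
\]
where each $\tau_i$ is an $\mathcal L_A$-term in the field variables $(x,y)$ and $\psi$ concerns only the $\RV$-sorts (with whatever extra $\bm{K}_*$-structure lives there).

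The heart of the argument is then a single-variable separation inside each $\tau_i(x,y)$. Here the hypothesis $\abs{x}=1$ enters decisively: by the Weierstrass-type term-structure theorem underlying the analytic setting (see \cite[Section~3]{rideau2017some} and \cite{CluckersLipshitzRobinson}), after a finite case split controlled by $\rv$-conditions on $\mathcal L_A$-terms in $y$ alone, every analytic term in the single field variable $x$ with parameters in $y$ can be written as a polynomial in $x$ over $\mathbb Z$ whose coefficients are $\mathcal L_A$-terms in~$y$, multiplied by a factor whose $\rv_\delta$-image is determined by an $\mathcal L_A$-term in~$y$. Substituting back and collecting all auxiliary $y$-terms into one tuple $t(y)$, each atomic conjunct $\rv_\delta(\tau_i(x,y))$ can be re-expressed in the form $\rv_\delta(P_i(x;t(y)))$ for a polynomial $P_i\in\mathbb Z[X,Z]$, plus $\rv$-conditions on $t(y)$ alone.

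Since $\mathcal L$ retains the base valued-field primitives (K1)--(K4) as well as all additional structure on the $\RV$-sorts present in $\mathcal L_*$ except the induced maps $u_\delta^K$ coming from the analytic symbols, the resulting formula is an $\mathcal L$-formula $\varphi'(x,z,r)$ with $\bm{K}_*\models \varphi(x,y,r)\leftrightarrow\varphi'(x,t(y),r)$. The main obstacle I anticipate is the bookkeeping in the second step: one must ensure that the finitely many case splits invoked during Weierstrass preparation can be performed uniformly, controlled by $\rv$-conditions on $\mathcal L_A$-terms in $y$ only, so that no residual analytic dependence on $x$ survives in the final formula. This is precisely the content packaged by \cite[Corollary~5.5]{rideau2017some}, and it is the point at which we lean most heavily on the detailed structure of the rings $A_{m,n}$.
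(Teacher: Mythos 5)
The paper provides no proof of this proposition beyond the one-line remark that it ``is a consequence of \cite[Corollary~5.5]{rideau2017some} (a generalization of a theorem in \cite{DHM})''; your proposal invokes the same citation and is therefore taking the same approach. The one thing worth flagging: your exposition is slightly circular about the role of Corollary~5.5 --- you first describe it as giving only a boolean combination of $\rv_\delta$-conditions on $\mathcal L_A$-terms mixing $x$ and $y$, then carry out a Weierstrass-preparation/variable-separation step by hand, and finally assert that the uniformity of the case splits underlying that separation ``is precisely the content packaged by'' the same Corollary~5.5. In fact the cited corollary (in the Denef--van~den~Dries--Macintyre tradition) already delivers the separated form for a single field variable $x$ directly, so the Weierstrass discussion is better read as explaining \emph{why} the citation yields the proposition rather than as an additional step the citation leaves open; presented as a separate step it invites questions (the unit in Weierstrass preparation genuinely depends on $x$, and controlling its $\rv_\delta$-image on cells is part of what the cited machinery handles) that the one-line structure of the argument does not actually need to answer.
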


\noindent
We now use this result to show a variant of our main theorem:

\begin{cor}\label{cor:analytic exp}
Let $K\models T_A$; if
the valued field underlying $K$ is distal \textup{(}has a distal expansion\textup{)},
then the $\L_A$-structure $K$ is distal \textup{(}has a distal expansion, respectively\textup{)}.
\end{cor}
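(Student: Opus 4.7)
By Remark \ref{rem:bmK and K}, extended in the obvious way to the analytic primitives, the $\L_A$-structure $K$ is bi-interpretable with the multi-sorted expansion $\bm{K}_*$, so by Fact \ref{fac: biinterp distal}(1) it suffices to prove the corresponding statement for $\bm{K}_*$. The reduct $\bm{K}$ of $\bm{K}_*$ obtained by dropping all analytic primitives (A1)--(A7) is, by hypothesis and the main theorem, distal (respectively has a distal expansion). The plan is to transfer distality to $\bm{K}_*$ via the forgetful-functor criterion of Lemma \ref{lem:FFdistalcriterion}, taking $T := \Th(\bm{K})$ and $T(\mathfrak{F}) := \Th(\bm{K}_*)$ with $\mathfrak{F}$ the analytic primitives. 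This reduces the task to the following: for every partitioned $\L_*$-formula $\varphi(x;y)$ with $|x|=1$, produce an $\L$-formula $\varphi'(x;z)$ and a tuple $t(y)$ of $\L_*$-terms such that $\Th(\bm{K}_*) \vdash \varphi(x;y) \leftrightarrow \varphi'(x;t(y))$.

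When $x$ is of the field sort, this is precisely the content of Proposition \ref{prop:rideau}: after splitting $y=(y_{\mathrm f},y_{\mathrm{rv}})$ into its field-sort and RV-sort components, the proposition supplies the desired $\varphi'$ together with a tuple of analytic terms $t(y_{\mathrm f})$, and $y_{\mathrm{rv}}$ is passed into $t$ via the identity. When $x$ is a single variable of some RV sort, the only $\L_*$-primitives that can involve $x$ are the functions $u_\delta^K$ of (A7), since the analytic functions $f^K$ on the field sort take only field-sort arguments. These $u_\delta^K$ factor through the residue morphisms via $u_\delta^K\circ\rv_\delta=\rv_\delta\circ u^K$; combining (a mild generalization of) Fact \ref{fac: Flenner's cell decomposition}(2) to the analytic setting with this factorization, one describes the $\L_*$-induced structure on the RV sorts through analytic operations on the residue field and the value group, organized by the pure short exact sequence $1\to\k^\times\to\RV^\times\to\Gamma\to 0$. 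A second application of Lemma \ref{lem:FFdistalcriterion}, now with base theory the distal structure $\RV_*$ coming from $\bm{K}$ and new symbols the $u_\delta^K$, then yields the required reduction in this case, relying on the fact that the relevant analytic expansions of the residue field (which by the main theorem must be either finite or of characteristic zero and distal) remain distal, by forgetful-functor arguments in the spirit of Corollaries \ref{cor: ff distal examples} and \ref{cor: DCF0 distal exp}.

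The main technical obstacle will be the last step of the RV case: producing an explicit rewriting of an atomic formula $u_\delta^K(x,\vec r)=s$ in the single object variable $x$ purely in terms of the base $\L$-structure on RV together with analytic terms in the parameter tuple $\vec r$, so that $x$ ultimately appears only inside $\L$-symbols. Granting this, the ``has a distal expansion'' half of the statement follows by combining the same argument with Lemma \ref{lem:distal exp}: starting from a distal expansion $\bm{K}^\ast$ of $\bm{K}$, the reductions above, carried out inside $\bm{K}^\ast$ rather than $\bm{K}$, produce a distal expansion of $\bm{K}_*$, and bi-interpretability transfers this back to a distal expansion of the $\L_A$-structure $K$.
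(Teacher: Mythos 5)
Your strategy coincides with the paper's: reduce via bi-interpretability to the multi-sorted structure $\bm{K}_*$, observe that its $\L$-reduct $\bm{K}$ is distal by Propositions~\ref{prop: reducing K to RV} and~\ref{prop: reducing RV to k and gamma}, and then lift distality to $\bm{K}_*$ by applying Lemma~\ref{lem:FFdistalcriterion} with Proposition~\ref{prop:rideau}. The paper carries out exactly this sequence (and also notes at the end that, when the underlying valued field is itself distal, the resulting $\bm{K}_*$ is bi-interpretable with the $\L_A$-structure $K$, which yields the ``distal'' rather than ``has a distal expansion'' half without further appeal to Lemma~\ref{lem:distal exp}).

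You are also right to flag the case of an RV-sort object variable. Lemma~\ref{lem:FFdistalcriterion}, via Proposition~\ref{prop: str honest defs in 1 var}, requires the rewriting hypothesis for \emph{every} single variable $x$, whereas Proposition~\ref{prop:rideau} as stated concerns a single field-sort variable $x$; the paper's one-line ``Now Lemma~\ref{lem:FFdistalcriterion} and Proposition~\ref{prop:rideau} yield\dots'' is silent about the RV sorts. Your diagnosis that the only analytic primitives that can see an RV-sort $x$ are the $u_\delta^K$ of (A7) is correct, and the proposed ``second application of Lemma~\ref{lem:FFdistalcriterion}'' with base $\RV_*$ and new symbols $u_\delta^K$ is a reasonable direction. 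However, the supporting claim that the $u_\delta^K$ ``factor through the residue morphisms'' and hence decompose along the short exact sequence $1\to\k^\times\to\RV^\times\to\Gamma\to0$ is not justified as stated: $u_\delta^K$ can take values of \emph{positive} valuation in $\RV_\delta$, and on that part of its range it does not simply split into a $\k$-component and a $\Gamma$-component. So the ``main technical obstacle'' you acknowledge is indeed still open in your write-up, and the proposed factorization is not a fix.

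What actually closes this gap is not a second term-rewriting but stable embeddedness. The analytic analogue of Fact~\ref{fac: Flenner's cell decomposition}(2) (part of Rideau's framework, and implicit in the statement of Proposition~\ref{prop:rideau}) asserts that $\RV_*$, with the structure it carries inside $\bm{K}_*$, is fully stably embedded. Given that, indiscernible sequences of RV-sort elements are handled by Lemma~\ref{lem: easy distal over a predicate} directly, without producing a term $t(y)$ at all---provided the induced structure on $\RV_*$ is distal. Proposition~\ref{prop:rideau} is exactly what guarantees this: applied with a dummy field variable, it shows that every $\L_*$-formula restricted to $\RV_*$ is (after substituting $\L_A$-terms for the field parameters) already an $\L$-formula, i.e., the symbols in (A7) contribute nothing new to the induced structure beyond what Propositions~\ref{prop: reducing K to RV} and~\ref{prop: reducing RV to k and gamma} already cover. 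So the fix is to invoke stable embeddedness together with Lemma~\ref{lem: easy distal over a predicate} for RV-sort sequences and Proposition~\ref{prop: lifting distality over a predicate} for RV-sort parameters, rather than forcing both cases through Lemma~\ref{lem:FFdistalcriterion}. Your proposal is a faithful reconstruction of the paper's argument with the gap flagged in the right place, but the intended mechanism for the RV case is stable embeddedness, not a second forgetful-functor pass over the $u_\delta^K$.
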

\begin{proof}
Suppose first that the valued field underlying $K$ has a distal expansion;
by the forward direction of our main theorem,
this valued field  is finitely ramified, and its value group~$\Gamma$ and residue field $\k$
have a distal expansion.
Consider now the structure $\bm{K}_*$ introduced before Proposition~\ref{prop:rideau}, where
we equip~$\RV_*$ with the functions (A7) as well as the structure
coming from the distal expansions of~$\Gamma$ and~$\k$
as explained at the beginning of Section~\ref{sec:reduction 2}.
By Propositions~\ref{prop: reducing K to RV} and~\ref{prop: reducing RV to k and gamma}, the $\L$-reduct $\bm{K}$ of~$\bm{K}_*$ is distal.
Now Lemma~\ref{lem:FFdistalcriterion} and Proposition~\ref{prop:rideau} yield that the  expansion~$\bm{K}_*$ of~$K$ is distal.
This shows that if the valued field underlying $K$ has a distal expansion, then so does $K$.
Note that if we follow this argument when the valued field underlying $K$ itself is distal, then the distal structure $\bm{K}_*$  we obtain in this way  is bi-interpretable with the $\L_A$-structure $K$.
\end{proof}

\begin{exampleunnumbered}
Let $\k$ be a distal field of characteristic zero and $A=\Z[[t]]$, $I=tA$. Then the valued field~$K=\k(\!(t)\!)$ of Laurent series with coefficients in $\k$ can be
expanded to a model of $T_A$ in a unique way such that $t\in A$ is interpreted by $t\in K$; by the previous corollary,
this $\L_A$-structure
$K$ is distal.
\end{exampleunnumbered}

\section*{Acknowledgements}
\noindent We are grateful to Franziska Jahnke and Martin Hils for  useful discussions on the topic of Section~\ref{SectionHensVal}, and 
to Rosario Mennuni for a question which prompted Remark~\ref{rem:Rosario}.
We thank the referee for various corrections and suggestions which improved the paper.
Aschen\-bren\-ner was partially supported by NSF Research Grant DMS-1700439.
Chernikov was partially supported by   NSF Research Grant DMS-1600796, by   NSF CAREER Grant DMS-1651321, an Alfred~P.~Sloan Fellowship, and a Simons Fellowship.
Gehret was partially supported by   NSF Award No.~1703709.

\bibliography{refs}

\end{document}